\renewcommand{\iu}{\mathrm{i}} 
\DeclareMathOperator{\CS}{CS}
\DeclareMathOperator{\coe}{coe}
\newcommand\@@vertwv[3]{\mathop{\ooalign{%
			$#1#3$\cr
			\hfil$#1\shortmid$\hfil\cr
			\hfil\raisebox{#2}{$#1\shortmid$}\hfil\cr}}}
\newcommand\@vertwv[1]{\mathchoice
	{\@@vertwv{\displaystyle     }{0.38ex}{#1}}%
	{\@@vertwv{\textstyle        }{0.38ex}{#1}}%
	{\@@vertwv{\scriptstyle      }{0.28ex}{#1}}%
	{\@@vertwv{\scriptscriptstyle}{0.21ex}{#1}}}
\newcommand\wedgemidvert{\@vertwv\wedge}
\newcommand\veemidvert{\@vertwv\vee  }
\renewcommand{\thepart}{\Roman{part}} 
\numberwithin{equation}{section} 
\begin{document}

\title[Quantum modularity and asymptotic expansion conjecture]{A framework for proving quantum modularity: Application to Witten's asymptotic expansion conjecture}
\author[Y. Murakami]{Yuya Murakami}
\address{Faculty of Mathematics, Kyushu University, 744, Motooka, Nishi-ku, Fukuoka, 819-0395, JAPAN}
\email{murakami.yuya.896@m.kyushu-u.ac.jp}
\thanks{The author is supported by JSPS KAKENHI Grant Number JP 23KJ1675.}

\date{\today}


\begin{abstract}
	We address two linked problems at the interface of quantum topology and number theory: deriving asymptotic expansions of the Witten--Reshetikhin--Turaev invariants for 3-manifolds and establishing quantum modularity of false theta functions. 
	Previous progress covers Seifert homology 3-spheres for the former and rank-one cases for the latter, both of which rely on single-variable integral representations.
	We extend these results to negative definite plumbed 3-manifolds and to general false theta functions, respectively.
	We address this limitation by developing two techniques: a Poisson summation formula with signature and a framework of modular series, both of which enable a precise and explicit analysis of multivariable integral representations.
	As further applications, our method yields a unified approach to proving quantum modularity for false theta functions, indefinite theta functions, and for Eisenstein series of odd weight.
\end{abstract}



\maketitle
\tableofcontents


\section{Introduction} \label{sec:intro}


We investigate two linked problems:
\begin{itemize}
	\item \textbf{Topological side}: derive asymptotic expansions of quantum invariants.
	\item \textbf{Number-theoretic side}: establish quantum modularity of false theta functions.
\end{itemize}
To address these problems, we develop two techniques: A \emph{Poisson summation formula with signature} and \emph{modular series}.


\subsection{Topological side: Asymptotics of quantum invariants} \label{subsec:intro_topology}


A central problem in quantum topology is to study the asymptotics of quantum invariants. 
In this context, a key conjecture is the \emph{asymptotic expansion conjecture} for the Witten--Reshetikhin--Turaev (WRT) invariants of $ 3 $-manifolds, originally formulated by Witten~\cite{Witten}.
It is stated as follows.

\begin{conj}[{The asymptotic expansion conjecture, \cite[Conjecture 1.1]{Andersen-Himpel-Jorgensen-Martens-McLellan}}] \label{conj:asymptotic}
	Let $ M $ be a closed and oriented $ 3 $-manifold.
	For each positive integer $ k $, let $ Z_k(M) \in \bbC $ denote the $ \SU(2) $ WRT invariant for $ M $ at level $ k $.
	Let $ \CS_{\bbC} (M) $ denote the set of Chern--Simons invariants defined as the image of the Chern--Simons action
	from the $ \SL_2(\bbC) $ character variety of $ M $ to $ \bbC / \Z $.
	Then, for each Chern--Simons invariant $ \theta \in \CS_{\bbC} (M) $, there exists a Puiseux series 
	$ Z_\theta (k) \in \bbC[k^{1/p} \mid p \in \Z_{>0}][[k^{-1}]] $
	and we have an asymptotic expansion of WRT invariants:
	\begin{equation} \label{eq:asymp_conj}
		Z_k (M)
		\sim
		\sum_{\theta \in \CS_{\bbC} (M) \cup \{ 0 \}} e^{2\pi\iu k \theta} Z_\theta (k) 
		\quad \text{ as } k \to \infty.
	\end{equation}
\end{conj}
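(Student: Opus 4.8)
The plan is to establish Conjecture~\ref{conj:asymptotic} for the class of negative definite plumbed $3$-manifolds, which is the scope announced above, by combining the two techniques advertised in the introduction. \textbf{Step 1 (reduction to false theta functions).} Given a negative definite plumbing graph $\Gamma$ with $s$ vertices and plumbing matrix $B$, I would first rewrite the WRT invariant $Z_k(M)$ as a finite rational linear combination of special values at $k$-th roots of unity of the Gukov--Pei--Putrov--Vafa homological block $\widehat Z_\Gamma(q)$ (equivalently, directly from the Reshetikhin--Turaev plumbing surgery formula, which produces a Gauss sum over $(\Z/k\Z)^s$ weighted by $B$), using the established radial--limit relation between $\widehat Z$ and WRT. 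A cone decomposition of the resulting lattice sum then presents $Z_k(M)$, up to elementary factors, as a finite sum of derivatives evaluated at roots of unity of multivariable \emph{false} theta functions attached to the quadratic form $B^{-1}$.

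\textbf{Step 2 (multivariable integral representation and Poisson summation with signature).} Each such false theta function is non-modular, but through the framework of \emph{modular series} I would represent it as an Eichler-type period integral of a vector-valued theta function over a path in the $s$-dimensional upper half-space --- equivalently, as an $s$-dimensional Laplace-type integral whose integrand is Gaussian in the lattice variable. The \emph{Poisson summation formula with signature} is then applied to resum the defining lattice sum: because a false theta function is only ``half'' of a genuine theta series (the summation is restricted to a cone, or carries sign factors), ordinary Poisson summation fails, and the corrected formula yields, besides the expected dual sum, boundary contributions weighted by the signature of an auxiliary quadratic form. The outcome is an exact description of the behaviour of $Z_k(M)$ near $q = e^{2\pi\iu/k}$ as a finite sum of $s$-dimensional oscillatory Gaussian integrals with large parameter $k$.

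\textbf{Step 3 (stationary phase and identification of the expansion).} I would deform each contour to pass through the critical points of the phase function $W$ and apply multidimensional stationary phase: each critical point $\ell_c$ contributes $e^{2\pi\iu k \theta}$, with $\theta := W(\ell_c)/(2\pi\iu)$, times an asymptotic series in $k^{-1/2}$ whose coefficients are Gaussian moments built from the Hessian and higher derivatives of $W$ at $\ell_c$ --- that is, a Puiseux series $Z_\theta(k)$. The term indexed by $\theta = 0$ arises from the signature boundary term of Step 2, i.e.\ from the trivial connection. It then remains to match the set $\{\theta\}$ of critical values with $\CS_{\bbC}(M)$, by parametrizing $\SL_2(\bbC)$ flat connections on $M$ via holonomies around the meridians of the plumbing circles, computing the Chern--Simons functional combinatorially from $B$, and verifying that it coincides with $W(\ell_c)/(2\pi\iu)$.

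\textbf{Main obstacle.} The principal difficulty lies in executing Steps 2 and 3 in the genuinely multivariable regime: controlling convergence and rigorously justifying the contour deformations of the $s$-dimensional integral when, after reduction, the phase $W$ is indefinite even though $B$ is negative definite, and proving that the signature correction terms are precisely those that reproduce the trivial- and abelian-connection contributions with the right normalization --- this is exactly what the new Poisson summation formula must deliver. A secondary obstacle is the last part of Step 3: the $\SL_2(\bbC)$ character variety of a plumbed manifold is, in general, not as explicitly presented as in the Seifert case, so showing that every critical value is a genuine Chern--Simons invariant, and that none is spurious or overlooked, requires a careful study of the character variety in terms of the plumbing data.
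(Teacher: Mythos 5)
The statement you are trying to prove is stated in the paper as a \emph{conjecture} (Witten's asymptotic expansion conjecture, quoted from Andersen--Himpel--J{\o}rgensen--Martens--McLellan), and the paper does not prove it. What the paper actually establishes is the weaker \cref{thm:main_WRT_asymptotic}: for negative definite plumbed manifolds there is an asymptotic expansion $Z_k(M) \sim \sum_{\theta \in \calS} e^{2\pi\iu k\theta} Z_\theta(k)$ with $\calS \subset \Q/\Z$ a finite set described explicitly in \cref{cor:WRT_asymp}; the identification $\calS = \CS_\bbC(M) \cup \{0\}$ is deliberately left open as \cref{conj:CS_inv_expression}, and is verified only for Seifert homology spheres by comparison with Andersen--Misteg{\aa}rd. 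Up to that point your Steps 1--3 do essentially retrace the paper's route: the radial limit theorem of Murakami reduces $Z_k(M)$ to values of the GPPV invariants, which are multivariable false theta functions; the Poisson summation formula with signature (equivalently, the modular transformation for false modular series, \cref{thm:false_modular_series_modular,thm:false_theta}) replaces the lattice sum by a dual sum plus contour-integral terms; and the Euler--Maclaurin and stationary-phase type formulas (\cref{prop:asymp_EM,prop:asymp_stationary_phase}, via \cref{thm:false_theta_asymp}) convert these into Puiseux series in $k^{-1/2}$ with oscillatory prefactors $\bm{e}(k\theta)$. One cosmetic difference: you propose Eichler-type period integrals as the integral representation, whereas the paper works with Cauchy principal value integrals and deforms contours of type $C_+$; the paper's choice is what makes the multivariable analytic continuation and the explicit determination of $\calS$ tractable, which is precisely where the rank-one Eichler-integral technology had previously stalled.

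The genuine gap is the final clause of your Step 3, which you defer as a ``secondary obstacle'': matching the exponents $\theta$ (values of quadratic forms $-\tfrac{1}{4}\transpose{n}W_\pi^* n$ built from the plumbing data) with the image of the $\SL_2(\bbC)$ Chern--Simons action. You give no argument for this, only a plan to ``parametrize flat connections via meridian holonomies and verify'' the identity, and no such verification is available beyond the Seifert case; the character variety of a general plumbed manifold is not explicitly enough understood to rule out spurious exponents in $\calS$ or missed Chern--Simons values, and the $\theta$'s produced by the modular/stationary-phase machinery a priori form only a finite subset of $\Q/\Z$ with no intrinsic gauge-theoretic meaning attached. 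Without this step you have proved (a special case of) \cref{thm:main_WRT_asymptotic}, not \cref{conj:asymptotic}; and since the conjecture concerns \emph{all} closed oriented $3$-manifolds, even a complete execution of your plan for negative definite plumbed manifolds would settle only a special case. You should therefore either reformulate your goal as the paper's theorem plus \cref{conj:CS_inv_expression}, or supply a genuinely new argument computing $\CS_\bbC(M)$ from the plumbing matrix and comparing it term by term with the set $\calS$ of \cref{cor:WRT_asymp}.
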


Here, the symbol $ \sim $ denotes the Poincar\'{e}'s notation of asymptotic expansions (\cref{dfn:asymptotic} \cref{item:dfn:asymptotic:Poincare}). 

There has been substantial progress on this conjecture~\cite{Andersen-Hansen,Andersen-Himpel,Andersen-Mistegard,Andersen-Petersen,Beasley-Witten,Charles,Chun,Chung:Seifert,Chung:rational,Chung:resurgent,Chung:SU(N),Charles-Marche:I,Charles-Marche:II,FIMT,Freed-Gompf,GMP,Hikami:Lattice,Hikami:Lattice2,Hansen-Takata,Jeffrey,Lawrence-Zagier,Matsusaka-Terashima,Rozansky:1,Rozansky:2,Wheeler,Wu}.
At present, it has been proved for lens spaces, Seifert homology $3$-spheres, and finite order mapping tori.

Our first main result establishes asymptotic formulas for a broader class of $3$-manifolds, recovering the known formulas for lens spaces and Seifert homology $3$-spheres as special cases.

\begin{thm}
	\label{thm:main_WRT_asymptotic}
	Let $ M $ be a negative definite plumbed manifold.
	Then, there exists a finite set $ \calS \subset \Q/\Z $ and Puiseux series $ Z_\theta (k) \in \bbC((k^{-1/2})) $ for each $ \theta \in \calS $
	such that
	\[
	Z_k (M)
	\sim
	\sum_{\theta \in \calS} e^{2\pi\iu k \theta} Z_\theta (k) 
	\quad \text{ as } k \to \infty.
	\]
	Moreover, $ \calS $ is described explicitly as in \cref{cor:WRT_asymp}.
\end{thm}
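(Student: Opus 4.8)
The plan is to reduce \cref{thm:main_WRT_asymptotic} to the quantum modularity of multivariable false theta functions established in the preceding sections, via an explicit integral/residue representation of the WRT invariant.

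First, I would fix a negative definite plumbing graph $\Gamma$ presenting $M$, with $n$ vertices, framings $(m_v)_v$ and negative definite linking matrix $B \in M_n(\Z)$. Using the Reshetikhin--Turaev surgery formula together with the plumbing calculus, $Z_k(M)$ is a finite Gauss-type sum whose summand is a product over the vertices of elementary blocks depending only on $B$ and the valencies $\deg v$. A standard Gaussian manipulation --- the Laplace transform method of Lawrence--Zagier, Hikami and Gukov--Pei--Putrov--Vafa --- then rewrites this sum as a finite $\bbC$-linear combination, indexed by the discriminant group $\Z^n/B\Z^n$, of radial limits of the GPPV series $\widehat{Z}_b(M;q)$; for a negative definite plumbed manifold each $\widehat{Z}_b$ is a $\Z[[q]]$-linear combination of multivariable false theta functions attached to the quadratic form $-B$. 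This moves the whole problem into the number-theoretic setting of the earlier sections.

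Second, I would apply the quantum modularity theorem for false theta functions. Transforming by the appropriate element of $\SL_2(\Z)$ (morally $\tau \mapsto -1/\tau$ at the cusp relevant to the limit $k \to \infty$), each false theta block acquires an automorphy factor times an honest value of a companion function, plus a cocycle term which, by the Poisson summation formula with signature together with the modular series formalism, admits an asymptotic expansion in $k^{-1/2}$ with explicit coefficients. The $k$-dependent part of the automorphy/Gauss-sum factors supplies the phases: on the coset $a + B\Z^n$ one gets $e^{2\pi\iu k\theta_a}$ with $\theta_a \equiv -\tfrac14\, a^\top B^{-1} a \pmod{\Z}$ up to the framing correction and an overall normalization, and $\calS$ is precisely the finite set of such $\theta_a$ that actually occur; this is the set identified with (part of) $\CS_{\bbC}(M)$ and spelled out in \cref{cor:WRT_asymp}.

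Third, I would collect contributions: summing over $\Z^n/B\Z^n$ and grouping terms of equal phase yields $Z_k(M) \sim \sum_{\theta \in \calS} e^{2\pi\iu k\theta} Z_\theta(k)$ with $Z_\theta(k)$ the sum of the relevant cocycle expansions, and since every block is Gaussian each integration contributes a half-integer power of $k$, so $Z_\theta(k) \in \bbC((k^{-1/2}))$ with only finitely many negative powers. The main obstacle is the multivariable asymptotic analysis in the second step: for Seifert homology spheres the relevant integral is one-dimensional and classical Euler--Maclaurin / Mellin-transform methods suffice, but for a general plumbing graph the false theta function genuinely involves $n$ variables and its integral representation lives over a cone in $n$-space on which the form determined by $B^{-1}$ need not be sign-definite even though $B$ is. Proving that the cocycle term is a genuine asymptotic series with uniformly controlled remainder, and computing its coefficients, is exactly what forces the Poisson summation formula with signature --- to handle the $\operatorname{sgn}$-weights responsible for the ``falseness'' of the theta function --- and the modular series framework --- to propagate the modular transformation coherently across all the variables. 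Once those tools are in place the reduction is essentially formal.
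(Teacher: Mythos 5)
Your high-level architecture is the paper's: express $Z_k(M)$ through radial limits of the GPPV invariants, apply the modular transformation for multivariable false theta functions, and extract the $k\to\infty$ expansion of the transformed side via the Poisson summation formula with signature and the modular series framework. But two of your steps have genuine gaps. First, you feed the GPPV invariant into the false theta machinery as an $|V|$-variable false theta function for the form determined by $W^{-1}$ and never address why the machinery applies to its coefficients. The obstruction is not sign-definiteness (if $W$ is negative definite so is $W^{-1}$, hence $-\transpose{l}W^{-1}l/4$ is positive definite and your worry there is vacuous); it is that the coefficient $F_l=\prod_v F_{v,l_v}$ contains, for every vertex of degree $1$ or $2$, a finitely supported factor (a singleton indicator), which does not lie in $\overline{\frakQ}^0$, the class to which the transformation formulas \cref{thm:false_modular_series_modular} and \cref{thm:false_theta} apply. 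The paper removes this by summing out the low-degree vertices and regrouping the series by the vertices of degree $\ge 3$ (\cref{lem:GPPV_false_theta_rep}), which requires the branch/trunk analysis of the linking matrix and the computation of the matrix $W^{\veemidvert}$ and its inverse (\cref{lem:linking_matrix_inverse}). This step is not optional bookkeeping: without it you cannot apply the transformation formula, and you also cannot produce the explicit description of $\calS$ demanded by the statement, since $\calS$ in \cref{cor:WRT_asymp} is written in terms of $W^{\veemidvert}$, the denominators $p_\beta$ of the branches, and the pole sets $\calP_v^{(\alpha)}$.

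Second, your mechanism for the phases is wrong, or at best captures only part of them. You attribute $e^{2\pi\iu k\theta}$ entirely to the Gauss-sum/automorphy factors, with $\theta_a\equiv-\tfrac14\transpose{a}W^{-1}a$ on cosets; in the paper that accounts only for the factor $\bm{e}(-k\transpose{\alpha}W\alpha)$ coming from the radial limit formula (\cref{thm:GPPV_conj}). The remaining, and more interesting, part of $\calS$ is the union of the sets $\calS^{(\alpha)}$ of \cref{cor:GPPV_asymp}: after the modular transformation, the companion false theta functions $\widetilde{\Theta}_{\pi,i}$ evaluated at $\tau$ near $-1/\rho=-k$ are themselves oscillatory, and by \cref{thm:false_theta_asymp} their asymptotic coefficients carry phases $\bm{e}(-\theta/\rho)$ with $\theta=-\tfrac14\transpose{n}W^*_\pi n$ running over values of quadratic forms at the poles of the cyclotomic rational functions — these are the contributions corresponding to nonabelian flat connections, and they do not come from any Gauss sum. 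Your picture, in which the cocycle/companion terms only contribute power series in $k^{-1/2}$ without new phases, would therefore produce too small a set $\calS$ and an incorrect expansion. Relatedly, the reduction of $Z_k(M)$ to radial limits of $\widehat{Z}_b$ is not a ``standard Gaussian manipulation'': the existence and identification of the non-tangential limits is the radial limit theorem of \cref{thm:GPPV_conj}, a cited nontrivial input, so you should invoke it rather than claim to rederive it.
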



Our proof strategy follows the method of modular transformation established by Lawrence--Zagier~\cite{Lawrence-Zagier}.
We summarize the strategy schematically in the following diagram:

\begin{equation}
	\begin{tikzcd}[row sep=4em]
		Z_k(M)
		\arrow[d, "k \to \infty" ', "(4)", dashed]
		&[5em]
		\widehat{Z}(q; M)
		\arrow[l, "\tau \to 1/k" ', "(1)"]
		\arrow[d, "\tau \mapsto -1/\tau", "(2)" '] 
		\\
		\displaystyle \sum_{\theta \in \Q/\Z} e^{2\pi\iu k \theta} Z_\theta (k) 
		&
		\displaystyle \widehat{Z}^*(q) + \int \widehat{Z}^{**} (\tau; \xi) d\xi
		\arrow[l, "{\tau = -k, \, k \to \infty}", "(3)" ']
	\end{tikzcd}				
\end{equation}

We aim to establish the left arrow (4).
We resolve this by traversing the diagram clockwise along the top, right, and bottom edges, following (1), (2), and (3).
Here, $ \widehat{Z}(q; M) $ in the upper-right corner is the $ q $-series known as the \emph{Gukov--Pei--Putrov--Vafa \textup{(}GPPV\textup{)} invariants} of negative definite plumbed manifolds $ M $ defined in \cite{GPPV}, where we set $ q = e^{2\pi\iu \tau} $.
The top arrow (1) expresses the \emph{radial limit conjecture}, which was conjectured in this case by Gukov--Pei--Putrov--Vafa~\cite{GPPV} and proved by Murakami~\cite{M:GPPV} (see \cref{subsec:GPPV_radial_limits}).
The right arrow (2) expresses \emph{modular transformation} for GPPV invariants.

To establish (2) and (3), we need to prove modular transformation formulas and asymptotic expansions for a broader class of false theta functions, including the GPPV invariants.
These purely number-theoretic problems are resolved in \cref{thm:main_false_theta,thm:main_asymptotic_false_theta} below.


\subsection{Number-theoretic side: Quantum modularity of false theta functions} \label{subsec:intro_number_theory}


Quantum modular forms, introduced by Zagier~\cite{Zagier:quantum}, constitute an important class in number theory.
A central example is provided by rank-one false theta functions.
For higher-rank false theta functions, however, quantum modularity has not yet been established, despite significant advances~\cite{BKM,BKM:vector,BKMN:rk2_false_modular,BKMN:False_modular,BMM:high_depth,Males}.
In this paper, we establish quantum modularity for  higher-rank false theta functions in full generality, defined as
\[
\widetilde{\Theta} (\tau)
=
\sum_{m \in \alpha + \Z^r} \sgn(Am) P(m) q^{\transpose{m} Sm /2},
\]
where $ r $ and $ s $ are positive integers, $ S \in \Sym_r^+ (\Q) $ is a positive definite symmetric matrix,
$ A \in \Mat_{s, r} (\Z) $ is a matrix of rank $ d \ge 0 $, 
$ P(x_1, \dots, x_r) \in \Q[x_1, \dots, x_r] $ is a polynomial, $ \alpha \in \Q^r $ is a vector, and
\[
\sgn(x) 
\coloneqq
\begin{cases}
	1 & \text{ if } x \ge 0, \\
	-1 & \text{ if } x < 0,
\end{cases}
\quad
\sgn(x_1, \dots, x_r)
\coloneqq
\sgn(x_1) \cdots \sgn(x_r).
\]
We call such functions \emph{false theta function of rank $ r $ and depth $ \le d $}.

Our main theorem is stated as follows.

\begin{thm} \label{thm:main_false_theta}
	Let $ \widetilde{\Theta}(\tau) $ be a false theta function of rank $ r $ and depth $ \le d $ 
	attached to a positive definite symmetric matrix $ S \in \Sym_r^+ (\Q) $.
	Then, there exist finitely many false theta functions 
	$ \widetilde{\Theta}_1 (\tau), \dots, \widetilde{\Theta}_M (\tau) $ of rank $ \le r $ and depth $ \le d $ and 
	holomorphic functions $ \Omega(\tau), \Omega_1 (\tau), \dots, \Omega_M (\tau) $ on the universal cover of $ \bbC \smallsetminus \{ 0 \} $ such that
	\[
	\sqrt{\det S} {\sqrt{\frac{\iu}{\tau}}}^{\, r} \cdot \widetilde{\Theta} \left( -\frac{1}{\tau} \right)
	=
	\sum_{1 \le i \le M} \widetilde{\Theta}_i (\tau) \Omega_i (\tau) 
	+ \Omega(\tau).
	\]
\end{thm}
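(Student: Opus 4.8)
The plan is to represent $\widetilde{\Theta}$ by a convergent multivariable integral of a Gaussian theta series, apply a Poisson summation formula with signature to the integrand in order to effect $\tau \mapsto -1/\tau$, and then sort the result into a ``theta part'' and a ``holomorphic part'', using the framework of modular series to organise the bookkeeping. In the base case $d = 0$ the sign factor is identically $1$, $\widetilde{\Theta}$ is an ordinary vector-valued theta function for the positive definite form $S$, and the identity is the classical transformation law: Poisson summation gives $\sqrt{\det S}\,\sqrt{\iu/\tau}^{\,r}\,\widetilde{\Theta}(-1/\tau)$ as a finite sum of theta functions for $S^{-1}$, so one takes these as the $\widetilde{\Theta}_i$, with $\Omega_i$ scalars and $\Omega = 0$.

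\emph{Normalisation and integral representation.} Using the Smith normal form of $A$, a $\GL_r(\Z)$-change of the summation variable, and the multiplicativity $\sgn(x_1, \dots, x_s) = \prod_j \sgn(x_j)$ (zero rows and repeated rows of $A$ contributing trivially), reduce to the case $\sgn(Am) = \sgn(m_1)\cdots\sgn(m_d)$, with $S \in \Sym_r^+(\Q)$, $\alpha$, $P$ replaced by equivalent data. For $n \neq 0$ one has $\sgn(n) = n \int_0^{\infty} u^{-1/2} e^{-\pi n^2 u}\,du$; splitting off the slices where some $m_j$ vanishes (each a false theta function of strictly smaller rank) and applying this identity to the remaining part, one writes
\[
\widetilde{\Theta}(\tau) = \int_{(0,\infty)^d} (u_1\cdots u_d)^{-1/2} \sum_{m \in \alpha + \Z^r} \bigl( m_1\cdots m_d\, P(m) \bigr)\, e^{\pi \iu\, \transpose{m} S(\tau,u)\, m}\, du_1 \cdots du_d + (\text{lower rank}),
\]
where $S(\tau,u) = \tau S + \iu\,\mathrm{diag}(u_1,\dots,u_d,0,\dots,0)$ has positive definite imaginary part whenever $\Im\tau > 0$; the inner sum is a genuine Gaussian theta series, the polynomial weights being produced by differentiation in an auxiliary source. (Equivalently one may first pass to a non-holomorphic completion of $\widetilde{\Theta}$ in which the $\sgn$ factors are smoothed by complementary error functions, and work with that; the representation above is the holomorphic incarnation.)

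\emph{Transformation and sorting.} Substitute $\tau \mapsto -1/\tau$, so that the exponent matrix becomes $S(-1/\tau,u) = -S/\tau + \iu\,\mathrm{diag}(u,0)$, and apply the Poisson summation formula with signature developed in the paper to the inner sum. This replaces the sum over $\alpha + \Z^r$ by a sum over the dual lattice, with $S(-1/\tau,u)^{-1} = -\tau\,(S - \iu\tau\,\mathrm{diag}(u,0))^{-1}$ in the exponent, and multiplies by $\sqrt{\det S}\,\sqrt{\iu/\tau}^{\,r}$ times a factor tending to $1$ as $u \to 0$. Rotate the contours of the auxiliary integrals, $u_j \mapsto u_j/\tau$ --- legitimate because $\det(S - \iu\tau\,\mathrm{diag}(u,0))$ does not vanish for $u \in (0,\infty)^d$ and $\tau$ in the cut plane, by definiteness of $S$ --- so the exponent becomes $\pi\iu\tau\,\transpose{n}(S - \iu\,\mathrm{diag}(u,0))^{-1}n$, Gaussian in $\tau$ with $u$-dependent matrix. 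Carrying out each $u_j$-integral over the full half-line in the direction it controls reproduces a factor $\sgn(\cdot)$ in the corresponding component of $n$, the complementary directions remaining Gaussian, while what survives of the integration domain contributes a function of $\tau$ alone. One thereby obtains a finite sum of terms $\widetilde{\Theta}_i(\tau)\,\Omega_i(\tau)$, with $\widetilde{\Theta}_i$ a false theta function of rank $\le r$ and depth $\le d$ attached to a form derived from $S^{-1}$, plus a purely integral contribution $\Omega(\tau)$, of incomplete-Gamma type, coming from truncated ranges of integration and holomorphic on the universal cover of $\bbC \smallsetminus \{0\}$.

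\emph{Organising the recursion, and the main obstacle.} Rather than tracking all contributions by hand, one carries out the last step inside the framework of \emph{modular series}: a class of (formal) series whose coefficients lie in a ring of functions holomorphic on the universal cover of $\bbC \smallsetminus \{0\}$ --- generated by $\tau^{\pm 1/2}$, $\sqrt{\iu/\tau}$ and the relevant incomplete Gamma functions --- which is visibly closed under the operations above; the finiteness of $M$ and the explicit list of $\widetilde{\Theta}_i$, $\Omega_i$, $\Omega$ then follow from the finiteness of the decomposition of $(0,\infty)^d$ into ``bulk'' and ``boundary'' pieces. The crux is exactly this combination: establishing the Poisson summation formula with signature (in particular justifying the interchange of summation and integration for the non-Schwartz, sign-modified Gaussians), and then deforming the auxiliary contours without meeting the zero locus of $\det(S - \iu\tau\,\mathrm{diag}(u,0))$, all while cleanly separating the ``bulk'' contributions --- which regenerate false theta functions and so determine the $\widetilde{\Theta}_i$ --- from the ``boundary'' contributions --- which must be shown holomorphic on the universal cover of $\bbC \smallsetminus \{0\}$. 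Positive-definiteness of $S$ keeps every Gaussian and every integral convergent throughout, and the rank hypothesis on $A$ is what bounds the number of active sign directions, hence keeps the decomposition finite.
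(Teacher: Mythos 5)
Your first step — reducing $\sgn(Am)$ to a product $\sgn(m_1)\cdots\sgn(m_{d'})$ via Smith normal form and the elimination of excess sign factors — matches the paper's reduction (\cref{prop:false_theta_rep}, proved in the appendix). From there, however, your route is genuinely different from the paper's: you represent each sign by $\sgn(n)=n\int_0^\infty u^{-1/2}e^{-\pi n^2u}\,du$, i.e.\ you work with an error-function/Eichler-integral completion in auxiliary variables and Poisson-sum the completed Gaussian (the Zwegers--Bringmann--Nazaroglu style the paper deliberately avoids), whereas the paper never completes: it applies the Poisson summation formula with signature (\cref{thm:PSF_sgn}, in the form \cref{thm:false_modular_series_modular}) directly, obtaining residue terms plus contour integrals along $C_+$, and then sorts these by an explicit recursion over ordered partitions using the block-matrix identity (\cref{lem:block_matrix}) and the contour-shift identity \cref{eq:contour_deforming_false_theta}, with holomorphy on $\widetilde{\bbC}$ supplied by \cref{lem:analy_cont}.

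As written, your argument has a genuine gap exactly at the sorting step, which is the hard part once $d\ge 2$. After Poisson summation the auxiliary variables are coupled: the dual exponent involves $(S-\iu\,\mathrm{diag}(u_1,\dots,u_d,0,\dots,0))^{-1}$, which is not a product over $j$, so you cannot ``carry out each $u_j$-integral over the full half-line in the direction it controls'' independently; moreover, interchanging the dual lattice sum with the $u$-integration fails termwise, since dual points whose relevant component vanishes produce divergent integrals of the type $\int^{\infty}u_j^{-1/2}\,du_j$ (in rank one this is rescued because the dual series is a weight-$3/2$ theta with no $n=0$ term; with a general polynomial $P$ and $d\ge 2$ it is not automatic). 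Consequently your description of the leftover piece as ``a function of $\tau$ alone \dots of incomplete-Gamma type'' is structurally wrong: the remainders are lattice sums in the complementary directions multiplied by iterated error-function/Eichler-type integrals, and the entire content of the theorem is to prove that they factor into finitely many products $\widetilde{\Theta}_i(\tau)\,\Omega_i(\tau)$ with $\widetilde{\Theta}_i$ of rank $\le r$, depth $\le d$ and $\Omega_i\in\calO(\widetilde{\bbC})$ --- the analogue of \cref{prop:Zwegers_error} in depth one, and of the paper's partition recursion in general; your appeal to a ring of ``modular series \dots visibly closed under the operations above'' asserts this factorization rather than proving it (and is not the paper's notion of modular series, which concerns quasi-polynomial coefficients against kernel functions). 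A secondary inaccuracy: the justification of the contour rotation --- nonvanishing of $\det\!\left(S-\iu\tau\,\mathrm{diag}(u,0)\right)$ for all $\tau$ in the cut plane --- is false as stated (already for $r=d=1$ it vanishes at $\tau=-\iu S/u$, which lies in the cut plane); the rotation is harmless for $\tau\in\bbH$, but the continuation of the resulting $\Omega$'s to the universal cover then requires a separate deformation argument, as in \cref{lem:analy_cont}.
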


We give $ \widetilde{\Theta}_i (\tau) $, $ \Omega_i (\tau) $, and $ \Omega (\tau) $ explicitly in \cref{thm:false_theta_rk2_modular,thm:false_theta}.
As a corollary of \cref{thm:main_false_theta}, we obtain the modular transformation formula for GPPV invariants.

We also establish the following asymptotic formulas for false theta functions as follows.

\begin{thm} \label{thm:main_asymptotic_false_theta}
	Let $ \widetilde{\Theta} (\tau) $ be a false theta function.
	\begin{enumerate}
		\item \label{item:thm:main_asymptotic_false_theta:1}
		For any rational number $ \rho \in \Q $, we have
		\[
		\widetilde{\Theta} (\tau + \rho)
		\sim
		\sum_{j \in \frac{1}{2} \Z}
		c_j (\rho) \tau^j
		\quad \text{ as } \tau \to 0
		\]
		for some $ c_j (\rho) \in \bbC $, which vanishes for any sufficiently small $ j $.
		
		\item \label{item:thm:main_asymptotic_false_theta:2}
		In the case where $ \rho = h/k \neq 0 $ for fixed $ h \in \Z \smallsetminus \{ 0 \} $, for any $ j $ we have
		\[
		c_j (\rho)
		\sim
		\sum_{\theta \in \calS} \bm{e} \left( -\frac{\theta}{\rho} \right) \widetilde{\varphi}_{\theta, h, j} (k)
		\quad \text{ as } k \to \infty
		\]
		for some finite set $ \calS \subset \Q / \Z $ independent of $ \rho $ and $ j $ and 
		some formal power series $ \widetilde{\varphi}_{\theta, h, j} (k) \in \bbC((k^{1/2})). $
	\end{enumerate}
\end{thm}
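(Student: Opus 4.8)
The plan is to deduce both parts from the modular transformation of \cref{thm:main_false_theta}, by first removing the shift $\rho$ through a residue-class decomposition --- an iterated $\tau \mapsto \tau + 1$ transformation --- and then applying $\tau \mapsto -1/\tau$ to each summand. Fix $\rho \in \Q$ and choose a positive integer $N$, linear in the denominator of $\rho$, large enough that $\bm{e}(\rho\,\transpose{m}Sm/2)$ depends only on the class of $m$ in $(\alpha + \Z^r)/N\Z^r$. For $\beta$ ranging over this finite set, put
\[
	\widetilde{\Theta}^{(\beta)}(w) \coloneqq \sum_{m \in (\beta/N) + \Z^r} \sgn(Am)\, P(Nm)\, e^{2\pi\iu w \transpose{m}Sm/2},
\]
a false theta function of rank $r$ and depth $\le d$ attached to $S$, with shift $\beta/N$ and polynomial $P(N\,\cdot\,)$. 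Using $\sgn(A(\beta + Nn)) = \sgn(A(n + \beta/N))$, valid because $N > 0$, and the choice of $N$, the defining series of $\widetilde{\Theta}$ splits as
\[
	\widetilde{\Theta}(\tau + \rho)
	=
	\sum_{\beta \in (\alpha + \Z^r)/N\Z^r} \bm{e}\!\left( \rho\, \frac{\transpose{\beta}S\beta}{2} \right) \widetilde{\Theta}^{(\beta)}(N^2\tau),
\]
so it suffices to analyse a single such false theta function as its argument tends to $0$.

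\textbf{Part \cref{item:thm:main_asymptotic_false_theta:1}.}
Applying \cref{thm:main_false_theta} to $\widetilde{\Theta}^{(\beta)}$ and substituting $N^2\tau = -1/\sigma$ gives, with $\sigma = -1/(N^2\tau)$,
\[
	\widetilde{\Theta}^{(\beta)}(N^2\tau)
	=
	\frac{1}{\sqrt{\det S}\,\sqrt{-\iu N^2\tau}^{\,r}} \left( \sum_{1 \le i \le M_\beta} \widetilde{\Theta}^{(\beta)}_i(\sigma)\, \Omega^{(\beta)}_i(\sigma) + \Omega^{(\beta)}(\sigma) \right),
\]
where the $\widetilde{\Theta}^{(\beta)}_i$ are false theta functions of rank $\le r$ and depth $\le d$, and the $\Omega^{(\beta)}_i, \Omega^{(\beta)}$ are the holomorphic companions of \cref{thm:false_theta_rk2_modular,thm:false_theta}. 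As $\tau \to 0$ we have $\sigma \to \iu\infty$. Since $S$ is positive definite, each $\widetilde{\Theta}^{(\beta)}_i$ is a $q$-series supported on nonnegative rational exponents, so $\widetilde{\Theta}^{(\beta)}_i(\sigma)$ equals its $q^0$-coefficient up to an error that is $O(\tau^j)$ for every $j$ and hence invisible to the Poincar\'e expansion; and by their explicit forms the $\Omega^{(\beta)}_i, \Omega^{(\beta)}$ admit Poincar\'e expansions, bounded below, in half-integral powers of $1/\sigma = -N^2\tau$. Multiplying by the prefactor $\sqrt{-\iu N^2\tau}^{\,-r}$ and summing the finitely many contributions produces $\widetilde{\Theta}(\tau + \rho) \sim \sum_{j \in \frac12\Z} c_j(\rho)\tau^j$, with $c_j(\rho) = 0$ for $j < -r/2 + j_0$, where $j_0$ is the smallest exponent occurring in the expansions of the companions.

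\textbf{Part \cref{item:thm:main_asymptotic_false_theta:2}.}
Now let $\rho = h/k$, so $N = c_0 k$ with $c_0 = c_0(S,\alpha)$ independent of $h$ and $k$. Reading off the coefficient of $\tau^j$ from the preceding display --- the exponentially small errors contributing nothing --- yields, for every $k$, the \emph{exact} identity
\[
	c_j\!\left( \frac{h}{k} \right)
	=
	\kappa_{r,j}\, N^{2j} \sum_{\beta \in (\alpha + \Z^r)/N\Z^r} \bm{e}\!\left( \frac{h}{2k}\transpose{\beta}S\beta \right) g_j\!\left( \frac{\beta}{N} \right),
\]
where $\kappa_{r,j}$ is an elementary constant and $g_j$ is an explicit weight: a finite sum of products of $q^0$-coefficients of the transformed false theta functions $\widetilde{\Theta}^{(\beta)}_i$ with Poincar\'e coefficients of $\Omega^{(\beta)}_i, \Omega^{(\beta)}$, whose dependence on $\beta/N$ is read off from \cref{thm:false_theta_rk2_modular,thm:false_theta} and is built from quadratic exponentials, polynomials, and locally constant data attached to the minimal vectors of the relevant dual cosets. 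Each summand is thus a multivariable quadratic Gauss sum of modulus $O(k)$ against such a weight, and Gauss reciprocity --- equivalently, the stationary-phase evaluation supplied by the Poisson summation formula with signature --- produces a complete asymptotic expansion
\[
	c_j\!\left( \frac{h}{k} \right)
	\sim
	\sum_{\theta \in \calS} \bm{e}\!\left( -\frac{\theta}{\rho} \right) \widetilde{\varphi}_{\theta,h,j}(k)
	\qquad \text{as } k \to \infty,
\]
with $\widetilde{\varphi}_{\theta,h,j}(k) \in \bbC((k^{1/2}))$ and $\calS$ a finite subset of $\Q/\Z$ depending only on $S$ and $A$, hence independent of $\rho$ and $j$; for GPPV invariants it is the set made explicit in \cref{cor:WRT_asymp}.

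\textbf{Main obstacle.}
Part \cref{item:thm:main_asymptotic_false_theta:1} and the reduction are straightforward once \cref{thm:main_false_theta} and the explicit companions of \cref{thm:false_theta_rk2_modular,thm:false_theta} are available; the real work lies in \cref{item:thm:main_asymptotic_false_theta:2} and is concentrated in two points. First, one must pin down the weight $g_j$, i.e.\ extract from the explicit transformations of \cref{thm:false_theta_rk2_modular,thm:false_theta} exactly how the $q^0$-coefficients of the $\widetilde{\Theta}^{(\beta)}_i$ and the Poincar\'e coefficients of the $\Omega^{(\beta)}_i$ depend on the shift $\beta/N$. When $A$ has full rank $d = r$ this is harmless, but for $d < r$ the false theta functions possess flat directions along which $\sgn(A\,\cdot\,)$ is constant: there the Gauss sum degenerates partly into a lattice-point count, and one must separate the genuinely oscillatory directions --- which generate the half-integral powers of $k$ and the resonance set $\calS$ --- from the flat ones, which generate polynomial volume factors in $k$; this is precisely the situation the Poisson summation formula with signature is built to handle, and routing the computation through it is the technical heart of the argument. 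Second, one must exclude anomalous cancellations that would lower the leading order of these Gauss sums, and verify that the resonance values coalesce into one finite set $\calS$ independent of $\rho$ and $j$; matching $\calS$ with the Chern--Simons description is the content of \cref{cor:WRT_asymp}.
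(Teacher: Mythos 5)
Your reduction to residue classes and your treatment of part \cref{item:thm:main_asymptotic_false_theta:1} are workable, though they take a different route from the paper: the paper proves part \cref{item:thm:main_asymptotic_false_theta:1} directly from the Euler--Maclaurin type formula (\cref{prop:asymp_EM}, \cref{cor:asymp_EM}), absorbing the periodic factor $\bm{e}(\rho\transpose{x}Sx/2)$ into the quasi-polynomial, whereas you pass through the modular transformation and expand at $\iu\infty$. Your version still needs \cref{prop:asymp_stationary_phase} (in sectors, as $\sigma\to\iu\infty$) to justify that the companions $\Omega^{(\beta)}_i,\Omega^{(\beta)}$ have bounded-below expansions in half-integral powers of $1/\sigma$, which you assert ``by their explicit forms'' but do not verify, and you must account for the facts that the polynomial of $\widetilde{\Theta}^{(\beta)}$ is $P(N\cdot)$ (so the transformed data depend on $N$, not only on $\beta/N$) and that rewriting $\sgn(A(n+\beta/N))$ as an integer-argument false quasi-polynomial introduces $\beta$-dependent integer shifts. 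These points are repairable.

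The genuine gap is in part \cref{item:thm:main_asymptotic_false_theta:2}. After your reduction, $c_j(h/k)$ is a complete weighted quadratic exponential sum over $\beta$ modulo $N=c_0k$, and you assert that a full asymptotic expansion in $k^{-1/2}$ with a finite resonance set $\calS$ follows from ``Gauss reciprocity --- equivalently, the stationary-phase evaluation supplied by the Poisson summation formula with signature.'' Nothing in the paper supplies such a statement as a black box: \cref{thm:main_PSF_sgn} concerns infinite sums $\sum_{m\in\Z^r}\sgn(m)f(m)$ with exponentially decaying $f$, not finite complete sums modulo $N$ against a weight, and your weight $g_j$ is not a nice function of $\beta/N$ (it involves arithmetic data such as whether shifted lattices meet the pole sets, the integer shifts mentioned above, and a dependence on $N$ through $P(N\cdot)$). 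Proving that this sum has an expansion to all orders in $k^{-1/2}$, that the resonance frequencies form one finite set independent of $h$ and $j$, and identifying them with the values $-\tfrac{1}{2}\transpose{n}S^*_\pi n$ is precisely the content of \cref{thm:false_theta_asymp} \cref{item:thm:false_theta_asymp:2}--\cref{item:thm:false_theta_asymp:3}; your proposal names this step as ``the technical heart'' but does not carry it out, so the theorem is not proved. (Your side claim that $\calS$ depends only on $S$ and $A$ is also inaccurate: in the paper it depends on the pole data $\calP_\pi$ of the quasi-polynomial part as well.) The paper avoids the weighted Gauss sum entirely: it applies \cref{thm:false_theta} to $\widetilde{\Theta}$ itself, obtains the factors $\bm{e}(-\theta/\rho)$ by applying part \cref{item:thm:main_asymptotic_false_theta:1} to the transformed false theta functions $\widetilde{\Theta}_{\pi,i}$ at the rational point $-1/\rho$ (their asymptotic coefficients lie in the span of $\bm{e}(-\tfrac{1}{2\rho}\transpose{n}S^*_\pi n)$), and gets the $k^{-1/2}$-series from \cref{prop:asymp_stationary_phase} applied to the holomorphic companions evaluated at $-1/\rho$. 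To repair your argument you would either have to prove the weighted reciprocity law you invoke --- essentially redoing the paper's analysis in a harder form --- or switch to the paper's route at the cusp $-1/\rho$.
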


We write $ \calS $ explicitly later in \cref{thm:false_theta_asymp} \cref{item:thm:false_theta_asymp:3}.
It can be written as a sum of values of quadratic forms.


\subsection{A framework for proving quantum modularity} \label{subsec:intro_modular_series}


To prove the modular transformation formulas for false theta functions in \cref{thm:main_false_theta}, we derive a \emph{Poisson summation formula with signature}. 
This contrasts with the classical proof of the modular transformation of theta functions, which relies on the Poisson summation formula.

\begin{thm}[Poisson summation formula with signature] \label{thm:main_PSF_sgn}
	Let $ r $ be a positive integer and
	$ f \colon \R^r \to \bbC $ be a continuous function with exponential decay, that is, 
	there exist $ a, K > 0 $ such that 
	$ \abs{f(x)} \le Ke^{-a\abs{x}} $ for any $ x \in \R^r $.
	Then, we have
	\[
	\sum_{m \in \Z^r} \sgn(m) f(m)
	=
	\sum_{I \subset \{ 1, \dots, r \}} (-1)^{\abs{I}} 2^{r - \abs{I}}
	\sum_{n_I^{} \in \Z^I} \sgn(n_I^{})
	\int_{C_+^{I^\complement}} \widehat{f}(n_I^{}, \xi_{I^\complement}^{})
	\prod_{j \in I^\complement} \frac{d\xi_j}{1 - e^{2\pi\iu \xi_j}},
	\]
	if the right hand side absolutely converges.
	Here, we use the following notation\textup{:}
	\begin{itemize}
		\item For a subset $ I \subset \{1, \dots, r\} $, we denote $ I^\complement \coloneqq \{ 1, \dots, r \} \smallsetminus I $.
		\item for a variable $ x = (x_1, \dots, x_r) $, we denote 
		$ x_{I} \coloneqq (x_i)_{i \in I} $ and $ x_{I^\complement}^{} \coloneqq (x_j)_{j \in I^\complement} $.
		\item Denote by $ C_+ $ the integration path shown in \cref{fig:main_C_+}.
		\item Denote by $ \widehat{f} (\xi) $ the Fourier transform of $ f(x) $.
	\end{itemize}
\end{thm}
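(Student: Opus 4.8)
The plan is to argue by induction on $r$; the case $r=1$ carries the essential content. Throughout one may assume, as is permitted, that $f$ is $C^\infty$, so that $\widehat f$ is holomorphic and rapidly decreasing on every horizontal line inside the strip $\abs{\operatorname{Im}\xi_j}<a/(2\pi)$, which legitimizes all interchanges of summation and integration below. The case of a merely continuous $f$ is then recovered by approximating $f$ by mollifications $f*\varphi_\delta$ with $\varphi_\delta$ a Gaussian approximate identity (so that $\widehat{f*\varphi_\delta}=\widehat f\cdot\widehat{\varphi_\delta}$, with $\widehat{\varphi_\delta}$ entire, uniformly bounded on the relevant strips, and tending to $1$ locally uniformly) and letting $\delta\to0$ by dominated convergence; here the standing hypothesis that the right-hand side converges absolutely is precisely what supplies the integrable and summable majorants.

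For $r=1$: writing $\sgn(m)=2\cdot\mathbf 1_{m\ge0}-1$ gives $\sum_{m\in\Z}\sgn(m)f(m)=2\sum_{m\ge0}f(m)-\sum_{m\in\Z}f(m)$, and the last sum is $\sum_{n\in\Z}\widehat f(n)$ by the classical Poisson summation formula. For the half-lattice sum, combine Fourier inversion $f(m)=\int_{\R}\widehat f(\xi)e^{2\pi\iu m\xi}\,d\xi=\int_{\R+\iu\epsilon}\widehat f(\xi)e^{2\pi\iu m\xi}\,d\xi$ (the shift of contour being harmless for Schwartz $\widehat f$) with the geometric series $\sum_{m\ge0}e^{2\pi\iu m\xi}=(1-e^{2\pi\iu\xi})^{-1}$, valid for $\operatorname{Im}\xi>0$, to obtain $\sum_{m\ge0}f(m)=\int_{\R+\iu\epsilon}\widehat f(\xi)(1-e^{2\pi\iu\xi})^{-1}\,d\xi$. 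Now deform $\R+\iu\epsilon$ to the contour $C_+$ of \cref{fig:main_C_+}, which runs above the nonnegative integers and below the negative integers; since $(1-e^{2\pi\iu\xi})^{-1}$ has a simple pole of residue $-(2\pi\iu)^{-1}$ at each integer, the residue theorem gives
\[
\int_{\R+\iu\epsilon}\widehat f(\xi)\,\frac{d\xi}{1-e^{2\pi\iu\xi}}=\int_{C_+}\widehat f(\xi)\,\frac{d\xi}{1-e^{2\pi\iu\xi}}+\sum_{n<0}\widehat f(n).
\]
Assembling these identities,
\[
\sum_{m\in\Z}\sgn(m)f(m)=2\int_{C_+}\widehat f(\xi)\,\frac{d\xi}{1-e^{2\pi\iu\xi}}+2\sum_{n<0}\widehat f(n)-\sum_{n\in\Z}\widehat f(n)=2\int_{C_+}\widehat f(\xi)\,\frac{d\xi}{1-e^{2\pi\iu\xi}}-\sum_{n\in\Z}\sgn(n)\widehat f(n),
\]
which is the asserted formula for $r=1$ (the term $I=\emptyset$ being the integral, the term $I=\{1\}$ the sum).

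For the inductive step, assume the formula for $r-1$. Write $x=(x',x_r)$ with $x'\in\R^{r-1}$ and put $F(x'):=\sum_{m_r\in\Z}\sgn(m_r)f(x',m_r)$, which is again $C^\infty$ with exponential decay; since $\sgn(m)=\sgn(m')\sgn(m_r)$, we have $\sum_{m\in\Z^r}\sgn(m)f(m)=\sum_{m'\in\Z^{r-1}}\sgn(m')F(m')$, to which the induction hypothesis applies. To express $\widehat F$, let $g(\xi',x_r):=\int_{\R^{r-1}}f(x',x_r)e^{-2\pi\iu\langle x',\xi'\rangle}\,dx'$ be the partial Fourier transform of $f$ in the first $r-1$ variables; then $\widehat F(\xi')=\sum_{m_r\in\Z}\sgn(m_r)g(\xi',m_r)$, and applying the already proven case $r=1$ to the function $x_r\mapsto g(\xi',x_r)$ — which, for each fixed $\xi'$, is continuous with exponential decay and has Fourier transform $\widehat f(\xi',\cdot\,)$ in $x_r$ — yields
\[
\widehat F(\xi')=2\int_{C_+}\widehat f(\xi',\xi_r)\,\frac{d\xi_r}{1-e^{2\pi\iu\xi_r}}-\sum_{n_r\in\Z}\sgn(n_r)\,\widehat f(\xi',n_r).
\]
Substituting this into the induction hypothesis and expanding, each $I'\subset\{1,\dots,r-1\}$ splits into a term indexed by $I=I'$, coming from the $C_+$-integral and carrying an extra integration in the $\xi_r$-variable and coefficient $(-1)^{\abs{I'}}2^{\,r-\abs{I'}}$, and a term indexed by $I=I'\cup\{r\}$, coming from $-\sum_{n_r}\sgn(n_r)\widehat f$ and carrying an extra factor $\sgn(n_r)$ and coefficient $(-1)^{\abs{I'}+1}2^{\,r-1-\abs{I'}}$. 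Since $(-1)^{\abs{I'}}2^{\,r-\abs{I'}}=(-1)^{\abs{I}}2^{\,r-\abs{I}}$ in the first case, $(-1)^{\abs{I'}+1}2^{\,r-1-\abs{I'}}=(-1)^{\abs{I}}2^{\,r-\abs{I}}$ in the second, and $\sgn(n_{I'})\sgn(n_r)=\sgn(n_I)$, summing over $I'\subset\{1,\dots,r-1\}$ reproduces exactly the sum over all $I\subset\{1,\dots,r\}$ in the statement, and the induction is complete.

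The conceptual content is entirely in these three steps; the main obstacle is the analytic bookkeeping. For a merely continuous $f$ one cannot directly invoke pointwise Fourier inversion, termwise integration of the geometric series, or vanishing of the horizontal ends in the contour shifts, which is why the reduction to smooth $f$ and the limit $\delta\to0$ — and with it the absolute-convergence hypothesis, used to produce dominating functions — is genuinely needed rather than cosmetic. The remaining care is purely mechanical: one must keep track of orientations and of the residue $-(2\pi\iu)^{-1}$ in the passage from $\R+\iu\epsilon$ to $C_+$, and of the powers of $2$ and the signs $(-1)^{\abs{I}}$ in the inductive reorganization; none of this presents a difficulty of principle.
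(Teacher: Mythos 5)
Your proposal is correct (up to the routine analytic bookkeeping you explicitly flag), but it follows a genuinely different route from the paper. The paper proves the stronger \cref{thm:PSF_sgn} in one stroke: it first converts the signed sum into $2^r$ times a principal-value integral of $\widehat f(\xi)\prod_i(1-\bm{e}(\xi_i))^{-1}$ over $\R^r$ (splitting the sum according to the signs of the $m_i$ and shifting each contour to $\R\pm\iu\varepsilon$), and then replaces each principal value by a $C_{\pm}$-integral plus a signed sum of residues at the integers, expanding the resulting product over $i$ to produce the sum over subsets $I$; this yields the identity for every sign vector $e\in\{\pm1\}^r$ together with the intermediate principal-value representation, both of which are reused later (e.g.\ in \cref{prop:modular_series_Laplace} and \cref{thm:false_modular_series_modular}). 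You instead prove only the $C_+$, $e=(+1,\dots,+1)$ case, via an $r=1$ computation (classical Poisson summation combined with the half-sum identity $\sum_{m\ge0}f(m)=\int_{\R+\iu\varepsilon}\widehat f(\xi)(1-\bm{e}(\xi))^{-1}d\xi$ and a deformation to $C_+$ that correctly picks up the residues $-(2\pi\iu)^{-1}\widehat f(n)$ at the negative integers), followed by induction on $r$ summing out one variable at a time; the coefficient and sign bookkeeping $(-1)^{\abs{I}}2^{r-\abs{I}}$ and $\sgn(n_{I'})\sgn(n_r)=\sgn(n_I)$ checks out, and your mollification reduction to smooth $f$, with the absolute-convergence hypothesis supplying the dominating majorants for the limit $\delta\to0$, is a legitimate way to justify the interchanges. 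What your route buys is that it avoids principal values entirely and leans on the classical Poisson summation formula plus a one-variable contour shift; what it gives up is the generality of the paper's version (arbitrary $e$ and the PV-integral identity), though the induction could be adapted to recover these. One point to make explicit in the inductive step: the inner application of the $r=1$ case is to $x_r\mapsto g(\xi',x_r)$ with $\xi'$ lying on the shifted contours, so you must take the contour height $\varepsilon$ small relative to the decay rate $a$ to retain exponential decay of $g(\xi',\cdot)$ uniformly in $\xi'$ --- the same smallness condition the paper imposes when it extends $\widehat f$ to the strip $\R^r+\iu(-a/2\pi,a/2\pi)^r$.
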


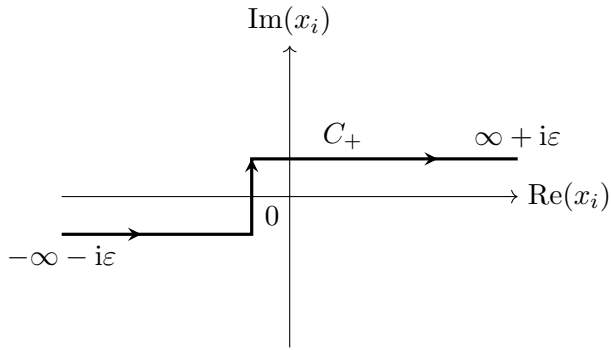
\begin{figure}[htbp]
	\centering
	\begin{tikzpicture}
		\tikzset{midarrow/.style={postaction={decorate},
				decoration={markings,
					mark=at position 0.15 with {\arrow{stealth}},
					mark=at position 0.5 with {\arrow{stealth}},
					mark=at position 0.85 with {\arrow{stealth}}
				}
			}
		}
		\draw[->] (-3, 0) -- (3, 0) node[right]{$ \Re (x_i) $};
		\draw[->] (0, -2) -- (0, 2) node[above]{$ \Im (x_i) $};
		
		\draw (0,0) node[below left]{0};
		
		\draw[very thick, midarrow] (-3,-0.5) node[below]{$ -\infty - \iu \varepsilon $}
		--(-0.5,-0.5)--(-0.5,0.5)
		--(3,0.5) node[above]{$ \infty + \iu \varepsilon $};
		
		\node at (0.7,0.8) {$C_+$};
		
	\end{tikzpicture}
	\caption{The integration path $ C_+ $}
	\label{fig:main_C_+}
\end{figure}

In the case when $ r=1 $, \cref{thm:main_PSF_sgn} implies that
\[
\sum_{m \in \Z} \sgn(m) f(m)
=
-\sum_{n \in \Z} \sgn(n) \widehat{f}(n)
+ 2 \int_{C_+} \widehat{f}(\xi) \frac{d\xi}{1 - e^{2\pi\iu \xi}}.
\]

By applying \cref{thm:main_PSF_sgn} to Gaussians $ f(x) = e^{-\pi \transpose{x} Sx} $, we obtain the modular transformation formula (\cref{thm:main_false_theta}) for false theta functions.
In this case, the integrals along $ C_+ $ have analytic continuation, and thus we obtain quantum modularity.

The GPPV invariants are false theta functions whose summands have complicated polynomial factors.
To derive modular transformations and asymptotic formulas for these series, it is essential to organize these factors uniformly and systematically.
Motivated by this, we introduce a framework of \emph{modular series}.

Modular series are a type of holomorphic functions on the upper half plane $ \bbH $ which are written as
\[
\sum_{m \in \Z^r} g(m) \widehat{\gamma}(\tau; m)
\]
for some map $ g \colon \Z^r \to \bbC $ called a (\emph{partial}/\emph{false}) \emph{quasi-polynomial} and some continuous map $ \gamma \colon \bbH \times \R^r \to \bbC $.
Typical examples of modular series are
false theta functions for a periodic map $ \chi \colon \Z/N\Z \to \bbC $ defined as
\[
\widetilde{\theta} (\tau; \chi) \coloneqq
\sum_{n \in \Z} \sgn(n) \chi(n) q^{n^2/2N}
\]
and Eisenstein series of weight $ k \in 2\Z_{>0} $ defined as
\[
E_k(\tau) \coloneqq
1 - \frac{2k}{B_k} \sum_{d=1}^{\infty} d^{k-1} \frac{q^d}{1 - q^d}.
\]
For modular series, we prove their modular transformation formulas and asymptotic formulas.

A framework of modular series has the following interesting aspects.

\begin{itemize}[nosep]	
	\item \textbf{Refined understanding of modular transformation formulas.}
	
	A framework of modular series enables a detailed analysis of the functions appearing in the modular transformation formulas (\cref{prop:modular_series_modular,thm:false_modular_series_modular,thm:false_theta}). 
	This is essential for obtaining explicit formulas for the finite set $ \calS \subset \Q/\Z $ in \cref{thm:main_asymptotic_false_theta,thm:main_WRT_asymptotic}.
	
	\item \textbf{Providing asymptotic expansions.}
	
	In the study of asymptotic expansions of quantum invariants and quantum modular forms, two types of asymptotic formulas are commonly used: those of Euler--Maclaurin type (\cref{prop:asymp_EM}) and those of stationary phase type (\cref{prop:asymp_stationary_phase}).
	Within a framework of modular series, we generalize both types of formulas.
	
	
	\item \textbf{Understanding GPPV invariants.}
	
	The GPPV invariants are defined via Cauchy principal value integrals that encode topological data in an organized combinatorial form.
	Although this is not apparent from the definition, they are in fact  false theta functions.
	A framework of modular series clarifies and uniformly extends this construction.
	
	\item \textbf{A unified proof of modular transformation formula for various $ q $-series.}
	
	Theta functions and Eisenstein series are two typical examples of modular forms.
	Their modularity is usually proved by different methods.
	Within a framework of modular series, we provide a unified proof of modularity for both of them by the Poisson summation formula.
	In particular, we prove in a unified manner the modular transformation formula for the  Eisenstein series of weight $ 2 $, which is a quasi-modular form.
	Moreover, we give a unified proof of quantum modularity for false theta functions and Eisenstein series of odd weight.
	This method systematically treats deformations such as inserting coefficients into the summands of theta functions, or deforming an Eisenstein series into a Poincar\'{e} series.
	
	\item \textbf{Quantum modularity arises from parity.}
	
	{\renewcommand{\arraystretch}{1.2} 
		\begin{table}[hptb]
			\centering
			\begin{tabular}{cc}
				\specialrule{.1em}{.05em}{.05em} 
				Modular forms & Quantum modular forms \\
				\hline
				Theta functions  & False theta functions \\
				Eisenstein series of even weight \quad & Eisenstein series of odd weight \\
				\specialrule{.1em}{.05em}{.05em} 
			\end{tabular}
			\caption{Modularity v.s.~quantum modularity.}
			\label{tab:modular_vs_quantum_modular}
		\end{table}
	}
	
	\cref{tab:modular_vs_quantum_modular} lists representative examples of modular forms and quantum modular forms.
	This raises the question of the origin of the distinction between modularity and quantum modularity.
	We advance the following principle:
	\begin{quote}
		\centering
		\emph{Quantum modularity arises from parity.}
	\end{quote}
	Moreover, for modular series, we can automatically define their ``false versions'' and we can prove their modular transformation formulas and asymptotic expansion formulas.
	Our framework indicates that Eisenstein series of odd weight ought to be regarded as ``false versions'' of Eisenstein series of even weight.
	
	\item \textbf{The reason why the error function appears in Zwegers' indefinite theta functions}.
	
	Zwegers~\cite{Zwegers:thesis} proved the modular transformation formulas for Ramanujan's mock theta functions by introducing his indefinite theta functions.
	In his construction, the error function appears in a seemingly ad hoc manner.
	Our framework explains this phenomenon: when we express indefinite theta functions as modular series, error functions arise naturally in their modular transformations.
	As a consequence, we also give an alternative proof of Zwegers' modular transformation formulas in \cref{subsec: indefinite_theta_Zwegers_modular_series}.
	
	\item \textbf{Toward a unified framework for various indefinite theta functions.}
	
	Zwegers' treatment of indefinite theta functions assumes that vectors appearing in the sign factors lie in the negative cone of a quadratic form.
	We remove this assumption and, within a framework of modular series, prove that general indefinite theta functions of type $ (r-1,1) $ admit modular transformation formulas and quantum modularity (\cref{thm:indef_theta_mod_trans_further_case}) parallel to those of Zwegers. 
	Unlike the method of modular completion---which requires identifying suitable error function regularizations---our method reduces the argument to routine computation of Fourier transforms.
	
\end{itemize}


\subsection*{Outline of the paper}


This paper will be organized as follows. 

In \cref{part:modular_series}, we establish a framework for proving quantum modularity.
In \cref{sec:PSF_sgn}, we prove a Poisson summation formula with signature (\cref{thm:main_PSF_sgn}).
In \cref{sec:modular_series_framework}, we define modular series and prove its basic properties.
In \cref{sec:modular_series_modular}, we prove modular transformation formula for modular series, which is a variation of Poisson summation formula with signature (\cref{thm:main_PSF_sgn}).

In \cref{part:quantum_modularity_asymptotics}, we apply results in \cref{part:modular_series} for false theta functions, indefinite theta functions in \cref{sec:indefinite_false}, and Eisenstein series in \cref{sec:Eisenstein}.
We prove \cref{thm:main_false_theta,thm:main_asymptotic_false_theta} in \cref{sec:false_theta} 
and modular transformation formulas and quantum modularity for general indefinite theta functions of type $ (r-1,1) $ in \cref{subsec: indefinite_theta_Zwegers_outside}.

Finally, in \cref{part:WRT}, we apply \cref{thm:main_false_theta,thm:main_asymptotic_false_theta} in \cref{sec:false_theta}, for GPPV invariants and prove \cref{thm:main_WRT_asymptotic}.



\subsection*{Notations}


Throughout this article, we use the following notation:

\begin{itemize}
	\item We denote by $ r $ a positive integer, representing the number of variables.
	\item We denote by $ q $ a complex variable with $ \abs{q} < 1 $.
	\item We denote by $ k $ a positive integer.
	\item Let $ \zeta_k \coloneqq e^{2\pi\iu/k} $.
	\item For a complex number $ z $, we denote by $ \bm{e}(z) \coloneqq e^{2\pi\iu z} $.
	\item Let $ \bbH \coloneqq \{ \tau \in \bbC \mid \Im(\tau) > 0 \} $.
	\item We denote by $ \tau $ a variable of $ \bbH $.
	\item $ q \coloneqq \bm{e} (\tau) $.
	\item $ \widetilde{q} \coloneqq \bm{e} (-1/\tau) $.
	\item We denote by $ \calO(\bbH) $ the set of holomorphic functions on $ \bbH $. 
	\item For a real number $ x $, let
	\begin{align}
		\sgn(x) \coloneqq 
		\begin{cases}
			1 & x \ge 0, \\
			-1 & x < 0,
		\end{cases}
		\qquad
		\sgn_0(x) \coloneqq 
		\begin{cases}
			1 & x > 0, \\
			0 & x = 0, \\
			-1 & x < 0.
		\end{cases}
	\end{align}
	\item $ \sgn(x_1, \dots, x_r) \coloneqq \sgn(x_1) \cdots \sgn(x_r) $.
	\item The Cauchy principal values
	\[
	\PV \int_{\abs{z} = 1}
	\coloneqq
	\frac{1}{2} \lim_{\varepsilon \to +0} \left( \int_{\abs{z} = 1+ \varepsilon} + \int_{\abs{z} = 1 - \varepsilon} \right),
	\quad
	\PV \int_{\R}
	\coloneqq
	\frac{1}{2} \lim_{\varepsilon \to +0} \left( \int_{\R + \iu \varepsilon} + \int_{\R - \iu \varepsilon} \right).
	\]
	\item For a set $ A $, we denote by $ \bm{1}_A $ its characteristic function. 
	\item We denote the image of $ g $ under an operator $ \calT $ by $ \calT g $ or $ \calT[g] $. 
	\item For $ e = (e_1, \dots, e_r) \in \{ \pm 1 \}^r $ and $ x = (x_1, \dots, x_r) \in \R^r $, 
	define $ ex \coloneqq (e_1 x_1, \dots, e_r x_r) $.
\end{itemize}


\subsection*{Acknowledgement}


The author would like to express the greatest appreciation to Masanobu Kaneko and Takuya Yamauchi for their valuable advice. 
Toshiki Matsusaka provided the author with valuable advice and introduced me to numerous related studies.
The author would like to thank Koji Hasegawa, Kazuhiro Hikami, William Misteg\aa{}rd, and Yuji Terashima for giving many comments. 
Keito Akiyama provided the author with advice on a proof of \cref{lem:ker_func}.
The author is supported by JSPS KAKENHI Grant Number JP 20J20308.


\part{A framework for ``modular series'' and Poisson summation formula with signature} \label{part:modular_series}



\section{Poisson summation formula with signature} \label{sec:PSF_sgn}


In this section, we prove \cref{thm:main_PSF_sgn}.
Stronger than \cref{thm:main_PSF_sgn}, the following result holds.

\begin{thm}[Poisson summation formula with signature] \label{thm:PSF_sgn}
	Let $ r $ be a positive integer and
	$ f \colon \R^r \to \bbC $ be a continuous function with exponential decay.
	Then, for any $ e \in \{ \pm 1 \}^r $, we have
	\begin{align}
		\sum_{m \in \Z^r} \sgn(m) f(m)
		&=
		2^r \PV \int_{\R^r} \widehat{f}(\xi) \prod_{1 \le i \le r} \frac{d\xi_i}{1 - \bm{e}(\xi_i)}
		\\
		&=
		\sum_{I \subset \{1, \dots, r\}} \left( \prod_{i \in I} (-e_i) \right)
		\sum_{n_I^{} \in \Z^I} \sgn(n_I^{})
		\left( 
		\prod_{j \in I^\complement} 2 \int_{C_{e_j}}
		\frac{d\xi_j}{1 - \bm{e}(\xi_j)}
		\right)
		\widehat{f}(n_I^{}, \xi_{I^\complement}^{}),
	\end{align}
	if the right hand sides absolutely converge.
	Here, we define the integration paths $ C_\pm $ as in \cref{fig:C+,fig:C-}.
\end{thm}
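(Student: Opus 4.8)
The plan is to reduce the multivariable identity to the one-dimensional case by a Fubini-type argument, and to prove the one-dimensional case by contour manipulation starting from the geometric-series representation of $\sgn$.

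\textbf{Step 1: One-dimensional geometric kernel.} First I would record the identity
\[
\sgn(m) = 2 \PV \int_{\R} \frac{\bm{e}(m\xi)}{1 - \bm{e}(\xi)}\, d\xi
\qquad (m \in \Z),
\]
which is the Fourier expansion of the $2\pi$-periodic sawtooth-type kernel $\tfrac{1}{1-\bm{e}(\xi)}$ evaluated in the principal-value sense: expanding $\tfrac{1}{1-\bm{e}(\xi)}$ formally as $\sum_{n\ge 0}\bm{e}(n\xi)$ on one side of the pole and as $-\sum_{n\ge 1}\bm{e}(-n\xi)$ on the other, the $\PV$ average picks out exactly $\sgn(m)$. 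This is the analytic heart and the step I expect to need the most care: one must justify the interchange of the principal-value limit with the (conditionally convergent) Fourier sum, which I would do by a standard Abel/Cesàro summation argument, or alternatively by deforming $\R\pm\iu\varepsilon$ to the staircase contours $C_\pm$ of \cref{fig:C+,fig:C-} and summing a genuinely convergent geometric series there.

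\textbf{Step 2: Tensor up and apply Fourier inversion.} Taking the product over $i = 1,\dots,r$ gives
\[
\sgn(m) = \prod_{i=1}^r \sgn(m_i) = 2^r \PV \int_{\R^r} \frac{\bm{e}(\transpose{m}\xi)}{\prod_i (1 - \bm{e}(\xi_i))}\, d\xi.
\]
Multiplying by $f(m)$, summing over $m \in \Z^r$, and swapping sum and integral — legitimate because $f$ has exponential decay, so $\sum_m f(m)\bm{e}(\transpose{m}\xi)$ converges uniformly and, crucially, the resulting periodization is a smooth function that kills the pole of $\prod_i(1-\bm{e}(\xi_i))^{-1}$ at the integer lattice in the $\PV$ sense — I obtain
\[
\sum_{m \in \Z^r} \sgn(m) f(m)
= 2^r \PV \int_{\R^r} \widehat{f}(\xi) \prod_{i=1}^r \frac{d\xi_i}{1 - \bm{e}(\xi_i)},
\]
by Poisson summation $\sum_m f(m)\bm{e}(\transpose{m}\xi) = \sum_n \widehat{f}(\xi + n)$ together with $\R^r/\Z^r$-periodicity of the kernel. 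This proves the first displayed equality.

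\textbf{Step 3: Deform each contour and expand the residue sums.} For the second equality I would treat one variable at a time. For a fixed $j$, write $\PV\int_{\R}\frac{d\xi_j}{1-\bm{e}(\xi_j)} = \int_{C_{e_j}}\frac{d\xi_j}{1-\bm{e}(\xi_j)} - (\text{half-residue correction at } \xi_j \in \Z)$, where the choice of $e_j \in \{\pm 1\}$ fixes whether $C_{e_j}$ passes above or below the poles; the principal value is the average of the two deformations, and the difference between $\PV$ and $C_{e_j}$ is $-\tfrac{1}{2}\cdot(\pm 2\pi\iu)$ times the residue, i.e.\ a factor producing the $\sgn(n_j)$ and the sign $(-e_j)$. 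Iterating over all $j$ and collecting, for each subset $I$ of indices where we "pick up the residue" (i.e.\ discretize $\xi_i$ to $n_i \in \Z$) and each complementary index $j \in I^\complement$ where we keep the contour integral over $C_{e_j}$, one gets exactly
\[
\sum_{I \subset \{1,\dots,r\}} \Bigl(\prod_{i \in I}(-e_i)\Bigr)
\sum_{n_I \in \Z^I} \sgn(n_I)
\Bigl(\prod_{j \in I^\complement} 2\int_{C_{e_j}} \frac{d\xi_j}{1 - \bm{e}(\xi_j)}\Bigr)
\widehat{f}(n_I, \xi_{I^\complement}).
\]
The sign bookkeeping — matching the residue factors, the half-residues from the $\PV$, and the $2^r$ prefactor against $(-e_i)$ and the $2\int_{C_{e_j}}$ — is routine but fiddly; the convergence hypothesis on the right-hand side is exactly what licenses reordering the finite sum over $I$ with the infinite sums and integrals. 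The passage from \cref{thm:PSF_sgn} back to \cref{thm:main_PSF_sgn} is then the specialization $e = (1,\dots,1)$, noting $C_+ = C_{+1}$ and $\sgn = (-e_i)$ collapses to $(-1)^{\abs I}$ with the remaining $2^{\abs{I^\complement}} = 2^{r - \abs I}$ extracted from the $\prod 2\int_{C_+}$.

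\textbf{Main obstacle.} The one genuinely delicate point is Step 1 together with the justification in Step 2 that the $\PV$ over $\R^r$ of the product kernel against the (smooth, rapidly decaying) periodization of $\widehat f$ equals the naive termwise evaluation — i.e.\ that principal value and summation commute. Everything else is contour deformation and sign accounting.
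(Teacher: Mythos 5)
Your Step 3 is essentially the paper's own argument for the second equality (the paper uses the identity $2\PV\int_{\R}=2\int_{C_{\pm}}\pm 2\pi\iu\sum_{n}\sgn(n)\Res_{\xi=n}$ and expands the product over the variables), so the issue is concentrated in Steps 1--2, i.e.\ in the first equality. The identity you take as the analytic heart, $\sgn(m)=2\PV\int_{\R}\frac{\bm{e}(m\xi)}{1-\bm{e}(\xi)}\,d\xi$, is not merely delicate: it is not a convergent integral at all. For $m\in\Z$ the integrand is $1$-periodic in $\xi$, and its principal-value average over one period is nonzero (it equals $\pm\tfrac12$, indeed $2\PV\int_{0}^{1}\frac{\bm{e}(-m\xi)}{1-\bm{e}(\xi)}\,d\xi=\sgn(m)$), so every truncation $\int_{-A}^{B}$ grows linearly in $A+B$. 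The principal value only regularizes the poles on $\Z$, not the divergence at infinity, and no Abel/Ces\`aro treatment of the series in the pole expansion changes that. Your fallback — deform to the staircase contours and sum ``a genuinely convergent geometric series'' on $C_{\pm}$ — also fails, since half of $C_{+}$ lies at $\Im\xi=-\varepsilon$, where $\abs{\bm{e}(\xi)}>1$ and the geometric series diverges. Step 2 then rests on this non-existent object, and its supporting claim is also wrong: the periodization $\sum_{n}\widehat f(\xi+n)$ does not vanish on the integer lattice, so it does not ``kill'' the poles of $\prod_i(1-\bm{e}(\xi_i))^{-1}$; finiteness of the right-hand side comes from the assumed convergence with $\widehat f$ in the integrand, not from any cancellation at the poles. (There is also a sign slip: with the paper's conventions $\sum_m f(m)\bm{e}(\transpose{m}\xi)=\sum_n\widehat f(n-\xi)$, not $\sum_n\widehat f(\xi+n)$.)

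The gap is avoidable, and the paper shows one way: never integrate the bare kernel over $\R$. Instead apply Fourier inversion to each term, $f(m)=\int_{\R}\widehat f(\xi)\bm{e}(m\xi)\,d\xi$, split the sum according to $\sgn(m)$, shift the $m\ge 0$ part to $\R+\iu\varepsilon$ and the $m\le -1$ part to $\R-\iu\varepsilon$ (legitimate because exponential decay of $f$ makes $\widehat f$ holomorphic on a strip), sum the geometric series where it converges uniformly, and observe that the two shifted line integrals recombine into $2\PV\int_{\R}\widehat f(\xi)\frac{d\xi}{1-\bm{e}(\xi)}$; the $r$-variable case is the tensor product of this. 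If you prefer to keep your periodization route, the correct kernel identity is the one over a single period quoted above; then fold $\PV\int_{\R^r}\widehat f\cdot K$ into $\PV\int_{[0,1]^r}$ of the periodization of $\widehat f$ against the periodic kernel, identify the periodization with $\sum_m f(m)\bm{e}(-\transpose{m}\xi)$ by Poisson summation, and integrate termwise — justified because that series converges in $C^{1}$ (indeed $C^{\infty}$) thanks to the exponential decay of $f$ on $\Z^r$, which is what a PV pairing against a non-integrable kernel actually requires, rather than uniform convergence alone. With one of these repairs, the rest of your outline (contour deformation, residues at integers producing $\prod_{i\in I}(-e_i)\sgn(n_I)$, and the specialization $e=(1,\dots,1)$ recovering the introduction's statement) goes through as you describe.
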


\begin{figure}[htbp]
	\begin{minipage}[b]{0.45\columnwidth}
		\centering
		\begin{tikzpicture}
			\tikzset{midarrow/.style={postaction={decorate},
					decoration={markings,
						mark=at position 0.15 with {\arrow{stealth}},
						mark=at position 0.5 with {\arrow{stealth}},
						mark=at position 0.85 with {\arrow{stealth}}
					}
				}
			}
			\draw[->] (-3, 0) -- (3, 0) node[right]{$ \Re (x_i) $};
			\draw[->] (0, -2) -- (0, 2) node[above]{$ \Im (x_i) $};
			
			\draw (0,0) node[below left]{0};
			
			\draw[very thick, midarrow] (-3,-0.5) node[below]{$ -\infty - \iu \varepsilon $}
			--(-0.5,-0.5)--(-0.5,0.5)
			--(3,0.5) node[above]{$ \infty + \iu \varepsilon $};
			
			\node at (0.7,0.8) {$C_+$};
			
		\end{tikzpicture}
		\caption{The integration path $ C_+ $}
		\label{fig:C+}
	\end{minipage}
	\hspace{0.04\columnwidth} 
	\begin{minipage}[b]{0.45\columnwidth}
		\centering
		\begin{tikzpicture}
			\tikzset{midarrow/.style={postaction={decorate},
					decoration={markings,
						mark=at position 0.15 with {\arrow{stealth}},
						mark=at position 0.5 with {\arrow{stealth}},
						mark=at position 0.85 with {\arrow{stealth}}
					}
				}
			}
			\draw[->] (-3, 0) -- (3, 0) node[right]{$ \Re (x_i) $};
			\draw[->] (0, -2) -- (0, 2) node[above]{$ \Im (x_i) $};
			
			\draw (0,0) node[above left]{0};
			
			\draw[very thick, midarrow] (-3,0.5) node[above]{$ -\infty - \iu \varepsilon $}
			--(-0.5,0.5)--(-0.5,-0.5)
			--(3,-0.5) node[below]{$ \infty + \iu \varepsilon $};
			
			\node at (0.7,-0.8) {$C_-$};
			
		\end{tikzpicture}
		\caption{The integration path $ C_- $}
		\label{fig:C-}
	\end{minipage}
\end{figure}

\begin{proof}
	Since $ f(x) $ decays exponentially, 
	there exist $ a, K > 0 $ such that 
	$ \abs{f(x)} \le Ke^{-a\abs{x}} $ for any $ x \in \R^r $.
	Thus, the Fourier transform $ \widehat{f}(\xi) $ extends holomorphically to a strip $ \R^r + \iu (-a/2\pi, a/2\pi)^r $.
	
	Let $ 0 < \varepsilon < a/2\pi $ be arbitrary.
	In the case when $ r=1 $, the first equality can be verified by
	\begin{align}
		\sum_{m \in \Z} \sgn(m) f(m)
		&=
		\left( \sum_{m \ge 1} - \sum_{m \le -1} \right)
		\int_{\R} \widehat{f}(\xi) \bm{e} (m\xi) d\xi
		\\
		&=
		\sum_{m \ge 0} \int_{\R + \iu \varepsilon} \widehat{f}(\xi) \bm{e} (m\xi) d\xi
		- \sum_{m \le -1} \int_{\R - \iu \varepsilon} \widehat{f}(\xi) \bm{e} (m\xi) d\xi
		\\
		&=
		\int_{\R + \iu \varepsilon} \widehat{f}(\xi) \frac{d\xi}{1 - \bm{e} (\xi)}
		- \int_{\R - \iu \varepsilon} \widehat{f}(\xi) \frac{\bm{e} (-\xi)}{1 - \bm{e} (-\xi)} d\xi
		\\		
		&=
		2 \PV \int_{\R} \widehat{f}(\xi) \frac{d\xi}{1 - \bm{e}(\xi)}.
	\end{align}
	The first equality can be established for general $ r $ by a similar argument.
	
	Using
	\[
	2 \PV \int_{\R} d\xi_i
	=
	2 \int_{C_{\pm}} d\xi_i 
	\pm 2\pi \iu \sum_{\alpha \in \R} \sgn(\alpha) \Res_{\xi_i = \alpha},
	\]
	we obtain the second equality as follows:
	\begin{align}
		&\phant
		2^r \PV \int_{\R^r} \widehat{f}(\xi) \prod_{1 \le i \le r} \frac{d\xi_i}{1 - \bm{e}(\xi_i)}
		\\
		&=
		\left( \prod_{1 \le i \le r} \left(
			2 \int_{C_{e_i}} d\xi_i 
			+ 2\pi \iu e_i \sum_{n_i \in \Z} \sgn(n_i) \Res_{\xi_i = n_i}
		\right) \right)
		\left[ \widehat{f}(\xi) \prod_{1 \le i \le r} \frac{1}{1 - \bm{e}(\xi_i)} \right]
		\\
		&=
		\sum_{I \subset \{1, \dots, r\}}
		\left( \prod_{i \in I} e_i 2\pi \iu \sum_{n_i \in \Z} \sgn(n_i) \Res_{\xi_i = n_i} \right)
		\left( \prod_{j \in I^\complement} 2 \int_{C_{e_i}} d\xi_i  \right)
		\left[ \widehat{f}(\xi) \prod_{1 \le i \le r} \frac{1}{1 - \bm{e}(\xi_i)} \right]
		\\
		&=
		\sum_{I \subset \{1, \dots, r\}} \left( \prod_{i \in I} (-e_i) \right)
		\sum_{n_I^{} \in \Z^I} \sgn(n_I^{})
		\left( 
			\prod_{j \in I^\complement} 2 \int_{C_{e_j}}
			\frac{d\xi_j}{1 - \bm{e}(\xi_j)}
		\right)
		\widehat{f}(n_I^{}, \xi_{I^\complement}^{}).
	\end{align}	
\end{proof}


\section{A framework of ``modular series'' } \label{sec:modular_series_framework}


In this section, we introduce a framework of ``modular series.''


\subsection{Cyclotomic rational functions and quasi-polynomials} \label{subsec:modular_series_R_Q}


To begin with, we introduce cyclotomic rational functions and quasi-polynomials and give their correspondence.
We need this correspondence in order to handle false theta functions with complicated coefficients, such as GPPV invariants.

\begin{dfn} \label{dfn:cyclotomic_rat_func}
	We introduce the following notation:
	\begin{alignat}{3}
		\frakR &= & \frakR_r &\coloneqq
		\Q \left[ z_i^{\pm 1}, \frac{1}{1 - z_i^N} \relmiddle| 1 \le i \le r, \, N \in \Z_{>0} \right]
		& &\subset \Q(z_1, \dots, z_r), 
		\\
		\frakQ &= & \frakQ_r &\coloneqq
		\sum_{b \in \Z^r}
		\bm{1}_{b + \Z_{\ge 0}^r}(x) \cdot
		\Q[ x_i, \bm{1}_{a + N\Z} (x_i) \mid 1 \le i \le r, a, N \in \Z ]
		& &\subset \{ g \colon \Z^r \to \Q \}, 
		\\
		\overline{\frakQ} &= & \overline{\frakQ}_r &\coloneqq
		\Q[ x_i, \bm{1}_{a + N\Z} (x_i) \mid 1 \le i \le r, a, N \in \Z ]
		& &\subset \{ g \colon \Z^r \to \Q \}, 
		\\
		\widetilde{\frakQ} &= & \widetilde{\frakQ}_r &\coloneqq
		\sum_{b \in \Z^r}
		\sgn(x - b) \cdot
		\Q[ x_i, \bm{1}_{a + N\Z} (x_i) \mid 1 \le i \le r, a, N \in \Z ]
		& &\subset \{ g \colon \Z^r \to \Q \}.
	\end{alignat}
	Here, we denote by $ \bm{1}_A $ the characteristic function $ A $ and we adopt the convention that $ a + N\Z = \{ a \} $ when $ k=0 $.
	We call elements in $ \frakR, \frakQ, \overline{\frakQ}, \widetilde{\frakQ} $ \emph{cyclotomic rational functions},%
		\footnote{Professor Masanobu Kaneko suggested this terminology.}
		partial quasi-polynomials, quasi-polynomials, and false quasi-polynomials, respectively. 
\end{dfn}

When the index set of the variables is changed from $ \{ 1, \dots, r \} $ to a finite set $ I $,
we write $ \frakR_I, \frakQ_I, \overline{\frakQ}_I $, and $ \widetilde{\frakQ}_I $.

Although the indicator function $ \bm{1}_{\{ a \}} (x) $ of a singleton set is not usually referred to as a quasipolynomial, we will include it under that term in this paper for the sake of notational simplicity.

\begin{dfn}
	\begin{enumerate}
		\item 
		We define there $ \Q $-linear maps\footnote{
			``coe'' is  is an abbreviation of ``coefficient.''
		}
		\[
		\coe \colon R_{r} \to \frakQ, \quad
		\overline{\coe} \colon R_{r} \to \overline{\frakQ}, \quad
		\widetilde{\coe} \colon R_{r} \to \widetilde{\frakQ}
		\]
		by setting
		\begin{align}
			\coe[G] (m) 
			&\coloneqq
			\int_{\abs{z_i} = \varepsilon, \, 1 \le i \le r}
			G(z) \prod_{1 \le i \le r} \frac{z_i^{-m_i} dz_i}{2 \pi \iu z_i},
			\\
			\coe[G] (m) 
			&\coloneqq
			\left( 
			\prod_{1 \le i \le r} \frac{1}{2} 
			\left(
			\int_{\abs{z_i} = 1 - \varepsilon}
			- \int_{\abs{z_i} = 1 + \varepsilon}
			\right)
			\right)
			G(z) \prod_{1 \le i \le r} \frac{z_i^{-m_i} dz_i}{2 \pi \iu z_i},
			\\
			\widetilde{\coe}[G] (m) 
			&\coloneqq
			\PV \int_{\abs{z_i} = 1, \, 1 \le i \le r}
			G(z) \prod_{1 \le i \le r} \frac{z_i^{-m_i} dz_i}{2 \pi \iu z_i}.
		\end{align}
		
		\item 
		For $ e = (e_i)_{1 \le i \le r} \in \{ \pm 1 \}^r $, we define four $ \Q $-linear involutions as follows:
		\begin{align}
			&\begin{array}{rccc}
				\iota_e \colon & \frakR & \longrightarrow & \frakR \\
				& G(z_1, \dots, z_r) & \longmapsto & G(z_1^{e_1}, \dots, z_r^{e_r}),
			\end{array}
			\\
			&\begin{array}{rccc}
				\iota_e \colon & \frakQ & \longrightarrow & \frakQ \\
				& \bm{1}_{a_i} (x_i) & \longmapsto & \bm{1}_{e_i a_i} (x_i) \\
				& \bm{1}_{a_i + N_i \Z_{\ge 0}} (x_i) & \longmapsto & 
				\begin{cases}
					\bm{1}_{a_i + N_i \Z_{\ge 0}} (x_i) & \text{ if } e_i = 1, \\
					-\bm{1}_{a_i + N_i \Z_{\le -1}} (-x_i) & \text{ if } e_i = -1,
				\end{cases}
				\\
				& x_i^{m_i} & \longmapsto & (e_i x_i)^{m_i},
			\end{array}
			\\
			&\begin{array}{rccc}
				\iota_e \colon & \overline{\frakQ} & \longrightarrow & \overline{\frakQ} \\
				& \overline{g} (x) & \longmapsto & \left( \prod_{1 \le i \le r} e_i \right) \overline{g} (\transpose{e}x),
			\end{array}
			\\
			&\begin{array}{rccc}
				\iota_e \colon & \widetilde{\frakQ} & \longrightarrow & \widetilde{\frakQ} \\
				& \widetilde{g} (x) & \longmapsto & \widetilde{g} (\transpose{e}x),
			\end{array}
		\end{align}
		where $ 1 \le i \le r $, $ a_i \in \Z $, and $ N_i \in \Z_{>0} $. 
	\end{enumerate}
\end{dfn}

\begin{rem} \label{rem:coe_Laurent}
	For $ G(z) \in \frakR $, it holds that
	\[
	G(z)
	=
	\sum_{m \in \Z^r} \coe[G] (m) z_1^{l_1} \cdots z_r^{l_r}
	\]
	when $ \abs{z_i} < 1 $ for each $ 1 \le i \le r $.
\end{rem}

By using the above maps, we obtain the following correspondence between cyclotomic rational maps and (partial/false) quasi-polynomials.

\begin{lem} \label{lem:R_C_corresp}
	\begin{enumerate}
		\item \label{item:lem:R_C_corresp:R_C}
		The map $ \coe \colon \frakR \to \frakQ $ is an isomorphism of $ \Q $-vector spaces. 
		
		\item \label{item:lem:R_C_corresp:involution}
		For each $ e = (e_1, \dots, e_r) \in \{ \pm 1 \}^r $, the diagram
		\begin{equation}
			\begin{tikzcd}
				\frakR 
				\arrow[d, "\coe" ', "\sim" {anchor=south, rotate=-90}] \arrow[r, "\iota_e"]
				&
				\frakR 
				\arrow[d, "\coe" ', "\sim" {anchor=south, rotate=-90}] 
				\\
				\frakQ
				\arrow[r, "\iota_e"]
				&
				\frakQ
			\end{tikzcd}				
		\end{equation}
		commutes. 
		
		\item \label{item:lem:R_C_corresp:vp}
		For any cyclotomic rational function $ G(z) \in \frakR $, we have
		\begin{align}\label{eq:tilde_g}
			\overline{\coe}[G] (x)
			&=
			2^{-r} \sum_{e \in \{ \pm 1 \}^r} \left( \prod_{1 \le i \le r} e_i \right) \coe[G] (ex),
			\\
			\widetilde{\coe}[G] (x)
			&=
			2^{-r} \sum_{e \in \{ \pm 1 \}^r} \coe[G] (ex),
		\end{align}
		where $ ex \coloneqq (e_1 x_1, \dots, e_r x_r) $.
		
		\item \label{item:lem:R_C_corresp:vp_express}
		For a cyclotomic rational function
		\[
		G(z) 
		=
		p \left( z_1 \frac{\partial}{\partial z_1}, \dots, z_r \frac{\partial}{\partial z_r} \right)
		\frac{z_1^{a_1}}{1 - z_1^N} \cdots \frac{z_r^{a_r}}{1 - z_r^N}
		\]
		with $ a \in \Z^r, N \in \Z_{>0} $ and $ p(x) \in \Q[y] $, we have
		\begin{align}
			\coe[G] (x) 
			&= 
			\bm{1}_{a + N\Z_{\ge 0}^r} (x) p(x),
			\\
			\overline{\coe}[G] (x)
			&=
			2^{-r} \bm{1}_{a + N\Z^r} (x) p(x),
			\\
			\widetilde{\coe}[G] (x)
			&=
			2^{-r} \sgn(y - a) \bm{1}_{a + N\Z^r} (x) p(x).
		\end{align}
		
		\label{item:lem:R_C_corresp:C_tilde_rep}
		We have
		\begin{align}
			\widetilde{\frakQ} 
			&=
			\sprod{ 
				\sgn(y - a) \bm{1}_{a + N\Z^r} (x) p(x)
				\relmiddle| 
				a \in \Z^r, N \in \Z_{>0}, p(x) \in \Q[y]
			}_\Q
			\\
			&=
			\sprod{
				2^{-r} \sum_{e \in \{ \pm 1 \}^r} \iota_e g (ex)
				\relmiddle| 
				g(x) \in \frakQ
			}_\Q
			\\
			&=
			\left\{
			\widetilde{g} \colon \Z^r \to \Q
			\relmiddle|
			\begin{gathered}
				\text{for some }
				g(x) \in \Q[ x_i, \delta(x_i \in a + N\Z) \mid 1 \le i \le r, a, N \in \Z ],
				\\
				\widetilde{g}(x) - \sgn(x) g(x)
				\text{ has finite support}
			\end{gathered}
			\right\}.
		\end{align}
		
		\item \label{item:lem:R_C_corresp:R_C_tilde}
		$ \overline{\coe} \colon \frakR \to \overline{\frakQ} $, $ \widetilde{\coe} \colon \frakR \to \widetilde{\frakQ} $
		are isomorphisms of $ \Q $-vector spaces.
		
		\item \label{item:lem:R_C_corresp:involution_bilateral}
		For each $ e = (e_1, \dots, e_r) \in \{ \pm 1 \}^r $, the diagram
		\begin{equation}
			\begin{array}{cc}
				\begin{tikzcd}
					\frakR 
					\arrow[d, "\overline{\coe}" ', "\sim" {anchor=south, rotate=-90}] \arrow[r, "\iota_e"]
					&
					\frakR 
					\arrow[d, "\overline{\coe}" ', "\sim" {anchor=south, rotate=-90}] 
					\\
					\overline{\frakQ}
					\arrow[r, "\iota_e"]
					&
					\overline{\frakQ}
				\end{tikzcd}
				& \quad
				\begin{tikzcd}
					\frakR 
					\arrow[d, "\widetilde{\coe}" ', "\sim" {anchor=south, rotate=-90}] \arrow[r, "\iota_e"]
					&
					\frakR 
					\arrow[d, "\widetilde{\coe}" ', "\sim" {anchor=south, rotate=-90}] 
					\\
					\widetilde{\frakQ}
					\arrow[r, "\iota_e"]
					&
					\widetilde{\frakQ}
				\end{tikzcd}
			\end{array}
		\end{equation}
		commutes. 
	\end{enumerate}
\end{lem}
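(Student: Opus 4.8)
The plan is to reduce the whole lemma to the one–variable case together with a single explicit computation, and then propagate by linearity and by tensor products. Since none of the generators defining $\frakR_r,\frakQ_r,\overline{\frakQ}_r,\widetilde{\frakQ}_r$ mix the variables, each of these $\Q$–vector spaces is the $r$–fold tensor product over $\Q$ of its one–variable analogue, and the maps $\coe,\overline{\coe},\widetilde{\coe}$ — being products of one–variable contour integrals — as well as the involutions $\iota_e$ respect these decompositions; so it suffices to treat $r=1$ and take tensor products. The second preliminary step is a normal form for $\frakR_1$: every element is a finite $\Q$–linear combination of the standard generators $G_{a,N,j}:=(z\partial_z)^j\dfrac{z^a}{1-z^N}$ ($a\in\Z$, $N\in\Z_{>0}$, $j\ge0$). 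This is a partial–fraction argument. The $\Q$–span $V$ of the $G_{a,N,j}$ contains every Laurent monomial, since $z^a=\dfrac{z^a}{1-z^N}-\dfrac{z^{a+N}}{1-z^N}$; it is stable under $z\partial_z$; and an induction shows it is closed under the ring operations — first clear two denominators to a common modulus $L=\operatorname{lcm}(N,N')$ via $\dfrac1{1-z^N}=\dfrac{1+z^N+\dots+z^{L-N}}{1-z^L}$, then use $z\partial_z\dfrac{z^c}{(1-z^L)^k}=\dfrac{cz^c}{(1-z^L)^k}+\dfrac{kL\,z^{c+L}}{(1-z^L)^{k+1}}$ to lower the power of $1-z^L$ — so $V=\frakR_1$.

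With the normal form in hand, item~\cref{item:lem:R_C_corresp:vp_express} becomes an explicit residue computation. Expanding $\dfrac{z^a}{1-z^N}$ as a geometric series at $z=0$, and noting that $z\partial_z$ multiplies the coefficient of $z^m$ by $m$, gives $\coe[G_{a,N,j}](m)=\bm{1}_{a+N\Z_{\ge0}}(m)\,m^j$. For $\overline{\coe}$ and $\widetilde{\coe}$ one uses that a cyclotomic rational function has its only poles at roots of unity, hence is holomorphic on $\{0<|z|<1\}$ and on $\{|z|>1\}$: the circle $|z|=1-\varepsilon$ may be pushed inward to compute the Laurent coefficient at $z=0$, while $z\mapsto 1/z$ turns $|z|=1+\varepsilon$ into a small circle computing (up to the Jacobian and the sign built into $\iota_{-1}$) minus the Laurent coefficient at $z=\infty$; since $\dfrac{z^a}{1-z^N}$ expanded at $\infty$ is $-\sum_{\ell\ge1}z^{a-N\ell}$, the latter contributes $-\bm{1}_{a+N\Z_{\le-1}}(m)\,m^j$. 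Combining this with $\overline{\coe}=\tfrac12\!\left(\oint_{1-\varepsilon}-\oint_{1+\varepsilon}\right)$ and $\widetilde{\coe}=\PV\!\oint_{|z|=1}=\tfrac12\!\left(\oint_{1-\varepsilon}+\oint_{1+\varepsilon}\right)$, and with the elementary identity $\bm{1}_{a+N\Z_{\ge0}}(m)-\bm{1}_{a+N\Z_{\le-1}}(m)=\sgn(m-a)\,\bm{1}_{a+N\Z}(m)$, yields the two remaining formulas of item~\cref{item:lem:R_C_corresp:vp_express}; the multivariable versions follow by tensoring. From these explicit values on a spanning set, the three descriptions of $\widetilde{\frakQ}$ in item~\cref{item:lem:R_C_corresp:C_tilde_rep} (and the analogous image descriptions for $\frakQ$ and $\overline{\frakQ}$ underlying items~\cref{item:lem:R_C_corresp:R_C} and~\cref{item:lem:R_C_corresp:R_C_tilde}) follow once one checks that $\{\bm{1}_{a+N\Z_{\ge0}}(m)m^j\}$, $\{\bm{1}_{a+N\Z}(m)m^j\}$, $\{\sgn(m-a)\bm{1}_{a+N\Z}(m)m^j\}$ respectively generate these spaces: here the Chinese remainder theorem collapses a product $\prod_\ell\bm{1}_{a_\ell+N_\ell\Z}$ to a single congruence indicator or to $0$, and telescoping identities such as $\bm{1}_{\{c\}}=\bm{1}_{c+N\Z_{\ge0}}-\bm{1}_{(c+N)+N\Z_{\ge0}}$ handle the low–modulus cases. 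The isomorphism statements~\cref{item:lem:R_C_corresp:R_C} and~\cref{item:lem:R_C_corresp:R_C_tilde} then reduce to injectivity, which is the assertion that a cyclotomic rational function all of whose Laurent (resp.\ principal–value contour) coefficients vanish is itself zero — i.e.\ uniqueness of the partial–fraction expansion.

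For the equivariance, I would first prove item~\cref{item:lem:R_C_corresp:involution} ($\coe\circ\iota_e=\iota_e\circ\coe$) on the standard generators: in the $-1$ slot $\iota_{-1}\dfrac{z^a}{1-z^N}=\dfrac{z^{-a}}{1-z^{-N}}=-\sum_{\ell\ge1}z^{N\ell-a}$ and $z\partial_z\mapsto-z\partial_z$, which one matches term by term against the action of $\iota_{-1}$ on $\frakQ$ ($\bm{1}_{a+N\Z_{\ge0}}(x)\mapsto-\bm{1}_{a+N\Z_{\le-1}}(-x)$, $x^j\mapsto(-x)^j$); linearity and tensoring give the general case. Item~\cref{item:lem:R_C_corresp:vp} is then either a corollary of item~\cref{item:lem:R_C_corresp:vp_express} by linearity, or — more conceptually — comes out of expanding $\prod_i\tfrac12\!\left(\oint_{1-\varepsilon,i}-\oint_{1+\varepsilon,i}\right)$ into $2^{-r}$ times a sum over subsets $I\subseteq\{1,\dots,r\}$: the factors with $i\notin I$ compute Laurent coefficients at $z_i=0$, the factors with $i\in I$ compute (via $z_i\mapsto 1/z_i$) Laurent coefficients at $z_i=\infty$, the weight $(-1)^{|I|}$ becomes $\prod_i e_i$ with $e_i=-1\iff i\in I$, and the resulting ``mixed'' coefficient is exactly $\iota_e[\coe G]$ evaluated at $ex$, i.e.\ what the statement denotes $\coe[G](ex)$; the same computation with $\PV\oint=\tfrac12(\oint_{1-\varepsilon}+\oint_{1+\varepsilon})$ (no signs) gives the $\widetilde{\coe}$ formula. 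Finally item~\cref{item:lem:R_C_corresp:involution_bilateral} follows from items~\cref{item:lem:R_C_corresp:involution} and~\cref{item:lem:R_C_corresp:vp}, since on $\overline{\frakQ}$ and $\widetilde{\frakQ}$ the involution $\iota_e$ is just the substitution $x\mapsto ex$ (with a global sign in the $\overline{\frakQ}$ case), which commutes with the averaging $2^{-r}\sum_e(\cdots)$.

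The conceptual content is light; the main obstacle is bookkeeping, carried out consistently for all three maps at once. One must keep track of the signs $(-1)^{|I|}$ and $\prod_i e_i$ and of the base–point flips $a\mapsto-a$, $\Z_{\ge0}\leftrightarrow\Z_{\le-1}$ induced by $\iota_e$ simultaneously in $\coe$, $\overline{\coe}$, $\widetilde{\coe}$. The single point that most needs care is the meaning of ``$\coe[G](ex)$'' in item~\cref{item:lem:R_C_corresp:vp}: it is not the naive evaluation of the function $\coe[G]\colon\Z^r\to\Q$ at $ex$, but rather $\iota_e$ applied to $\coe[G]\in\frakQ$ and then evaluated at $ex$ — equivalently, the Laurent coefficient of $G$ obtained by expanding at $0$ in the variables with $e_i=+1$ and at $\infty$ in those with $e_i=-1$ — and getting this wrong spoils the boundary contribution coming from the $z_i=0$ pole a cyclotomic rational function may carry, i.e.\ from the ``$\ge0$''-truncation built into $\frakQ$. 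The remaining friction lies in keeping the Chinese–remainder reductions aligned across all the image identifications and being precise about the exact function spaces $\frakQ$, $\overline{\frakQ}$, $\widetilde{\frakQ}$ involved.
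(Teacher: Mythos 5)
Your overall route is the same as the paper's: reduce by linearity to product/one-variable form, put $\frakR$ in the normal form spanned by $(z\frac{\partial}{\partial z})^{j}\frac{z^{a}}{1-z^{N}}$ via partial fractions, compute $\coe$, $\overline{\coe}$, $\widetilde{\coe}$ on these generators by comparing the Laurent expansions inside and outside the unit circle (this is exactly how the paper gets items (3) and (4)), check $\iota_e$-equivariance on generators, and deduce (6) from (2) and (3). Your caveat that $\coe[G](ex)$ must be read as the $\iota_e$-twisted coefficient (expansion at $0$ or at $\infty$ according to $e_i$) and not as naive evaluation is correct and matches the paper's actual computation (with the naive reading the formula already fails for $G=\frac{1}{1-z}$ at $x=0$). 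Your compressed treatment of the descriptions of $\widetilde{\frakQ}$ is also in the paper's spirit: besides CRT, the only content is that $\sgn(x-b)\bm{1}_{a+N\Z}(x)$ and $\sgn(x-a)\bm{1}_{a+N\Z}(x)$ differ by a finitely supported function, which is the paper's identity \cref{eq:sgn}, and finitely supported functions are reached by your telescoping trick.

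The genuine gap is your injectivity argument for item (5). You reduce it to: ``a cyclotomic rational function all of whose $\overline{\coe}$- (resp.\ $\widetilde{\coe}$-) coefficients vanish is zero, i.e.\ uniqueness of partial fractions.'' For $\overline{\coe}$ this criterion is simply false: on any Laurent monomial the two circles $\abs{z}=1\pm\varepsilon$ give the same coefficient, so $\overline{\coe}[z^{a}]=0$ for every $a$ (in particular $\overline{\coe}[1]=0$), and vanishing of all $\overline{\coe}$-coefficients does not force $G=0$; so injectivity of $\overline{\coe}$ cannot be obtained this way (this is in fact a tension with the literal isomorphism claim itself; the paper's proof of (5) for $\overline{\coe}$ only invokes item (4), which is a statement about images, and never argues injectivity). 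For $\widetilde{\coe}$ the criterion is true but is not ``uniqueness of partial fractions'': from $\widetilde{\coe}[G]=0$ you must first observe that for $\abs{m}$ large the value $\widetilde{\coe}[G](m)$ is a genuine quasi-polynomial in $m$, so its vanishing for all large $\abs{m}$ forces it to vanish identically (a quasi-polynomial with infinitely many zeros in each residue class is zero); this forces the denominator part of $G$ to collapse to a Laurent polynomial, on which $\widetilde{\coe}$ is just the coefficient map, whence $G=0$. Note also that the paper proves (5) for $\widetilde{\coe}$ in the opposite direction, by an explicit surjectivity construction: split $\widetilde{g}$ into $g_{\ge}(x)=2^{r}\bm{1}_{\Z_{\ge 0}^{r}}(x)\widetilde{g}(x)\in\frakQ$, take its $\coe$-preimage, and correct by a Laurent polynomial accounting for the finitely supported difference. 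Your proposal needs either this constructive step together with the repaired injectivity argument for $\widetilde{\coe}$, or a reformulation of what is being claimed for $\overline{\coe}$; as written, the $\overline{\coe}$ part of (5) is the step that would fail.
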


\begin{proof}
	\cref{item:lem:R_C_corresp:R_C}
	By using partial fraction decomposition, we have
	\begin{align}
		\frakR
		&=
		\sum_{\substack{
				a_1, \dots, a_r \in \Z, \\
				N_1, \dots, N_r \in \Z_{> 0}, \\
				m_1, \dots, m_r \in \Z_{\ge 0}
		}}
		\Q \cdot \frac{z_1^{a_1} \cdots z_r^{a_r}}{(1 - z_1^{N_1})^{m_1} \cdots (1 - z_r^{N_r})^{m_r}} \\
		&=
		\sum_{\substack{
				a_1, \dots, a_r \in \Z, \\
				N_1, \dots, N_r \in \Z_{\ge 0}, \\
				m_1, \dots, m_r \in \Z_{\ge 0}
		}}
		\Q \cdot 
		\prod_{1 \le i \le r} 
		\left( z_i \frac{\partial}{\partial z_i} \right)^{m_i}
		z_i^{a_i} g_{N_i} (z_i),
	\end{align}
	where
	\[
	g_{N_i} (z_i) \coloneqq
	\begin{dcases}
		\frac{1}{1 - z_i^{N_i}} & \text{ if } N_i \ge 1, \\
		1 & \text{ if } N_i = 0.
	\end{dcases}
	\]
	We also have
	\begin{align}
		\frakQ
		&=
		\sum_{\substack{
				a_1, \dots, a_r \in \Z, \\
				N_1, \dots, N_r \in \Z_{> 0}, \\
				m_1, \dots, m_r \in \Z_{\ge 0}
		}}
		\Q \cdot \prod_{1 \le i \le r} x_i^{m_i}
		\bm{1}_{a_i + N_i \Z_{\ge 0}} (x_i).
	\end{align}
	Since the equality
	\[
	\left( z_i \frac{\partial}{\partial z_i} \right)^{m_i}
	z_i^{a_i} g_{N_i} (z_i)
	=
	\sum_{m_i \in \Z}
	m_i^{m_i} \bm{1}_{a_i + N_i \Z_{\ge 0}} (x_i) z_i^{m_i}
	\]
	holds, 
	\[
	\prod_{1 \le i \le r} 
	\left( z_i \frac{\partial}{\partial z_i} \right)^{m_i}
	z_i^{a_i} g_{N_i} (z_i)
	\in \frakR
	\quad \text{ and } \quad
	\prod_{1 \le i \le r} x_i^{m_i} \bm{1}_{a_i + N_i \Z_{\ge 0}} (x_i)
	\in \frakQ
	\]
	correspond via the map $ \coe $. 
	Thus, we obtain the claim. 
	
	\cref{item:lem:R_C_corresp:involution}
	For $ a_i \in \Z, N_i \in \Z_{>0}, m_i \in \Z_{\ge 0} $, we have
	\begin{align}
		\restrict{\left(
			\left( z_i \frac{\partial}{\partial z_i} \right)^{m_i}
			\frac{z_i^{a_i}}{1 - z_i^{N_i}}
			\right)}{z_i \mapsto z_i^{-1}}
		&=
		\left( -z_i \frac{\partial}{\partial z_i} \right)^{m_i}
		\frac{z_i^{-a_i}}{1 - z_i^{-N_i}}
		\\
		&=
		- \left( -z_i \frac{\partial}{\partial z_i} \right)^{m_i}
		\frac{z_i^{N_i - a_i}}{1 - z_i^{N_i}}
		\\
		&=
		- \sum_{m_i \in \Z}
		\bm{1}_{a_i + N_i \Z_{\le -1}} (-m_i) (-m_i)^{m_i} z_i^{m_i}
		\\
		&=
		\sum_{m_i \in \Z}
		\iota_{-} [\bm{1}_{a_i + N_i \Z_{\ge 0}} (x_i) x_i^{m_i}] (m_i) z_i^{m_i}
	\end{align}
	from which the claim follows.
	
	\cref{item:lem:R_C_corresp:vp}
	Since both sides are $ \Q $-linear, we can assume
	\[
	G(z) = G_1 (z_1) \cdots G_r (z_r), \quad
	G_i (z_i) \in \Q \left[ z_i^{\pm 1}, \frac{1}{1 - z_i^N} \relmiddle| N \in \Z_{>0} \right].
	\]
	In this case, we compute that 
	\begin{align}
		\overline{\coe}[G] (m)
		&=
		\prod_{1 \le i \le r} \frac{1}{2} 
		\left(
		\int_{\abs{z_i} = 1 - \varepsilon} - \int_{\abs{z_i} = 1 + \varepsilon}
		\right)
		G_i(z_i) \frac{z_i^{-m_i} dz_i}{2\pi \iu z_i} \\
		&=
		\prod_{1 \le i \le r} \frac{1}{2} 
		\int_{\abs{z_i} = 1 - \varepsilon}
		\left(
		G_{i}(z_i) z_i^{-m_i} - G_{i}(z_i^{-1}) z_i^{m_i}
		\right)
		\frac{dz_i}{2\pi \iu z_i},
		\\
		\widetilde{\coe}[G] (m)
		&=
		\prod_{1 \le i \le r} \frac{1}{2} 
		\left(
		\int_{\abs{z_i} = 1 - \varepsilon} + \int_{\abs{z_i} = 1 + \varepsilon}
		\right)
		G_i(z_i) \frac{z_i^{-m_i} dz_i}{2\pi \iu z_i} \\
		&=
		\prod_{1 \le i \le r} \frac{1}{2} 
		\int_{\abs{z_i} = 1 - \varepsilon}
		\left(
		G_{i}(z_i) z_i^{-m_i} + G_{i}(z_i^{-1}) z_i^{m_i}
		\right)
		\frac{dz_i}{2\pi \iu z_i}.
	\end{align}
	Here, since $ G_{i}(z_i^{\pm 1}) \in \Q [ z_i^{\pm 1}, (1 - z_i^N)^{-1} \mid N \in \Z_{>0} ] $,
	they only have a pole at $ z_i = 0 $ in $ \abs{z_i} < 1 $. 
	By using this fact and \cref{item:lem:R_C_corresp:involution}, we conclude that
	\begin{align}
		\overline{\coe}[G] (m)
		&=
		\prod_{1 \le i \le r} \frac{1}{2} 
		\left(
		\coe[G_i] (m_i) - \iota_{-} \coe[G_i] (-m_i)
		\right)
		=
		2^{-r} \sum_{e \in \{ \pm 1 \}^r} \left( \prod_{1 \le i \le r} e_i \right)
		\coe[G] (el),
		\\
		\widetilde{\coe}[G] (m)
		&=
		\prod_{1 \le i \le r} \frac{1}{2} 
		\left(
		\coe[G_i] (m_i) + \iota_{-} \coe[G_i] (-m_i)
		\right)
		=
		2^{-r} \sum_{e \in \{ \pm 1 \}^r}
		\coe[G] (el).
	\end{align}
	
	\cref{item:lem:R_C_corresp:vp_express}
	The first equality follows from the above proof of \cref{item:lem:R_C_corresp:R_C}. 
	For $ a_i \in \Z, N_i \in \Z_{>0}, m_i \in \Z_{\ge 0} $, let $ g_i (x_i) = \bm{1}_{a_i + N_i \Z_{\ge 0}} (x_i) x_i^{m_i} $.
	Since we can write $ \iota_- g_i (-x_i) = - \bm{1}_{a_i + N_i \Z_{\le -1}} (x_i) x_i^{m_i} $, we obtain
	\begin{align}
		g_i (x_i) - \iota_- g_i (-x_i)
		&=
		\bm{1}_{a_i + N_i \Z} (x_i) x_i^{m_i},
		\\
		g_i (x_i) + \iota_- g_i (-x_i)
		&=
		\sgn(x_i - a_i) \bm{1}_{a_i + N_i \Z} (x_i) x_i^{m_i}.
	\end{align}
	By this and \cref{item:lem:R_C_corresp:vp}, we obtain the second and last equalities.
	
	\cref{item:lem:R_C_corresp:C_tilde_rep}
	It suffices to show the first equality. 
	Given $ b_i, a \in \Z, N \in \Z_{>0} $, we take $ 0 \le b_i' < N $ and $ n_i \in \Z $ with $ b_i = b_i' + N n_i $.
	To begin with, we prove
	\begin{equation} \label{eq:sgn}
		\sgn(x_i - b_i) \bm{1}_{a + N\Z} (x_i)
		\in \sgn(x_i - k n_i) \bm{1}_{a + N\Z} (x_i)
		+ \Q[\bm{1}_c(x_i) \mid c \in \Z].
	\end{equation}
	We can assume $ 0 \le a < k $. 
	For $ x_i = a + k m_i $ with some $ m_i \in \Z $, we have
	\begin{align}
		\sgn(x_i - b_i) \bm{1}_{a + N\Z} (x_i)
		&=
		\sgn(a_i - b_i' + k (m_i - n_i))
		\\
		&=
		\begin{cases}
			\sgn(N_i(m_i - n_i)) & \text{ if } m_i \neq n_i, \\
			\sgn(a_i - b_i') & \text{ if } m_i = n_i
		\end{cases}
		\\
		&=
		\sgn(N_i(m_i - n_i))
		+ \left( -1 + \sgn(a_i - b_i') \right) \bm{1}_{a + k n_i } (x_i)
		\\
		&=
		\sgn(x_i - k n_i)
		+ \left( -1 + \sgn(a_i - b_i') \right) \bm{1}_{a + k n_i } (x_i).
	\end{align}
	Thus, we obtain \cref{eq:sgn}. 
	
	By using \cref{eq:sgn} two times, we have
	\begin{align}
		\widetilde{\frakQ}
		&=
		\sprod{ \prod_{1 \le i \le r} \sgn(x_i - k n_i) \bm{1}_{a_i + N_i \Z} (x_i) x_i^{m_i} \relmiddle| 
			\begin{aligned}
				a_1, \dots, a_r &\in \Z, \\
				N_1, \dots, N_r &\in \Z_{> 0}, \\
				m_1, \dots, m_r &\in \Z_{\ge 0} 
			\end{aligned}
		}_\Q
		\\
		&=
		\sprod{ \prod_{1 \le i \le r} \sgn(x_i - a_i) \bm{1}_{a_i + N_i \Z} (x_i) x_i^{m_i} \relmiddle| 
			\begin{aligned}
				a_1, \dots, a_r &\in \Z, \\
				N_1, \dots, N_r &\in \Z_{> 0}, \\
				m_1, \dots, m_r &\in \Z_{\ge 0} 
			\end{aligned}
		}_\Q.
	\end{align}
	By \cref{item:lem:R_C_corresp:vp_express}, we obtain
	\[
	\widetilde{\frakQ}
	=
	\sprod{ 
		\sgn(y - a) \bm{1}_{a + N\Z^r} (x) p(x)
		\relmiddle| 
		a \in \Z^r, N \in \Z_{>0}, p(x) \in \Q[y]
	}_\Q.
	\]
	
	\cref{item:lem:R_C_corresp:R_C_tilde}
	The claim for $ \overline{\coe} $ follows from \cref{item:lem:R_C_corresp:vp_express}.  
	We need to prove the claim for $ \widetilde{\coe} $.
	Fix an arbitrary $ \widetilde{g}(x) \in \widetilde{\frakQ} $. 
	Let $ g_{\ge} (x) \coloneqq 2^r \bm{1}_{\Z_{\ge 0}^r} (x) \widetilde{g}(x) $.
	This is an element of $ \frakQ $. 
	By \cref{item:lem:R_C_corresp:R_C}, there exists the corresponded cyclotomic rational function $ G_{\ge} (z) \in \frakR $. 
	Let
	\[
	\widetilde{g}_{<} (x) \coloneqq
	\widetilde{g} (x) - 
	2^{-r} \sum_{e \in \{ \pm 1 \}^r} \iota_e g_{\ge}(el).
	\]
	By the definition of $ \widetilde{\frakQ} $, $ \widetilde{g}_{<} (x) $ has finite support. 
	Thus, 
	\[
	G_{<} (z) \coloneqq
	\sum_{m \in \Z^r} \widetilde{g}_{<} (m) z^l
	\]
	is an element of $ \Q[z_1^{\pm 1}, \dots, z_r^{\pm 1}] $. 
	Then, by putting $ G(z) \coloneqq G_{\ge} (z) + G_{<} (z) \in \frakR $, we have
	\[
	\widetilde{g} (m) 
	=
	\PV \int_{\abs{z_i} = 1, \, 1 \le i \le r}
	G(z) \prod_{1 \le i \le r} \frac{z_i^{-m_i} dz_i}{2 \pi \iu z_i}
	=
	\widetilde{\coe}[G] (m) 
	\]
	by \cref{item:lem:R_C_corresp:involution,item:lem:R_C_corresp:vp}.
	
	\cref{item:lem:R_C_corresp:involution_bilateral}
	For $ G(z) \in \frakR $, we have
	\begin{alignat}{2}
		\overline{\coe} [\iota_e G] (x)
		&=
		2^{-r} \sum_{e' \in \{ \pm 1 \}^r} \left( \prod_{1 \le i \le r} e'_i \right) \iota_{e'} \coe[\iota_e G] (\transpose{e'} x)
		& & \quad \text{ by \cref{item:lem:R_C_corresp:involution}}
		\\
		&=
		2^{-r} \sum_{e' \in \{ \pm 1 \}^r} \left( \prod_{1 \le i \le r} e'_i \right) \iota_{e'e} \coe[G] (\transpose{e'} x)
		& & \quad \text{ by \cref{item:lem:R_C_corresp:vp}}
		\\
		&=
		2^{-r} \sum_{e' \in \{ \pm 1 \}^r} \left( \prod_{1 \le i \le r} e'_i e_i \right) \iota_{e'} \coe[G] (\transpose{(e'e)} x)
		& & \quad \text{ by replacing $ e' $ by $ e'e $}
		\\
		&=
		\overline{\coe} [G] (ex)
		& & \quad \text{ by \cref{item:lem:R_C_corresp:involution}}.
	\end{alignat}
	By the same argument, we obtain the claim for $ \widetilde{\coe} $.
\end{proof}


\subsection{Modular series} \label{subsec:modular_series}


The main object of this section, a \emph{modular series}, is composed of (partial/false) quasi-polynomials and certain building blocks called \emph{kernel functions}.
A kernel function is defined as follows.

\begin{dfn} \label{dfn:kernel_func}
	A \emph{kernel function} with $ r $ variables is a continuous function $ \gamma \colon \bbH \times \R^r \to \bbC $ that satisfies the following conditions:
	\begin{itemize}
		\item The function $ \gamma $ is holomorphic in the variable $ \tau \in \bbH $.
		\item The function $ \gamma $ is of exponential decay in $ y \in \R^r $ uniformly in $ \tau \in \bbH $, that is, 
		for any $ \delta > 0 $, there exist $ a, K > 0 $ such that 
		$ \abs{\gamma(\tau; x)} \le Ke^{-a\abs{x}} $ for any $ \tau \in \bbH $ with $ \Im(\tau) > \delta $ and $ x \in \R^r $.
		\item Its Fourier transform 
		\[
		\calF[\gamma] (\tau; \xi) 
		\coloneqq
		\int_{\R^r} \gamma (\tau; x) \bm{e} (-\transpose{\xi}x) dx
		\]
		is absolutely integrable on $ \R^r + \iu b $ for any sufficiently small $ \varepsilon > 0 $ and any $ b \in (-\varepsilon, \varepsilon)^r $.
	\end{itemize}
\end{dfn}

\begin{dfn}
	We say that a function $ g(x) $ is \emph{of polynomial growth} if 
	there exists $ n \in \Z_{>0} $ such that $ \abs{g(x)} = O(\abs{x}^n) $ as $ \abs{x} \to \infty $. 
\end{dfn}

In what follows, we use the following notation. 

\begin{notn*}
	\begin{itemize}
		\item We denote by $ \frakK $ the set of kernel function with $ r $ variables.
		\item Let $ \calO(\bbH) $ denote the set of holomorphic functions on $ \bbH $. 
		\item We use the notation 
		$ C_{\mathrm{poly}}(\Z^r) =
		\left\{ 
		g \colon \Z^r \to \bbC \relmiddle| 
		\text{$ g $ is of polynomial growth}
		\right\} $.
	\end{itemize}
\end{notn*}

We define modular series as follows.

\begin{dfn} \label{dfn:modular_series}
	\begin{enumerate}
		\item We define  a $ \Q $-linear map $ \Phi \colon C_{\mathrm{poly}}(\Z^r)\times \frakK \to \calO(\bbH)$ as
		\[
		\Phi[g, \gamma] (\tau)
		\coloneqq
		\sum_{m \in \Z^r} g(m) \gamma (\tau; m).
		\]
		
		\item For $ g \in \frakQ, \overline{g} \in \overline{\frakQ} $, and $ \widetilde{g} \in \widetilde{\frakQ} $, we call
		$ \Phi[g, \gamma] (\tau), \Phi[\overline{g}, \gamma] (\tau) $, and $ \Phi[\widetilde{g}, \gamma] (\tau) $ 
		\emph{partial modular series, modular series, and false modular series of the kernel function $ \gamma (\tau; y) $}. 
		
	\end{enumerate}
\end{dfn}

\begin{ex} \label{ex:ker_func}
	\begin{enumerate}
		\item A typical example of a kernel function is $ q^{x^2/2} $.
		Partial modular series, modular series, and false modular series of the kernel function $ q^{x^2/2} $ are called \emph{partial theta functions, theta functions, and false theta functions}.
		
		\item \label{item:ex:ker_func:Eisenstein_level_4} 
		A function $ 1/\cosh (\iu \tau x) = 2/(q^{x/2} + q^{-x/2}) $ is a kernel function. 
		When we choose a quasi-polynomial as $ \overline{g}(x) \coloneqq 1 $, the modular series can be written as
		\[
		\Phi[\overline{g}, \gamma] (\tau) 
		=
		1 + 4 \sum_{m=1}^\infty \frac{1}{q^{m/2} + q^{-m/2}}
		=
		1 + 4 \sum_{n=1}^\infty \left( \sum_{d \mid n, \, d \text{ odd}} (-1)^{(d-1)/2} \right) q^{n/2}
		\eqqcolon
		E_2^{(4)} \left( \frac{\tau}{2} \right),
		\]
		which is called \emph{Eisenstein series of weight $ 2 $ and level $ 4 $}.
		
	\end{enumerate}
\end{ex}

We give further examples in \cref{sec:false_theta,sec:indefinite_false,sec:Eisenstein}. 

In the next subsection, we will prove modular transformation formulas for modular series and false modular series, which are variations of the Poisson summation formula with signature (\cref{thm:PSF_sgn}).
To achieve this, let us first study the Fourier transformation of kernel functions.

We denote $ \widehat{\frakK} \coloneqq \left\{ \calF[\gamma] \colon \bbH \times \R^r \to \bbC \relmiddle| \gamma \in \frakK \right\} $.

\begin{lem} \label{lem:ker_func}
	The set $ \widehat{\frakK} $ is the set of functions $ \widehat{\gamma} \colon \bbH \times \R^r \to \bbC $ that satisfies the followings:
	\begin{enumerate}
		\item \label{item:lem:ker_func:1}
		For any $ \delta > 0 $, there exists $ a>0 $ such that $ \widehat{\gamma} $ extends holomorphically to
		\[
		\{ \tau \in \bbH \mid \Im (\tau) > \delta \}
		\times \{ \xi \in \bbC \mid \abs{\Im(\xi)} < a \}^r.
		\]
		
		\item \label{item:lem:ker_func:2}
		Given $ a>0 $ as above, for any $ b \in (-a, a)^r $, the function $ \widehat{\gamma} (\tau; u + b \iu) $ is absolutely integrable in $ u \in \R^r $ and its inverse Fourier transform
		\[
		\calF^{-1}[\widehat{\gamma}(\tau; u + b \iu)] (x)
		\coloneqq
		\int_{\R^r} \widehat{\gamma} (\tau; u + b \iu) \bm{e} (\transpose{u}x) du
		\]
		is continuous.
		
		\item \label{item:lem:ker_func:3}
		For $ a>0 $ as above, 
		\[
		\sup_{b \in (-a, a)^r} \norm{\calF^{-1}[\widehat{\gamma}(\tau; u + b \iu)] (x)}_{L^\infty} < \infty.
		\]
	\end{enumerate}
\end{lem}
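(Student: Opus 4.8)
The plan is to prove the two inclusions separately. For the forward inclusion ($\widehat{\frakK} \subset \{\text{functions satisfying (1)--(3)}\}$), I would start from a kernel function $\gamma \in \frakK$ and verify that $\widehat{\gamma} = \calF[\gamma]$ has the three listed properties. Property (1) follows from the uniform exponential decay of $\gamma$: given $\delta > 0$, choose $a, K > 0$ with $|\gamma(\tau;x)| \le Ke^{-a|x|}$ for $\Im(\tau) > \delta$; then the integral $\int_{\R^r} \gamma(\tau;x) \bm{e}(-\transpose{\xi}x)\,dx$ converges absolutely and locally uniformly for $\xi$ in the strip $\{|\Im(\xi_i)| < a/2\pi\}^r$ (shrinking $a$ by the factor $2\pi$ if one prefers that normalization), and Morera's theorem together with holomorphy of $\gamma$ in $\tau$ gives joint holomorphy in $(\tau,\xi)$ on the stated product domain. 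Property (2): absolute integrability of $\widehat{\gamma}(\tau; u + b\iu)$ in $u$ is exactly the last bullet of \cref{dfn:kernel_func}, and continuity of its inverse Fourier transform follows from dominated convergence. Property (3): by contour shifting (justified by the holomorphic extension in (1) and decay), $\calF^{-1}[\widehat{\gamma}(\tau; u + b\iu)](x) = \bm{e}(\transpose{b}x)\cdot\gamma(\tau;x)$ up to sign conventions, but since we want an $L^\infty$ bound uniform in $b$ over a \emph{bounded} box $(-a,a)^r$ and $\gamma$ itself is bounded (being exponentially decaying and continuous), this supremum is finite — the factor $\bm{e}(\transpose{b}x)$ has modulus $e^{-2\pi\transpose{b}x}$ which, combined with $e^{-a'|x|}$ decay of $\gamma$ for some $a' > 2\pi a$, stays bounded; here one must be slightly careful and choose the strip width in (1) strictly larger than the box in (3), which is arranged by the quantifier structure (first fix $\delta$, get some $a_0$, then work with $a = a_0/2$ say).

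For the reverse inclusion, I would take $\widehat{\gamma}$ satisfying (1)--(3) and set $\gamma(\tau;x) \coloneqq \calF^{-1}[\widehat{\gamma}(\tau; u)](x)$, the inverse transform along the real locus $b = 0$, and check $\gamma \in \frakK$ with $\calF[\gamma] = \widehat{\gamma}$. Holomorphy of $\gamma$ in $\tau$: differentiate under the integral sign, legitimate by (2) and a dominated-convergence/Morera argument using holomorphy of $\widehat{\gamma}$ in $\tau$ from (1). The Fourier inversion identity $\calF[\gamma] = \widehat{\gamma}$ on $\R^r$ holds by the classical Fourier inversion theorem once we know $\gamma$ is, say, continuous and integrable; extending it to the strip follows from uniqueness of analytic continuation, using (1). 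The crux is the \emph{exponential decay} of $\gamma$ required by \cref{dfn:kernel_func}: this is where property (3) together with contour shifting enters. For a given $\delta$, pick $a$ as in (1); for any $b \in (-a,a)^r$, shifting the contour of $\calF^{-1}$ from $\R^r$ to $\R^r + b\iu$ (valid by (1) plus the decay needed to kill the connecting pieces at infinity — this decay must itself be extracted, see the obstacle below) yields
\[
\gamma(\tau;x) = \bm{e}(\transpose{b}x) \calF^{-1}[\widehat{\gamma}(\tau; u + b\iu)](x),
\]
so $|\gamma(\tau;x)| \le e^{2\pi\transpose{b}x}\sup_{b'}\|\calF^{-1}[\widehat{\gamma}(\tau; u + b'\iu)]\|_{L^\infty}$; optimizing the choice of $b$ coordinate-wise against the sign of each $x_i$ (taking $b_i \to \pm a$ according to $\mp\operatorname{sgn}(x_i)$) produces a bound $K e^{-a'|x|}$ with $a' < 2\pi a$, uniformly in $\tau$ with $\Im(\tau) > \delta$ by (3). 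Finally the absolute integrability of $\calF[\gamma](\tau; u + b\iu)$ in the last bullet of \cref{dfn:kernel_func} is just property (2) read off for the constructed $\gamma$.

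The main obstacle I anticipate is the contour-shifting step in the reverse direction: to move the line of integration in $\calF^{-1}$ off the real axis one needs the integrand $\widehat{\gamma}(\tau; \cdot)$ to decay as $\Re(\xi) \to \pm\infty$ uniformly across the strip, so that the vertical connecting segments vanish — but hypotheses (1)--(3) only assert $L^1$-integrability on each horizontal line, not pointwise decay at infinity. The standard fix is to work with a truncated contour, a rectangle with vertical sides at $\Re(\xi_i) = \pm R$, and argue that the contribution of the vertical sides tends to $0$ along a suitable sequence $R_n \to \infty$ (this is possible because an $L^1$ function on a line must have a subsequence of slices tending to $0$; more carefully, one uses that $\widehat{\gamma}$ is holomorphic and $L^1$ on each horizontal line, so by a Phragmén–Lindelöf or a direct Fubini-plus-mean-value argument the vertical integrals are small on average in $R$). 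Alternatively, and more cleanly, one first establishes the inversion identity and decay under the extra temporary assumption that $\widehat{\gamma}$ is Schwartz-like, then removes it by a density/approximation argument; Akiyama's contribution (acknowledged before the lemma) presumably supplies exactly this technical point. The rest of the argument is routine manipulation of Fourier integrals and Morera's theorem.
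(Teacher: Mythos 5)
Your proposal follows essentially the same route as the paper's proof: the forward direction is the direct estimate $\abs{\gamma(\tau;x)\bm{e}(-\transpose{\xi}x)}\le Ke^{-a\abs{x}+2\pi\abs{\transpose{b}x}}$ on the strip together with Fourier inversion and boundedness of $\gamma(\tau;x)e^{2\pi\transpose{b}x}$, and the converse is exactly the paper's contour shift to $\R^r+\iu b$ with $b=(a-\varepsilon)(\sgn(x_1),\dots,\sgn(x_r))$, using condition (2) for continuity and condition (3) for the uniform constant $K$, yielding $\abs{\calF^{-1}[\widehat{\gamma}](\tau;x)}\le Ke^{-2\pi(a-\varepsilon)\abs{x}}$ and hence exponential decay. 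The contour-shift justification you single out as the main obstacle is indeed passed over in the paper (it is asserted directly from condition (1)), so your truncation/regularization discussion, like your explicit verification of holomorphy in $\tau$ and of $\calF[\gamma]=\widehat{\gamma}$, is added rigor rather than a different method.
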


\begin{proof}
	Let $ \gamma (\tau; y) \in \frakK $ and $ \delta > 0 $ be arbitrary.
	There exist $ a, K > 0 $ such that $ \abs{f(x)} \le Ke^{-a\abs{x}} $ for any $ x \in \R^r $.
	For any $ b \in (-a/2\pi, a/2\pi)^r $, Fourier transform $ \calF[\gamma] (\tau; x) $ converges uniformly in any compact subset of $ \R^r + b \iu  $ since $ \abs{\gamma (\tau; x) \bm{e} (-\transpose{\xi}x)}\le K e^{-a\abs{x} + \abs{\transpose{b}x}} $.
	
	Since $ \calF^{-1} [\calF[\gamma]] (\tau; x) = \gamma (\tau; x) $ holds as elements of $ L^2(\R^r) $ and 
	the map $ C^0(\R^r) \to L^2(\R^r) $ is injective,
	$ \calF^{-1} [\calF[\gamma]] (\tau; x) = \gamma (\tau; x) $ holds as functions.
	In particular, $ \calF^{-1} [\calF[\gamma]] (\tau; x) $ is continuous.
	The function $ \calF^{-1} [\calF[\gamma] (\tau; u+ b\iu)] (x) = \gamma(\tau; y) e^{2\pi \transpose{b}y} $ is bounded in $ b \in (-a/2\pi, a/2\pi)^r $ and $ x \in \R^r $.
	Thus, $ \calF[\gamma] (\tau; x) \in \widehat{\frakK} $ satisfies the conditions \cref{item:lem:ker_func:1,item:lem:ker_func:2,item:lem:ker_func:3}.
	
	Conversely, let $ \widehat{\gamma} (\tau; \xi) $ be a function satisfying the conditions \cref{item:lem:ker_func:1,item:lem:ker_func:2,item:lem:ker_func:3}.
	 and $ \delta > 0 $ be arbitrary and $ a > 0 $ be as in \cref{item:lem:ker_func:1}.
	Let $ x \in \R^r $ and $ 0 < \varepsilon < a $ be arbitrary and put $ b \coloneqq (a-\varepsilon) (\sgn(x_1), \dots, \sgn(x_r)) $.	
	By \cref{item:lem:ker_func:1}, we can write
	\[
	\calF^{-1}[\widehat{\gamma}] (\tau; x)
	=
	\int_{\R^r + b\iu} \widehat{\gamma} (\tau; \xi) \bm{e} (\transpose{\xi}x) d\xi
	=
	e^{-2\pi (a-\varepsilon) \abs{x}} \calF^{-1}[\widehat{\gamma}(\tau; u + b\iu)] (x).
	\]
	This function is continuous by \cref{item:lem:ker_func:2}.
	By \cref{item:lem:ker_func:3},
	\[
	K \coloneqq \sup_{b \in (-a, a)^r, \, x \in \R^r} \abs{\calF^{-1}[\widehat{\gamma}(\tau; u + b \iu)] (x)}
	\]
	is finite.
	Since we have $ \abs{\calF^{-1}[\widehat{\gamma}] (\tau; x)} \le K e^{-2\pi (a-\varepsilon) \abs{x}} $ for any $ 0 < \varepsilon < a $,
	we obtain $ \abs{\calF^{-1}[\widehat{\gamma}] (\tau; x)} \le K e^{-2\pi a \abs{x}} $.
	Thus, we have $ \calF^{-1}[\widehat{\gamma}] (\tau; x) \in \frakK $.
\end{proof}

\begin{rem}
	We can consider \cref{lem:ker_func} as a variant of Paley--Wiener's theorem.
	A related result is given in \cite[Theorem IX.12--14]{Reed-Simon:Fourier}.
\end{rem}

Next, we will give integral representations of (partial/false) modular series.

\begin{notn*}
	\begin{itemize}
		\item For $ x = (x_1, \dots, x_r) \in \bbC^r $, we write $ \bm{e} (x) \coloneqq (\bm{e} (x_i))_{1 \le i \le r} $.
		
		\item We define three $ \Q $-linear maps $ \calL, \overline{\calL}, \widetilde{\calL} \colon \frakR \times \widehat{\frakK} \to \calO(\bbH) $ as
		\begin{align}
			\calL[G, \widehat{\gamma}] (\tau)
			&\coloneqq
			\int_{(\R + \iu \varepsilon)^r}
			G (\bm{e} (x)) 
			\widehat{\gamma} (\tau; x) dx, 
			\\
			\overline{\calL}[G, \widehat{\gamma}] (\tau)
			&\coloneqq
			\left( 
			\prod_{1 \le i \le r} \frac{1}{2} 
			\left(
			\int_{\R + \iu \varepsilon}
			- \int_{\R - \iu \varepsilon}
			\right)
			\right)
			G (\bm{e} (x))
			\widehat{\gamma} (\tau; x) dx, 
			\\
			\widetilde{\calL}[G, \widehat{\gamma}] (\tau)
			&\coloneqq
			\PV \int_{\R^r}
			G (\bm{e} (x))
			\widehat{\gamma} (\tau; x) dx.
		\end{align}
	\end{itemize}
\end{notn*}

As shown in the following proposition, the above holomorphic functions are integral representations of (partial/false) modular series.
The result for false modular series is an extension of the first equality in \cref{thm:PSF_sgn}.

\begin{prop} \label{prop:modular_series_Laplace}
	Diagrams
	\begin{equation}
		\begin{array}{ccc}
			\begin{tikzcd}
				\frakQ \times \frakK \arrow[d, "\coe^{-1} \times \calF" ', "\sim" {anchor=south, rotate=-90}] \arrow[r, "\Phi"] & \calO(\bbH) \\
				\frakR \times \widehat{\frakK} \arrow[ru, "\calL" '] &
			\end{tikzcd}
			&
			\begin{tikzcd}
				\overline{\frakQ} \times \frakK \arrow[d, "\overline{\coe}^{-1} \times \calF" ', "\sim" {anchor=south, rotate=-90}] \arrow[r, "\Phi"] & \calO(\bbH) \\
				\frakR \times \widehat{\frakK} \arrow[ru, "\overline{\calL}" '] &
			\end{tikzcd}
			&
			\begin{tikzcd}
				\widetilde{\frakQ} \times \frakK \arrow[d, "\widetilde{\coe}^{-1} \times \calF" ', "\sim" {anchor=south, rotate=-90}] \arrow[r, "\Phi"] & \calO(\bbH) \\
				\frakR \times \widehat{\frakK} \arrow[ru, "\widetilde{\calL}" '] &
			\end{tikzcd}
		\end{array}
	\end{equation}
	commute. 
\end{prop}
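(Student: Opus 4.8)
The plan is to verify commutativity of each of the three triangles by unwinding the definitions of $\Phi$, $\calL$ (resp.\ $\overline{\calL}$, $\widetilde{\calL}$), and the coefficient maps $\coe, \overline{\coe}, \widetilde{\coe}$, reducing in each case to an interchange of summation and integration justified by the exponential decay built into the definition of a kernel function. First I would treat the partial case. Let $G \in \frakR$ and $\gamma \in \frakK$, and set $g \coloneqq \coe[G] \in \frakQ$, $\widehat{\gamma} \coloneqq \calF[\gamma]$. Fix $\varepsilon > 0$ small enough that $\widehat{\gamma}(\tau; \cdot)$ extends holomorphically and is absolutely integrable on $\R^r + \iu b$ for all $b \in (-\varepsilon,\varepsilon)^r$ (possible by \cref{lem:ker_func}), and small enough that the Laurent expansion $G(\bm{e}(x)) = \sum_{m \in \Z^r} \coe[G](m)\, \bm{e}(x)^m$ converges when $\Im(x_i) = \varepsilon$ for all $i$; this uses \cref{rem:coe_Laurent} together with the fact that $\abs{\bm{e}(x_i)} = e^{-2\pi\varepsilon} < 1$ there. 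Substituting this expansion into the definition of $\calL[G,\widehat{\gamma}](\tau)$ gives
\[
\calL[G,\widehat{\gamma}](\tau)
= \int_{(\R + \iu\varepsilon)^r} \sum_{m \in \Z^r} \coe[G](m)\, \bm{e}(x)^m\, \widehat{\gamma}(\tau;x)\, dx,
\]
and interchanging sum and integral (justified below) together with the inverse Fourier identity $\int_{\R^r + \iu\varepsilon} \bm{e}(\transpose{m}x)\widehat{\gamma}(\tau;x)\,dx = \calF^{-1}[\widehat{\gamma}](\tau;m) = \gamma(\tau;m)$—which is the content of $\calF^{-1}\calF\gamma = \gamma$ as continuous functions, shown in the proof of \cref{lem:ker_func}, and is contour-independent in $b$ by Cauchy's theorem—yields $\sum_{m} \coe[G](m)\gamma(\tau;m) = \Phi[g,\gamma](\tau)$, as desired.

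The interchange of $\sum$ and $\int$ is the one genuine analytic point, so I would handle it carefully once. Since $g = \coe[G] \in \frakQ$ is a partial quasi-polynomial, $g(m)$ is supported on a translated positive orthant and is of polynomial growth, so $\abs{\coe[G](m)} \le C(1+\abs{m})^n$ for some $C, n$. Meanwhile $\abs{\bm{e}(x)^m} = e^{-2\pi\varepsilon(m_1 + \dots + m_r)}$ on the relevant contour, restricted to $m$ in the support orthant, and $\widehat{\gamma}(\tau;\cdot)$ is absolutely integrable on $\R^r + \iu\varepsilon$; hence $\sum_m \int \abs{\coe[G](m)\,\bm{e}(x)^m\,\widehat{\gamma}(\tau;x)}\,\abs{dx} \le \bigl(\sum_m C(1+\abs{m})^n e^{-2\pi\varepsilon\abs{m}_1}\bigr)\,\norm{\widehat{\gamma}(\tau;\cdot)}_{L^1(\R^r+\iu\varepsilon)} < \infty$, so Fubini applies. (Strictly, one may need to shrink the contour to $\Im(x_i) = \varepsilon'$ with $0 < \varepsilon' < \varepsilon$ so that the geometric decay is strict on the full orthant; this is harmless by Cauchy's theorem since $\widehat{\gamma}$ is holomorphic and integrable on the intermediate strip.) I expect this absolute-convergence bookkeeping to be the main obstacle, though it is routine.

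For the remaining two triangles I would deduce them from the partial case via \cref{lem:R_C_corresp} rather than repeating the argument. By \cref{lem:R_C_corresp} \cref{item:lem:R_C_corresp:vp} we have $\overline{\coe}[G](x) = 2^{-r}\sum_{e \in \{\pm1\}^r}\bigl(\prod_i e_i\bigr)\coe[G](ex)$ and $\widetilde{\coe}[G](x) = 2^{-r}\sum_{e}\coe[G](ex)$. Applying $\Phi$ and re-indexing each inner sum over $\Z^r$ by $m \mapsto em$ (a bijection, on which $\gamma(\tau;em)$ appears), one gets $\Phi[\overline{\coe}[G],\gamma](\tau) = 2^{-r}\sum_e (\prod_i e_i)\,\Phi[\coe[G]\circ e,\, \gamma](\tau)$, and $\Phi[\coe[G]\circ e,\gamma](\tau) = \Phi[\coe[\iota_e G], \gamma\circ(\id,e)](\tau)$ after the substitution, which by the partial case equals $\calL[\iota_e G, \calF[\gamma(\tau;e\,\cdot)]](\tau)$. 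The change of variables $x \mapsto ex$ inside the integral—which flips the contour $\R + \iu\varepsilon$ to $\R - \iu\varepsilon$ in the coordinates with $e_i = -1$ and produces the sign $\prod_i e_i$ from $dx$ after also using $\calF[\gamma(\tau;e\,\cdot)](\xi) = \calF[\gamma](\tau;e\xi)$—then reassembles the sum over $e$ precisely into the signed-contour average defining $\overline{\calL}[G,\widehat{\gamma}](\tau)$, namely $\prod_i \tfrac12(\int_{\R+\iu\varepsilon} - \int_{\R-\iu\varepsilon})$; the $\widetilde{\calL}$ case is identical but with the sum $\int_{\R+\iu\varepsilon} + \int_{\R-\iu\varepsilon}$ and no sign, which is the principal-value integral $\PV\int_{\R^r}$. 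Thus all three diagrams commute. $\qed$
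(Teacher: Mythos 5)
Your proposal follows essentially the same route as the paper's proof: the first triangle via the Laurent expansion of $G(\bm{e}(\xi))$, Fourier inversion and a contour shift (you add the explicit Fubini justification that the paper leaves implicit), and the remaining two triangles by the $e$-symmetrization identities of \cref{lem:R_C_corresp}, re-indexing $m \mapsto em$, and flipping contours to reassemble $\overline{\calL}$ and $\widetilde{\calL}$ (the PV integral). The only blemishes are bookkeeping slips in the symmetrized step---the placement of $\iota_e$ versus plain precomposition with $e$, and the orientation sign under $x \mapsto ex$---but these mirror the looseness already present in the paper's own write-up of the same computation and do not change the argument.
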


\begin{proof}
	Take any cyclotomic rational function $ G(z) \in \frakR $ and kernel function $ \gamma(\tau; x) \in \frakK $.
	By \cref{rem:coe_Laurent}, it holds that
	\[
	G(\bm{e}(\xi))
	=
	\sum_{m \in \Z^r} \coe[G] (m) \bm{e} \left( \transpose{m} \xi \right)
	\]
	if $ \Im(\xi_i) > 0 $ for each $ 1 \le i \le r $.
	Thus, we have
	\[
	\Phi [\coe[G], \gamma] (\tau)
	=
	\sum_{m \in \Z^r} \coe[G] (m)
	\int_{\R^r}
	\calF[\gamma] (\tau; \xi)
	\bm{e} \left( \transpose{m} \xi \right) d\xi
	=
	\int_{(\R + \iu \varepsilon)^r}
	\calF[\gamma] (\tau; \xi)
	G(\bm{e}(\xi)) d\xi,
	\]
	which proves commutativity of the first diagram. 
	
	Next, we will prove the commutativity of the second diagram. 
	Since 
	\[
	\overline{\coe}[G] (x)
	=
	2^{-r} \sum_{e \in \{ \pm 1 \}^r} \left( \prod_{1 \le i \le r} e_i \right) \coe[G] (ex)
	\]
	by \cref{lem:R_C_corresp} \cref{item:lem:R_C_corresp:vp}, we have
	\begin{align}
		\Phi [\overline{\coe}[G], \gamma] (\tau)
		&=
		2^{-r} \sum_{e \in \{ \pm 1 \}^r} \left( \prod_{1 \le i \le r} e_i \right)
		\sum_{m \in \Z^r} \coe[G] (em) \gamma (\tau; m)
		\\
		&=
		2^{-r} \sum_{e \in \{ \pm 1 \}^r} \left( \prod_{1 \le i \le r} e_i \right)
		\sum_{m \in \Z^r} \coe[G] (m) \gamma (\tau; em).
	\end{align}
	By integral representation of $ \Phi_\gamma[g] (\tau) $ shown above, we have
	\begin{align}
		\Phi [\overline{\coe}[G], \gamma] (\tau)
		&=
		2^{-r} \sum_{e \in \{ \pm 1 \}^r} \left( \prod_{1 \le i \le r} e_i \right)
		\int_{(\R + \iu \varepsilon)^N}
		G (\bm{e} (e\xi)) \widehat{\gamma} (\tau; \xi) dx\\xi
		\\
		&=
		2^{-r} \sum_{e \in \{ \pm 1 \}^r}
		\left(
		\prod_{1 \le i \le r}
		e_i \int_{\R + e_i \iu \varepsilon} d\xi_i
		\right)
		G (\bm{e} (\xi)) \widehat{\gamma} (\tau; \xi)
		\\
		&=
		\left(
		\prod_{1 \le i \le r}
		\frac{1}{2}
		\left(
		\int_{\R + \iu \varepsilon} d\xi_i - \int_{\R - \iu \varepsilon} d\xi_i
		\right)
		\right)
		G (\bm{e} (\xi)) \widehat{\gamma} (\tau; \xi)
		\\
		&=
		\overline{\calL}[G, \gamma] (\tau),
	\end{align}
	which proves commutativity of the second diagram. 
	
	We can prove commutativity of the last diagram by applying the same argument for the equality
	\[
	\widetilde{\coe}[G] (x)
	=
	2^{-r} \sum_{e \in \{ \pm 1 \}^r} \coe[G] (ex)
	\]
	in \cref{lem:R_C_corresp} \cref{item:lem:R_C_corresp:vp}.
\end{proof}

\begin{ex} \label{ex:false_theta_PV_rep}
	Take a cyclotomic rational function
	\[
	G(z) 
	=
	p \left( z_1 \frac{\partial}{\partial z_1}, \dots, z_r \frac{\partial}{\partial z_r} \right)
	\left[ \frac{1}{1 - z_1^N} \cdots \frac{1}{1 - z_r^N} \right]
	\]
	with $ N \in \Z_{>0} $ and $ p(x) \in \Q[y] $.
	Let $ S \in \Sym_r^+ (\R) $ be a positive definite symmetric matrix and $ \gamma(\tau; x) \coloneqq q^{\transpose{x}Sx/2} $.
	Then, we have
	\begin{align}
		\Phi[\widetilde{\coe}[G], \gamma] (\tau)
		&=
		\PV \int_{\abs{z_i} = 1, \, 1 \le i \le r}
		\left(
		\sum_{m \in \Z^r} z_i^{-m_i} q^{\transpose{m}Sm/2}
		\right)
		G(z) \prod_{1 \le i \le r} \frac{dz_i}{2 \pi \iu z_i}
		\\
		&=
		\sum_{m \in \Z^r} \sgn(m) p(m) q^{\transpose{x}Sx/2}
		\\
		&=
		\frac{1}{\sqrt{\det S}} \sqrt{\frac{\iu}{\tau}}^r
		\PV \int_{\R^r} \widetilde{q}^{\transpose{\xi}S^{-1} \xi/2}
		G(\bm{e}(\xi)) d\xi
	\end{align}
	by \cref{lem:R_C_corresp} \cref{item:lem:R_C_corresp:vp_express} and \cref{prop:modular_series_Laplace},
	where $ \widetilde{q} \coloneqq \bm{e} (-1/\tau) $.
	
	As a special case, we obtain an integral representation of GPPV invariants (\cref{dfn:GPPV_inv,rem:GPPV_int_rep}).
\end{ex}


\section{Modular transformation formula for modular series} \label{sec:modular_series_modular}


In this section, we give modular transformation formulas for modular series and false modular series.


\subsection{Modular transformation formula for modular series} \label{subsec:modular_series_modular}


We use the following notation.

\begin{notn*}
	\begin{itemize}
		\item Denote
		\[
		\overline{\frakQ}^0 
		\coloneqq
		\Q[ x_i, \bm{1}_{a + N\Z} (x_i) \mid 1 \le i \le r, a \in \Z, N \in \Z_{>0} ]
		\subset \overline{\frakQ}.
		\]
		
		\item For a quasi-polynomial $ \overline{g}(x) \in \overline{\frakQ}^0 $, choose a positive integer $ N \in \Z_{>0} $ such that it can be expressed as
		\[
		\overline{g}(x)
		=
		\sum_{a \in (\Z/N\Z)^r} \bm{1}_{a + N\Z^r} (x) P_a(x)
		\in \overline{\frakQ}^0
		\]
		with a polynomial $ P_a (x) \in \Q[x_1, \dots, x_r] $ for $ a \in (\Z/N\Z)^r $.
		Then, we define an operator $ T_{\overline{g}, N} \colon \widehat{\frakK} \to \widehat{\frakK} $ by mapping
		$ \widehat{\gamma} (\tau; \xi) \in \widehat{\frakK} $ to
		\[
		T_{\overline{g}, N} \widehat{\gamma} (\tau; \xi)
		\coloneqq
		\frac{1}{N^r} \sum_{a \in (\Z/N\Z)^r} \bm{e} \left( \transpose{a} \xi \right)
		P_a \left( -\frac{1}{2\pi\iu} \frac{\partial}{\partial \xi_1}, \dots, -\frac{1}{2\pi\iu} \frac{\partial}{\partial \xi_r} \right)
		\widehat{\gamma} \left( \tau; \xi \right).
		\]
		
		\item $ \bm{e} (\xi) \coloneqq (\bm{e} (\xi_1), \dots, \bm{e} (\xi_r)) $.
	\end{itemize}
\end{notn*}

We regard the following proposition as a modular transformation formula for modular series, since the right hand side is typically expressed as a function of $-1/\tau$.
For example, in the situation in \cref{ex:false_theta_PV_rep}, the following proposition implies well-known modular transformation formulas for theta functions.

\begin{prop} \label{prop:modular_series_modular}
	For any $ \overline{g}(x) \in \overline{\frakQ}^0 $ and $ N \in \Z_{>0} $ choosed as above, we have
	\[
	\Phi[\overline{g}, \gamma] (\tau) = \Phi[1, T_{\overline{g}, N} \widehat{\gamma} (\tau; N^{-1} \xi)] (\tau).
	\]
	Moreover, the right hand side equals
	\[
	\sum_{n \in \calP} T_{\overline{g}, N} \widehat{\gamma} (\tau; n),
	\]
	where $ \calP \subset \Q^r $ denotes the set of poles of $ \overline{\coe} [\overline{g}] (\bm{e} (\xi)) $.
\end{prop}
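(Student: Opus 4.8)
My plan is to derive both assertions from the integral representation furnished by \cref{prop:modular_series_Laplace} together with a residue computation. Write $\widehat{\gamma}\coloneqq\calF[\gamma]$, and let $G\coloneqq\overline{\coe}^{-1}[\overline{g}]\in\frakR$ be the cyclotomic rational function corresponding to $\overline{g}$ (this is what the statement denotes by $\overline{\coe}[\overline{g}]$, and $\calP$ is exactly the pole locus of $G(\bm{e}(\xi))$). Since $\overline{g}(x)=\sum_{a\in(\Z/N\Z)^{r}}\bm{1}_{a+N\Z^{r}}(x)P_{a}(x)$, \cref{lem:R_C_corresp} \cref{item:lem:R_C_corresp:vp_express} gives the explicit form
\[
G(z)=2^{r}\sum_{a\in(\Z/N\Z)^{r}}P_{a}\!\left(z_{1}\frac{\partial}{\partial z_{1}},\dots,z_{r}\frac{\partial}{\partial z_{r}}\right)\prod_{i=1}^{r}\frac{z_{i}^{a_{i}}}{1-z_{i}^{N}},
\]
so that $G(\bm{e}(\xi))=2^{r}\sum_{a}P_{a}\!\bigl(\tfrac{1}{2\pi\iu}\partial_{\xi_{1}},\dots,\tfrac{1}{2\pi\iu}\partial_{\xi_{r}}\bigr)\prod_{i}\tfrac{\bm{e}(a_{i}\xi_{i})}{1-\bm{e}(N\xi_{i})}$; in particular every pole of $G(\bm{e}(\xi))$ lies in $\tfrac{1}{N}\Z^{r}$, so $\calP\subseteq\tfrac1N\Z^{r}$ is discrete. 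By \cref{prop:modular_series_Laplace}, $\Phi[\overline{g},\gamma](\tau)=\overline{\calL}[G,\widehat{\gamma}](\tau)=\bigl(\prod_{i=1}^{r}\tfrac12\bigl(\int_{\R+\iu\varepsilon}-\int_{\R-\iu\varepsilon}\bigr)\bigr)G(\bm{e}(\xi))\widehat{\gamma}(\tau;\xi)\,d\xi$.

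First I would perform the contour computation. Because $\widehat{\gamma}(\tau;\cdot)$ extends holomorphically to a strip $|\Im\xi_{i}|<a$ and $G(\bm{e}(\xi))$ has poles only on the hyperplanes $\xi_{i}\in\tfrac1N\Z$, the difference $\tfrac12\bigl(\int_{\R+\iu\varepsilon}-\int_{\R-\iu\varepsilon}\bigr)$ in the $i$-th variable equals $-\pi\iu\sum_{\alpha}\Res_{\xi_{i}=\alpha}$ by the residue theorem (the vertical segments at infinity vanish by the decay of $\widehat{\gamma}$). Iterating over all $r$ variables yields
\[
\Phi[\overline{g},\gamma](\tau)=(-\pi\iu)^{r}\sum_{n\in\calP}\Res_{\xi=n}\bigl[G(\bm{e}(\xi))\widehat{\gamma}(\tau;\xi)\bigr],
\]
where $\Res_{\xi=n}$ denotes the iterated residue at $n$ (summing over all of $\tfrac1N\Z^{r}$ is the same, since off $\calP$ the residue is $0$). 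This already yields the ``moreover'' statement modulo identifying the residues.

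Next I would identify the residues: I expect $(-\pi\iu)^{r}\Res_{\xi=n}[G(\bm{e}(\xi))\widehat{\gamma}(\tau;\xi)]=T_{\overline{g},N}\widehat{\gamma}(\tau;n)$ for every $n\in\tfrac1N\Z^{r}$. Plugging the explicit form of $G$ into the residue and integrating by parts on a small polydisc around $n$ moves each operator $P_{a}\bigl(\tfrac{1}{2\pi\iu}\partial_{\xi}\bigr)$ off $\prod_{i}\tfrac{\bm{e}(a_{i}\xi_{i})}{1-\bm{e}(N\xi_{i})}$ and onto $\widehat{\gamma}$ as $P_{a}\bigl(-\tfrac{1}{2\pi\iu}\partial_{\xi}\bigr)$; the remaining factor is a product of simple poles with residue $\prod_{i}\tfrac{\bm{e}(a_{i}n_{i})}{-2\pi\iu N}=\tfrac{\bm{e}(\transpose{a}n)}{(-2\pi\iu N)^{r}}$, and collecting the constants via $(-\pi\iu)^{r}2^{r}/(-2\pi\iu N)^{r}=N^{-r}$ leaves precisely $\tfrac{1}{N^{r}}\sum_{a}\bm{e}(\transpose{a}n)P_{a}\bigl(-\tfrac{1}{2\pi\iu}\partial_{\xi}\bigr)\widehat{\gamma}(\tau;n)=T_{\overline{g},N}\widehat{\gamma}(\tau;n)$. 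In particular this residue vanishes when $n\in\tfrac1N\Z^{r}\smallsetminus\calP$, so re-inserting the zero terms gives $\sum_{n\in\calP}T_{\overline{g},N}\widehat{\gamma}(\tau;n)=\sum_{n\in\Z^{r}}T_{\overline{g},N}\widehat{\gamma}(\tau;N^{-1}n)=\Phi[1,T_{\overline{g},N}\widehat{\gamma}(\tau;N^{-1}\xi)](\tau)$, which is the first identity. Equivalently, this first identity is just the classical Poisson summation formula applied to each residue class $a+N\Z^{r}$ of $\overline{g}$: the dilation $x\mapsto Nx$ contributes $N^{-r}$, the shift by $N^{-1}a$ contributes $\bm{e}(N^{-1}\transpose{a}\xi)$, and multiplication by $P_{a}$ becomes the operator $P_{a}(-\tfrac{1}{2\pi\iu}\partial)$ on $\calF[\gamma]$, assembling into $T_{\overline{g},N}$.

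The main work, and the place where care is needed, is analytic bookkeeping rather than any conceptual obstacle: one must justify differentiation under the integral sign, termwise integration by parts, interchanging summation with integration, the vanishing of the vertical contour segments at infinity, and the absolute convergence of the residue series, all of which should follow from the kernel-function axioms of \cref{dfn:kernel_func} and \cref{lem:ker_func} controlling the decay and holomorphy of $\gamma$ and $\widehat{\gamma}$. Combinatorially, the delicate point is that $P_{a}\bigl(\tfrac{1}{2\pi\iu}\partial_{\xi}\bigr)$ can raise the order of the poles of $G(\bm{e}(\xi))$, so the residue must be organized via integration by parts (which reduces everything to the simple poles of $\prod_{i}\tfrac{\bm{e}(a_{i}\xi_{i})}{1-\bm{e}(N\xi_{i})}$) and, when $G$ does not factor through the variables, via the iterated-residue formalism---harmless here because the pole loci in each variable are governed solely by the factors $1-z_{i}^{N}$. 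Finally, the hypothesis $\overline{g}\in\overline{\frakQ}^{0}$ (rather than $\overline{\frakQ}$) is essential: it ensures $G$ has no Laurent-polynomial summand---i.e.\ $\overline{g}$ has no $\bm{1}_{\{a\}}$-terms---which would correspond to a pole-free part of $G(\bm{e}(\xi))$ contributing nothing to the residue sum while contributing $\gamma(\tau;a)\neq0$ to $\Phi[\overline{g},\gamma]$.
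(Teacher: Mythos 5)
Your proposal is correct and follows essentially the same route as the paper: both reduce the ``moreover'' statement via the integral representation of \cref{prop:modular_series_Laplace} to a residue computation, move the derivatives coming from the polynomial parts $P_a$ onto $\widehat{\gamma}$ by integration by parts at the level of residues, evaluate the remaining simple poles of $z^a/(1-z^N)$ to recover $T_{\overline{g},N}$, and obtain the first identity from the classical Poisson summation formula applied to each residue class $a+N\Z^r$ (which you note explicitly). Your constant bookkeeping (the factor $2^r$ in $G=\overline{\coe}^{-1}[\overline{g}]$ against $(-\pi\iu)^r$ from the two-sided contours) and your observation that $T_{\overline{g},N}\widehat{\gamma}(\tau;n)$ vanishes at points of $N^{-1}\Z^r\smallsetminus\calP$, which reconciles the two displayed sums, are if anything slightly more carefully normalized than the paper's own writeup.
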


\begin{proof}
	The first statement follows from the Poisson summation formula and 
	\[
	\calF \left[ P_a (a+Nx) \gamma(\tau; a+Nx) \right]
	=
	\frac{1}{N^r} \bm{e} \left( \frac{1}{N} \transpose{a} \xi \right)
	P_a \left( -\frac{1}{2\pi\iu} \frac{\partial}{\partial \xi_1}, \dots, -\frac{1}{2\pi\iu} \frac{\partial}{\partial \xi_r} \right)
	\widehat{\gamma} \left( \tau; \frac{1}{N} \xi \right).
	\]
	On the other hand, by \cref{prop:modular_series_Laplace} we have
	$ \Phi[\overline{g}, \gamma] (\tau) = \calL[G, \widehat{\gamma}] (\tau) $, where $ G(z) \coloneqq \overline{\coe}^{-1} [\overline{g}] (z) $.
	We prove the first and last statement by applying the residue theorem for this equality.
	We have
	\[
	\Phi[\overline{g}, \gamma] (\tau) = 
	\sum_{n \in \calP} T'_{\overline{g}} \widehat{\gamma} (\tau; n),
	\]
	where $ \widehat{\gamma} (\tau; \xi) $ is the Fourier transform of $ \gamma (\tau; x) $ and denote
	\[
	T'_{\overline{g}} \widehat{\gamma} (\tau; n)
	\coloneqq
	\left( \prod_{1 \le i \le r} (-2\pi\iu) \Res_{\xi_i = n_i} \right)
	\left[ G(\bm{e} (\xi)) \widehat{\gamma} (\tau; \xi) \right].
	\]
	Thus, it suffices to show that 
	$ T_{\overline{g}, N} \widehat{\gamma} (\tau; Nn) = T'_{\overline{g}} \widehat{\gamma} (\tau; n) $ for any $ n \in \calP $.
	
	First, we reduce to the case where $ \overline{g}(x) $ is a periodic map.
	For each $ 1 \le i \le r $, we have
	\[
	T_{\overline{g}(x) x_i, N} \widehat{\gamma} (\tau; \xi) 
	= 
	T_{\overline{g}(x) x_i, N} \left[ -\frac{1}{2\pi\iu} \frac{\partial \widehat{\gamma}}{\partial \xi_i} \right] (\tau; \xi).
	\]
	On the other hand, since
	\[
	\overline{\coe}[\overline{g}(x) x_i] (z)
	=
	\sum_{m \in \Z_{\ge 0}^r} \overline{g} (m) m_i z_1^{m_1} \cdots z_r^{m_r}
	=
	z_i \frac{\partial G}{\partial z_i} (z),
	\]
	we have
	\[
	T'_{\overline{g}(x) x_i} \widehat{\gamma} (\tau; \xi) 
	= 
	\left( \prod_{1 \le i \le r} (-2\pi\iu) \Res_{\xi_i = n_i} \right)
	\left[ \frac{1}{2\pi\iu} \frac{\partial G}{\partial \xi_i} (\bm{e} (\xi)) \widehat{\gamma} (\tau; \xi) \right].
	\]
	In general, for any meromorphic function $ \varphi(z) $ and $ \psi(z) $ and a complex number $ \alpha \in \bbC $, we have
	\[
	\Res_{z=\alpha} \varphi'(z) \psi(z)
	=
	- \Res_{z=\alpha} \varphi(z) \psi'(z)
	\]
	since $ \Res_{z=\alpha} \left( \varphi(z) \psi(z) \right)' = 0 $.
	Hence, we have
	\[
	T'_{\overline{g}(x) x_i} \widehat{\gamma} (\tau; \xi) 
	= 
	T'_{\overline{g}(x) x_i} \left[ -\frac{1}{2\pi\iu} \frac{\partial \widehat{\gamma}}{\partial \xi_i} \right] (\tau; \xi).
	\]
	Therefore, we can reduce to the case where $ \overline{g}(x) $ is a periodic map.
	
	Second, we prove the claim when $ \overline{g}(x) $ is a periodic map.
	We can assume $ r=1 $ and $ \overline{g}(x) = \bm{1}_{a + N\Z} (x) $ for $ a \in \Z $ and $ N \in \Z_{>0} $.
	In this case, we can write $ G(z) = z^a / (1 - z^N) $ and its any pole $ n $ is an element of $ \frac{1}{N} \Z $.
	Then, we have
	\[
	T'_{\overline{g}} \widehat{\gamma} (\tau; n)
	=
	-2\pi\iu \Res_{\xi = n} \frac{\bm{e} (a\xi)}{1 - \bm{e} (N\xi)} \widehat{\gamma} (\tau; \xi)
	=
	\frac{1}{N} \bm{e} (an) \widehat{\gamma} (\tau; n)
	=
	T_{\overline{g}, N} \widehat{\gamma} (\tau; n).
	\]
\end{proof}

\begin{ex}
	Let us consider the case when $ \gamma(\tau; x) \coloneqq 1/\cosh (\iu \tau x) = 2/(q^{x/2} + q^{-x/2}) $ and a quasi-polynomial as $ \overline{g}(x) \coloneqq 1 $. 
	In this case, by \cref{ex:ker_func} \cref{item:ex:ker_func:Eisenstein_level_4}, the modular series is $ \Phi[\overline{g}, \gamma] (\tau) = E_2^{(4)} \left( \tau/2 \right) $.
	Since $ \calF[1/\cosh(x)] = 1/\cosh(\xi) $,
	by applying the Poisson summation formula or \cref{prop:modular_series_modular}, we obtain
	\[
	E_2^{(4)} \left( \frac{\tau}{2} \right) 
	= \frac{\iu}{\tau} E_2^{(4)} \left( -\frac{1}{2\tau} \right),
	\]
	which is the modular transformation formula for $ E_2^{(4)} (\tau) $.
\end{ex}


\subsection{Modular transformation formula for false modular series} \label{subsec:false_modular_series_modular}


We use the following notation.

\begin{notn*}
	Let $ I \subset \{1, \dots, r\} $ be a subset.
	\begin{itemize}
		\item We denote $ I^\complement \coloneqq \{ 1, \dots, r \} \smallsetminus I $.
		\item For a variable $ x = (x_1, \dots, x_r) $, we denote 
		$ x_{I} \coloneqq (x_i)_{i \in I} $ and $ x_{I^\complement}^{} \coloneqq (x_j)_{j \in I^\complement} $. 
	\end{itemize}
\end{notn*}

The following theorem is our modular transformation formula.

\begin{thm} \label{thm:false_modular_series_modular}
	Let $ \overline{g}(x) \in \overline{\frakQ}^0 $ and choose a positive integer $ N $ such that
	\[
	\overline{g}(x) \in \sprod{ \bm{1}_{a + N \Z^r} (x) \mid a \in (\Z/ N\Z)^I }_{\Q[x]}.
	\]
	Then, for any $ e \in \{ \pm 1 \}^r $, we have
	\[
	\Phi[\sgn(x) \overline{g}(x), \gamma] (\tau)
	=
	\sum_{I \subset \{1, \dots, r\}} \left( \prod_{i \in I} e_i \right)
	\sum_{n_I \in \Z^I} \sgn(n_I)
	\left( 
	\prod_{j \in I^\complement} 2 \int_{C_{e_j}}
	\frac{d\xi_j}{1 - \bm{e}(\xi_j)}
	\right)
	T_{\overline{g}, N} \widehat{\gamma} \left( \tau; \frac{1}{N} n_I, \frac{1}{N} \xi_{I^\complement}^{} \right),
	\]
	where $ C_+ $ and $ C_- $ are integration paths defined in \cref{fig:C+,fig:C-}.
	
	Moreover, for each $ I \subset \{1, \dots, r\} $, we choose a positive integer $ N_I^{} $ such that 
	\[
	\overline{g}(x)
	=
	\sum_{ a_I^{} \in (\Z/ N_I^{} \Z)^I } \bm{1}_{a_I^{} + N_I^{}\Z^I} (x_I^{}) P_{a_I^{}} (x_I^{}) g_{a_I^{}} (x_{I^\complement}^{}) 
	\]
	with polynomials $ P_{a_I^{}} (x_I^{})  \in \Q[x_I^{}] $ and 
	quasi-polynomials $ g_{a_I^{}} (x_{I^\complement}^{}) \in \overline{\frakQ}^0_{I^\complement} $ 
	for $ a \in (\Z/N\Z)^r $.
	We denote
	\begin{align}
		&\phant
		T_{\overline{g}, I, N_I^{}} \widehat{\gamma} (\tau; \xi)
		\\
		&\coloneqq
		\frac{1}{N_I^{\abs{I}}} \sum_{ a_I^{} \in (\Z/ N_I^{} \Z)^I } \bm{e} \left( \transpose{a_I^{}} \xi_I^{} \right)
		P_{a_I^{}} \left( \left(-\frac{1}{2\pi\iu} \frac{\partial}{\partial \xi_i} \right)_{i \in I} \right)
		\widehat{\gamma} \left( \tau; \xi_I^{}, \xi_{I^\complement}^{} \right)
		\cdot \overline{\coe}[g_{a_I^{}}] ( \bm{e} ( \xi_{I^\complement}^{} ) ).
	\end{align}
	Let $ \calP_I^{} \subset \Q^I $ denote the set of poles of $ \overline{\coe} [\overline{g}] (\bm{e} (\xi)) $ in $ (\xi_i)_{i \in I} $.
	Then, we have
	\[
	\Phi[\sgn(x) \overline{g}(x), \gamma] (\tau)
	=
	\sum_{I \subset \{1, \dots, r\}} \left( \prod_{i \in I} e_i \right)
	\sum_{n_I \in \calP_I^{}} \sgn(n_I)
	\left( 
	\prod_{j \in I^\complement} 2 \int_{C_{e_j}}
	\right)
	T_{\overline{g}, I, N_I^{}} \widehat{\gamma} (\tau; n_I^{}, \xi_{I^\complement}^{})
	d\xi_{I^\complement}^{}.
	\]
\end{thm}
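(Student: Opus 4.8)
The plan is to mirror the two-step argument used for \cref{prop:modular_series_modular}, replacing ordinary Poisson summation by the Poisson summation formula with signature (\cref{thm:PSF_sgn}) for the first displayed identity, and the residue theorem applied to an integral representation for the second. First I would fix representatives $0 \le a_i < N$ and write $\overline{g}(x) = \sum_{a \in (\Z/N\Z)^r} \bm{1}_{a + N\Z^r}(x) P_a(x)$ with $P_a(x) \in \Q[x]$. For these representatives one has $\sgn(a + Nn) = \sgn(n)$ for every $n \in \Z^r$ (check the three cases $n_i \ge 1$, $n_i = 0$, $n_i \le -1$), so by \cref{lem:R_C_corresp} the false quasi-polynomial $\sgn(x)\overline{g}(x)$ equals $\widetilde{\coe}[G]$ for $G \coloneqq \overline{\coe}^{-1}[\overline{g}] \in \frakR$, and moreover
\[
\Phi[\sgn(x)\overline{g}(x), \gamma](\tau) = \sum_{a}\sum_{n \in \Z^r} \sgn(n)\, f_a(n), \qquad f_a(x) \coloneqq P_a(a + Nx)\,\gamma(\tau; a + Nx).
\]
The choice of representatives is a genuine (if minor) point: a different choice shifts the sign factor and breaks this clean identification.

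Since $\gamma$ is a kernel function, each $f_a$ is continuous with exponential decay, so \cref{thm:PSF_sgn} applies to $f_a$ with the given $e$. A change of variables $y = a + Nx$ together with the operational calculus (multiplication by the polynomial $P_a(y)$ corresponds on the Fourier side to the differential operator $P_a(-\tfrac{1}{2\pi\iu}\partial_\eta)$) gives $\widehat{f_a}(\xi) = N^{-r}\bm{e}(\transpose{a}\xi/N)\,[P_a(-\tfrac{1}{2\pi\iu}\partial_\eta)\widehat{\gamma}(\tau;\eta)]_{\eta = \xi/N}$, hence $\sum_a \widehat{f_a}(\xi) = T_{\overline{g}, N}\widehat{\gamma}(\tau; N^{-1}\xi)$ by the very definition of $T_{\overline{g}, N}$. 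Summing the conclusion of \cref{thm:PSF_sgn} over $a$ then produces the first displayed identity; absolute convergence of the right-hand side is ensured by the kernel-function axioms (\cref{lem:ker_func}: the operator $P_a(-\tfrac{1}{2\pi\iu}\partial_\eta)\widehat{\gamma} = \calF[P_a(x)\gamma]$ is again the Fourier transform of a kernel function) together with the boundedness of $1/(1 - \bm{e}(\xi_j))$ along $C_{e_j}$, and the precise signs are exactly those dictated by \cref{thm:PSF_sgn}.

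For the refined formula I would instead start from \cref{prop:modular_series_Laplace}, which gives $\Phi[\sgn(x)\overline{g}(x), \gamma](\tau) = \widetilde{\calL}[G, \widehat{\gamma}](\tau) = \PV \int_{\R^r} G(\bm{e}(\xi))\,\widehat{\gamma}(\tau;\xi)\, d\xi$ with the same $G = \overline{\coe}^{-1}[\overline{g}]$. Applying the contour identity $2\PV\int_\R d\xi_i = 2\int_{C_{e_i}} d\xi_i + 2\pi\iu\, e_i \sum_{\alpha} \sgn(\alpha)\Res_{\xi_i = \alpha}$ (the sum over the poles $\alpha$ of $G(\bm{e}(\xi))$ in $\xi_i$, which lie in $\tfrac{1}{N_i}\Z$) in each variable and expanding the resulting product over $i = 1, \dots, r$, I would organize the $2^r$ terms by the subset $I \subset \{1,\dots,r\}$ of variables in which the residue term is chosen. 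This yields, for each $I$, the sum $\sum_{n_I \in \calP_I}\sgn(n_I)$, a residue operator $\prod_{i \in I} 2\pi\iu \Res_{\xi_i = n_i}$, and the integrals $\prod_{j \in I^\complement} 2\int_{C_{e_j}}$ over the remaining variables.

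The remaining work — and the step I expect to be the actual obstacle — is to evaluate $\prod_{i \in I} 2\pi\iu \Res_{\xi_i = n_i}$ of $G(\bm{e}(\xi))\widehat{\gamma}(\tau;\xi)$ and match it with $T_{\overline{g}, I, N_I}\widehat{\gamma}$. Expanding $\overline{g}$ as a partial Fourier series in the $I$-variables, $\overline{g}(x) = \sum_{a_I} \bm{1}_{a_I + N_I\Z^I}(x_I) P_{a_I}(x_I)\, g_{a_I}(x_{I^\complement})$, the cyclotomic rational function $G$ factors accordingly, and since $\widehat{\gamma}$ is holomorphic the residue in $\xi_I$ interacts only with the factor $P_{a_I}(z_I\partial_{z_I})\prod_{i \in I}\tfrac{z_i^{a_i}}{1 - z_i^{N_i}}$. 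Using the integration-by-parts identity for residues $\Res_{z = \alpha}\varphi'\psi = -\Res_{z = \alpha}\varphi\psi'$ exactly as in the proof of \cref{prop:modular_series_modular}, one moves $P_{a_I}(z_I\partial_{z_I})$ off the rational factor onto $\widehat{\gamma}$, where it becomes $P_{a_I}((-\tfrac{1}{2\pi\iu}\partial_{\xi_i})_{i \in I})$, while the residue $\prod_{i \in I} 2\pi\iu\Res_{\xi_i = n_i}\tfrac{\bm{e}(a_i\xi_i)}{1 - \bm{e}(N_i\xi_i)}$ contributes $N_I^{-\abs{I}}\bm{e}(\transpose{a_I}n_I)$ up to sign; summing over $a_I$ reassembles precisely $T_{\overline{g}, I, N_I}\widehat{\gamma}(\tau; n_I, \xi_{I^\complement})$, the surviving $I^\complement$-part of $G(\bm{e}(\xi))$ providing the factor $\overline{\coe}[g_{a_I}](\bm{e}(\xi_{I^\complement}))$. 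Combining this with the outer factors from the contour identity gives the second displayed identity. The delicate bookkeeping here is keeping the partial-Fourier decomposition, the reduction to the periodic case, the normalization constants, and the signs straight, and checking that the poles actually picked up form exactly $\calP_I$; the analytic justification of all contour deformations is routine via \cref{lem:ker_func}.
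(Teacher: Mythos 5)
Your proposal is correct and follows essentially the same route as the paper, whose proof of this theorem is exactly the two-step sketch you give: the representation $\Phi[\sgn(x)\overline{g}(x),\gamma]=\widetilde{\calL}[G,\widehat{\gamma}]$ from \cref{prop:modular_series_Laplace}, the contour/residue splitting from the proof of \cref{thm:PSF_sgn}, and the residue evaluation (integration by parts for residues, reduction to the periodic case) from the proof of \cref{prop:modular_series_modular}. One caveat: \cref{thm:PSF_sgn} produces the factor $\prod_{i\in I}(-e_i)$, so your assertion that the signs come out ``exactly'' as in the first displayed identity (which carries $\prod_{i\in I}e_i$) glosses over what appears to be a sign slip in the statement itself rather than any defect in your argument.
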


This theorem follows from the same argument in proof of \cref{thm:PSF_sgn,prop:modular_series_modular} for $ \Phi[\widetilde{g}, \gamma] (\tau) = \widetilde{\calL}[G, \widehat{\gamma}] (\tau)$ proved in \cref{prop:modular_series_Laplace}.
As in the proof of \cref{prop:modular_series_modular}, the final expression in this theorem is proved using the framework of modular series, rather than the Poisson summation formula with signature (\cref{thm:PSF_sgn}). 
The final expression is used for the precise determination of the asymptotic expansion of the asymptotic coefficients of false theta functions (\cref{thm:main_asymptotic_false_theta,thm:false_theta_asymp}), in particular the asymptotic expansion of WRT invariants (\cref{thm:main_WRT_asymptotic}).

\begin{rem}
	In the above formula, integral terms appear.
	Typically, they have analytic continuation to $ \bbC \smallsetminus \R_{\le 0} $ or $ \bbC \smallsetminus \R_{\ge 0} $ and thus, \cref{thm:false_modular_series_modular} implies quantum modularity for false modular series. 
	The difference between \cref{prop:modular_series_modular} and \cref{thm:false_modular_series_modular} lies in the choice of signature or integration path---that is, in the parity.
	This motivates the following slogan:
	\begin{quote}
		\centering
		\emph{Quantum modularity arises from the parity.}
	\end{quote}
\end{rem}


\section{Asymptotic expansion of modular series} \label{sec:asymptotic}


In this section, we establish asymptotic expansion formulas for partial and false modular series as $ \tau \to 0 $.


\subsection{Notation for asymptotic expansion} \label{subsec:asymptotic_notation}


To begin with, we prepare notation for asymptotic expansion as in \cite[Chapter I]{Wong}. 
In what follows, we fix 
a set $ \Omega \subset \bbC $,
two maps $ f, g \colon \Omega \to \bbC $,
and a point $ z_0 \in \overline{\Omega} \cup \{ \infty \} \subset \bbP^1(\bbC) $.

\begin{dfn} \label{dfn:asymptotic}
	\begin{enumerate}
		\item (Landau symbol)
		We write 
		\[
		f(z) = O(g(z)) \text{ as } z \to z_0
		\]
		to express that there exists an open neighborhood $ U $ of $ z_0 $ such that 
		$ \abs{f/g} $ is bounded on $ \Omega \cap U $.
		
		
		\item \label{item:dfn:asymptotic:Poincare}
		(Poincar\'{e}'s notation)
		For a sequence $ (a_j)_{j=j_0}^\infty $ of complex numbers with $ j_0 \in \Z $, we write 
		\[
		f(z) \sim \sum_{j=j_0}^{\infty} a_j z^{-j} \text{ as } z \to z_0
		\]
		to express that
		\[
		f(z) = \sum_{j=j_0}^{j_1} a_j z^{-j} + O(z^{-j_1 - 1}) \text{ as } z \to z_0
		\]
		holds for any integer $ j_1 \ge j_0 $.
		
		In this case, we call the above equation the \emph{asymptotic expansion of $ f(z) $ as $ z \to z_0 $}. 
	\end{enumerate}	
\end{dfn}

We need the following terminologies to state our results.

\begin{dfn}
	Let $ D \subset \R^r $ be an unbounded open domain.
	A $ C^\infty $ function $ f \colon D \to \bbC $ is called \emph{of rapid decay} as $ x_1, \dots, x_r \to \infty $ 
	if $ x_1^{m_1} \cdots x_r^{m_r} f^{(n)} (x) $ is bounded as $ x_1, \dots, x_r \to \infty $ for any $ m, n \in \Z_{\ge 0}^r $.
\end{dfn}

\begin{dfn}
	Let $ f \colon \R^r \to \bbC $ be a $ C^\infty $ function of rapid decay as $ x_1, \dots, x_r \to \infty $.
	\begin{enumerate}
		\item Define the \emph{$ (-1) $st derivative} or \emph{antiderivative} of $ f $ as
		\[
		f^{(-1)} (x) =
		\frac{\partial^{-1} f}{\partial x_i^{-1}} (x) \coloneqq -\int_{x_i}^\infty f dx'_i.
		\]
		Moreover, for $ j \in \Z^r $, define
		\[
		f^{(j)}(x) \coloneqq \frac{\partial^{j_1 + \cdots + j_r} f}{\partial x_1^{j_1} \cdots \partial x_r^{j_r}} (x).
		\]
		
		\item For a Laurent series
		\[
		\varphi(t_1, \dots, t_r) = \sum_{j \in \Z^r} \varphi_j t_1^{j_1} \cdots t_r^{j_r} \in \Q((t_1, \dots, t_r)),
		\]
		define {the Hadamard product with one variable} $ \varphi \odot f (t) $ as
		\[
		\varphi \odot f (t)
		\coloneqq
		\sum_{j \in \Z^r} \varphi_j f^{(j)}(0) t^{j_1 + \cdots + j_r}
		\in \bbC((t)).
		\]
	\end{enumerate}
\end{dfn}


\subsection{Asymptotic expansion formula of Euler--Maclaurin type} \label{subsec:asymptotic_EM}


When considering asymptotic expansions, we assume that each function takes the form 
$ \gamma(\tau; x) = f(\tau^\kappa x) $ for some function $ f $ and rational number $ \kappa $. 
This assumption causes no issues for applications.
For simplicity, we replace $ \tau^\kappa $ by $ \tau $ and $ f(x) $ by $ f(x/\iu) $.
In this case, by letting $ \tau = \iu t $, we can write $ \gamma(\tau; x) = f(tx) $.
We state asymptotic formulas for functions of this type.
Our asymptotic formulas are as follows.

\begin{prop} \label{prop:asymp_EM}
	Let $ g(x) \in \frakQ, \overline{g}(x) \in \overline{\frakQ}, \widetilde{g}(x) \in \widetilde{\frakQ} $, and 
	$ G(z) \in \frakR $ be \textup{(}partial/false\textup{)} quasi-polynomials and a cyclotomic rational function
	which correspond via $ \Q $-isomorphisms $ \coe, \overline{\coe}, \widetilde{\coe} $.
	Let $ f \colon \bbC^r \to \bbC $ be a $ C^\infty $ function and
	$ -\pi/2 < \theta_0 < 0 < \theta_1 < \pi/2 $ be two angles.
	We assume that $ f $ is of rapid decay as $ \abs{x_1}, \dots, \abs{x_r} \to \infty $ 
	on the domain $ \{ x \in \bbC^r \mid \arg(x_1), \dots, \arg(x_r) \in (\theta_0, \theta_1) \} $.
	Then, for a vector $ \alpha \in \R^r $ and a complex number $ t \in \bbC \smallsetminus \{ 0 \} $ with $ \theta_0 < \arg \left( t \right) < \theta_1 $, we have the following asymptotic formulas.
	\begin{enumerate}
		\item \label{item:prop:asymp_EM:1}
		\textup{(\cite[Proposition 3.6]{M:GPPV})}
		\[
		\sum_{m \in \Z^r} g (m) f(t(m + \alpha))
		\sim
		\varphi \odot f (t)
		\quad \text{ as } t \to 0,
		\]
		where
		\[
		\varphi(t_1, \dots, t_r)
		\coloneqq
		e^{\alpha_1 t_1 + \dots + \alpha_r t_r} G (e^{t_1}, \dots, e^{t_r})
		\in \Q[\alpha_1, \dots, \alpha_r] ((t_1, \dots, t_r)).
		\]
		\item \label{item:prop:asymp_EM:2}
		Moreover, if $ f $ is also of rapid decay as $ \abs{x_1}, \dots, \abs{x_r}\to \infty $ 
		on the domain $ \{ x \in \bbC^r \mid \arg(x_1), \dots, \arg(x_r) \in (\theta_0 + \pi, \theta_1 + \pi) \} $, we have 
		\begin{alignat}{2}
			\sum_{m \in \Z^r} \widetilde{g} (m) f(t(m + \alpha))
			&\sim
			\varphi \odot f (t)
			& &\quad \text{ as } t \to 0,
			\\
			\sum_{m \in \Z^r} \overline{g} (m) f(t(m + \alpha))
			&\sim
			O(t^{R})
			& &\quad \text{ as $ t \to 0 $ for any $ R>0 $}. 
		\end{alignat}
	\end{enumerate}
\end{prop}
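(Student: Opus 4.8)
The plan is to reduce all three asymptotic statements to the single basic case \cref{item:prop:asymp_EM:1}, which we may invoke as a known result from \cite[Proposition 3.6]{M:GPPV}. The mechanism is the decomposition of a false quasi-polynomial (resp.\ a quasi-polynomial) into shifted copies of ordinary partial quasi-polynomials via the involutions $\iota_e$, exactly as in \cref{lem:R_C_corresp} \cref{item:lem:R_C_corresp:vp}. Concretely, writing $G(z) \in \frakR$ for the cyclotomic rational function corresponding to $g$, $\overline g$, $\widetilde g$ under the respective isomorphisms $\coe$, $\overline{\coe}$, $\widetilde{\coe}$, we have the pointwise identities
\[
\widetilde g(x) = 2^{-r} \sum_{e \in \{\pm 1\}^r} \coe[\iota_e G](ex),
\qquad
\overline g(x) = 2^{-r} \sum_{e \in \{\pm 1\}^r} \Bigl(\prod_i e_i\Bigr) \coe[\iota_e G](ex),
\]
so that, after the change of variable $m \mapsto em$ in each summand,
\[
\sum_{m \in \Z^r} \widetilde g(m) f(t(m+\alpha))
= 2^{-r} \sum_{e \in \{\pm 1\}^r} \sum_{m \in \Z^r} \coe[\iota_e G](m)\, f\bigl(t(em + e\alpha)\bigr),
\]
and similarly for $\overline g$ with the extra sign $\prod_i e_i$. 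Each inner sum is now of the form treated in \cref{item:prop:asymp_EM:1}, but with $t$ replaced by the componentwise-scaled argument $te$ and $\alpha$ replaced by $e\alpha$; the hypothesis that $f$ is of rapid decay on both the sector $\arg(x_i) \in (\theta_0,\theta_1)$ and its antipode $\arg(x_i) \in (\theta_0+\pi,\theta_1+\pi)$ is precisely what guarantees \cref{item:prop:asymp_EM:1} applies to every $e \in \{\pm 1\}^r$ (for $e_i = -1$ the relevant argument lies in the antipodal sector).

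The next step is to assemble the asymptotic expansions. By \cref{item:prop:asymp_EM:1}, the $e$-th summand is asymptotic to $\varphi_e \odot f(te)$ where $\varphi_e(t_1,\dots,t_r) = e^{\transpose{(e\alpha)} t}\, (\iota_e G)(e^{t_1},\dots,e^{t_r})$, and one checks from the definition of $\iota_e$ on $\frakR$ (namely $z_i \mapsto z_i^{e_i}$) that $(\iota_e G)(e^{et}) = G(e^{t})$, while $e^{\transpose{(e\alpha)}(et)} = e^{\transpose{\alpha}t}$; the Hadamard-product bookkeeping reduces to the observation that the $j$-th Taylor coefficient of the substituted function contributes $f^{(j)}(0)$ after the sign change $x_i \mapsto e_i x_i$ is absorbed into $f^{(j)}$, producing a factor $\prod_i e_i^{j_i}$. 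Summing over $e$: for $\widetilde g$, the factors $\prod_i e_i^{j_i}$ average to $2^{-r}\sum_e \prod_i e_i^{j_i} = \bm{1}_{j \in (2\Z)^r}$ over the parity classes — wait, more precisely one must track that the reflections also act on the variable of $f$, and after the dust settles the contributions recombine into exactly $\varphi \odot f(t)$ with $\varphi(t) = e^{\transpose{\alpha}t} G(e^{t_1},\dots,e^{t_r})$, the same $\varphi$ as in part \cref{item:prop:asymp_EM:1}. For $\overline g$, the additional sign $\prod_i e_i$ in the decomposition shifts the surviving parity class, and the cancellation is total at every order: $2^{-r}\sum_e \prod_i e_i \cdot (\text{$e$-contribution})$ has vanishing Taylor coefficients to all orders, giving $O(t^R)$ for every $R>0$.

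The main obstacle I anticipate is the careful bookkeeping in this last cancellation — keeping straight how the reflection $e$ acts simultaneously on the lattice variable $m$, on the shift $\alpha$, and on the argument of $f$ inside the Hadamard product, so that one can legitimately conclude that the alternating sum $\sum_e (\prod_i e_i)(\cdots)$ kills every coefficient in the formal expansion while the plain sum $\sum_e(\cdots)$ reproduces $\varphi \odot f$. One clean way to organize this is to note that $f^{(j)}(0)$ is a fixed number for each multi-index $j$, so the entire $e$-dependence of the $j$-th term of $\varphi_e \odot f$ is an explicit monomial in the $e_i$, and then $\sum_{e \in \{\pm1\}^r}$ of a sign monomial is either $2^r$ or $0$ by orthogonality of characters of $(\Z/2\Z)^r$; the false case picks out one coset and the barred case picks out a coset that is always empty once the extra $\prod_i e_i$ is folded in. A secondary technical point is uniformity of the error terms in $e$ so that finitely many asymptotic expansions may be added — but since $\{\pm 1\}^r$ is finite this is automatic from \cref{item:prop:asymp_EM:1}. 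The convergence of the ambient series (as opposed to its asymptotic expansion) on the relevant sector again follows from the rapid-decay hypothesis on $f$, which ensures each of the finitely many reindexed sums converges absolutely and locally uniformly in $t$.
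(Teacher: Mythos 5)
Your proposal is correct and follows essentially the same route as the paper: decompose $\widetilde g$ and $\overline g$ into the $2^r$ reflected partial quasi-polynomials $(\iota_e\,\coe[G])(ex)$, reindex $m\mapsto em$, apply part \cref{item:prop:asymp_EM:1} to $f_e(x)\coloneqq f(ex)$ (legitimate on the antipodal sector by the extra decay hypothesis), and observe that the sign monomials cancel termwise so the plain average reproduces $\varphi\odot f$ while the alternating average vanishes to all orders; the paper only adds a self-contained re-proof of \cref{item:prop:asymp_EM:1} via the Euler--Maclaurin lemma, which you may instead cite. The one slip is purely notational: in your display the reindexed summand should read $f\bigl(t(em+\alpha)\bigr)=f_e\bigl(t(m+e\alpha)\bigr)$ rather than $f\bigl(t(em+e\alpha)\bigr)$, exactly as your verbal description ($t\mapsto te$, $\alpha\mapsto e\alpha$) already says.
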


Surprisingly, in particular $ \Phi[g, \gamma] (\tau) $ and $ \Phi[\sgn(x) g(x), \gamma] (\tau) $ share the same asymptotic expansion
for $ g(x) = \bm{1}_{\Z_{\ge 0}^r} (x) p(x) \in \frakQ $ with $ p(x) \in \Q[x_1, \dots, x_r] $.
This phenomenon relates to the symmetry of Bernoulli polynomials, as we discuss in \cref{rem:asymptotic_Bernoulli_poly_symmetry} later.

\begin{rem} \label{rem:asymp_coeff_Bernoulli}
	In \cref{prop:asymp_EM}, the Laurent coefficients $ B_m $ of $ G(e^{t_1}, \dots, e^{t_r}) $ can be written by using Bernoulli polynomials.
	Indeed, by considering partial fraction decomposition, any $ G(z_1, \dots, z_r) \in \frakR $ can be written as a finite $ \Q $-linear sum of finite products of $ z_i^{a} / (1 - z_i^N) $ and we have 
	\[
	\eval{\frac{z_i^{a}}{1 - z_i^N}}_{z_i=e^t}
	=
	-\sum_{j=-1}^\infty \frac{B_{j+1} (a/k)}{(j+1)!} (Nt)^{j},
	\]
	where $ B_m (\alpha) $ is the \emph{$ m $-th Bernoulli polynomial} defined as
	\[
	\sum_{j=0}^\infty \frac{B_j (\alpha)}{j!} t^j
	\coloneqq
	\frac{t e^{\alpha t}}{e^t - 1}.
	\]
\end{rem}

\begin{rem}
	It follows immediately that \cref{prop:asymp_EM} also holds in the case of $ g(x) \in \frakQ \otimes_{\Q} \bbC $ and
	$ G(z) \in \frakR \otimes_{\Q} \bbC $.
\end{rem}

Before the proof, we discuss the relation to previous works.
A prototype of \cref{prop:asymp_EM} is the following statement:

\begin{lem} \label{lem:EM_asymptotic}
	For a $ C^\infty $ function $ f \colon \R^r \to \bbC $ of rapid decay as $ x_1, \dots, x_r \to \infty $, 
	a vector $ \alpha \in \R^r $, and a variable $ t \in \R_{>0} $, we have an asymptotic expansion 
	\[
	\sum_{m \in \Z_{\ge 0}^r} f(t(m+\alpha))
	\sim 
	\sum_{j \in \Z_{\ge -1}^r} \frac{B_{j_1 + 1}(\alpha_ 1) \cdots B_{j_r + 1}(\alpha_ r)}{(j_1 + 1)! \cdots (j_r + 1)!}
	f^{(j)}(0) t^{j_1 + \cdots + j_r}
	\quad \text{ as } t \to +0.
	\]
\end{lem}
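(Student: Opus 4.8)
The plan is to deduce the $r$-dimensional statement from the one-dimensional case, and to obtain the latter from the classical Euler--Maclaurin summation formula.

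First I would treat $r=1$. Apply the Euler--Maclaurin summation formula with remainder to $F(x)\coloneqq f(t(x+\alpha))$ on the half-line $[0,\infty)$. Since $f$ is of rapid decay, all boundary contributions at $+\infty$ vanish, and the Euler--Maclaurin remainder --- an integral over $[0,\infty)$ of a periodic Bernoulli function against a high derivative $F^{(2K+1)}$ --- is $O(t^{2K})$ for every $K$, because $\int_0^\infty\abs{F^{(2K+1)}(x)}\,dx=t^{2K}\int_{t\alpha}^\infty\abs{f^{(2K+1)}(u)}\,du=O(t^{2K})$. What survives is a finite combination of $\int_0^\infty f(t(x+\alpha))\,dx$ and the values $F^{(k)}(0)=t^{k}f^{(k)}(t\alpha)$ for $k\ge 0$. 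Expanding $f^{(k)}(t\alpha)=\sum_{l\ge 0}\frac{(t\alpha)^l}{l!}f^{(k+l)}(0)$ in its Taylor series at $0$, and rewriting the integral by the substitution $u=t(x+\alpha)$ as $\frac1t\bigl(\int_0^\infty f-\int_0^{t\alpha}f\bigr)$ and then expanding the lower-limit ``defect'' $\int_0^{t\alpha}f$ in powers of $t$ (its coefficients involving $f^{(-1)}(0),f^{(0)}(0),f^{(1)}(0),\dots$), one collects the coefficient of $f^{(j)}(0)\,t^{j}$ for each $j\ge -1$. That coefficient is a polynomial in $\alpha$, and the binomial identity $B_{n}(\alpha)=\sum_{k=0}^{n}\binom{n}{k}B_k\,\alpha^{n-k}$ for Bernoulli polynomials identifies it with $B_{j+1}(\alpha)/(j+1)!$, up to the overall sign fixed by the normalization of the antiderivative $f^{(-1)}$.

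Next I would pass to general $r$ by iterating the \emph{exact} one-dimensional Euler--Maclaurin identity variable by variable --- this is the standard proof of the multidimensional Euler--Maclaurin formula. At each stage the function to which the formula is applied is again $C^\infty$ and of rapid decay, so the step is legitimate; after $r$ iterations one is left with the $r$-fold product of the one-variable expansions (which accounts for the product $\prod_i B_{j_i+1}(\alpha_i)/(j_i+1)!$ and the total power $t^{j_1+\cdots+j_r}$), plus a sum of remainder terms each of which is $O(t^{N})$ for arbitrary $N$ by the rapid decay of $f$ and its derivatives. Equivalently one can argue by a plain induction on $r$, summing over $m_r$ first, applying the $(r-1)$-variable case to $x'\mapsto f(x',t(m_r+\alpha_r))$, and then summing the resulting one-variable sums over $m_r$; here one must check that the $O(t^{N})$ error of the inner expansion is controlled \emph{uniformly} in $m_r$, which holds because the relevant sup-norms of derivatives decay rapidly in $t(m_r+\alpha_r)$, so that their sum over $m_r$ costs at most one extra power of $t$.

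\textbf{Main obstacle.} The delicate point is the $j=-1$ term: the leading $t^{-1}$ contribution comes from the integral $\int_0^\infty f(t(x+\alpha))\,dx$, which must be developed as a Laurent series in $t$ whose non-negative powers arise from the lower-limit defect and then mix with the contributions of the Euler--Maclaurin Bernoulli-number terms; making this bookkeeping transparent --- so that after collecting everything the coefficients assemble into the single closed form $B_{j+1}(\alpha)/(j+1)!$ --- together with keeping the remainder estimates uniform through the $r$-fold iteration, is where essentially all the work lies. A clean alternative that sidesteps the bookkeeping is a Mellin-transform proof: for $\alpha>0$ (the general case following by peeling off finitely many terms) one has $\int_0^\infty\bigl(\sum_{m\ge 0}f(t(m+\alpha))\bigr)t^{s-1}\,dt=\zeta(s,\alpha)\,\widetilde f(s)$ with $\widetilde f(s)=\int_0^\infty f(x)x^{s-1}\,dx$, and shifting the Mellin inversion contour to the left while summing residues --- the Hurwitz zeta contributing its pole at $s=1$, and $\widetilde f$ a simple pole of residue $f^{(n)}(0)/n!$ at $s=-n$ --- reproduces the expansion via the special values $\zeta(-n,\alpha)=-B_{n+1}(\alpha)/(n+1)$; the $r$-variable version then follows from multiplicativity of Mellin transforms.
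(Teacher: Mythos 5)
Your primary route coincides with the paper's: the paper proves this lemma only by appeal to the Euler--Maclaurin summation formula (citing the proof of Zagier's Equation~(44)), with the several-variable case handled, as in the works it cites for $r=2$ and general $r$, by exactly the coordinate-by-coordinate iteration you describe; your remainder bound $O(t^{2K})$ from rapid decay, the vanishing of boundary terms at infinity, and the uniformity-in-$m_r$ check in the induction are the right ingredients, and your Mellin alternative is in fact the method of Zagier's own appendix.

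One caveat, at the single place where you hedge: the overall sign is \emph{not} ``fixed by the normalization of the antiderivative $f^{(-1)}$'' --- the coefficients with $j\ge 0$ involve no antiderivative at all, so that normalization cannot decide it. If you complete your own bookkeeping (from $\sum_{m\ge0}F(m)=\int_0^\infty F-\sum_{k\ge1}\tfrac{B_k}{k!}F^{(k-1)}(0)+R_K$, or from the Mellin residues with $\zeta(-n,\alpha)=-\tfrac{B_{n+1}(\alpha)}{n+1}$), then under the paper's conventions ($f^{(-1)}(x)=-\int_x^\infty f$ and $B_n(\alpha)$ generated by $te^{\alpha t}/(e^t-1)$) the coefficient of $f^{(j)}(0)t^{j}$ in one variable comes out as $-B_{j+1}(\alpha)/(j+1)!$, i.e.\ the Laurent coefficient of $e^{\alpha t}/(1-e^{t})$, which is precisely the series $\varphi$ that \cref{prop:asymp_EM} attaches to $g=\bm{1}_{\Z_{\ge0}}$; in $r$ variables the assembled constant therefore differs from the lemma's display by $(-1)^r$. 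A one-line test with $r=1$, $\alpha=0$, $f(x)=e^{-x}$, where $\sum_{m\ge0}e^{-tm}=t^{-1}+\tfrac12+\tfrac{t}{12}-\cdots$ while the displayed formula gives $-t^{-1}-\tfrac12-\tfrac{t}{12}-\cdots$, settles this: the statement as printed carries a sign slip relative to the paper's own conventions (harmless downstream, since the proof of \cref{prop:asymp_EM} only uses the existence of a universal expansion and re-derives the coefficients by testing against exponentials). So state the sign explicitly in your write-up rather than leaving it open --- the binomial identity $B_{n}(\alpha)=\sum_{k}\binom{n}{k}B_k\,\alpha^{n-k}$ assembles the polynomial but does not, by itself, determine its sign.
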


We can prove this formula by using the Euler--Maclaurin summation formula as in a proof of \cite[Equation (44)]{Zagier:asymptotic}.
This asymptotic formula is stated in \cite[Equation (44)]{Zagier:asymptotic} for the case when $ r=1 $ and Bringmann--Kaszian--Milas~\cite[Equation (2.8)]{BKM} and Bringmann--Mahlburg--Milas~\cite[Lemma 2.2]{BMM:high_depth} for the case when $ r=2 $.
\cref{lem:EM_asymptotic} are generalized by Murakami in \cite[Proposition 5.4]{M:plumbed} and \cite[Proposition 3.6]{M:GPPV} and gave a variant in \cite[Proposition 3.8]{M:GPPV}.
Our \cref{prop:asymp_EM} is an extension of all these formulas.

\begin{proof}[Proof of $ \cref{prop:asymp_EM} $]
	\cref{item:prop:asymp_EM:1} is essentially the same statement of \cite[Proposition 3.6]{M:GPPV}. 
	However, for the convenience of the reader, we include a reorganized version of the proof below.
	We prove the claim by reducing it to \cref{lem:EM_asymptotic}.
	
	We need to give the asympotic formula for a variable $ t \in e^{\iu \theta} \R_{>0} $ with $ \theta_0 < \theta < \theta_1 $.
	By replacing $ f(x) $ by $ f(\theta^{-1} x) $, we can assume $ \theta = 0 $.
	Since both sides are $ \R $-linear in $ g(x) $ and $ G(z) $, we can assume
	\[
	g(x) = (x_1 + \alpha_1)^{d_1} \cdots (x_r + \alpha_r)^{d_r} \bm{1}_{a_1 + N_1 \Z_{\ge 0}} (x_1) \cdots \bm{1}_{a_r + N_r \Z_{\ge 0}} (x_r)
	\]
	for some $ d_i, N_i \in \Z_{\ge 0} $, and $ a_i \in \Z $.
	If $ N_i = 0 $ for some $ 1 \le i \le r $, then the claim is trivial in the variable $ x_i $.
	Thus, we can assume $ N_1, \dots, N_r > 0 $.
	By $ \Q $-linearity, we can assume $ N_1 = \cdots N_r \eqqcolon N $.
	In this case, we have
	\begin{align}
		\sum_{m \in \Z^r} g(m) f(t(m + \alpha))
		&=
		\sum_{m \in \Z_{\ge 0}^r} (a_1 + N m_1 + \alpha_1)^{d_1} \cdots (a_r + N m_r + \alpha_r)^{d_r} f(t(a + km + \alpha))
		\\
		&=
		N^{-d_1 - \cdots - d_r} \sum_{m \in \Z_{\ge 0}^r} f_*(t(m + \beta)),
	\end{align}
	where $ f_* (x) \coloneqq N^{-d_1 - \cdots - d_r} x_1^{d_1} \cdots x_r^{d_r} $ and $ \beta \coloneqq N^{-1} a + \alpha $.
	By applying \cref{lem:EM_asymptotic}, we obtain an asymptotic formula
	\[
	\sum_{m \in \Z^r} g(m) f(t(m+\alpha))
	\sim 
	\sum_{j \in \Z_{\ge -1}^r} \frac{B_{j_1 + 1}(\alpha_ 1) \cdots B_{j_r + 1}(\alpha_ r)}{(j_1 + 1)! \cdots (j_r + 1)!}
	f_*^{(j)}(0) t^{j_1 + \cdots + j_r - d_1 - \cdots - d_r}
	\quad \text{ as } t \to +0.
	\]
	For any $ j \in \Z^r $, the derivative $ f_*^{(j)} (0) $ can be written as a $ \Q $-linear combination of $ f^{(j')} (0) $ with $ j' \in \Z^r $ such that
	$ j'_1 + \cdots + j'_r = j_1 + \cdots + j_r - d_1 - \cdots - d_r $.
	Thus, we have
	\[
	\sum_{m \in \Z^r} g(m) f(t(m+\alpha))
	\sim 
	\sum_{j' \in \Z^r} c_{j'} f^{(j')}(0) t^{j'_1 + \cdots + j'_r}
	\quad \text{ as } t \to +0
	\]
	for some $ c_{j'} \in \R $ independent of $ f $.
	
	It suffices to show $ c_{j} = \varphi_{j} $ for any $ j \in \Z^r $.
	Fix arbitrary $ u_1 , \dots, u_r < 0 $.
	In the case when $ f(x) = e^{u_1 x_1 + \cdots + u_r x_r} $, since $ \coe[G](x) = g(x) $, we have
	\[
	\sum_{m \in \Z^r} g(m) f(t(m+\alpha))
	=
	e^{\alpha_1 t u_1 + \dots + \alpha_r t u_r} G (e^{t u_1}, \dots, e^{t u_r})
	=
	\varphi(t u_1, \dots, t u_r).
	\]
	Let $ \varphi(t_1, \dots, t_r) \eqqcolon \sum_{j \in \Z^r} \varphi_{j} t_1^{j_1} \cdots t_r^{j_r} $.
	Then, we have
	\[
	\varphi(t u_1, \dots, t u_r)
	=
	\sum_{j \in \Z^r} \varphi_{j} u_1^{j_1} \cdots u_r^{j_r} t^{j_1 + \cdots + j_r}.
	\]
	On the other hand, we have $ f^{(j)}(0) = u_1^{j_1} \cdots u_r^{j_r} $ for any $ j \in \Z^r $.
	Thus, by comparing asymptotic expansions as $ t \to +0 $, we have $ c_{j} = \varphi_{j} $ for any $ j \in \Z^r $.
	Therefore, we obtain the claim for any $ f $.
	
	We prove \cref{item:prop:asymp_EM:2}.
	By \cref{lem:R_C_corresp} \cref{item:lem:R_C_corresp:vp}, we have
	\[
	\widetilde{g} (x)
	=
	2^{-r} \sum_{e \in \{ \pm 1 \}^r} g(ex),
	\]
	where $ ex \coloneqq (e_1 x_1, \dots, e_r x_r) $.
	Thus, we have
	\[
	\sum_{m \in \Z^r} \widetilde{g} (m) f(t(m + \alpha))
	=
	2^{-r} \sum_{e \in \{ \pm 1 \}^r}
	\sum_{m \in \Z^r} g (em) f (t(m + \alpha)).
	\]
	By replacing $ m $ by $ em $ and letting $ f_{e} (x) \coloneqq f(ex) $, this is equal to
	\[
	2^{-r} \sum_{e \in \{ \pm 1 \}^r}
	\sum_{m \in \Z^r} g (m) f_{e} (t(m + \alpha)).	
	\]
	Define $ \varphi (t_1, \dots, t_r) $ as in \cref{item:prop:asymp_EM:1} and
	let $ \varphi_e (t_1, \dots, t_r) \coloneqq \varphi (e_1 t_1, \dots, e_r t_r) $.
	Then, by \cref{item:prop:asymp_EM:1} we have
	\[
	\sum_{m \in \Z^r} \widetilde{g} (m) f(t(m + \alpha))
	\sim
	2^{-r} \sum_{e \in \{ \pm 1 \}^r}
	\varphi_e \odot f_{e} (t)
	\quad \text{ as } t \to 0.
	\]
	Let
	\[
	\varphi(t_1, \dots, t_r) \eqqcolon 
	\sum_{j \in \Z^r} \varphi_j t_1^{j_1} \cdots t_r^{j_r}.
	\]
	Since 
	\[
	\varphi_e (t_1, \dots, t_r)
	=
	\sum_{j \in \Z^r} e_1^{j_1} \cdots e_r^{j_r} \varphi_j t_1^{j_1} \cdots t_r^{j_r}
	\]
	and $  f_{e}^{(j)} (0) = e_1^{j_1} \cdots e_r^{j_r} f^{(j)} (0) $,
	we have
	\[
	\varphi_e \odot f_{e} (t)
	=
	\sum_{j \in \Z^r} \varphi_j f^{(j)} (0) t^{j_1 + \cdots + j_r}
	=
	\varphi \odot f (t),
	\]
	which proves the claim for $ \widetilde{g} $.
	
	The claim for $ \overline{g} $ follows from the same argument by using 
	\[
	\overline{g} (x)
	=
	2^{-r} \sum_{e \in \{ \pm 1 \}^r} \left( \prod_{1 \le i \le r} e_i \right) g(ex),
	\]
	which proved in \cref{lem:R_C_corresp} \cref{item:lem:R_C_corresp:vp}.
\end{proof}

In \cref{prop:asymp_EM} \cref{item:prop:asymp_EM:2}, we essentially treat signed sums involving $ \sgn(m) $.
The following corollary extends this to general signed sums with $ \sgn(Am) $.

\begin{cor} \label{cor:asymp_EM}
	Let $ r $ and $ s $ be positive integers and $ A \in \Mat_{s, r} (\Z) $ be a matrix of rank $ d $.
	Let $ \overline{g}(x) \in \overline{\frakQ} $ be a quasi-polynomial, 
	$ f \colon \bbC^r \to \bbC $ be a $ C^\infty $ function, and
	$ -\pi/2 < \theta_0 < 0 < \theta_1 < \pi/2 $ be two angles.
	We assume that $ f $ is of rapid decay as $ \abs{x_1}, \dots, \abs{x_r}\to \infty $ 
	on the domain $ \{ x \in \bbC^r \mid \arg(x_1), \dots, \arg(x_r) \in (\theta_0 + \pi, \theta_1 + \pi) \} $.
	Then, for a vector $ \alpha \in \R^r $ and a complex number $ t \in \bbC \smallsetminus \{ 0 \} $ with $ \theta_0 < \arg \left( t \right) < \theta_1 $, we have the asymptotic formula
	\[
	\sum_{m \in \Z^r} \sgn(Am) \overline{g} (m) f(t(m + \alpha))
	\sim
	\sum_{1 \le d' \le d} a_{d'}
	\varphi_{d'} \odot f_{B_{d'}}^{} (t)
	\quad \text{ as } t \to 0,
	\]
	with the following notation:
	\begin{itemize}
		\item Let $ (a_{d'})_{1 \le d' \le d} $ be a family of rational numbers, 
		$ (B_{d'})_{1 \le d' \le d} $ be a family of matrices in $ \Mat_r (\Z) \cap \GL_r (\Q) $, and
		$ (L_{d'})_{1 \le d' \le d} $ be a family of free $ \Z $-submodules with $ \Z^{d'} \subset L_{d'} \subset \Q^{d'} $ which detemined in \cref{prop:sign_Am_to_m}.
		\item Let
		\begin{align}
			\varphi_{d'} (t_1, \dots, t_{d'})
			&\coloneqq
			\sum_{m' \in L_{d'} \cap \Q_{\ge 0}^{d'}} 
			\overline{g} \left( B_{d'} \pmat{m' \\ 0} \right)
			e^{m'_1 t_1 + \cdots + m'_{d'} t_{d'}}
			\in \Q[\alpha_1, \dots, \alpha_r] ((t_1, \dots, t_{d'})).
		\end{align}
		\item Let $ f_{B_{d'}}^{} (x) \coloneqq f \left( B_{d'} \transpose{(x_1, \dots, x_{d'}, 0, \dots, 0)} \right) $.
	\end{itemize}
\end{cor}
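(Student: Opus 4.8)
The plan is to deduce \cref{cor:asymp_EM} from \cref{prop:asymp_EM} once the sign factor $\sgn(Am)$ has been resolved by the decomposition of \cref{prop:sign_Am_to_m}. First I would observe that, although \cref{prop:sign_Am_to_m} is naturally formulated for rapidly decaying test functions, it applies to the summand at hand: $m\mapsto\overline{g}(m)\,f(t(m+\alpha))$ is a quasi-polynomial times a function of rapid decay on the relevant sector, hence itself of rapid decay there. Applying it rewrites
\[
\sum_{m\in\Z^r}\sgn(Am)\,\overline{g}(m)\,f(t(m+\alpha))
=\sum_{1\le d'\le d}a_{d'}\sum_{m'\in L_{d'}\cap\Q_{\ge 0}^{d'}}
\overline{g}\!\left(B_{d'}\pmat{m'\\0}\right)f\!\left(t\left(B_{d'}\pmat{m'\\0}+\alpha\right)\right),
\]
with $a_{d'}$, $B_{d'}\in\Mat_r(\Z)\cap\GL_r(\Q)$ and $L_{d'}$ the data produced in \cref{prop:sign_Am_to_m}. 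The qualitative feature I would extract from that proposition is that each cone sum on the right is ``inward pointing'': as $m'\to\infty$ in $\Q_{\ge 0}^{d'}$ with $\arg(t)\in(\theta_0,\theta_1)$, the argument $t(B_{d'}\pmat{m'\\0}+\alpha)$ goes to infinity inside the sector $\{\arg(x_i)\in(\theta_0+\pi,\theta_1+\pi)\}$, which is why the decay hypothesis is imposed on exactly that sector. (After expanding the product of $s$ signs one is left with cone sums; the ones that would point toward $\{\arg(x_i)\in(\theta_0,\theta_1)\}$ are re-expressed using the vanishing $\sum_{m\in\Z^{d'}}\overline{h}(m)f(\cdots)\sim O(t^R)$ for the bilateral quasi-polynomial part, cf.\ \cref{prop:asymp_EM} \cref{item:prop:asymp_EM:2}.)

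Next I would handle each of the finitely many cone sums separately. Rescaling $L_{d'}$ to $\Z^{d'}$ (pick $c\in\Z_{>0}$ with $cL_{d'}\subseteq\Z^{d'}$ and substitute $m'\mapsto m'/c$), the coefficient $m'\mapsto\overline{g}\!\left(B_{d'}\pmat{m'\\0}\right)$, restricted to the cone $\Q_{\ge 0}^{d'}$, lies in $\frakQ_{d'}$: indeed $\overline{g}\in\overline{\frakQ}_r$ is a quasi-polynomial, and pulling back along the linear map $m'\mapsto B_{d'}\pmat{m'\\0}$ followed by restriction to the nonnegative cone sends quasi-polynomials to partial quasi-polynomials. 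The evaluation of $f$ then has the form $f_{B_{d'}}(t(m'+\alpha_{d'}))$ (up to a fixed offset in the frozen coordinates, which only moves the affine slice on which $f$ is read off and is harmless), with $f_{B_{d'}}$ as in the statement, and $f_{B_{d'}}$ is of rapid decay on $\{\arg(x_i)\in(\theta_0,\theta_1)\}$ by the hypothesis on $f$ and the inward-pointing property above. Hence \cref{prop:asymp_EM} \cref{item:prop:asymp_EM:1} applies and gives
\[
\sum_{m'\in L_{d'}\cap\Q_{\ge 0}^{d'}}
\overline{g}\!\left(B_{d'}\pmat{m'\\0}\right)f\!\left(t\left(B_{d'}\pmat{m'\\0}+\alpha\right)\right)
\sim\varphi_{d'}\odot f_{B_{d'}}(t)\quad\text{as }t\to 0,
\]
where $\varphi_{d'}$ is the generating series of that partial quasi-polynomial (together with the exponential factor coming from the shift, which accounts for the $\Q[\alpha_1,\dots,\alpha_r]$-dependence), i.e.\ exactly the $\varphi_{d'}$ of the statement. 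Summing over $1\le d'\le d$ with the coefficients $a_{d'}$ and using additivity of Poincar\'{e} asymptotic expansions gives the claim.

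The main obstacle is the bookkeeping in this second step. One must check, uniformly over the finitely many $B_{d'}$, that pulling back along $m'\mapsto B_{d'}\pmat{m'\\0}$ and restricting to the cone genuinely lands in $\frakQ_{d'}$ (this uses that $B_{d'}$ has integer entries and is invertible over $\Q$, so that preimages of residue classes modulo $N$ are again unions of residue classes on $L_{d'}$), that the sector of rapid decay is transported correctly by $B_{d'}$, and that the resulting generating series matches the $\varphi$ appearing in \cref{prop:asymp_EM} \cref{item:prop:asymp_EM:1} once the shift is accounted for. The genuinely combinatorial content --- the decomposition of $\sgn(Am)$ itself --- is carried by \cref{prop:sign_Am_to_m} and is used here as a black box.
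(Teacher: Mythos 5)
Your overall strategy---use \cref{prop:sign_Am_to_m} as a black box and then feed the resulting pieces into \cref{prop:asymp_EM}---is exactly the route the paper intends (its own proof of \cref{cor:asymp_EM} consists of precisely that pointer, with all details omitted). The problem is your middle step. \cref{prop:sign_Am_to_m} does \emph{not} produce the cone sums you display: it rewrites the left-hand side as
\[
\sum_{1\le d'\le d} a_{d'} \sum_{(m',m'')\in L_{d'}\oplus\Z^{r-d'}} \sgn(m')\,\overline{g}\bigl(B_{d'}(m',m'')\bigr)\, f\bigl(t(B_{d'}(m',m'')+\alpha)\bigr),
\]
a sum over the \emph{full} product lattice in which only the first $d'$ coordinates carry a sign. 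Your displayed ``identity,'' with $m'$ restricted to $L_{d'}\cap\Q_{\ge0}^{d'}$ and $m''$ frozen to $0$, is simply a different quantity: already for $r=2$, $s=d=1$, $A$ the row vector $(1,0)$ and $\overline{g}\equiv 1$, the proposition returns $\sum_{(m_1,m_2)\in\Z^2}\sgn(m_1)f(tm)$, not $\sum_{m_1\ge 0}f(tm_1,0)$. The entire analytic content of the corollary is the passage from this full-lattice mixed sum to the slice data ($m''=0$, $x''=0$) appearing in $\varphi_{d'}$ and $f_{B_{d'}}$, and your parenthetical does not supply it. You cannot dispatch it by citing \cref{prop:asymp_EM} \cref{item:prop:asymp_EM:2}: the coefficient $\sgn(m')\overline{g}(B_{d'}m)$ is neither a quasi-polynomial nor a false quasi-polynomial in the paper's sense (the sign involves only some of the variables), and a bilateral sum over $m''$ is not $O(t^R)$ on its own---in the example above the $m_2$-sum of a Gaussian equals $t^{-1}\int_\R f(\,\cdot\,,x_2)\,dx_2$ up to terms beyond all orders, so ``the bilateral part vanishes to all orders'' is false as a pointwise statement about those sums. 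What is actually needed (and what the paper leaves implicit) is a mixed analogue of \cref{prop:asymp_EM}: run the reflection/cancellation argument $2^{-(r-d')}\sum_{e''}$ only in the $m''$-variables of the coefficient, and track how the cancellation interacts with the antiderivative (negative-index) contributions of $f$ in those variables, which is precisely where the collapse to the slice must be justified. That step is missing from your write-up.

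Your second stage---rescaling $L_{d'}$ to $\Z^{d'}$, checking that pulling $\overline{g}$ back along $m'\mapsto B_{d'}(m',0)$ stays within the quasi-polynomial class, transporting the decay sector, and matching the generating series with $\varphi_{d'}$---is sensible and matches the intended use of \cref{prop:asymp_EM} \cref{item:prop:asymp_EM:1}; note, though, that the ``inward-pointing'' sector property you attribute to \cref{prop:sign_Am_to_m} is not part of that proposition (it says nothing about sectors or about $B_{d'}$ preserving them), so this too would need an argument. As it stands, the proof is incomplete at the reduction step described above.
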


The proof follows from \cref{prop:sign_Am_to_m}, which will be proved in \cref{sec:sgn}.
We omit the proof.

\begin{rem} \label{rem:cor:asymp_EM}
	\begin{enumerate}
		\item In the case when $ s=d $, by \cref{lem:sign_Am_to_m}, we have
		\[
		\sum_{m \in \Z^r} \sgn(Am) \overline{g} (m) f(t(m + \alpha))
		\sim
		\varphi \odot f_B^{} (t)
		\quad \text{ as } t \to 0,
		\]
		with the following notation:
		\begin{itemize}
			\item Let $ D(A) \coloneqq \abs{\Z^d / A(\Z^r)} $.
			\item Let $ B \in \Mat_r (\Z) \cap \GL_r (\Q) $ be a matrix determined in \cref{lem:sign_Am_to_m}.
			\item For $ x \in \R^r $, denote $ x' \coloneqq \transpose{(x_1, \dots, x_d)} $.
			\item Let
			\begin{align}
				&\phant
				\varphi(t_1, \dots, t_d)
				\\
				&\coloneqq
				\left(
				\sum_{\mu \in D(A)^{-1} A(\Z^r) / \Z^d} 
				\exp \left( \sum_{1 \le i \le d} (\mu_i + ((A^*)^{-1} \alpha)_i) t_i \right)
				\right)
				\overline{\coe}^{-1} \left[ \overline{g} \left( A^* \pmat{x' \\ 0} \right)  \right] (e^{t_1}, \dots, e^{t_d})
				\\
				&\in \Q[\alpha_1, \dots, \alpha_r] ((t_1, \dots, t_d)).
			\end{align}
			\item Let $ f_B^{} (x) \coloneqq f \left( B \smat{x' \\ 0} \right) $.
		\end{itemize}
		
		\item In the case when $ A = \pmat{I_d & C} $ with the identity matrix $ I_d \in \GL_d(\Z) $, we have 
		\[
		B = \pmat{I_d & -C \\ o & I_{r-d}}.
		\]
		Thus, we have $ f_B^{} (x') = f(x', 0) $ and
		\[
		\varphi(t_1, \dots, t_d)
		=
		\exp \left( \sum_{1 \le i \le d} (\alpha_i + (C \alpha'')_i) t_i \right)
		\overline{\coe}^{-1} \left[ \overline{g} \right] (e^{t_1}, \dots, e^{t_d}, 0, \dots, 0),
		\]
		where $ \alpha'' \coloneqq (\alpha_{d+1}, \dots, \alpha_r) $.
		
	\end{enumerate}
\end{rem}

Next, we discuss the relation between the symmetry of Bernoulli polynomials and the coincidence of the asymptotic expansions of partial modular series and false modular series.

\begin{rem} \label{rem:asymptotic_Bernoulli_poly_symmetry}
	The Bernoulli polynomials have a symmetry $ B_m (1-u) = (-1)^m B_m (u) $,
	which follows from
	\begin{equation} \label{eq:Bernoulli_poly_symmetry}
		\frac{e^{(1-u)t}}{e^t - 1}
		= -\frac{e^{-ut}}{e^{-t} - 1}.
	\end{equation}
	Fix integers $ a $ and $ k $ and let $ r=1 $ and $ g(x) = \bm{1}_{a + N\Z_{\ge 0}} (x) $.
	In this case, we have
	\[
	\coe[G] (x) = \frac{1} {2} \sgn(x-a) \bm{1}_{a + N\Z} (x), \quad
	G(z) = \frac{z^a}{1 - z^N}
	\]
	by \cref{lem:R_C_corresp} \cref{item:lem:R_C_corresp:vp_express}.
	By replacing $ f(x) $ by $ f(x/N) $, we can write the left hand side in \cref{prop:asymp_EM} \cref{item:prop:asymp_EM:1,item:prop:asymp_EM:2} as
	\[
	\sum_{m=0}^\infty f \left( t\left( \frac{a}{N} + m \right) \right), \quad
	\frac{1}{2} \sum_{m=-\infty}^\infty \sgn(m) f \left( t\left( \frac{a}{N} + m \right) \right)
	\]
	respectively.
	By \cref{prop:asymp_EM,rem:coe_Laurent}, these have the same asymptotic expansion as $ t \to +0 $
	\[
	\sum_{j=-1}^\infty \frac{B_{j+1}(a/N)}{(j+1)!} f^{(j)}(0) t^j.
	\]
	Thus, the difference
	\[
	-\sum_{m \le -1} f \left( t\left( \frac{a}{N} + m \right) \right)
	\]
	also has the same asymptotic expansion.
	On the other hand, the last infinite sum can be written as
	\[
	-\sum_{m =0}^\infty f \left( -t\left( 1 - \frac{a}{N} + m \right) \right).
	\]
	By applying \cref{prop:asymp_EM} \cref{item:prop:asymp_EM:1}, this sum has asymptotic expansion
	\[
	\sum_{j=-1}^\infty (-1)^{j+1} \frac{B_{j+1}(1 - a/N)}{(j+1)!} f^{(j)}(0) t^j.
	\]
	Since the asymptotic expansion is unique, we have
	\[
	(-1)^{j+1} B_{j+1} \left( 1 - \frac{a}{N} \right)
	=
	B_{j+1} \left( \frac{a}{N} \right),
	\]
	which is the symmetry of Bernoulli polynomials.
	
	The above argument can be regraded as using $ \bm{1}_{a+N\Z_{\ge -1}} (-m) = \bm{1}_{k-a + N\Z_{\ge 0}} (m) $, which follows from
	\begin{equation} \label{eq:Bernoulli_poly_symmetry2}
		\frac{z^a}{1 - z^N} = -\frac{z^{N-a}}{1 - z^N}
	\end{equation}
	and \cref{lem:R_C_corresp} \cref{item:lem:R_C_corresp:involution}.
	Since \cref{eq:Bernoulli_poly_symmetry,eq:Bernoulli_poly_symmetry2} are same equation, we can conclude that:
	\begin{quote}
		Partial modular series and false modular series share the same asymptotic expansions,
		for the same reason as the symmetry of Bernoulli polynomials.
	\end{quote}
\end{rem}

\begin{rem}
	To consider quantum modular forms or quantum invariants, we need to consider the asymptotic expansions as
	$ \tau \to \alpha $ for any rational number $ \alpha \in \Q $.
	In general, it is impossible to provide such a formula.
	However, when $ \gamma(\tau; x) = q^{\transpose{x} Sx/2} $ for some positive definite symmetric matrix $ S \in \Sym_r^+(\Q) $,
	we can deduce the asymptotic fomula as $ \tau \to \alpha $ of partial and false modular series of the kernel function $ \gamma(\tau; x) $ 
	from \cref{prop:asymp_EM}.
	Indeed, in this case we have $ \gamma(\tau + \alpha; x) = \bm{e} (\alpha\transpose{x} Sx/2) q^{\transpose{x} Sx/2} $.
	Since $ \bm{e} (\alpha\transpose{x} Sx/2) $ is a periodic function on $ \Z^r $,
	for any (partial/false) quasi-polynomial $ g \in \frakQ \cup \overline{\frakQ} \cup \widetilde{\frakQ} $,
	a map $ g(x) \bm{e} (\alpha\transpose{x} Sx/2) $ is also a (resp. partial/false) quasi-polynomial.
	Thus, we can apply \cref{prop:asymp_EM} to 
	$ \Phi[g, \gamma] (\tau + \alpha) = \Phi[g(x) \bm{e} (\alpha\transpose{x} Sx/2), \gamma] (\tau) $
	and obtain the asymptotic fomula as $ \tau \to \alpha $.
\end{rem}

To close this section, we discuss the order of the asymptotic series $ \varphi \odot f (t) $ in \cref{prop:asymp_EM}.
By definition, we have
\[
\ord_{t=0} \varphi \odot f (t)
\ge \ord_{t=0} \varphi(t, \dots, t)
= \ord_{z=1} G(z, \dots, z).
\]
We rephrase this quantity in terms of the false quasi-polynomial $ \widetilde{g}(x) $.

\begin{dfn}
	\begin{enumerate}
		\item We define the degree map $ \deg \colon \overline{\frakQ} \to \Z_{\ge -r} $ of a quasi-polynomial as follows.
		Writing a quasi-polynomial $ \overline{g}(x) \in \overline{\frakQ} $ as
		\[
		\overline{g}(x)
		=
		\sum_{I \subset \{ 1, \dots, r \}} C_I^{} (x_I^{}) P_I^{} (x_I^{}) \bm{1}_{A_{I^\complement}^{}}^{} ( x_{I^\complement}^{} )
		\]
		for some period map $ C_I^{} \colon (\Z/N_I^{} \Z)^I \to \Q $ with positive integer $ N_I^{} \in \Z_{>0} $,
		polynomial $ P_I^{} (x_I^{}) \in \Q[ x_I^{} ] $, and
		a finite set $ A_{I^\complement}^{} \subset \Z^{I^\complement} $,
		where $ I^\complement \coloneqq I \smallsetminus \{ 1, \dots, r \} $.
		We define
		\[
		\deg \overline{g}
		\coloneqq
		\max \left\{ \deg P_I^{} - \abs{I^\complement} \mid I \subset \{ 1, \dots, r \} \right\}.
		\]
		
		\item Similarly, we define the degree map $ \deg \colon \widetilde{\frakQ} \to \Z_{\ge -r} $ of a false quasi-polynomial as 
		\[
		\deg \widetilde{g}
		\coloneqq
		\max \left\{ \deg P_I^{} - \abs{I^\complement} \mid I \subset \{ 1, \dots, r \}, 1 \le j \le M \right\}
		\]
		for
		\[
		\widetilde{g}(x)
		=
		\sum_{I \subset \{ 1, \dots, r \}} \sum_{1 \le j \le M} \sgn( x - b_j )
		C_{I, j}^{} (x_I^{}) P_{I, j}^{} (x_I^{}) \bm{1}_{A_{I^\complement, j}^{}}^{} ( x_{I^\complement}^{} ),
		\]
		where $ b_j \in \Z^r $ and other data are as above.
	\end{enumerate}
\end{dfn}

\begin{rem} \label{rem:deg_q-poly}
	\begin{enumerate}
		\item \label{item:rem:deg_q-poly:comm}
		The above two degree maps are commutative via the isomorphism $ \overline{\frakQ} \cong \widetilde{\frakQ} $ in \cref{lem:R_C_corresp} \cref{item:lem:R_C_corresp:R_C_tilde}.
		
		\item \label{item:rem:deg_q-poly:inequality}
		For any $ g(x), h(x) \in \overline{\frakQ} $ (resp.~$ g(x), h(x) \in \widetilde{\frakQ} $), we have
		$ \deg (g+h) \le \max \{ \deg g, \deg h \} $.
		
		\item \label{item:rem:deg_q-poly:order}
		For $ G(z_1, \dots, z_r) \in \frakR $, we have
		$ \ord_{z=1} G(z, \dots, z) = -\deg \overline{\coe}[G] - r = -\deg \widetilde{\coe}[G] - r $.
		Thus, for the asymptotic series $ \varphi \odot f (t) $ in \cref{prop:asymp_EM}, we have
		\[
		\ord_{t=0} \varphi \odot f (t) \ge \deg -\widetilde{g} - r. 
		\]
	\end{enumerate}
\end{rem}


\subsection{Asymptotic expansion formula of stationary phase type} \label{subsec:asymptotic_stationary_phase}


Since modular series have integral representations, we can reformulate the above asymptotic formula (\cref{prop:asymp_EM}) in terms of integrals as follows.

\begin{prop} \label{prop:asymp_stationary_phase}
	Let $ f \colon \R^r \to \bbC $ be a continuous function of exponential decay as $ \abs{x} \to \infty $ and real analytic at $ 0 \in \R^r $,
	$ \R^r \subset U \subset \bbC^r $ be an open subset, 
	$ \varphi \colon U \to \bbC $ be a meromorphic function such that all its poles lie on $ \R^r $ and 
	for some $ K, c > 0 $ it satisfies $ \abs{\varphi(\xi)} \le K e^{c\abs{\xi}} $ for any $ \xi \in U $.
	We assume that the Fourier transform $ \widehat{f} $ satisfies the following conditions:
	\begin{enumerate}
		\item \label{item:prop:asymp_stationary_phase:1}
		For any sufficiently small $ \varepsilon > 0 $ and any $ m \in \Z_{\ge 0}^r $, an integral
		\[
		\int_{(\R + \iu \varepsilon)^r} \widehat{f} (\xi) \frac{d\xi}{\xi_1^{m_1} \cdots \xi_r^{m_r}}
		\]
		converges.
		
		\item \label{item:prop:asymp_stationary_phase:2}
		There exist $ \lambda \colon \R_{>0} \to \R $, $ h \colon U \to \R_{>0} $ and $ K' > 0 $ such that the followings hold:
		\begin{enumerate}
			\item \label{item:prop:asymp_stationary_phase:2a}
			For any $ \xi \in U $ and $ u>0 $, we have
			$ \abs{\widehat{f} (u\xi)} \le K' e^{-\lambda(u) g(\xi)} $.
			
			\item \label{item:prop:asymp_stationary_phase:2b}
			We have $ e^{-\lambda(u)} = O(u^{-R}) $ as $ u \to \infty $ for any $ R>0 $.
			
			\item \label{item:prop:asymp_stationary_phase:2c}
			For any sufficiently small $ \varepsilon' > 0 $, we have
			$ \mu \coloneqq \inf \{  g(\xi) \mid \xi \in U, \abs{\Re(\xi)} \ge \varepsilon' \} $.
			
			\item \label{item:prop:asymp_stationary_phase:2d}
			For any sufficiently small $ \varepsilon' > 0 $, there exists $ c' > 0 $ such that for any $ e \in \{ \pm 1 \}^r $, we have
			\[
			\int_{\R^r + \iu \varepsilon' e} e^{-c' g(\xi) + c \abs{\xi}} d\xi < \infty.
			\]
		\end{enumerate}
	\end{enumerate}
	Then, for any sufficiently small $ \varepsilon > 0 $, we have following asymptotic formulas
	\begin{align}
		\int_{(\R + \iu \varepsilon)^r} \widehat{f}(u\xi) \varphi(x) dx
		&\sim 
		\varphi \odot f \left( \frac{1}{2\pi\iu u} \right) \cdot u^{-r}
		\quad \text{ as } u \to \infty,
		\\
		\PV \int_{\R^r} \widehat{f}(u\xi) \varphi(x) dx
		&\sim 
		\varphi \odot f \left( \frac{1}{2\pi\iu u} \right) \cdot u^{-r}
		\quad \text{ as } u \to \infty.
	\end{align}
\end{prop}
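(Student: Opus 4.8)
The plan is to deduce this proposition from the Euler--Maclaurin asymptotic expansion (\cref{prop:asymp_EM}) together with the integral representations of modular series (\cref{prop:modular_series_Laplace}), the role of the technical hypotheses being to guarantee that the asymptotics of the integral depend only on the behavior of $\varphi$ near $\xi = 0$. First I would localize: fixing a small $\varepsilon' > 0$, split $\int_{(\R + \iu\varepsilon)^r}$ into the polydisc-type region $\{\, \abs{\Re(\xi_i)} < \varepsilon' \text{ for all } i \,\}$ and its complement. On the complement, \cref{item:prop:asymp_stationary_phase:2c} bounds $g$ below by a positive constant, so \cref{item:prop:asymp_stationary_phase:2a} and \cref{item:prop:asymp_stationary_phase:2b} make $\abs{\widehat{f}(u\xi)}$ beat every power of $u^{-1}$, while \cref{item:prop:asymp_stationary_phase:2d} together with the bound $\abs{\varphi(\xi)} \le K e^{c\abs{\xi}}$ makes the integral converge; hence the complement contributes $O(u^{-R})$ for every $R > 0$ and is irrelevant to the asymptotic expansion.

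On the central region I would expand $\varphi$ in its Laurent series at $0$, which has finite principal part since the poles of $\varphi$ lie on $\R^r$. The key computation is that of a single monomial: substituting $\eta = u\xi$ and then shifting each coordinate contour from $\Im(\eta_i) = u\varepsilon$ down to $\Im(\eta_i) = \varepsilon$ --- which crosses no singularity, the only pole $\eta_i = 0$ of $\eta^j$ lying below both, and which is legitimate because \cref{item:prop:asymp_stationary_phase:2a} presupposes that $\widehat{f}$ extends to an entire function on $\bbC^r$ with rapid decay on each such contour --- one obtains
\[
\int_{(\R + \iu\varepsilon)^r} \widehat{f}(u\xi)\, \xi^j \, d\xi = \frac{f^{(j)}(0)}{(2\pi\iu u)^{j_1 + \cdots + j_r}}\, u^{-r},
\]
by Fourier inversion $f(x) = \int \widehat{f}(\eta)\bm{e}(\transpose{\eta}x)\,d\eta$ differentiated at $x = 0$ when $j_i \ge 0$, and, when $j_i < 0$, by reading $\eta_i^{-1}$ as the boundary value $(\eta_i + \iu 0)^{-1}$, which reproduces the one-sided antiderivative $f^{(-1)}$ from \cref{prop:asymp_EM}'s conventions (iterated for higher order). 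Summing over the monomials of the $N$-th Laurent truncation $\varphi_N$ of $\varphi$ gives $\int_{(\R + \iu\varepsilon)^r}\widehat{f}(u\xi)\varphi_N(\xi)\,d\xi = u^{-r}\,\varphi_N \odot f(1/(2\pi\iu u))$, while the remainder $\varphi - \varphi_N = O(\abs{\xi}^{N})$ near $0$ contributes $O(u^{-r-N})$ after the substitution $\eta = u\xi$ and the superpolynomial decay of $\widehat{f}$. Letting $N \to \infty$ yields the first asymptotic formula. Equivalently, one may phrase the same argument through \cref{prop:modular_series_Laplace}, approximating $\varphi$ by $G(\bm{e}(\xi))$ with $G \in \frakR \otimes_{\Q} \bbC$ matching the Laurent expansion of $\varphi$ at $0$ to order $N$; the corresponding integral, after $\xi \mapsto \xi/u$, becomes $u^{-r}\sum_m \coe[G](m)\, f(m/u)$, whose asymptotics are given directly by \cref{prop:asymp_EM}\,\cref{item:prop:asymp_EM:1}, the factor $2\pi\iu$ appearing from the change of Laurent variable $z_i = \bm{e}(\xi_i)$.

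For the principal-value formula I would use $\PV\int_{\R^r} = 2^{-r}\sum_{e \in \{\pm 1\}^r} \int_{\R^r + \iu\varepsilon e}$ and run the identical argument on each $\int_{\R^r + \iu\varepsilon e}$; \cref{item:prop:asymp_stationary_phase:2c} and \cref{item:prop:asymp_stationary_phase:2d} are already stated uniformly in $e$, and the monomial computation now produces, in the coordinates where $j_i < 0$, the principal-value integral $\PV\int_{\R} \widehat{f}(\eta)\,\eta_i^{-1}\,d\eta_i$ in place of the boundary-value one. Tracking the residues at $\eta_i = 0$ across all $e$ shows these averages assemble into exactly the same one-sided antiderivative values, so the limit is again $u^{-r}\,\varphi \odot f(1/(2\pi\iu u))$. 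The main obstacle I expect is precisely this last bookkeeping, together with making the tail estimate and the Laurent-remainder estimate genuinely uniform in $u$: one must carefully exploit \cref{item:prop:asymp_stationary_phase:2a}--\cref{item:prop:asymp_stationary_phase:2d} to bound the weighted integrals $\int \abs{\widehat{f}(\eta)}\,\abs{\eta}^{N}\,d\eta$ over the shifted and truncated contours independently of $u$, and to justify interchanging the Laurent expansion of $\varphi$ with the integral near $\xi = 0$.
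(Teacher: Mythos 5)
Your proposal follows essentially the same route as the paper's proof: the exact evaluation of $\int_{(\R + \iu \varepsilon)^r} \widehat{f}(u\xi)\,\xi^j\, d\xi = f^{(j)}(0)\,(2\pi\iu u)^{-j_1-\cdots-j_r} u^{-r}$ for monomials (the paper's \cref{lem:Fourier_derivative} combined with condition \cref{item:prop:asymp_stationary_phase:1}), the truncation $\varphi_{>l}$ of the Laurent expansion at $0$ with an $O(\abs{\xi}^{l+1})$ bound on a small real neighborhood of the origin, the use of conditions \cref{item:prop:asymp_stationary_phase:2a}--\cref{item:prop:asymp_stationary_phase:2d} to show that the portion of the integral away from $\xi=0$ is $O(u^{-R})$ for every $R>0$, and the principal-value formula by averaging over the contours $\R^r + \iu\varepsilon e$, $e \in \{\pm1\}^r$, exactly as the paper does by appealing to the argument of \cref{prop:asymp_EM}. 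The only deviations are cosmetic or avoidable: the paper deforms the contour onto the real segment $(-\varepsilon',\varepsilon')$ near the origin together with the path $C_{\varepsilon'}$ rather than splitting the domain of integration, and it obtains the monomial identity by applying \cref{lem:Fourier_derivative} to the rescaled function $f(\cdot/u)$ at a fixed contour height, so that no shift of the contour across a strip of height $u\varepsilon$ is required and, in particular, condition \cref{item:prop:asymp_stationary_phase:2a} need not be read as asserting that $\widehat{f}$ extends to an entire function on $\bbC^r$.
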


In this proposition, the variable $ u $ corresponds to $ t^{-1} $ in \cref{prop:asymp_EM}.
This formula is an extension of the stationary phase approximation.

To prove this proposition, we need the following lemma.

\begin{lem} \label{lem:Fourier_derivative}
	Let $ f \colon \R \to \bbC $ be a continuous function of exponential decay as $ \abs{x} \to \infty $
	and $ \widehat{f} $ be its Fourier transform.
	Then, for any sufficiently small $ \varepsilon > 0 $, we have
	\[
	f^{(-1)} (x) =
	\int_{\R + \iu \varepsilon} \widehat{f} (\xi) \bm{e} (\transpose{\xi} x) \frac{d\xi}{2\pi\iu \xi}
	\]
	if the right hand side converges uniformly.
	As a consequence, for any $ m \in \Z $, we have
	\[
	f^{(m)} (x) =
	\int_{\R + \iu \varepsilon} \widehat{f} (\xi) \bm{e} (\transpose{\xi} x) (2\pi\iu \xi)^m d\xi
	\]
	if the right hand side converges uniformly.
\end{lem}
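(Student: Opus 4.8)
The plan is to reduce the whole statement to the single contour identity
\[
\int_{\R + \iu\varepsilon} \frac{\bm{e}(\xi u)}{(2\pi\iu\xi)^{k}}\, d\xi \;=\; -\,\bm{1}_{\{u<0\}}\,\frac{u^{k-1}}{(k-1)!}
\qquad(k \ge 1,\ u \neq 0),
\]
which I would prove by closing the contour. For $u>0$ one closes in the upper half-plane: there $\lvert\bm{e}(\xi u)\rvert$ is bounded, the semicircular arc contributes nothing (by the crude bound $O(R^{1-k})$ when $k\ge 2$, and by Jordan's lemma when $k=1$), and no pole is enclosed because $\xi=0$ lies strictly below the line $\Im\xi=\varepsilon$; hence the integral vanishes. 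For $u<0$ one closes in the lower half-plane, picking up $-2\pi\iu$ times the residue at the order-$k$ pole $\xi=0$, which is $-u^{k-1}/(k-1)!$.

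Granting this, the case $m=-k$ of the lemma follows by inserting $\widehat{f}(\xi)=\int_{\R}f(s)\,\bm{e}(-\xi s)\,ds$ into the right-hand side, interchanging the order of integration, and then applying the identity above with $u=x-s$:
\[
\int_{\R+\iu\varepsilon}\widehat{f}(\xi)\,\bm{e}(\xi x)\,\frac{d\xi}{(2\pi\iu\xi)^{k}}
=\int_{\R}f(s)\!\left(\int_{\R+\iu\varepsilon}\frac{\bm{e}(\xi(x-s))}{(2\pi\iu\xi)^{k}}\,d\xi\right)ds
=-\frac{1}{(k-1)!}\int_{x}^{\infty}(x-s)^{k-1}f(s)\,ds .
\]
By Cauchy's formula for repeated integration (with every intermediate antiderivative normalized to vanish as $x\to+\infty$, which is possible since $f$ and all its antiderivatives decay exponentially at $+\infty$) the last expression equals $f^{(-k)}(x)$; for $k=1$ it reduces to $-\int_{x}^{\infty}f\,dx'$, the formula in the statement. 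The case $m=0$ is the ordinary Fourier inversion theorem: substituting $\xi=v+\iu\varepsilon$ turns the right-hand side into $e^{-2\pi\varepsilon x}\int_{\R}\widehat{g_\varepsilon}(v)\,\bm{e}(vx)\,dv=e^{-2\pi\varepsilon x}g_\varepsilon(x)=f(x)$, where $g_\varepsilon(s):=f(s)\,e^{2\pi\varepsilon s}$, which lies in $L^1(\R)$ once $2\pi\varepsilon$ is below the exponential decay rate of $f$. Finally, for $m\ge1$ (where $f^{(m)}$ presupposes that $f$ is $C^{m}$ with $f,\dots,f^{(m-1)}$ of exponential decay) one integrates by parts to get $\calF[f^{(m)}](\xi)=(2\pi\iu\xi)^{m}\widehat{f}(\xi)$ and applies the $m=0$ case to $f^{(m)}$.

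The one point that genuinely requires care is the interchange of integration in the display above, since the double integral is not absolutely convergent. I would handle it by first truncating the outer contour to $[-T,T]+\iu\varepsilon$, where Fubini does apply because $\int_{\R}\lvert f(s)\rvert\,e^{2\pi\varepsilon s}\,ds=\lVert g_\varepsilon\rVert_{L^1}<\infty$ and $\lvert\xi\rvert^{-k}\le\varepsilon^{-k}$ on that contour, and then letting $T\to\infty$. On the left-hand side this limit is legitimate by the hypothesis that the integral converges uniformly in $x$. On the right-hand side one applies dominated convergence, the crucial ingredient being a bound $\bigl\lvert\int_{[-T,T]+\iu\varepsilon}\bm{e}(\xi u)(2\pi\iu\xi)^{-k}\,d\xi\bigr\rvert\le C\,e^{-2\pi\varepsilon u}$ uniform in $T$ and $u$, so that the integrand is dominated by a constant multiple of $\lvert f(s)\rvert\,e^{2\pi\varepsilon s}$; the pointwise limit of the truncated inner integral is exactly the contour identity of the first paragraph.

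For $k\ge2$ this uniform bound is immediate from the integrability of $\lvert v+\iu\varepsilon\rvert^{-k}$ over $\R$, but in the boundary case $k=1$ it rests on the elementary sine-integral type estimate that $\int_{-T}^{T}\bm{e}(vu)/(v+\iu\varepsilon)\,dv$ is bounded uniformly in $T$ and $u$ (split into $\lvert v\rvert\le 1$, controlled by $\lvert v+\iu\varepsilon\rvert\ge\varepsilon$ and an odd-part cancellation, and $\lvert v\rvert>1$, controlled by a Dirichlet-test argument after rescaling $w=vu$); this is the only delicate estimate in the argument, and it is exactly where the conditional, rather than absolute, nature of the convergence shows up.
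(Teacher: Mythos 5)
Your proposal is correct, but it takes a genuinely different route from the paper's proof. The paper argues by uniqueness of the normalized antiderivative: calling the right-hand side $g(x)$, it differentiates under the integral sign (using the assumed uniform convergence), identifies $g'(x)=f(x)$ by Fourier inversion along the shifted contour, notes that $g(x)\to 0$ as $x\to+\infty$ because $\bm{e}(\xi x)\to 0$ when $\Im(\xi)=\varepsilon>0$, and concludes $g=f^{(-1)}$ since $f^{(-1)}$ is the unique antiderivative of $f$ vanishing at $+\infty$; the statement for general $m$ then follows by iteration. You instead evaluate the kernel $\int_{\R+\iu\varepsilon}\bm{e}(\xi u)(2\pi\iu\xi)^{-k}\,d\xi$ in closed form by residues, interchange the $\xi$- and $s$-integrations via truncation and dominated convergence, and recognize Cauchy's repeated-integration formula; this yields all $m\le -1$ at once and avoids both differentiation under the integral sign and the uniqueness step, at the price of the delicate interchange in the conditionally convergent case $k=1$, which you correctly isolate (the uniform-in-$T,u$ bound on $\int_{-T}^{T}e^{2\pi\iu vu}(v+\iu\varepsilon)^{-1}\,dv$, which does hold by the splitting you describe). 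Two points worth making explicit in a write-up: the identity $\widehat{f}(\xi)=\int_{\R}f(s)\bm{e}(-\xi s)\,ds$ on the line $\Im(\xi)=\varepsilon$ rests on the holomorphic continuation of $\widehat{f}$ to the strip $\abs{\Im(\xi)}<a/2\pi$ coming from the exponential decay of $f$ (this is where ``sufficiently small $\varepsilon$'' enters), and in your $m=0$ case the inversion $\int_{\R}\widehat{g_\varepsilon}(v)\bm{e}(vx)\,dv=g_\varepsilon(x)$ concerns a possibly only conditionally convergent integral (since $\widehat{g_\varepsilon}$ need not be integrable), so it should be justified by a summability form of the inversion theorem together with the convergence hypothesis --- the same level of care the paper's own brief appeal to ``the Fourier inversion formula and continuity of $f$'' implicitly requires.
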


\begin{proof}
	First, we remark that the Fourier transform $ \widehat{f}(\xi) $ extends holomorphically to a strip $ \R^r + \iu (-a/2\pi, a/2\pi)^r $
	since $ f(x) $ decays exponentially.
	Let $ g(x) $ denotes the right hand side.
	Its derivative coincides with $ f(x) $ by the Fourier inversion formula and continuity of $ f $.
	Since $ \bm{e} (\transpose{\xi} x) \to 0 $ as $ x \to \infty $ for $ \xi \in \R + \iu \varepsilon $,
	we have $ g(x) \to 0 $ as $ x \to \infty $.
	Since $ f^{(-1)} (x) $ has the same properties, we obtain the claim.
\end{proof}

\begin{proof}[Proof of $ \cref{prop:asymp_stationary_phase} $]
	First, we prove the first asymptotic formula.
	When $ \varphi(\xi) = \xi_1^{j_1} \cdots \xi_r^{j_r} $ for some $ j \in \Z^r $,
	by \cref{lem:Fourier_derivative} and the condition \cref{item:prop:asymp_stationary_phase:1}, we have
	\[
	\int_{(\R + \iu \varepsilon)^r} \widehat{f} (u\xi) \varphi(\xi) d\xi
	=
	\int_{(\R + \iu \varepsilon)^r} \widehat{f} (u\xi) \xi_1^{j_1} \cdots \xi_r^{j_r} d\xi
	=
	f^{(j)} (0) (2\pi\iu u)^{-j_1 - \cdots - j_r} u^{-r}.
	\]
	Thus, our asymptotic formula is proved, and in this case, it is an equality.
	
	We now turn to the general case of $ \varphi(\xi) $.
	Fix an arbitrary $ l \in \Z_{\ge 0} $ and let
	\[
	\varphi_{>l} (\xi) \coloneqq
	\varphi(\xi) - \sum_{j \in \Z^r, \, j_1 + \cdots + j_r \le l} \varphi_j \xi_1^{j_1} \cdots \xi_r^{j_r},
	\quad \text{ where } 
	\varphi (\xi) \eqqcolon
	\sum_{j \in \Z^r} \varphi_j \xi_1^{j_1} \cdots \xi_r^{j_r}.
	\]
	The preceding argument reduces the proof of the asymptotic formula to establishing the estimate
	\begin{equation} \label{eq:proof_stationary_phase}
		\int_{\R^r + \iu \varepsilon} \widehat{f} (u\xi) \varphi_{>l} (\xi) d\xi
		= O(u^{- l - r - 1}) \quad \text{ as } u \to \infty.
	\end{equation}
	Since the integrand is holomorphic at $ \xi = 0 $, the contour $ \R + \iu \varepsilon $ can be deformed into 
	$ C_{\varepsilon'} $ and $ (-\varepsilon', \varepsilon') $ for any sufficiently small $ \varepsilon' > 0 $,
	where $ C_{\varepsilon'} $ is an integration path shown in \cref{fig:int_path_stationoary_phase}.
	
	\begin{figure}[htbp]
		\centering
		\begin{tikzpicture}
			\tikzset{midarrow/.style={postaction={decorate},
					decoration={markings,
						mark=at position 0.5 with {\arrow{stealth}},
					}
				}
			}
			\draw[->] (-3, 0) -- (3, 0) node[right]{$ \Re (x_i) $};
			\draw[->] (0, -2) -- (0, 2) node[above]{$ \Im (x_i) $};
			
			\draw (0,0) node[below left]{0};
			
			
			\draw[very thick, midarrow] (-3,-0.5) node[below]{$ -\infty - \iu \varepsilon $}
			--(-1,-0.5);
			
			\draw[very thick, midarrow] (-1,-0.5) arc (-90:0:0.5);
			\node[above] at (-0.5,0) {$ -\varepsilon' $};
			
			\draw[very thick, midarrow] (0.5,0) arc (-180:-90:0.5);
			\node[above] at (0.5,0) {$ \varepsilon' $};
			
			\draw[very thick, midarrow] (1,-0.5)--(3,-0.5) node[below]{$ \infty + \iu \varepsilon $};
			
			\node at (0.5,-0.8) {$ C_{\varepsilon'} $};
			
		\end{tikzpicture}
		\caption{The integration path $ C_{\varepsilon'} $}
		\label{fig:int_path_stationoary_phase}
	\end{figure}
	
	For sufficiently small $ \varepsilon' > 0 $, there exists $ K' >0 $ such that
	$ \abs{\varphi_{>l} (\xi)} \le K' \abs{\xi}^{l+1} $ for any $ \xi \in (-\varepsilon', \varepsilon')^r $.
	Then, we have
	\[
	\abs{
		\int_{(-\varepsilon', \varepsilon')^r} \widehat{f} (u\xi) \varphi_{>l} (\xi) d\xi
	}
	\le
	K' u^{-l-r-1} \int_{(-\varepsilon', \varepsilon')^r} \abs{ \widehat{f} (\xi) \xi^{l+1} } d\xi.
	\]
	Thus, to prove \cref{eq:proof_stationary_phase}, it suffices to show 
	\[
	\int_{C_{\varepsilon'}^r} \widehat{f} (u\xi) \varphi_{>l} (\xi) d\xi
	= O(u^{-R})
	\quad \text{ as } u \to \infty
	\]
	for any $ R>0 $.
	By the condtion \cref{item:prop:asymp_stationary_phase:2} \cref{item:prop:asymp_stationary_phase:2a}, we have
	\begin{align}
		\abs{
			\int_{C_{\varepsilon'}^r} \widehat{f} (u\xi) \varphi_{>l} (\xi) d\xi
		}
		&\le
		KK' \int_{C_{\varepsilon'}^r} e^{-\lambda(u) g(\xi) + c \abs{\xi}} d\xi
		\\
		&=
		KK' \int_{C_{\varepsilon'}^r} e^{-(\lambda(u) - c') g(\xi)} e^{-c' g(\xi) + c \abs{\xi}} d\xi.
	\end{align}
	Since $ \lambda(u) \to \infty $ as $ u \to \infty $ by the condtion \cref{item:prop:asymp_stationary_phase:2} \cref{item:prop:asymp_stationary_phase:2b}, 
	there exists $ u_0 > 0 $ such that for any $ u > u_0 $, we have $ \lambda(u) > c' $.
	Thus, by the condtion \cref{item:prop:asymp_stationary_phase:2} \cref{item:prop:asymp_stationary_phase:2c}, for any $ u > u_0 $, the above integral is bounded by
	\[
	\le KK' e^{-(\lambda(u) - c') \mu} \int_{C_{\varepsilon'}^r} e^{-c' g(\xi) + c \abs{\xi}} d\xi.
	\]
	The last integral converges by the condition \cref{item:prop:asymp_stationary_phase:2} \cref{item:prop:asymp_stationary_phase:2d}.
	Thus, by the condtion \cref{item:prop:asymp_stationary_phase:2} \cref{item:prop:asymp_stationary_phase:2b}, the above integral has the estimate
	$ O(u^{-R}) $ as $ u \to \infty $ for any $ R>0 $.	
	Therefore, we obtain the first asymptotic formula.
	
	The second asymptotic formula follows from the same argument in the proof of \cref{prop:asymp_EM}.
\end{proof}

\begin{ex}
	We observe that our formula also covers cases beyond the stationary phase approximation.
	All assumptions in \cref{prop:asymp_stationary_phase} hold when
	$ f(x) = 1/\cosh(\pi x) $ and
	$ \varphi(\xi) = G(\bm{e} (\xi_1), \dots, \bm{e} (\xi_1)) $ for a cyclotomic rational function $ G(z) \in \frakR $.
	In this case, our asymptotic formula in \cref{prop:asymp_stationary_phase} is not a type of stationary phase approximation 
	since $ \widehat{f}(u\xi) = 1/\cosh(\pi u\xi) $ cannot be written down as a form $ e^{-u h(\xi)} $.
\end{ex}

\begin{ex}
	All assumptions in \cref{prop:asymp_stationary_phase} hold when
	$ f(x) = e^{-\pi \transpose{x}Sx} $ for a positive definite symmetric matrix $ S \in \Sym_r^+ (\Z) $ and
	$ \varphi(\xi) = G(\bm{e} (\xi_1), \dots, \bm{e} (\xi_1)) $ for a cyclotomic rational function $ G(z) \in \frakR $.
	In this case, our asymptotic formula is an extension of the following stationary phase approximation formula of Gaussian type.
\end{ex}

\begin{lem}[{\cite[Chapter 7, Lemma 7.7.3]{Hormander}}] \label{lem:stationary_phase}
	Let $ S \in \Sym_r (\bbC) $ be a non-degenerate symmetric matrix such that $ \Re(S) $ is positive semidefinite matrix and
	$ \varphi \colon \R^r \to \bbC $ be a $ C^\infty $-function satisfying that 
	there exists $ r_0 \in \R $ such that for any $ r_0 < u \in \R $, the integral
	\[
	\int_{\R^r} e^{-\pi u \transpose{\xi} S^{-1} \xi} \varphi(\xi) d\xi
	\]
	converges absolutely.
	Then, we have an asymptotic expansion
	\[
	\int_{\R^r} e^{-\pi u \transpose{\xi} S^{-1} \xi} \varphi(\xi) d\xi
	\sim 
	\frac{u^{-r/2}}{\sqrt{\det S}}
	\sum_{j=0}^{\infty} \frac{(4\pi u)^{-j}}{j!} \sprod{SD, D}^j \varphi (0)
	\quad \text{ as } u \to \infty,
	\]
	where
	\[
	S = (S_{i, j})_{1 \le i, j \le r},
	\quad
	D \coloneqq \left( \frac{\partial}{\partial x_i} \right)_{1 \le i \le r},
	\quad
	\sprod{SD, D}
	\coloneqq
	\sum_{1 \le i, j \le r} a_{i, j} \frac{\partial^2}{\partial x_i \partial x_j}.
	\]
\end{lem}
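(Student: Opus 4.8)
The plan is to run the classical Laplace/stationary-phase argument, which is exactly \cite[Chapter 7, Lemma 7.7.3]{Hormander}; I sketch it here. First I would reduce to the case $ S \in \Sym_r^+(\R) $ real positive definite. For each fixed $ u > r_0 $ the left-hand integral is holomorphic in the entries of $ S $ on the tube $ \{ S \in \Sym_r(\bbC) \mid \Re(S) > 0 \} $, as is every coefficient $ S \mapsto \frac{1}{\sqrt{\det S}} \frac{(4\pi u)^{-k}}{k!} \sprod{SD,D}^k \varphi(0) $ of the candidate series (the branch of $ \sqrt{\det S} $ being fixed by positivity on the real locus); hence it suffices to prove the asymptotic expansion for $ S $ real positive definite. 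The extension to those $ S $ with $ \Re(S) $ still positive definite is then immediate by analytic continuation together with a dominating bound, and the remaining case in which $ \Re(S) $ is merely positive semidefinite — the genuinely delicate one — is discussed at the end.

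Assume now $ S \in \Sym_r^+(\R) $. Rescaling $ \xi = \eta/\sqrt{u} $ gives $ \int_{\R^r} e^{-\pi u \transpose{\xi} S^{-1} \xi} \varphi(\xi)\, d\xi = u^{-r/2} \int_{\R^r} e^{-\pi \transpose{\eta} S^{-1} \eta}\, \varphi(\eta/\sqrt{u})\, d\eta $. Fix $ N \in \Z_{\ge 0} $ and write $ \varphi(\eta/\sqrt{u}) = \sum_{\abs{\alpha} < 2N} \frac{\partial^\alpha \varphi(0)}{\alpha!}\, u^{-\abs{\alpha}/2}\, \eta^\alpha + R_N(\eta/\sqrt{u}) $, where $ \abs{R_N(x)} \le C_N \abs{x}^{2N} $ for $ \abs{x} \le 1 $. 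The monomial terms are integrated exactly against the Gaussian: odd moments of $ e^{-\pi\transpose{\eta}S^{-1}\eta} $ vanish, and for $ \abs{\alpha} = 2k $ the even moment equals a constant times $ \sum_{\mathrm{pairings}} \prod_{\mathrm{pairs}\,\{i,j\}} S_{ij} $ by Wick's theorem, with the normalization coming from the elementary evaluation $ \int_{\R^r} e^{-\pi\transpose{\eta}S^{-1}\eta}\,d\eta = (\det S^{-1})^{-1/2} $. Summing over $ \alpha $ and using the purely combinatorial identity $ \sum_{\abs{\alpha} = 2k} \frac{\partial^\alpha \varphi(0)}{\alpha!} \sum_{\mathrm{pairings}} \prod_{\mathrm{pairs}} S_{ij} = \frac{1}{2^k\, k!}\, \sprod{SD,D}^k \varphi(0) $ — which is nothing but the expansion of $ \sprod{SD,D}^k = \bigl( \sum_{i,j} S_{ij} \partial_i \partial_j \bigr)^k $ together with the symmetries $ S_{ij} = S_{ji} $ and $ \partial_i \partial_j = \partial_j \partial_i $ — one finds that the polynomial part of the integral is exactly the truncation $ \sum_{0 \le k < N} $ of the series in the statement. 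Conceptually this step is the fact that the Fourier transform of $ e^{-\pi u \transpose{\xi} S^{-1}\xi} $ is a constant multiple of $ e^{-\frac{\pi}{u}\transpose{\eta} S \eta} $, so that integrating a function against it applies the heat operator $ e^{\frac{1}{4\pi u}\sprod{SD,D}} $ and evaluates at the origin; this is the source of the differential operator $ \sprod{SD,D} $.

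It remains to estimate $ u^{-r/2}\int_{\R^r} e^{-\pi\transpose{\eta}S^{-1}\eta}\, R_N(\eta/\sqrt{u})\, d\eta $, which I would split at $ \abs{\eta} = \delta\sqrt{u} $ for a fixed small $ \delta > 0 $. On $ \abs{\eta} \le \delta\sqrt{u} $ one has $ \abs{R_N(\eta/\sqrt{u})} \le C_N u^{-N}\abs{\eta}^{2N} $, and since $ \int_{\R^r} e^{-\pi\transpose{\eta}S^{-1}\eta}\abs{\eta}^{2N}\, d\eta < \infty $ this piece is $ O(u^{-r/2 - N}) $. On $ \abs{\eta} \ge \delta\sqrt{u} $, i.e.\ $ \abs{\xi} \ge \delta $ in the original variable, the inequality $ \Re(\transpose{\xi} S^{-1}\xi) = \transpose{\xi}\bigl( S^{-1}\Re(S)\,\overline{S}^{-1} \bigr)\xi \ge c\abs{\xi}^2 $ (valid because $ \Re(S) > 0 $) gives $ \bigl| e^{-\pi u\transpose{\xi}S^{-1}\xi} \bigr| \le e^{-\pi c\delta^2 u/2}\, e^{-\frac{\pi c}{2}\transpose{\xi}\Re(S^{-1})\xi} $, whence this tail is $ O(u^{-R}) $ for every $ R > 0 $ once the absolute-convergence hypothesis is used to bound the fixed integral $ \int_{\R^r} e^{-\frac{\pi c}{2}\transpose{\xi}\Re(S^{-1})\xi}\abs{\varphi(\xi)}\, d\xi $. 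Combining these with the polynomial part yields the expansion to order $ N $; as $ N $ is arbitrary, this proves the lemma for $ S \in \Sym_r^+(\R) $, and then for the remaining admissible $ S $ as in the first paragraph.

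The main obstacle is the degenerate case, when $ \Re(S) $ is positive semidefinite but singular: along the null directions of $ \Re(S) $ the weight $ e^{-\pi u\transpose{\xi}S^{-1}\xi} $ has modulus bounded below, so the tail estimate above collapses and the reduction in the first paragraph is not automatic. There one must argue by non-stationary phase in those oscillatory directions — integrating by parts repeatedly and using the smoothness and controlled growth of $ \varphi $ to extract arbitrarily many negative powers of $ u $ from the region $ \abs{\xi} \ge \delta $ — which is precisely the additional work carried out in Hörmander's proof; I would follow it verbatim rather than reprove it.
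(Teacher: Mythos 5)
The paper contains no proof of \cref{lem:stationary_phase} to compare against: the lemma is imported wholesale from H\"ormander, and the only thing proved nearby is \cref{lem:asymp_exp_comparison}, which merely rewrites the right-hand side. Measured against that, your sketch is a reasonable reconstruction of the standard Laplace-expansion proof, and in the case $\Re(S)$ positive definite it is essentially complete: rescaling, Taylor expansion with remainder, Gaussian moments via Wick's theorem (the identification of the even-moment sum with $\sprod{SD,D}^k\varphi(0)/(2^k k!)$, equivalently the heat-operator/Fourier-transform interpretation), and a split tail estimate. Two points deserve tightening. First, the opening claim that the passage from real positive definite $S$ to complex $S$ with $\Re(S)>0$ is ``immediate by analytic continuation'' is not right as stated: an asymptotic expansion does not continue analytically in a parameter without uniform remainder bounds. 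Fortunately your direct argument already covers complex $S$ with $\Re(S)>0$, since the moment identities are exact identities of holomorphic functions of $S$ and the tail estimate only uses $\Re(\transpose{\xi}S^{-1}\xi)\ge c\abs{\xi}^2$; so the cleaner formulation is to run the quantitative argument once for all such $S$ and drop the continuation step. Second, in the tail estimate the fixed comparison integral must be anchored to the hypothesis: choose $u_0>r_0$ and write $e^{-\pi u\Re(\transpose{\xi}S^{-1}\xi)}\le e^{-\pi(u-u_0)c\delta^2}\,e^{-\pi u_0\Re(\transpose{\xi}S^{-1}\xi)}$ on $\abs{\xi}\ge\delta$, so that finiteness of the remaining integral is exactly the assumed absolute convergence; as written, the finiteness of your Gaussian-weighted integral of $\abs{\varphi}$ is asserted rather than derived.

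The genuinely degenerate case, $\Re(S)$ positive semidefinite but singular, you defer to H\"ormander, which is no worse than what the paper does; but note that H\"ormander's Lemma~7.7.3 assumes the amplitude is Schwartz (or has controlled derivatives), whereas the lemma as stated here only assumes $\varphi\in C^\infty$ together with one absolute-convergence condition, so ``following H\"ormander verbatim'' does not literally close that case under these hypotheses. This does not affect the paper's use of the lemma, since in \cref{prop:asymp_stationary_phase} and its applications the relevant quadratic forms are real positive definite, which is exactly the regime your argument handles.
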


Here we remark that the right hand side of the above asymptotic formula can be written as the same form in \cref{prop:asymp_EM} as follows.

\begin{lem} \label{lem:asymp_exp_comparison}
	In the setting of \cref{lem:stationary_phase}, we have
	\[	
	\sum_{j=0}^{\infty} \frac{\left( 4\pi u \right)^{-j}}{j!} \sprod{SD, D}^j \varphi (0)
	=
	\varphi \left( -\frac{x}{2\pi\iu} \right) \odot e^{-\pi \transpose{x} S x} (u^{-1/2}).
	\]
\end{lem}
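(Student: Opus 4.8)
The plan is to reduce \cref{lem:asymp_exp_comparison} to a single‑monomial identity and then evaluate both sides through one common quantity, the Gaussian moment $\int_{\R^r}e^{-\pi\transpose{\xi}S^{-1}\xi}\,\xi^k\,d\xi$. First I would observe that both sides are $\bbC$‑linear in $\varphi$ and, regarded as elements of $\bbC((u^{-1/2}))$, depend only on the Taylor coefficients of $\varphi$ at the origin: on the left because $\sprod{SD,D}^j$ is a constant‑coefficient operator, homogeneous of order $2j$, evaluated at $0$; on the right by the very definition of the Hadamard product $\odot$. More precisely, for each $m\ge 0$ the coefficient of $u^{-m}$ on either side is a fixed linear functional of the degree‑$2m$ part of the Taylor expansion of $\varphi$. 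Hence it suffices to treat $\varphi(\xi)=\xi^k:=\xi_1^{k_1}\cdots\xi_r^{k_r}$ for an arbitrary multi‑index $k$, with $\varphi(-x/(2\pi\iu))$ read as the polynomial $(-2\pi\iu)^{-|k|}x^k$ (here $|k|:=k_1+\cdots+k_r$); since $\varphi$ is merely $C^\infty$, this is legitimate because everything in sight is a purely formal operation on Taylor data.

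For this $\varphi$, the definition of the one‑variable Hadamard product rewrites the right‑hand side as $(-2\pi\iu)^{-|k|}\bigl[\partial_x^k e^{-\pi\transpose{x}Sx}\bigr]_{x=0}\,u^{-|k|/2}$. I would then differentiate the Gaussian Fourier identity $\int_{\R^r}e^{-\pi\transpose{\xi}S^{-1}\xi}\,\bm{e}(\transpose{x}\xi)\,d\xi=\sqrt{\det S}\;e^{-\pi\transpose{x}Sx}$ in $x$, each $\partial/\partial x_i$ pulling down a factor $2\pi\iu\xi_i$ from $\bm{e}(\transpose{x}\xi)=e^{2\pi\iu\transpose{x}\xi}$, and set $x=0$; this gives $\bigl[\partial_x^k e^{-\pi\transpose{x}Sx}\bigr]_{x=0}=\tfrac{(2\pi\iu)^{|k|}}{\sqrt{\det S}}\int_{\R^r}e^{-\pi\transpose{\xi}S^{-1}\xi}\,\xi^k\,d\xi$, so the right‑hand side becomes $\tfrac{1}{\sqrt{\det S}}\bigl(\int_{\R^r}e^{-\pi\transpose{\xi}S^{-1}\xi}\,\xi^k\,d\xi\bigr)u^{-|k|/2}$. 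When $|k|$ is odd both the Gaussian $e^{-\pi\transpose{x}Sx}$ and the integrand $\xi^k e^{-\pi\transpose{\xi}S^{-1}\xi}$ are odd, so both sides of the asserted identity vanish; for $|k|$ even the factor $(-2\pi\iu)^{-|k|}(2\pi\iu)^{|k|}=(-1)^{|k|}$ is $1$.

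For the left‑hand side I would note that $\sprod{SD,D}^j\xi^k$ is homogeneous of degree $|k|-2j$, hence vanishes at $0$ unless $2j=|k|$, so the series collapses to the single term $\tfrac{(4\pi u)^{-|k|/2}}{(|k|/2)!}\bigl[\sprod{SD,D}^{|k|/2}\xi^k\bigr]_{\xi=0}$ (and to $0$ for $|k|$ odd). The remaining ingredient is the ``completing the square'' identity $\int_{\R^r}e^{-\pi\transpose{\xi}S^{-1}\xi}\,P(\xi)\,d\xi=\sqrt{\det S}\sum_{j\ge 0}\tfrac{(4\pi)^{-j}}{j!}\bigl[\sprod{SD,D}^jP\bigr](0)$, valid for every polynomial $P$ (the sum being finite) --- equivalently, the exact, polynomial form of the expansion in \cref{lem:stationary_phase}. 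It follows from the displayed Gaussian integral by expanding $P$ over exponentials $e^{2\pi\iu\transpose{b}x}$ via Fourier inversion and using $\sprod{SD,D}\,e^{2\pi\iu\transpose{b}x}=-(2\pi)^2(\transpose{b}Sb)\,e^{2\pi\iu\transpose{b}x}$, or directly from Wick's theorem. Taking $P(\xi)=\xi^k$ identifies the left‑hand side, too, with $\tfrac{1}{\sqrt{\det S}}\bigl(\int_{\R^r}e^{-\pi\transpose{\xi}S^{-1}\xi}\,\xi^k\,d\xi\bigr)u^{-|k|/2}$, and combined with the first paragraph this proves the lemma.

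I expect the only real friction to be the bookkeeping of normalization constants --- the interacting powers of $2\pi$ and $\iu$, the $\pi$ in $e^{-\pi\transpose{x}Sx}$, and the $4\pi$ inside $\sprod{SD,D}$ --- which is precisely why I route both evaluations through the single symmetric object $\int_{\R^r}e^{-\pi\transpose{\xi}S^{-1}\xi}\xi^k\,d\xi$, so that the constants cancel transparently. Beyond that, the parity remark for odd $|k|$ and the observation that the monomial reduction is purely formal (hence harmless even though $\varphi$ is only $C^\infty$) are the only points that need a sentence of care.
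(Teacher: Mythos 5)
Your reduction to monomials is sound: coefficientwise in $u^{-1/2}$, both sides are finite linear functionals of the Taylor data of $\varphi$ at $0$, so it suffices to check $\varphi(\xi)=\xi^k$, and your degree/parity bookkeeping on the left-hand side is correct. For \emph{real positive definite} $S$ your two evaluations through the moment $\int_{\R^r}e^{-\pi\transpose{\xi}S^{-1}\xi}\xi^k\,d\xi$ do match, and this is genuinely different from the paper's argument, which never introduces an integral: the paper expands $e^{-\pi\transpose{x}Sx}$ into its Taylor series and matches multinomial coefficients directly, which amounts to the single-monomial identity $\left[\partial_x^{k}e^{-\pi\transpose{x}Sx}\right]_{x=0}=\frac{(-\pi)^{\abs{k}/2}}{(\abs{k}/2)!}\left.\sprod{SD,D}^{\abs{k}/2}x^{k}\right|_{x=0}$ for $\abs{k}$ even; that identity is exactly the combinatorial content your Wick step encodes, only without any Gaussian integral as intermediary.

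The genuine gap is the setting. \cref{lem:asymp_exp_comparison} is asserted in the setting of \cref{lem:stationary_phase}, where $S\in\Sym_r(\bbC)$ is merely nondegenerate with $\Re(S)$ positive \emph{semi}definite; purely imaginary $S$ is allowed, and then $\Re(S^{-1})$ is not positive definite, so your pivot object $\int_{\R^r}e^{-\pi\transpose{\xi}S^{-1}\xi}\xi^k\,d\xi$ does not converge absolutely (and $\sqrt{\det S}$ requires a branch choice), even though the identity to be proven is purely formal in $S$. The repair is cheap but must be said: for fixed $k$ both sides are polynomials in the entries of $S$, so proving the identity for real positive definite $S$ and invoking polynomial identity/analytic continuation suffices --- or avoid integrals entirely via the coefficient-extraction identity above. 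Two smaller points: your sketch of the ``completing the square'' identity by ``expanding $P$ over exponentials via Fourier inversion'' is not literally available (the Fourier transform of a polynomial is a derivative of $\delta$); either cite Wick/Isserlis or differentiate the Gaussian Fourier identity in the parameter and match Taylor coefficients, which is again the paper's multinomial computation. And in the odd-$\abs{k}$ case the Gaussian $e^{-\pi\transpose{x}Sx}$ is even, not odd; what you need is that its odd-order derivatives vanish at $0$, which is what makes the right-hand side vanish there.
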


\begin{proof}
	We can calculate as follows:
	\begin{align}
		&\phant
		\varphi \left( -\frac{x}{2\pi\iu} \right) \odot e^{-\pi \transpose{x} S x} (u^{-1/2})
		\\
		&=
		\sum_{m \in \Z_{\ge 0}^r} 
		\left( -2\pi\iu \right)^{-m_1 - \cdots - m_r}
		\frac{\varphi^{(m)} (0)}{m_1 ! \cdots m_r !}
		\restrict{
			\frac{\partial^{m_1 + \cdots + m_r}}{\partial x_1^{m_1} \cdots \partial x_r^{m_r}}
			e^{-\pi \transpose{x} S x}
		}{x=0}
		u^{-(m_1 + \cdots + m_r)/2}
		\\
		&=
		\sum_{m \in \Z_{\ge 0}^r}
		\left( -2\pi\iu \sqrt{u} \right)^{-m_1 - \cdots - m_r}
		\frac{\varphi^{(m)} (0)}{m_1 ! \cdots m_r !}
		\restrict{
			\frac{\partial^{m_1 + \cdots + m_r}}{\partial x_1^{m_1} \cdots \partial x_r^{m_r}}
			\left(
			\sum_{l=0}^\infty \frac{(-\pi \transpose{x} S x)^l}{l!}
			\right)
		}{x=0}
		\\
		&=
		\sum_{l=0}^\infty \frac{(-\pi)^l}{l!}
		\sum_{\substack{
				m \in \Z_{\ge 0}^r, \\
				m_1 + \cdots + m_r = 2l
		}}
		\left( -2\pi\iu \sqrt{u} \right)^{-m_1 - \cdots - m_r}
		\frac{\varphi^{(m)} (0)}{m_1 ! \cdots m_r !}
		\restrict{
			\frac{\partial^{l}}{\partial x_1^{m_1} \cdots \partial x_r^{m_r}}
			(\transpose{x} S x)^l
		}{x=0}
		\\
		&=
		\sum_{l=0}^\infty \frac{(4\pi u)^{-l}}{l!}
		\sum_{\substack{
				m \in \Z_{\ge 0}^r, \\
				m_1 + \cdots + m_r = 2l
		}}
		\frac{\varphi^{(m)} (0)}{m_1 ! \cdots m_r !}
		\restrict{
			\frac{\partial^{l}}{\partial x_1^{m_1} \cdots \partial x_r^{m_r}}
			\left(
			\sum_{\substack{
					(n_{i, j})_{1 \le i, j \le r} \in \Z_{\ge 0}^{r \times r}, \\
					\sum_{1 \le i, j \le r} n_{i, j} = l
			}}
			l! \prod_{1 \le i, j \le r} \frac{(S_{i, j} x_i x_j)^{n_{i, j}}}{n_{i, j}!}
			\right)
		}{x=0}
		\\
		&=
		\sum_{l=0}^\infty (4\pi u)^{-l}
		\sum_{\substack{
				m \in \Z_{\ge 0}^r, \\
				m_1 + \cdots + m_r = l
		}}
		\varphi^{(m)} (0)
		\sum_{\substack{
				(n_{i, j})_{1 \le i, j \le r} \in \Z_{\ge 0}^{r \times r}, \\
				m_i = \sum_{1 \le j \le r} (n_{i, j} + n_{j, i}) \text{ for each } 1 \le i \le r
		}}
		\prod_{1 \le i, j \le r} \frac{S_{i, j}^{n_{i, j}}}{n_{i, j} !}
		\\
		&=
		\sum_{l=0}^\infty (4\pi u)^{-l}
		\sum_{\substack{
				m \in \Z_{\ge 0}^r, \\
				m_1 + \cdots + m_r = 2l
		}}
		\varphi^{(m)} (0)
		\sum_{\substack{
				(n_{i, j})_{1 \le i, j \le r} \in \Z_{\ge 0}^{r \times r}, \\
				m_i = \sum_{1 \le j \le r} (n_{i, j} + n_{j, i}) \text{ for each } 1 \le i \le r
		}}
		\prod_{1 \le i, j \le r} \frac{S_{i, j}^{n_{i, j}}}{n_{i, j} !}
		\\
		&=
		\sum_{l=0}^\infty (4\pi u)^{-l}
		\sum_{\substack{
				(n_{i, j})_{1 \le i, j \le r} \in \Z_{\ge 0}^{r \times r}, \\
				\sum_{1 \le i, j \le r} n_{i, j} = l
		}}
		\left(
		\prod_{1 \le i, j \le r} \frac{S_{i, j}^{n_{i, j}}}{n_{i, j} !}
		\frac{\partial^{n_{i, j} + n_{j, i}}}{\partial x_i^{n_{i, j} + n_{j, i}}}
		\right)
		\varphi (0)
		\\
		&=
		\sum_{l=0}^\infty (4\pi u)^{-l}
		\sum_{\substack{
				(n_{i, j})_{1 \le i, j \le r} \in \Z_{\ge 0}^{r \times r}, \\
				\sum_{1 \le i, j \le r} n_{i, j} = l
		}}
		\left(
		\prod_{1 \le i, j \le r} \frac{1}{n_{i, j} !} 
		\left(
		S_{i, j}
		\frac{\partial^{2}}{\partial x_i \partial x_j}
		\right)^{n_{i, j}}
		\right)
		\varphi (0)
		\\
		&=
		\sum_{l=0}^\infty \frac{(4\pi u)^{-l}}{l!}
		\sprod{SD, D}^l
		\varphi (0).
	\end{align}
\end{proof}


\part{Application to quantum modularity and asymptotic expansions} \label{part:quantum_modularity_asymptotics}


In this part, we apply the results in \cref{part:modular_series} for many cases.

\section{Quantum modularity and asymptotic expansions of false theta functions} \label{sec:false_theta}


In this section, we derive the modular transformation formula for false theta functions
as an application of the Poisson summation formula with signature (\cref{thm:PSF_sgn}) or the modular transformation formula for false modular series (\cref{thm:false_modular_series_modular}) prepared in \cref{part:modular_series}.


\subsection{Previous results} \label{subsec:false_theta_previous_results}


To begin with, we discuss previous works on modular transformation formulas for false theta functions.

A pioneering contribution is due to Lawrence--Zagier~\cite{Lawrence-Zagier}, who studied Witten's asymptotic expansion conjecture for the Poincar\'{e} homology sphere.
For rank one false theta functions, they constructed non-holomorphic Eichler integrals whose asymptotic expansions at all rational numbers agree with those of false theta functions and derived their modular transformation formulas.
As a consequence, they proved quantum modularity of the radial limits of rank-one false theta functions.
They did not, however, provide a modular transformation formula for the false theta functions themselves.


The first explicit modular transformation formulas for false theta functions were given by Creutzig--Milas~\cite[Theorem 4]{Creutzig-Milas} in the context of characters of vertex algebras.
Their proof is based on the idea of \emph{regularization}, in which one considers the integral of $ q^{\xi^2 /2} / \sin (\pi(\xi + \iu \varepsilon)) $ along $ \R $.

Bringmann--Nazaroglu~\cite[Theorem 1.2]{Bringmann-Nazaroglu} introduced \emph{modular completions} for false theta functions of general rank and established their modular transformation formulas.
Their approach is based on the method of constructing modular completions for indefinite theta functions by Zwegers~\cite{Zwegers:thesis}.
The approach by Bringmann--Nazaroglu~\cite{Bringmann-Nazaroglu} was later extended by
Bringmann--Kaszian--\linebreak[0]Milas--\linebreak[0]Nazaroglu~\cite[Theorem 1.1]{BKMN:False_modular} to what they call \emph{higher depth false modular forms}.
Building on the approach by  Bringmann--Nazaroglu~\cite{Bringmann-Nazaroglu}, Matsusaka--Terashima~\cite[Proposition 3.9 and 3.14]{Matsusaka-Terashima} derived modular transformation formulas for rank one false theta functions with polynomial factors in summands, not studied in earlier works, and used them to give an alternative proof of Witten's asymptotic expansion conjecture for Seifert homology spheres.

In a different approach, Andersen--Misteg\aa{}rd~\cite[p.~751]{Andersen-Mistegard} essentially obtained modular transformation formulas for rank one false theta functions with polynomial factors, carrying this out in particular for the GPPV invariants of Seifert homology spheres.
Their method is based on ideas from resurgence theory.
Han--Li--Sauzin--Sun~\cite[Theorem 3]{Han-Li-Sauzin-Sun} also proved similar formulas in a more systematic form using the same method.


\subsection{Rank two case} \label{subsec:false_theta_rk2}


For the reader's convenience, we first state the explicit modular transformation formulas for false theta functions in a simple rank-two case.
The general case will be treated in the next subsection.

Throughout this subsection, we fix a positive definite symmetric matrix $ S = \smat{a & b \\ b & c} \in \Sym_2^+ (\Z) $
and a vector $ \alpha \in \R^2 $.
For these data, we define a false theta function as
\[
\widetilde{\Theta} (\tau)
\coloneqq
\sum_{m \in \alpha + \Z^2} \sgn(m_1) \sgn(m_2) q^{\transpose{m} S m/2}.
\]
We denote $ D \coloneqq \det S \in \Z_{>0} $ and $ \widetilde{\bbC} $ the universal cover of $ \bbC \smallsetminus \{ 0 \} $.

Our modular transformation formula is as follows.

\begin{thm} \label{thm:false_theta_rk2_modular}
	We have
	\begin{align}
		\frac{\iu}{\tau} \sqrt{D} \cdot
		\widetilde{\Theta} \left(  -\frac{1}{\tau}\right)
		=
		\widetilde{\Theta}^* (\tau)
		&+ \sum_{\mu \in a^{-1} \Z / \Z} \left( 
		\widetilde{\Theta}_{1, \mu} (\tau) 
		+ \widetilde{\theta}_{1, \mu} (\tau) \Omega_{1, \mu} (\tau)
		\right)
		\\
		&+ \sum_{\mu \in c^{-1} \Z / \Z} \left( 
		\widetilde{\Theta}_{2, \mu} (\tau) 
		+ \widetilde{\theta}_{2, \mu} (\tau) \Omega_{2, \mu} (\tau)
		\right)
		+ \Omega (\tau),
	\end{align}
	where
	\begin{align}
		\widetilde{\Theta}^* (\tau)
		&\coloneqq
		\sum_{n \in \Z^2} \sgn(n_1) \sgn(n_2) \bm{e} (\transpose{\alpha} m) q^{\transpose{n} S^{-1} n/2},
		\\
		\widetilde{\Theta}_{1, \mu} (\tau)
		&\coloneqq
		\sum_{n \in \transpose{(\mu, -b \mu)} + \Z^2} 
		\left( \sgn(n_2) - \sgn(n_2 - b n_1) \right)
		\bm{e} ((a \alpha_1 + b \alpha_2) n_1 + \alpha_2 n_2) q^{a (n_1^2 + n_2^2/D)/2},
		\\
		\widetilde{\theta}_{1, \mu} (\tau)
		&\coloneqq
		\sum_{n_1 \in \mu + \Z} \sgn(n_1) \bm{e} (a \alpha_1 n_1) q^{a n_1^2 /2},
		\\
		\Omega_{1, \mu} (\tau)
		&\coloneqq
		2 \int_{C_+} \bm{e} (\alpha_2 \xi_2) q^{a \xi_2^2 / 2D} \frac{d\xi_2}{1 - \bm{e} (\xi_2 + \mu)},
		\\
		\widetilde{\Theta}_{2, \mu} (\tau)
		&\coloneqq
		\sum_{n \in \transpose{(-b \mu, \mu)} + \Z^2} 
		\left( \sgn(n_1) - \sgn(n_1 - b n_2) \right)
		\bm{e} (\alpha_1 n_1 + (b \alpha_1 + c \alpha_2) n_2) q^{c (n_1^2 /D + n_2^2)/2},
		\\
		\widetilde{\theta}_{2, \mu} (\tau)
		&\coloneqq
		\sum_{n_2 \in \mu + \Z} \sgn(n_2) \bm{e} (c \alpha_2 n_2) q^{c n_2^2 /2},
		\\
		\Omega_{2, \mu} (\tau)
		&\coloneqq
		2 \int_{C_+} \bm{e} (\alpha_1 \xi_1) q^{c \xi_1^2 / 2D} \frac{d\xi_1}{1 - \bm{e} (\xi_1 + \mu)},
		\\
		\Omega (\tau)
		&\coloneqq
		4 \int_{C_+^2} \bm{e} (\transpose{\alpha} \xi) q^{\transpose{\xi} S^{-1} \xi / 2}
		\frac{d\xi_1}{1 - \bm{e} (\xi_1)} \frac{d\xi_2}{1 - \bm{e} (\xi_2)}.
	\end{align}
	Moreover, the functions $ \Omega_{1, \mu} (\tau), \Omega_{2, \mu} (\tau) $ and $ \Omega (\tau) $
	extend to holomorphic functions on $ \widetilde{\bbC} $.
\end{thm}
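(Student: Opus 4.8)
The plan is to obtain the transformation formula from the Poisson summation formula with signature (\cref{thm:PSF_sgn}) applied to a Gaussian kernel, and then to prove the holomorphic extension of the three $\Omega$-terms by deforming the contours $C_+$. Handling the shift via the framework of \cref{thm:false_modular_series_modular}---or, after noting that $\widetilde{\Theta}$ depends on $\alpha$ only modulo $\Z^2$, reducing to $\alpha\in[0,1)^2$, where $\sgn(m+\alpha)=\sgn(m)$ for all $m\in\Z^2$---one writes $\widetilde{\Theta}(-1/\tau)=\sum_{m\in\Z^2}\sgn(m)f(m)$ with $f(x)=\bm{e}\!\left(-\tfrac{1}{2\tau}\transpose{(x+\alpha)}S(x+\alpha)\right)$. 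A Gaussian integral gives $\widehat{f}(\xi)=\tfrac{\tau}{\iu\sqrt{D}}\,\bm{e}(\transpose{\alpha}\xi)\,q^{\transpose{\xi}S^{-1}\xi/2}$, the branch of the root being pinned down by continuity from $\tau\in\iu\R_{>0}$; this is the source of the prefactor $\tfrac{\iu}{\tau}\sqrt{D}$. Then \cref{thm:PSF_sgn} with $e=(1,1)$ expresses $\tfrac{\iu}{\tau}\sqrt{D}\cdot\widetilde{\Theta}(-1/\tau)$ as a sum over subsets $I\subseteq\{1,2\}$: for $I=\{1,2\}$ the double sign-sum of Gaussians is exactly $\widetilde{\Theta}^*(\tau)$, and for $I=\varnothing$ the double contour integral along $C_+^2$ is exactly $\Omega(\tau)$. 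The mixed terms $I=\{1\},\{2\}$ remain.

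For $I=\{1\}$ the integrand is $\widehat{f}(n_1,\xi_2)$ with $\transpose{(n_1,\xi_2)}S^{-1}(n_1,\xi_2)=\tfrac{1}{D}(cn_1^2-2bn_1\xi_2+a\xi_2^2)$, whose cross term I remove by completing the square: $q^{\transpose{(n_1,\xi_2)}S^{-1}(n_1,\xi_2)/2}=q^{n_1^2/(2a)}\,q^{a(\xi_2-\frac{b}{a}n_1)^2/(2D)}$, followed by the shift $\xi_2\mapsto\xi_2+\tfrac{b}{a}n_1$ which decouples the Gaussian. Since the pole factor $\bigl(1-\bm{e}(\xi_2+\tfrac{b}{a}n_1)\bigr)^{-1}$ is $1$-periodic, the shifted contour $C_+-\tfrac{b}{a}n_1$ depends on $n_1$ only through the residue class of $\tfrac{b}{a}n_1$ mod $1$, which one parametrizes by $\mu\in a^{-1}\Z/\Z$; re-centering it to $C_+$ crosses the poles lying in between. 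Re-summing those residues over $n_1$ assembles (after the rescaling $n_1\mapsto n_1/a$) the depth-reduced correction $\widetilde{\Theta}_{1,\mu}(\tau)$---the combination $\sgn(n_2)-\sgn(n_2-bn_1)$ being exactly the difference of the ``old'' and ``shifted'' pole conventions---while the leftover integral, now independent of $n_1$, factors as $\widetilde{\theta}_{1,\mu}(\tau)\,\Omega_{1,\mu}(\tau)$. The case $I=\{2\}$ is symmetric, and assembling the pieces yields the stated identity. I expect this step to be the main obstacle: the ideas are routine, but bookkeeping which poles are crossed and in which direction, and matching the residues to the explicit lattice shifts $\transpose{(\mu,-b\mu)}$ and exponents $q^{a(n_1^2+n_2^2/D)/2}$ in the statement, is delicate.

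For the holomorphic extension to $\widetilde{\bbC}$, consider $\Omega$ (the two-variable case; $\Omega_{1,\mu},\Omega_{2,\mu}$ are the one-variable analogue). In $\Omega(\tau)=4\int_{C_+^2}\bm{e}(\transpose{\alpha}\xi)\,e^{\pi\iu\tau\transpose{\xi}S^{-1}\xi}\prod_i\tfrac{d\xi_i}{1-\bm{e}(\xi_i)}$ the Gaussian factor decays along $C_+^2$ as soon as the two tails of each factor run to infinity in a direction $\phi$ with $\arg\tau+2\phi\in(0,\pi)$, since there $\transpose{\xi}S^{-1}\xi$ has argument $\approx 2\phi$ and modulus $\to\infty$ (this uses that $\transpose{(\cdot)}S^{-1}(\cdot)$ is a positive definite form, so its complexification has positive real part on an open cone about $\R^2$). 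For $\tau\in\bbH$ this holds with $\phi=0$, i.e.\ the original $C_+^2$. To continue $\tau$ along an arbitrary path in $\widetilde{\bbC}$ I rotate the tails accordingly while keeping the bounded middle portions fixed, so they still wind around the coordinate hyperplanes $\{\xi_i=0\}$---i.e.\ around the pole at the origin already encircled by $C_+$. The tails stay strictly off the real axis where the poles $\xi_i\in\Z$ sit, except that once $\tau$ has turned far enough a tail may sweep across finitely many nonzero integers; each such crossing contributes a residue term which is itself a contour integral of a Gaussian in one fewer variable, holomorphic in $\tau$ by the same argument applied recursively. Since $\widetilde{\bbC}$ is simply connected, these local holomorphic representatives (deformed integral plus finitely many residue corrections) patch to one holomorphic function extending $\Omega$; equivalently, the $\Omega$ and $\Omega_{j,\mu}$ are iterated Eichler-type integrals of theta functions of rank at most $2$, whose extension to $\widetilde{\bbC}$ is the familiar phenomenon going back to Lawrence--Zagier. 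This gives the final claim.
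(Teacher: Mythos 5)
Your proposal follows essentially the same route as the paper's proof of the transformation identity: reduce to $\alpha\in[0,1)^2$, compute the Fourier transform of the shifted Gaussian to produce the prefactor $\tfrac{\iu}{\tau}\sqrt{D}$, apply \cref{thm:PSF_sgn} with $e=(1,1)$ so that $I=\{1,2\}$ gives $\widetilde{\Theta}^*$ and $I=\varnothing$ gives $\Omega$, and then, for the mixed subsets, complete the square via $\transpose{\xi}S^{-1}\xi=\tfrac{a}{D}(\xi_2-\tfrac{b}{a}\xi_1)^2+\tfrac1a\xi_1^2$, shift the contour, and convert the discrepancy between the shifted and unshifted contours into residue sums; this is exactly the paper's use of its contour-deformation identity, whose factor $\sgn(\beta)-\sgn(\beta-\mu)$ is the source of the combination $\sgn(n_2)-\sgn(n_2-bn_1)$, followed by the regrouping $n_1=am_1$, $\mu\in a^{-1}\Z/\Z$ that produces $\widetilde{\Theta}_{1,\mu}+\widetilde{\theta}_{1,\mu}\Omega_{1,\mu}$. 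So the main identity is argued as in the paper (and the bookkeeping you flag as delicate, including the signs $\prod_{i\in I}(-e_i)$ attached to the $|I|=1$ terms, is handled there in the same spirit).

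The one place where you genuinely deviate, and where your argument as stated would not go through, is the holomorphic extension to $\widetilde{\bbC}$. You propose to rotate the tails of $C_+$ with $\arg\tau$ and accept that ``a tail may sweep across finitely many nonzero integers,'' compensating by residues. That is not what happens: the tails are rays going to infinity, so sweeping one across the real axis crosses \emph{infinitely} many integer poles at once, and the resulting residue series is itself a partial theta sum whose convergence in the new range of $\tau$ is exactly the issue being addressed, so the ``finitely many residue corrections, holomorphic by recursion'' step fails. The paper's \cref{lem:analy_cont} avoids this entirely: both tails are rotated simultaneously to directions $e^{\iu\varepsilon}\infty$ and $-e^{\iu\varepsilon}\infty$ (the path $C_\varepsilon$), which stays strictly off the real axis, so no poles are ever crossed and the continuation is obtained purely from the improved decay of $q^{\transpose{\xi}S^{-1}\xi/2}$ along the rotated rays, iterated in $\arg\tau$. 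You should replace your pole-crossing step by this pole-free deformation.
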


The last statement follows from the following lemma.

\begin{lem} \label{lem:analy_cont}
	Let $ S \in \Sym_r^+ (\Q) $ be a positive definite symmetric matrix and
	$ \varphi(\xi) $ be a holomorphic function on an open set containing $ C_+^r $, 
	which satisfies the estimate $ \varphi(\xi) = O(e^{c \abs{\xi}})$ as $ \abs{\xi} \to \infty $ for some constant $ c > 0 $.
	Then, the integral
	\[
	\int_{C_+^r} q^{\transpose{\xi} S^{-1} \xi / 2} \varphi(\xi) d\xi
	\]
	extends to a holomorphic function in $ \tau $ on $ \widetilde{\bbC} $.
\end{lem}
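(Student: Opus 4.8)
The plan is to establish convergence and holomorphy on $\bbH$ by a crude majorant, then to push past $\bbH$ by rotating the tails of $C_+^{r}$ so that the Gaussian factor keeps decaying, to identify the rotated integral with the original one by Cauchy's theorem, and finally to iterate the rotation to reach every point of $\widetilde{\bbC}$. Set $Q(\xi)\coloneqq\transpose{\xi}S^{-1}\xi$; since $S$ is positive definite so is $S^{-1}$, hence $Q$ is a positive definite rational quadratic form, and $q^{Q(\xi)/2}=e^{\pi\iu\tau Q(\xi)}$ satisfies $\abs{q^{Q(\xi)/2}}=e^{-\pi\Im(\tau Q(\xi))}$. Along $C_+^{r}$ (see \cref{fig:C+}) each $\Re\xi_j$ is unbounded while each $\Im\xi_j$ stays bounded, so $\Im(\tau Q(\xi))=\Im(\tau)\,Q(\Re\xi)+O(\abs{\Re\xi})$; combined with $\abs{\varphi(\xi)}=O(e^{c\abs{\xi}})$ this gives an integrable majorant, locally uniform for $\tau\in\bbH$, and hence (Morera's theorem, or differentiation under the integral) the integral is a holomorphic function $F(\tau)$ on $\bbH$.

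For the continuation, write $\tau=\abs{\tau}e^{\iu\vartheta}$ with $\vartheta\in\R$ the global coordinate on $\widetilde{\bbC}$. For $\omega\in\R$ let $C_+^{(\omega)}$ be obtained from $C_+$ by keeping its bounded middle part (where $\varphi$ is holomorphic by hypothesis) fixed and sending the two tails to infinity along rays in the directions $e^{\iu\omega}$ and $-e^{\iu\omega}$. On such a ray, $\xi=te^{\iu\omega}v$ with $v\in\R^{r}$, $Q(v)>0$, so $\abs{q^{Q(\xi)/2}}=e^{-\pi\abs{\tau}t^{2}Q(v)\sin(\vartheta+2\omega)}$, which decays Gaussianly exactly when $\sin(\vartheta+2\omega)>0$. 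Given a target $\tau_0\in\widetilde{\bbC}$ with $\arg\tau_0=\vartheta_0$, choose $\omega$ with $\vartheta_0+2\omega\in(0,\pi)$; the majorant argument then makes $\int_{(C_+^{(\omega)})^{r}}q^{Q(\xi)/2}\varphi(\xi)\,d\xi$ holomorphic near $\tau_0$. When also $\tau\in\bbH$ and $[\vartheta,\vartheta+2\omega]\subset(0,\pi)$, Cauchy's theorem applied one variable at a time identifies this with $F(\tau)$: the discrepancy is the integral over the wedges swept between $C_+$ and $C_+^{(\omega)}$, which is killed by the arcs of radius $T\to\infty$, the integrand there being $O(e^{-\pi\abs{\tau}T^{2}Q(v)\sin(\vartheta+2\psi)+cT})$ with $\sin(\vartheta+2\psi)>0$ for the intermediate angle $\psi$. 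Moving $\omega$ in small steps, each matched to the previous one this way over an overlap region, extends $F$ by almost another $\pi$ of argument per step; finitely many steps reach any $\vartheta_0$, and since $\widetilde{\bbC}$ is simply connected these local pieces glue into one holomorphic function on $\widetilde{\bbC}$.

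The main obstacle is the interplay of this winding with the singularities of $\varphi$: reaching all of $\widetilde{\bbC}$ forces $\omega\to\pm\infty$, so the rotated tails must eventually sweep across the real directions, where in the intended applications $\varphi$ is meromorphic on $\bbC^{r}$ with polar locus a union of real hyperplanes $\{\xi_j=\text{const}\in\Q\}$ (from factors like $1/(1-\bm{e}(\xi_j+\mu))$). I would take each incremental rotation in the direction that keeps the swept wedge inside $\{\Im\xi_j>0\ \text{for all }j\}$ or inside $\{\Im\xi_j<0\ \text{for all }j\}$ — a net rotation of nearly $\pi$ is available before one is forced onto $\R^{r}$ — and, when a wedge must cross a polar hyperplane $\{\xi_j=a\}$, replace the contour-shift identity by a residue identity $\int_{(C_+^{(\omega_k)})^{r}}=\int_{(C_+^{(\omega_{k+1})})^{r}}+\text{(sum of residue contributions of the hyperplanes crossed)}$. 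Evaluating such a residue $\Res_{\xi_j=a}$ leaves an integral of exactly the same shape in $r-1$ variables — $Q$ replaced by its restriction (again the inverse of a positive definite rational matrix), $\varphi$ by a residue times a $\tau$-dependent exponential factor of size $O(e^{c'\abs{\xi'}})$ — so it is holomorphic in $\tau$ on $\widetilde{\bbC}$ by induction on $r$, with $r=0$ trivial; and for fixed $\tau_0$ only finitely many crossings occur. (Taken literally, with $\varphi$ holomorphic only on a thin tube about $C_+^{r}$, one first records that the middle of $C_+$ never moves and that the finitely many tail rotations needed to reach a given $\tau_0$ remain in a sector where $\varphi$ is holomorphic apart from the poles just treated.)
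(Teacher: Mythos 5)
Your opening steps---the majorant giving holomorphy on $\bbH$, and the rotation of the two tails of $C_+$ with gluing of the resulting half-planes of convergence along small overlapping steps---are exactly the paper's argument (the deformation of $C_+$ into the rotated contour $C_\varepsilon$ of \cref{fig:contour_lem_analy_cont}, repeated). The genuine gap is in your mechanism for passing the real directions, which is where all the content of ``extends to $\widetilde{\bbC}$'', as opposed to a bounded range of $\arg\tau$ of aperture about $3\pi$, actually lies. When a straight tail pivots across the real $\xi_j$-axis it crosses, in one sweep, \emph{all} poles of the factor $1/(1-\bm{e}(\xi_j+\mu))$ lying beyond the pivot, i.e.\ infinitely many of them; so the correction relating the old and new contours is not ``a residue leaving a single integral of the same shape in $r-1$ variables'' but an infinite series of such terms, indexed by the crossed poles and carrying factors of the form $q^{\ast(n+\mu)^2}$, i.e.\ a partial-theta-type object. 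For $r=1$ it is literally a partial theta function, which converges only where the relevant $\Im\tau>0$ holds, i.e.\ on disjoint copies of the upper half-plane inside $\widetilde{\bbC}$, and has no analytic continuation beyond them (lacunary series, natural boundary). Hence your induction has no valid base, and, more importantly, the identity $\int_{\mathrm{old}}=\int_{\mathrm{new}}+(\text{residue series})$ extends the function only to the intersection of the new contour's half-plane with the domain of the residue series, which is no larger than the domain you already had; the crossing buys nothing. Your remark that ``only finitely many crossings occur'' is also not true in the relevant sense: finitely many sweeps, but each sweep meets infinitely many poles.

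Note that the paper's proof never takes residues at all: it only deforms the contour through pole-free regions. What makes ``repeatedly deforming'' reach every sheet is the fact that the polar locus of $\varphi$ meets each coordinate line in a \emph{discrete} set of real points: when a tail has rotated up against the real direction, one threads it through a gap between two consecutive poles and lets it continue rotating in the other half-plane; iterating this weaving keeps every intermediate contour pole-free, keeps the asymptotic directions of the tails inside the Gaussian-decay sectors $\{\sin(\arg\tau+2\arg\xi_j)>0\}$, and therefore continues the integral indefinitely in $\arg\tau$ with no correction terms whatsoever. This is the step your residue scheme should be replaced by. Your closing parenthesis does flag a real (separate) point: as literally stated, with $\varphi$ holomorphic only on a neighborhood of $C_+^r$, no deformation is justified; both the paper's proof and any repaired version of yours implicitly use that in the applications $\varphi$ is holomorphic away from families of real hyperplanes, and the lemma has to be read with that stronger hypothesis.
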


\begin{proof}
	We deform the integration path $ C_+ $ into $ C_\varepsilon $ defined as in \cref{fig:contour_lem_analy_cont}.
	For $ \tau \in \bbC \smallsetminus \{ 0  \} $ such that 
	$ -2\varepsilon + \varepsilon' < \arg(\tau) < \pi $ for some $ \varepsilon' > 0 $, 
	we have
	\begin{alignat}{2}
		q^{\transpose{\xi} S^{-1} \xi / 2} \varphi(\xi)
		&=
		O \left( \exp \left( -\abs{\tau \transpose{\xi} S^{-1} \xi} \sin (2\varepsilon + \arg(\tau) ) + c \abs{\xi} \right) \right) & &
		\\
		&=
		O \left( \exp \left( -\abs{\tau \transpose{\xi} S^{-1} \xi} \sin \varepsilon' \right) \right)
		& &\quad \text{ as } \abs{\xi} \to \infty.
	\end{alignat}
	Therefore, the integral extends holomorphically on this domain.
	By repeatedly deforming the integration path, we can analytically continue the integral to $ \widetilde{\bbC} $.
\end{proof}

\begin{figure}[htbp]
	\centering
	\begin{tikzpicture}
		\tikzset{midarrow/.style={postaction={decorate},
				decoration={markings,
					mark=at position 0.15 with {\arrow{stealth}},
					mark=at position 0.5 with {\arrow{stealth}},
					mark=at position 0.85 with {\arrow{stealth}}
				}
			}
		}
		\draw[->] (-3, 0) -- (3, 0) node[right]{$ \Re (\xi_i) $};
		\draw[->] (0, -2) -- (0, 2) node[above]{$ \Im (\xi_i) $};
		
		\draw (0,0) node[below left]{0};
		
		\draw[very thick, midarrow] (200:3.2) node[below]{$ -e^{\iu \varepsilon} \infty $}
		-- (200:0.5) arc[start angle=200, end angle=20, radius=0.5] 
		-- (20:3.2) node[above]{$ e^{\iu \varepsilon} \infty $};
		
		\node at (1,1) {$C_{\varepsilon}$};
		
	\end{tikzpicture}
	\caption{The integration path $ C_\varepsilon $}
	\label{fig:contour_lem_analy_cont}
\end{figure}

\begin{proof}[Proof of \cref{thm:false_theta_rk2_modular}]
	We need to prove the modular transformation formula.
	We can assume $ \alpha \in \left[ 0, 1 \right)^2 $.
	Let $ \gamma (\tau; x) \coloneqq q^{\transpose{x} S x/2} $ be a kernel function.
	We can write 
	\[
	\widetilde{\Theta}(\tau) = \sum_{m \in \Z^2} \sgn(m_1) \sgn(m_2) \gamma(\tau; m + \alpha).
	\]
	We can calculate as
	\[
	\calF \left[ \gamma \left( -\frac{1}{\tau}; x + \alpha \right) \right] (\xi)
	=
	\frac{\tau}{\iu} \frac{1}{\sqrt{D}} \cdot 
	\bm{e} (\transpose{\alpha} \xi) q^{\transpose{\xi} S^{-1} \xi/2}.
	\]
	Thus, by the Poisson summation formula with signature (\cref{thm:PSF_sgn}) or the modular transformation formula for false modular series (\cref{thm:false_modular_series_modular}), we have
	\[
	\frac{\iu}{\tau} \sqrt{D} \cdot
	\widetilde{\Theta} \left(  -\frac{1}{\tau}\right)
	=
	\widetilde{\Theta}^* (\tau)
	+ \calI_1(\tau) + \calI_2 (\tau)
	+ \Omega (\tau),
	\]
	where
	\begin{align}
		\calI_1(\tau) &\coloneqq
		\sum_{n_1 \in \Z} \sgn(n_1) \cdot 2 \int_{C_+} 
		\bm{e} (\alpha_1 n_1 + \alpha_2 \xi_2) q^{(n_1, \xi_2) S^{-1} \transpose{(n_1, \xi_2)} /2}
		\frac{d\xi_2}{1 - \bm{e} (\xi_2)},
		\\
		\calI_2(\tau) &\coloneqq
		\sum_{n_2 \in \Z} \sgn(n_2) \cdot 2 \int_{C_+} 
		\bm{e} (\alpha_1 \xi_1 + \alpha_2 n_2) q^{(\xi_1, n_2) S^{-1} \transpose{(\xi_1, n_2)} /2}
		\frac{d\xi_1}{1 - \bm{e} (\xi_1)}.
	\end{align}
	We need to calculate $ \calI_1(\tau) $ and $ \calI_2(\tau) $.
	Since
	\[
	\transpose{\xi} S^{-1} \xi
	=
	\frac{a}{D} \left( \xi_2 - \frac{b}{a} \xi_1 \right)^2 + \frac{1}{a} \xi_1^2,
	\]
	we have
	\begin{align}
		\calI_1(\tau) 
		&=
		\sum_{n_1 \in \Z} \sgn(n_1) \bm{e} (\alpha_1 n_1) q^{n_1^2 /2a}
		\cdot 2 \int_{C_+} 
		\bm{e} (\alpha_2 \xi_2) q^{a (\xi_2 - bn_1/a)^2 /2D}
		\frac{d\xi_2}{1 - \bm{e} (\xi_2)}
		\\
		&=
		\sum_{n_1 \in \Z} \sgn(n_1) \bm{e} \left( \left( \alpha_1 + \frac{b}{a} \alpha_2 \right) n_1 \right) q^{n_1^2 /2a}
		\cdot 2 \int_{C_+ - b n_1/a} 
		\bm{e} (\alpha_2 \xi_2) q^{a \xi_2^2 /2D}
		\frac{d\xi_2}{1 - \bm{e} (\xi_2 + b n_2 /a)}.
	\end{align}
	By deforming integration path, for any $ \mu \in \R $, we have
	\begin{align}
		2 \int_{C_+ + \mu} d\xi_2 - 2 \int_{C_+} d\xi_2
		&=
		\begin{dcases}
			2 \sum_{\beta \in \left[ \mu, 0 \right)} 2\pi\iu \Res_{\xi_2 = \beta} & \text{ if } \mu < 0, \\
			2 \sum_{\beta \in \left[ 0, \mu \right)} 2\pi\iu \Res_{\xi_2 = \beta} & \text{ if } \mu \ge 0
		\end{dcases}
		\\
		&=
		\sum_{\beta \in \R} ( 1 - \sgn(\beta) \sgn(\beta - \mu) ) 2\pi\iu \Res_{\xi_2 = \beta}.
	\end{align}
	Moreover, if $ 1 - \sgn(\beta) \sgn(\beta - \mu) \neq 0 $, then $ \sgn(\beta) = \sgn(\mu) $.
	Thus, we have
	\begin{equation} \label{eq:contour_deforming_false_theta}
		2 \int_{C_+ + \mu} d\xi_2 - 2 \int_{C_+} d\xi_2
		=
		\sgn(\mu) \sum_{\beta \in \R} ( \sgn(\beta) - \sgn(\beta - \mu) ) 2\pi\iu \Res_{\xi_2 = \beta}.
	\end{equation}
	Therefore, we obtain
	\begin{align}
		&\phant
		\calI_1(\tau) - \sum_{\mu \in a^{-1} \Z / \Z} \widetilde{\theta}_{1, \mu} (\tau) \Omega_{1, \mu} (\tau)
		\\
		&=
		\sum_{n_1 \in \Z} \sgn(n_1) \bm{e} \left( \left( \alpha_1 + \frac{b}{a} \alpha_2 \right) n_1 \right) q^{n_1^2 /2a}
		\left( 2 \int_{C_+ - b n_1/a} - 2 \int_{C_+}  \right)
		\bm{e} (\alpha_2 \xi_2) q^{a \xi_2^2 /2D}
		\frac{d\xi_2}{1 - \bm{e} (\xi_2 + b n_1 /a)}		
		\\
		&=
		\sum_{n_1 \in \Z} \sgn(n_1) \bm{e} \left( \left( \alpha_1 + \frac{b}{a} \alpha_2 \right) n_1 \right) q^{n_1^2 /2a}
		\sgn \left( -\frac{b n_1}{a} \right)
		\sum_{n_2 \in -b n_1 /a + \Z} \left( \sgn(n_2) - \sgn \left(n_2 + \frac{b n_1}{a} \right) \right)
		\bm{e} (\alpha_2 n_2) q^{a n_2^2 /2D}.
	\end{align}
	Since $ \sgn(x) \sgn(\pm x) = 1 $ for any $ x \in \R $, we have
	\begin{align}
		&\phant
		\calI_1(\tau) - \sum_{\mu \in a^{-1} \Z / \Z} \widetilde{\theta}_{1, \mu} (\tau) \Omega_{1, \mu} (\tau)
		\\
		&=
		\sum_{n_1 \in \Z} \bm{e} \left( \left( \alpha_1 + \frac{b}{a} \alpha_2 \right) n_1 \right) q^{n_1^2 /2a}
		\sum_{n_2 \in -b n_1 /a + \Z} \left( \sgn(n_2) - \sgn \left(n_2 + \frac{b n_1}{a} \right) \right)
		\bm{e} (\alpha_2 n_2) q^{a n_2^2 /2D}.
	\end{align}
	By replacing $ n_1 $ by $ a m_1 $ with $ \mu \in a^{-1} \Z / \Z $ and $ m_1 \in \mu + \Z $, we have
	\begin{align}
		&\phant
		\calI_1(\tau) - \sum_{\mu \in a^{-1} \Z / \Z} \widetilde{\theta}_{1, \mu} (\tau) \Omega_{1, \mu} (\tau)
		\\
		&=
		\sum_{\mu \in a^{-1} \Z / \Z} \sum_{m_1 \in \mu + \Z} \sum_{n_2 \in -b \mu + \Z} 
		\left( \sgn(n_2) - \sgn \left(n_2 + b m_1 \right) \right)
		\bm{e} \left( \left( a \alpha_1 + b \alpha_2 \right) m_1 + \alpha_2 n_2 \right) q^{na m_1^2 /2 + a n_2^2 /2D}
		\\
		&=
		\sum_{\mu \in a^{-1} \Z / \Z} \widetilde{\Theta}_{1, \mu} (\tau).
	\end{align}
	By the same calculation, we have
	\[
	\calI_2 (\tau) - \sum_{\mu \in c^{-1} \Z / \Z} \widetilde{\theta}_{2, \mu} (\tau) \Omega_{2, \mu} (\tau)
	=
	\sum_{\mu \in c^{-1} \Z / \Z} \widetilde{\Theta}_{2, \mu} (\tau).
	\]
	Thus, we obtain the claim.
\end{proof}

\begin{rem}
	We can also write
	\begin{align}
		&\phant
		\calI_1(\tau) - \sum_{\mu \in a^{-1} \Z / \Z} \widetilde{\theta}_{1, \mu} (\tau) \Omega_{1, \mu} (\tau)
		\\
		&=
		\sum_{\mu \in a^{-1} \Z / \Z}
		\left(
		\sum_{\substack{
				n \in \transpose{(\mu, -b \mu)} + \Z^2, \\
				-bn_1 \le n_2 < 0
		}}
		+ \sum_{\substack{
				n \in \transpose{(\mu, -b \mu)} + \Z^2, \\
				0 \le n_2 < -bn_1
		}}
		\right)
		\bm{e} ((a \alpha_1 + b \alpha_2) n_1 + \alpha_2 n_2) q^{a (n_1^2 + n_2^2/D)/2}.
	\end{align}
	One may call such functions \emph{multiple theta functions}, in analogy with multiple zeta functions or multiple Eisenstein series.
\end{rem}


\subsection{General rank case} \label{subsec:false_theta_rk_general}


We are now in a position to state the modular transformation formula for false theta functions in the general rank case.

To begin with, we recall our main result stated in \cref{subsec:intro_number_theory}.

\begin{dfn} \label{dfn:false_theta}
Let $ r \ge d \ge 1 $ be positive integers, $ S \in \Sym_r^+ (\Q) $ be a positive definite symmetric matrix,
$ A \in \Mat_{d, r} (\Z) $ be a matrix of rank $ d $, 
$ P(x_1, \dots, x_r) \in \Q[x_1, \dots, x_r] $ be a polynomial, and $ \alpha \in \Q^r $ be a vector.
For these data, we define
\[
\widetilde{\Theta} (\tau)
\coloneqq
\sum_{m \in \alpha + \Z^r} \sgn(Am) P(m) q^{\transpose{m} Sm /2}
\]
to be the \emph{false theta function of rank $ r $ and depth $ d $ associated to the symmetric matrix $ S $}.
\end{dfn}

The following result follows from \cref{prop:sign_Am_to_m}, which will be proved in \cref{sec:sgn}.

\begin{prop} \label{prop:false_theta_rep}
Let $ r $ and $ s $ be positive integers, $ S \in \Sym_r^+ (\Q) $ be a positive definite symmetric matrix,
$ A \in \Mat_{s, r} (\Z) $ be a matrix of rank $ d \ge 0 $, 
$ \overline{g}(x) \in \overline{\frakQ}^0 $ be a quasi-polynomial, and $ \alpha \in \Q^r $ be a vector.
For these data, we define
\[
\widetilde{\Theta} (\tau)
\coloneqq
\sum_{m \in \alpha + \Z^r} \sgn(Am) \overline{g}(m) q^{\transpose{m} Sm /2}.
\]
Then, the function $ \widetilde{\Theta} (\tau) $ can be written as a finite $ \Q $-linear combination of false theta functions of rank $ r $ and depth $ d' $ which have forms
\[
\widetilde{\Theta} (\tau)
\coloneqq
\sum_{m \in \beta + \Z^r} \sgn(m_1) \cdots \sgn(m_{d'}) h(m) q^{\transpose{m} \transpose{B} SBm /2},
\]
where $ d' \le d $, $ \beta \in \Q^r $, $ h(x) \in \overline{\frakQ} $, and $ B \in \Mat_r(\Z) \cap \GL_r(\Q) $.
\end{prop}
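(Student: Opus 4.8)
The plan is to deduce the statement directly from \cref{prop:sign_Am_to_m}, whose purpose is exactly to rewrite a lattice sum twisted by $\sgn(Am)$ as a finite $\Q$-linear combination of sums twisted only by $\sgn(m_1)\cdots\sgn(m_{d'})$, after invertible integral changes of variable. Since everything below is $\Q$-linear in $\overline g$, I would first reduce to the case in which $\overline g(x)$ is a single product of powers $x_i^{k_i}$ and one-variable residue-class indicators $\bm 1_{a_i+N\Z}(x_i)$ with a common modulus $N$; this costs no generality.

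The first step is to normalize the sign datum. Choosing $L\in\Z_{>0}$ with $LA\alpha\in\Z^s$ and using $\sgn(Lx)=\sgn(x)$, we may replace $A$ by $M:=LA\in\Mat_{s,r}(\Z)$, which still has rank $d$ and now satisfies $Mm\in\Z^s$ for every $m$ in the summation range $\alpha+\Z^r$. Thus $\widetilde\Theta(\tau)=\sum_{m\in\alpha+\Z^r}\sgn(Mm)\,F(m)$ with $F(m):=\overline g(m)\,q^{\transpose{m}Sm/2}$, which is of the shape handled by \cref{prop:sign_Am_to_m}.

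The second step is to apply \cref{prop:sign_Am_to_m}. It expresses the sum as a finite $\Q$-linear combination of sums of the form $\sum_{m\in\beta_j+\Z^r}\sgn(m_1)\cdots\sgn(m_{d_j'})\,F(B_jm)$, with $d_j'\le d$, $B_j\in\Mat_r(\Z)\cap\GL_r(\Q)$ and $\beta_j\in\Q^r$ (possibly after a finite further decomposition of the lattice). For our $F$ one has $F(B_jm)=\overline g(B_jm)\,q^{\transpose{m}(\transpose{B_j}SB_j)m/2}$, so it remains to observe two things. First, $\transpose{B_j}SB_j\in\Sym_r^+(\Q)$ is again positive definite because $B_j$ is invertible. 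Second, $m\mapsto\overline g(B_jm)$ lies in $\overline{\frakQ}$: a polynomial pulls back to a polynomial under an integral-linear substitution, and $\bm 1_{a+N\Z}$ applied to an integral linear form $\sum_i c_i x_i$ expands as a finite sum of products $\prod_i\bm 1_{b_i+N\Z}(x_i)$ over residues $b\in(\Z/N\Z)^r$ with $\sum_i c_i b_i\equiv a\pmod N$ — so this is a consequence of the definition of $\overline{\frakQ}$ (\cref{dfn:cyclotomic_rat_func}). Setting $h(m):=\overline g(B_jm)$, $B:=B_j$, $\beta:=\beta_j$, $d':=d_j'$, each term is exactly of the asserted form $\sum_{m\in\beta+\Z^r}\sgn(m_1)\cdots\sgn(m_{d'})\,h(m)\,q^{\transpose{m}\transpose{B}SBm/2}$, and since $d_j'\le d$ the whole of $\widetilde\Theta(\tau)$ is a finite $\Q$-linear combination of such series.

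I expect the genuinely hard content to sit entirely inside \cref{prop:sign_Am_to_m}, which is deferred to \cref{sec:sgn}. The obstacle there is that $\sgn(Mm)$ couples several integral linear forms whose zero-hyperplanes, together with the underlying lattice shift coming from $\alpha$, cannot be straightened simultaneously by a single substitution; this forces one to decompose the lattice into residue classes and to apply row-by-row integral shears, which are then assembled into the matrices $B_j$ and the rational base points $\beta_j$. It is also the mechanism responsible for the drop in depth: a linear form that becomes sign-constant on a residue class contributes no $\sgn$-factor, so $d_j'$ may be strictly smaller than $d$. Granting that proposition, the present proof reduces to the bookkeeping of the previous paragraph — positive-definiteness of $\transpose{B_j}SB_j$ and the stability of $\overline{\frakQ}$ under integral-linear substitutions — which I would record as a couple of short remarks.
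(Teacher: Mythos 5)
Your proposal is correct and takes essentially the same route as the paper, which derives this proposition directly from \cref{prop:sign_Am_to_m} together with exactly the bookkeeping you record (positive definiteness of $\transpose{B}SB$ for invertible $B$, and stability of $\overline{\frakQ}$ under integral-linear substitution). The one point to tighten is that \cref{prop:sign_Am_to_m} is stated for sums over $\Z^r$, so the shift $\alpha$ must be dealt with explicitly---for instance by decomposing $B_{d'}^{-1}(\alpha+\Z^r)$ into finitely many cosets $\beta+\Z^r$ after the substitution $m=B_{d'}\widetilde{m}$ (or by rescaling $m$ so that the congruence condition is absorbed into the quasi-polynomial)---and your preliminary replacement of $A$ by $LA$ with $LA\alpha\in\Z^s$ does not by itself address this, since $\sgn$ never required integrality of its argument; this is a matter of a sentence, not a missing idea.
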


Our main theorem is as follows.

\begin{thm} \label{thm:false_theta}
Let $ \widetilde{\Theta}(\tau) $ be a false theta function of rank $ r $ and depth $ d $ 
associated to a positive definite symmetric matrix $ S \in \Sym_r^+ (\Q) $.
Then, there exist finitely many false theta functions 
$ \widetilde{\Theta}_1 (\tau), \dots, \widetilde{\Theta}_M (\tau), \widetilde{\Theta}'_1 (\tau), \dots, \widetilde{\Theta}'_{M'} (\tau) $ of rank $ \le r $ and depth $ \le d $, 
non-negative integers $ l_1, \dots, l_M \in \Z_{\ge 0} $, and 
holomorphic functions $ \Omega(\tau), \Omega_1 (\tau), \dots, \Omega_N (\tau) \in \calO(\widetilde{\bbC}) $ such that
\[
\sqrt{\det S} {\sqrt{\frac{\iu}{\tau}}}^{\, r} \cdot \widetilde{\Theta} \left( -\frac{1}{\tau} \right)
=
\sum_{1 \le i \le M} \widetilde{\Theta}_i (\tau) \tau^{l_i} 
+ \sum_{1 \le i \le M'} \widetilde{\Theta}'_i (\tau) \Omega_j (\tau) 
+ \Omega(\tau).
\]

Furthermore, the above false theta functions admit a rather explicit description.
Suppose that 
\[
\widetilde{\Theta} (\tau)
=
\sum_{m \in \Z^r} \widetilde{g}(m) q^{\transpose{(m + \alpha)} S (m + \alpha) /2},
\]
where $ S \in \Sym_r^+ (\Q) $ is a positive definite symmetric matrix,
$ \alpha \in \Q^r $ is a vector, and
$ \widetilde{g}(x) = \widetilde{\coe}[G](x) \in \overline{\frakQ} $ is a false quasi-polynomial with
$ G(z) = G_1 (z_1) \cdots G_r (z_r) \in \frakR $ such that $ \overline{g}(x) \coloneqq \overline{\coe}[G](x) \in \overline{\frakQ}^0 $.
In this case, the above modular transformation formula takes the form
\[
\sqrt{\det S} {\sqrt{\frac{\iu}{\tau}}}^{\, r} \cdot \widetilde{\Theta} \left( -\frac{1}{\tau} \right)
=
\sum_{\pi \in \Pi_r} \sum_{1 \le i \le N_\pi} \widetilde{\Theta}_{\pi, i} (\tau) \Omega_{\pi, i} (\tau)
+ \Omega(\tau),
\]
with the following notation:
\begin{itemize}
	\item We denote $ \Pi_r \coloneqq \left\{ (I_1, \dots, I_s) \mid I_1 \sqcup \cdots \sqcup I_s = \{ 1, \dots, r \} \right\} $ 
	the set of ordered partitions of $ \{ 1, \dots, r \} $.
	
	\item For each $ \pi \in \Pi_r $, $ N_\pi $ is a positive integer.
	
	\item For each $ \pi \in \Pi_r $ and $ 1 \le i \le N_\pi $, 
	$ \widetilde{\Theta}_{\pi, i} (\tau) $ is a false theta function of rank $ \le r $ and depth $ \le d $
	and $ \Omega_{\pi, i} (\tau) $ is a holomorphic function on $ \widetilde{\bbC} $.
	
	
	\item For $ \pi = (\{ 1, \dots, r \}) $, we can write 
	$ \Omega_{\pi, i} (\tau) = \tau^{l_i} $ for some non-negative integers $ l_1, \dots, l_M $ and 
	\[
	\sum_{1 \le i \le N_\pi} \widetilde{\Theta}_{\pi, i} (\tau) \Omega_{\pi, i} (\tau)
	=
	\sum_{\beta \in \calP_\pi /\Z} \sum_{n \in \beta + \Z^r}  \sgn(n) \widehat{g} (\tau; n) \bm{e} ( \transpose{\alpha} n) 
	q^{\transpose{n} S_\pi n /2},
	\]
	where $ S^*_\pi \coloneqq S^{-1} $,
	$ \calP_\pi = \calP \subset \Q^r $ is the set of poles of meromorphic function $ G( \bm{e} (\xi) ) $,
	$ T_{\overline{g}} \colon \widehat{\frakK} \to \widehat{\frakK} $ is the operator defined in \cref{subsec:modular_series_modular}, and
	$ \widehat{g} (\tau; \xi_1, \dots, \xi_r) \in \Q[\tau, \xi_1, \dots, \xi_r] $ is a polynomial defined as
	\[
	\widehat{g} (\tau; \xi) \bm{e} ( \transpose{\alpha} \xi) q^{\transpose{\xi} S^{-1} \xi /2}
	\coloneqq
	T_{\overline{g}} \left[ \bm{e} ( \transpose{\alpha} \xi) q^{\transpose{\xi} S^{-1} \xi /2} \right].
	\]
	
	\item We can write
	\[
	\Omega (\tau)
	=
	2^r \int_{C_+^r} \bm{e} (\transpose{\alpha} \xi) \widehat{g} (\tau; \xi) q^{\transpose{\xi} S^{-1} \xi / 2}
	G (\bm{e} (\xi)) d\xi.
	\]
	
	\item For $ \pi = (I_1, \dots, I_s) \in \Pi_r $ with $ s \ge 2 $ and $ 1 \le i \le N_\pi $, we can write 
	\begin{align}
		\widetilde{\Theta}_{\pi, i} (\tau)
		&=
		\sum_{n \in \beta_{\pi, i} + \Z^{I_s^\complement}} \sgn( A_{\pi, i} n) \bm{e} ( \transpose{\alpha} U_\pi n) 
		\widehat{g}_{\pi, i} (n) q^{\transpose{n} S^*_\pi n /2},
		\\
		\Omega_{\pi, i} (\tau)
		&=
		\int_{C_+^{I_s}}
		\bm{e} (\transpose{\alpha_{I_s}^{}} \xi_{I_s}^{}) 
		h_{\pi, i} \left( \tau; \xi_{I_s}^{} \right) 
		q^{\transpose{\xi_{I_s}^{}} S_{I_s}^* \xi_{I_s}^{}/2}
		G_{I_s}^{} \left( \bm{e} \left( \xi_{I_s}^{} + \beta^*_{\pi, i} \right) \right) 
		d\xi_{I_s}^{},
	\end{align}
	with the following notation:
	\begin{itemize}
		\item For a subset $ I \subset \{ 1, \dots, r \} $, denote $ I^\complement \coloneqq \{ 1, \dots, r \} \smallsetminus I $ its complement.
		
		\item Some matrix $ A_{\pi, i} \in \Mat_{d, \abs{I_s^\complement}} (\Z) $ of rank $ d $.
		
		\item Some quasi-polynomial $ \widehat{g}_{\pi, i} (x) \in \Q[x_j \mid j \in I_s^\complement] $.
		
		\item Some polynomial $ h_{\pi, i} \left( \tau; \xi_{I_s}^{} \right) \in \Q[\tau, \xi_j \mid j \in I_s ] $.
		
		\item For a subset $ I \subsetneq \{ 1, \dots, r \} $, let 
		\[
		S \eqqcolon \pmat{S_I^{} & B_I^{} \\ \transpose{B_I^{}} & S_{I^\complement}^{}}, \quad
		S^{-1} \eqqcolon \pmat{S_I^* & * \\ * & S_{I^\complement}^*}.
		\]
		be block submatrices.
		
		\item More generally, for a subset $ J \subset I \subset \{ 1, \dots, r \} $ and a $ I \times I $ matrix $ A \in \GL_I^{} (\Q) $, 
		denote $ A_J^{} $ and $ A_J^* $ the $ J \times J $-submatrices of $ A $ and $ A^{-1} $ respectively.
		
		\item For a subset $ I, J \subset \{ 1, \dots, r \} $, 
		denote $ B_{I, J}^{} $ and $ B_{I, J}^* $ the $ I \times J $-submatrices of $ S $ and $ S^{-1} $ respectively.
		
		\item A symmetric matrix
		\[
		S^*_\pi \coloneqq
		\pmat{ (S_{I_1})^{-1} & & & \\ & S^*_{I_1, I_2} & & \\ & & \ddots & & \\ & & & 
			S^*_{I_1 \sqcup \cdots \sqcup I_{s-2}, I_{s-1}}}
		\in \Sym_{I_s^\complement}^{} (\Q),
		\]
		where $ S_{I, J}^* \coloneqq ((S_{I^\complement}^*)_J^*)^{-1} $ for $ J \subset I \subset \{  1, \dots, r \} $.
		
		\item A lower triangular matrix
		\begin{equation}
			U_\pi =
			\begin{tikzpicture}[baseline=(M.center)]
				\matrix (M) [
				matrix of math nodes,
				nodes in empty cells,
				left delimiter={(}, right delimiter={)},
				row sep=1.2em, column sep=1.6em,
				nodes={text height=1.8ex, text depth=0.6ex} 
				]{
					1_{I_1} &        		   &        &              & \\
					& 1_{I_2}   	&        &              & \\
					U_{I_1} &           		& \ddots &              & \\
					& U_{I_1 \sqcup I_2,I_2} & \ddots & 1_{I_{s-1}} & \\
					&        			 &        & U_{I_1 \sqcup \cdots \sqcup I_{s-1}, I_{s-1}} & 1_{I_s} \\
				};
				
				\node[draw,rounded corners,inner xsep=10pt,fit=(M-2-1)(M-5-1)] {};
				\node[draw,rounded corners,inner xsep=20pt,fit=(M-3-2)(M-5-2)] {};
				\node[draw,rounded corners,inner xsep=5pt,fit=(M-5-4)(M-5-4)] {};
			\end{tikzpicture}
			\in \GL_r(\Q),
		\end{equation}
		where $ 1_{I}^{} $ is the $ I \times I $ identity matrix,
		$ U_{I_1}^{} \coloneqq -\transpose{B_I^{}} \left( S_I^{} \right)^{-1} $,
		and $ U_{I, J} \coloneqq ((S_{I^\complement}^*)_J^*)^{-1} \transpose{B_{J, I^\complement}^*} $ for $ J \subset I \subset \{  1, \dots, r \} $.
		
		\item For each $ 1 \le i \le r $, denote $ \calP_i \subset \Q $ is the set of poles of the meromorphic function $ G_i (\bm{e} (\xi_i) ) $.
		
		\item For each $ I \subset \{ 1, \dots, r \} $, denote $ \calP_I^{} \coloneqq \prod_{i \in I} \calP_i \subset \Q^I $.
		
		\item Let $ \calP_\pi \coloneqq U_\pi (\calP_{I_s^\complement} \oplus \Q^{I_s}) \cap \Q^{I_s^\complement} $.
		
		\item Some vector $ \beta_{\pi, i} \in \calP_\pi / \Z^{I_s^\complement} $.
		
		\item Some vector $ \beta^*_{\pi, i} \in U_\pi^{-1} (\calP_\pi) \cap \Q^{I_s} / \Z^{I_s} $.
		
	\end{itemize}
\end{itemize}
\end{thm}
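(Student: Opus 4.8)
The plan is to deduce the explicit (second) form of the theorem and then read off the coarser (first) form from it.

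First I would put the series into normal form. By \cref{prop:false_theta_rep} (hence by \cref{prop:sign_Am_to_m}), a $\Q$-linear change of summation variable $m\mapsto Bm$ rewrites $\sum_m\sgn(Am)P(m)q^{\transpose m Sm/2}$ as a finite $\Q$-linear combination of series $\sum_{m\in\Z^r}\widetilde g(m)q^{\transpose{(m+\alpha)}S'(m+\alpha)/2}$ with $S'\in\Sym_r^+(\Q)$ and $\widetilde g=\widetilde{\coe}[G]$ for a product $G=G_1(z_1)\cdots G_r(z_r)\in\frakR$ (possibly after a further reduction of the rank, a finite-support remainder being absorbed into an entire function), which is exactly the hypothesis of the explicit part. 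For such a series, \cref{prop:modular_series_Laplace} gives $\widetilde\Theta(\tau)=\widetilde{\calL}[G,\widehat\gamma](\tau)$ for the kernel $\gamma(\tau;x)=q^{\transpose{(x+\alpha)}S(x+\alpha)/2}$, and the Gaussian Fourier transform of $\gamma(-1/\tau;\cdot)$ computed as in \cref{ex:false_theta_PV_rep} turns the left-hand side of the desired identity into the single Cauchy principal value integral
\[
\sqrt{\det S}\,{\sqrt{\iu/\tau}}^{\,r}\,\widetilde\Theta(-1/\tau)=\PV\int_{\R^r}\bm{e}(\transpose\alpha\xi)\,q^{\transpose\xi S^{-1}\xi/2}\,G(\bm{e}(\xi))\,d\xi.
\]

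The core of the argument is a recursion that peels off integration variables one block at a time along an ordered partition $\pi=(I_1,\dots,I_s)\in\Pi_r$. On a variable $\xi_i$ I would rewrite $\PV\int_\R d\xi_i$ as $\int_{C_+}d\xi_i$ plus $\pi\iu$ times the sum of signed residues of $G_i(\bm{e}(\xi_i))$ at its poles $\calP_i$, exactly as in the proofs of \cref{thm:PSF_sgn} and \cref{prop:modular_series_modular}. A residue at $\xi_i=n_i$ evaluates the Gaussian and the character $\bm{e}(\alpha_i\xi_i)$ at $\xi_i=n_i$ (derivatives appearing for higher-order poles, recorded by the operator $T_{\overline g}$ of \cref{subsec:modular_series_modular}, which is also the source of the positive powers of $\tau$ in $\widehat g$), after which completing the square in the remaining Gaussian forces the surviving contours to be shifted by a rational multiple of $n_i$; this is the mechanism already visible in the rank-two case, see \eqref{eq:contour_deforming_false_theta}, and every such shift crosses further poles of the remaining $G_j(\bm{e}(\xi_j))$, producing the extra sign factors $\sgn(n_j)-\sgn(n_j-(\text{shift}))$. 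Iterating over the blocks $I_1,\dots,I_{s-1}$ (residued out) while keeping $I_s$ as a genuine integral $\int_{C_+^{I_s}}$, the successive shifts assemble into the lower-triangular matrix $U_\pi$, the quadratic forms at each stage into $S^*_\pi$ via the block Schur-complement identities recorded in the statement, and the poles encountered into $\calP_\pi$; grouping the leftover discrete sums by residue classes and expanding $\widehat g$ in powers of $\tau$ yields the terms $\widetilde\Theta_{\pi,i}(\tau)\Omega_{\pi,i}(\tau)$, with $\Omega_{\pi,i}=\tau^{l_i}$ in the fully discrete term $\pi=(\{1,\dots,r\})$ and $\Omega=2^r\int_{C_+^r}\cdots$ in the term where no variable is residued out.

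To finish I would (i) apply \cref{prop:false_theta_rep} once more to each discrete piece to recognize the iterated sign factors as defining false theta functions of rank $\le r$ and depth $\le d$, with the stated matrices $A_{\pi,i}$ and quadratic forms $S^*_\pi$; (ii) apply \cref{lem:analy_cont} to each integral over $C_+^{I_s}$, whose integrand is $q^{\transpose{\xi_{I_s}}S^*_\pi\xi_{I_s}/2}$ times a function of at most exponential growth, to conclude that every $\Omega_{\pi,i}$ and $\Omega$ extends holomorphically to $\widetilde\bbC$; and (iii) collect terms to obtain the coarser first form. The main obstacle is the bookkeeping described in the previous paragraph — tracking precisely which poles are crossed at each stage, checking that the contour shifts are consistent as one passes from block to block, and verifying that the accumulated data is faithfully encoded by $U_\pi$, $S^*_\pi$, $\calP_\pi$ and the shift vectors $\beta_{\pi,i}$, $\beta^*_{\pi,i}$ — rather than any single analytic estimate.
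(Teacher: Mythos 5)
Your proposal is correct and follows essentially the same route as the paper: reduce to the normal form $\sum_m \sgn(m)\overline{g}(m)q^{\transpose{m}Sm/2}$ with $G=G_1\cdots G_r$, apply the signed Poisson summation/false modular series transformation (you unroll \cref{thm:false_modular_series_modular} via \cref{prop:modular_series_Laplace} and the PV-to-$C_+$-plus-residues step, which is exactly how that theorem is proved), then iterate the complete-the-square/contour-shift mechanism of \eqref{eq:contour_deforming_false_theta} and \cref{lem:block_matrix} over ordered partitions to assemble $U_\pi$, $S^*_\pi$, $\calP_\pi$, and finish with \cref{lem:analy_cont} and the signed-sum reductions (\cref{lem:sign_Am_to_m_rank_less}, \cref{prop:signs_more_than_rank}) to identify the discrete pieces as false theta functions of rank $\le r$ and depth $\le d$. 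This matches the paper's proof step for step, including the derivation of the coarse first form from the explicit one.
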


\begin{rem}
For $ \pi = (I_1, \dots, I_s) $, the definition of $ S^*_\pi $ and $ U_\pi $ differs between the cases $ s = 1 $ and $ s \ge 2 $.
As we will see in the proof below, 
for $ s = 1 $ we ``invert'' the quadratic form via the Fourier transform.
In contrast, for $ s \ge 2 $ we do not apply an additional Fourier transform but instead expand it directly.
\end{rem}

\begin{proof}[Proof of \cref{thm:false_theta}]
By \cref{lem:sign_Am_to_m}, the first statement for general false theta functions follows from the last statement.
By \cref{lem:R_C_corresp} \cref{item:lem:R_C_corresp:C_tilde_rep},  we can assume that $ \widetilde{\Theta} (\tau) $ has a form
\[
\widetilde{\Theta} (\tau)
=
\sum_{m \in \alpha + \Z^r} \sgn(m) \overline{g}(m) q^{\transpose{m} Sm /2}
\]
with $ \overline{g}(x) \in \overline{\frakQ}^0 $ and $ \overline{\coe}^{-1} [\overline{g}] (z) = G(z) = G_1 (z_1) \cdots G_r (z_r) $.
By the modular transformation formula for false modular series (\cref{thm:false_modular_series_modular}), we have
\[
\sqrt{\frac{\iu}{\tau}}^{\, r} \frac{1}{\sqrt{\det S}} \cdot
\widetilde{\Theta} \left(  -\frac{1}{\tau}\right)
=
\sum_{I \subset \{ 1, \dots, r \}} \calI_I^{} (\tau).
\]
Here, $ \calI_I^{} (\tau) $ for $ I \subset \{ 1, \dots, r \} $ is defined as follows:
\begin{itemize}
	\item For a vector $ x = (x_1, \dots, x_r) \in \bbC^r $, denote $ x_I^{} \coloneqq (x_i)_{i \in I}^{} $.
	\item Let $ \overline{g}_I^{} (x_I^{}) \coloneqq \prod_{i \in I} \overline{g}_i (x_i) \in \overline{\frakQ}^0_I $.
	\item Let $ G_I^{} (z_I^{}) \coloneqq \overline{\coe}^{-1} [\overline{g}_I^{}] (z_I^{}) \in \frakR_I^{} $.
	\item Let $ T_{\overline{g}_I^{}} \colon \widehat{\frakK}_I^{} \to \widehat{\frakK}_I^{} $ be the operator defined in \cref{subsec:modular_series_modular}. 
	\item Let $ \widehat{g}_I^{} (\tau; \xi_1, \dots, \xi_r) \in \Q[\tau, \xi_1, \dots, \xi_r] $ is a polynomial defined as
	\[
	\widehat{g}_I^{} (\tau; \xi) \bm{e} ( \transpose{\alpha} \xi) q^{\transpose{\xi} S^{-1} \xi /2}
	\coloneqq
	T_{\overline{g}_I^{}} \left[ \bm{e} ( \transpose{\alpha} \xi) q^{\transpose{\xi} S^{-1} \xi /2} \right].
	\]
	\item Define a function $ \calI_I^{} (\tau) $ as
	\[
	\calI_I^{} (\tau) \coloneqq
	\sum_{n_I^{} \in \calP_I^{}} \sgn(n_I^{}) \bm{e} (\transpose{\alpha_I^{}} n_I^{})
	\cdot 2^{r - \abs{I}} \int_{C_+^{I^\complement}}
	\bm{e} (\transpose{\alpha_{I^\complement}^{}} \xi_{I^\complement}^{}) 
	\widehat{g}_I^{} (\tau; n_I^{}, \xi_{I^\complement}^{}) 
	q^{\frac{1}{2} (\transpose{n_I^{}}, \transpose{\xi_{I^\complement}^{}}) S^{-1} \smat{n_I^{} \\ \xi_{I^\complement}^{}}}
	G_{I^\complement}^{} (\bm{e}(\xi_{I^\complement}^{})) d\xi_{I^\complement}^{}.
	\]
\end{itemize}
We have
\[
\calI_{\{ 1, \dots, r \}}^{} (\tau)
=
\sum_{n \in \calP} \sgn(n) \bm{e} ( \transpose{\alpha} n) \widehat{P} (\tau; n) q^{\transpose{n} S^{-1} n /2},
\quad
\calI_\emptyset^{} (\tau) = \Omega (\tau).
\]
We need to calculate $ \calI_I^{} (\tau) $ for $ \emptyset \neq I \subsetneq \{ 1, \dots, r \} $.
To achieve this, we need the following lemma.

\begin{lem}[{\cite[Subsection A.5.5]{Boyd-Vandenberghe}, \cite[Lemma 2.3]{M:plumbed}}] \label{lem:block_matrix}
	For a symmetric block matrix
	\[
	X = \pmat{A & B \\ \transpose{B} & C} \in \GL_{r+s} (\bbC)
	\]
	such that $ A \in \GL_r(\bbC) $ and $ C \in \GL_s(\bbC) $ be symmetric matrices and $ B \in \Mat_{r, s}(\bbC) $, 
	let $ C^* $ be the lower-right $ s \times s $ submatrix of $ X^{-1} $\textup{:}
	\[
	X^{-1} = \pmat{* & * \\ * & C^*}.
	\]
	Then, we have
	\[
	X^{-1} = \pmat{-A^{-1} B \\ I_s} C^* \pmat{-\transpose{B}A^{-1} & I_s} + \pmat{A^{-1} & \\ & O}, \quad
	C^* =  (C - \transpose{B} A^{-1} B)^{-1}, \quad
	\det C^* = \frac{\det A}{\det X}.
	\]
\end{lem}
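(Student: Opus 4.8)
The plan is to prove \cref{lem:block_matrix} by the standard block-$LDU$ (Schur complement) computation; since the cited sources \cite[Subsection A.5.5]{Boyd-Vandenberghe} and \cite[Lemma 2.3]{M:plumbed} carry out exactly this, I would reproduce the short argument for completeness. First I would record the factorization
\[
X
=
\pmat{I_r & O \\ \transpose{B} A^{-1} & I_s}
\pmat{A & O \\ O & S}
\pmat{I_r & A^{-1} B \\ O & I_s},
\qquad
S \coloneqq C - \transpose{B} A^{-1} B,
\]
which makes sense because $A$ is invertible and is verified by expanding the block product (the top-left block returns $A$, the off-diagonal blocks return $B$ and $\transpose{B}$, and the bottom-right block returns $\transpose{B} A^{-1} B + S = C$). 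Taking determinants, the two unipotent triangular factors contribute $1$ each, so $\det X = \det A \cdot \det S$; since $X$ and $A$ are invertible by hypothesis, $S$ is invertible, and once we know $C^* = S^{-1}$ this yields $\det C^* = \det A / \det X$, the last claimed identity.

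Second, I would invert the factorization --- each triangular factor inverts by flipping the sign of its off-diagonal block --- to obtain
\[
X^{-1}
=
\pmat{I_r & -A^{-1} B \\ O & I_s}
\pmat{A^{-1} & O \\ O & S^{-1}}
\pmat{I_r & O \\ -\transpose{B} A^{-1} & I_s},
\]
and expanding the block product gives
\[
X^{-1}
=
\pmat{A^{-1} + A^{-1} B S^{-1} \transpose{B} A^{-1} & -A^{-1} B S^{-1} \\ -S^{-1} \transpose{B} A^{-1} & S^{-1}}.
\]
Reading off the lower-right $s \times s$ block identifies $C^* = S^{-1} = (C - \transpose{B} A^{-1} B)^{-1}$, which is the middle identity of the lemma.

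Finally, for the first (rank-$s$ factored) identity I would expand
\[
\pmat{-A^{-1} B \\ I_s} C^* \pmat{-\transpose{B} A^{-1} & I_s}
+ \pmat{A^{-1} & O \\ O & O}
=
\pmat{A^{-1} + A^{-1} B C^* \transpose{B} A^{-1} & -A^{-1} B C^* \\ -C^* \transpose{B} A^{-1} & C^*}
\]
and, substituting $C^* = S^{-1}$ from the previous step, observe that this matches the expression for $X^{-1}$ block by block. This completes the argument.

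I do not expect any genuine obstacle: the whole proof is a finite manipulation of $2 \times 2$ block matrices. The only points worth an explicit remark are that the invertibility of $A$ is precisely what licenses the initial factorization, and that the invertibility of $X$ together with that of $A$ forces the Schur complement $S$ to be invertible, so that $S^{-1}$, hence $C^*$, is well-defined; symmetry of $A$ and $C$ is not needed for the algebraic identities (it only makes the two outer triangular factors mutual transposes).
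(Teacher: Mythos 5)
Your proof is correct and complete; the paper itself does not prove this lemma but only cites \cite[Subsection A.5.5]{Boyd-Vandenberghe} and \cite[Lemma 2.3]{M:plumbed}, whose argument is exactly the block-$LDU$/Schur-complement computation you give. Your remarks on where invertibility of $A$ and $X$ is used (and that symmetry is inessential) are accurate, so nothing further is needed.
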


By this lemma, we have
\[
\transpose{\xi} S^{-1} \xi
=
\transpose{( \xi_{I^\complement}^{} + U_I^{} \xi_I^{} )}
S_{I^\complement}^*
( \xi_{I^\complement}^{} + U_I^{} \xi_I^{} )
+ \transpose{\xi_I^{}} (S_I^{})^{-1} \xi_I^{},
\]
where we denote $ U_I^{} \coloneqq -\transpose{B_I^{}} (S_I^{})^{-1} $ as in the statement.
Thus, by replacing $ \xi_{I^\complement}^{} $ by $ \xi_{I^\complement}^{} - U_I^{} n_I^{} $, we have
\begin{align}
	\calI_I(\tau) 
	&=
	\sum_{n_I^{} \in \calP_I^{}} \sgn(n_I^{}) \bm{e} (\transpose{\alpha_I^{}} U_\pi n_I^{}) q^{\transpose{n_I^{}} (S_I^{})^{-1} n_I^{} /2}
	\\
	&\phant
	\cdot 2^{r - \abs{I}} \int_{C_+^{I^\complement} - U_I^{} n_I^{} }
	\bm{e} (\transpose{\alpha_{I^\complement}^{}} \xi_{I^\complement}^{}) 
	\widehat{g}_I^{} \left( 
	\tau; n_I^{}, \xi_{I^\complement}^{} - U_I^{} n_I^{} 
	\right) 
	q^{\transpose{\xi_{I^\complement}^{}} S_{I^\complement}^* \xi_{I^\complement}^{} /2}
	G_{I^\complement}^{} \left( \bm{e} \left( \xi_{I^\complement}^{} - U_I^{} n_I^{} \right) \right) d\xi_{I^\complement}^{}.
\end{align}
By using \cref{eq:contour_deforming_false_theta}, the integral term can be written as
\begin{align}
	&\phant
	\left( \prod_{j \in I^\complement} 
	\left(
	2 \int_{C_+} G_j \left( \bm{e} \left( \xi_j - (U_I^{} n_I^{})_j \right) \right) d\xi_j
	+ \sgn((U_I^{} n_I^{})_j) \sum_{n_j \in (U_I^{} n_I^{})_j + \calP_j}
	\left( \sgn(n_j) - \sgn(n_j - (U_I^{} n_I^{})_j) \right) \ev_{\xi_j = n_j}
	\right)
	\right)
	\\
	&\phant
	\left[
	\bm{e} (\transpose{\alpha_{I^\complement}^{}} \xi_{I^\complement}^{}) 
	\widehat{g} \left( 
	\tau; n_I^{}, \xi_{I^\complement}^{} - U_I^{} n_I^{}
	\right) 
	q^{\transpose{\xi_{I^\complement}^{}} S_{I^\complement}^* \xi_{I^\complement}^{} /2}
	\right]
	\\
	&=
	\sum_{J \subset I^\complement} \sum_{n_J^{} \in -(U_I^{} n_I^{})_J^{} + \calP_J^{}}
	\sigma_{I, J}^{} (n_I^{}, n_J^{}) \bm{e} (\transpose{\alpha_J^{}} n_J^{}) 
	\\
	&\phant
	\cdot 2^{\abs{(I \sqcup J)^\complement}}
	\int_{C_+^{(I \sqcup J)^\complement}}
	\bm{e} (\transpose{\alpha_{(I \sqcup J)^\complement}^{}} \xi_{(I \sqcup J)^\complement}^{}) 
	\widehat{g}_I^{} \left( 
	\tau; n_I^{}, \pmat {n_J^{} \\ \xi_{(I \sqcup J)^\complement}^{} } - U_I^{} n_I^{}
	\right) 
	q^{\smat{ \transpose{n_J^{}} & \transpose{\xi_{(I \sqcup J)^\complement}^{}} }
		S_{I^\complement}^* \smat{n_J^{} \\ \xi_{(I \sqcup J)^\complement}^{} } /2}
	\\
	&\phant
	\cdot G_{(I \sqcup J)^\complement}^{} 
	\left( \bm{e} \left( \xi_{(I \sqcup J)^\complement}^{} - (U_I^{} n_I^{})_{(I \sqcup J)^\complement}^{}  \right) \right) 
	d\xi_{(I \sqcup J)^\complement}^{},
\end{align}
where
\[
\sigma_{I, J}^{} (n_I^{}, n_J^{})
\coloneqq
\sgn(U_I^{} n_I^{}) 
\left(
\prod_{j \in J} \left( \sgn(n_j) + \sgn(n_j - (U_I^{} n_I^{})_j) \right) 
\right).
\]
Here, we have
\begin{align}
	&\phant
	\pmat{ \transpose{n_J^{}} & \transpose{\xi_{(I \sqcup J)^\complement}^{}} }
	S_{I^\complement}^* \pmat{n_J^{} \\ \xi_{(I \sqcup J)^\complement}^{} }
	\\
	&=
	\transpose{n_J^{}} 
	\left( 
	S_J^* - B^*_{J, (I \sqcup J)^\complement} \left( S^*_{(I \sqcup J)^\complement} \right)^{-1} \transpose{B^*_{J, (I \sqcup J)^\complement}}
	\right) 
	n_J^{}
	\\
	&\phant
	+
	\transpose{
		\left( 
		\xi_{(I \sqcup J)^\complement}^{} 
		+ \left( S^*_{(I \sqcup J)^\complement} \right)^{-1} \transpose{B^*_{J, (I \sqcup J)^\complement}} n_J^{} 
		\right)
	}
	S_{(I \sqcup J)^\complement}^* 
	\left( 
	\xi_{(I \sqcup J)^\complement}^{} 
	+ \left( S^*_{(I \sqcup J)^\complement} \right)^{-1} \transpose{B^*_{J, (I \sqcup J)^\complement}} n_J^{} 
	\right)
	\\
	&=
	\transpose{n_J^{}} S^*_{I, J} n_J^{}
	+
	\transpose{
		\left( 
		\xi_{(I \sqcup J)^\complement}^{} + U_{I, J}^{} n_J^{} 
		\right)
	}
	S_{(I \sqcup J)^\complement}^* 	
	\left( 
	\xi_{(I \sqcup J)^\complement}^{} + U_{I, J}^{} n_J^{} 
	\right)
\end{align}
by \cref{lem:block_matrix}.
Therefore, we obtain
\begin{align}
	\calI_I(\tau) 
	&=
	\sum_{n_I^{} \in \calP_I^{}} 
	\sum_{J \subset I^\complement} 
	\sum_{n_J^{} \in \left( U_I^{} n_I^{} \right)_J^{} + \calP_J^{}}
	\sigma_{I, J}^{} (n_I^{}, n_J^{})
	\bm{e} (\transpose{\alpha_I^{}} U_\pi n_I^{} + \transpose{\alpha_J^{}} n_J^{}) 
	q^{\transpose{n_I^{}} (S_I^{})^{-1} n_I^{} /2 + \transpose{n_J^{}} S^*_{I, J} n_J^{} /2}
	\\
	&\phant
	\cdot 2^{\abs{(I \sqcup J)^\complement}}
	\int_{C_+^{(I \sqcup J)^\complement}}
	\bm{e} (\transpose{\alpha_{(I \sqcup J)^\complement}^{}} \xi_{(I \sqcup J)^\complement}^{}) 
	\widehat{g}_I^{} \left( 
	\tau; n_I^{}, \pmat {n_J^{} \\ \xi_{(I \sqcup J)^\complement}^{} } - U_I^{} n_I^{}
	\right) 
	\\
	&\phant
	\cdot 
	q^{\frac{1}{2} \transpose{
			\left( 
			\xi_{(I \sqcup J)^\complement}^{} + U_{I, J}^{} n_J^{} 
			\right)
		}
		S_{(I \sqcup J)^\complement}^* 	
		\left( 
		\xi_{(I \sqcup J)^\complement}^{} + U_{I, J}^{} n_J^{} 
		\right)}
	G_{(I \sqcup J)^\complement}^{} 
	\left( \bm{e} \left( \xi_{(I \sqcup J)^\complement}^{} - (U_I^{} n_I^{})_{(I \sqcup J)^\complement}^{}  \right) \right) 
	d\xi_{(I \sqcup J)^\complement}^{}.
\end{align}
By repeating this operation until the condition $ J = \emptyset $ is satisfied, we obtain the expression
\[
\calI_I^{} (\tau)
=
\sum_{\pi = (I_1, \dots, I_s) \in \Pi_r, \, I_1 = I} \calI_\pi (\tau),
\]
where
\begin{align}
	\calI_\pi (\tau)
	&\coloneqq
	\sum_{n_{I_s^\complement}^{} \in L_\pi}
	\sigma_\pi \left( n_{I_s^\complement}^{} \right)
	\bm{e} \left( \transpose{\alpha} U_\pi^{-1} n_{I_s^\complement}^{} \right)
	q^{\transpose{n_{I_s^\complement}^{}} S^*_\pi n_{I_s^\complement}^{} /2}
	\\
	&\phant
	\cdot 2^{\abs{I_s}}
	\int_{C_+^{I_s}}
	\bm{e} (\transpose{\alpha_{I_s}^{}} \xi_{I_s}^{}) 
	\widehat{g}_I^{} \left( 
	\tau; U_\pi^{-1} \pmat{n_{I_s^\complement}^{} \\ \xi_{I_s^{} }}
	\right) 
	q^{\transpose{\xi_{I_s}^{}} S_{I_s}^* \xi_{I_s}^{}/2}
	G_{I_s}^{} \left( \bm{e} \left( \xi_{I_s}^{} - \left( U_\pi n_{I_s^\complement}^{} \right)_{I_s}^{} \right) \right) 
	d\xi_{I_s}^{},
	\\
	\sigma_\pi \left( n_{I_s^\complement}^{} \right)
	&\coloneqq
	\sgn(n_{I_1}^{}) \prod_{2 \le s' \le s} \sigma_{s'} (n_{I_{s' - 1}^\complement}^{}, n_{I_{s'}^\complement}^{}),
	\\
	\sigma_{s'} \left( n_{I_{s' - 1}^\complement}^{}, n_{I_{s'}^\complement}^{} \right)
	&\coloneqq
	\sgn \left( \left( U_{I_{s'-1}}^{} n_{s'-1}^{} \right)_{I_{s'}}^{} \right)
	\prod_{j \in I_{s'}} \left( \sgn(n_j) - \sgn \left( n_j - \left( U_{I_{s'-1}}^{} n_{s'-1}^{} \right)_j \right) \right).
\end{align}
By \cref{lem:analy_cont,lem:sign_Am_to_m_rank_less,prop:signs_more_than_rank}, we can write
\[
\calI_\pi (\tau)
=
\sum_{1 \le i \le N_\pi} \widetilde{\Theta}_{\pi, i} (\tau) \Omega_{\pi, i} (\tau)
\]
for some false theta function $ \widetilde{\Theta}_{\pi, i} (\tau) $ of rank $ \le r $ and depth $ \le d $ 
and some holomorphic function $ \Omega_{\pi, i} (\tau) $ on $ \widetilde{\bbC} $ which admit representations as in the statement.
\end{proof}

By the same argument, we obtain the following similar statement over $ \R $.

\begin{thm} \label{thm:false_theta_R}
Let $ \widetilde{\Theta}(\tau) $ be a false theta function defined as
\[
\widetilde{\Theta} (\tau)
\coloneqq
\sum_{m \in \alpha + \Z^r} \sgn(Am) \bm{e} (\transpose{\beta} m) P(m) q^{\transpose{m} Sm /2}
\]
for positive integers $ r \ge r' \ge 1 $,
a positive definite symmetric matrix $ S \in \Sym_r^+ (\R) $,
a matrix $ A \in \Mat_{r', r} (\Z) $ of rank $ d \ge 0 $, 
a polynomial $ P(x_1, \dots, x_r) \in \R[x_1, \dots, x_r] $, and 
two vectors $ \alpha, \beta \in \R^r $.
Then, there exist finitely many false theta functions 
$ \widetilde{\Theta}_1 (\tau), \dots, \widetilde{\Theta}_M (\tau), \widetilde{\Theta}'_1 (\tau), \dots, \widetilde{\Theta}'_{M'} (\tau) $ of rank $ \le r $ and depth $ \le d $ of the above type, 
non-negative integers $ l_1, \dots, l_M \in \Z_{\ge 0} $ and 
holomorphic functions $ \Omega(\tau), \Omega_1 (\tau), \dots, \Omega_N (\tau) \in \calO(\widetilde{\bbC}) $ such that
\[
\sqrt{\det S} {\sqrt{\frac{\iu}{\tau}}}^{\, r} \cdot \widetilde{\Theta} \left( -\frac{1}{\tau} \right)
=
\sum_{1 \le i \le M} \widetilde{\Theta}_i (\tau) \tau^{l_i} 
+ \sum_{1 \le i \le M'} \widetilde{\Theta}'_i (\tau) \Omega_j (\tau) 
+ \Omega(\tau).
\]
Moreover, if $ A = I_r \in \Mat_{r} (\Z) $ which appears in the definition of $ \widetilde{\Theta}(\tau) $, we can write
\begin{align}
	\sum_{1 \le i \le M} \widetilde{\Theta}_i^* (\tau) \tau^{l_i} 
	&=
	\sum_{n \in \Z^r} \sgn(n) \bm{e} ( \transpose{\alpha} n) \widehat{P} (\tau; n + \beta) q^{\transpose{(n + \beta)} S^{-1} (n + \beta) /2}
	\\
	\Omega (\tau)
	&=
	2^r \int_{C_+^r} \bm{e} (\transpose{\alpha} \xi) \widehat{P} (\tau; \xi + \beta) q^{\transpose{(\xi + \beta)} S^{-1} (\xi + \beta) / 2}
	\prod_{1 \le i \le r} \frac{d\xi_i}{1 - \bm{e} (\xi_i)},
\end{align}
where $ \widehat{P} (\tau; \xi_1, \dots, \xi_r) \in \R[\tau, \xi_1, \dots, \xi_r] $ is a polynomial defined as
\[
\widehat{P} (\tau; \xi) q^{\transpose{\xi} S^{-1} \xi /2}
\coloneqq
P \left( -\frac{1}{2\pi\iu} \frac{\partial}{\partial \xi_1}, \dots, -\frac{1}{2\pi\iu} \frac{\partial}{\partial \xi_1} \right) 
\left[ q^{\transpose{\xi} S^{-1} \xi /2} \right].
\]
\end{thm}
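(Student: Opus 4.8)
The plan is to reproduce, almost verbatim, the proof of \cref{thm:false_theta}, the only changes being that the kernel function is now $\gamma(\tau;x)\coloneqq\bm{e}(\transpose{\beta}x)q^{\transpose{x}Sx/2}$ and that the summation weight is $\sgn(Am)P(m)$ instead of a false quasi-polynomial attached to a cyclotomic rational function. First I would check that $\gamma$ is a kernel function in the sense of \cref{dfn:kernel_func}: it is holomorphic in $\tau$, the factor $\bm{e}(\transpose{\beta}x)$ is bounded on $\R^r$, and $q^{\transpose{x}Sx/2}$ has Gaussian decay in $x$ uniformly on each half-plane $\{\Im\tau>\delta\}$ because $S\in\Sym_r^+(\R)$; integrability of its Fourier transform then follows from \cref{lem:ker_func}. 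Next, invoking \cref{lem:sign_Am_to_m,prop:sign_Am_to_m} (proved in \cref{sec:sgn}), I would reduce $\sgn(Am)$ to a product $\sgn(m_1)\cdots\sgn(m_{d'})$ with $d'\le d$ after an integral linear change of variables $m\mapsto Bm$, $B\in\Mat_r(\Z)\cap\GL_r(\Q)$; this replaces $S$ by the congruent form $\transpose{B}SB\in\Sym_r^+(\R)$ and $\alpha,\beta$ by their images, so it stays inside the stated class, and it reduces the first assertion to a finite $\Q$-linear combination of false theta functions in which $A$ is a coordinate projection, exactly as in \cref{prop:false_theta_rep}.

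\textbf{Main transformation.} Writing $\widetilde{\Theta}(-1/\tau)=\sum_{m\in\Z^r}\sgn(A(m+\alpha))P(m+\alpha)\gamma(-1/\tau;m+\alpha)$, I would apply the modular transformation formula for false modular series \cref{thm:false_modular_series_modular} (equivalently the Poisson summation formula with signature \cref{thm:PSF_sgn}), treating the polynomial factor $P$ through the differential operator $P\!\left(-\frac{1}{2\pi\iu}\frac{\partial}{\partial\xi_1},\dots,-\frac{1}{2\pi\iu}\frac{\partial}{\partial\xi_r}\right)$ as in \cref{prop:modular_series_modular}. The only analytic input is the classical Gaussian Fourier transform, which after the shift by $\alpha$ and the character by $\beta$ produces, up to the scalar ${\sqrt{\tau/\iu}}^{\,r}/\sqrt{\det S}$, the kernel $\bm{e}(\transpose{\alpha}\xi)\,q^{\transpose{(\xi+\beta)}S^{-1}(\xi+\beta)/2}$; feeding this into the operator above yields the polynomial $\widehat{P}(\tau;\xi)\in\R[\tau,\xi_1,\dots,\xi_r]$ of the statement. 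This gives a decomposition of $\sqrt{\det S}\,{\sqrt{\iu/\tau}}^{\,r}\widetilde{\Theta}(-1/\tau)$ indexed by subsets $I\subset\{1,\dots,r\}$: the term $I=\{1,\dots,r\}$ is $\sum_{n\in\Z^r}\sgn(n)\bm{e}(\transpose{\alpha}n)\widehat{P}(\tau;n+\beta)q^{\transpose{(n+\beta)}S^{-1}(n+\beta)/2}$, which splits according to the powers of $\tau$ occurring in $\widehat{P}$ into the $\sum_i\widetilde{\Theta}_i(\tau)\tau^{l_i}$ part, while the term $I=\emptyset$ yields the integral $\Omega(\tau)$ over $C_+^r$.

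\textbf{Intermediate terms and packaging.} For $\emptyset\neq I\subsetneq\{1,\dots,r\}$ I would follow the computation in the proofs of \cref{thm:false_theta_rk2_modular,thm:false_theta}: complete the square in the $I^\complement$-variables using \cref{lem:block_matrix}, translate the $C_+^{I^\complement}$-contour by the resulting $\R$-linear combination of the $n_I$, and rewrite the contour difference as a sum of residues at integer points via \cref{eq:contour_deforming_false_theta}. Each residue term is a false theta function with strictly fewer integration variables, so iterating over the ordered partitions $\pi\in\Pi_r$ terminates. Holomorphy on $\widetilde{\bbC}$ of all resulting integral factors follows from \cref{lem:analy_cont}, since $(1-\bm{e}(\xi_j))^{-1}$ is bounded along $C_+$ and $\widehat{P}$ is a polynomial, and \cref{lem:sign_Am_to_m_rank_less,prop:signs_more_than_rank} then repackage the iterated sums as false theta functions of rank $\le r$ and depth $\le d$, which is the asserted shape. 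The two explicit identities at the end of the statement are simply the $I=\{1,\dots,r\}$ and $I=\emptyset$ terms read off in the special case $A=I_r$, where the reduction step is vacuous.

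\textbf{The main obstacle.} I expect the difficulty to be organizational rather than conceptual: carefully tracking how $\alpha$, $\beta$, the sign factors, and the block data transform under the iterated changes of variable and contour deformations, and checking at each stage that the residue contributions are again false theta functions of the controlled rank and depth so that the induction over ordered partitions closes. One should also confirm that allowing $S\in\Sym_r^+(\R)$ rather than $\Sym_r^+(\Q)$ creates no problem; it does not, because the only rational data entering the contour integrals are the integer poles of $(1-\bm{e}(\xi_j))^{-1}$ coming from \cref{thm:PSF_sgn}, and the modular-transformation argument nowhere uses rationality or periodicity of the quadratic form.
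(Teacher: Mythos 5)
Your proposal is correct and coincides with the paper's own treatment: the paper proves \cref{thm:false_theta_R} simply ``by the same argument'' as \cref{thm:false_theta}, i.e.\ absorbing the character $\bm{e}(\transpose{\beta}x)$ into the Gaussian kernel, handling $P$ via the differential operator as in \cref{prop:modular_series_modular}, reducing $\sgn(Am)$ by \cref{lem:sign_Am_to_m,lem:sign_Am_to_m_rank_less,prop:signs_more_than_rank}, and running the subset/ordered-partition induction with \cref{lem:block_matrix}, \cref{eq:contour_deforming_false_theta}, and \cref{lem:analy_cont}, exactly as you outline. Your closing observation that only positive definiteness (not rationality) of $S$ is used is also the reason the argument transfers to the real setting.
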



\subsection{Asymptotic expansions of asymptotic coefficients of false theta functions} \label{subsec:false_theta_asymptotic}


To conclude this section, we derive asymptotic expansions as $ k \to \infty $ of asymptotic coefficients of false theta functions as $ \tau \to h/k $ for fixed $ h \in \Z \smallsetminus \{ 0 \} $.
We need such a result to derive the asymptotic expansions of WRT invariants.

Our main result in this section is the following statement.

\begin{thm} \label{thm:false_theta_asymp}
	Let $ r $ and $ s $ be positive integers, 
	$ S \in \Sym_r^+ (\Q) $ be a positive definite symmetric matrix,
	$ A \in \Mat_{s, r} (\Z) $ be a matrix of rank $ d \ge 0 $, 
	Let $ \overline{g}(x) \in \overline{\frakQ}^0 $ be a quasi-polynomial, 
	and $ \alpha \in \Q^r $ be a vector.
	For these data, we define
	\[
	\widetilde{\Theta} (\tau)
	\coloneqq
	\sum_{m \in \alpha + \Z^r} \sgn(Am) \overline{g}(m) q^{\transpose{m} Sm /2}.
	\]
	Then, we have the following asymptotic formulas.
	
	\begin{enumerate}
		\item \label{item:thm:false_theta_asymp:1}
		For any rational number $ \rho \in \Q $, we have
		\[
		\widetilde{\Theta} (\tau + \rho)
		\sim
		\sum_{j \in \frac{1}{2} \Z}
		c_j (\rho) \tau^j
		\quad \text{ as } \tau \to 0
		\]
		for some complex number
		\[
		c_j (\rho) \in 
		\sprod{ \bm{e} \left( \frac{\rho}{2} \transpose{m} Sm \relmiddle| m \in \alpha + \Z^r, \overline{g}(m) \neq 0 \right) }_\Q,
		\]
		which vanishes if $ j < -(r - d + \deg P)/2 $ or
		$ j \in \frac{1}{2} \Z_{\ge 0} \smallsetminus \Z_{\ge 0} $.
		
		\item \label{item:thm:false_theta_asymp:2}
		In the case where $ \rho = h/k \neq 0 $ for fixed $ h \in \Z \smallsetminus \{ 0 \} $, for any $ j $ we have
		\[
		c_j (\rho)
		\sim
		\sum_{\theta \in \calS} \bm{e} \left( \frac{\theta}{\rho} \right) \widetilde{\varphi}_{\theta, h, j} \left( \frac{1}{\sqrt{k}} \right) 
		\quad \text{ as } k \to \infty
		\]
		for some finite set $ \calS \subset \Q / \Z $ independent of $ \rho $ and $ j $ and 
		some formal power series 
		$ \widetilde{\varphi}_{\theta, h, j} (u) \in\bbC((u)) $.
		
		\item \label{item:thm:false_theta_asymp:3}
		Suppose that we can write
		\[
		\widetilde{\Theta} (\tau)
		=
		\sum_{m \in \Z^r} \widetilde{g}(m) q^{\transpose{(m + \alpha)} S (m + \alpha) /2},
		\]
		where $ \widetilde{g}(x) = \widetilde{\coe}[G](x) \in \overline{\frakQ} $ is a false quasi-polynomial with
		$ G(z) = G_1 (z_1) \cdots G_r (z_r) \in \frakR $ such that $ \overline{g}(x) \coloneqq \overline{\coe}[G](x) \in \overline{\frakQ}^0 $.
		Then, we can write $ \calS $ in \cref{item:thm:false_theta_asymp:2} explicitly as
		\[
		\calS = \{ 0 \} \cup \bigcup_{\pi \in \Pi_r} \calS_\pi^{},
		\]
		where
		\[
		\calS_\pi^{} \coloneqq
		\left\{
		-\frac{1}{2} \transpose{n} S_\pi^* n
		\bmod \Z
		\relmiddle|
		n \in \calP_\pi
		\right\}
		\]
		and $ S_\pi^* $ and $ \calP_\pi $ are as in \cref{thm:false_theta}.
	\end{enumerate}
\end{thm}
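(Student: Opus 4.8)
The plan is to derive \cref{item:thm:false_theta_asymp:1} from the Euler--Maclaurin machinery of \cref{sec:asymptotic}, and then to feed the modular transformation \cref{thm:false_theta} back into it to obtain \cref{item:thm:false_theta_asymp:2,item:thm:false_theta_asymp:3}. For \cref{item:thm:false_theta_asymp:1}, note that $\gamma(\tau;x)\coloneqq q^{\transpose{x}Sx/2}$ is a kernel function and that, after using \cref{prop:false_theta_rep} to turn $\sgn(A\,\cdot\,)$ into a false quasi-polynomial, $\widetilde{\Theta}(\tau)=\Phi[\widetilde{g},\gamma](\tau)$ for some $\widetilde{g}$ in $\widetilde{\frakQ}\otimes\bbC$. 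Since $\bm{e}(\rho\transpose{x}Sx/2)$ is $\Z^{r}$-periodic when $\rho\in\Q$ and $S\in\Sym_{r}^{+}(\Q)$, the twisted map $\widetilde{g}(x)\bm{e}(\rho\transpose{x}Sx/2)$ is again a false quasi-polynomial, so $\widetilde{\Theta}(\tau+\rho)=\Phi[\widetilde{g}_{\rho},\gamma](\tau)$; writing $\gamma(\tau;x)=f(\sqrt{\tau}\,x)$ for the fixed complex Gaussian $f$ (the case $\kappa=1/2$ of the discussion opening \cref{sec:asymptotic}) brings this under \cref{cor:asymp_EM}, which yields $\widetilde{\Theta}(\tau+\rho)\sim\sum_{d'}a_{d'}\,\varphi_{d'}\odot f_{B_{d'}}(\sqrt{\tau})$. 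Reading off the coefficient of $\tau^{j}$ defines $c_{j}(\rho)$; it lies in the asserted $\Q$-span because each $\varphi_{d'}$ is a $\Q$-combination of exponentials weighted by the values $\bm{e}(\frac{\rho}{2}\transpose{m}Sm)$ over the support of $\overline{g}$, and the Hadamard product only multiplies these by the (regularized) Taylor and antiderivative values of $f$ at $0$. The lower bound on $j$ is the order estimate of \cref{rem:deg_q-poly} \cref{item:rem:deg_q-poly:order}, and the vanishing of $c_{j}(\rho)$ for $j\in\frac{1}{2}\Z_{\ge0}\smallsetminus\Z_{\ge0}$ follows from parity: $f$ is even, so $f^{(\mathbf{j})}(0)$ with all $j_{i}\ge0$ vanishes when $\abs{\mathbf{j}}$ is odd, and the non-integral half-powers can only come from the negative-order (antiderivative) terms.

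For \cref{item:thm:false_theta_asymp:2,item:thm:false_theta_asymp:3}, fix $\rho=h/k$ and rewrite \cref{thm:false_theta} as
\[
\widetilde{\Theta}(\tau)=\frac{1}{\sqrt{\det S}}\,(-\iu\tau)^{-r/2}\left(\sum_{\pi\in\Pi_{r}}\sum_{1\le i\le N_{\pi}}\widetilde{\Theta}_{\pi,i}(-1/\tau)\,\Omega_{\pi,i}(-1/\tau)+\Omega(-1/\tau)\right).
\]
As $\tau\to h/k$ we have $-1/\tau\to-k/h$, a nonzero point of $\widetilde{\bbC}$, so each $\Omega_{\pi,i}$ and $\Omega$ is analytic there and contributes its Taylor series in $(-1/\tau+k/h)$, while each $\widetilde{\Theta}_{\pi,i}$ contributes its asymptotic expansion at the rational point $-k/h$ supplied by \cref{item:thm:false_theta_asymp:1}. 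Substituting $-1/\tau+k/h=\frac{k}{h\tau}(\tau-h/k)$ and re-expanding in $\tau-h/k$, the coefficient $c_{j}(h/k)$ becomes a finite combination, with weights that are powers of $k$, of the asymptotic coefficients of the $\widetilde{\Theta}_{\pi,i}$ at $-k/h$ and of the Taylor coefficients (in particular the values) of the $\Omega_{\pi,i}$ and $\Omega$ at $-k/h$. This reduces \cref{item:thm:false_theta_asymp:2,item:thm:false_theta_asymp:3} to the behaviour of these two ingredients as $k\to\infty$.

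The $\Omega$-ingredient is the source of the set $\calS$. Inserting the integral representations of $\Omega_{\pi,i}$ and $\Omega$ from \cref{thm:false_theta} (valid on all of $\widetilde{\bbC}$ by \cref{lem:analy_cont}) and letting the argument $-k/h$ recede to $-\infty$, the Gaussian factor $q^{\transpose{\xi}S_{\pi}^{*}\xi/2}$ becomes rapidly oscillating; deforming the contours $C_{+}$ past the poles of $G(\bm{e}(\xi))$ and invoking the stationary-phase estimate \cref{prop:asymp_stationary_phase} produces a residue sum over $n\in\calP_{\pi}$ together with a smooth remainder, and each residue evaluates to $\bm{e}(-\tfrac{k}{2h}\transpose{n}S_{\pi}^{*}n)$ times a power series in $k^{-1/2}$, that is, to $\bm{e}(\theta/\rho)$ with $\theta=-\tfrac{1}{2}\transpose{n}S_{\pi}^{*}n\bmod\Z\in\calS_{\pi}$, the smooth remainder feeding into the $\theta=0$ part. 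This step is exactly the reciprocity (Landsberg--Schaar type) behaviour of the multivariable quadratic exponential sums already underlying the proof of \cref{thm:false_theta} via \cref{thm:main_PSF_sgn,thm:false_modular_series_modular}. The $\widetilde{\Theta}_{\pi,i}$-ingredient is handled by induction on the rank: for the coarsest partition $\pi=(\{1,\dots,r\})$ the corresponding term is written out explicitly in \cref{thm:false_theta} and is periodic in $k$, so by discrete Fourier analysis it contributes only modes $\bm{e}(\theta/\rho)$ with $\theta\in\{0\}\cup\calS_{(\{1,\dots,r\})}$, while for every finer $\pi$ the false theta function $\widetilde{\Theta}_{\pi,i}$ has rank $\abs{I_{s}^{\complement}}<r$ and the inductive hypothesis applies. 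Collecting the two ingredients gives $c_{j}(h/k)\sim\sum_{\theta\in\calS}\bm{e}(\theta/\rho)\,\widetilde{\varphi}_{\theta,h,j}(k^{-1/2})$ with $\calS=\{0\}\cup\bigcup_{\pi\in\Pi_{r}}\calS_{\pi}$, which is \cref{item:thm:false_theta_asymp:2,item:thm:false_theta_asymp:3}.

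The hard part is the interchange of the two limiting regimes. One must justify substituting the Taylor and asymptotic expansions near $-k/h$ into the modular transformation and then re-expanding in $k$ without loss --- i.e.\ that the errors, exponentially small in $\tau-h/k$ for fixed $k$, stay negligible uniformly as $k\to\infty$ --- and that the stationary-phase analysis of the $\Omega$-integrals in \cref{prop:asymp_stationary_phase} holds uniformly as the argument tends to infinity, now to all orders in $k^{-1/2}$. The combinatorial bookkeeping of the ordered-partition structure $\Pi_{r}$ and the identification of the residue data with $(S_{\pi}^{*},\calP_{\pi})$ is intricate but is precisely the combinatorics already assembled in the proof of \cref{thm:false_theta}, so it introduces no genuinely new difficulty.
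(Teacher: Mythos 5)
Your treatment of part \cref{item:thm:false_theta_asymp:1} is essentially the paper's own argument: twist the quasi-polynomial by the periodic factor $\bm{e}\left(\frac{\rho}{2}\transpose{x}Sx\right)$, apply \cref{cor:asymp_EM} to the Gaussian kernel with $t=\sqrt{\tau/\iu}$, and get the degree bound from \cref{rem:deg_q-poly} and the half-integer vanishing from the evenness of the Gaussian. No issues there.

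For parts \cref{item:thm:false_theta_asymp:2} and \cref{item:thm:false_theta_asymp:3} your skeleton (insert \cref{thm:false_theta}, expand the false theta factors at the rational point $-k/h$ by part \cref{item:thm:false_theta_asymp:1}, expand the holomorphic $\Omega$ factors, then let $k\to\infty$) is also the paper's, but two of your key steps do not hold as stated. First, you make the $\Omega$-terms ``the source of $\calS$'' by deforming the contours $C_+$ past the poles of $G(\bm{e}(\xi))$ and collecting residues $\bm{e}\left(-\frac{k}{2h}\transpose{n}S_\pi^* n\right)$. Whether the continuation of $\Omega_{\pi,i}$ to the point $-k/h$ forces such a crossing depends on the sign of $h$: for $-k/h>0$ the rotated contour of \cref{lem:analy_cont} stays on the same side of every real pole, so no residues arise at all; and in the case you implicitly take ($-k/h\to-\infty$), the rotation sweeps \emph{all} of the infinitely many poles on a half-line, and the resulting residue series has summands of modulus one at the real point, so it does not converge absolutely --- this step needs an Abel-type summation argument you do not supply. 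The paper avoids this entirely: the $\Omega$-integrals are kept pole-free, \cref{prop:asymp_stationary_phase} shows they contribute only to the $\theta=0$ part, and every mode $\bm{e}(\theta/\rho)$ with $\theta\in\calS_\pi$ is read off from the asymptotic coefficients of the false theta factors $\widetilde{\Theta}_{\pi,i}$ at $-1/\rho$, which by part \cref{item:thm:false_theta_asymp:1} lie in the $\Q$-span of $\bm{e}\left(-\frac{1}{2\rho}\transpose{n}S_\pi^* n\right)$. (Since over-inclusion in $\calS$ is harmless, your attribution would not change the final set, but the step itself is not justified.)

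Second, your ``induction on rank'' for the theta factors attached to finer partitions invokes part \cref{item:thm:false_theta_asymp:2} in the wrong regime: at the point $-k/h$ it is the \emph{numerator} that tends to infinity while the denominator $h$ stays fixed, whereas the inductive hypothesis concerns $\rho=h'/k'$ with $h'$ fixed and $k'\to\infty$, so it simply does not apply. What is actually needed --- and what you already use for the coarsest partition --- is part \cref{item:thm:false_theta_asymp:1} at the rational point $-k/h$ together with the observation that its coefficients are $\Q$-linear combinations of $\bm{e}\left(-\frac{k}{2h}\transpose{n}S_\pi^* n\right)$ with $k$-independent weights (the twisting character is periodic in $m$ with period independent of $k$); this works uniformly for every $\pi$ and is exactly how the paper argues. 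With these two repairs your argument collapses onto the paper's proof; as written, the residue step and the induction step are genuine gaps.
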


Here, we use a tilde for the asymptotic series $ \widetilde{\varphi}_{\theta, h, j} (u) $, following the standard notation in resurgence theory.

\begin{rem}
	For the case where $ r=1 $, we have $ S \in \Q_{>0} $ and 
	\[
	\calS = \{ 0 \} \cup 
	\left\{
	-\frac{a^2}{2S}
	\bmod \Z
	\relmiddle|
	a \in \Q \text{ is a pole of $ G(\bm{e}(\xi)) $}
	\right\}.
	\]
	In this case, our asymptotic formula is essentially obtained by Andersen--Misteg\aa{}rd~\cite[Equation (5.22)]{Andersen-Mistegard}.
\end{rem}

As shown below, \cref{thm:false_theta_asymp} \cref{item:thm:false_theta_asymp:1} follows readily from \cref{prop:asymp_EM}.

\begin{proof}[Proof of \cref{thm:false_theta_asymp}]
	First, we prove \cref{item:thm:false_theta_asymp:1}.
	By applying \cref{cor:asymp_EM} \cref{item:prop:asymp_EM:2} for 
	\[
	f(x) = e^{-\pi \transpose{x} Sx}, \quad
	t = \sqrt{\frac{\tau}{\iu}}, \quad
	\overline{g}(x) \text{ as } \overline{g} (x) \bm{e} \left( \frac{\rho}{2} \transpose{x}Sx \right),
	\]
	we have
	\[
	\widetilde{\Theta} (\tau + \alpha)
	\sim
	\sum_{1 \le d' \le d} a_{d'}
	\varphi_{d'} \odot f_{B_{d'}}^{} (t)
	\quad \text{ as } \tau \to 0
	\]
	with the following notation:
	\begin{itemize}
		\item Let $ (a_{d'})_{1 \le d' \le d} $ be a family of rational numbers, 
		$ (B_{d'})_{1 \le d' \le d} $ be a family of matrices in $ \Mat_r (\Z) \cap \GL_r (\Q) $, and
		$ (L_{d'})_{1 \le d' \le d} $ be a family of free $ \Z $-submodules with $ \Z^{d'} \subset L_{d'} \subset \Q^{d'} $ which detemined in \cref{prop:sign_Am_to_m}.
		\item Let
		\begin{align}
			\varphi_{d'} (t_1, \dots, t_{d'})
			&\coloneqq
			\sum_{m' \in L_{d'} \cap \Q_{\ge 0}^{d'}} 
			\overline{g} \left( B_{d'} \pmat{m' \\ 0} \right)
			\bm{e} \left( \frac{\rho}{2} (\transpose{m'}, 0) \transpose{B_{d'}} S B_{d'} \pmat{m' \\ 0} \right)
			e^{m'_1 t_1 + \cdots + m'_{d'} t_{d'}}
			\\
			&\in \Q[\alpha_1, \dots, \alpha_r] ((t_1, \dots, t_{d'})).
		\end{align}
		\item Let $ f_{B_{d'}}^{} (x) \coloneqq f \left( B_{d'} \transpose{(x_1, \dots, x_{d'}, 0, \dots, 0)} \right) $.
	\end{itemize}
	By a choice in \cref{prop:sign_Am_to_m}, $ B_{d'} $ induces an isomorphism $ L_{d'} \oplus \Z^{r-d'} \xrightarrow{\sim} \Z^r $.
	Thus, each asymptotic coefficient is an element of
	\[
	\sprod{ \bm{e} \left( \frac{\rho}{2} \transpose{m} Sm \relmiddle| m \in \Z^r, \overline{g}(m) \neq 0 \right) }_\Q.
	\]
	Moreover, by \cref{rem:deg_q-poly} \cref{item:rem:deg_q-poly:order}, we have
	$ \ord_{t=0} \varphi \odot f (t) \ge \deg -\overline{g} - r $.
	Moreover, since $ f(x) $ is an even function, the $ j $-th coefficients of $ \varphi \odot f (t) $ vanish for positive odd integer $ j $.
	
	Next, we prove \cref{item:thm:false_theta_asymp:2,item:thm:false_theta_asymp:3}.
	We have the modular transformation formula
	\[
	\sqrt{\det S} {\sqrt{\frac{\iu}{\tau}}}^{\, r} \cdot \widetilde{\Theta} \left( -\frac{1}{\tau} \right)
	=
	\sum_{\pi \in \Pi_r} \sum_{1 \le i \le N_\pi} \widetilde{\Theta}_{\pi, i} (\tau) \Omega_{\pi, i} (\tau)
	+ \Omega(\tau),
	\]
	in \cref{thm:false_theta}.
	It suffices to consider an asymptotic expansion as $ \tau \to \tau - 1/\rho $ for the right hand side.
	By \cref{item:thm:false_theta_asymp:1}, we have an asymptotic expansion
	\[
	\widetilde{\Theta}_{\pi, i} \left( \tau - \frac{1}{\rho} \right)
	\sim
	\sum_{j \in \frac{1}{2} \Z}
	c_{\pi, i, j} (\rho) \tau^j
	\quad \text{ as } \tau \to 0
	\]
	for some complex number
	\[
	c_{\pi, i, j} (\rho) \in 
	(2\pi\iu)^{-j} \cdot
	\sprod{ 
		\bm{e} \left( -\frac{1}{2\rho} \transpose{n} S_\pi^* n\right) 
		\relmiddle| 
		n \in U_\pi^{-1} (\Z^r) \cap \calP_{I_s^\complement}^{}
	}_\Q.
	\]
	On the other hand, $ \Omega_{\pi, i} (\tau) $ and $ \Omega(\tau) $ have a representation
	\[
	\Omega'(\tau)
	=
	\int_{C_+^{r'}} q^{\transpose{\xi'} S' \xi'/2} \varphi(\xi') d\xi',
	\]
	where $ r' $ is a positive integer, $ \xi' $ is a tuple of $ r' $ variables, $ S' \in \Sym_{r'}^+ (\Q) $ is a positive definite symmetric matrix,
	and $ \varphi(\xi) $ is a meromorphic function whose all poles lie on the real axis.
	In particular, this function $ \Omega'(\tau) $ is holomorphic at $ \tau = -1/\rho $ and its value is
	\[
	\Omega' \left( -\frac{1}{\rho} \right)
	=
	\int_{C_+^{r'}} \bm{e} \left( -\frac{1}{2\rho} \transpose{\xi'} S' \xi' \right) \varphi(\xi') d\xi'.
	\]
	By deforming the integral path $ C_+ $ into $ C_\varepsilon $ define as in \cref{fig:contour_lem_analy_cont},
	we can apply \cref{prop:asymp_stationary_phase}.
	Then, we obtain an asymptotic expansion
	\[
	\Omega' \left( -\frac{1}{\rho} \right)
	\sim
	\varphi \odot f \left( \frac{1}{\sqrt{2\pi\iu \rho}} \right) \cdot \rho^{r'}
	\quad \text{ as } k \to \infty,
	\]
	where $ \rho = h/k $ for fixed $ h $ and $ f(x) \coloneqq \exp ( -\pi \transpose{x'} (S')^{-1} x') $.
	Thus, we obtain the claim.
\end{proof}


\section{Quantum modularity of indefinite theta functions} \label{sec:indefinite_false}


In a 1920 letter to Hardy, Ramanujan listed seventeen functions he termed ``mock theta functions.''
A complete description of their modular transformation formulas was established by Zwegers in his 2002 thesis~\cite{Zwegers:thesis}.
His approach is based on expressions of the mock theta functions as indefinite theta functions.
He constructed nonholomorphic \emph{modular completions} by inserting error functions and established modular transformation formulas for them.

In this section, we first provide an alternative proof of the modular transformation formula for indefinite theta functions obtained by Zwegers by using modular series instead of modular completion.
We also establish a modular transformation formula for further cases of indefinite theta functions.


\subsection{Zwegers' results} \label{subsec: indefinite_theta_Zwegers}


To begin with, we review results by Zwegers~\cite{Zwegers:thesis} for indefinite theta functions.
Throughout this section, we fix a positive integer $ r \ge 2 $ and an indefinite symmetric matrix $ S \in \Sym_r (\Z) $ of type $ (r-1,1) $.
We define an indefinite quadratic form $ Q \colon \R^r \to \R $ as $ Q(x) \coloneqq \transpose{x}Sx/2 $ 
and a bilinear form $ B \colon \R^r \times \R^r \to \R $ as $ B(x, y) \coloneqq \transpose{x}Sy $.
We also fix vectors $  \alpha, \beta, c, c' \in \R^r $ such that $ c $ and $ c' $ are linearly independent and satisfy 
$ Q(c), Q(c'), B(c, c') < 0 $.


\begin{dfn}[{Zwegers~\cite[Definition 2.1]{Zwegers:thesis}}]
	We define an \emph{indefinite theta function} as
	\[
	\Theta_{\alpha, \beta}^+ (\tau)
	\coloneqq
	\sum_{m \in \alpha + \Z^r}
	\left(
	\sgn \left( B(c, m) \right) - \sgn \left( B(c', m) \right)
	\right)
	\bm{e} \left( B(\beta, m) \right) q^{Q(m)}.
	\]
\end{dfn}

Zwegers essentially proved the following modular transformation formula.

\begin{thm} \label{thm:indef_theta_mod_trans}
	If $ B(c'', m) \notin \Z $ for each $ c'' \in \{ c, c' \} $ and any $ m \in (\alpha + \Z^r) \cup (\beta + S^{-1} (\Z^r)) $, 
	then we have
	\begin{align}
		\Theta_{\alpha, \beta}^+ \left( -\frac{1}{\tau} \right)
		&=
		\frac{\iu}{\sqrt{-\det S}} \sqrt{\frac{\tau}{\iu}}^r \bm{e} \left( B(\alpha, \beta) \right)
		\sum_{\beta' \in (\beta + S^{-1}(\Z^r)) / \Z^r} \Theta_{\beta', -\alpha}^+ (\tau)
		\\
		&\phant
		- \iu \sum_{c'' \in \{ c, c' \}} \varepsilon_{c''} 
		\sum_{\mu \in (\alpha + \Z^r) \cap \Delta_{c''}} 
		\calR_{\mu, \beta, c''} \left( -\frac{1}{\tau} \right),
	\end{align}
	with the following notation:
	\begin{itemize}
		\item Denote $ \varepsilon_{c} \coloneqq 1 $ and $ \varepsilon_{c'} \coloneqq -1 $.
		\item Denote $ \sprod{c''}_{\Z}^{\perp} \coloneqq \{ n \in \Z^r \mid B(c'', n) = 0 \} $.
		
		\item Let $ \Delta_{c''} $ be a fundamental domain of the action of $ \sprod{c''}_{\Z}^{\perp} $ on 
		$ \{ x \in \R^r \mid B(c'', x) / 2Q(c'') \in \left[ 0, 1 \right) \} $.
		
		\item For $ x \in \R^r $, let
		$ x^{\perp} \coloneqq x - \frac{B(c'', x)}{2Q(c'')} c'' \in \sprod{c''}_{\Z}^{\perp} \otimes_{\Z} \R $.
		
		\item For vectors $ \alpha', \beta' \in \sprod{c''}_{\Z}^{\perp} \otimes_\Z \R $, define a theta function for the positive definite quadratic form on $ \sprod{c''}_{\Z}^{\perp} $ defined by $ Q $ as
		\[
		\theta_{\eval{Q}_{\sprod{c''}_{\Z}^{\perp}}, \alpha', \beta'} (\tau) 
		\coloneqq
		\sum_{\lambda \in \alpha' + \sprod{c''}_{\Z}^{\perp}} \bm{e} (B(\beta', \lambda)) q^{Q(\lambda)},
		\]
		
		\item For $ a, b \in \R $, define a unary theta function as
		\[
		g_{a, b} (\tau) \coloneqq
		\sum_{n \in a + \Z} \bm{e} (bn) n q^{n^2/2}
		\]
		and its Eichler integral as
		\[
		g_{a, b}^* (\tau) \coloneqq
		\int_{0}^{\iu \infty} g_{\alpha, \beta} (z) \sqrt{\frac{\iu}{z + \tau}} dz.
		\]
		
		\item For $ \mu \in \R^r $, we define
		\[
		\calR_{\mu, \beta, c''} (\tau)
		\coloneqq
		\theta_{\eval{Q}_{\sprod{c''}_{\Z}^{\perp}}, \mu^{\perp}, \beta^{\perp}} (\tau)
		g^*_{\frac{B(c'', \mu)}{2Q(c'')}, B(c'', \beta)} (-2Q(c'') \tau).
		\]
	\end{itemize}
\end{thm}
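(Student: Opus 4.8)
The plan is to bypass Zwegers' modular completion entirely and instead deduce the formula from the one-variable Poisson summation formula with signature (\cref{thm:main_PSF_sgn} with $r=1$), applied after a classical coordinate decomposition. First I would split $\Theta_{\alpha,\beta}^+ = \Theta^{(c)}_{\alpha,\beta} - \Theta^{(c')}_{\alpha,\beta}$, where $\Theta^{(c'')}_{\alpha,\beta}(\tau) \coloneqq \sum_{m \in \alpha + \Z^r} \sgn(B(c'',m)) \bm{e}(B(\beta,m)) q^{Q(m)}$, treating the two timelike vectors $c,c'$ symmetrically (the sign $\varepsilon_{c''}$ in the statement records this). Each $\Theta^{(c'')}$ diverges as a $q$-series, so — exactly as in the Creutzig--Milas regularization and as in \cref{ex:false_theta_PV_rep} — I would carry out every manipulation on the contour-integral representation of the sign factor, recombining into $q$-series only at the end; the hypothesis $B(c'',m)\notin\Z$ guarantees the sign factor is locally constant on the relevant lattice cosets, so no poles obstruct the contour shifts.

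Next I would perform the orthogonal decomposition relative to $c''$: writing $m = \tfrac{B(c'',m)}{2Q(c'')} c'' + m^{\perp}$ with $m^{\perp} \in \sprod{c''}_{\Z}^{\perp}\otimes_{\Z}\R$ and using $B(c'',m^{\perp})=0$ gives
\[
Q(m) = \frac{B(c'',m)^2}{4Q(c'')} + Q(m^{\perp}),
\]
where $Q|_{\sprod{c''}_{\Z}^{\perp}}$ is positive definite because $c''$ is timelike. Summing over the lattice in this factorized form — the ``timelike'' integer variable $n$ running over $\tfrac{B(c'',\mu)}{2Q(c'')}+\Z$ and carrying $\sgn$, and the transverse lattice $\sprod{c''}_{\Z}^{\perp}$ shifted by $\mu^{\perp}$, with $\mu$ ranging over the fundamental domain $\Delta_{c''}$ — exhibits $\Theta^{(c'')}_{\alpha,\beta}$ as a finite sum over $\mu$ of products: a one-variable piece built from a Gaussian against $\sgn(n)$, times the positive definite transverse theta $\theta_{Q|_{\sprod{c''}_{\Z}^{\perp}},\,\mu^{\perp},\,\beta^{\perp}}$. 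Applying $\tau\mapsto-1/\tau$ and then the $r=1$ Poisson summation formula with signature to the timelike factor — with ordinary Poisson summation (equivalently \cref{prop:modular_series_modular}) applied to the transverse positive definite theta — turns the left-hand side into two contributions: the ``residue/main'' part $-\sum_n\sgn(n)\widehat f(n)$, and the ``contour'' part $2\int_{C_+}\widehat f(\xi)\,\frac{d\xi}{1-\bm{e}(\xi)}$.

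The remaining work is to match these with the two groups of terms in the statement. For the main part: reassembling the transverse theta transform together with $-\sum_n\sgn(n)\widehat f(n)$ over the two choices of $c''$ and over the cosets $\mu$, and tracking the Gauss-sum/determinant factor $1/\sqrt{-\det S}$, the phase $\bm{e}(B(\alpha,\beta))$ and the dual lattice, recovers $\tfrac{\iu}{\sqrt{-\det S}}\sqrt{\tau/\iu}^{\,r}\bm{e}(B(\alpha,\beta))\sum_{\beta'}\Theta^+_{\beta',-\alpha}(\tau)$ — this is just the computation showing that, were the sign factors absent, $\Theta^+$ would be a genuine weight-$r/2$ theta function. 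For the contour part: the Fourier transform of the one-dimensional Gaussian after $\tau\mapsto-1/\tau$ is again a Gaussian in $\xi$, and the $C_+$-integral, after deforming the contour as in \cref{lem:analy_cont} and interchanging with the defining sum of Zwegers' unary theta $g_{a,b}$, collapses to the Eichler integral $g^*_{\frac{B(c'',\mu)}{2Q(c'')},\,B(c'',\beta)}(-2Q(c'')\tau)$; multiplying by the transverse theta yields exactly $\calR_{\mu,\beta,c''}(-1/\tau)$, with the prefactor $-\iu\varepsilon_{c''}$ emerging from the bookkeeping. I expect this last identification — pinning down every constant, phase and orientation in the passage from the $\frac{1}{1-\bm{e}(\xi)}$-kernel integral to Zwegers' normalized Eichler integral, together with the lattice combinatorics of $\sprod{c''}_{\Z}^{\perp}$, the fundamental domains $\Delta_{c''}$ and the dual-coset sum over $\beta'$ — to be the main obstacle; by contrast the convergence/regularization issues are dispatched by the contour-integral viewpoint and \cref{lem:analy_cont}. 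This also delivers the promised explanation of ``why the error function appears'': it is nothing but the Poisson-with-signature kernel $\frac{1}{1-\bm{e}(\xi)}$ integrated against a Gaussian.
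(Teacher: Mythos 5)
Your proposal has a genuine gap at its very first step, and it propagates through everything after. Splitting $\Theta^{+}_{\alpha,\beta}=\Theta^{(c)}_{\alpha,\beta}-\Theta^{(c')}_{\alpha,\beta}$ and then factorizing each piece along $c''$ produces, in the timelike direction, the series $\sum_{n\in \frac{B(c'',\mu)}{2Q(c'')}+\Z}\sgn(n)\,\bm{e}(bn)\,q^{Q(c'')n^{2}}$ with $Q(c'')<0$: its terms satisfy $\abs{q^{Q(c'')n^{2}}}=e^{2\pi \Im(\tau)\abs{Q(c'')}n^{2}}\to\infty$, so this factor diverges termwise for every $\tau\in\bbH$. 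The divergence comes from the growth of the indefinite Gaussian, not from the sign factor, so no Cauchy-principal-value or contour representation of $\sgn$ (Creutzig--Milas style, or \cref{ex:false_theta_PV_rep}) regularizes it; and \cref{thm:main_PSF_sgn} cannot be applied to this factor, since its hypothesis is exponential decay of $f$, which fails badly here. The hypothesis $B(c'',m)\notin\Z$ does not help: in the theorem it only excludes lattice points on the walls, it has nothing to do with contour shifts. Consequently both halves of your matching argument rest on objects ($\widehat{f}(n)$ for the timelike factor, and the corresponding $C_{+}$-integral) that do not exist, and your closing claim that the error function ``is nothing but the Poisson-with-signature kernel integrated against a Gaussian'' is not the mechanism at work in this theorem.

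What makes $\Theta^{+}_{\alpha,\beta}$ converge is precisely that the two sign factors are kept together: under $Q(c),Q(c'),B(c,c')<0$ the function $\gamma(\tau;x)=\bigl(\sgn(B(c,x))-\sgn(B(c',x))\bigr)q^{Q(x)}$ vanishes wherever $Q(x)<0$, hence is of exponential decay. The paper's proof therefore applies ordinary $r$-dimensional Poisson summation to this single function, computes its Fourier transform in \cref{lem:indefinite_Gaussian_Fourier} --- this is where the error functions actually enter, via $\erf=\sgn-E^{*}$, with the $\sgn$-difference part reassembling the dual indefinite theta function --- and only then performs the $c''$-orthogonal lattice decomposition you wanted to do at the outset, but applied to the convergent $E^{*}$-weighted sums through \cref{prop:Zwegers_error} (the Gaussian decay of $E^{*}$ is exactly what compensates $q^{-n^{2}/2}$ there), which produces the $\calR_{\mu,\beta,c''}$ terms. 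To salvage your route you would have to assign an independent meaning and modular transformation to the divergent timelike factor (for instance via Eichler integrals or median resummation), which is a substantially different and harder argument than invoking \cref{thm:main_PSF_sgn}; as written, the step ``apply the $r=1$ Poisson summation formula with signature to the timelike factor'' fails.
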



Although Zwegers did not state this theorem explicitly, it follows from his modular transformation formula for the modular completion \cite[Corollary 2.9 (5) and Subsection 4.2 and 4.3]{Zwegers:thesis} together with the following proposition.

\begin{prop}[{Zwegers~\cite[Proposition 4.3]{Zwegers:thesis}}] \label{prop:Zwegers_error}
	Let $ E \colon \bbH \times \R \to \bbC $ be a map satisfying the following two conditions:
	\begin{itemize}
		\item For any $ r \in \R_{>0} $, it holds that $ E(r^2 \tau; u) = E(\tau; ru) $.
		\item There exists $ \delta>0 $ such that 
		$ E(\tau; u) = O\left( \abs{q}^{(1+\delta) u^2/2} \right) $ as $ \abs{u} \to \infty $ uniformly in $ \tau $.
	\end{itemize}
	Then, we have
	\[
	\sum_{\nu \in \alpha + \Z^r} E \left( \tau; \frac{B(c, \nu)}{\sqrt{-2Q(c)}} \right) \bm{e} (B(\beta, \nu)) q^{Q(\nu)}
	=
	\sum_{\mu \in (\alpha + \Z^r) \cap \Delta_{c''}} \theta_{\restrict{Q}{\sprod{c}_{\Z}^{\perp}}, \mu^{\perp}, \beta^{\perp}} (\tau)
	g^{*, E}_{-\frac{B(c, \mu)}{2Q(c)}, -B(c, \beta)} (-2Q(c) \tau),
	\]
	where for $ a, b \in \R $ we define
	\[
	g^{*, E}_{a, b} (\tau)
	\coloneqq
	\sum_{n \in a + \Z} 
	E \left( \tau; n \right) \bm{e} \left( bn \right) n q^{-n^2 /2}.
	\]
\end{prop}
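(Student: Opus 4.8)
The plan is to split the lattice coset $\alpha + \Z^r$ along the distinguished negative direction $c$ and to recognise each resulting one-dimensional slice, after the rescaling $\tau \mapsto -2Q(c)\tau$, as the unary object $g^{*,E}$; the two hypotheses on $E$ are tailored precisely to make this work. First I would set up the orthogonal splitting with respect to $c$: for $\nu \in \R^r$ write $\nu = \nu^\perp + tc$ with $t \coloneqq B(c,\nu)/B(c,c) = B(c,\nu)/2Q(c)$, so that $B(c,\nu^\perp) = 0$. Because $S$ has type $(r-1,1)$ and $Q(c) < 0$, the restriction of $Q$ to the real hyperplane $\{x \mid B(c,x) = 0\}$ is positive definite, and a direct computation yields $Q(\nu) = Q(\nu^\perp) + Q(c)\,t^2$, $B(\beta,\nu) = B(\beta^\perp,\nu^\perp) + t\,B(c,\beta)$ with $\beta^\perp \coloneqq \beta - \frac{B(c,\beta)}{2Q(c)}c$, and $\frac{B(c,\nu)}{\sqrt{-2Q(c)}} = -\sqrt{-2Q(c)}\,t$. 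Hence every summand on the left factors as a term depending only on $\nu^\perp$ times a term depending only on $t$, the latter being $E\!\left(\tau; -\sqrt{-2Q(c)}\,t\right)\bm{e}(t\,B(c,\beta))\,q^{Q(c)t^2}$.

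Next comes the lattice bookkeeping. Let $\Lambda \coloneqq \sprod{c}_{\Z}^{\perp}$, a primitive sublattice of $\Z^r$ of rank $r-1$ carrying the positive definite form $\restrict{Q}{\Lambda}$. The group $\Lambda$ acts by translation on the slab $\{x \in \R^r \mid B(c,x)/2Q(c) \in [0,1)\}$ with $\Delta_c$ a fundamental domain, and I would check that every $\nu \in \alpha + \Z^r$ is written uniquely as $\mu + \lambda$ plus an integer multiple of a fixed vector shifting the slab-coordinate by $1$, where $\mu \in (\alpha + \Z^r) \cap \Delta_c$ and $\lambda \in \Lambda$; along such a family $\nu^\perp$ runs over $\mu^\perp + \Lambda$ while $t$ runs over $\frac{B(c,\mu)}{2Q(c)} + \Z$. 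Inserting this parametrisation, the $\nu^\perp$-sum becomes exactly $\theta_{\restrict{Q}{\Lambda},\,\mu^\perp,\,\beta^\perp}(\tau) = \sum_{\lambda \in \mu^\perp + \Lambda}\bm{e}(B(\beta^\perp,\lambda))\,q^{Q(\lambda)}$, and there remains the $t$-sum. For that I would invoke the homogeneity $E(r^2\tau;u) = E(\tau;ru)$ with $r = \sqrt{-2Q(c)}$ to rewrite $E\!\left(\tau;-\sqrt{-2Q(c)}\,t\right) = E(-2Q(c)\tau;-t)$, substitute $n = -t$, and use $q^{Q(c)t^2} = \widetilde{q}^{\,-n^2/2}$ with $\widetilde{q} = \bm{e}(-2Q(c)\tau)$; matching the one-variable weight against the definition of $g^{*,E}$ then turns the $t$-sum into $g^{*,E}_{-B(c,\mu)/2Q(c),\,-B(c,\beta)}(-2Q(c)\tau)$, and summing over $\mu \in (\alpha+\Z^r)\cap\Delta_c$ gives the asserted identity. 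The rearrangement of the double sum is legitimate because the decay hypothesis $\abs{E(\tau;u)} = O\!\left(\abs{q}^{(1+\delta)u^2/2}\right)$ dominates the growth $\abs{q^{Q(c)t^2}} = \abs{q}^{-\abs{Q(c)}t^2}$ coming from the negative direction, while $\restrict{Q}{\Lambda}$ is positive definite; so the whole sum converges absolutely, locally uniformly in $\tau$.

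In this argument the analytic input is essentially free, and I expect the real obstacle to be combinatorial: verifying that $(\alpha + \Z^r)\cap\Delta_c$, the sublattice $\Lambda$, and the slab-shift vector tile $\alpha + \Z^r$ exactly once, and that under this tiling the slice data $\mu^\perp$, $\beta^\perp$, $B(c,\mu)/2Q(c)$, $B(c,\beta)$ and the relevant weight normalisation line up precisely with the definitions of $\theta_{\restrict{Q}{\Lambda},\ast,\ast}$ and $g^{*,E}_{\ast,\ast}$. This is exactly the point at which the primitivity of $\sprod{c}_{\Z}^{\perp}$ and the choice of $\Delta_c$ as a fundamental domain are used. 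Finally, since in the sequel the proposition is applied only to the specific $E$'s that serve as building blocks of Zwegers' modular completions, one can alternatively verify it directly for those $E$ by using the splitting above to reduce to the classical one-variable identity expressing the non-holomorphic Eichler integral of the weight-$3/2$ unary theta function through the error function.
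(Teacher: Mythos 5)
Your proposal is correct and follows essentially the same route as the paper's proof: the orthogonal splitting $\nu = \nu^\perp + tc$, the tiling of $\alpha + \Z^r$ by $(\alpha+\Z^r)\cap\Delta_c$, the sublattice $\sprod{c}_{\Z}^{\perp}$ and $\Z c$, the homogeneity $E(r^2\tau;u)=E(\tau;ru)$ with $r=\sqrt{-2Q(c)}$, and the resulting factorization into $\theta_{\restrict{Q}{\sprod{c}_{\Z}^{\perp}},\mu^\perp,\beta^\perp}(\tau)$ times the unary sum identified with $g^{*,E}$ at $-2Q(c)\tau$. The only difference is cosmetic (your explicit convergence remark and the substitution $n=-t$, which the paper leaves implicit).
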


\begin{proof}
	For the convenience of the reader, we give a proof.
	By the decomposition
	\[
	\alpha + \Z^r 
	=
	\left\{ \mu \in \alpha + \Z^r \relmiddle| \frac{B(c, \mu)}{\transpose{c} Sc} \in \left[ 0, 1 \right) \right\} 
	\oplus \Z c
	=
	\coprod_{\mu \in (\alpha + \Z^r) \cap \Delta_c} \left( \left( \mu^{\perp} + \sprod{c}_{\Z}^{\perp} \right) \oplus \left( \frac{B(c, \mu)}{2Q(c)} + \Z \right) c \right),
	\]
	the left hand side equals
	\begin{align}
		&\phant
		\sum_{\mu \in (\alpha + \Z^r) \cap \Delta_c} \sum_{\lambda \in \mu^{\perp} + \sprod{c}_{\Z}^{\perp}} 
		\sum_{n \in \frac{B(c, \mu)}{2Q(c)} + \Z}
		E \left( \tau; \frac{B(c, c)}{\sqrt{-2Q(c)}} n \right) \bm{e} \left( B \left( \beta, \lambda + nc \right) \right) q^{Q(\lambda + nc)}
		\\
		&=
		\sum_{\mu \in (\alpha + \Z^r) \cap \Delta_c} 
		\left(
		\sum_{\lambda \in \mu^{\perp} + \sprod{c}_{\Z}^{\perp}} 
		\bm{e} \left( B \left( \beta^{\perp}, \lambda \right) \right) q^{Q(\lambda)}
		\right)
		\left(
		\sum_{n \in \frac{B(c, \mu)}{2Q(c)} + \Z} 
		E \left( -2Q(c)\tau; -n \right) \bm{e} \left( B \left( \beta, c \right) n \right) q^{Q(c) n^2}
		\right),
	\end{align}
	which equals the right hand side.
\end{proof}

\begin{rem}
	Zwegers~\cite[Proposition 4.3]{Zwegers:thesis} stated the above claim for the case where
	$ E(\tau; u) = \sgn(u) - \erf(u \sqrt{2\pi \Im(\tau)}) $.
	In this case, since we can write
	\[
	E(\tau; u) =
	-\iu u q^{u^2/2} \int_{-\overline{\tau}}^{\iu \infty} \bm{e} \left( \frac{u^2}{2} z \right) \sqrt{\frac{\iu}{z + \tau}} dz,
	\]
	we have an expression as a nonholomorphic Eichler integral
	\[
	g^{*, E}_{\alpha, \beta} (\tau)  =
	-\iu \int_{-\overline{\tau}}^{\iu \infty} g_{\alpha, -\beta} (z) \sqrt{\frac{\iu}{z + \tau}} dz.
	\]
\end{rem}

We note here that the error term $ \calR_{\mu, \beta, c} (\tau) $ in \cref{thm:indef_theta_mod_trans} satisfies the following modular transformation formula as follows from Zwegers' argument~\cite[Proof of Proposition 4.5]{Zwegers:thesis}.

\begin{lem} \label{lem:indef_theta_error_term_modularity}
	For any $ \mu, \beta \in \R^r $, we have
	\[
	\calR_{\mu, \beta, c} \left( -\frac{1}{\tau} \right)
	=
	\frac{\iu}{\sqrt{-\det S}} \sqrt{\frac{\tau}{\iu}}^r \bm{e} \left( B(\mu, \beta) \right)
	\sum_{\beta' \in (\beta + S^{-1}\Z^r) / \Z^r}
	\calR_{\beta', -\mu, c} (\tau).
	\]
\end{lem}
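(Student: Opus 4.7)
The strategy factors $\calR_{\mu,\beta,c}(\tau)$ into its two explicit pieces --- the positive-definite theta function on the sublattice $\sprod{c}_{\Z}^{\perp}$ and the Eichler integral of a unary theta --- then applies the modular transformation to each factor separately and reassembles. The starting observation is that since $Q(c)<0$, one has a $Q$-orthogonal decomposition $\R^r = \R c \oplus (\sprod{c}_{\Z}^{\perp}\otimes_\Z \R)$, giving $x = x^\perp + \tfrac{B(c,x)}{2Q(c)} c$ and the splitting
\[
B(\alpha,\beta) = B(\alpha^\perp,\beta^\perp) + \tfrac{B(c,\alpha)\, B(c,\beta)}{2Q(c)},
\qquad
\det S = 2Q(c)\cdot \det (Q|_{\sprod{c}_{\Z}^{\perp}}),
\]
so that in particular $-\det S = 2|Q(c)|\cdot \det(Q|_{\sprod{c}_{\Z}^{\perp}}) > 0$.

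The first main step is the modular transformation of $\theta_{Q|_\perp,\,\mu^\perp,\beta^\perp}(\tau)$, a standard positive-definite theta function of rank $r-1$. Poisson summation on $\sprod{c}_{\Z}^{\perp}$ (a special case of \cref{prop:modular_series_modular} applied to the Gaussian kernel on this sublattice) gives
\[
\theta_{Q|_\perp,\mu^\perp,\beta^\perp}\!\left(-\tfrac{1}{\tau}\right)
= \tfrac{1}{\sqrt{\det Q|_\perp}}\left(\tfrac{\tau}{\iu}\right)^{(r-1)/2} \bm{e}(B(\mu^\perp,\beta^\perp)) \sum_{\beta'^\perp} \theta_{Q|_\perp,\beta'^\perp,-\mu^\perp}(\tau),
\]
where $\beta'^\perp$ ranges over $(\beta^\perp + (\sprod{c}_{\Z}^{\perp})^\ast)/\sprod{c}_{\Z}^{\perp}$.

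The second main step is the modular transformation of the Eichler integral $g^*_{a,b}$. Using the weight-$3/2$ modularity $g_{a,b}(-1/w) = -\iu(w/\iu)^{3/2}\bm{e}(ab)\, g_{b,-a}(w)$ and the substitution $z = -1/w$ inside the defining integral for $g^*_{a,b}(-1/\sigma)$ --- which preserves the $S$-fixed geodesic $\{0,\iu\infty\}$ and hence leaves no period defect --- yields the clean identity
\[
g^*_{a,b}(-1/\sigma) = -\iu\,\sqrt{\sigma/\iu}\,\bm{e}(ab)\,g^*_{b,-a}(\sigma).
\]
Applied at $\sigma = \tau/(-2Q(c))$, this converts $g^*_{B(c,\mu)/(2Q(c)),\,B(c,\beta)}(-2Q(c)\cdot(-1/\tau))$ into a multiple of $g^*_{b',-a'}(-2Q(c)\tau)$ after a rescaling of variables that also transforms the first argument $a = B(c,\mu)/(2Q(c))$ into a dual-lattice parameter indexing $(2Q(c))^{-1}\Z/\Z$ and sends the second argument $b = B(c,\beta)$ to $-B(c,\mu)$ (which is exactly the second argument of the $g^*$ appearing in $\calR_{\beta',-\mu,c}$).

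Finally, one multiplies the two transformed factors. The powers of $\tau/\iu$ combine to $\sqrt{\tau/\iu}^{\,r}$; the determinantal prefactors combine via $\det(Q|_\perp)\cdot 2|Q(c)| = -\det S$ to give $\iu/\sqrt{-\det S}$; the phases combine via the orthogonal decomposition of $B$ to give $\bm{e}(B(\mu,\beta))$; and the combined sum over $\beta'^\perp \in (\beta^\perp + (\sprod{c}_\Z^\perp)^\ast)/\sprod{c}_\Z^\perp$ together with the $\R c$-direction dual index from the Eichler transformation reassembles into a single sum over $\beta' \in (\beta + S^{-1}\Z^r)/\Z^r$, via the natural isomorphism
\[
(\beta + S^{-1}\Z^r)/\Z^r \;\cong\; \bigl((\beta^\perp + (\sprod{c}_{\Z}^{\perp})^\ast)/\sprod{c}_{\Z}^{\perp}\bigr) \times \bigl((\tfrac{B(c,\beta)}{2Q(c)} + (2Q(c))^{-1}\Z)/\Z\bigr).
\]
The main obstacle is the clean, period-free transformation of the Eichler integral $g^*_{a,b}$ --- $g^*$ is a quantum modular form and generically modular only up to a period integral, so the vanishing of that integral here relies crucially on both $S$-invariance of the contour $\{0,\iu\infty\}$ and the precise rescaling by $-2Q(c)$ --- together with the bookkeeping of signs, square-root branches, and the dual-lattice matching needed to collapse the product of two sums into the single sum over $(\beta + S^{-1}\Z^r)/\Z^r$.
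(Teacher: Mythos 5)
Your proposal is correct and follows essentially the same route as the paper's proof: factor $\calR_{\mu,\beta,c}$ into the positive-definite theta function on $\sprod{c}_{\Z}^{\perp}$ and the Eichler integral $g^*$, transform each (using the period-free $S$-transformation of $g^*$ along the geodesic $\{0,\iu\infty\}$ and the level identity handling the rescaling by $-2Q(c)$), and reassemble via the isomorphism $(\beta + S^{-1}\Z^r)/\Z^r \cong \bigl((\beta^{\perp} + S^{-1}(\sprod{c}_{\Z}^{\perp}))/\sprod{c}_{\Z}^{\perp}\bigr) \oplus \bigl((B(c,\beta)+\Z)/2Q(c)\Z\bigr)$. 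The only caveat is that your identity $\det S = 2Q(c)\det(Q|_{\sprod{c}_{\Z}^{\perp}})$ holds only up to the square of the index of $\Z c \oplus \sprod{c}_{\Z}^{\perp}$ in $\Z^r$; this factor is absorbed by the cardinalities of the coset sums in the final reassembly, exactly as in the paper.
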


\begin{proof}
	The claim follows from the following facts:
	\begin{itemize}
		\item The modular transformation formulas for positive definite theta functions
		\[
		\theta_{\eval{Q}_{\sprod{c}_{\Z}^{\perp}}, \mu^{\perp}, \beta^{\perp}} \left( -\frac{1}{\tau} \right)
		=
		\frac{1}{\sqrt{\det \eval{S}_{ \sprod{c}_{\Z}^{\perp} } }}
		\sqrt{\frac{\tau}{\iu}}^{\, r-1} \bm{e} \left( B(\mu^{\perp}, \beta^{\perp}) \right)
		\sum_{\beta' \in (\beta^{\perp} + S^{-1} (\sprod{c}_{\Z}^{\perp})) / \sprod{c}_{\Z}^{\perp}}
		\theta_{\eval{Q}_{\sprod{c}_{\Z}^{\perp}}, -\beta', \mu^{\perp}} (\tau).
		\]
		
		\item We have
		\begin{equation} \label{eq:Eichler_int_modularity}
			g_{a, b}^* \left( \frac{1}{\tau} \right)
			= \iu \bm{e} (ab) \sqrt{\frac{\tau}{\iu}} g_{b, -a}^* (\tau)
		\end{equation}
		by the fact
		\[
		g_{a, b} (\tau) 
		= \iu \bm{e} (ab) \sqrt{\frac{\tau}{\iu}} g_{b, -a} (\tau),
		\]
		which is stated in \cite[Proposition 1.15 (3) and (5)]{Zwegers:thesis}.
		
		\item For any $ N \in \Z_{>0} $ and $ a, b \in \R $, we have
		\[
		g^*_{a, b/N} \left( \frac{\tau}{N} \right)
		=
		\sum_{a' \in (a + \Z) / N\Z} g^*_{a'/N, b} (N\tau),
		\]
		which follows from
		\[
		g_{a, b/N} \left( \frac{\tau}{N} \right)
		=
		N \sum_{a' \in (a + \Z) / N\Z} g_{a'/N, b} (N\tau).
		\]
		
		\item We have the isomorphism of
		\[
		\begin{array}{ccc}
			(\beta + S(\Z^r)) / \Z^r & \xlongrightarrow{\sim} &
			\left( \beta^{\perp} + S^{-1} (\sprod{c}_{\Z}^{\perp}) \right) / \sprod{c}_{\Z}^{\perp} 
			\oplus (B(c,\beta) + \Z) / 2Q(c) \Z \\
			\beta' & \longmapsto & (\beta^{\prime \perp}, B(c, \beta')).
		\end{array}
		\]
		
	\end{itemize}
\end{proof}

To conclude this subsection, we consider quantum modularity of indefinite theta functions, which follows from \cref{thm:indef_theta_mod_trans}.

\begin{thm} \label{thm:indefinite_theta_quantum_modular}
	If $ B(c'', m) \notin \Z $ for each $ c'' \in \{ c, c' \} $ and any $ m \in (\alpha + \Z^r) \cup (\beta + S^{-1} (\Z^r)) $, 
	then $ \Theta_{\alpha, \beta}^+ (\tau) $ is a component of a vector-valued holomorphic quantum modular form, that is, there exist a vector-valued holomorphic function $ F \colon \bbH \to \bbC^N $ which has $ \Theta_{\alpha, \beta}^+ \left( \tau \right) $ as a component and a matrix-valued holomorphic function $ M \colon \bbH \to \GL_N^{} (\bbC \smallsetminus \R_{\le 0}) $ such that
	\[
	F \left( -\frac{1}{\tau} \right) = M(\tau) F(\tau).
	\]
\end{thm}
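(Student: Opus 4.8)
The plan is to bootstrap \cref{thm:indefinite_theta_quantum_modular} from the explicit modular transformation formula in \cref{thm:indef_theta_mod_trans} by assembling a finite-dimensional vector of holomorphic functions that is closed under $\tau \mapsto -1/\tau$ and, trivially, under $\tau \mapsto \tau+1$. First I would list the functions that appear on the right-hand side of \cref{thm:indef_theta_mod_trans}: the shifted indefinite theta functions $\Theta_{\beta',-\alpha}^+(\tau)$ for $\beta'$ ranging over the finite set $(\beta + S^{-1}(\Z^r))/\Z^r$, and the error terms $\calR_{\mu,\beta,c''}(\tau)$ for $c'' \in \{c,c'\}$ and $\mu$ ranging over the finite sets $(\alpha+\Z^r)\cap \Delta_{c''}$. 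The key observation is that each of these classes is itself finite and (essentially) closed under inversion: applying \cref{thm:indef_theta_mod_trans} again to each $\Theta_{\beta',-\alpha}^+$ produces shifted indefinite thetas lying in the \emph{same} lattice-coset structure together with error terms of the same shape, and \cref{lem:indef_theta_error_term_modularity} shows directly that the $\calR$-terms transform among themselves under $\tau \mapsto -1/\tau$ (with a prefactor $\iu(-\det S)^{-1/2}(\tau/\iu)^{r/2}\bm{e}(B(\mu,\beta))$, which lies in $\GL_1(\bbC \smallsetminus \R_{\le 0})$ as a function of $\tau$, since $\sqrt{\tau/\iu}^{\,r}$ is holomorphic and non-vanishing on $\bbH$).

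The concrete steps I would carry out are: (1) fix once and for all the finite index sets $J_1 := (\beta+S^{-1}(\Z^r))/\Z^r$ and, for $c''\in\{c,c'\}$, $J_2^{c''} := (\alpha+\Z^r)\cap\Delta_{c''}$, together with the analogous sets obtained after one inversion; take their union to get a single finite list of pairs $(\alpha_i,\beta_i)$ and $(\mu_j,\beta_j,c_j'')$; (2) define $F(\tau)$ to be the column vector whose entries are $\Theta_{\alpha_i,\beta_i}^+(\tau)$ for $i$ in the first list and $\calR_{\mu_j,\beta_j,c_j''}(\tau)$ for $j$ in the second; (3) verify using \cref{thm:indef_theta_mod_trans} and \cref{lem:indef_theta_error_term_modularity} that $F(-1/\tau) = M(\tau)F(\tau)$ for an explicit matrix $M(\tau)$ whose entries are monomials in $\sqrt{\tau/\iu}$ times roots of unity, hence lie in $\calO(\bbH)$; (4) check that $M(\tau) \in \GL_N(\bbC\smallsetminus\R_{\le 0})$ by exhibiting $M$ as a block-triangular (in fact essentially block-diagonal after reordering) matrix whose diagonal blocks are scalar multiples of permutation-type matrices coming from the coset actions, so $\det M(\tau)$ is a nonzero monomial in $\sqrt{\tau/\iu}$, never hitting $\R_{\le 0}$ on $\bbH$; (5) note that $\Theta_{\alpha,\beta}^+(\tau)$ is by construction one of the components of $F$, and that $F(\tau+1) = D F(\tau)$ for a diagonal matrix $D$ of roots of unity (from $q \mapsto \bm{e}(\tau+1)$), which together with the $S$-transformation generates the action of $\SL_2(\Z)$ (or the relevant congruence subgroup), completing the verification that $F$ is a vector-valued holomorphic quantum modular form.

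One technical point that needs care is the \emph{non-integrality hypothesis}: \cref{thm:indef_theta_mod_trans} requires $B(c'',m)\notin\Z$ for all $m$ in the relevant cosets, and one must check this hypothesis is \emph{stable} under the inversion process, i.e. that the newly produced $\Theta_{\beta',-\alpha}^+$ also satisfy it, so that \cref{thm:indef_theta_mod_trans} can be reapplied. This should follow from the fact that $S^{-1}(\Z^r)$-translates of $\beta$ and $\Z^r$-translates of $\alpha$ only shift $B(c'',m)$ by elements of $S^{-1}(\Z^r)$ and $\Z^r$ respectively, under which the condition $B(c'',m)\notin\Z$ is either preserved or the corresponding term vanishes identically; a short lemma isolating this closure property is probably worth stating explicitly.

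The main obstacle I expect is bookkeeping: ensuring that the collection of functions one writes down is genuinely \emph{finite} and genuinely \emph{closed} under both generators of the modular group, rather than spawning an ever-growing family under repeated inversion. The resolution is that all the shifts live in the finite abelian groups $(\beta+S^{-1}(\Z^r))/\Z^r$ and $\sprod{c''}_\Z^\perp$-fundamental domains, which are invariant (as sets) under the transformations in question — so the process closes up after at most one or two iterations. Making this closure precise, and organizing $M(\tau)$ into its block structure so that invertibility with values in $\bbC\smallsetminus\R_{\le 0}$ is manifest, is the real content of the proof; the modular transformation formulas themselves are already supplied by \cref{thm:indef_theta_mod_trans} and \cref{lem:indef_theta_error_term_modularity}.
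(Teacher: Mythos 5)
Your construction puts the error terms $\calR_{\mu,\beta,c''}$ into the vector $F$ and keeps only constants times powers of $\sqrt{\tau/\iu}$ in the matrix $M$; this is not what the paper does, and the difference is not cosmetic. The paper's proof consists of \cref{thm:indef_theta_mod_trans} (and \cref{lem:indef_theta_error_term_modularity}, plus the standard transformation of the positive definite theta factors) together with the remark following the theorem, which shows that the Eichler integrals $g_{a,b}^*(\tau)$ extend holomorphically to $\bbC\smallsetminus\R_{\le 0}$ whenever $a,b\notin\Z$, the splitting of the integral at $\iu\varepsilon$ and \cref{eq:Eichler_int_modularity} being used to control the endpoint at $0$. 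The intended $F$ therefore consists of the indefinite theta functions $\Theta^+_{\cdot,\cdot}$ and the positive definite theta series $\theta_{\eval{Q}_{\sprod{c''}_\Z^\perp},\mu^\perp,\beta^\perp}$, i.e.\ genuine $q$-series holomorphic on $\bbH$, while the factors $g^*_{B(c'',\mu)/2Q(c''),\,B(c'',\beta)}(-2Q(c'')\tau)$ are placed in $M(\tau)$; the non-integrality hypothesis is exactly what guarantees $a=B(c'',\mu)/2Q(c'')\notin\Z$ and $b=B(c'',\beta)\notin\Z$, hence the continuation of these matrix entries. In your scheme this continuation is never used (your hypothesis only feeds into reapplying \cref{thm:indef_theta_mod_trans}), and the price is that the statement you actually verify is vacuous: for \emph{any} holomorphic $f$ on $\bbH$ one may take $F(\tau)=(f(\tau),f(-1/\tau))$ and $M$ the swap matrix and satisfy the displayed identity with $M$ entire. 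So although your $F$ and $M$ formally satisfy $F(-1/\tau)=M(\tau)F(\tau)$, the proof misses the one idea the theorem is meant to encode and the paper supplies, namely that all non-modular (Eichler-integral) pieces can be pushed into a cocycle matrix that continues across $\R_{>0}$, leaving only holomorphic theta-type series as components of $F$.

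Two smaller points. First, your stability argument for the hypothesis under iteration is asserted rather than proved: the claimed dichotomy ``preserved or the term vanishes identically'' is not an argument. What actually happens (for $c,c'$ integral, as is implicit in the definition of $\sprod{c''}_\Z^\perp$, $\Delta_{c''}$ and in the coset decomposition used in \cref{prop:Zwegers_error}) is that shifting $m$ by elements of $\Z^r$ or of $S^{-1}(\Z^r)$ changes $B(c'',m)$ by the integers $B(c'',n)$ resp.\ $\transpose{c''}n$, so non-integrality propagates to the characteristics $(\beta',-\alpha)$, $(\alpha'',-\beta')$, \dots produced by iterating \cref{thm:indef_theta_mod_trans}; this is worth stating, since for non-integral $c''$ it can fail. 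Second, the closure bookkeeping you defer is needed in the paper's proof as well, but note that for the statement as given only the $S$-transformation must close up, so the discussion of $\tau\mapsto\tau+1$ is unnecessary.
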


This theorem follows from the following remarks.

\begin{rem}
	If $ a, b \notin \Z $, then $ g_{a, b}^* (\tau) $ extends holomorphically to $ \bbC \smallsetminus \R_{\le 0} $.
	Indeed, let $ a_0 \coloneqq \min \{ \abs{n} \mid n \in a + \Z \} > 0 $, 
	then since $ g(z) = O(e^{-\pi \alpha_0 \Im(z)}) $ as $ \Im(z) \to \infty $,
	for any $ \varepsilon > 0 $, an integral
	\[
	\int_{\iu \varepsilon}^{\iu \infty} g_{\alpha, \beta} (z) \sqrt{\frac{\iu}{z + \tau}} dz
	\]
	converges for any $ \tau \in \R_{>0} $.
	In addition, by \cref{eq:Eichler_int_modularity}, we have
	\[
	\int_0^{\iu \varepsilon} g_{\alpha, \beta} (z) \sqrt{\frac{\iu}{z + \tau}} dz
	=
	\iu \bm{e} (\alpha \beta) \sqrt{\frac{\iu}{\tau}} 
	\int_{\iu \varepsilon^{-1}}^{\iu \infty} g_{\beta, -\alpha} (z) \sqrt{\frac{\iu}{z - 1/\tau}} dz
	\]
	by replacing $ z $ by $ -1/z $ and the right hand side converges for any $ \tau \in \R_{>0} $ by the same reason.
\end{rem}


\subsection{Treating indefinite theta functions as modular series} \label{subsec: indefinite_theta_Zwegers_modular_series}


In this subsection, we give an alternative proof of \cref{thm:indef_theta_mod_trans} by using a framework of modular series.
We define $ \gamma(\tau; x) \coloneqq \left( \sgn(B(x, c)) - \sgn(B(x, c')) \right) q^{Q(x)} $.
This function is holomorphic in $ \tau \in \bbH $ and of exponential decay in $ y \in \R^r $ uniformly in $ \tau $ since
$ \gamma(\tau; x) = 0 $ for any $ x \in \R^r $ with $ Q(x) < 0 $.
Although $ \gamma(\tau; x) $ is not continuous in $ x $, it is measurable.
Thus, we can apply the Poisson summation formula.

The Fourier transform $ \gamma(\tau; x) $ is calculated as follows.

\begin{lem} \label{lem:indefinite_Gaussian_Fourier}
	We have
	\[
	\widehat{\gamma}(\tau; \xi)
	=
	\frac{-\iu}{\sqrt{-\det S}} \sqrt{\frac{\iu}{\tau}}^{\, r}
	\left(
	\erf \left( \sqrt{\frac{\pi\iu}{\tau}} \frac{\transpose{c}\xi}{\sqrt{-2Q(c)}} \right)
	- \erf \left( \sqrt{\frac{\pi\iu}{\tau}} \frac{\transpose{c'}\xi}{\sqrt{-2Q(c')}} \right)
	\right)
	\widetilde{q}^{\transpose{\xi}S^{-1} \xi/2},
	\]
	where $ \widetilde{q} \coloneqq \bm{e} (-1/\tau) $ and $ \erf(z) $ is the error functions defined as
	\[
	\erf(z) \coloneqq
	\frac{2}{\sqrt{\pi}} \int_{0}^{z} e^{-t^2} dt.
	\]
	In particular, we have
	\[
	\widehat{\gamma} \left( \tau; S\xi \right)
	=
	\frac{-\iu}{\sqrt{-\det S}} \sqrt{\frac{\iu}{\tau}}^{\, r}
	\left(
	\gamma \left( -\frac{1}{\tau}; \xi \right)
	- \left( E^* \left( -\frac{1}{\tau}; \frac{B(c, \xi)}{\sqrt{-2Q(c)}} \right) - E^* \left( -\frac{1}{\tau}; \frac{B(c', \xi)}{\sqrt{-2Q(c')}} \right) \right)
	\widetilde{q}^{\, Q(\xi)}
	\right),
	\]
	where $ E^* (\tau; u) \coloneqq \sgn(u) - \erf(u \sqrt{-\pi\iu \tau}) $.
\end{lem}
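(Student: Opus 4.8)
The plan is to compute $\widehat{\gamma}(\tau;\xi)=\int_{\R^r}\gamma(\tau;x)\,\bm{e}(-\transpose{\xi}x)\,dx$ directly; this converges absolutely since $\gamma(\tau;x)$ vanishes off the region where $\sgn B(x,c)\neq\sgn B(x,c')$, and the hypotheses $Q(c),Q(c'),B(c,c')<0$ force that region into the spacelike cone $\{Q\ge 0\}$, uniformly away from the light cone, so $\gamma$ has genuine exponential decay. First I would pass to $B$-orthogonal coordinates adapted to the pair $(c,c')$: since $c,c'$ are linearly independent with $Q(c),Q(c')<0$, the Gram matrix $\left(\begin{smallmatrix}2Q(c)&B(c,c')\\B(c,c')&2Q(c')\end{smallmatrix}\right)$ is nondegenerate of signature $(1,1)$, so $\R^r=\Pi\oplus V_0$ with $\Pi=\langle c,c'\rangle$, $V_0=\{y:B(y,\Pi)=0\}$ and $Q|_{V_0}$ positive definite, and $S^{-1}$ is block diagonal for this splitting. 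Writing $x=p+z$ with $p\in\Pi$, $z\in V_0$, one has $Q(x)=Q(p)+Q(z)$ and $B(x,c)=B(p,c)$, $B(x,c')=B(p,c')$, so $\widehat{\gamma}$ factors into an honest Gaussian integral over $V_0$ — which by the classical formula contributes ${\sqrt{\iu/\tau}}^{\,r-2}(\det S|_{V_0})^{-1/2}$ times the $V_0$-block of $\widetilde{q}^{\,\transpose{\xi}S^{-1}\xi/2}$ — and a two-dimensional integral $\int_{\Pi}\bigl(\sgn B(p,c)-\sgn B(p,c')\bigr)q^{Q(p)}\bm{e}(-\transpose{\xi}p)\,dp$ over the $(1,1)$-plane $\Pi$.

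For the two-dimensional piece I would use coordinates $(s,s')$ on $\Pi$ given by $s=B(p,c)$, $s'=B(p,c')$, so that $\sgn B(p,c)-\sgn B(p,c')=2(\mathbf{1}_{s>0>s'}-\mathbf{1}_{s'>0>s})$, $q^{Q(p)}=q^{\widetilde{Q}(s,s')}$ for the transported $(1,1)$-form $\widetilde{Q}$, and the assumption $B(c,c')<0$ translates precisely into $\widetilde{Q}$ being positive on the closed quadrant $\{s\ge 0\ge s'\}$ — this is where the cone hypotheses are used. Using the symmetry $p\mapsto -p$, the piece becomes a multiple of $\iint_{s>0>s'}q^{\widetilde{Q}(s,s')}\bm{e}(-s\eta-s'\eta')\,ds\,ds'$ minus its value at $(-\eta,-\eta')$, which I would evaluate by iterated integration: integrating $s'$ over the half-line produces an error-function term via the one-variable identity $\int_{-\infty}^{R}e^{-at^2-bt}\,dt=\tfrac{\sqrt{\pi}}{2\sqrt{a}}\,e^{b^2/4a}\bigl(1+\erf(\sqrt{a}\,R+\tfrac{b}{2\sqrt{a}})\bigr)$ (valid for $\Re a>0$), and then, after completing the remaining squares — during which the quadratic coefficients collapse to pure multiples of $\tau^{-1}$ times $Q(c)$, $Q(c')$, or determinant-type quantities — the outer $s$-integral simplifies and the whole two-dimensional piece reduces to $\erf\!\bigl(\sqrt{\pi\iu/\tau}\,\transpose{c}\xi/\sqrt{-2Q(c)}\bigr)-\erf\!\bigl(\sqrt{\pi\iu/\tau}\,\transpose{c'}\xi/\sqrt{-2Q(c')}\bigr)$ times the remaining $\widetilde{q}$-factor and the constants $-\iu$ and $(\det S|_{\Pi})^{-1/2}$. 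Assembling the two factors and using $\det S|_{\Pi}\cdot\det S|_{V_0}=\det S$, with $\det S|_{\Pi}<0$ carrying the phase $-\iu$, yields the first displayed formula. A quicker route is instead to split $\gamma$ into its $c$- and $c'$-terms, use $\R^r=\R c\oplus c^\perp$ (resp. $\R c'\oplus c'^\perp$), and evaluate each one-variable factor via $\int_\R\sgn(t)e^{-at^2-bt}\,dt=-\sqrt{\pi/a}\,e^{b^2/4a}\erf(b/2\sqrt{a})$; this produces the two error functions immediately, but then one must justify the relevant analytic continuation in $a$ (for instance by deforming the quadratic form through nondegenerate complex symmetric matrices, in the spirit of Zwegers and of Creutzig--Milas).

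The main obstacle is exactly this convergence/regularization bookkeeping: each of the two ``$\sgn$-terms'' has, taken alone, a divergent Gaussian in the timelike direction, so one either keeps them coupled throughout (the wedge computation above, whose real cost is the somewhat intricate collapse of the iterated integral to a bare difference of two error functions) or invokes an analytic-continuation argument; checking that the hypotheses $Q(c),Q(c'),B(c,c')<0$ are precisely what makes the coupled integral converge and the answer take the stated symmetric shape is where the work lies, the rest being routine. Finally, the second displayed identity follows from the first by the substitution $\xi\mapsto S\xi$, which sends $\transpose{c}\xi$ to $B(c,\xi)$ and $\transpose{\xi}S^{-1}\xi$ to $\transpose{\xi}S\xi=2Q(\xi)$, together with the elementary rewriting $\erf\!\bigl(u\sqrt{\pi\iu/\tau}\bigr)=\sgn(u)-E^*(-1/\tau;u)$ (immediate from $E^*(\tau;u)=\sgn(u)-\erf(u\sqrt{-\pi\iu\tau})$) and the observation that $\bigl(\sgn B(c,\xi)-\sgn B(c',\xi)\bigr)\widetilde{q}^{\,Q(\xi)}=\gamma(-1/\tau;\xi)$.
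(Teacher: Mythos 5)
Your reduction is the same as the paper's: split $\R^r$ $B$-orthogonally into the span of $c,c'$ and its complement, factor out the positive definite Gaussian over the complement, and reduce to a two-dimensional signature-$(1,1)$ integral over the wedge on which $\sgn B(c,x)\neq \sgn B(c',x)$; your convergence rationale (positivity of $Q$ on the closed wedge, which is exactly where the hypotheses $Q(c),Q(c'),B(c,c')<0$ enter) is correct, and your derivation of the second display from the first via $\xi\mapsto S\xi$, $\transpose{c}S\xi=B(c,\xi)$, $\transpose{\xi}S\xi=2Q(\xi)$ and $\erf\bigl(u\sqrt{\pi\iu/\tau}\bigr)=\sgn(u)-E^*(-1/\tau;u)$ is exactly what the paper's ``in particular'' amounts to.

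The gap is in the evaluation of the two-dimensional wedge integral, which is the whole content of the lemma. With your order of integration, the inner half-line integral over $s'$ already produces an error function, so the outer integral has the shape $\int_0^\infty(\text{Gaussian in }s)\bigl(1+\erf(\text{linear in }s)\bigr)\,ds$; completing squares does not reduce such an integral to one-variable error functions (it is a genuinely two-dimensional, ``second order'' error integral), and each of the two quadrant integrals, at $(\eta,\eta')$ and at $(-\eta,-\eta')$, is not separately of the claimed shape: only their difference collapses, and your sketch gives no mechanism for that cancellation. The paper organizes the computation precisely to avoid this: it integrates first over the bounded variable running between the two boundary rays of the wedge, so that each ray contributes one boundary term; the resulting rational factor $1/(x_2-\xi_1/\tau)$ is expressed (up to elementary factors) as $\int_{\xi_2}^{\infty}\bm{e}\bigl(-(x_2-\xi_1/\tau)\xi_2'\bigr)\,d\xi_2'$, which is justified after using oddness of $\gamma$ to assume $\xi_1<0$; the integrals are exchanged, the Gaussian $x_2$-integral is performed, and only then does the remaining half-line integral produce the two error functions, one per boundary ray. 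Your alternative ``quicker route'' (split into the $c$- and $c'$-terms and continue analytically in the quadratic form) is a legitimate known strategy, but each split term diverges in the timelike direction, so the continuation/regularization you defer is again precisely the missing argument. As written, the key collapse to $\erf-\erf$ is asserted rather than proved, and the specific mechanism you propose for it would not deliver it.
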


\begin{proof}
	\textbf{Step 1}. We reduce to a simple case.
	Let $ \sprod{c, c'}_\R^\perp \coloneqq \{ x \in \R^r \mid B(c, x) = B(c', x) = 0 \} $.
	Since $ \R^r = \sprod{c, c'}_\R \oplus \sprod{c, c'}_\R^\perp $ is an orthogonal decomposition with respect to the bilinear form $ B $,
	the subspace $ \sprod{c, c'}_\R^\perp $ has dimension $ r-2 $ and $ S $ is positive definite on it.
	Thus, we can write
	\[
	\transpose{C} SC
	=
	\pmat{S_0 & \\ & S_0^\perp},
	\]
	where $ C^\perp \in \Mat_{r, r-2} (\R) $ is the matrix whose columns form a basis of $ \sprod{c, c'}_\R^\perp $,
	$ C \coloneqq (c, c', C^\perp) \in \GL_r(\R) $,
	$ S_0 \coloneqq \transpose{(c, c')} S (c, c') \in \Sym_2(\R) $ is the symmetric matrix of type $ (1, 1) $,
	and $ S_0^\perp \coloneqq \transpose{c^{\perp}} S c^{\perp} \in \Sym_{r-2}^+ (\R) $ is the positive definite symmetric matrix.
	
	Let
	\[
	\pmat{a_0 & b_0 \\ b_0 & c_0} \coloneqq S_0, \quad
	\lambda \coloneqq \frac{b_0 + \sqrt{b_0^2 - a_0 c_0}}{a_0}, \quad
	\lambda' \coloneqq \frac{b_0 - \sqrt{b_0^2 - a_0 c_0}}{a_0}, \quad
	P_0 \coloneqq \sqrt{\frac{-a_0}{2}} \pmat{1 & \lambda \\ -1 & -\lambda'}.
	\]
	Then, we have the following:
	\begin{itemize}
		\item We have $ \lambda, \lambda' \in \R_{>0} $ and $ P_0 \in \GL_2(\R) $ since $ a_0, b_0, c_0, a_0 c_0 - b_0^2 < 0 $.
		
		\item For $ u, v, x, y \in \R $, if
		\[
		\pmat{u \\ v} = P_0 \pmat{x \\ y} = \sqrt{\frac{-a_0}{2}} \pmat{x + \lambda' y \\ -(x + \lambda y)},
		\]
		then we have
		\[
		\transpose{P_0} S P_0 = \pmat{0 & 1 \\ 1 & 0}.
		\]
		
		\item We have
		\[
		S_0 P_0
		=
		\transpose{P_0}^{-1} \pmat{0 & 1 \\ 1 & 0}
		=
		\sqrt{\frac{-2a_0}{b_0^2 - a_0 c_0}} \pmat{1 & -\lambda \\ 1 & -\lambda'}.
		\]
	\end{itemize}
	Therefore, if we set
	\[
	P \coloneqq C \pmat{P_0 & \\ & P_0^\perp} \in \GL_r(\R), \quad
	S' \coloneqq 
	\pmat{0 & 1 & & & \\
		1 & 0 & & & \\
		& & 1 & & \\
		& & & \ddots & \\
		& & & & 1},
	\]
	then we obtain
	\begin{align}
		\transpose{P} SP &= S', \\
		S^{-1} &= P S' \transpose{P}, \\
		\transpose{(c, c')} SP
		&=
		\sqrt{\frac{-2a_0}{b_0^2 - a_0 c_0}} 
		\pmat{1 & -\lambda & 0 & \cdots & 0 \\
			1 & -\lambda' & 0 & \cdots & 0}, 
		\\
		\transpose{(c, c')} S (c, c')
		&=
		\frac{-2a_0}{b_0^2 - a_0 c_0}
		\pmat{-2\lambda & -\lambda - \lambda' \\ -\lambda - \lambda' & -2\lambda'}.
		\label{eq:lem:indefinite_Gaussian_Fourier:1}
	\end{align}
	Thus, we have $ \gamma(\tau; x) = \gamma_0(\tau; P^{-1} x) $ where
	\[
	\gamma_0 (\tau; x)
	\coloneqq
	\left( \sgn(x_1 - \lambda x_2) - \sgn(x_1 - \lambda' x_2) \right)
	q^{x_1 x_2 + (x_3^2 + \cdots + x_r^2)/2}.
	\]
	Here, $ \gamma_0 $ is the specialization of $ \gamma $ in the statement with $ S = S_0, c = \transpose{(1, -\lambda, 0, \dots, 0)} $, and $ c' = \transpose{(1, -\lambda', 0, \dots, 0)} $.
	The claim for $ \gamma_0 $ is stated as
	\[
	\widehat{\gamma}_0 (\tau; \xi)
	=
	\left(
	\erf \left( \sqrt{\frac{\pi\iu}{\tau}} \frac{\xi_2 - \lambda \xi_1}{2\lambda} \right)
	- \erf \left( \sqrt{\frac{\pi\iu}{\tau}} \frac{\xi_2 - \lambda' \xi_1}{2\lambda'} \right)
	\right)
	\widetilde{q}^{\xi_1 \xi_2 + (\xi_3^2 + \cdots + \xi_r^2)/2}.
	\]
	If this holds, then by $ \gamma(\tau; x) = \gamma_0(\tau;  P^{-1} x) $, we have
	\begin{align}
		\widehat{\gamma}(\tau; \xi)
		&=
		\frac{1}{\abs{\det P}} \widehat{\gamma}_0(\tau; \transpose{P}\xi)
		\\
		&=
		\frac{1}{\sqrt{\abs{\det S}}}
		\left(
		\erf \left( \sqrt{\frac{\pi\iu}{-2Q(c) \tau}} \transpose{c}\xi \right)
		- \erf \left( \sqrt{\frac{\pi\iu}{-2Q(c') \tau}} \transpose{c'}\xi \right)
		\right)
		\widetilde{q}^{\transpose{\xi}S^{-1} \xi/2},
	\end{align}
	which implies the claim for $ \gamma $.
	Thus, the claim is reduced to the case for $ \gamma_0 $.
	Moreover, we can assume $ r=2 $.
	
	\textbf{Step 2}.
	We prove the claim for $ \gamma_0 $ with $ r=2 $.
	We can assume $ \lambda > \lambda' > 0 $.
	
	If $ 0, x_1, \lambda x_2 $, and $ \lambda' x_2 $ are pairwise distinct, then we have
	\begin{align}
		\sgn(x_1 - \lambda x_2) - \sgn(x_1 - \lambda' x_2)
		&=
		\begin{cases}
			-2 & \text{ if } 0 < \lambda'  x_2 < x_1 < \lambda x_2, \\
			2 & \text{ if } 0 > \lambda'  x_2 > x_1 > \lambda x_2, \\
			0 & \text{ otherwise}.
		\end{cases}
	\end{align}
	Thus, we have
	\begin{align}
		\widehat{\gamma}_0 (\tau; \xi)
		&=
		-2 \int_{0 < \lambda'  x_2 < x_1 < \lambda x_2} q^{x_1 x_2}
		\left( \bm{e} (-\transpose{\xi}x) - \bm{e} (\transpose{\xi}x) \right) dx
		\\
		&=
		-2 \int_0^\infty dx_2 \cdot
		\left[
		\frac{q^{x_1 x_2}}{2\pi\iu}
		\left(
		\frac{\bm{e} (-\transpose{\xi}x)}{\tau x_2 - x_1}
		- \frac{\bm{e} (\transpose{\xi}x)}{\tau x_2 + x_1}
		\right)
		\right]_{x_1 = \lambda'  x_2}^{\lambda x_2}
		\\
		&=
		-2 \int_0^\infty dx_2 \cdot
		\frac{1}{2\pi\iu}
		\left(
		\frac{\bm{e}(- \xi_2 x_2)}{\tau x_2 - \xi_1} 
		\left( q^{\lambda x_2^2} \bm{e} (-\lambda \xi_1 x_2) - q^{\lambda'  x_2^2} \bm{e} (-\lambda' \xi_1 x_2) \right)
		\right.
		\\
		&\phant
		\hphantom{
			-2 \int_0^\infty dx_2 \cdot
			\frac{1}{2\pi\iu}
			\left( \right.
		}
		\left.
		- \frac{\bm{e}(\xi_2 x_2)}{\tau x_2 + \xi_1} 
		\left( q^{\lambda x_2^2} \bm{e} (\lambda \xi_1 x_2) - q^{\lambda' x_2^2} \bm{e} (\lambda' \xi_1 x_2) \right)
		\right).
	\end{align}
	By replacing $ x_2 $ by $ -x_2 $ in the second term, we obtain
	\[
	\widehat{\gamma}_0 (\tau; x)
	=
	-2 \int_{\R} dx_2 \cdot
	\frac{1}{2\pi\iu\tau}
	\frac{\bm{e}(-\xi_2 x_2)}{\xi_2 - x_1 / \tau} 
	\left( q^{\lambda x_2^2} \bm{e} (-\lambda \xi_1 x_2) - q^{\lambda' x_2^2} \bm{e} (-\lambda' \xi_1 x_2) \right).
	\]
	Since $ \gamma $ is an odd function, we can assume $ \xi_1 < 0 $.
	In this case, $ \bm{e} (-(x_2 - \xi_1 / \tau) \xi'_2) $ is of exponential decay in $ \xi'_2 \to \infty $ since $ \Im(-\xi_1 / \tau) < 0 $.
	Thus, we have
	\begin{align}
		\widehat{\gamma}_0 (\tau; \xi)
		&=
		-2 \widetilde{q}^{\, \xi_1 \xi_2} \int_{\R} dx_2 \cdot
		\left[
		-\frac{1}{2\pi\iu\tau}
		\frac{\bm{e}(-(x_2 - \xi_1 / \tau) \xi'_2)}{x_2 - \xi_1 / \tau} 
		\right]_{\xi'_2 = \xi_2}^\infty
		\left( q^{\lambda x_2^2} \bm{e} (-\lambda \xi_1 x_2) - q^{\lambda' x_2^2} \bm{e} (-\lambda' \xi_1 x_2) \right)
		\\
		&=
		-\frac{2}{\tau} \widetilde{q}^{\, \xi_1 \xi_2} \int_{\R} dx_2 \cdot
		\int_{\xi_2}^{\infty}
		\bm{e}(-(x_2 - \xi_1 / \tau) \xi'_2) d\xi'_2 \cdot
		\left( q^{\lambda x_2^2} \bm{e} (-\lambda \xi_1 x_2) - q^{\lambda' x_2^2} \bm{e} (-\lambda' \xi_1 x_2) \right)
		\\
		&=
		-\frac{2}{\tau} \widetilde{q}^{\, \xi_1 \xi_2} 
		\int_{\xi_2}^{\infty} d\xi'_2 \cdot \widetilde{q}^{\, -\xi_1 \xi'_2} 
		\int_{\R}
		\left( q^{\lambda x_2^2} \bm{e} (-(\lambda \xi_1 + \xi'_2) x_2) - q^{\lambda' x_2^2} \bm{e} (-(\lambda' \xi_1 + \xi'_2) x_2) \right) dx_2
		\\
		&=
		-\frac{2}{\tau} \widetilde{q}^{\, \xi_1 \xi_2} 
		\int_{x_2}^{\infty} d\xi'_2 \cdot \widetilde{q}^{\, -\xi_1 \xi'_2}
		\left( 
		\sqrt{\frac{\iu}{2\lambda \tau}} \widetilde{q}^{\, (\lambda \xi_1 + \xi'_2)^2 / 4\lambda} 
		- \sqrt{\frac{\iu}{2\lambda' \tau}} \widetilde{q}^{\, (\lambda' \xi_1 + \xi'_2)^2 / 4\lambda'} 
		\right)
		\\
		&=
		-\frac{2}{\tau} \widetilde{q}^{\, \xi_1 \xi_2} 
		\int_{\xi_2}^{\infty} d\xi'_2 \cdot
		\left( 
		\sqrt{\frac{\iu}{2\lambda \tau}} \widetilde{q}^{\, (\lambda \xi_1 - \xi'_2)^2 / 4\lambda} 
		- \sqrt{\frac{\iu}{2\lambda' \tau}} \widetilde{q}^{\, (\lambda' \xi_1 - \xi'_2)^2 / 4\lambda'} 
		\right)
		\\
		&=
		-\frac{2}{\sqrt{\pi} \tau} \widetilde{q}^{\, \xi_1 \xi_2} 
		\left( 
		\int_{(\xi_2 - \lambda \xi_1) \sqrt{\pi \iu / 2\lambda \tau} }^{\infty}
		- \int_{(\xi_2 - \lambda' \xi_1) \sqrt{\pi \iu / 2\lambda' \tau} }^{\infty}
		\right)
		e^{-t^2} dt
		\\
		&=
		-\iu \frac{\iu}{\tau}
		\left(
		\erf \left( \sqrt{\frac{\pi\iu}{2\lambda \tau}} (\xi_2 - \lambda \xi_1) \right)
		- \erf \left( \sqrt{\frac{\pi\iu}{2\lambda' \tau}} (\xi_2 - \lambda' \xi_1) \right)
		\right)
		\widetilde{q}^{\, \xi_1 \xi_2}.
	\end{align}
\end{proof}

In his proof of \cref{thm:indef_theta_mod_trans}, Zwegers constructs the modular completion by introducing error function factors, whereas in our approach \cref{lem:indefinite_Gaussian_Fourier} shows that these error functions arise naturally.

\begin{proof}[An alternative proof of \cref{thm:indef_theta_mod_trans}]
	We can write 
	\[
	\Theta_{\alpha, \beta}^+ (\tau)
	=
	\sum_{m \in \alpha + \Z^r}
	\bm{e} \left( B(\beta, m) \right) \gamma(\tau; m).
	\]
	By the Poisson summation formula, we have
	\[
	\sum_{n \in \alpha + \Z^r}
	\bm{e} \left( B(\beta, n) \right) \widehat{\gamma} \left( \tau; Sn \right)
	=
	\frac{1}{\abs{\det S}} \sum_{m \in \Z^r} \bm{e} \left( \transpose{\alpha} m \right) \gamma \left( \tau; S^{-1} m - \beta \right).
	\]
	Thus, we obtain
	\[
	\Theta_{\alpha, \beta}^+ \left( -\frac{1}{\tau} \right)
	- \calE_{\alpha, \beta, c} (\tau) + \calE_{\alpha, \beta, c'} (\tau)
	=
	\frac{\iu}{\sqrt{-\det S}} \sqrt{\frac{\tau}{\iu}}^{\, r}
	\bm{e} \left( B(\alpha, \beta) \right)
	\sum_{\beta' \in (\beta + S^{-1} (\Z^r)) / \Z^r}
	\Theta_{-\beta, \alpha}^+ \left( -\frac{1}{\tau} \right),
	\]
	where
	\[
	\calE_{\alpha, \beta, c} (\tau)
	\coloneqq
	\sum_{n \in \alpha + \Z^r}
	E^* \left(\tau; \frac{B(c, n)}{\sqrt{-2Q(c)}} \right) 
	\bm{e} \left( B(\beta, n) \right) q^{Q(n)}.
	\]
	We have
	\[
	E^*(\tau; u)
	=
	\sgn(u) \frac{2}{\pi} \int_{\abs{u} \sqrt{-\pi\iu \tau}}^{\infty} e^{-t^2} dt
	=
	\iu u \int_{\tau}^{\iu \infty} \bm{e} \left( \frac{u^2}{2} z \right) \sqrt{\frac{\iu}{z}} dz
	=
	\iu u q^{u^2 /2} \int_{0}^{\iu \infty} \bm{e} \left( \frac{u^2}{2} z \right) \sqrt{\frac{\iu}{z + \tau}} dz.
	\]
	In particular, $ E^*(\tau; u) $ satisfies two condition in \cref{prop:Zwegers_error}.
	Thus, by \cref{prop:Zwegers_error}, we have
	\[
	\calE_{\alpha, \beta, c} (\tau)
	=
	\sum_{\mu \in (\alpha + \Z^r) \cap \Delta_{c}} \theta_{\restrict{Q}{\sprod{c}_{\Z}^{\perp}}, \mu^{\perp}, \beta^{\perp}} (\tau)
	g^{*, E}_{-\frac{B(c, \mu)}{2Q(c)}, -B(c, \beta)} (-2Q(c) \tau).
	\]
	For any $ a, b \in \R $, we have
	\[
	g^{*, E}_{a, b} (\tau)
	=
	\sum_{n \in a + \Z} 
	\bm{e} \left( bn \right) n q^{-n^2 /2}
	\left(
	\iu n q^{n^2 /2} \int_{0}^{\iu \infty} \bm{e} \left( \frac{n^2}{2} z \right) \sqrt{\frac{\iu}{z + \tau}} dz
	\right)
	=
	\iu g^*_{a, b} (\tau)
	=
	-\iu g^*_{-a, -b} (\tau).
	\]
	Therefore, we obtain
	\[
	\calE_{\alpha, \beta, c} (\tau)
	=
	-\iu \sum_{\mu \in (\alpha + \Z^r) \cap \Delta_{c}} \calR_{\alpha, \beta, c} (\tau).
	\]
	which proves the claim.
\end{proof}



\subsection{Further cases of indefinite theta functions} \label{subsec: indefinite_theta_Zwegers_outside}


We continue to fix an indefinite symmetric matrix $ S \in \Sym_r (\Z) $ of type $ (r-1,1) $.
In the previous subsection, we assumed that $ c $ and $ c' $ satisfy $ Q(c), Q(c'), B(c, c') < 0 $.
In this subsection, we drop these assumptions.
Our targets are indefinite theta functions defined for any $ c, c' \in \R^r $ as
\[
\Theta_{\alpha, \beta} (\tau)
\coloneqq
\sum_{m \in \alpha + \Z^r}
\left(
\sgn \left( B(c, m) \right) - \sgn \left( B(c', m) \right)
\right)
\bm{e} \left( B(\beta, m) \right) q^{\abs{Q(m)}}.
\]

Our aim is to prove the following formula.

\begin{thm} \label{thm:indef_theta_mod_trans_further_case}
	If $ B(c'', m) \notin \Z $ for each $ c'' \in \{ c, c' \} $ and any $ m \in (\alpha + \Z^r) \cup (\beta + S^{-1} (\Z^r)) $, 
	then we have
	\begin{align}
		\Theta_{\alpha, \beta}^+ \left( -\frac{1}{\tau} \right)
		&=
		\frac{\iu}{\sqrt{-\det S}} \sqrt{\frac{\tau}{\iu}}^r \bm{e} \left( B(\alpha, \beta) \right)
		\sum_{\beta' \in (\beta + S^{-1}\Z^r) / \Z^r} \Theta_{\beta', -\alpha}^+ (\tau)
		\\
		&\phant
		- \iu \sum_{c'' \in \{ c, c' \}} \varepsilon_{c''} 
		\sum_{\mu \in (\alpha + \Z^r) \cap \Delta_{c''}} 
		\calR_{\mu, \beta, c''} \left( -\frac{1}{\tau} \right).
	\end{align}
	Here, in the case when $ Q(c'') > 0 $, we define a set $ \Delta_{c''} $ and a function $ \calR_{\mu, \beta, c''} (\tau) $ as in \cref{thm:indef_theta_mod_trans}, with $ S $ replaced by $ -S $.
	
	In particular, if in addition $ B(c'', \alpha) / 2Q(c'') \notin \Z $ for each $ c'' \in \{ c, c' \} $, 
	then $ \Theta_{\alpha, \beta}^+ (\tau) $ is a component of a vector-valued holomorphic quantum modular form.
\end{thm}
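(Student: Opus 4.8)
The plan is to mirror the modular-series proof of Theorem~\ref{thm:indef_theta_mod_trans} carried out in Subsection~\ref{subsec: indefinite_theta_Zwegers_modular_series}, the new ingredient being a reduction of an arbitrary configuration of $c,c'$ to the Zwegers configuration for $S$ and for $-S$ \emph{simultaneously}. The structural observation driving this is that $B_{-S}(x,y)=-B_S(x,y)$, so $\sgn B_{-S}(c,m)=-\sgn B_S(c,m)$ whenever $B_S(c,m)\neq 0$, while $|Q_S(m)|=|Q_{-S}(m)|$; hence any portion of the defining sum of $\Theta_{\alpha,\beta}$ supported where $Q_S(m)\le 0$ is, up to an overall sign and up to replacing $c,c'$ by suitable representatives, an indefinite theta function of Zwegers type for the form $-S$, which has type $(1,r-1)$ and for which the relevant vectors then lie in its negative cone. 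This is exactly why the statement asks for $S$ to be replaced by $-S$ in the definitions of $\Delta_{c''}$ and $\calR_{\mu,\beta,c''}$ precisely for those $c''$ with $Q(c'')>0$.

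Concretely, first I would reduce to a low-dimensional model by the orthogonal splitting $\R^r=\sprod{c,c'}_{\R}\oplus\sprod{c,c'}_{\R}^{\perp}$ used in Step~1 of the proof of Lemma~\ref{lem:indefinite_Gaussian_Fourier}: on $\sprod{c,c'}_{\R}^{\perp}$ the form is positive definite and contributes only a harmless Gaussian factor, so the sign factor and the indefinite part of the quadratic form live on the plane $\sprod{c,c'}_{\R}$. On that plane I would split the lattice sum by the sign of $Q(m)$, writing $q^{|Q(m)|}=\tfrac12(1+\sgn_0 Q(m))q^{Q(m)}+\tfrac12(1-\sgn_0 Q(m))q^{-Q(m)}$ and absorbing the extra sign into $\sgn B(c,m)-\sgn B(c',m)$; since $\sgn_0 Q(m)$ is determined by the wedge geometry once $c,c'$ are fixed, each resulting piece is, after negating some of $c,c'$, a genuine Zwegers indefinite theta function — for $S$ on the $q^{Q(m)}$ part and for $-S$ on the $q^{-Q(m)}$ part. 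To each piece I apply the Poisson summation formula with kernel $\gamma(\tau;x)=(\sgn B(c,x)-\sgn B(c',x))\,q^{\pm Q(x)}$ restricted to the relevant half-space, whose Fourier transform is evaluated exactly as in Lemma~\ref{lem:indefinite_Gaussian_Fourier} and produces complementary error functions built from $\sqrt{\mp\pi\iu/\tau}$ according to whether the piece belongs to $S$ or to $-S$. The genuine $\pm S$-theta contributions reassemble, by the same manipulation as in the proof of Theorem~\ref{thm:indef_theta_mod_trans}, into $\tfrac{\iu}{\sqrt{-\det S}}\sqrt{\tau/\iu}^{\,r}\bm{e}(B(\alpha,\beta))\sum_{\beta'}\Theta^+_{\beta',-\alpha}(\tau)$, and the error-function remainders are recognized, via Proposition~\ref{prop:Zwegers_error} applied with $E^*(\tau;u)=\sgn(u)-\erf(u\sqrt{\mp\pi\iu\tau})$, as the functions $\calR_{\mu,\beta,c''}(-1/\tau)$ of the statement — with $-S$ in place of $S$ exactly for the $c''$ with $Q(c'')>0$, as dictated by the splitting, and with the signs $\varepsilon_{c''}$ produced automatically.

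For the final quantum-modularity assertion I would argue verbatim as in the proof of Theorem~\ref{thm:indefinite_theta_quantum_modular}. Under the added hypotheses $B(c'',\alpha)/2Q(c'')\notin\Z$, each error term $\calR_{\mu,\beta,c''}$ is a product of a positive definite theta function, which is genuinely modular, and an Eichler integral $g^{*}_{a,b}$ with both $a=B(c'',\mu)/2Q(c'')$ and $b=B(c'',\beta)$ non-integral; by the remark preceding Theorem~\ref{thm:indefinite_theta_quantum_modular}, such $g^{*}_{a,b}$ extend holomorphically to $\bbC\smallsetminus\R_{\le 0}$, and they transform among themselves under $\tau\mapsto-1/\tau$ by the $\pm S$-analogue of Lemma~\ref{lem:indef_theta_error_term_modularity}. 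Bundling all the functions $\Theta^{+}_{\beta',-\alpha}$ and $\calR_{\mu,\beta,c''}$ into a single vector $F\colon\bbH\to\bbC^N$ having $\Theta^{+}_{\alpha,\beta}$ as a component, the modular transformation formula just established together with these two facts yields $F(-1/\tau)=M(\tau)F(\tau)$ for a holomorphic $M\colon\bbH\to\GL_N(\bbC\smallsetminus\R_{\le 0})$, which is the claim.

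The step I expect to be the real obstacle is making the causal case analysis genuinely clean. One must verify that the $\sgn_0 Q(m)$-splitting actually decomposes $\Theta_{\alpha,\beta}$ into \emph{absolutely convergent} Zwegers-for-$S$ and Zwegers-for-$-S$ pieces with the correct cone orientations — in particular that, although the support wedge $R=\{\sgn B(c,x)\neq\sgn B(c',x)\}$ now meets the light cone, the surviving lattice terms are those on which $|Q(m)|$ grows (which forces the plane $\sprod{c,c'}_{\R}$ to be indefinite, a condition one should record) — and that no boundary lattice points (those with $B(c'',m)\in\Z$ or $Q(m)=0$) are mishandled; the genericity hypotheses $B(c'',m)\notin\Z$ are precisely what rule these out. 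Once the bookkeeping of which vectors are negated and which form governs each piece is pinned down, the analytic heart of the argument is identical to the Fourier-transform and residue computations already performed in Subsection~\ref{subsec: indefinite_theta_Zwegers_modular_series}.
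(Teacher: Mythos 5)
There is a genuine gap, and it sits exactly where the paper's new content lies. Your plan is to split the sum by the sign of $Q(m)$ and then treat the two pieces as ``genuine Zwegers indefinite theta functions'' for $S$ and for $-S$, with Fourier transforms ``evaluated exactly as in \cref{lem:indefinite_Gaussian_Fourier}.'' Neither claim holds. In the mixed case $Q(c)<0$, $Q(c')>0$ (the only case the paper actually has to treat, after reducing via \cref{thm:indef_theta_mod_trans}), the support wedge $\{\sgn B(c,x)\neq\sgn B(c',x)\}$ crosses the light cone, so its intersection with $\{Q\ge 0\}$ (resp.\ $\{Q\le 0\}$) is a wedge one of whose walls is the light cone itself. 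In the two-dimensional model of the paper's proof ($S=\smat{0&1\\1&0}$, $c=\transpose{(-1,\lambda)}$, $c'=\transpose{(1,\lambda')}$) these pieces are bounded by $B(c,x)=0$ or $B(c',x)=0$ on one side and by $x_1=0$ on the other; a light-cone wall corresponds to an isotropic vector $c''$ with $Q(c'')=0$, which is excluded by the hypotheses $Q(c),Q(c'),B(c,c')<0$ of \cref{thm:indef_theta_mod_trans}. So no negation of $c,c'$ turns either piece into a case already covered, and you cannot invoke the established theorem (or \cref{lem:indefinite_Gaussian_Fourier}) piecewise; you would need a separate treatment of the degenerate isotropic-wall case, which you do not supply. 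Likewise, the Fourier transform of the half-wedge kernel $-2\sgn(x_2)\bm{1}_{(\lambda,\infty)}(x_2/x_1)\,q^{x_1x_2}$ is a genuinely new computation: it produces, besides the expected error-function term, a boundary contribution of the form $\tau^{-1}\bigl(-\sgn(\xi_2)+\sgn(-\xi_1+\lambda\xi_2)\bigr)\widetilde{q}^{\,\lvert\xi_1\xi_2\rvert}$ which only reassembles into the full indefinite kernel after the two half-wedge pieces are added. This is precisely the content of the paper's \cref{lem:indefinite_Gaussian_Fourier2}, and your proposal replaces it with a citation of a lemma that does not cover it.

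For comparison, the paper's route is: reduce to the single case $Q(c)<0$, $B(c,c')<0$, $Q(c')>0$; compute directly (via the hyperbolic model and an explicit contour computation) the Fourier transform of the kernel $\bigl(\sgn B(c,x)-\sgn B(c',x)\bigr)q^{\lvert Q(x)\rvert}$, obtaining the true indefinite kernel in $\widetilde{q}$ plus two $E^*$-terms weighted by $\widetilde{q}^{\,Q}$ and $\widetilde{q}^{\,-Q}$; then rerun the Poisson-summation argument of \cref{subsec: indefinite_theta_Zwegers_modular_series}, applying \cref{prop:Zwegers_error} once for $S$ and once for $-S$ to convert the two error sums into the $\calR_{\mu,\beta,c''}$ terms. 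Your outline of the reassembly step, of the use of \cref{prop:Zwegers_error} with both signs of the form, and of the final quantum-modularity argument is consistent with this, but until you either carry out the analogue of \cref{lem:indefinite_Gaussian_Fourier2} yourself or prove a Zwegers-type transformation allowing an isotropic wall vector, the decisive analytic step is missing rather than merely ``bookkeeping,'' as your closing paragraph suggests.
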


To prove this theorem, it suffices to consider the case when $ Q(c), B(c, c') < 0 $ and $ Q(c') > 0 $ by \cref{thm:indef_theta_mod_trans}.
In this case, the above theorem follows from the following lemma and the same argument in the proof of \cref{thm:indef_theta_mod_trans} in \cref{subsec: indefinite_theta_Zwegers_modular_series}.

\begin{lem} \label{lem:indefinite_Gaussian_Fourier2}
	Let $ \gamma(\tau; x) \coloneqq 
	\left(
	\sgn \left( B(c, x) \right) - \sgn \left( B(c', x) \right)
	\right)
	q^{\abs{Q(x)}} $.
	Then, its Fourier transform is 
	\begin{align}
		\widehat{\gamma}(\tau; \xi)
		=
		\frac{-\iu}{\sqrt{-\det S}} \frac{\iu}{\tau}
		&\left(
		\left(
		\sgn(\transpose{c} \xi) - \sgn(\transpose{c'} \xi)
		\right) 
		\widetilde{q}^{\, \abs{\transpose{\xi}S^{-1} \xi/2}}
		\vphantom{ E^* \left( -\frac{1}{\tau}; \frac{\transpose{c} \xi}{\sqrt{-2Q(c)}} \right) }
		\right.
		\\
		&\phantom{\left( \right.}
		\left.
		-
		E^* \left( -\frac{1}{\tau}; \frac{\transpose{c} \xi}{\sqrt{-2Q(c)}} \right)
		\widetilde{q}^{\, \transpose{\xi}S^{-1} \xi/2}
		+
		E^* \left( -\frac{1}{\tau}; \frac{\transpose{c'} \xi}{\sqrt{2Q(c)}} \right)
		\widetilde{q}^{\, -\transpose{\xi}S^{-1}\xi/2}
		\right).
	\end{align}
	In particular, we have
	\[
	\widehat{\gamma} \left( -\frac{1}{\tau}; S\xi \right)
	=
	\frac{-\iu}{\sqrt{-\det S}} \frac{\tau}{\iu}
	\left(
	\gamma (\tau; \xi)
	- E^* \left( \tau; \frac{B(c, \xi)}{\sqrt{-2Q(c)}} \right) q^{Q(\xi)}
	+ E^* \left( \tau; \frac{-B(c', \xi)}{\sqrt{2Q(c)}} \right) q^{-Q(\xi)}
	\right).
	\]
\end{lem}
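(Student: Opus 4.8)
The plan is to mimic the two-step proof of \cref{lem:indefinite_Gaussian_Fourier}: first put $S,c,c'$ into a normal form by a real linear change of variables, then compute the Fourier integral directly, keeping careful track of the one genuinely new feature, namely that the weight $q^{\abs{Q(x)}}$ does not vanish on the whole support of the sign factor. Since \cref{thm:indef_theta_mod_trans} already treats the Zwegers configuration, it suffices to prove the lemma when $Q(c)<0<Q(c')$ and $B(c,c')<0$. As in Step 1 of the proof of \cref{lem:indefinite_Gaussian_Fourier}, the $B$-orthogonal decomposition $\R^r=\sprod{c,c'}_\R\oplus\sprod{c,c'}_\R^\perp$ has a plane of type $(1,1)$ (it contains the timelike vector $c$) and a positive definite complement; diagonalizing and rescaling the $(1,1)$-block into light-cone coordinates reduces the claim to a normal form
\[
\gamma_0(\tau;x)=\bigl(\sgn(x_1-\lambda x_2)-\sgn(x_1-\lambda'x_2)\bigr)\,q^{\abs{x_1x_2+(x_3^2+\cdots+x_r^2)/2}},
\]
where now, because $c'$ is spacelike, the constants $\lambda,\lambda'$ have \emph{opposite} signs (rather than both positive, as in the Zwegers case). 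The transfer back to $\gamma$ is handled by the identities $\transpose{P}SP=S'$ and $S^{-1}=PS'\transpose{P}$ recorded in that proof.

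The decisive difference from \cref{lem:indefinite_Gaussian_Fourier} is that, since $\lambda$ and $\lambda'$ have opposite signs, the support of the sign factor in $\gamma_0$ straddles the axes $\{x_1x_2=0\}$, so the support of $\gamma_0$ meets both $\{Q\ge 0\}$ and $\{Q<0\}$; in particular the absolute value is essential and the $\sprod{c,c'}_\R^\perp$-variables cannot simply be integrated out into a single Gaussian factor. I would instead fix the plane variables $x_0=(x_1,x_2)$, write $Q(x)=x_1x_2+Q^\perp(x^\perp)$ with $Q^\perp\ge 0$, and integrate over $x^\perp$ first: on $\{x_1x_2\ge 0\}$ the inner integral is an ordinary Gaussian, while on $\{x_1x_2<0\}$ the domain of $x^\perp$ splits along the ellipsoid $Q^\perp(x^\perp)=-x_1x_2$, producing incomplete Gaussian integrals. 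The remaining integral over the $(x_1,x_2)$-plane is then carried out exactly as in Step 2 of the proof of \cref{lem:indefinite_Gaussian_Fourier}: restrict to the wedge where the sign factor is $\pm 2$, change the order of integration, and recognize one-dimensional Gaussian integrals; the incomplete pieces coming from $\{x_1x_2<0\}$ reassemble into $\erf$'s, which are then identified with $E^*(\tau;u)=\sgn(u)-\erf(u\sqrt{-\pi\iu\tau})$ evaluated at $\transpose{c}\xi/\sqrt{-2Q(c)}$ and $\transpose{c'}\xi/\sqrt{2Q(c')}$. Collecting terms and undoing the normalization gives the stated formula, whose three groups of summands are the Zwegers-type $\sgn$-difference term carrying the weight $\widetilde{q}^{\,\abs{\transpose{\xi}S^{-1}\xi/2}}$, together with two $E^*$-terms carrying the two signed weights $\widetilde{q}^{\,\pm\transpose{\xi}S^{-1}\xi/2}$.

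The ``in particular'' statement is then purely formal: substituting $\tau\mapsto-1/\tau$ and $\xi\mapsto S\xi$ turns $\iu/\tau$ into $\tau/\iu$, turns $\widetilde{q}$ (at the argument $-1/\tau$) into $q$, and uses $\transpose{(S\xi)}S^{-1}(S\xi)=\transpose{\xi}S\xi=2Q(\xi)$ together with $\transpose{c''}(S\xi)=B(c'',\xi)$ for $c''\in\{c,c'\}$; no additional computation is required.

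I expect the main obstacle to be the bookkeeping in the $\{x_1x_2<0\}$ regime: one must propagate the split of the $x^\perp$-domain through the subsequent plane integral and verify that the incomplete-Gaussian contributions recombine into precisely the displayed combination of $E^*$-terms and $\widetilde{q}^{\,\pm\cdot}$-terms, with no leftover incomplete-Gamma remnants. A secondary, more mechanical point is choosing the branches of $\sqrt{-2Q(c)}$ and $\sqrt{2Q(c')}$ consistently, so that both error-function arguments remain real; this is what forces the asymmetric signs visible in the formula.
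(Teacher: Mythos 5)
Your reduction to the case $Q(c)<0<Q(c')$ and your handling of the ``in particular'' clause agree with the paper, but the core of your plan --- keeping all $r$ variables, integrating over $x^\perp$ first against $q^{\abs{Q(x)}}$, splitting the $x^\perp$-domain along the ellipsoid $Q^\perp(x^\perp)=-x_1x_2$, and then hoping the truncated-Gaussian contributions ``reassemble into $\erf$'s'' --- is precisely the step you leave unverified, and it is where the proposal breaks down. The inner integrals over $\{Q^\perp\le -x_1x_2\}$ and its complement, twisted by the oscillatory factor in the perpendicular variables, are incomplete-gamma--type functions depending jointly on $x_1x_2$ and $\xi$, and your sketch offers no mechanism by which the subsequent wedge integral over the plane collapses them into the stated closed form $E^*(\cdot)\,\widetilde{q}^{\,\pm\transpose{\xi}S^{-1}\xi/2}$ plus a $\sgn$-difference term; note also that the prefactor $\iu/\tau$ in the statement is that of a two-variable Fourier transform, which already signals that the identity is not produced by an $r$-dimensional computation of the kind you describe. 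You flag this recombination as the main obstacle, but it is not bookkeeping: it is the entire new content of the lemma, and as proposed it would not go through.

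The paper's own proof avoids the difficulty by a device your proposal misses. Working with the two-variable normal form $S=\smat{0 & 1 \\ 1 & 0}$, $c=\transpose{(-1,\lambda)}$, $c'=\transpose{(1,\lambda')}$, it rewrites the sign factor as $-2\sgn(x_2)\bigl(\bm{1}_{(\lambda,\infty)}(x_2/x_1)+\bm{1}_{(\lambda',\infty)}(-x_2/x_1)\bigr)$, i.e.\ it splits the support along the light cone into two pairs of cones: on the first piece $x_1x_2\ge 0$ and on the second $x_1x_2\le 0$, so on each piece $q^{\abs{Q(x)}}$ becomes an honest $q^{\pm x_1x_2}$ with no absolute value left. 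The second piece is the first with $x_1\mapsto -x_1$ and $\lambda$ replaced by $\lambda'$, and each piece is then the same wedge integral already evaluated in the proof of \cref{lem:indefinite_Gaussian_Fourier}: shift the contour past the pole at $x_1=\xi_2/\tau$, perform the Gaussian integral, and use $\frac{\iu}{\pi}\int_{\R\pm\iu}e^{-\pi w^2}\bm{e}(-uw)\frac{dw}{w}=\pm 1+\erf(\sqrt{\pi}u)$, which produces the $\sgn$- and $E^*$-terms directly. If you want to salvage your route, adopt this decomposition of the sign factor \emph{before} confronting the weight $q^{\abs{Q}}$; without that idea the incomplete-Gaussian computation does not lead to the displayed formula.
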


\begin{proof}
	By the same argument in the proof of \cref{thm:indef_theta_mod_trans} in \cref{subsec: indefinite_theta_Zwegers_modular_series},
	we may assume
	\[
	S = \pmat{0 & 1 \\ 1 & 0},
	\quad
	c = \pmat{-1 \\ \lambda},
	\quad
	c' = \pmat{1 \\ \lambda'},
	\quad
	\lambda \ge \lambda'.
	\]
	In this case, we can write
	\[
	\gamma (\tau; x) 
	=
	\left(
	\sgn \left( \lambda x_1 - x_2 \right) - \sgn \left( \lambda' x_1 + x_2 \right)
	\right)
	q^{\abs{x_1 x_2}}. 
	\]
	Since
	\begin{align}
		\sgn \left( \lambda x_1 - x_2 \right) - \sgn \left( \lambda' x_1 + x_2 \right)
		&=
		\begin{cases}
			-2 & \text{ if } x_2 > \lambda x_1 > 0, \\
			2 & \text{ if } x_2 < \lambda x_1 < 0 \\
			-2 & \text{ if } x_2 > -\lambda' x_1 > 0, \\
			2 & \text{ if } x_2 < -\lambda' x_1 < 0, \\
			0 & \text{ otherwise}
		\end{cases}
		\\
		&=
		-2 \sgn(x_2)  
		\left( \bm{1}_{(\lambda, \infty)} \left( \frac{x_2}{x_1} \right) + \bm{1}_{(\lambda', \infty)} \left( -\frac{x_2}{x_1} \right) \right),
	\end{align}
	we can write $ \gamma(\tau; x) = \gamma_\lambda^{} (\tau; x) + \gamma_{\lambda'}^{} (\tau; -x_1, x_2) $, where 
	\[
	\gamma_\lambda^{} (\tau; x) 
	\coloneqq
	-2\sgn(x_2) \bm{1}_{(\lambda, \infty)} \left( \frac{x_2}{x_1} \right) q^{x_1 x_2}.
	\]
	We have
	\begin{align}
		\widehat{\gamma}_\lambda^{} (\tau; \xi)
		&=
		\sum_{e_1, e_2 \in \{ \pm 1 \}} \int_0^\infty \int_0^\infty
		\gamma_\lambda^{} (\tau; e_1 x_1, e_2 x_2) \bm{e} \left( -e_1 \xi_1 x_1 - e_2 \xi_2 x_2 \right) dx
		\\
		&=
		-2 \int_0^\infty \int_0^\infty q^{x_1 x_2} 
		\left(
		\bm{1}_{(\lambda, \infty)} \left( \frac{x_2}{x_1} \right)
		\left( \bm{e} (-\transpose{\xi} x) - \bm{e} (\transpose{\xi} x) \right)
		\right)
		dx
		\\
		&=
		-2 \int_0^\infty dx_1 \cdot
		\int_{\lambda x_1}^{\infty} q^{x_1 x_2} 
		\left( \bm{e} (-\transpose{\xi} x) - \bm{e} (\transpose{\xi} x) \right)
		dx_2
		\\
		&=
		-2 \int_0^\infty \frac{q^{\lambda x_1^2}}{2\pi\iu} 
		\left( 
		- \frac{\bm{e} (-(\xi_1 + \lambda \xi_2) x_1)}{\tau x_1 - \xi_2}
		+ \frac{\bm{e} ((\xi_1 + \lambda \xi_2) x_1)}{\tau x_1 + \xi_2}
		\right)
		dx_1.
	\end{align}
	By replacing $ x_1 $ by $ -x_1 $ in the second term, we obtain
	\[
	\widehat{\gamma}_\lambda^{} (\tau; \xi)
	=
	\frac{1}{\pi\iu\tau} \int_\R q^{\lambda x_1^2}
	\frac{\bm{e} (-(\xi_1 + \lambda \xi_2) x_1)}{x_1 - \xi_2 / \tau}
	dx_1.
	\]
	By replacing $ x_1 $ by $ x_1 + \xi_2 / \tau $, we have
	\[
	\widehat{\gamma}_\lambda^{} (\tau; \xi)
	=
	\frac{\widetilde{q}^{\xi_1 \xi_2}}{\pi\iu\tau} 
	\int_{\R + \iu \xi_2} q^{\lambda x_1^2}
	\bm{e} ((-\xi_1 + \lambda \xi_2) x_1)
	\frac{dx_1}{x_1}.
	\]
	By substituting $ w = \sqrt{-2\iu \tau \lambda} x_1 $, we have
	\[
	\widehat{\gamma}_\lambda^{} (\tau; \xi)
	=
	\frac{\widetilde{q}^{\xi_1 \xi_2}}{\pi\iu\tau} 
	\int_{\R + \iu \xi_2} e^{-\pi w^2}
	\bm{e} \left( \sqrt{\frac{\iu}{\tau}} \frac{-\xi_1 + \lambda \xi_2}{\sqrt{2\lambda}} w \right)
	\frac{dw}{w}.
	\]
	Here, we have
	\[
	\frac{\iu}{\pi} \int_{\R \pm \iu} e^{-\pi w^2} \bm{e} (-uw) \frac{dw}{w}
	=
	\pm 1 + \erf (\sqrt{\pi} u)
	\]
	by the residue theorem and the formula
	\[
	\erf (\sqrt{\pi} u)
	=
	\frac{\iu}{\pi} \PV \int_{\R} e^{-\pi w^2} \bm{e} (-uw) \frac{dw}{w},
	\]
	which is obtained by applying the Fourier transform.
	Thus, we obtain 
	\begin{align}
		\widehat{\gamma}_\lambda^{} (\tau; \xi) 
		&=
		\frac{1}{\tau} \widetilde{q}^{\, \xi_1 \xi_2}
		\left(
		-\sgn(\xi_2) + \erf \left( \sqrt{\frac{\pi\iu}{\tau}} \frac{-\xi_1 + \lambda \xi_2}{\sqrt{2\lambda}} \right)
		\right)
		\\
		&=
		\frac{1}{\tau} \widetilde{q}^{\, \xi_1 \xi_2}
		\left(
		-\sgn(\xi_2) + \sgn(-\xi_1 + \lambda \xi_2)
		- E^* \left( -\frac{1}{\tau}; \frac{-\xi_1 + \lambda \xi_2}{\sqrt{2\lambda}} \right)
		\right).
	\end{align}
	Since $ -\sgn(\xi_2) + \sgn(-\xi_1 + \lambda \xi_2) = 0 $ if $ \xi_1 \xi_2 < 0 $, we have
	\[
	\widehat{\gamma}_\lambda^{} (\tau; \xi)
	=
	\frac{1}{\tau}
	\left(
	-\sgn(\xi_2) + \sgn(-\xi_1 + \lambda \xi_2)
	\right) 
	\widetilde{q}^{\, \abs{\xi_1 \xi_2}}
	-
	\frac{1}{\tau}
	E^* \left( -\frac{1}{\tau}; \frac{-\xi_1 + \lambda \xi_2}{\sqrt{2\lambda}} \right)
	\widetilde{q}^{\, \xi_1 \xi_2}.
	\]
	Therefore, we obtain
	\begin{align}
		\widehat{\gamma} (\tau; \xi) 
		&= \widehat{\gamma}_\lambda^{} (\tau; \xi) + \widehat{\gamma}_{\lambda'}^{} (\tau; -\xi_1, \xi_2)
		\\
		&=
		\frac{1}{\tau}
		\left(
		\sgn(-\xi_1 + \lambda \xi_2) - \sgn(\xi_1 + \lambda' \xi_2)
		\right) 
		\widetilde{q}^{\, \abs{\xi_1 \xi_2}}
		\\
		&\phant
		-
		\frac{1}{\tau}
		E^* \left( -\frac{1}{\tau}; \frac{-\xi_1 + \lambda \xi_2}{\sqrt{2\lambda}} \right)
		\widetilde{q}^{\, \xi_1 \xi_2}
		+
		\frac{1}{\tau}
		E^* \left( -\frac{1}{\tau}; \frac{-\xi_1 - \lambda' \xi_2}{\sqrt{2\lambda}} \right)
		\widetilde{q}^{\, -\xi_1 \xi_2}.
	\end{align}
\end{proof}


\section{Quantum modularity of Eisenstein series} \label{sec:Eisenstein}



\subsection{Eisenstein series of even weight} \label{subsec:Eisenstein_even}


Usually, the modular transformation formula for Eisenstein series of even weight is proved separately for the cases $ k=2 $ and $ k \ge 4 $.
In this subsection, we provide a unified proof of both cases based on the framework of modular series.

Throughout this subsection, we fix an even integer $ k \ge 2 $.

\begin{dfn}
	We define \emph{Eisenstein series of weight $ k $} as
	\begin{align}
		G_k (\tau) &\coloneqq
		\begin{dcases}
			\sum_{(m, n) \in \Z^2 \smallsetminus \{ (0, 0) \}} \frac{1}{(m\tau + n)^k} & \text{ if } k \ge 4, \\
			\left( \sum_{m \in \Z} \sum_{n \in \Z \smallsetminus \{ 0 \}} + \sum_{m = 0} \sum_{n \in \Z} \right) \frac{1}{(m\tau + n)^2} 
			& \text{ if } k = 2,
		\end{dcases}
		\\
		E_k (\tau) &\coloneqq
		\frac{G_k (\tau)}{2 \zeta(k)},
	\end{align}
	where
	\[
	\zeta (s) \coloneqq
	\sum_{n=1}^\infty \frac{1}{n^s},
	\quad
	s \in \bbC, \, \Re(s) > 1
	\]
	is the \emph{Riemann zeta function}. 
\end{dfn}

\begin{rem} \label{rem:Eisenstein_even}
	We recall well-known facts.
	By using the partial fraction decomposition
	\begin{equation} \label{eq:cot_decomp}
		\pi \cot (\pi z) =
		\frac{1}{z} + \sum_{n=1}^\infty \left( \frac{1}{z-n} + \frac{1}{z+n} \right),
	\end{equation}
	we can prove
	\begin{equation} \label{eq:Eisenstein_even_expression}
		E_k (\tau) 
		=
		1 + \frac{(2\pi\iu)^k}{(k-1)! \zeta(k)} \sum_{d=1}^\infty \frac{d^{k-1} q^d}{1 - q^d}
		=
		1 + \frac{(2\pi\iu)^k}{(k-1)! \zeta(k)} \sum_{n=1}^\infty \sigma_{k-1} (n) q^n,
	\end{equation}
	where
	\[
	\sigma_{k-1} (n)
	\coloneqq
	\sum_{0 < d \mid n} d^{k-1}
	\]
	is the divisor function. 
	We also have
	\begin{equation} \label{eq:zeta_even_expression}
		(2\pi\iu)^{-k} (k-1)! \zeta(k)
		=
		\frac{\zeta(1-k)}{2}
		=
		-\frac{B_k}{2k},
	\end{equation}
	where $ B_k $ is the $ k $-th Bernoulli number defined as
	\[
	\sum_{m=0}^\infty B_m \frac{t^m}{m!}
	\coloneqq
	\frac{t}{e^t - 1}.
	\]
\end{rem}

It is well-known that Eisenstein series satisfy the following modular transformation formula. 

\begin{prop} \label{prop:Eisenstein_even}
	We have
	\[
	E_k (\tau)
	=
	\tau^{-k} E_k \left( -\frac{1}{\tau} \right) 
	- \frac{12}{2\pi\iu\tau} \delta_{2, k},
	\]
	where
	\[
	\delta_{m,n}
	\coloneqq
	\begin{cases}
		1 & \text{ if } m=n, \\
		0 & \text{ if } m \neq n.
	\end{cases}
	\]
	is the Kronecker delta function.
\end{prop}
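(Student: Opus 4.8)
The plan is to realize $E_k(\tau)$ (up to an explicit constant) as a modular series with \emph{trivial} quasi-polynomial, and then invoke the $\overline{g}=1$ case of \cref{prop:modular_series_modular}, which in that case is just the ordinary Poisson summation formula. Concretely, set
\[
\gamma_k(\tau; x) \coloneqq |x|^{k-1} \frac{q^{|x|}}{1 - q^{|x|}},
\]
interpreted at $x=0$ by continuity. First I would check $\gamma_k \in \frakK$: exponential decay of both tails is immediate from $|q^{|x|}| = e^{-2\pi |x| \Im(\tau)} < 1$, and continuity (together with the Fourier-transform axioms) follows from $\frac{q^{s}}{1-q^{s}} = -\frac{1}{2\pi\iu\tau s} + O(1)$ as $s\to 0$, which gives the crucial value $\gamma_k(\tau; 0) = 0$ when $k \ge 4$ but $\gamma_2(\tau; 0) = -\frac{1}{2\pi\iu\tau}$. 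Since $\gamma_k$ is even in $x$ and $\gamma_k(\tau; m) = m^{k-1}\frac{q^m}{1-q^m}$ for $m \ge 1$, expanding $\frac{q^{|x|}}{1-q^{|x|}} = \sum_{m\ge 1} q^{m|x|}$ and using \cref{eq:Eisenstein_even_expression,eq:zeta_even_expression} yields
\[
\Phi[1, \gamma_k](\tau) = \gamma_k(\tau; 0) + 2\sum_{m\ge 1} m^{k-1}\frac{q^m}{1-q^m} = \gamma_k(\tau; 0) - \frac{B_k}{k}\bigl(E_k(\tau) - 1\bigr).
\]

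Next I would compute the Fourier transform. Integrating the expansion $\gamma_k(\tau; x) = \sum_{m\ge 1}|x|^{k-1} q^{m|x|}$ term by term gives
\[
\widehat{\gamma}_k(\tau; \xi) = \frac{(k-1)!}{(2\pi\iu)^k}\sum_{m\in\Z\smallsetminus\{0\}} \frac{1}{(\xi - \tau m)^k}.
\]
Now apply the Lipschitz summation formula, obtained by differentiating the cotangent partial-fraction expansion \cref{eq:cot_decomp} $\,(k-1)$ times: for $\xi > 0$ one has $\Im(-\xi/\tau) > 0$, so $\sum_{m\in\Z}(\xi - \tau m)^{-k} = \tau^{-k}\sum_{m\in\Z}(m - \xi/\tau)^{-k} = \tau^{-k}\frac{(2\pi\iu)^k}{(k-1)!}\sum_{d\ge 1} d^{k-1}\widetilde{q}^{\,d\xi}$, whence
\[
\widehat{\gamma}_k(\tau; \xi) = \frac{1}{\tau^k}\sum_{d\ge 1} d^{k-1}\widetilde{q}^{\,d\xi} - \frac{(k-1)!}{(2\pi\iu)^k}\frac{1}{\xi^k} \qquad (\xi > 0),
\]
together with $\widehat{\gamma}_k(\tau; 0) = 2\frac{(k-1)!\zeta(k)}{(2\pi\iu)^k}\tau^{-k}$. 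Since $\widehat{\gamma}_k$ is even and $\sum_{n\ge 1}\sum_{d\ge 1} d^{k-1}\widetilde{q}^{\,dn} = \sum_{N\ge 1}\sigma_{k-1}(N)\widetilde{q}^{\,N}$ rebuilds the Eisenstein $q$-series now in $\widetilde{q}$, summing over $n\in\Z$ and applying \cref{eq:Eisenstein_even_expression,eq:zeta_even_expression} once more gives
\[
\sum_{n\in\Z}\widehat{\gamma}_k(\tau; n) = -\frac{B_k}{k}\left(\tau^{-k} E_k\left(-\frac{1}{\tau}\right) - 1\right).
\]

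Finally, Poisson summation equates the two boxed displays, the $E_k$ constant terms cancel the $-1$'s, and one is left with
\[
E_k(\tau) = \tau^{-k} E_k\left(-\frac{1}{\tau}\right) + \frac{k}{B_k}\,\gamma_k(\tau; 0).
\]
For $k\ge 4$ this is exact modularity, while for $k=2$ one has $\gamma_2(\tau;0) = -\frac{1}{2\pi\iu\tau}$ and $\frac{k}{B_k} = 12$, producing precisely the anomalous term $-\frac{12}{2\pi\iu\tau}\delta_{2,k}$. I expect the only genuinely delicate point to be the $k=2$ case: one must (i) get the limiting value $\gamma_2(\tau; 0)$ exactly right, and (ii) justify that Poisson summation still applies even though $\gamma_2$ is merely continuous (not $C^1$ at the origin, because of the $|x|$) --- this is fine since both $\sum_m \gamma_2(\tau;m)$ and $\sum_n \widehat{\gamma}_2(\tau;n)$ converge absolutely and $\gamma_2$ is of bounded variation and piecewise real-analytic, but it is exactly where care is needed. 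This also makes transparent the slogan of the paper: the quasi-modularity of $E_2$ is a \emph{boundary effect} --- the non-vanishing of the kernel at the origin, equivalently the fact that the formal ``$k=1$'' Fourier integral $\int_{\R}(\xi - \tau m)^{-1}\,dm$ is a sign distribution rather than $0$. All the intervening $q$-expansion bookkeeping is routine given \cref{eq:Eisenstein_even_expression,eq:zeta_even_expression}.
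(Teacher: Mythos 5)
Your proof is correct and follows essentially the same route as the paper: both realize the Eisenstein series as a modular series built from the Lambert-type kernel and apply the Poisson summation of \cref{prop:modular_series_modular}, with the $k=2$ anomaly arising in both cases from the same nonzero value $-\tfrac{1}{2\pi\iu\tau}$ of the summand at $m=0$. The only (harmless) reorganizations are that you absorb the full factor $|x|^{k-1}$ into the kernel and take the trivial quasi-polynomial, where the paper keeps the single kernel $|x|\,q^{|x|}/(1-q^{|x|})$ and places $x^{k-2}$ in the quasi-polynomial (so the Fourier side is handled by the derivative operator $T_{\overline{g},N}$), and that you identify the transformed side with $E_k(-1/\tau)$ via the Lipschitz expansion of each $\widehat{\gamma}_k(\tau;n)$, whereas the paper recognizes the resulting double sum directly as $G_k(-1/\tau)-2\zeta(k)\tau^k$.
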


Usually, this formula is proved by exchanging the double sum in the definition of $ G_k(\tau) $ (for example, see \cite[Exercise 1.2.8]{Diamond-Shurman}).
Here, we give an alternative proof using the method of modular series.

To begin with, we express Eisenstein series as modular series.
Inspired by the calculation of Eisenstein series of level $ 4 $ in \cref{ex:ker_func} \cref{item:ex:ker_func:Eisenstein_level_4}, we start from a \emph{Lambert series}
\[
\Phi_k (\tau)
\coloneqq
\sum_{d=1}^\infty d^{k-1} \frac{q^d}{1 - q^d}.
\]
One might consider taking
\[
\gamma' (\tau; x)
\coloneqq
\frac{q^x}{1 - q^x}
\]
as a kernel function. 
However, this does not work since $ \gamma' (\tau; x) $ is not continuous at $ x=0 $ and does not decay as $ y \to -\infty $ but converges to $ -1 $.
Thus, we take a function 
\[
\gamma (\tau; x)
\coloneqq
x \left( \frac{q^x}{1 - q^x} \bm{1}_{\R \smallsetminus \{ 0 \}} (x) + \bm{1}_{(-\infty, 0)} (x) \right),
\]
which corrects these obstacles, as the kernel function.
This choice is the main idea in this subsection.

This function $ \gamma (\tau; x) $ is of exponential decay.
We remark that we can also write
\begin{equation} \label{eq:kernel_func_Eisen_even}
	\gamma (\tau; x)
	=
	\abs{x} \frac{q^{\abs{x}}}{1 - q^{\abs{x}}}.
\end{equation}
Since $ x^{k-2} \in \overline{C}_1 $ is a quasi-polynomial, we have
\[
\Phi [x^{k-2}, \gamma] (\tau)
=
2\Phi_k (\tau) - \frac{\delta_{2,k}}{2\pi\iu \tau}
=
\frac{B_k}{k} \left( 1 - E_k (\tau) \right) - \frac{\delta_{2,k}}{2\pi\iu \tau}
\]
since we have $ \gamma(\tau; 0) = -1/2\pi\iu \tau $ and $ k \ge 2 $ is even.

Using this expression, we give an alternative proof of \cref{prop:Eisenstein_even}. 
The Fourier transform of $ \gamma (\tau; x) $ can be calculated as follows.

\begin{lem} \label{lem:Eisen_kernel_func_Fourier}
	We have
	\[
	\widehat{\gamma} (\tau; \xi)
	=
	(2\pi\iu \tau)^{-2} 
	\sum_{l \in \Z \smallsetminus \{ 0 \}} \frac{1}{(-\xi/\tau + l)^2}.
	\]
\end{lem}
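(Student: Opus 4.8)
The plan is to compute the Fourier integral directly from the geometric series expansion of the kernel function. Using the alternative expression \eqref{eq:kernel_func_Eisen_even}, write $\gamma(\tau; x) = \abs{x}\, q^{\abs{x}}/(1-q^{\abs{x}})$ and expand, for $x \neq 0$,
\[
\frac{q^{\abs{x}}}{1-q^{\abs{x}}} = \sum_{d=1}^{\infty} q^{d\abs{x}} = \sum_{d=1}^{\infty} e^{2\pi\iu\tau d\abs{x}},
\]
which converges since $\abs{q^{\abs{x}}} = e^{-2\pi\Im(\tau)\abs{x}} < 1$. Since $\abs{x}\, e^{2\pi\iu\tau d\abs{x}}$ is dominated in absolute value by the integrable function $\abs{x}\, e^{-2\pi\Im(\tau)\abs{x}}$ and the tail sums are controlled geometrically, Fubini's theorem permits interchanging the sum and the Fourier integral, so that
\[
\widehat{\gamma}(\tau; \xi) = \sum_{d=1}^{\infty} \int_{\R} \abs{x}\, e^{2\pi\iu\tau d\abs{x}}\, e^{-2\pi\iu\xi x}\, dx.
\]

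Next I would split each integral at $x = 0$ and substitute $x \mapsto -x$ on the negative half-line; because $\abs{x}$ and $e^{2\pi\iu\tau d\abs{x}}$ are even in $x$, the two halves combine into $\int_0^\infty x\, e^{2\pi\iu\tau d x}\bigl(e^{-2\pi\iu\xi x} + e^{2\pi\iu\xi x}\bigr)\, dx$. For $\xi \in \R$ (the range in which $\widehat{\gamma}$ is initially defined) both exponents have real part equal to $2\pi d\,\Im(\tau) > 0$, so applying $\int_0^\infty x\, e^{-ax}\, dx = a^{-2}$ for $\Re(a) > 0$ gives
\[
\int_{\R} \abs{x}\, e^{2\pi\iu\tau d\abs{x}}\, e^{-2\pi\iu\xi x}\, dx = \frac{1}{(2\pi\iu(\xi - \tau d))^2} + \frac{1}{(2\pi\iu(\xi + \tau d))^2} = \frac{1}{(2\pi\iu)^2}\left( \frac{1}{(\xi - \tau d)^2} + \frac{1}{(\xi + \tau d)^2} \right).
\]
Summing over $d \geq 1$, pulling out a factor $\tau^{-2}$ from each denominator, and re-indexing by $l = d$ and $l = -d$ yields
\[
\widehat{\gamma}(\tau; \xi) = \frac{1}{(2\pi\iu\tau)^2} \sum_{l \in \Z \smallsetminus \{0\}} \frac{1}{(\xi/\tau - l)^2} = \frac{1}{(2\pi\iu\tau)^2} \sum_{l \in \Z \smallsetminus \{0\}} \frac{1}{(-\xi/\tau + l)^2},
\]
which is the claimed identity. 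Finally I would note that the resulting series converges absolutely and locally uniformly by comparison with $\sum_{l} l^{-2}$, so both sides extend holomorphically to a strip in $\xi$ (the left side by \cref{lem:ker_func}), and since they agree on $\R$ they agree on the strip.

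This argument is essentially routine computation; the only points requiring care are the justification of the term-by-term integration (handled by the uniform integrable majorant above) and the sign bookkeeping in the half-line integrals. For the latter, the cleanest route is to establish the formula first for $\xi \in \R$, where convergence of the individual half-line integrals is transparent, and then invoke analytic continuation rather than tracking convergence for complex $\xi$ directly. One may also remark that the right-hand side is, up to the leading term, the derivative of the cotangent partial fraction expansion \eqref{eq:cot_decomp}, i.e.\ $\sum_{l \in \Z}(\xi/\tau - l)^{-2} = \pi^2/\sin^2(\pi\xi/\tau)$, but this reformulation is not needed for the proof.
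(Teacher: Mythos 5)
Your proof is correct and follows essentially the same route as the paper's: expand $q^{\abs{x}}/(1-q^{\abs{x}})$ as a geometric series, interchange sum and integral, and evaluate the resulting elementary exponential integrals termwise, using evenness to fold the integral onto $[0,\infty)$. The only cosmetic difference is that you compute $\int_0^\infty x e^{-ax}\,dx = a^{-2}$ directly, whereas the paper first rewrites the factor $x$ as $-\tfrac{1}{2\pi\iu}\partial_\xi$ and integrates the first-order kernel before differentiating; the bookkeeping and the outcome are identical.
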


\begin{proof}
	We can calculate as follows:
	\begin{align}
		\widehat{\gamma} (\tau; \xi)
		&=
		\int_0^\infty \gamma (\tau; x) \left( \bm{e} (-\xi x) + \bm{e} (\xi x) \right) dx
		\\
		&=
		-\frac{1}{2\pi\iu} \frac{\partial}{\partial \xi}
		\int_0^\infty \frac{q^x}{1 - q^x} \left( \bm{e} (-\xi x) - \bm{e} (\xi x) \right) dx
		\\
		&=
		-\frac{1}{2\pi\iu} \frac{\partial}{\partial \xi}
		\sum_{l=1}^\infty
		\int_{0}^{\infty} q^{lx} \left( \bm{e} (-\xi x) - \bm{e} (\xi x) \right) dx
		\\
		&=
		\frac{1}{2\pi\iu} \frac{\partial}{\partial \xi}
		\sum_{l=1}^\infty \frac{1}{2\pi\iu} \left( \frac{1}{l\tau - \xi} - \frac{1}{l\tau + \xi} \right) dx
		\\
		&=
		(2\pi\iu \tau)^{-2} 
		\sum_{l \in \Z \smallsetminus \{ 0 \}} \frac{1}{(-\xi/\tau + l)^2}.
	\end{align}
\end{proof}

\begin{rem}
	We present a version of the above formula before multiplying by $ x $.
	Let
	\[
	f(\tau; x) \coloneqq
	\sgn(x) \frac{q^{\abs{x}}}{1 - q^{\abs{x}}} + \frac{1}{2\pi\iu \tau x}.
	\]
	Then, we can calculate its Fourier transform in the sense of Cauchy principal value as
	\[
	\widehat{f} (\tau; \xi) = \frac{1}{\tau} f \left( -\frac{1}{\tau}; \xi \right).
	\]
	Here we remark that $ f(\tau; x) $ does not decay exponentially; indeed, it is not even summable on $ \Z_{>0} $.
\end{rem}

Based on the above preparations, we present an alternative proof of \cref{prop:Eisenstein_even}.

\begin{proof}[Proof of $ \cref{prop:Eisenstein_even} $]
	By \cref{prop:modular_series_modular} or the Poisson summation formula, we obtain
	\[
	\frac{B_k}{k} \left( 1 - E_k (\tau) \right) - \frac{\delta_{2,k}}{2\pi\iu \tau}
	=
	\Phi [x^{k-2}, \gamma] (\tau)
	=
	\Phi \left[1, \left( -\frac{1}{2\pi\iu} \frac{d}{d\xi} \right)^{k-2} \widehat{\gamma} \right] (\tau).
	\]
	By \cref{lem:Eisen_kernel_func_Fourier}, the right hand side equals to
	\begin{align}
		&\phant
		(2\pi\iu \tau)^{-2} \sum_{n \in \Z} 
		\eval{
			\left( -\frac{1}{2\pi\iu} \frac{d}{d\xi} \right)^{k-2}
			\sum_{l \in \Z \smallsetminus \{ 0 \}} \frac{1}{(-\xi/\tau + l)^2}
		}_{\xi = n}
		\\
		&=
		(2\pi\iu \tau)^{-k} (k-1)!
		\sum_{n \in \Z} \sum_{l \in \Z \smallsetminus \{ 0 \}} \frac{1}{(-n/\tau + l)^k}
		\\
		&=
		(2\pi\iu \tau)^{-k} (k-1)!
		\left( G_k \left( -\frac{1}{\tau} \right) - 2 \zeta(k) \tau^k \right)
		\\
		&=
		(2\pi\iu \tau)^{-k} (k-1)! \cdot 2\zeta(k)
		\left( E_k \left( -\frac{1}{\tau} \right) - \tau^k \right).
	\end{align}
	By \cref{eq:zeta_even_expression}, this is equal to
	\[
	-\frac{B_k}{k} \left( \tau^{-k} E_k \left( -\frac{1}{\tau} \right) - 1 \right).
	\]
	Thus, we obtain
	\[
	-1 + E_k (\tau) + \frac{k}{B_k} \frac{\delta_{2,k}}{2\pi\iu \tau}
	=
	\tau^{-k} E_k \left( -\frac{1}{\tau} \right) - 1,
	\]
	which implies the claim since $ B_2 = 1/6 $.
\end{proof}

\begin{rem}
	For $ k \ge 4 $, the above proof is essentially the same as a proof of the partial fraction decomposition of $ \cot(z) $ \cref{eq:cot_decomp} using the Poisson summation formula.
	We present this proof below.
	Let 
	\[
	f_\tau (y) \coloneqq \frac{1}{\tau - y} + \frac{1}{\tau + y}.
	\]
	Its Fourier transform is calculated as
	\[
	\widehat{f}_\tau (x) = -2\pi\iu q^{\abs{x}}.
	\]
	Since both of $ f_\tau (y) $ and $ \widehat{f}_\tau (x) $ are continuous $ L^2 $-function, by the Poisson summation formula, we obtain
	\[
	\sum_{m \in \Z} f_\tau (m)
	=
	\sum_{n \in \Z} \widehat{f}_\tau (n),
	\]
	which implies the partial fraction decomposition of $ \cot(z) $.
\end{rem}


\subsection{Eisenstein series of odd weight} \label{subsec:Eisenstein_odd}


Inspired by the above proof, by using modular series, we prove quantum modularity for Eisenstein series of odd weight defined as follows.
Hereafter in this subsection, we fix a positive odd integer $ k $.

\begin{dfn}
	We define an \emph{Eisenstein series of weight $ k $} as
	\[
	\calE_k (\tau) 
	\coloneqq 
	-\frac{B_k}{2k}
	+ \sum_{d=1}^\infty \frac{d^{k-1}q^d}{1 - q^d}
	=
	-\frac{1}{4} \delta_{1, k}
	+ \sum_{n=1}^\infty \sigma_{k-1} (n) q^n.
	\]
\end{dfn}

This function has quantum modularity as follows.

\begin{thm}[{Lewis--Zagier~\cite[Chapter III \S 1, Example 2 and Chapter IV \S 1]{Lewis-Zagier:period_func}, 
		Shimomura~\cite[Theorem 2.1]{Shimomura},
		Zagier~\cite{Zagier:QMF_lecture}, 
		Bettin--Conrey~\cite[Theorem 1]{Bettin-Conrey:period} and 
		Bringmann--Ono--Wagner~\cite[Theorem 1.6 (1)]{Bringmann-Ono-Wagner}}] 
	\label{thm:Eisenstein_odd}
	For any positive odd integer $ k $, the Eisenstein series of weight $ k $ is a quantum modular form in sense of \cite{Zagier:quantum}, that is, 
	the modular gap $ \calE_k (\tau) - \tau^{-k} \calE_k \left( -1/\tau \right) $ 
	extends to an holomorphic function on $ \bbC \smallsetminus \R_{\le 0} $.
\end{thm}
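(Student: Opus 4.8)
The plan is to mimic the unified treatment of the even-weight case (\cref{subsec:Eisenstein_even}) by expressing $\calE_k(\tau)$ as a (partial) modular series for a suitable kernel function, and then applying the modular transformation formula for modular series (\cref{prop:modular_series_modular}), i.e.\ the Poisson summation formula, to compute its image under $\tau \mapsto -1/\tau$. As in the even-weight discussion, the natural starting point is the Lambert series $\sum_{d\ge 1} d^{k-1} q^d/(1-q^d)$, and the obstacle is that the naive kernel $q^x/(1-q^x)$ is neither continuous at $x=0$ nor decaying as $y\to-\infty$. First I would introduce the corrected kernel function
\[
\gamma(\tau;x) \coloneqq x\left( \frac{q^x}{1-q^x}\bm{1}_{\R\smallsetminus\{0\}}(x) + \bm{1}_{(-\infty,0)}(x) \right) = \abs{x}\,\frac{q^{\abs{x}}}{1-q^{\abs{x}}},
\]
exactly as in \cref{eq:kernel_func_Eisen_even}, which is a kernel function of exponential decay. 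Then for the odd exponent $x^{k-2}$, the function $x^{k-2}$ is an \emph{odd} quasi-polynomial when $k$ is odd, so $\Phi[x^{k-2},\gamma](\tau)$ picks out the \emph{signed} sum, and one computes $\Phi[x^{k-2},\gamma](\tau) = $ a rational multiple of the Lambert series plus a correction coming from the value $\gamma(\tau;0)$ when $k=1$; comparing with the definition of $\calE_k$ this expresses $\calE_k(\tau)$ (up to an explicit constant and, for $k=1$, an explicit $1/\tau$ term) as a partial/false modular series of $\gamma$.

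The key computational input is the Fourier transform of $\gamma$, which was already recorded in \cref{lem:Eisen_kernel_func_Fourier}:
\[
\widehat{\gamma}(\tau;\xi) = (2\pi\iu\tau)^{-2}\sum_{l\in\Z\smallsetminus\{0\}}\frac{1}{(-\xi/\tau+l)^2}.
\]
Applying $(-\tfrac{1}{2\pi\iu}\tfrac{d}{d\xi})^{k-2}$ to pull down $x^{k-2}$ under the Fourier transform, \cref{prop:modular_series_modular} gives
\[
\Phi[x^{k-2},\gamma](\tau) = \Phi\!\left[1,\ \bigl(-\tfrac{1}{2\pi\iu}\tfrac{d}{d\xi}\bigr)^{k-2}\widehat\gamma\right](\tau) = (2\pi\iu\tau)^{-k}(k-1)!\sum_{n\in\Z}\sum_{l\in\Z\smallsetminus\{0\}}\frac{1}{(-n/\tau+l)^k}.
\]
Here the crucial parity point is that for $k$ odd the double sum over $(n,l)$ is \emph{conditionally} convergent, not absolutely convergent, so the inner sum and the reordering must be handled carefully; this is precisely where quantum modularity (rather than genuine modularity) enters, since one cannot freely symmetrize. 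I would then split off the $n=0$ term, which gives $\tau^{-k}\calE_k(-1/\tau)$ up to constants by comparing the $q$-expansion of $\calE_k$, and identify the remaining double sum $\sum_{n\ne 0}\sum_{l\ne 0}(-n/\tau+l)^{-k}$ as the modular gap. The final step is to show this remaining sum extends holomorphically in $\tau$ to $\bbC\smallsetminus\R_{\le 0}$: on the cut plane the terms $-n/\tau+l$ avoid $0$, and one gets convergence/holomorphy of the iterated sum by an Abel-summation (partial summation in $l$) argument together with the exponential decay coming from $\widehat\gamma$, or alternatively by deforming the integration contour $\R$ defining $\Phi[1,\cdot]$ as in the proof of \cref{lem:analy_cont}.

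The main obstacle, as in the classical treatments of the Eisenstein series of odd weight (Lewis--Zagier, Bettin--Conrey), is establishing the holomorphic continuation of the modular gap to $\bbC\smallsetminus\R_{\le 0}$: one must control the non-absolutely-convergent double sum $\sum_{n\ne0}\sum_{l\ne0}(-n/\tau+l)^{-k}$ uniformly on compact subsets of the cut plane. I expect to handle this by writing the gap as an integral of $\gamma$ against the cotangent-type kernel and deforming the contour $C_+$ (or $\R$) into the wedge-shaped path $C_\varepsilon$ of \cref{fig:contour_lem_analy_cont}, exactly as in \cref{lem:analy_cont}, so that the Gaussian-free but still exponentially small factor $q^{\abs{x}}$ guarantees convergence for $\arg\tau$ ranging over $(-\pi+\varepsilon',\pi-\varepsilon')$; iterating the deformation sweeps out all of $\bbC\smallsetminus\R_{\le 0}$. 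The remaining bookkeeping — matching the explicit constant $-B_k/2k$, the $-\tfrac14\delta_{1,k}$ term, and the precise form of the gap against the statements of Lewis--Zagier, Bettin--Conrey and Bringmann--Ono--Wagner — is routine once the modular-series identity and the contour-deformation argument are in place.
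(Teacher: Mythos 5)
There is a genuine gap at the core identity. For odd $k$ the kernel $\gamma(\tau;x)=\abs{x}\,q^{\abs{x}}/(1-q^{\abs{x}})$ is \emph{even} in $x$, while $x^{k-2}$ is \emph{odd}, so the unsigned series you start from, $\Phi[x^{k-2},\gamma](\tau)=\sum_{m\in\Z}m^{k-2}\gamma(\tau;m)$, vanishes identically (the $m$ and $-m$ terms cancel, and the $m=0$ term is $0$ for $k\ge 3$); it does not ``pick out the signed sum,'' and it is not a multiple of $\calE_k$. Consistently, your displayed consequence of \cref{prop:modular_series_modular} is just the even-weight identity transplanted: for odd $k$ the right-hand double sum is also killed by the symmetry $(n,l)\mapsto(-n,-l)$, so the identity degenerates to $0=0$ and carries no information about $\calE_k$ or its modular gap. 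What is actually needed — and what the paper does — is to take the \emph{false} quasi-polynomial $\sgn(x)x^{k-2}$, so that $\Phi[\sgn(x)x^{k-2},\gamma](\tau)=2\calE_k(\tau)$, and to apply the Poisson summation formula \emph{with signature} (\cref{thm:PSF_sgn}, or \cref{thm:false_modular_series_modular}) rather than ordinary Poisson summation. That formula produces two terms: a signed sum $\sum_{n}\sgn(n)(-\tfrac{1}{2\pi\iu}\tfrac{d}{d\xi})^{k-2}\widehat\gamma(\tau;n)$, which via \cref{lem:Eisen_kernel_func_Fourier} and \cref{eq:Eisenstein_odd_proof} evaluates to $2(2\pi\iu)^{-k}(k-1)!\,\zeta(k)+2\tau^{-k}\calE_k(-1/\tau)$, plus a boundary contour integral $2\int_{C_-}(-\tfrac{1}{2\pi\iu}\tfrac{d}{d\xi})^{k-2}\widehat\gamma(\tau;\xi)\,\tfrac{d\xi}{1-\bm{e}(\xi)}$, which \emph{is} the modular gap; its holomorphic continuation to $\bbC\smallsetminus\R_{\le 0}$ is then obtained by the contour deformation you correctly anticipate (in the spirit of \cref{lem:analy_cont}), not by reordering a conditionally convergent lattice sum. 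Your plan of locating the quantum-modular phenomenon in the non-absolute convergence of $\sum_{n\ne0}\sum_{l\ne0}(-n/\tau+l)^{-k}$ is the classical cone-summation route, which the paper explicitly does not follow, and in your setup it cannot even get started because the series you feed into Poisson summation is zero.

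A second, smaller but real gap is the case $k=1$, which you dismiss as bookkeeping. There $x^{k-2}=x^{-1}$ is not a polynomial, so the kernel $\abs{x}\,q^{\abs{x}}/(1-q^{\abs{x}})$ with a quasi-polynomial weight is unavailable; the natural kernel $\sgn(x)\,q^{\abs{x}}/(1-q^{\abs{x}})$ has a genuine $1/x$ singularity at $x=0$ and fails to be a kernel function. The paper has to regularize it by explicit correction terms $\gamma_*$ and $\gamma_{**}$, compute the corrected Fourier transform (\cref{lem:Fourier_Eisenstein_base}), and evaluate the resulting signed sum using the Weierstrass product and digamma identities (\cref{lem:sum_gamma_digamma}) together with the reflection formula, producing $\log\Gamma(\tau)$, $\psi(\tau)$, and $\log(1-q)$ terms in the gap. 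None of this is captured by ``a correction coming from $\gamma(\tau;0)$,'' so your proposal as written covers, at best, $k\ge 3$ after the sign/parity issue above is repaired.
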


\begin{rem}
	We discuss previous works on \cref{thm:Eisenstein_odd}.
	Lewis--Zagier~\cite[Chapter III \S 1, Example 2 and Chapter IV \S 1]{Lewis-Zagier:period_func} and Bettin--Conrey~\cite[Theorem 1]{Bettin-Conrey:period} proved the same statement for the case when the weight $ k $ is a general complex number.
	
	In~\cite{Zagier:QMF_lecture}, Zagier states that there are four proofs of \cref{thm:Eisenstein_odd}, namely:
	\begin{enumerate}
		\item \label{item:Eisenstein_odd_proof:1}
		Sums over cones in lattices.
		
		\item \label{item:Eisenstein_odd_proof:2}
		Inverse Mellin transforms.
		
		\item \label{item:Eisenstein_odd_proof:3}
		Green forms for Maass--Eisenstein series.
		
		\item \label{item:Eisenstein_odd_proof:4}
		Taylor coefficients of the Faddeev's quantum dilogarithm.
	\end{enumerate}
	He also demonstrates the proof of \cref{item:Eisenstein_odd_proof:1} for $ k \ge 3 $, which is based on the following expression:
	\[
	\calE_k (\tau)
	- \tau^{-k} \calE_k \left( -\frac{1}{\tau} \right)
	=
	\left(
	\frac{1}{2} \sum_{m=0} \sum_{n=0}
	+ \sum_{m=0} \sum_{n>0} + \sum_{m>0} \sum_{n=0}
	+ 2 \sum_{(m, n) \in \Z_{> 0}^2}
	\right)
	\frac{1}{(m\tau + n)^k}.
	\]
	Shimomura~\cite[Theorem 2.1]{Shimomura} also found this expression.
	Bettin--Conrey~\cite[Theorem 1]{Bettin-Conrey:period} and Bringmann--Ono--Wagner~\cite[Theorem 1.6 (1)]{Bringmann-Ono-Wagner} gave a representation of the modular gap $ \calE_k (\tau) - \tau^{-k} \calE_k \left( -1/\tau \right) $ as an inverse Mellin transform.
	Their proof aligns with method \cref{item:Eisenstein_odd_proof:2}.
\end{rem}

As an application of the Poisson summation formula with signature (\cref{thm:PSF_sgn}), we present an alternative proof of \cref{thm:Eisenstein_odd}.
Our proof is new and is different from any of the four proofs \cref{item:Eisenstein_odd_proof:1,item:Eisenstein_odd_proof:2,item:Eisenstein_odd_proof:3,item:Eisenstein_odd_proof:4}.

Our proof splits into two cases: $ k \ge 3 $ and $ k=1 $. 
We begin with the former.

\begin{proof}[\cref{thm:Eisenstein_odd} for $ k \ge 3 $]
	As in the previous subsection, we choose a kernel function as
	\[
	\gamma (\tau; x)
	\coloneqq
	\abs{x} \frac{q^{\abs{x}}}{1 - q^{\abs{x}}}.
	\]
	In this case, we have $ \Phi[\sgn(x) x^{k-2}, \gamma] (\tau) = 2\calE_k (\tau) $.
	By the Poisson summation formula with signature (\cref{thm:PSF_sgn}) or the modular transformation formula for false modular series (\cref{thm:false_modular_series_modular}), we have
	\[
	2 \calE_k (\tau)
	=
	\sum_{n \in \Z} \sgn(n)
	\left( -\frac{1}{2\pi\iu} \frac{d}{d\xi} \right)^{k-2} \widehat{\gamma} \left( \tau; n \right)
	+ 2 \int_{C_-} 
	\left( -\frac{1}{2\pi\iu} \frac{d}{d\xi} \right)^{k-2} \widehat{\gamma} \left( \tau; \xi \right) 
	\frac{d\xi}{1 - \bm{e}(\xi)}.
	\]
	
	To calculate each term, we need to calculate derivatives of $ \widehat{\gamma} \left( \tau; \xi \right) $.
	By \cref{lem:Eisen_kernel_func_Fourier}, we have
	\begin{align}
		\widehat{\gamma} \left( \tau; \xi \right) 
		&=
		(2\pi\iu \tau)^{-2} 
		\eval{
			\frac{d}{dz} 
			\left( \frac{1}{z} - \pi \cot (\pi z) \right)
		}_{z = -\xi/\tau}
		=
		(2\pi\iu \tau)^{-2} 
		\eval{
			\frac{d}{dz} 
			\left( \frac{1}{z} + \pi\iu + 2\pi\iu \frac{\bm{e}(z)}{1 - \bm{e}(z)} \right)
		}_{z = -\xi/\tau}
		\\
		&=
		(2\pi\iu \tau)^{-2} 
		\eval{
			\frac{d}{dz} 
			\left( \frac{1}{z} + 2\pi\iu \sum_{d=1}^\infty \bm{e}(dz) \right)
		}_{z = -\xi/\tau}.
	\end{align}
	Thus, we have
	\begin{align}
		\left( -\frac{1}{2\pi\iu} \frac{d}{d\xi} \right)^{k-2} \widehat{\gamma} \left( \tau; \xi \right) 
		&=
		(2\pi\iu \tau)^{-k} 
		\eval{
			\frac{d^{k-1}}{dz^{k-1}} 
			\left( \frac{1}{z} + 2\pi\iu \sum_{d=1}^\infty \bm{e}(dz) \right)
		}_{z = -\xi/\tau}
		\\
		&=
		(2\pi\iu)^{-k} (k-1)! \frac{1}{\xi^k} + \tau^{-k} \sum_{d=1}^\infty d^{k-1} \widetilde{q}^{\, d\xi},
		\label{eq:Eisenstein_odd_proof}
	\end{align}
	where $ \widetilde{q} \coloneqq \bm{e}(-1/\tau) $.
	Here we remark that
	\[
	\left( -\frac{1}{2\pi\iu} \frac{d}{d\xi} \right)^{k-2} \widehat{\gamma} \left( \tau; \xi \right) 
	=
	(2\pi\iu \tau)^{-2} (k-1)!
	\sum_{l \in \Z \smallsetminus \{ 0 \}} \frac{1}{(-\xi/\tau + l)^k}.
	\]
	also holds, although not used in the proof.
	
	Since $ \gamma(\tau; x) $ is an even function in $ x $, its Fourier transform $ \widehat{\gamma}(\tau; \xi) $ is even in $ \xi $.
	Thus, we have
	\[
	\sum_{n \in \Z} \sgn(n)
	\left( -\frac{1}{2\pi\iu} \frac{d}{d\xi} \right)^{k-2} \widehat{\gamma} \left( \tau; n \right)
	=
	2 \sum_{n=1}^\infty
	\left( -\frac{1}{2\pi\iu} \frac{d}{d\xi} \right)^{k-2} \widehat{\gamma} \left( \tau; n \right).
	\]
	By \cref{eq:Eisenstein_odd_proof}, we have
	\begin{align}
		\sum_{n \in \Z} \sgn(n)
		\left( -\frac{1}{2\pi\iu} \frac{d}{d\xi} \right)^{k-2} \widehat{\gamma} \left( \tau; n \right)
		&=
		2 \sum_{n=1}^\infty
		\left( 
		(2\pi\iu)^{-k} (k-1)! \frac{1}{n^k} 
		+ \tau^{-k} \sum_{d=1}^\infty d^{k-1} \widetilde{q}^{\, dn}
		\right)
		\\
		&=
		2(2\pi\iu)^{-k} (k-1)! \zeta(k)
		+ 2 \tau^{-k} \calE_k \left( -\frac{1}{\tau} \right).
	\end{align}
	
	Let $ C'_{\varepsilon} $ be an integration path shown in \cref{fig:int_path_EIsenstein_odd}.
	Then, the contour $ C_- $ can be deformed into 
	$ C'_{\varepsilon} $, $ \R_{\ge 0} - \iu \varepsilon $, and $ \R_{\le 0} + \iu \varepsilon $.
	
	\begin{figure}[htbp]
		\centering
		\begin{tikzpicture}
			\tikzset{midarrow/.style={postaction={decorate},
					decoration={markings,
						mark=at position 0.5 with {\arrow{stealth}},
					}
				}
			}
			\draw[->] (-3, 0) -- (3, 0) node[right]{$ \Re (x_i) $};
			\draw[->] (0, -2) -- (0, 2) node[above]{$ \Im (x_i) $};
			
			\draw (0,0) node[below left]{0};
			
			
			\draw[very thick, midarrow] (0,-0.5)--(0,-1.5) node[left]{$ -\iu \varepsilon $};
			
			\draw[very thick, midarrow] (0,0.5) arc (90:270:0.5);
			
			\draw[very thick, midarrow] (0,1.5) node[left]{$ \iu \varepsilon $}--(0,0.5);
			
			\node at (0.5,-0.8) {$ C'_{\varepsilon} $};
			
		\end{tikzpicture}
		\caption{The integration path $ C'_{\varepsilon} $}
		\label{fig:int_path_EIsenstein_odd}
	\end{figure}
	
	Since $ \widehat{\gamma}(\tau; \xi) $ is even in $ \xi $, we have
	\begin{align}
		&\phant \int_{C_-} 
		\left( -\frac{1}{2\pi\iu} \frac{d}{d\xi} \right)^{k-2} \widehat{\gamma} \left( \tau; \xi \right) 
		\frac{d\xi}{1 - \bm{e}(\xi)}
		\\
		&=\int_{C'_\varepsilon} 
		\left( -\frac{1}{2\pi\iu} \frac{d}{d\xi} \right)^{k-2} \widehat{\gamma} \left( \tau; \xi \right) 
		\frac{d\xi}{1 - \bm{e}(\xi)}
		+ \int_{\R_{\ge 0} - \iu \varepsilon} 
		\left( -\frac{1}{2\pi\iu} \frac{d}{d\xi} \right)^{k-2} \widehat{\gamma} \left( \tau; \xi \right) 
		\frac{1 + \bm{e}(\xi)}{1 - \bm{e}(\xi)} d\xi.
	\end{align}
	The first integral extends to a holomorphic function on $ \bbC \smallsetminus \R_{\le 0} $.
	By \cref{eq:Eisenstein_odd_proof}, the second integral can be transformed as 
	\[
	(2\pi\iu)^{-k} (k-1)! 
	\int_{\R_{\ge 0} - \iu \varepsilon} \frac{1}{\xi^k}  \frac{d\xi}{1 - \bm{e}(\xi)}
	+ \tau^{-k} \int_{\R_{\ge 0} - \iu \varepsilon} 
	\left(
	\sum_{d=1}^\infty d^{k-1} \widetilde{q}^{\, d\xi}
	\right)
	\frac{d\xi}{1 - \bm{e}(\xi)}.
	\]
	In this equation, the first integral converges and independent of $ \tau $.
	The second integral converges when $ \Im(-\xi/\tau) > 0 $, in particular $ \tau \in \R_{>0} $.
	By deforming the contour to satisfy $ \Im(-\xi/\tau) > 0 $, the second integral extends to a holomorphic function on $ \bbC \smallsetminus \R_{\le 0} $.
	Thus, we obtain the claim.
\end{proof}


\subsection{Eisenstein series of weight one} \label{subsec:Eisenstein_wt1}


Next, we prove \cref{thm:Eisenstein_odd} for $ k=1 $.
Let
\[
\gamma(\tau; x) \coloneqq \sgn(x) \frac{q^{\abs{x}}}{1 - q^{\abs{x}}}.
\]
Then, we have
\[
\calE_1 (\tau) = -\frac{1}{4} + \frac{1}{2} \sum_{m \in \Z \smallsetminus \{ 0 \}} \sgn(m) \gamma(\tau; m).
\]
However, this $ \gamma(\tau; x) $ has a singularity at $ x=0 $, namely,
\[
\gamma(\tau; x) = -\frac{1}{2\pi\iu \tau x} - \frac{1}{2} \sgn(x) + O(x) 
\quad \text{ as } x \to 0.
\]
To remove this singularity and make it continuous, we use the following correction terms:
\[
\gamma_* (\tau; x) \coloneqq \frac{q^{\abs{x}}}{2\pi\iu \tau x}, \quad
\gamma_{**} (\tau; x) \coloneqq - \frac{1}{2} \sgn(x) q^{\abs{x}}.
\]
We define $ \gamma_{\mathrm{reg}} (\tau; x) \coloneqq \gamma (\tau; x) + \gamma_* (\tau; x) + \gamma_{**} (\tau; x) $.
This function is continuous and of exponential decay.
We prove \cref{thm:Eisenstein_odd} for $ k=1 $ by using this function as a kernel function.

In our proof, we need the following lemma.

\begin{lem} \label{lem:Fourier_Eisenstein_base}
	We have
	\begin{alignat}{2}
		\calF \left[ q^{\abs{x}} \right]
		&=
		-\frac{1}{2\pi\iu} \left( \frac{1}{\tau + \xi} + \frac{1}{\tau - \xi} \right), & &
		\\
		\calF \left[ \sgn(x) q^{\abs{x}} \right]
		&=
		\frac{1}{2\pi\iu} \left( \frac{1}{\tau + \xi} - \frac{1}{\tau - \xi} \right), & &
		\\
		\calF \left[ \frac{q^{\abs{x}}}{x} \right]
		&=
		\log \frac{\tau + \xi}{\tau - \xi}
		& &=
		\sgn(\xi) \log \frac{\abs{\xi} + \tau}{\abs{\xi} - \tau} - \pi\iu \sgn(\xi),
		\\
		\calF \left[ \sgn(x) \frac{q^{\abs{x}}}{1 - q^{\abs{x}}} \right]
		&=
		-\frac{1}{2\iu \tau} \cot \left( -\frac{\xi}{\tau} \right) - \frac{1}{2\pi\iu \xi}
		& &=
		\frac{1}{\tau} \sgn(\xi) \frac{\widetilde{q}^{\abs{\xi}}}{1 - \widetilde{q}^{\abs{\xi}}} - \frac{1}{2 \pi \iu \xi} + \frac{1}{2} \sgn(\xi).
	\end{alignat}
	Here, we take Fourier transforms in the sense of Cauchy principal value and take $ \log $ to be the principal branch.
\end{lem}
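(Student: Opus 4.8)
The plan is to compute the first two transforms directly and then reduce the last two to them. Throughout recall $ q = \bm{e}(\tau) $ with $ \Im(\tau)>0 $, so $ q^{\abs{x}} = \bm{e}(\tau\abs{x}) $ decays exponentially as $ \abs{x}\to\infty $, and likewise $ \abs{\widetilde{q}}<1 $. For $ \calF[q^{\abs{x}}] $ I would split the integral over $ \R $ into the rays $ x\ge 0 $ and $ x\le 0 $ and use $ \int_0^\infty \bm{e}(ax)\,dx = -1/(2\pi\iu a) $, valid since $ \Im(\tau\pm\xi)=\Im(\tau)>0 $ for real $ \xi $; this gives $ -\frac{1}{2\pi\iu}(\frac{1}{\tau-\xi}+\frac{1}{\tau+\xi}) $, and inserting the factor $ \sgn(x)=\pm 1 $ on the two rays flips one sign and yields $ \calF[\sgn(x)q^{\abs{x}}] $. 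Both integrands are genuinely integrable, so no principal value is needed here.

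For $ \calF[q^{\abs{x}}/x] $, note that $ q^{\abs{x}}/x $ has only a singularity of the shape $ (\text{smooth})/x $ at the origin, so its principal-value Fourier transform $ F(\xi) $ is a well-defined continuous function; differentiating in $ \xi $ removes the singularity, $ F'(\xi) = \calF[-2\pi\iu x\cdot q^{\abs{x}}/x](\xi) = -2\pi\iu\,\calF[q^{\abs{x}}](\xi) = \frac{1}{\tau+\xi}+\frac{1}{\tau-\xi} $, the differentiation under the principal-value integral being legitimate because after the factor $ x $ the integrand is ordinary $ L^1 $. An antiderivative is $ \log(\tau+\xi)-\log(\tau-\xi) $; since $ \tau\pm\xi\in\bbH $ and $ (\tau+\xi)/(\tau-\xi) $ is never a negative real (that would force $ \tau\in\R $), the principal-branch $ \log\frac{\tau+\xi}{\tau-\xi} $ is a single continuous antiderivative, and the integration constant vanishes because $ F(0)=\PV\!\int q^{\abs{x}}/x\,dx = 0 $ by oddness while $ \log 1 = 0 $. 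The alternative form follows by tracking arguments: for $ \xi>0 $ one has $ \frac{\tau+\xi}{\tau-\xi} = -\frac{\xi+\tau}{\xi-\tau} $ with $ \arg\in(-\pi,0) $, hence $ \log\frac{\tau+\xi}{\tau-\xi} = \log\frac{\abs{\xi}+\tau}{\abs{\xi}-\tau}-\pi\iu $, and the case $ \xi<0 $ follows from oddness in $ \xi $.

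For the last transform I would expand the geometric series $ q^{\abs{x}}/(1-q^{\abs{x}}) = \sum_{m\ge 1}q^{m\abs{x}} $ (valid for every $ x\ne 0 $), so that $ \sgn(x)q^{\abs{x}}/(1-q^{\abs{x}}) = \sum_{m\ge 1}\sgn(x)q^{m\abs{x}} $, apply the second formula with $ \tau $ replaced by $ m\tau $ termwise, and interchange the sum with the principal-value integral, justified by the uniform exponential decay of the tails away from $ 0 $ together with the exact $ 1/x $ nature of the singularity at $ 0 $. This produces $ \sum_{m\ge 1}\frac{1}{2\pi\iu}(\frac{1}{m\tau+\xi}-\frac{1}{m\tau-\xi}) = \frac{1}{2\pi\iu\tau}\sum_{m\ge 1}(\frac{1}{m+\xi/\tau}-\frac{1}{m-\xi/\tau}) $, and the partial-fraction expansion \eqref{eq:cot_decomp} of $ \pi\cot(\pi z) $ with $ z=\xi/\tau $ collapses the inner sum to $ \pi\cot(\pi\xi/\tau)-\tau/\xi $, reproducing the first displayed closed form. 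Finally, converting $ \cot $ into $ \widetilde{q} $ via $ \pi\cot(\pi w) = \mp\pi\iu - 2\pi\iu\sum_{m\ge 1}\bm{e}(\pm mw) $ according as $ \Im(w)>0 $ or $ \Im(w)<0 $, applied with $ w=\xi/\tau $ whose imaginary part has sign $ -\sgn(\xi) $, yields the stated expression in $ \widetilde{q}^{\abs{\xi}} $; the apparent pole at $ \xi=0 $ is removable, the two $ 1/\xi $-contributions cancelling.

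The only genuinely delicate point is the bookkeeping around the principal value: justifying differentiation under the principal-value integral in the third identity and the interchange of $ \sum $ with the principal-value integral in the fourth, and pinning down the branch of $ \log $ and the sheet of $ \cot $ so that the two displayed forms of each line agree exactly, all consistently with the normalization $ \bm{e}(z)=e^{2\pi\iu z} $. None of this is deep, but the sign and branch conventions have to be handled with care.
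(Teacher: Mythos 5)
Your overall strategy is sound; since the paper omits its own proof of this lemma, the natural benchmark is the analogous computation in \cref{lem:Eisen_kernel_func_Fourier}, and your route (split into rays for the first two lines, differentiate the principal-value integral for the third, geometric series plus the cotangent expansion \cref{eq:cot_decomp} for the fourth) is exactly that style of argument. Your treatment of the first three identities is correct, including the justification of differentiation under the principal-value integral and the branch bookkeeping giving $\log\frac{\tau+\xi}{\tau-\xi}=\sgn(\xi)\log\frac{\abs{\xi}+\tau}{\abs{\xi}-\tau}-\pi\iu\,\sgn(\xi)$.

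On the fourth identity, however, your write-up papers over a real discrepancy. A correct execution of your own argument gives
\[
\calF\left[\sgn(x)\frac{q^{\abs{x}}}{1-q^{\abs{x}}}\right](\xi)
=\frac{1}{2\iu\tau}\cot\left(\frac{\pi\xi}{\tau}\right)-\frac{1}{2\pi\iu\xi}
=\frac{1}{\tau}\sgn(\xi)\frac{\widetilde{q}^{\,\abs{\xi}}}{1-\widetilde{q}^{\,\abs{\xi}}}-\frac{1}{2\pi\iu\xi}+\frac{\sgn(\xi)}{2\tau},
\]
i.e.\ with a factor $\pi$ inside the cotangent and with $\sgn(\xi)/2\tau$ rather than $\sgn(\xi)/2$, so your claim that the computation ``reproduces the first displayed closed form'' and ``yields the stated expression'' is not accurate: the printed statement appears to be mistyped on both counts, and the corrected constant $\sgn(\xi)/2\tau$ is precisely what the proof of \cref{thm:Eisenstein_odd} for $k=1$ requires (it cancels the $-\sgn(\xi)/2\tau$ coming from $\widehat{\gamma}_*$; with $\sgn(\xi)/2$ the displayed formula for $\widehat{\gamma}_{\mathrm{reg}}$ would not follow). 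A quick sanity check is $\xi\to+\infty$: the $1/x$ singularity contributes $\frac{1}{\pi\tau}\int_0^\infty\frac{\sin(2\pi\xi x)}{x}\,dx=\frac{1}{2\tau}$ while the remainder vanishes by Riemann--Lebesgue, so the transform tends to $1/2\tau$, not $1/2$. Relatedly, your quoted expansion of $\pi\cot(\pi w)$ in the case $\Im(w)<0$ has a sign slip: it should be $\pi\cot(\pi w)=\pi\iu+2\pi\iu\sum_{m\ge1}\bm{e}(-mw)$, with a plus sign in front of the series; taken literally, your version would flip the sign of the $\widetilde{q}$-term in the final answer. So the method is fine, but you should correct the series identity and explicitly flag the mismatch with the printed lemma rather than asserting exact agreement.
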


We omit a proof of this lemma.

To calculate a sum of $ \widehat{\gamma}_{\mathrm{reg}} (\tau; \xi) $, we need the following lemma.

\begin{lem} \label{lem:sum_gamma_digamma}
	For $ z \in \bbC \smallsetminus \Z_{\le -1} $, we have
	\begin{align}
		\sum_{n=1}^\infty \left( \log \left( 1 + \frac{z}{n} \right) - \frac{z}{n} \right)
		&=
		-\log \Gamma(1+z) - \gamma z,
		\\
		\sum_{n=1}^\infty \left( \frac{1}{z+n} - \frac{1}{n} \right)
		&=
		-\psi(1+z) - \gamma,
	\end{align}
	where $ \psi(s) \coloneqq \Gamma'(s) / \Gamma(s) $ is the digamma function and
	$ \gamma = 0.577\cdots $  is the Euler--Mascheroni constant.
\end{lem}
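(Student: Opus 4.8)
The statement to prove is \cref{lem:sum_gamma_digamma}, which records two classical series evaluations involving the Gamma and digamma functions. These are standard identities, so the plan is to derive them cleanly from the Weierstrass product for the Gamma function rather than attempting any delicate summation manipulations.

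\medskip

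The plan is to start from the Weierstrass product
\[
\frac{1}{\Gamma(s)} = s e^{\gamma s} \prod_{n=1}^\infty \left( 1 + \frac{s}{n} \right) e^{-s/n},
\]
which is valid for all $ s \in \bbC $ and converges locally uniformly. Taking the principal logarithm of the reciprocal, for $ z \in \bbC \smallsetminus \Z_{\le -1} $ we obtain
\[
\log \Gamma(1+z) = \log \frac{\Gamma(s)}{s^{-1}} \Big|_{s = 1+z}
\quad\text{wait, more carefully:}\quad
-\log \Gamma(1+z) = \gamma(1+z) + \sum_{n=1}^\infty \left( \log\left(1 + \frac{1+z}{n}\right) - \frac{1+z}{n} \right) - \log(1+z).
\]
Since the same identity holds at $ z = 0 $ (where $ \Gamma(1) = 1 $), subtracting the $ z=0 $ instance from the general one makes the $ \log(1+z) $ and the constant terms cancel in a telescoping fashion, leaving exactly
\[
-\log \Gamma(1+z) - \gamma z = \sum_{n=1}^\infty \left( \log\left(1 + \frac{z}{n}\right) - \frac{z}{n} \right),
\]
after reindexing; I would double check the index shift $ n \mapsto n $ versus $ n \mapsto n+1 $ carefully, but the cancellation is forced since both sides vanish at $ z = 0 $ and the series converges. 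This establishes the first identity. For the second identity I would simply differentiate the first with respect to $ z $ termwise—justified by local uniform convergence of the differentiated series—using $ \frac{d}{dz} \log \Gamma(1+z) = \psi(1+z) $, which yields
\[
-\psi(1+z) - \gamma = \sum_{n=1}^\infty \left( \frac{1}{z+n} - \frac{1}{n} \right).
\]

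\medskip

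The only real points requiring care are bookkeeping ones: getting the index shift right when comparing the product formula (whose factors naturally live over $ n \ge 1 $, so $ 1 + \frac{1+z}{n} = \frac{n+1+z}{n} $) against the desired series (whose summand is $ \log(1 + z/n) = \log\frac{n+z}{n} $), and justifying termwise differentiation. Both are routine; the telescoping/reindexing step is where a sign or off-by-one slip is most likely, so I would verify it by checking the $ z \to 0 $ and, say, $ z = 1 $ cases numerically against $ \Gamma(2) = 1 $ and $ \psi(2) = 1 - \gamma $. No step here is a genuine obstacle—this lemma is infrastructure, and the proof is a short invocation of the Weierstrass product.
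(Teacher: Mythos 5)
Your proposal is correct and follows essentially the same route as the paper: the first identity is read off from the Weierstrass product for the Gamma function (the paper applies it in the form $1/\Gamma(1+z)=e^{\gamma z}\prod_{n\ge 1}(1+z/n)e^{-z/n}$, which avoids your index-shift and the extra $\log(1+z)$ terms), and the second follows by termwise differentiation. The intermediate sign slip on $\log(1+z)$ in your display is harmless, since the subtraction-and-reindexing you describe does cancel it and yields the stated identity.
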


\begin{proof}
	The first equality follows from the Weierstrass product representation
	\[
	\frac{1}{\Gamma(z)}
	=
	e^{\gamma z} \prod_{n=1}^\infty \left( 1 + \frac{z}{n} \right) e^{-z/n}.
	\]
	The second equality follows from the first by termwise differentiation.
\end{proof}

We are now ready to prove \cref{thm:Eisenstein_odd} for $ k=1 $.

\begin{proof}[Proof of \cref{thm:Eisenstein_odd} for $ k=1 $]
	By the Poisson summation formula with signature (\cref{thm:PSF_sgn}), we have
	\[
	\sum_{m \in \Z} \sgn(m) \gamma_{\mathrm{reg}} (\tau; m)
	=
	\sum_{n \in \Z} \sgn(n) \widehat{\gamma}_{\mathrm{reg}} (\tau; n)
	+ \int_{C_-} \widehat{\gamma}_{\mathrm{reg}} \left( \tau; \xi \right) \frac{d\xi}{1 - \bm{e}(\xi)}.
	\]
	We have
	\[
	\sum_{m \in \Z} \sgn(m) \gamma_{\mathrm{reg}} (\tau; m)
	=
	\frac{1}{2} + 2 \calE_1 (\tau) + \frac{1}{\pi\iu \tau} \log (1-q) - \frac{q}{1-q}.
	\]
	On the other hand, by \cref{lem:Fourier_Eisenstein_base}, we have
	\[
	\widehat{\gamma}_{\mathrm{reg}} (\tau; \xi)
	=
	\frac{1}{\tau} \gamma \left( -\frac{1}{\tau}; \xi \right)
	+ \frac{\sgn(\xi)}{2\pi\iu} \left( 
		- \frac{1}{\abs{\xi}} 
		+ \frac{1}{\tau} \log \frac{\abs{\xi} + \tau}{\abs{\xi} - \tau}  
		- \frac{1}{2} \left( \frac{1}{\tau + \abs{\xi}} - \frac{1}{\tau - \abs{\xi}} \right)
	\right).
	\]
	In particular, we have $ \widehat{\gamma}_{\mathrm{reg}} (\tau; \xi) = -\sgn(\xi) / 2\tau $.
	Thus, by combining with \cref{lem:sum_gamma_digamma}, we have
	\begin{align}
		&\phant
		\sum_{n \in \Z} \sgn(n) \widehat{\gamma}_{\mathrm{reg}} (\tau; n)
		\\
		&=
		-\frac{1}{2\tau} + \frac{1}{\tau} \left( \frac{1}{2} + 2 \calE_1 \left( -\frac{1}{\tau} \right) \right)
		- \frac{1}{\pi\iu \tau} \log \frac{\Gamma(1 + \tau)}{\Gamma(1 - \tau)}
		+ \frac{1}{2\pi\iu} \left( \psi(1 + \tau) + \psi(1 - \tau) + 2\gamma \right).
	\end{align}
	By the reflection formula for the gamma functions, we have
	\begin{alignat}{2}
		\log \frac{\Gamma(1 + \tau)}{\Gamma(1 - \tau)}
		&=
		\log \left( \tau \Gamma(\tau)^2 \frac{\sin \pi \tau}{\pi} \right)
		& &=
		2 \log \Gamma(\tau) + \log \tau + \log (1-q) - \pi\iu \tau - \log 2\pi + \frac{\pi\iu}{2},
		\\
		\psi(1 + \tau) + \psi(1 - \tau)
		&=
		2\psi(\tau) + \frac{1}{\tau} + \pi \cot \pi\tau
		& &=
		2\psi(\tau) + \frac{1}{\tau} - \pi\iu - 2\pi\iu \frac{q}{1 - q}.
	\end{alignat}
	Thus, we have
	\begin{align}
		&\phant
		\sum_{n \in \Z} \sgn(n) \widehat{\gamma}_{\mathrm{reg}} (\tau; n)
		\\
		&=
		\frac{2}{\tau} \calE_1 \left( -\frac{1}{\tau} \right)
		- \frac{1}{\pi\iu \tau} 
		\left( 
			-2 \log \Gamma (\tau) + \tau \psi(\tau) 
			- \log (1-q) - \log \tau 
			+ \log 2\pi + 2
			+ \gamma \tau
		\right)
		+ \frac{1}{2} - \frac{1}{2\tau} - \frac{q}{1-q}.
	\end{align}
	Therefore, we obtain
	\begin{align}
		2 \calE_1 (\tau)
		&=
		\frac{2}{\tau} \calE_1 \left( -\frac{1}{\tau} \right)
		+ \frac{1}{\pi\iu \tau} 
		\left( 
		-2 \log \Gamma (\tau) + \tau \psi(\tau) 
		- 2\log (1-q) - \log \tau 
		+ \log 2\pi + 2
		+ \gamma \tau
		\right)
		- \frac{1}{2\tau}
		\\
		&\phant
		+ \int_{C_-} \widehat{\gamma}_{\mathrm{reg}} \left( \tau; \xi \right) \frac{d\xi}{1 - \bm{e}(\xi)}.
	\end{align}
	By the same argument in the proof for $ k \ge 3 $, the integral term extends to a holomorphic function on $ \bbC \smallsetminus \R_{\le 0} $.
	Thus, we obtain the claim.
\end{proof}


\part{Application to Witten's asymptotic expansion conjecture} \label{part:WRT}


In this part, we prove our asymptotic formula (\cref{thm:main_WRT_asymptotic}) for WRT invariants as an application of results in \cref{sec:false_theta}.


\section{Preliminaries} \label{sec:preliminaries}


In this section, we provide some notation and basic facts, which we use throughout this paper.


\subsection{Plumbed manifolds} \label{subsec:plumbed_manifold}



A \emph{plumbing graph} $ \Gamma $ is a tree with integer weights on the vertices.
For a plumbing graph $ \Gamma $, we obtain a surgery diagram $ \calL(\Gamma) $ whose all components are trivial knots as in \cref{fig:surgery_diagram}.
Let $ M(\Gamma) $ be the plumed manifold obtained from $ S^3 $ through the surgery along the diagram $ \calL(\Gamma) $.
Such $ 3 $-manifolds are called \emph{plumbed manifolds}.
According to Neumann's theorem (\cite[Proposition 2.2]{Neumann_Lecture}, \cite[Theorem 3.1]{Neumann_work}), two plumbing graphs define homeomorphic $ 3 $-manifolds if and only if they are related by Neumann moves shown in \cref{fig:Neumann}.
Lens spaces and Seifert homology spheres are examples of plumbed manifolds.

For a plumbing graph $ \Gamma $, let $ M \coloneqq M(\Gamma) $ and let $ W $ be the linking matrix of $ \Gamma $.
Then, we have an isomorphism $ H_1(M, \Z) \cong \Z^{V} / W(\Z^{V}) $ which is compatible with the linking form $ H_1(M, \Z) \times H_1(M, \Z) \to \Z $ and $ \Z^{V} / W(\Z^{V}) \times \Z^{V} / W(\Z^{V}) \to \Z $; $ (m, n) \mapsto \transpose{m} W^{-1} n $. 
We also have $ H^1(M, \Z) \cong \Hom_\Z (H_1(M, \Z), \Z) \cong W^{-1}(\Z^V) / \Z^V $ by the universal coefficient theorem.

For a vertex $ v \in V $, define its degree as $ \deg(v) \coloneqq \# \left\{ v' \in V \relmiddle{|} \text{$ v' $ is adjacent to $ v $} \right\} $ 
Let $ \delta \coloneqq (\deg(v))_{v \in V} \in \Z^V $.
Then, the affine space $ \Spin^c(M) $ over $ H_1(M, \Z) $ of $ \Spin^c $ structures are isomorphic to $ \left( \delta + 2\Z^{V} \right) / 2W(\Z^{V}) $ as stated in \cite[Equation (34)]{Gukov-Manolescu}.
Moreover, through this bijection, the action by conjugation of $ \Spin^c $ structures commutes with the action by multiplication by 
$ -1 $ on $ \left( \delta + 2\Z^{V} \right) / 2W(\Z^{V}) $.

\begin{figure}[htb]
	\begin{minipage}[b]{0.45\linewidth}
		\centering
		\begin{tikzpicture}
			\node[shape=circle,fill=black, scale = 0.4] (1) at (0,0) { };
			\node[shape=circle,fill=black, scale = 0.4] (2) at (1.5,0) { };
			\node[shape=circle,fill=black, scale = 0.4] (3) at (-1,-1) { };
			\node[shape=circle,fill=black, scale = 0.4] (4) at (-1,1) { };
			\node[shape=circle,fill=black, scale = 0.4] (5) at (2.5,1) { };
			\node[shape=circle,fill=black, scale = 0.4] (6) at (2.5,-1) { };
			
			\node[draw=none] (B1) at (0,0.4) {$ w_1 $};
			\node[draw=none] (B2) at (1.5, 0.4) {$ w_2 $};
			\node[draw=none] (B3) at (-0.6,1) {$ w_3 $};
			\node[draw=none] (B4) at (-0.6,-1) {$ w_4 $};
			\node[draw=none] (B5) at (2.1,1) {$ w_5 $};		
			\node[draw=none] (B6) at (2.1,-1) {$ w_6 $};	
			
			\path [-](1) edge node[left] {} (2);
			\path [-](1) edge node[left] {} (3);
			\path [-](1) edge node[left] {} (4);
			\path [-](2) edge node[left] {} (5);
			\path [-](2) edge node[left] {} (6);
		\end{tikzpicture}
		\caption{An example of a plumbing graph $ \Gamma $} \label{fig:H-graph}		
	\end{minipage}
	\begin{minipage}[b]{0.45\linewidth}
		\centering
		\begin{tikzpicture}[scale=0.5]
			\begin{knot}[
				clip width=5, 
				flip crossing=2, 
				flip crossing=4, 
				flip crossing=6, 
				flip crossing=7, 
				flip crossing=9
				]
				\strand[thick] (0, 0) circle [x radius=3cm, y radius=1.5cm];
				\strand[thick] (0, 0) +(4cm, 0pt) circle [x radius=3cm, y radius=1.5cm];
				\strand[thick] (0, 0) +(-2.5cm, 2cm) circle [radius=1.5cm];
				\strand[thick] (0, 0) +(-2.5cm, -2cm) circle [radius=1.5cm];
				\strand[thick] (0, 0) +(6.5cm, 2cm) circle [radius=1.5cm];
				\strand[thick] (0, 0) +(6.5cm, -2cm) circle [radius=1.5cm];
				
				\node (1) at (0, 2) {$w_{1}$};
				\node (2) at (4, 2) {$w_{2}$};
				\node (3) at (-2.5, 4) {$w_{3}$};
				\node (4) at (-2.5, -4) {$w_{4}$};
				\node (5) at (6.5, 4) {$w_{5}$};
				\node (6) at (6.5, -4) {$w_{6}$};
			\end{knot}
		\end{tikzpicture}
		\caption{The surgery diagram $ \calL(\Gamma) $ corresponding to $\Gamma$ in \cref{fig:H-graph}} \label{fig:surgery_diagram}
	\end{minipage}
\end{figure}

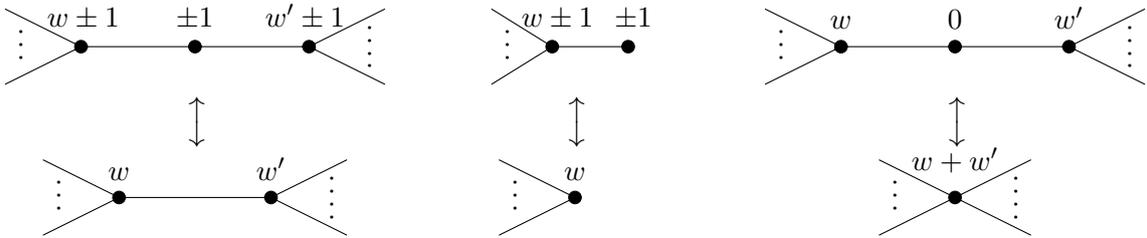
\begin{figure}[htp]
	\centering
	\begin{tikzpicture}
		\draw[fill]
		(-1.5,0) node[above=0.1cm, xshift=0.03cm]{$w \pm 1$} circle(0.5ex)--
		(0,0) node[above=0.1cm]{$\pm 1$} circle(0.5ex)--
		(1.5,0) node[above=0.1cm, xshift=-0.05cm]{$w' \pm 1$} circle(0.5ex)
		(-2.5,0.5) node[above]{}--(-1.5,0) node[above]{}
		(-2.3,0) node[rotate=270]{$\cdots$}
		(-2.5,-0.5) node[above]{}--(-1.5,-0) node[above]{}
		(1.5,0) node[above]{}--(2.5,0.5) node[above]{}
		(2.3,0) node[rotate=270]{$\ldotp\ldotp\ldotp\ldotp$}
		(1.5,0) node[above]{}--(2.5,-0.5) node[above]{}
		(0,-1) node[rotate=270]{$\longleftrightarrow$}
		(-1,-2) node[above=0.1cm]{$ w $} circle(0.5ex)--
		(1,-2) node[above=0.1cm]{$ w' $} circle(0.5ex)
		(-2,-1.5) node[above]{}--(-1,-2) node[above]{}
		(-1.8,-2) node[rotate=270]{$\cdots$}
		(-2,-2.5) node[above]{}--(-1,-2) node[above]{}
		(1,-2) node[above]{}--(2,-1.5) node[above]{}
		(1.8,-2) node[rotate=270]{$\ldotp\ldotp\ldotp\ldotp$}
		(1,-2) node[above]{}--(2,-2.5) node[above]{};
		
		\draw[fill]
		(4.7,0) node[above=0.1cm, xshift=0.07cm]{$w \pm 1$} circle(0.5ex)--
		(5.7,0) node[above=0.1cm, xshift=0.07cm]{$\pm 1$} circle(0.5ex)
		(3.9,0.5) node[above]{}--(4.7,0) node[above]{}
		(4.1,0) node[rotate=270]{$\cdots$}
		(3.9,-0.5) node[above]{}--(4.7,0) node[above]{}
		(5,-1) node[rotate=270]{$\longleftrightarrow$}
		(5,-2) node[above=0.1cm]{$w$} circle(0.5ex)
		(4,-1.5) node[above]{}--(5,-2) node[above]{}
		(4.2,-2) node[rotate=270]{$\cdots$}
		(4,-2.5) node[above]{}--(5,-2) node[above]{};
		
		\draw[fill]
		(8.5,0) node[above=0.1cm]{$w$} circle(0.5ex)--
		(10,0) node[above=0.1cm]{$0$} circle(0.5ex)--
		(11.5,0) node[above=0.1cm]{$w'$} circle(0.5ex)
		(7.5,0.5) node[above]{}--(8.5,0) node[above]{}
		(7.7,0) node[rotate=270]{$\cdots$}
		(7.5,-0.5) node[above]{}--(8.5,0) node[above]{}
		(11.5,0) node[above]{}--(12.5,0.5) node[above]{}
		(12.3,0) node[rotate=270]{$\ldotp\ldotp\ldotp\ldotp$}
		(11.5,0) node[above]{}--(12.5,-0.5) node[above]{}
		(10,-1) node[rotate=270]{$\longleftrightarrow$}
		(10,-2) node[above=0.2cm]{$w + w'$} circle(0.5ex)
		(9,-1.5) node[above]{}--(10,-2) node[above]{}
		(9.2,-2) node[rotate=270]{$\cdots$}
		(9,-2.5) node[above]{}--(10,-2) node[above]{}
		(10,-2) node[above]{}--(11,-1.5) node[above]{}
		(10.8,-2) node[rotate=270]{$\ldotp\ldotp\ldotp\ldotp$}
		(10,-2) node[above]{}--(11,-2.5) node[above]{};
	\end{tikzpicture}
	\caption{Neumann moves}
	\label{fig:Neumann}
\end{figure}


\subsection{The Witten--Reshetikhin--Turaev invariants} \label{subsec:WRT}


For a closed and oriented $ 3 $-manifold $ M $ and a positive integer $ k $, let $ Z_k(M) \in \bbC $ denote the $ \SU(2) $ Witten--Reshetikhin--Turaev (WRT) invariant for $ M $ at level $ k $.
We adopt the normalization $ Z_k (S^3) = 1 $.
For a plumbed manifold $ M $, its WRT invariant $ Z_k(M) $ is expressed as follows.

\begin{prop}[{Gukov--Pei--Putrov--Vafa~\cite[Equation A.12]{GPPV}}]
	\label{prop:WRT_rep}
	For plumbed manifolds $ M = M(\Gamma) $ defined by a plumbing graph $ \Gamma $, we have
	\[
	Z_k (M)
	=
	\frac{\zeta_8^{-\sgn(W)} \zeta_{4k}^{-\sum_{v \in V} (w_{v} + 3)}}
	{2 \sqrt{2k}^{\abs{V}} \left( \zeta_{2k} - \zeta_{2k}^{-1} \right)}
	\sum_{\mu \in (\Z \smallsetminus k\Z)^V/2k\Z^V}
	\bm{e} \left( \frac{1}{4k} {}^t\!\mu W \mu \right)
	\prod_{v \in V} F_v \left( \zeta_{2k}^{\mu_v} \right),
	\]
	where 
	\begin{align}
		\sgn(W) &\coloneqq \# \{ \text{positive eigenvalues of $ W $} \} - \# \{ \text{negative eigenvalues of $ W $} \},
		\\
		F_v (z_v) &\coloneqq \left( z_v - z_v^{-1} \right)^{2 - \deg(v)}.
	\end{align}
\end{prop}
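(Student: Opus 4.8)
The plan is to deduce the formula from the Reshetikhin--Turaev (RT) surgery formula applied to the framed link $\calL(\Gamma)\subset S^3$ of \cref{fig:surgery_diagram}, whose linking matrix is exactly $W$: diagonal entries the weights $w_v$, off-diagonal entry $1$ for every edge of $\Gamma$. First I would recall that $M(\Gamma)$ is obtained from $S^3$ by integral surgery on $\calL(\Gamma)$, so that $Z_k(M)$ is the RT state sum over colorings $a_v\in\{1,\dots,k-1\}$ of the components, divided by the anomaly factor attached to the signature $\sgn(W)=b_+-b_-$ (the difference of the numbers of positive and negative eigenvalues of $W$). The structural point is that $\calL(\Gamma)$ is a \emph{tree of unknots}: each vertex $v$ is an unknot with framing $w_v$, each edge of $\Gamma$ is a single Hopf clasp between the corresponding two unknots, and there are no other crossings.

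Second, I would compute the colored invariant of $\calL(\Gamma)$ itself. Since the link is a tree of unknots joined by Hopf clasps, the colored invariant factorizes: an unknot colored $a$ with framing $f$ contributes the quantum dimension $[a]=(\zeta_{2k}^{a}-\zeta_{2k}^{-a})/(\zeta_{2k}-\zeta_{2k}^{-1})$ times the twist eigenvalue $t_a^{f}$, and a Hopf clasp between colors $a$ and $b$ contributes, up to a fixed normalization, the $S$-matrix entry, which is proportional to $\sin(\pi ab/k)$. Because a tree on $\abs{V}$ vertices has $\abs{V}-1$ edges and $\sum_v\deg(v)=2(\abs{V}-1)$ (so that $\sum_v(2-\deg v)=2$), the quantum-dimension factors at the vertices collapse --- after passing from the $S$-matrix normalization to the twist normalization and cancelling denominators --- into $\prod_v(\zeta_{2k}^{a_v}-\zeta_{2k}^{-a_v})^{2-\deg(v)}=\prod_vF_v(\zeta_{2k}^{a_v})$, times the phase $\prod_v t_{a_v}^{w_v}$, an overall constant, and the product of the edge sines. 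The cleanest way to carry this out is induction on the tree, peeling off one leaf at a time and using the encircling (fusion) identity for the quantum dimension; this is also the computation behind invariance under the first two Neumann moves of \cref{fig:Neumann}.

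The decisive third step turns this into the Gaussian sum over $\mu\in(\Z\smallsetminus k\Z)^V/2k\Z^V$. I would write each edge sine as $\sin(\pi a_va_{v'}/k)=\tfrac{1}{2\iu}(\zeta_{2k}^{a_va_{v'}}-\zeta_{2k}^{-a_va_{v'}})$ and expand the product over the $\abs{V}-1$ edges into $2^{\abs{V}-1}$ sign terms; combined with the twists $\prod_v t_{a_v}^{w_v}$, the all-plus term is exactly $\bm{e}\bigl(\tfrac1{4k}\transpose{a}Wa\bigr)$ up to a universal power of $\zeta_{4k}$. Because $\Gamma$ is a tree, every edge-sign pattern has the form $\varepsilon_{vv'}=s_vs_{v'}$ for some $s\colon V\to\{\pm1\}$ (propagate $s$ from a root; a tree has no cycle to obstruct this), so each sign term is brought back to the all-plus form by $a_v\mapsto s_va_v\bmod 2k$. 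The $2^{\abs{V}-1}$ sign patterns correspond to the $2^{\abs{V}-1}$ classes of $s$ modulo the global flip $s\mapsto-s$; normalizing $s$ at a fixed root, the pairs $(\mathbf{s},\mathbf{a})$ with $\mathbf{a}\in\{1,\dots,k-1\}^V$ parametrize exactly one half of $(\Z\smallsetminus k\Z)^V/2k\Z^V$, and the $\mu\mapsto-\mu$ symmetry of the summand --- valid because $\sum_v(2-\deg v)=2$ is even --- restores the full sum at the cost of a factor $\tfrac12$. Meanwhile the signs $\prod_vs_v^{\deg v}$ produced by expanding the sines cancel the signs produced at the flipped vertices by the factors $F_v$. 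Reassembling and then collecting all normalization constants --- the $\mathcal{D}$-factors of the RT formula, the $S$-matrix normalizations, the $(2\iu)^{-(\abs{V}-1)}$ from the sine expansion, and the identity $F_v(\zeta_{2k}^{\mu_v})=(2\iu\sin(\pi\mu_v/k))^{2-\deg v}$ --- produces the prefactor $\dfrac{1}{2\sqrt{2k}^{\abs{V}}(\zeta_{2k}-\zeta_{2k}^{-1})}$ together with the sum $\sum_{\mu\in(\Z\smallsetminus k\Z)^V/2k\Z^V}\bm{e}\bigl(\tfrac1{4k}\transpose{\mu}W\mu\bigr)\prod_vF_v(\zeta_{2k}^{\mu_v})$.

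The part I expect to be the real obstacle is the precise bookkeeping of the roots of unity: matching the RT anomaly against $\zeta_8^{-\sgn(W)}$, and pinning the universal collected phase down to exactly $\zeta_{4k}^{-\sum_v(w_v+3)}$ rather than off by some fixed power of $\zeta_{8k}$. This needs careful tracking of the twist eigenvalue $t_a$ (including its $k$-dependent central-charge correction coming from $c=\tfrac{3(k-2)}{k}$), of the sign of each Hopf clasp, and of the writhe corrections implicit in how the unknots of $\calL(\Gamma)$ are drawn. As a safeguard I would finally check the resulting formula against its known special cases --- linear plumbings, which must reproduce the lens-space formula, and star-shaped plumbings, which must reproduce the Seifert-homology-sphere expressions of Lawrence--Zagier \cite{Lawrence-Zagier} and Hikami \cite{Hikami:Lattice} --- and verify invariance under all three Neumann moves, which together with $Z_k(S^3)=1$ characterizes the normalization uniquely in any case.
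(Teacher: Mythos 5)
The paper does not actually prove \cref{prop:WRT_rep}: it is imported verbatim from Gukov--Pei--Putrov--Vafa (Equation A.12 of \cite{GPPV}), so there is no in-paper argument to compare against. Your sketch reconstructs essentially the derivation given in that cited source: apply the Reshetikhin--Turaev surgery formula to the tree-of-unknots link $\calL(\Gamma)$, factor the colored invariant into twist, quantum-dimension and Hopf-clasp contributions, use $\sum_v (2-\deg v)=2$ to collapse the vertex factors into $\prod_v F_v(\zeta_{2k}^{a_v})$, expand the edge sines, and symmetrize. The combinatorial core of your third step is sound: on a tree every edge-sign pattern is of the form $s_v s_{v'}$, the substitution $a_v \mapsto s_v a_v \bmod 2k$ identifies $\{\pm 1\}\times\{1,\dots,k-1\}$ with $(\Z\smallsetminus k\Z)/2k\Z$ at each vertex, the sign $\prod_v s_v^{\deg v}$ from the sine expansion is exactly absorbed by $F_v(\zeta_{2k}^{-a_v})=(-1)^{\deg v}F_v(\zeta_{2k}^{a_v})$, and the residual $\mu\mapsto -\mu$ symmetry accounts for the overall factor $\tfrac12$. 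What remains open in your write-up is precisely what you flag: the framing-anomaly and central-charge bookkeeping that produces $\zeta_8^{-\sgn(W)}$ and pins the universal phase to $\zeta_{4k}^{-\sum_v (w_v+3)}$ (your appeal to Neumann-move invariance plus $Z_k(S^3)=1$ is a useful consistency check but, as stated, does not by itself rigorously fix a $k$-dependent per-vertex phase; the direct computation with the twist eigenvalues is still needed). With that bookkeeping carried out, your argument is the standard proof of the quoted formula.
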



\subsection{The GPPV invariants} \label{subsec:HB_false_main}


A plumbed manifold is \emph{negative definite} if it is defined by a plumbing graph whose linking matrix is negative definite.
We remark that a negative definite plumbed manifold is a rational homology sphere.
For a negative definite plumbed manifold, one can define its \emph{Gukov--Pei--Putrov--Vafa} (\emph{GPPV}) \emph{invariant} as follows.

\begin{dfn}[{\cite[Equation (A.29)]{GPPV}}] \label{dfn:GPPV_inv}
	The \emph{Gukov--Pei--Putrov--Vafa invariant} for a negative definite plumbed manifold $ M = M(\Gamma) $ and its $ \Spin^c $ structure $ b \in \Spin^c (M) \cong \left( \delta + 2\Z^{V} \right) / 2W(\Z^{V}) $ is defined as
	\[
	\widehat{Z}_{b} (q; M)
	\coloneqq 
	q^{\Delta} 
	\sum_{l \in 2W(\Z^V) + b} F_l q^{-\transpose{l} W^{-1} l/4},
	\]
	where 
	\[
	\Delta \coloneqq -\frac{3 \abs{V} + \tr W}{4},
	\quad
	F_l \coloneqq \prod_{v \in V} F_{v, l_v}, 
	\quad
	F_{v, l_v} \coloneqq \PV \int_{\abs{z_v} = 1} F_v (z_v) \frac{z_v^{l_v} dz_v}{2\pi\iu z_v},
	\]
	and the Cauchy principal value
	\[
	\PV
	\coloneqq 
	\frac{1}{2}
	\lim_{\veps \to +0} \left( \int_{\abs{z} = 1+\veps} + \int_{\abs{z} = 1-\veps} \right).
	\]
\end{dfn}

The coefficient $ F_l $ of GPPV invariants satisfies the following properties.

\begin{lem} \label{lem:coeff_of_F}
	Let $ l = (l_v)_{v \in V} \in \Z^V $ and $ v \in V $.
	\begin{enumerate}
		\item \label{item:lem:coeff_of_F:expression}
		\textup{(\cite[p. 743]{Andersen-Mistegard})} We have
		\begin{equation}
			F_{v, l_v} =
			\begin{dcases}
				-l_v, & \text{ if } \deg (v) = 1, \, l_v \in \{ \pm 1 \}, \\
				1, & \text{ if } \deg (v) = 2, \, l_v =0, \\
				\frac{\sgn(l_v)^{\deg (v)}}{2} \binom{m + \deg (v) - 3}{\deg (v) - 3}, & \text{ if } \deg (v) \ge 3, \, \pm l_v = \deg (v) -2 + 2m \text{ for some } m \in \Z_{\ge 0}, \\
				0, & \text{ otherwise}.
			\end{dcases}
		\end{equation}
		
		\item \label{item:lem:coeff_of_F:symmetry}
		\textup{(\cite[Lemma 2.3 (ii)]{M:GPPV})} 
		We have $ F_{v, l_v} = (-1)^{\deg (v)} F_{v, l_v} $ and $ F_{-l} = F_l $.
		
		\item \label{item:lem:coeff_of_F:expansion}
		\textup{(\cite[Lemma 2.3 (iii)]{M:GPPV})} 
		We have
		\[
		\sum_{l_v \in \deg (v) + 2 \Z_{\ge -1}} F_{v, l_v} z_v^{l_v}
		=
		(-1)^{\deg (v)} F_v(z_v) \cdot
		\begin{dcases}
			1, \text{ if } \deg (v) \le 2, \\
			\frac{1}{2}, \text{ if } \deg (v) \ge 3.
		\end{dcases}
		\]
	\end{enumerate}
\end{lem}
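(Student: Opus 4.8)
The plan is to reduce all three parts to the Laurent expansions of $F_v(z) = (z - z^{-1})^{2-d}$ in the two annuli $0 < \abs{z} < 1$ and $\abs{z} > 1$, writing $d \coloneqq \deg(v)$. The key point is that $F_v(z) z^{l-1}$ is holomorphic away from $z \in \{0, \pm 1\}$, so the circle $\abs{z} = 1 - \varepsilon$ lies in $0 < \abs{z} < 1$ and $\abs{z} = 1 + \varepsilon$ in $\abs{z} > 1$; hence each of the two integrals $\int_{\abs{z} = 1 \mp \varepsilon} F_v(z) z^{l-1} \frac{dz}{2\pi\iu}$ equals the coefficient of $z^{-l}$ in the corresponding one-sided Laurent expansion, and $F_{v,l_v}$ is their arithmetic mean. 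For $d \le 2$ the function $F_v$ is a Laurent polynomial, both expansions coincide, and no principal value is needed.

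For part \cref{item:lem:coeff_of_F:expression} I would first read off $d = 1$ (where $F_v = z - z^{-1}$, giving $F_{v,l_v} = -l_v$ for $l_v = \pm 1$) and $d = 2$ (where $F_v = 1$, giving $F_{v,l_v} = \delta_{l_v,0}$). For $d \ge 3$ I would use $F_v(z) = (z - z^{-1})^{-(d-2)} = (-1)^{d} z^{d-2}(1 - z^{2})^{-(d-2)}$ and the negative binomial theorem to get $(-1)^{d}\sum_{m \ge 0}\binom{m+d-3}{d-3} z^{d-2+2m}$ for $\abs{z} < 1$ and $\sum_{m \ge 0}\binom{m+d-3}{d-3}z^{-(d-2)-2m}$ for $\abs{z} > 1$. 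Averaging the coefficients of $z^{-l_v}$, only the first expansion contributes when $l_v < 0$ and only the second when $l_v > 0$, which yields $0$ unless $\pm l_v = d - 2 + 2m$ for some $m \ge 0$, and otherwise $\tfrac12 \sgn(l_v)^{d}\binom{m+d-3}{d-3}$, the factor $(-1)^{d}$ arising precisely from the negative branch; the case $l_v = 0$ gives $0$ since $d - 2 \ge 1$.

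Part \cref{item:lem:coeff_of_F:symmetry} I would then deduce from $F_v(z^{-1}) = (z^{-1} - z)^{2-d} = (-1)^{d} F_v(z)$: substituting $z \mapsto z^{-1}$ in the principal-value integral swaps the circles $\abs{z} = 1 \pm \varepsilon$ (so the principal value is preserved) and contributes a sign via $\frac{dz}{z} \mapsto -\frac{dz}{z}$ together with the reversal of orientation, giving the sign-reversal relation $F_{v,-l_v} = (-1)^{d} F_{v,l_v}$; taking the product over $v \in V$ and using that $\sum_{v \in V}\deg(v) = 2\,\#\{\text{edges of }\Gamma\}$ is even gives $F_{-l} = F_l$. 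Finally, part \cref{item:lem:coeff_of_F:expansion} follows by summing the formula from part \cref{item:lem:coeff_of_F:expression} over $l_v \in d + 2\Z_{\ge -1} = \{d-2, d, d+2, \dots\}$: for $d \le 2$ this is a short finite sum one checks directly, and for $d \ge 3$ every exponent in the progression is positive, so $\sum_{m \ge 0}\tfrac12\binom{m+d-3}{d-3}z_v^{d-2+2m} = \tfrac12 z_v^{d-2}(1-z_v^{2})^{-(d-2)} = \tfrac{(-1)^{d}}{2} F_v(z_v)$, matching the stated factors $1$ for $d \le 2$ and $\tfrac12$ for $d \ge 3$.

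The main obstacle I expect is not conceptual but the careful bookkeeping in part \cref{item:lem:coeff_of_F:expression} for $d \ge 3$: one must make sure the principal-value prescription is exactly what licenses replacing the non-integrable integrand on $\abs{z} = 1$ (which blows up like $(z \mp 1)^{-(d-2)}$ near $\pm 1$) by its two convergent surrogates on $\abs{z} = 1 \pm \varepsilon$, and keep track of which sign $(-1)^{d}$ and which index shift $\pm l_v = d - 2 + 2m$ attaches to which circle. Parts \cref{item:lem:coeff_of_F:symmetry} and \cref{item:lem:coeff_of_F:expansion} are then immediate.
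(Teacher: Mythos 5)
Your proposal is correct, and all the computations check out: the principal value in \cref{dfn:GPPV_inv} is by definition the average of the integrals over $\abs{z}=1\pm\varepsilon$, each of which extracts the coefficient of $z^{-l_v}$ from the corresponding one-sided Laurent expansion, and your two binomial expansions of $(z-z^{-1})^{-(d-2)}$ give exactly the stated values, including the sign $\sgn(l_v)^{\deg(v)}$ coming from the inner branch. The "main obstacle" you flag is in fact a non-issue for precisely this reason: nothing on $\abs{z}=1$ ever needs to be integrated, since the PV prescription already lives on the two displaced circles and the coefficient extraction there is independent of $\varepsilon$. Note also that you have silently corrected a typo in the statement: part \cref{item:lem:coeff_of_F:symmetry} should read $F_{v,-l_v}=(-1)^{\deg(v)}F_{v,l_v}$ (as written it would force $F_{v,l_v}=0$ for odd degree), and this is exactly the relation you prove via $z\mapsto z^{-1}$, with $F_{-l}=F_l$ following from $\sum_v\deg(v)=2\,\#\{\text{edges}\}$.

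Your route does differ from the paper's. The paper does not prove the lemma in situ: part \cref{item:lem:coeff_of_F:expression} is quoted from Andersen--Misteg\aa{}rd and parts \cref{item:lem:coeff_of_F:symmetry}, \cref{item:lem:coeff_of_F:expansion} from the author's earlier work, and the only in-paper argument is \cref{rem:coeff_of_F_modular_series}, which re-derives part \cref{item:lem:coeff_of_F:expansion} abstractly from the correspondence between cyclotomic rational functions and (false) quasi-polynomials (\cref{lem:R_C_corresp} together with \cref{rem:coe_Laurent} and the parity $F_v(z_v^{-1})=(-1)^{\deg(v)}F_v(z_v)$), without ever writing down the coefficients. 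Your proof is the elementary, self-contained alternative: it computes everything explicitly from the negative binomial series, which yields part \cref{item:lem:coeff_of_F:expression} as well, whereas the framework argument of the remark is designed to bypass such explicit expansions and to exhibit the GPPV coefficients as an instance of $\widetilde{\coe}$, which is what the rest of \cref{part:WRT} actually uses. Both arguments are sound; yours gives the closed formulas directly, the paper's situates them inside the modular-series machinery.
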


We have $ F_V^{} (z_V^{}) \coloneqq \prod_{v \in V} F_v (z_v) \in \frakR_V^{} $ and $ F_l = F_{-l} = \widetilde{\coe}[F_v^{}] (l) $ by definition.
Thus, the GPPV invariant can be regarded as a false modular series.

\begin{rem} \label{rem:coeff_of_F_modular_series}
	We can prove \cref{lem:coeff_of_F} \cref{item:lem:coeff_of_F:expansion} by using a framework of cyclotomic rational functions and quasi-polynomials.
	Indeed, by \cref{lem:R_C_corresp} \cref{item:lem:R_C_corresp:vp,item:lem:R_C_corresp:involution_bilateral}, we have
	\[
	F_{v, l_v} = \frac{1}{2} \sum_{e_v \in \{ \pm 1 \}} \coe[F_v] (e_v l_v)
	= \frac{1}{2} \sum_{e_v \in \{ \pm 1 \}} \coe[F_v (z_v^{e_v})] (l_v).
	\]
	Since $ F_v (z_v^{-1}) = (-1)^{\deg (v)} F_v (z_v) $, we have
	\[
	F_l = \frac{1}{2} \sum_{e_v \in \{ \pm 1 \}} e_v^{\deg (v)} \coe[F_v] (l_v).
	\]
	On the other hand, by \cref{rem:coe_Laurent}, we have
	\[
	F_v (z_v)
	=
	\sum_{l_v \in \Z} \coe[F_v](l_v) z_v^{l_v}
	=
	\sum_{l_v \in \deg (v) + 2\Z_{\ge -1}} \coe[F_v](l_v) z_v^{l_v}
	\]
	for $ \abs{z_v} < 1 $.
	Thus, we obtain \cref{lem:coeff_of_F} \cref{item:lem:coeff_of_F:expansion}.
\end{rem}


\subsection{Radial limits of GPPV invariants} \label{subsec:GPPV_radial_limits}


The \emph{radial limit conjecture} asks whether the WRT invariants can be expressed as radial limits of certain $q$-series.
Here, a radial limit refers to the limit along $ q \to \zeta_k $ for a positive integer $ k $, where we set $ \zeta_k \coloneqq \bm{e}(1/k) $.
This conjecture plays an important role in resolving the asymptotic expansion conjecture for WRT invariants and in their categorification.
It can be formulated as follows.

\begin{conj}[The radial limit conjecture, {Hikami~\cite[Equation (1.4)]{Hikami:radial_limit}, Gukov--Pei--Putrov--\linebreak[0]Vafa~\cite[Conjecture 2.1]{GPPV}, Gukov--Manolescu~\cite[Conjecture 3.1]{Gukov-Manolescu}}]
	\label{conj:radial_limit_conj}
	Let $ M $ be a rational homology sphere.
	Then, for each $ \Spin^c $ structure $ b \in \Spin^c(M) $, there exist invariants
	\[
	\Delta_b \in \Q, \quad
	c \in \Z_{\ge 0}, \quad
	\widehat{Z}_b (q; M) \in 2^{-c} q^{\Delta_b} \Z[[q]]
	\]
	that are conjugation-invariant, 
	$ \widehat{Z}_b (q; M) $ conveges on $ \abs{q} < 1 $, 
	and for infinitely many $ k \in \Z_{>0} $, the radial limits $ \lim_{q \to \zeta_k} \widehat{Z}_b (q; M) $ converge and we have
	\[
	Z_k (M) 
	=
	\lim_{q \to \zeta_k}
	\frac{1}{ \left( \zeta_{2k} - \zeta_{2k}^{-1} \right) \sqrt{\abs{H_1(M, \Z)}} }
	\sum_{a, b \in \Spin^c(M)} 
	\bm{e}(k \lk(a, a) - \lk(a, b)) \widehat{Z}_b (q; M),
	\]      
	where $ \lk $ is the linking form $ H_1(M, \Z) \times H_1(M, \Z) \to \Z $.
\end{conj}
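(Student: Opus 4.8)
The plan is to establish \cref{conj:radial_limit_conj} for the class of negative definite plumbed manifolds, which is exactly the input needed for arrow $(1)$ in the diagram of \cref{subsec:intro_topology} and hence for \cref{thm:main_WRT_asymptotic}. Fix a negative definite plumbing graph $ \Gamma $ with linking matrix $ W $ and set $ S \coloneqq -W^{-1} \in \Sym_{\abs{V}}^+ (\Q) $. First I would dispose of the structural parts of the conjecture: the rational number $ \Delta_b $ and the exponent $ c $ are read off from \cref{dfn:GPPV_inv}; the integrality $ \widehat{Z}_b (q; M) \in 2^{-c} q^{\Delta_b} \Z[[q]] $ and convergence on $ \abs{q} < 1 $ follow from the explicit formula for $ F_{v, l_v} $ in \cref{lem:coeff_of_F} \cref{item:lem:coeff_of_F:expression} together with positive definiteness of $ S $, and conjugation invariance is the symmetry $ F_{-l} = F_l $ in \cref{lem:coeff_of_F} \cref{item:lem:coeff_of_F:symmetry}. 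This reduces the conjecture to the radial limit formula itself, and by linearity of that formula it suffices to compute, for each $ \Spin^c $ structure $ b $ and each $ k $, the limit $ \lim_{q \to \zeta_k} \widehat{Z}_b (q; M) $ along the ray $ q = r \zeta_k $, $ r \to 1^- $, i.e.\ $ \tau \to 1/k $ within $ \bbH $.

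For the radial limit I would exploit that, by \cref{dfn:GPPV_inv} and \cref{rem:coeff_of_F_modular_series}, one has $ \widehat{Z}_b (q; M) = q^{\Delta} \Phi[\widetilde{g}_b, \gamma](\tau) $, where $ \gamma(\tau; x) = q^{\transpose{x} S x / 2} $ and $ \widetilde{g}_b $ is the false quasi-polynomial obtained by restricting $ \widetilde{\coe}[F_V] $ to the coset $ b + 2W(\Z^V) $; in particular $ \widehat{Z}_b (q; M) $ is a false theta function in the sense of \cref{dfn:false_theta}, with the integral representation of \cref{ex:false_theta_PV_rep} and \cref{rem:GPPV_int_rep}. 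Using $ \gamma(\tau + 1/k; x) = \bm{e}(\tfrac{1}{2k} \transpose{x} S x) \gamma(\tau; x) $ together with periodicity of $ \bm{e}(\tfrac{1}{2k} \transpose{x} S x) $ on $ \Z^{\abs{V}} $, I would apply \cref{thm:main_asymptotic_false_theta} \cref{item:thm:main_asymptotic_false_theta:1} (equivalently \cref{prop:asymp_EM} \cref{item:prop:asymp_EM:2}) to obtain $ \widehat{Z}_b(q;M) \sim \sum_j c_j(1/k) \tau^j $ as $ \tau \to 0 $. The crucial point is that $ c_j(1/k) = 0 $ for $ j < 0 $: this I would obtain from the degree estimate of \cref{rem:deg_q-poly} \cref{item:rem:deg_q-poly:order} applied to $ \widetilde{g}_b $, whose degree is controlled by \cref{lem:coeff_of_F}, or, alternatively, by deforming the contour $ C_+ $ in the integral representation into $ C_\varepsilon $ as in \cref{lem:analy_cont} and checking holomorphy at $ \tau = 1/k $. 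Hence the radial limit converges and equals $ c_0(1/k) $, which by the proof of \cref{thm:main_asymptotic_false_theta} \cref{item:thm:main_asymptotic_false_theta:1} is a finite $ \Q $-linear combination of Gaussian phases $ \bm{e}(\tfrac{1}{2k} \transpose{m} S m) $ over a finite index set; more precisely, applying \cref{thm:false_modular_series_modular} with the sign vector $ e $ chosen so that the integral terms are regular at $ \tau = 1/k $ identifies $ c_0(1/k) $ with the residue contributions indexed by the poles of $ F_V(\bm{e}(\xi)) $.

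The remaining step is the purely arithmetic identification of this residue sum with the finite Gauss sum of \cref{prop:WRT_rep}. I would substitute the formula for $ c_0(1/k) $ into the combination $ \sum_{a, b} \bm{e}(k \lk(a,a) - \lk(a,b)) \widehat{Z}_b(q;M) $, use the isomorphism $ H_1(M, \Z) \cong \Z^V / W(\Z^V) $ and the identification $ \lk(m,n) = \transpose{m} W^{-1} n $, and then apply Gauss sum reciprocity (the Milgram formula for the quadratic form attached to $ W $) to convert the sum of $ S $-Gaussians over residues into a sum of $ W $-Gaussians over $ (\Z \smallsetminus k\Z)^V / 2k\Z^V $. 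Expanding each local factor via \cref{lem:coeff_of_F} \cref{item:lem:coeff_of_F:expansion} then reconstitutes the product $ \prod_{v \in V} F_v(\zeta_{2k}^{\mu_v}) $ and the prefactor $ \zeta_8^{-\sgn(W)} \zeta_{4k}^{-\sum_v (w_v + 3)} / (2\sqrt{2k}^{\abs{V}}(\zeta_{2k} - \zeta_{2k}^{-1})) $. To keep the bookkeeping tractable I would first reduce $ \Gamma $ to a normal form using invariance under Neumann moves (both sides being Neumann invariant up to the displayed prefactors), so that only vertices of degrees $ 1 $, $ 2 $, and $ \ge 3 $, with their explicit $ F_{v, l_v} $, need to be handled.

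The hard part will be this last matching. The false theta side delivers a sum of residues weighted by products of $ F_{v, l_v} $ --- whose sign factors $ \sgn(l_v)^{\deg(v)} $ and binomial coefficients for $ \deg(v) \ge 3 $ must be tracked with care --- paired with Gaussian phases for the form $ -W^{-1} $, whereas the WRT side is a Gauss sum for the form $ W $ restricted to units modulo $ 2k $. Reconciling the two hinges on a precise application of Gauss sum reciprocity together with a careful analysis of how the sign restrictions in the false theta function translate into the exclusion $ \mu \in (\Z \smallsetminus k\Z)^V $; pinning down the $ 8 $th root of unity and the exact power of $ \sqrt{2k} $ is the delicate computation. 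A secondary subtlety, flagged above, is the uniform vanishing of the negative asymptotic coefficients across all negative definite plumbing trees, for which the degree estimate of \cref{rem:deg_q-poly} applied to $ \widetilde{g}_b $ via \cref{lem:coeff_of_F} should suffice.
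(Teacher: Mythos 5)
There is a genuine mismatch here: the statement you are asked about is a \emph{conjecture}, and the paper offers no proof of it; its own treatment is to cite the special case that is already known, namely \cref{thm:GPPV_conj} (from \cite{M:GPPV}) for negative definite plumbed manifolds, and to record explicitly, right after \cref{conj:GPPV}, that even in that case the convergence of the individual radial limits $\lim_{q \to \zeta_k} \widehat{Z}_b(q; M)$ for each $b$ remains open. Your proposal has two corresponding gaps. First, \cref{conj:radial_limit_conj} is asserted for \emph{all} rational homology spheres, where the invariants $\widehat{Z}_b(q;M)$ are not even constructed in this paper; \cref{dfn:GPPV_inv} only applies to negative definite plumbed manifolds, so restricting to that class cannot prove the conjecture as stated --- the existence of the invariants is itself part of the assertion.

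Second, and more substantively, even within the plumbed case your argument for the existence of each individual limit does not go through. You claim $c_j(1/k) = 0$ for $j < 0$ via the degree estimate of \cref{rem:deg_q-poly}, but that remark only gives the one-sided bound $\ord_{t=0}\varphi \odot f(t) \ge -\deg \widetilde{g}_b - r$, which is typically negative (the factors $F_v$ at vertices of degree $\ge 3$ contribute positive degree), so negative powers of $\tau$ in the expansion of $\widehat{Z}_b(\tau + 1/k)$ cannot be ruled out this way; likewise \cref{thm:false_theta_asymp} \cref{item:thm:false_theta_asymp:1} only says the coefficients vanish for sufficiently small $j$, not for all $j<0$. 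The established result (\cref{thm:GPPV_conj}) asserts convergence only of the full linear combination $\sum_{a,b}\bm{e}(-k\,\transpose{a}W^{-1}a - \transpose{a}W^{-1}b)\,\widehat{Z}_b$, where cancellations among the $\Spin^c$ sectors are essential; proving convergence sector by sector is precisely the missing piece the paper flags. Your final step --- Gauss sum reciprocity to match the residue sum with the finite sum in \cref{prop:WRT_rep} --- is in the spirit of the cited proof, but as written it is a program rather than an argument, and it inherits the unproved sector-wise convergence on which the rest of your outline relies.
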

In the case of negative definite plumbed manifolds, this conjecture was refined by Gukov--Pei--Putrov--\linebreak[0]Vafa~\cite{GPPV} as follows.

\begin{conj}[{\cite[Equation (A.28)]{GPPV}}] \label{conj:GPPV}
	For a negative definite plumbed manifold $ M $ and its GPPV invariants $ \widehat{Z}_b(q; M) $,  we have
	\begin{equation} \label{eq:GPPV_conj}
		Z_k(M)
		=
		\frac{1}{2(\zeta_{2k} - \zeta_{2k}^{-1}) \sqrt{\abs{\det W}}}
		\sum_{\substack{
				a \in \Z^V/W(\Z^V), \\
				b \in (\delta + 2\Z^V)/2W(\Z^V)
		}}
		\bm{e} \left( -k \transpose{a} W^{-1} a - \transpose{a} W^{-1} b \right)
		\lim_{q \to \zeta_k} \widehat{Z}_{b} (q; M).
	\end{equation}
\end{conj}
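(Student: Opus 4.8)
The plan is to compute both sides of \eqref{eq:GPPV_conj} as explicit finite exponential sums and identify them by finite Gauss sum reciprocity. Write $q = \bm{e}(\tau)$, so that the radial limit $q \to \zeta_k$ corresponds to $\tau \to 1/k$ inside $\bbH$; set $\tau = 1/k + \tau'$ with $\tau' \to 0$, $\Im(\tau') > 0$. By \cref{dfn:GPPV_inv} we have $\widehat{Z}_b(q; M) = q^{\Delta}\,\widetilde{\Theta}_b(\tau)$ with $\widetilde{\Theta}_b(\tau) = \sum_{l \in 2W(\Z^V) + b} F_l\, q^{-\transpose{l} W^{-1} l/4}$. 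Since $W$ is negative definite, $-W^{-1} \in \Sym_{\abs{V}}^+(\Q)$, and $F_l = \widetilde{\coe}[F_V](l)$ for $F_V(z) = \prod_{v} F_v(z_v) \in \frakR_V$; thus $\widetilde{\Theta}_b$ is the false modular series of the kernel $q^{-\transpose{x} W^{-1} x/4}$, and after the substitution $l = b + 2Wn$ it becomes a false theta function of rank $\abs{V}$ in $n$ in the sense of \cref{dfn:false_theta,dfn:modular_series}.

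First I would use that $q^{-\transpose{x} W^{-1} x/4} = \bm{e}\!\left(-\tfrac{1}{4k}\transpose{x} W^{-1} x\right)\,\bm{e}(\tau')^{-\transpose{x} W^{-1} x/4}$, where the phase $x \mapsto \bm{e}\!\left(-\tfrac{1}{4k}\transpose{x} W^{-1} x\right)$ is periodic on $b + 2W(\Z^V)$, so multiplying $F_V$ by it keeps us inside $\widetilde{\frakQ}$. Then \cref{thm:false_theta_asymp} \cref{item:thm:false_theta_asymp:1} (ultimately \cref{prop:asymp_EM} \cref{item:prop:asymp_EM:2}) yields an asymptotic expansion $\widehat{Z}_b(\bm{e}(1/k + \tau'); M) \sim \sum_{j} c_j(1/k)\, (\tau')^{j}$ as $\tau' \to 0$. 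The radial limit therefore exists and equals $c_0(1/k)$ once one knows $c_j(1/k) = 0$ for $j < 0$, and the Euler--Maclaurin mechanism behind \cref{prop:asymp_EM} shows that this constant term is a \emph{finite} sum: grouping $l \in b + 2W(\Z^V)$ into residue classes modulo $2kN(\Z^V)$ for a suitable $N$, only the $t^{0}$-coefficient survives, and by \cref{rem:coe_Laurent} it reassembles into a finite combination of terms $F_l\,\bm{e}\!\left(-\tfrac{1}{4k}\transpose{l} W^{-1} l\right)$. The $q^{\Delta}$ prefactor contributes $\zeta_{4k}^{-(3\abs{V} + \tr W)}$, which already matches the prefactor $\zeta_{4k}^{-\sum_v(w_v+3)}$ of \cref{prop:WRT_rep}.

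Second, I would compare with \cref{prop:WRT_rep}, which writes $Z_k(M)$ as $\frac{\zeta_8^{-\sgn(W)}\,\zeta_{4k}^{-\sum_v(w_v+3)}}{2\sqrt{2k}^{\abs{V}}(\zeta_{2k} - \zeta_{2k}^{-1})}\sum_{\mu}\bm{e}\!\left(\tfrac{1}{4k}\transpose{\mu} W \mu\right)\prod_v F_v(\zeta_{2k}^{\mu_v})$ over $\mu \in (\Z \smallsetminus k\Z)^V / 2k\Z^V$. Plugging the finite expression for $\lim_{q \to \zeta_k}\widehat{Z}_b(q; M)$ into the right-hand side of \eqref{eq:GPPV_conj}, interchanging the (finite) summations, and expanding $F_l = \widetilde{\coe}[F_V](l)$ through the boundary Fourier expansion of $F_V(\bm{e}(\cdot))$ (by \cref{rem:coe_Laurent}, with the principal value handled as in \cref{ex:false_theta_PV_rep}), the sum over $a \in \Z^V / W(\Z^V)$ becomes a Gauss sum attached to the finite quadratic form $x \mapsto \transpose{x} W^{-1} x$ on $\Z^V / W(\Z^V)$. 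Milgram reciprocity for this Gauss sum produces exactly the normalization $\zeta_8^{\sgn(W)}/\sqrt{\abs{\det W}}$ and collapses the sublattice $2kN(\Z^V)$-sum to a sum over $\mu \in \Z^V/2k\Z^V$; the restriction $\mu \notin k\Z^V$ arises because $\zeta_{2k}^{\mu_v} \in \{\pm 1\}$ precisely when $k \mid \mu_v$, at which point $F_v(z_v) = (z_v - z_v^{-1})^{2-\deg v}$ either vanishes (for $\deg v \le 2$) or its contribution is already accounted for through \cref{lem:coeff_of_F} (for $\deg v \ge 3$). Comparing with \cref{prop:WRT_rep} then gives \eqref{eq:GPPV_conj}.

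The main obstacle is twofold. First, establishing that the radial limit exists at all, i.e.\ the vanishing of the negative-order coefficients $c_j(1/k)$ for $j < 0$: this is the genuinely false-modular phenomenon, and it rests on the interplay between the $\sgn$ factors implicit in $\widetilde{\coe}$, the negative-definiteness of $W$, and the precise combinatorics of $F_V$ recorded in \cref{lem:coeff_of_F}. Second, the Gauss sum reciprocity bookkeeping, in particular tracking the signature defect $\zeta_8^{-\sgn(W)}$ through Milgram's formula and matching the excision of the loci $k \mid \mu_v$ so that the two finite sums agree term by term; getting all of these normalizations to land on the nose is where most of the work lies.
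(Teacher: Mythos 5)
You are attempting to prove a statement that the paper itself records only as a conjecture (Gukov--Pei--Putrov--Vafa's Equation (A.28)): the paper gives no proof of \cref{conj:GPPV}, and immediately after stating it notes that it is known ``except for the convergence of $\lim_{q\to\zeta_k}\widehat{Z}_b(q;M)$ for each $b\in\Spin^c(M)$,'' citing \cref{thm:GPPV_conj} (i.e.\ [M:GPPV, Theorem 1.2]), in which the limit $q\to\zeta_k$ is taken of the entire weighted sum over $a$ and $b$ rather than term by term. So there is no ``paper proof'' to compare against; the only honest comparison is with \cref{thm:GPPV_conj}, and your proposal does not get beyond it.

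The genuine gap sits exactly at the step you flag as your ``first obstacle'' and then treat as if it were handled: the vanishing of the negative-order coefficients $c_j(1/k)$, $j<0$, for each individual $\widehat{Z}_b$, equivalently the existence of each separate radial limit. This is not a consequence of \cref{prop:asymp_EM} or \cref{thm:false_theta_asymp}: those results only force $c_j(\rho)=0$ for $j<-(r-d+\deg P)/2$ and for positive half-integers outside $\Z_{\ge0}$, while the Euler--Maclaurin mechanism (\cref{lem:EM_asymptotic}) genuinely produces a $t^{-1}$ term with coefficient $f^{(-1)}(0)\neq 0$ in general, which after the Gaussian substitution yields $(\tau')^{-1/2}$-type contributions for a single $\Spin^c$ structure $b$; no cancellation mechanism for these is known for fixed $b$, and asserting that ``only the $t^0$-coefficient survives'' is precisely the unproven content of the conjecture. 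In the proved statement these divergent contributions cancel only after performing the $\bm{e}(-k\transpose{a}W^{-1}a-\transpose{a}W^{-1}b)$-weighted sum over $a$ and $b$, which is why \cref{thm:GPPV_conj} is formulated with the limit outside the double sum. The remainder of your outline --- identifying the surviving constant term as a finite exponential sum and matching it against \cref{prop:WRT_rep} via Gauss-sum (Milgram) reciprocity, with the $\zeta_8^{-\sgn(W)}$ and $k\mid\mu_v$ bookkeeping --- is essentially the strategy behind the summed version, so even if carried out carefully it would re-derive \cref{thm:GPPV_conj} rather than prove \cref{conj:GPPV}.
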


This conjecture has been proved by Murakami~\cite{M:GPPV}, except for the convergence of $ \lim_{q \to \zeta_k} \widehat{Z}_{b} (q; M) $ for each $ b \in \Spin^c(M) $.

\begin{thm}[{\cite[Theorem 1.2]{M:GPPV}}] \label{thm:GPPV_conj}
	For a negative definite plumbed manifold $ M $, we have
	\[
	Z_k(M)
	=
	\lim_{q \to \zeta_k} \frac{1}{2(\zeta_{2k} - \zeta_{2k}^{-1}) \sqrt{\abs{\det W}}}
	\sum_{\substack{
			a \in \Z^V/W(\Z^V), \\
			b \in (\delta + 2\Z^V)/2W(\Z^V)
	}}
	\bm{e} \left( -k \transpose{a} W^{-1} a - \transpose{a} W^{-1} b \right)
	\widehat{Z}_{b} (q; M).
	\]
	Here, the limit is taken over $ \theta_0 \le \arg \left( \tau - 1/k \right) \le \theta_1 $ for any $ 0 < \theta_0 < \theta_1 < \pi $. 
\end{thm}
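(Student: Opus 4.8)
The plan is to evaluate the right--hand side directly by combining the false--modular--series description of the GPPV invariants with the Euler--Maclaurin asymptotic expansion, and then to match the outcome with the WRT formula of \cref{prop:WRT_rep} via a Gauss sum reciprocity.

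First I would recast everything in the language of \cref{sec:modular_series_framework}. By \cref{lem:coeff_of_F} \cref{item:lem:coeff_of_F:expression} the product $F_V^{}(z_V^{}) = \prod_{v \in V} F_v(z_v)$ lies in $\frakR_V^{}$, and $F_l = \widetilde{\coe}[F_V^{}](l)$; hence, by \cref{dfn:GPPV_inv}, $\widehat{Z}_b(q; M) = q^\Delta \Phi[\widetilde{g}_b, \gamma](\tau)$, where $\gamma(\tau; x) \coloneqq q^{-\transpose{x} W^{-1} x/4}$ is a kernel function (negative definiteness of $W$ makes $-W^{-1}$ positive definite, giving the required exponential decay) and $\widetilde{g}_b \in \widetilde{\frakQ}_V^{}$ is the false quasi-polynomial obtained from $F_V^{}$ by restriction to the coset $b + 2W(\Z^V)$. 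Since $\delta + 2\Z^V$ is the disjoint union of the cosets $2W(\Z^V) + b$ and $\bm{e}(-\transpose{a}W^{-1}b) = \bm{e}(-\transpose{a}W^{-1}l)$ whenever $l - b \in 2W(\Z^V)$, folding the sum over $b$ into the lattice sum and absorbing both the linear phase and the Gauss weight into the quasi-polynomial rewrites the right--hand side of the theorem as
\[
\frac{1}{2(\zeta_{2k} - \zeta_{2k}^{-1}) \sqrt{\abs{\det W}}} \sum_{a \in \Z^V / W(\Z^V)} \bm{e}\bigl(-k\transpose{a}W^{-1}a\bigr)\, \lim_{q \to \zeta_k} \Phi[\widetilde{g}_a, \gamma](\tau)
\]
for suitable $\widetilde{g}_a \in \widetilde{\frakQ}_V^{} \otimes_\Q \bbC$. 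Moreover $\gamma(\tau + 1/k; x) = \bm{e}\bigl(-\frac{1}{4k}\transpose{x}W^{-1}x\bigr)\gamma(\tau; x)$ with $\bm{e}\bigl(-\frac{1}{4k}\transpose{x}W^{-1}x\bigr)$ periodic on $\Z^V$, so each $\Phi[\widetilde{g}_a, \gamma](\tau + 1/k)$ is again a false modular series of the Gaussian kernel.

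Next I would apply the Euler--Maclaurin expansion for false modular series, \cref{prop:asymp_EM} \cref{item:prop:asymp_EM:2}, with $f(x) = e^{-\pi\transpose{x}(-W^{-1})x}$ and $t = \sqrt{(\tau - 1/k)/\iu}$, obtaining $\Phi[\widetilde{g}_a, \gamma](\tau) \sim \varphi_a \odot f(t)$ as $\tau \to 1/k$ along $\theta_0 \le \arg(\tau - 1/k) \le \theta_1$, where $\varphi_a$ is the generating function $e^{\alpha \cdot t} G_a(e^{t_1}, \dots, e^{t_{\abs{V}}})$ attached to $\widetilde{g}_a$, with Bernoulli-type coefficients as in \cref{rem:asymp_coeff_Bernoulli}. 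The decisive structural point is that the outer sum over $a \in \Z^V / W(\Z^V)$ against $\bm{e}(-k\transpose{a}W^{-1}a)$ acts as a finite Fourier transform on the coset variable: running the ensuing orthogonality shows that (i) every coefficient of a strictly negative power of $(\tau - 1/k)$ cancels in the total sum, so the radial limit in the statement exists along the sector, and (ii) the surviving $t^0$-coefficient collapses to a finite Gauss sum attached to the quadratic form $\frac{1}{4k}W^{-1}$ weighted by the $F_v$.

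Finally I would identify this finite Gauss sum with \cref{prop:WRT_rep}. Writing $l \in \delta + 2\Z^V$ as $l = 2kWm + \nu$ and applying Gauss sum reciprocity (Landsberg--Schaar / Milgram) for the form $\frac{1}{2k}W$ against its inverse converts the sum over the ``long'' variable $m$ into a sum over $\mu \bmod 2k\Z^V$ with $\mu_v \not\equiv 0 \bmod k$, reproducing $\sum_\mu \bm{e}\bigl(\frac{1}{4k}\transpose{\mu}W\mu\bigr)\prod_v F_v(\zeta_{2k}^{\mu_v})$; negative definiteness forces $\sgn(W) = -\abs{V}$, pinning down the $\zeta_8$-phase, and bookkeeping of $\sqrt{2k}^{\,\abs{V}}$, $\sqrt{\abs{\det W}}$, $\zeta_{4k}^{-\sum_v(w_v + 3)}$ against $q^\Delta$ matches the remaining normalizations. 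The main obstacle is exactly this last coordination: vertices of degree $\ge 3$ contribute genuinely polynomial-weighted false quasi-polynomials (\cref{lem:coeff_of_F} \cref{item:lem:coeff_of_F:symmetry}), so the Bernoulli-polynomial computation of the $t^0$-coefficient and the cancellation of the singular terms must be carried out with enough precision to survive Gauss sum reciprocity, and keeping every phase and power of $2k$ consistent through the Euler--Maclaurin and reciprocity steps is the technically delicate part.
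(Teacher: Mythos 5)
First, a point of reference: the paper does not prove \cref{thm:GPPV_conj} at all; it is imported verbatim from the author's earlier work \cite{M:GPPV}, and the only new content here is the remark that the argument there extends from radial limits $\tau = 1/k + \iu t$ to sectorial limits by applying the Euler--Maclaurin asymptotics to $\exp(e^{\iu\theta}\,\transpose{x}W^{-1}x)$. Measured against that cited argument, your overall route (rewrite the $\widehat{Z}_b$ as Gaussian-kernel false modular series, expand with \cref{prop:asymp_EM} at $\tau = 1/k$, and match the outcome with the Gauss-sum formula of \cref{prop:WRT_rep} via reciprocity) is the right kind of plan, and your preliminary reductions (folding the sum over $b$ into the lattice, absorbing the phases into quasi-polynomials, periodicity of $\bm{e}\bigl(-\tfrac{1}{4k}\transpose{x}W^{-1}x\bigr)$ on the lattice) are sound. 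But two steps that carry essentially all of the content are asserted rather than proved.

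The first gap is the claim that the sum over $a \in \Z^V/W(\Z^V)$ weighted by $\bm{e}(-k\transpose{a}W^{-1}a - \transpose{a}W^{-1}b)$ "acts as a finite Fourier transform" whose "orthogonality" cancels every negative half-power of $\tau - 1/k$ and collapses the constant term. The weight is quadratic in $a$ (a Gauss-type weight), not a character, so there is no orthogonality relation to run; the vanishing of the singular part of the expansion for this specific weighted combination is exactly the difficult arithmetic input of the cited theorem. The paper itself emphasizes that the convergence of each individual $\lim_{q\to\zeta_k}\widehat{Z}_b(q;M)$ is still open, so no termwise argument is available and the cancellation must be established by actually computing the relevant weighted Gauss sums (or residue contributions) attached to the Bernoulli-type coefficients of \cref{rem:asymp_coeff_Bernoulli}. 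Without that computation the existence of the limit in the statement is not established.

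The second gap is the identification of the surviving constant term with \cref{prop:WRT_rep}. The WRT sum there runs over $\mu \in (\Z\smallsetminus k\Z)^V/2k\Z^V$, i.e., it excludes precisely the classes at which $F_v(\zeta_{2k}^{\mu_v})$ has a pole when $\deg(v)\ge 3$; your reciprocity step must explain why these excluded classes do not arise on the $\widehat{Z}$ side (equivalently, how the higher-order poles of $F_v$ at $z_v=\pm 1$ are matched by the polynomial parts of the false quasi-polynomials in the Euler--Maclaurin constant term), and this, together with the coordination of $\zeta_8^{-\sgn(W)}$, $\zeta_{4k}^{-\sum_v(w_v+3)}$, $\sqrt{2k}^{\,\abs{V}}$, $\sqrt{\abs{\det W}}$ and $q^{\Delta}$, is exactly where the cited proof does its work. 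You flag it as "delicate" but supply no argument, so as it stands the proposal is a correct roadmap to the theorem of \cite{M:GPPV} rather than a proof of it.
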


\begin{rem}
	\cite{M:GPPV} only consider the case $ \tau = 1/k + \iu t $ for $ t > 0 $.
	However, the argument in \cite{M:GPPV} also works for $ t = e^{\iu \theta} $ with any $ 0 < \theta < \pi $ by applying the asympotic formula of Euler--Maclaurin type for the function $ \exp (e^{\iu \theta} \cdot \transpose{x} W^{-1} x) $.
\end{rem}


\subsection{Twisted sums of GPPV invariants} \label{subsec:GPPV_twist}


For each $ \alpha \in H^1(M, \Z) \cong W^{-1} (\Z^V) / \Z^V $, we define
\[
\widehat{Z}^{(\alpha)} (q; M)
=
\sum_{b \in (\delta + 2\Z^V)/2W(\Z^V)}
\bm{e}\left( \transpose{\alpha} b \right)
\widehat{Z}_{b} (q; M)
=
q^{\Delta} \sum_{l \in \delta + 2\Z^V} \bm{e}\left( \transpose{\alpha} l \right) F_l q^{-\transpose{l} W^{-1} l/4}.
\]
Then, \cref{thm:GPPV_conj} can be restated as
\begin{equation} \label{eq:GPPV_conj_modify}
	Z_k(M)
	=
	\frac{1}{2(\zeta_{2k} - \zeta_{2k}^{-1}) \sqrt{\abs{\det W}}}
	\lim_{q \to \zeta_k} 
	\sum_{\alpha \in W^{-1} (\Z^V) / \Z^V}
	\bm{e} \left( -k \transpose{\alpha} W \alpha \right)
	\widehat{Z}^{(\alpha)} (q; M).
\end{equation}
Thus, to establish asymptotic expansions of WRT invariants, it suffices to consider a modular transformation of $ \widehat{Z}^{(\alpha)} (q) $ for each $ \alpha \in W^{-1} (\Z^V) / \Z^V $. 


\section{Linking matrices of plumbing graphs} \label{sec:linking_matrix}


In this section, we consider properties of linking matrices.
In particular, we give a formula for its inverse matrices in \cref{lem:linking_matrix_inverse}, which we need to provide a modular transformation of $ \widehat{Z}^{(\alpha)} (q) $ .

Throughout this section, we fix a plumbing graph $ \Gamma $.
Let $ W $ be its linking matrix.


\subsection{Branches and trunks} \label{subsec:branch_trunk}


We use the following notation:
\begin{itemize}
	\item Let $ V $ be the set of vertices of $ \Gamma $.
	
	\item For each vertex $ v \in V $, we define its \emph{degree} as
	$ \deg(v) \coloneqq \left\{ v' \in V \relmiddle{|} \text{$ v' $ is adjacent to $ v $} \right\} $.
	
	\item For a positive integer $ d $, we define $ V_d \coloneqq \{ v \in V \mid \deg (v) = d \} $.
	\item We define $ V^{\veemidvert} \coloneqq \{ v \in V \mid \deg (v) \ge 3 \} $.%
	\footnote{The symbol $ \veemidvert $ represents a vertex where three edges meet.  
		Its Unicode code point is U+2A5B, and it can be produced by using the \texttt{stix} or \texttt{unicode-math} package with the command \texttt{\textbackslash veemidvert}.}
\end{itemize}

To calculate the inverse matrix $ W^{-1} $, we introduce the following terminology.

\begin{dfn}
	\begin{enumerate}
		\item A \emph{branch of $ \Gamma $ emanating from a vertex $ v \in V^{\veemidvert} $} is a subgraph consisting of $ v $, a vertex of degree $1$, and several intermediate vertices of degree $2$ connecting them, as illustrated in \cref{fig:branch}.
		
		\item A \emph{trunk of $ \Gamma $ emanating from a vertex $ v \in V^{\veemidvert} $ toward a vertex $ v' \in V^{\veemidvert} $} is an oriented subgraph consisting of $ v $, $ v' $, and several intermediate vertices of degree $2$ connecting them, oriented from $v$ to $v'$, as illustrated in \cref{fig:trunk}.
		
	\end{enumerate}
\end{dfn}

\begin{figure}[htp]
	\begin{minipage}[b]{0.45\linewidth}
		\centering
		\begin{tikzpicture}
			\node[shape=circle,fill=black, scale = 0.4] (0) at (0,0) { };
			\node[shape=circle,fill=black, scale = 0.4] (1) at (1,0) { };
			\coordinate (1+) at (1.5,0) { };
			\coordinate (2-) at (2.5,0) { };
			\node[shape=circle,fill=black, scale = 0.4] (2) at (3,0) { };
			\node[shape=circle,fill=black, scale = 0.4] (3) at (4,0) { };
			
			\node[above=0.1cm of 0]{$ w_v $};
			\node[above=0.1cm of 1]{$ w_{i_{1}} $};
			\node[above=0.1cm of 2]{$ w_{i_{s-1}} $};
			\node[above=0.1cm of 3]{$ w_{i_{s}} $};
			
			\path [-](0) edge node[left] {} (1);
			\path [-](1) edge node[left] {} (1+);
			\path [dashed](1+) edge node[left] {} (2-);
			\path [-](2-) edge node[left] {} (2);
			\path [-](2) edge node[left] {} (3);
			
			\coordinate (0u) at (-0.7,-0.5) { };
			\coordinate (0d) at (-0.7,0.5) { };
			\path [-](0u) edge node[left] {} (0);
			\node[left=0.5cm of 0,rotate=90,anchor=center]{$\cdots$};
			\path [-](0d) edge node[left] {} (0);
		\end{tikzpicture}
		\caption{A branch of plumbing graph} \label{fig:branch}
	\end{minipage}
	\begin{minipage}[b]{0.45\linewidth}
		\centering
		\begin{tikzpicture}
			\node[shape=circle,fill=black, scale = 0.4] (0) at (0,0) { };
			\node[shape=circle,fill=black, scale = 0.4] (1) at (1,0) { };
			\coordinate (1+) at (1.5,0) { };
			\coordinate (2-) at (2.5,0) { };
			\node[shape=circle,fill=black, scale = 0.4] (2) at (3,0) { };
			\node[shape=circle,fill=black, scale = 0.4] (3) at (4,0) { };
			
			\node[above=0.1cm of 0]{$ w_v $};
			\node[above=0.1cm of 1]{$ w_{i_{1}} $};
			\node[above=0.1cm of 2]{$ w_{i_{s}} $};
			\node[above=0.1cm of 3]{$ w_{v'} $};
			
			\path [-](0) edge node[left] {} (1);
			\path [-](1) edge node[left] {} (1+);
			\path [dashed](1+) edge node[left] {} (2-);
			\path [-](2-) edge node[left] {} (2);
			\path [-](2) edge node[left] {} (3);
			
			\coordinate (0u) at (-0.7,-0.5) { };
			\coordinate (0d) at (-0.7,0.5) { };
			\path [-](0u) edge node[left] {} (0);
			\node[left=0.5cm of 0,rotate=90,anchor=center]{$\cdots$};
			\path [-](0d) edge node[left] {} (0);
			
			\coordinate (3u) at (4.7,-0.5) { };
			\coordinate (3d) at (4.7,0.5) { };
			\path [-](3u) edge node[left] {} (3);
			\node[right=0.5cm of 3,rotate=90,anchor=center]{$\cdots$};
			\path [-](3d) edge node[left] {} (3);
		\end{tikzpicture}
		\caption{A trunk of plumbing graph} \label{fig:trunk}
	\end{minipage}
\end{figure}

For a branch or trunk $ \beta $ emanating from a vertex $ v \in V^{\veemidvert} $ weighted as in \cref{fig:branch} or \cref{fig:trunk}, we define the following notation:
\[
i_\beta \coloneqq i_s \in V_1, \quad
l_\beta \coloneqq s, \quad
\overline{v} \coloneqq \{ \text{branches emanating from $ v $} \},
\]
\[
W_\beta \coloneqq
\pmat{w_{i_1} & 1 & & \\ 1 & \ddots & \ddots & \\ & \ddots & \ddots & 1 \\ & & 1 & w_{i_s}}, \quad
p_\beta 
\coloneqq 
\det W_\beta, \quad
q_\beta 
\coloneqq 
\vmat{w_{i_2} & 1 & & \\ 1 & \ddots & \ddots & \\ & \ddots & \ddots & 1 \\ & & 1 & w_{i_s}}, \quad
\acute{q}_\beta 
\coloneqq 
\vmat{w_{i_{s-1}} & 1 & & \\ 1 & \ddots & \ddots & \\ & \ddots & \ddots & 1 \\ & & 1 & w_{i_1}}.
\]
The following properties hold for these notations.

\begin{lem} \label{lem:p_b}
	For a branch or trunk $ \beta $ emanating from a vertex $ v \in V^{\veemidvert} $ weighted as in \cref{fig:branch} or \cref{fig:trunk}, the following properties hold.
	\begin{enumerate}
		\item \label{item:lem:p_b:comp}
		If $ p_\beta \neq 0 $, then $ q_\beta / p_\beta, \acute{q}_\beta / p_\beta $, and $ (-1)^{l_\beta}/p_\beta $ are $ (1,1), (s, s) $, and $ (1,s) $ components of the matrix $ W_\beta^{-1} $ respectively.
		
		\item \label{item:lem:p_b:2_2_matrix} We have
		\[
		\pmat{w_{i_1} & -1 \\ 1 & 0} \cdots \pmat{w_{i_s} & -1 \\ 1 & 0}
		= \pmat{p_\beta & * \\ q_\beta & *}.
		\]
		\item \label{item:lem:p_b:conti_frac}
		If $ p_\beta \neq 0 $, then we have
		\[
		\frac{q_\beta}{p_\beta}
		= \cfrac{1}{w_{i_1} - \cfrac{1}{\ddots - \cfrac{1}{w_{i_s}}}}, \quad
		\frac{\acute{q}_\beta}{p_\beta}
		= \cfrac{1}{w_{i_s} - \cfrac{1}{\ddots - \cfrac{1}{w_{i_1}}}}.
		\]
	\end{enumerate}
\end{lem}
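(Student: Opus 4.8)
The plan is to reduce everything to the elementary theory of three‑term recursions for tridiagonal determinants, with the matrix identity in \cref{item:lem:p_b:2_2_matrix} serving as the computational engine. First I would fix notation: let $D_k$ be the determinant of the leading $k\times k$ block of $W_\beta$ (the tridiagonal matrix on $w_{i_1},\dots,w_{i_k}$), with the conventions $D_0=1$, $D_{-1}=0$. Laplace expansion along the last row gives $D_k=w_{i_k}D_{k-1}-D_{k-2}$, so $p_\beta=D_s$; moreover $\acute q_\beta=D_{s-1}$, because reversing the order of the diagonal entries of a tridiagonal matrix is conjugation by the anti‑diagonal permutation matrix and hence leaves the determinant unchanged. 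Symmetrically, writing $\widehat D_k$ for the determinant of the trailing block on $w_{i_k},\dots,w_{i_s}$, one has $q_\beta=\widehat D_2$ and $\widehat D_{k}=w_{i_k}\widehat D_{k+1}-\widehat D_{k+2}$ with $\widehat D_{s+1}=1$.

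Next I would prove \cref{item:lem:p_b:2_2_matrix} by induction on $s$. Writing $\bigl(\begin{smallmatrix}a_k&b_k\\ c_k&d_k\end{smallmatrix}\bigr)$ for the product of the first $k$ factors $\bigl(\begin{smallmatrix}w_{i_j}&-1\\ 1&0\end{smallmatrix}\bigr)$, right‑multiplication by the next factor gives $a_{k+1}=w_{i_{k+1}}a_k+b_k$, $b_{k+1}=-a_k$, hence $a_{k+1}=w_{i_{k+1}}a_k-a_{k-1}$ with $a_0=1$, $a_1=w_{i_1}$; comparing initial data and recursion shows $a_k=D_k$, so $a_s=p_\beta$. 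Likewise $c_{k+1}=w_{i_{k+1}}c_k-c_{k-1}$ with $c_0=0$, $c_1=1$, which identifies $c_k$ with the determinant of the tridiagonal matrix on $w_{i_2},\dots,w_{i_k}$, so $c_s=q_\beta$; this is \cref{item:lem:p_b:2_2_matrix}, and as a by‑product one reads off $b_s=-\acute q_\beta$ and (since each transfer matrix has determinant $1$) the relation $p_\beta d_s+q_\beta\acute q_\beta=1$.

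Then I would deduce \cref{item:lem:p_b:comp} and \cref{item:lem:p_b:conti_frac}. Under the hypothesis $p_\beta\neq 0$ the matrix $W_\beta$ is invertible, so the adjugate formula $(W_\beta^{-1})_{ij}=(-1)^{i+j}\det\bigl(W_\beta^{(j,i)}\bigr)/p_\beta$ applies, where $W_\beta^{(j,i)}$ denotes $W_\beta$ with row $j$ and column $i$ removed. Deleting row $1$ and column $1$ leaves exactly the trailing block on $w_{i_2},\dots,w_{i_s}$, so $(W_\beta^{-1})_{11}=\widehat D_2/p_\beta=q_\beta/p_\beta$; deleting row $s$ and column $s$ leaves the leading block on $w_{i_1},\dots,w_{i_{s-1}}$, so $(W_\beta^{-1})_{ss}=D_{s-1}/p_\beta=\acute q_\beta/p_\beta$ by the reversal invariance noted above. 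For the corner entry, deleting row $s$ and column $1$ leaves a triangular matrix whose diagonal consists entirely of the off‑diagonal entries of $W_\beta$, so it has determinant $\pm1$; combined with the cofactor sign $(-1)^{1+s}$, a direct computation gives $(W_\beta^{-1})_{1s}=(-1)^{l_\beta}/p_\beta$. Finally, \cref{item:lem:p_b:conti_frac} follows by iterating the Laplace expansion: from $p_\beta=w_{i_1}\widehat D_2-\widehat D_3$ and $\widehat D_2=w_{i_2}\widehat D_3-\widehat D_4$, etc., down to $\widehat D_s=w_{i_s}$, one gets $p_\beta/q_\beta=w_{i_1}-1/(w_{i_2}-1/(\cdots))$; taking reciprocals yields the first displayed continued fraction, and applying the same argument to the reversed branch yields the second.

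The main obstacle is not conceptual but the sign bookkeeping in the $(1,s)$ entry: one must correctly recognize the deleted‑row‑and‑column minor as a unit‑triangular determinant and pair it with the right cofactor sign; everything else is a routine induction together with the reversal symmetry of tridiagonal determinants, which should be stated once and reused.
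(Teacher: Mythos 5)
Your overall strategy coincides with the paper's: part \cref{item:lem:p_b:2_2_matrix} is proved there by exactly the same transfer-matrix recursion (the paper multiplies the $2\times 2$ factors from the right end and compares with the trailing tridiagonal minors, you multiply from the left and compare with the leading ones; the induction is identical), and part \cref{item:lem:p_b:comp} is the cofactor expansion that the paper dispatches in one line. For part \cref{item:lem:p_b:conti_frac} you take a genuinely different, but equally elementary, route: the paper deduces the continued fractions from part \cref{item:lem:p_b:2_2_matrix} via the M\"obius action of $\SL_2(\Z)$ on $\bbP^1(\Q)$, while you unfold the Laplace expansions $p_\beta = w_{i_1}\widehat D_2 - \widehat D_3$, $\widehat D_2 = w_{i_2}\widehat D_3 - \widehat D_4$, and so on. One caveat: your unfolding divides by the intermediate minors $\widehat D_k$, whose nonvanishing is not guaranteed by the sole hypothesis $p_\beta \neq 0$ (take $s=2$, $w_{i_2}=0$), whereas the projective reading through $\bbP^1(\Q)$ sidesteps this; either add that interpretation or deduce \cref{item:lem:p_b:conti_frac} from \cref{item:lem:p_b:2_2_matrix} as the paper does.

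There is, however, a concrete problem in your treatment of the corner entry in part \cref{item:lem:p_b:comp}. The minor of $W_\beta$ obtained by deleting row $s$ and column $1$ is lower triangular with every diagonal entry equal to an off-diagonal entry $1$ of $W_\beta$, so its determinant is exactly $+1$, not merely $\pm 1$; combined with the cofactor sign $(-1)^{1+s}$ this yields $(W_\beta^{-1})_{1s} = (-1)^{s+1}/p_\beta = (-1)^{l_\beta + 1}/p_\beta$, not the $(-1)^{l_\beta}/p_\beta$ you assert. The ``$\pm 1$'' hedge conceals a sign you cannot recover: test $s=1$, where the entry is $+1/w_{i_1}$, or $s=2$, where the $(1,2)$ entry of the inverse of $\pmat{w_{i_1} & 1 \\ 1 & w_{i_2}}$ is $-1/(w_{i_1}w_{i_2}-1)$. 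So with the paper's conventions ($l_\beta = s$, off-diagonal entries $+1$) your cited ingredients produce the opposite sign from the one stated; you should either correct your bookkeeping or point out explicitly that the sign printed in the statement appears to be off by one, rather than declare that ``a direct computation gives'' it.
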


\begin{proof}
	\cref{item:lem:p_b:comp} follows from the cofactor expansion of the inverse matrix.
	
	We now prove \cref{item:lem:p_b:2_2_matrix}. 
	For each $ i \le j \le s $, let
	\[
	a_j \coloneqq
	\pmat{w_{i_j} & 1 & & \\ 1 & \ddots & \ddots & \\ & \ddots & \ddots & 1 \\ & & 1 & w_{i_s}}, \quad
	\pmat{p_j & * \\ q_j & *} \coloneqq
	\pmat{w_{i_j} & -1 \\ 1 & 0} \cdots \pmat{w_{i_s} & -1 \\ 1 & 0}.
	\]
	Then, since we have
	\[
	a_s = k_s = p_s, \quad
	q_s = 1, \quad
	a_j = k_j a_{j+1} - a_{j+2}, \quad
	p_j = k_j p_{j+1} - p_{j+2}, \quad
	q_j = p_{j-1},
	\]
	we obtain $ p_j = a_j, q_j = a_{j-1} $. 
	Thus, we have $ p_1 = p_\beta, q_1 = q_\beta $ because $ a_1 = p_\beta, a_2 = q_\beta $.
	
	\cref{item:lem:p_b:conti_frac} follows from \cref{item:lem:p_b:2_2_matrix} and the action of $ \SL_2(\Z) $ on $ \bbP^1(\Q) $ via M\"{o}bius transformations.
\end{proof}

Next, we give a sufficient condition for $ p_\beta \neq 0 $.

\begin{dfn} \label{dfn:strong_nondeg}
	A plumbing graph $ \Gamma $ is \emph{strongly nondegenerate} if its linking matrix $ W $ is invertible
	and the $ V^{\veemidvert} \times V^{\veemidvert} $-submatrix of $ W^{-1} $ is also invertible.
\end{dfn}

\begin{rem} \label{rem:neg_def_strong_nondeg}
	As defined in \cite[Definition 4.3]{Gukov-Manolescu}, a plumbing graph $ \Gamma $ is said to be \emph{weakly negative definite}
	if $ W $ is invertible and the $ V^{\veemidvert} \times V^{\veemidvert} $-submatrix of $ W^{-1} $ is negative definite.
	In this case, $ \Gamma $ is strongly nondegenerate.
\end{rem}

We have the following sufficient condition for $ p_\beta \neq 0 $.

\begin{lem} \label{lem:strong_nondeg_p_non-zero}
	For a strongly nondegenerate plumbing graph $ \Gamma $, its any branch or trunk $ \beta $ satisfies $ p_\beta \neq 0 $.
\end{lem}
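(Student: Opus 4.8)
The plan is to deduce the non-vanishing of all the $p_\beta$ simultaneously from a single determinant computation. Let $W'$ denote the principal submatrix of $W$ indexed by the vertices of degree at most $2$, that is, by $V \smallsetminus V^{\veemidvert}$. First I would record the combinatorial observation that, because $\Gamma$ is a tree, the subgraph induced on $V \smallsetminus V^{\veemidvert}$ is a disjoint union of paths, and each such path is exactly the set $\{ i_1, \dots, i_s \}$ of internal vertices of a unique branch or trunk $\beta$: an endpoint of such a path is either a leaf of $\Gamma$ or attaches to a vertex of $V^{\veemidvert}$, and the two cases distinguish branches from trunks. (If $V^{\veemidvert} = \emptyset$ there are no branches or trunks and there is nothing to prove, and a trunk with $s = 0$ has $p_\beta = 1$ trivially.) Consequently, after reordering, $W'$ is block diagonal with blocks the matrices $W_\beta$, so $\det W' = \prod_\beta p_\beta$, and it suffices to prove $\det W' \neq 0$.

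For this last step I would use strong nondegeneracy directly. Put $C^* \coloneqq (W^{-1})_{V^{\veemidvert} \times V^{\veemidvert}}$; by \cref{dfn:strong_nondeg} both $W$ and $C^*$ are invertible. Suppose $\det W' = 0$ and choose a nonzero $u$ in the kernel of $W'$. Extending $u$ by zero on $V^{\veemidvert}$ produces $\widetilde{u} \in \R^V$ for which $W\widetilde{u}$ is supported on $V^{\veemidvert}$; writing $W\widetilde{u} = (0, z)$ in the decomposition $\R^V = \R^{V \smallsetminus V^{\veemidvert}} \oplus \R^{V^{\veemidvert}}$ with $z \in \R^{V^{\veemidvert}}$ and applying $W^{-1}$, the $V^{\veemidvert}$-components give $0 = C^* z$, hence $z = 0$, hence $\widetilde{u} = 0$, a contradiction. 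Thus $\det W' \neq 0$, and every $p_\beta \neq 0$. One could instead invoke Jacobi's identity for the minors of $W^{-1}$, or \cref{lem:block_matrix}; but the statement of \cref{lem:block_matrix} presupposes invertibility of the complementary block, which is precisely the conclusion we are after, so the short kernel argument above is cleaner here.

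The only step requiring genuine care is the combinatorial one — identifying the connected components of the subgraph on $V \smallsetminus V^{\veemidvert}$ with the branch and trunk interiors, and matching each $W_\beta$ with the corresponding principal submatrix of $W$ (equal diagonal entries $w_{i_j}$, unit entries between consecutive $i_j$, zeros elsewhere) — which is exactly the place the tree hypothesis is used. Everything else is routine linear algebra. In particular this also covers the weakly negative definite case of \cref{rem:neg_def_strong_nondeg}, and hence all negative definite plumbed manifolds.
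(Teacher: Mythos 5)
Your proof is correct, and its overall shape matches the paper's: both arguments hinge on identifying the principal submatrix of $W$ indexed by the vertices of degree at most $2$ with the block-diagonal matrix whose blocks are the $W_\beta$, and then deducing its invertibility from strong nondegeneracy. Where you diverge is the linear-algebra step. The paper applies \cref{lem:block_matrix} to $W$ to obtain $(\det C^*)(\det W)=\det A$, where $A$ is the degree-at-most-$2$ block and $C^*$ is the $V^{\veemidvert}\times V^{\veemidvert}$ submatrix of $W^{-1}$, and concludes $\det A\neq 0$; you instead run a short kernel argument. Your parenthetical criticism of the determinant route is fair: as stated, \cref{lem:block_matrix} assumes both diagonal blocks of $X$ invertible, and in this application $A$ is exactly the block whose invertibility is at stake (nor does strong nondegeneracy by itself guarantee that the $V^{\veemidvert}\times V^{\veemidvert}$ block of $W$ itself is invertible). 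The identity $(\det C^*)(\det W)=\det A$ is really Jacobi's identity and holds whenever $W$ is invertible, so the paper's computation is easily repaired, but your kernel argument needs no repair and is self-contained. Your treatment of the combinatorial step---that the components of the induced subgraph on $V\smallsetminus V^{\veemidvert}$ are precisely the branch and trunk interiors, using that $\Gamma$ is a connected tree, so that the submatrix is block diagonal with blocks the $W_\beta$---is spelled out in more detail than the paper's one-line assertion, and it is correct as you state it, including the degenerate cases $V^{\veemidvert}=\emptyset$ and trunks of length zero.
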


\begin{proof}
	We can assume $ V^{\veemidvert} \neq \emptyset $ since $ 0 \times 0 $ matrix has determinant $ 1 $.
	Let $ A $ be the $ (V_1 \cup V_2) \times (V_1 \cup V_2) $-submatrix of $ W $ and
	$ C^* $ be the $ V^{\veemidvert} \times V^{\veemidvert} $-submatrix of $ W^{-1} $. 
	By applying \cref{lem:block_matrix} for $ W $, we have $ (\det C^*)(\det W) = \det A $.
	Since $ \Gamma $ is strongly nondegenerate, $ C $ is invertible.
	Thus, $ A $ is also invertible. 
	Here, we can write $ A = \diag(W_\beta)_\beta $, where $ \beta $ runs over all branches or trunks.
	Thus, $ W_\beta $ is invertible, that is, $ p_\beta \neq 0 $. 
\end{proof}


\subsection{Computing inverses of linking matrices} \label{subsec:linking_matrix_inverse}


For a strongly nondegenerate plumbing graph $ \Gamma $, we define the following notation. 
\begin{itemize}
	\item For vertices $ v \neq v' \in V^{\veemidvert} $, define
	\begin{align}
		w_{v, v}^{\veemidvert} = w_{v}^{\veemidvert}
		&\coloneqq
		w_v - \sum_{\beta \in \overline{v}} \frac{\acute{q}_\beta}{p_\beta} 
		- \sum_{\text{trunk $ \tau $ emanating from $ v $}} \frac{\acute{q}_\beta}{p_\beta}, 
		\\
		w_{v, v'}^{\veemidvert}
		&\coloneqq
		\begin{cases}
			1 & \text{$ v $ is adjacent to $ v' $}, \\
			-1/p_\tau & \text{there exists a trunk $ \tau $ emanating from $ v $ toward $ v' $}, \\
			0 & \text{otherwise}.
		\end{cases}
	\end{align}
	\item Let $ W^{\veemidvert} \coloneqq (w_{v, v}^{\veemidvert})_{v, v' \in V^{\veemidvert}} \in \Sym(\Z^{V^{\veemidvert}}) $. 
\end{itemize}

Here we remark that $ p_\beta \neq 0 $ for each branch or trunk $ \beta $ by \cref{lem:strong_nondeg_p_non-zero} since $ \Gamma $ is strongly nondegenerate.

We now arrive at the formula for the inverse of a linking matrix, which was the main objective of this section.

\begin{lem} \label{lem:linking_matrix_inverse}
	For a strongly nondegenerate plumbing graph $ \Gamma $, the matrix $ W^{\veemidvert} $ is the inverse of 
	the $ V^{\veemidvert} \times V^{\veemidvert} $-submatrix of $ W^{-1} $ and satisfies
	\[
	\det W = (\det W^{\veemidvert}) \prod_{\text{branches or trunks $ \beta $}} p_\beta.
	\]
\end{lem}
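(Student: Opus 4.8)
The plan is to realize $W^{\veemidvert}$ as a Schur complement of $W$. Order the vertices of $\Gamma$ so that those in $V_1 \cup V_2$ come first and those in $V^{\veemidvert}$ come last, and write $W = \left(\begin{smallmatrix} A & B \\ \transpose{B} & C \end{smallmatrix}\right)$, where $A$ is the $(V_1 \cup V_2) \times (V_1 \cup V_2)$ submatrix and $C$ the $V^{\veemidvert} \times V^{\veemidvert}$ submatrix. (As in the proof of \cref{lem:strong_nondeg_p_non-zero} we may assume $V^{\veemidvert} \ne \emptyset$.) The first, combinatorial, step is the observation that $A$ is block diagonal, $A = \bigoplus_\beta W_\beta$, the sum running over all branches and trunks $\beta$ of $\Gamma$. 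Indeed, since $\Gamma$ is a tree and every vertex of $V_1 \cup V_2$ has degree $\le 2$, deleting $V^{\veemidvert}$ leaves a disjoint union of paths; following $\Gamma$ outward from any such path until reaching either a leaf or a vertex of $V^{\veemidvert}$ identifies it with the chain of intermediate vertices of a unique branch (if one end is a leaf) or a unique trunk (if both ends abut $V^{\veemidvert}$), and the tridiagonal matrix recorded along that path is exactly $W_\beta$. By \cref{lem:strong_nondeg_p_non-zero} each $p_\beta = \det W_\beta \ne 0$, so $A$ is invertible with $A^{-1} = \bigoplus_\beta W_\beta^{-1}$ and $\det A = \prod_\beta p_\beta$.

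Next I would invoke the block-inversion identity of \cref{lem:block_matrix}: the lower-right $V^{\veemidvert} \times V^{\veemidvert}$ block $C^*$ of $W^{-1}$ equals $(C - \transpose{B}A^{-1}B)^{-1}$ and satisfies $\det C^* \cdot \det W = \det A$. (Only invertibility of $A$ and of $W$ is actually needed for these two conclusions, via Jacobi's identity for complementary minors together with the Schur factorization $W = \left(\begin{smallmatrix} I & 0 \\ \transpose{B}A^{-1} & I\end{smallmatrix}\right)\left(\begin{smallmatrix} A & 0 \\ 0 & C - \transpose{B}A^{-1}B\end{smallmatrix}\right)\left(\begin{smallmatrix} I & A^{-1}B \\ 0 & I\end{smallmatrix}\right)$, so no separate hypothesis on $C$ is required.) It then remains to identify the Schur complement $C - \transpose{B}A^{-1}B$ with $W^{\veemidvert}$, which I would do entrywise. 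The off-diagonal block $B$ has, in each column $v \in V^{\veemidvert}$, exactly one nonzero entry (equal to $1$) for each branch or trunk incident to $v$, located in the row of the intermediate vertex adjacent to $v$; combined with the block form of $A^{-1}$ this reduces $(\transpose{B}A^{-1}B)_{v,v}$ to a sum of corner entries of the matrices $W_\beta^{-1}$ over the branches and trunks at $v$, and $(\transpose{B}A^{-1}B)_{v,v'}$ for $v\ne v'$ to the single off-diagonal corner entry of $W_\tau^{-1}$ when a trunk $\tau$ joins $v$ to $v'$ (and $0$ otherwise). By \cref{lem:p_b} \cref{item:lem:p_b:comp} these corner entries are exactly $q_\beta/p_\beta$, $\acute{q}_\beta/p_\beta$ and $(-1)^{l_\beta}/p_\beta$; matching them against the definition of $w^{\veemidvert}_{v,v}$ and $w^{\veemidvert}_{v,v'}$, and using $C_{v,v}=w_v$ and $C_{v,v'}=1$ for directly adjacent $v,v'$, yields $C - \transpose{B}A^{-1}B = W^{\veemidvert}$. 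Feeding this back into the block-inversion identity gives $(C^*)^{-1} = W^{\veemidvert}$ and $\det W = \det A/\det C^* = \det(W^{\veemidvert})\prod_\beta p_\beta$, as claimed.

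\textbf{Main obstacle.} The delicate point is the bookkeeping in the last step: keeping straight which end of a branch or trunk abuts the given vertex of $V^{\veemidvert}$, so that the correct corner of $W_\beta^{-1}$ — the $(1,1)$ versus the $(l_\beta,l_\beta)$ entry, and the sign of the $(1,l_\beta)$ entry — is attached to the right matrix entry. This is precisely where the $q_\beta$, $\acute{q}_\beta$ and $(-1)^{l_\beta}$ in the definition of $W^{\veemidvert}$ originate, and carrying it out requires fixing orientation conventions for trunks consistently with \cref{fig:branch,fig:trunk}; everything else is a routine Schur-complement computation.
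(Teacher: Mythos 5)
Your proposal is correct and follows essentially the same route as the paper: apply \cref{lem:block_matrix} to the block decomposition with $A=\bigoplus_\beta W_\beta$ (exactly as in the proof of \cref{lem:strong_nondeg_p_non-zero}), identify the Schur complement entrywise with $W^{\veemidvert}$ via the corner entries of $W_\beta^{-1}$ from \cref{lem:p_b} \cref{item:lem:p_b:comp}, and read off the determinant identity from $\det C^*=\det A/\det W$. Your parenthetical observation that only invertibility of $A$ and $W$ is needed (not of the $V^{\veemidvert}\times V^{\veemidvert}$ block $C$) is a small but welcome sharpening of the hypotheses of \cref{lem:block_matrix}.
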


\begin{proof}
	The claim follows by applying \cref{lem:block_matrix} as in \cref{lem:strong_nondeg_p_non-zero} and computing the $ V^{\veemidvert} \times V^{\veemidvert} $-submatrix with the aid of \cref{lem:p_b} \cref{item:lem:p_b:comp}. 
\end{proof}


\subsection{Properties of strongly nondegenerate plumbing graph} \label{subsec:strong_nondeg_condition}


We recall that two plumbing graphs define homeomorphic 3-manifolds if and only if they are related by Neumann moves shown in \cref{fig:Neumann}.
For an equivalence class under Neumann moves, we have the following property.

\begin{lem} \label{lem:V_minimal}
	If a strongly non-degenerate plumbing graph $ \Gamma $ attains the minimal number of vertices $ \abs{V} $ in its equivalence class under Neumann moves, then for any branch or trunk $ \beta $, we have $ \abs{p_\beta} \ge 2 $.
\end{lem}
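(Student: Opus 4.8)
The plan is to argue by contradiction: suppose $|p_\beta| \le 1$ for some branch or trunk $\beta$ emanating from a vertex $v \in V^{\veemidvert}$. Since $\Gamma$ is strongly nondegenerate, \cref{lem:strong_nondeg_p_non-zero} gives $p_\beta \ne 0$, so we must have $p_\beta = \pm 1$. I would then show that in this case one can perform a sequence of Neumann moves that reduces the number of vertices, contradicting minimality of $|V|$.

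The key observation is the following. By \cref{lem:p_b} \cref{item:lem:p_b:2_2_matrix}, the product of the matrices $\bigl(\begin{smallmatrix} w_{i_j} & -1 \\ 1 & 0 \end{smallmatrix}\bigr)$ along $\beta$ has upper-left entry $p_\beta$ and lower-left entry $q_\beta$, and these generate the continued fraction expansion of $q_\beta/p_\beta$ via \cref{item:lem:p_b:conti_frac}. When $p_\beta = \pm 1$, the continued fraction $[w_{i_1}, \dots, w_{i_s}]$ represents $\pm q_\beta \in \Z$, i.e.\ an \emph{integer}; equivalently the matrix product above lies in a coset where repeated application of the Neumann blow-down move (the middle and right moves in \cref{fig:Neumann}, which remove a $\pm 1$-weighted vertex of degree $\le 2$ at the cost of adjusting neighboring weights) can be used to shorten the chain. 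Concretely, I would induct on $l_\beta = s$: if $s = 1$ then $p_\beta = w_{i_1} = \pm 1$, so the terminal degree-$1$ vertex carries weight $\pm 1$ and is adjacent to $v$ via a degree-$2$ chain of length zero, i.e.\ directly; in the trunk case this is the left Neumann move (absorbing $\pm 1$ into the two neighbors), and in the branch case the degree-$1$ vertex of weight $\pm 1$ attached to $v$ can be blown down, decreasing $|V|$ and changing $w_v$ by $\mp 1$ — contradiction. For the inductive step, one uses that an integer-valued continued fraction $[w_{i_1}, \dots, w_{i_s}]$ with all $|w_{i_j}|$ large is impossible unless some $w_{i_j} = \pm 1$ somewhere along the chain (or the chain collapses), and a $\pm 1$ entry at an interior or terminal degree-$2$ vertex is exactly the pattern removed by a Neumann move; after the move one obtains a shorter branch/trunk $\beta'$ with $p_{\beta'} = \pm p_\beta = \pm 1$, and the induction applies.

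More carefully, I would avoid the vague "large entries" phrasing and instead argue: if $p_\beta = \pm 1$, consider the matrix identity in \cref{lem:p_b} \cref{item:lem:p_b:2_2_matrix}. The matrix $\bigl(\begin{smallmatrix} p_\beta & * \\ q_\beta & * \end{smallmatrix}\bigr)$ has determinant $\det\bigl(\begin{smallmatrix} w_{i_1} & -1 \\ 1 & 0\end{smallmatrix}\bigr)\cdots = 1$, so with $p_\beta = \pm 1$ its lower-left entry $q_\beta$ is an integer and the matrix is $\pm\bigl(\begin{smallmatrix} 1 & 0 \\ q_\beta & 1 \end{smallmatrix}\bigr)\bigl(\begin{smallmatrix} \pm 1 & * \\ 0 & \pm 1\end{smallmatrix}\bigr)$ up to a unipotent factor — this is precisely the statement that the chain $(w_{i_1}, \dots, w_{i_s})$ is Neumann-equivalent (as a linear plumbing segment) to a single vertex or to a pair of vertices meeting at $v$ and, in the trunk case, at $v'$. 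Since Neumann moves are local and do not disturb the rest of $\Gamma$, applying this reduction strictly decreases $|V|$ while staying in the equivalence class and preserving strong nondegeneracy (the homeomorphism type of $M(\Gamma)$ is unchanged, and strong nondegeneracy is a property of $M$ via the linking form and the splitting of \cref{lem:block_matrix}). This contradicts the assumed minimality of $|V|$.

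\textbf{Main obstacle.} The delicate point is establishing rigorously that $p_\beta = \pm 1$ forces a genuine reduction by Neumann moves — i.e.\ translating the arithmetic statement "the continued fraction $[w_{i_1},\dots,w_{i_s}]$ has denominator $\pm 1$" into a concrete sequence of the three allowed moves in \cref{fig:Neumann}, and checking that the branch versus trunk cases (and the boundary behavior at $v$ and $v'$, which have degree $\ge 3$ and hence other edges attached) are handled correctly so that each move legitimately removes a vertex of degree $\le 2$ without creating an illegal configuration. One must also verify that strong nondegeneracy is preserved under these moves — this follows because strong nondegeneracy is an invariant of the 3-manifold (it can be phrased purely in terms of $W^{-1}$ restricted to high-valence vertices, and by \cref{lem:linking_matrix_inverse} and \cref{lem:block_matrix} this restriction is governed by $W^{\veemidvert}$, whose invertibility is a homeomorphism invariant), but spelling this out is where the care is needed.
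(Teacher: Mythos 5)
Your overall strategy (contradiction via $p_\beta=\pm 1$ plus Neumann blow-downs) is the contrapositive of what the paper does, but it has a genuine gap at its central step. The entire content of the lemma is the arithmetic fact that if every weight $w_{i_1},\dots,w_{i_s}$ along the branch or trunk lies in $\Z\smallsetminus\{0,\pm1\}$, then the tridiagonal determinant $p_\beta$ also lies in $\Z\smallsetminus\{0,\pm1\}$; equivalently, $p_\beta=\pm1$ forces some $w_{i_j}\in\{0,\pm1\}$, and only then does one of the moves of \cref{fig:Neumann} apply to reduce $\abs{V}$. This is exactly the paper's \cref{lem:triple_diag_det}, proved there by a short induction on the chain length showing $\abs{p_j}>\abs{p_{j+1}}>1$ via the recurrence $p_j=w_{i_j}p_{j+1}-p_{j+2}$. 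In your write-up this fact appears only as the assertion that ``an integer-valued continued fraction with all $\abs{w_{i_j}}$ large is impossible unless some $w_{i_j}=\pm1$,'' and your ``more careful'' matrix-factorization paragraph does not prove it either: saying that the chain is ``Neumann-equivalent to a single vertex or a pair of vertices'' when $p_\beta=\pm1$ is essentially a restatement of what must be shown, since the existence of the blow-down sequence rests precisely on finding a vertex of weight $0$ or $\pm1$ somewhere along the chain. You even flag this as the main obstacle, but flagging it does not close it; the induction you sketch never gets a usable inductive hypothesis without the determinant monotonicity (or an equivalent continued-fraction estimate). A secondary omission: your dichotomy only mentions weights $\pm1$, but an interior weight $0$ is equally consistent with $\abs{p_\beta}=1$ (e.g.\ $s=2$, $w_{i_2}=0$ gives $p_\beta=-1$) and is reduced by the third move of \cref{fig:Neumann}, so it must be part of the case analysis.

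Two smaller points. First, your concern about preserving strong nondegeneracy under the moves is unnecessary: minimality of $\abs{V}$ is taken over the whole Neumann equivalence class, so producing any equivalent graph with fewer vertices already contradicts the hypothesis, regardless of whether that graph is strongly nondegenerate. Second, once the arithmetic lemma is supplied, your argument collapses to the paper's: minimality plus the moves of \cref{fig:Neumann} exclude weights in $\{0,\pm1\}$ on all degree-$\le 2$ vertices of the branch or trunk, and the determinant lemma (with \cref{lem:strong_nondeg_p_non-zero} and \cref{lem:p_b} playing only an auxiliary role) then gives $\abs{p_\beta}\ge 2$ directly, with no need for contradiction or for tracking explicit blow-down sequences.
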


\begin{rem}
	A plumbing graph which attains the minimal number of vertices $ \abs{V} $ is not unique within its equivalence class under Neumann moves.
\end{rem}

\cref{lem:V_minimal} immediately follows from the following lemma. 

\begin{lem} \label{lem:triple_diag_det}
	For integers $ w_1, \dots, w_s \in \Z \smallsetminus \{ 0, \pm 1 \} $, we have
	\[
	\vmat{w_{1} & 1 & & \\ 1 & \ddots & \ddots & \\ & \ddots & \ddots & 1 \\ & & 1 & w_{s}}
	\in \Z \smallsetminus \{ 0, \pm 1 \}.
	\]
\end{lem}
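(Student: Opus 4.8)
The plan is to prove Lemma~\ref{lem:triple_diag_det} by induction on $s$, tracking not just that the determinant has absolute value $\ge 2$ but that the continued-fraction ratio stays outside the unit interval. For $s=1$ the statement is immediate since $w_1 \in \Z \smallsetminus \{0, \pm 1\}$ means $\abs{w_1} \ge 2$. For the inductive step, write $D_s$ for the $s \times s$ tridiagonal determinant with diagonal $w_1, \dots, w_s$, so that the cofactor expansion along the last row gives the recursion $D_s = w_s D_{s-1} - D_{s-2}$ (with $D_0 = 1$). The key auxiliary quantity is the ratio $\rho_s \coloneqq D_{s-1}/D_s$ (equivalently the continued fraction $\cfrac{1}{w_s - \cfrac{1}{w_{s-1} - \cdots}}$ of Lemma~\ref{lem:p_b}~\ref{item:lem:p_b:conti_frac}, read in reverse order), and I want to show simultaneously that $\abs{D_s} \ge 2$ and $\abs{\rho_s} \le 1/2$ for all $s \ge 1$.

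First I would establish the base case $s=1$: $D_1 = w_1$, $\abs{D_1} \ge 2$, and $\rho_1 = 1/w_1$ so $\abs{\rho_1} \le 1/2$. Then, assuming $\abs{D_{s-1}} \ge 2$ and $\abs{\rho_{s-1}} = \abs{D_{s-2}/D_{s-1}} \le 1/2$, I would use $D_s = w_s D_{s-1} - D_{s-2} = D_{s-1}(w_s - \rho_{s-1})$. Since $\abs{w_s} \ge 2$ and $\abs{\rho_{s-1}} \le 1/2$, the triangle inequality gives $\abs{w_s - \rho_{s-1}} \ge \abs{w_s} - \abs{\rho_{s-1}} \ge 2 - 1/2 = 3/2$, hence $\abs{D_s} \ge \abs{D_{s-1}} \cdot 3/2 \ge 3 > 2$, and moreover $D_s$ is a nonzero integer. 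For the ratio bound, $\abs{\rho_s} = \abs{D_{s-1}/D_s} = 1/\abs{w_s - \rho_{s-1}} \le 1/(3/2) = 2/3$. This last bound $2/3$ is weaker than the $1/2$ I assumed, so I need to be slightly more careful: I should instead prove the bound $\abs{\rho_s} \le 1/(2 - 1/2) = 2/3$ is not self-propagating, so the clean choice is to carry the invariant $\abs{\rho_s} < 1$ together with $\abs{D_s} \ge 2$. With $\abs{\rho_{s-1}} < 1$ and $\abs{w_s} \ge 2$ we get $\abs{w_s - \rho_{s-1}} > 1$, so $\abs{\rho_s} < 1$ and $\abs{D_s} = \abs{D_{s-1}}\abs{w_s - \rho_{s-1}} > \abs{D_{s-1}} \ge 2$; combined with integrality this finishes the induction.

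The only subtlety — and what I expect to be the main (very minor) obstacle — is choosing the right strength of the inductive hypothesis so that it propagates cleanly: the naive hypothesis "$\abs{D_s} \ge 2$" alone does not close the induction because one cannot control $\abs{D_{s-2}}$ relative to $\abs{D_{s-1}}$ without the ratio bound, while an overly strong ratio bound like $\abs{\rho_s} \le 1/2$ fails to reproduce itself. The fix is to bundle $\abs{D_s} \ge 2$ (in fact a strictly increasing sequence $\abs{D_1} < \abs{D_2} < \cdots$) with $\abs{\rho_s} < 1$, both of which self-propagate via the single identity $D_s = D_{s-1}(w_s - \rho_{s-1})$ and $\abs{w_s - \rho_{s-1}} > 1$. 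Once Lemma~\ref{lem:triple_diag_det} is in hand, Lemma~\ref{lem:V_minimal} is immediate: in a plumbing graph minimizing $\abs{V}$ in its Neumann equivalence class, no weight on a degree-$\le 2$ vertex lying on a branch or trunk can equal $0$ or $\pm 1$ — a $0$-weight degree-$2$ vertex is removed by the third Neumann move and a $\pm 1$-weight degree-$\le 2$ vertex by the first two — so every $W_\beta$ is a tridiagonal matrix with all diagonal entries in $\Z \smallsetminus \{0, \pm 1\}$, whence $\abs{p_\beta} = \abs{\det W_\beta} \ge 2$ by the lemma.
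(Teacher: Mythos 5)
Your proof is correct and is essentially the paper's argument: the paper also strengthens the induction to the statement that consecutive tridiagonal determinants strictly increase in absolute value (its invariant $\lvert D_s\rvert > \lvert D_{s-1}\rvert > 1$, proved via the same cofactor recursion $D_s = w_s D_{s-1} - D_{s-2}$), and your ratio bound $\lvert D_{s-1}/D_s\rvert < 1$ together with $\lvert D_s\rvert \ge 2$ is just a reformulation of that invariant. The only cosmetic difference is that you expand along the last row rather than the first, which changes nothing.
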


\begin{proof}
	We prove
	\[
	\abs{ \det \pmat{w_{1} & 1 & & \\ 1 & \ddots & \ddots & \\ & \ddots & \ddots & 1 \\ & & 1 & w_{s}} }
	> \abs{ \det \pmat{w_{2} & 1 & & \\ 1 & \ddots & \ddots & \\ & \ddots & \ddots & 1 \\ & & 1 & w_{s}} }
	> 1
	\]
	by induction on $ s $. 
	In the case when $ s=2 $, this follows from a direct computation.
	If this holds for $ s-1 $, then this also holds for $ s $ by considering the cofactor expansion. 
\end{proof}


\section{Asymptotic expansions of GPPV invariants and WRT invariants} \label{sec:GPPV_modularity}


Finally, we establish asymptotic formulas for GPPV invariants and WRT invariants.


\subsection{Expression of GPPV invariants as a sum grouped by vertices of degree $ \ge 3 $} \label{subsec:general_false_theta_deg_3}


In our modular transformation formula for false modular series (\cref{thm:false_modular_series_modular}) or false theta functions (\cref{thm:false_theta}), we only consider false quasi-polynomials corresponding to quasi-polynomials in $ \overline{\frakQ}^0 $.
For this reason, we need to express GPPV invariants as a sum grouped by vertices of degree $ \ge 3 $.
Our expression generalizes the expression by Andersen--Misterg\aa{}rd~\cite[Theorem 3]{Andersen-Mistegard} for Seifert homology spheres.

In what follows, we employ the terminology and notation set up in \cref{sec:linking_matrix}.

Throughout the rest of this section, we fix a plumbed manifold $ M = M(\Gamma) $ defined by a plumbing graph $ \Gamma $ and a 1-cocycle $ \alpha \in H^1(M, \Z) \cong W^{-1} (\Z^V) / \Z^V $.
We assume that the linking matrix $ W $ of $ \Gamma $ is negative definite and $ V^{\veemidvert} \coloneqq \{ v \in V \mid \deg(v) \ge 3 \} \neq \emptyset $.
In particular, $ M $ is not a lens space.
In this case, each branch or trunk $ \beta $ of $ \Gamma $ satisfies $ p_\beta \neq 0 $ by \cref{lem:strong_nondeg_p_non-zero}, since $ \Gamma $ is strongly nondegenerate (\cref{dfn:strong_nondeg}) by \cref{rem:neg_def_strong_nondeg}. 
Using this fact, we set up the following notation for the operation of grouping vertices of degree $ \ge 3 $.

\begin{notn*}
	\begin{itemize}
		\item Let
		\[
		\Delta^{\veemidvert} \coloneqq 
		\Delta - \sum_{v \in V^{\veemidvert}} \sum_{\beta \in \overline{v}} \frac{\acute{q}_\beta}{4p_\beta}.
		\]
		\item We define a quadratic form $ Q^{\veemidvert} \colon \Z^{V^{\veemidvert}} \to (\det W)^{-1} \Z $ as
		$ Q^{\veemidvert}(x) \coloneqq -\transpose{x} (W^{\veemidvert})^{-1} x $.
		\item For each $ v \in V^{\veemidvert} $, we denote $ P_v $ the least common multiple of $ \{ p_\beta \mid \beta \in \overline{v} \} $ and define a cyclotomic rational function $ F_v^{\veemidvert, (\alpha)} (z_v) $ of one variable and its Laurent coefficients as
		\[
		F_v^{\veemidvert, (\alpha)} (z_v)
		\coloneqq 
		F_v ( \bm{e}(\alpha_v) z_v^{P_v})
		\prod_{\beta \in \overline{v}} F_{i_\beta} ( \bm{e}(\alpha_{i_\beta}) z_v^{P_v / p_\beta})
		\eqqcolon \sum_{m_v = m_{v, 0}}^{\infty} F_{v, m_v}^{\veemidvert, (\alpha)} z_v^{m_v}.
		\]
		\item Let $ n_0 \coloneqq (n_{v, 0})_{v \in V^{\veemidvert}} \in \Z^{V^{\veemidvert}} $.
		\item For each $ m \in \Z_{\ge m_0}^{V^{\veemidvert}} $, define
		\[
		F_m^{\veemidvert, (\alpha)}
		\coloneqq
		\prod_{v \in V^{\veemidvert}} F_{v, m_v}^{\veemidvert, (\alpha)}.
		\]
		\item Let $ z_{V^{\veemidvert}}^{} = (z_v)_{v \in V^{\veemidvert}} $ be a variable.
		\item Let
		\[
		F^{\veemidvert, (\alpha)} \left( \xi_{V^{\veemidvert}}^{} \right)
		\coloneqq
		\prod_{v \in V^{\veemidvert}} F_v^{\veemidvert, (\alpha)} (z_v).
		\]
	\end{itemize}
\end{notn*}

\begin{rem} \label{rem:inverse_mat_quad_form}
	For any $ l \in \{ \pm 1 \}^{V_1} \times \{ 0 \}^{V_2} \times \Z^{V^{\veemidvert}} $, we have
	\[
	-\transpose{l} W^{-1} l
	= Q^{\veemidvert} \left( \left( l_v - \sum_{\beta \in \overline{v}} \frac{l_{i_\beta}}{p_\beta} \right)_{v \in V^{\veemidvert}} \right)
	- \sum_{v \in V^{\veemidvert}} \sum_{\beta \in \overline{v}} \frac{\acute{q}_\beta}{p_\beta}
	\]
	by \cref{lem:block_matrix,lem:linking_matrix_inverse}. 
	
	Note that the last term is independent of $l$ since $l_i^2 = 1$ for any $i \in V_1$.
	By this reason, applying the operation $ W \mapsto W^{\veemidvert} $ once again to $ W^{\veemidvert} $ does not yield a valid representation.
\end{rem}

\begin{rem} \label{rem:F_v_symmetry_deg_3}
	For each $ v \in V^{\veemidvert} $, we have $ F_v^{\veemidvert, (\alpha)} (z_v) = (-1)^{\deg (v) + \abs{\overline{v}}} F_v^{\veemidvert, (-\alpha)} (z_v^{-1}) $
	by \cref{lem:coeff_of_F} \cref{item:lem:coeff_of_F:symmetry}.
\end{rem}

Under the above preparations, we obtain the following false theta function representation.

\begin{lem} \label{lem:GPPV_false_theta_rep}
	We have
	\begin{align}
		\widehat{Z}^{(\alpha)} (q; M) 
		&=
		2^{-\abs{V^{\veemidvert}}} q^{\Delta^{\veemidvert}}
		\sum_{e \in \{ \pm 1 \}^{V^{\veemidvert}}} \left( \prod_{v \in V^{\veemidvert}} e_v^{\deg (v) - \abs{\overline{v}}} \right)
		\sum_{n \in \Z_{\ge n_0}^{V^{\veemidvert}}} F_n^{\veemidvert, (e\alpha)} q^{Q^{\veemidvert}((e_v n_v / 2P_v)_{v \in V^{\veemidvert}})}
		\\
		&=
		q^{\Delta^{\veemidvert}}
		\sum_{n \in \Z^{V^{\veemidvert}}} \widetilde{\coe} \left[ F^{\veemidvert, (\alpha)} \right] (n) q^{Q^{\veemidvert}((n_v / 2P_v)_{v \in V^{\veemidvert}})},
	\end{align}
	where we define $ e\alpha \coloneqq (e_v \alpha_v)_{v \in V^{\veemidvert}} \in \Z^{V^{\veemidvert}} $.
\end{lem}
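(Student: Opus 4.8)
The plan is to unfold the definition of $\widehat{Z}^{(\alpha)}(q; M)$, cut the summation down to the terms that actually contribute, rewrite the quadratic exponent so that only the degree-$\ge 3$ vertices remain, and then recognize the resulting coefficient as a false modular-series coefficient. First I would write $\widehat{Z}^{(\alpha)}(q; M)=q^{\Delta}\sum_{l\in\delta+2\Z^V}\bm{e}(\transpose{\alpha}l)F_l\,q^{-\transpose{l}W^{-1}l/4}$ and invoke \cref{lem:coeff_of_F} \cref{item:lem:coeff_of_F:expression}: the factor $F_l=\prod_{v\in V}F_{v,l_v}$ vanishes unless $l_v\in\{\pm1\}$ for $v\in V_1$, $l_v=0$ for $v\in V_2$, and $l_v\in\pm(\deg(v)-2+2\Z_{\ge0})$ for $v\in V^{\veemidvert}$; moreover every degree-$1$ vertex is the endpoint $i_\beta$ of a unique branch $\beta$ and every degree-$2$ vertex lies on a unique branch or trunk (because $V^{\veemidvert}\ne\emptyset$), so the surviving sum is over $(l_v)_{v\in V^{\veemidvert}}\in\Z^{V^{\veemidvert}}$ together with the signs $(l_{i_\beta})_{\beta}$. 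Since the linking matrix is negative definite, $\Gamma$ is strongly nondegenerate (\cref{rem:neg_def_strong_nondeg}) and $p_\beta\ne0$ for every branch or trunk (\cref{lem:strong_nondeg_p_non-zero}), so \cref{rem:inverse_mat_quad_form} is available and gives $-\tfrac14\transpose{l}W^{-1}l=\tfrac14 Q^{\veemidvert}\big((l_v-\sum_{\beta\in\overline v}l_{i_\beta}/p_\beta)_{v\in V^{\veemidvert}}\big)-\sum_{v\in V^{\veemidvert}}\sum_{\beta\in\overline v}\acute q_\beta/(4p_\beta)$; the second, $l$-independent term combines with $q^{\Delta}$ to produce $q^{\Delta^{\veemidvert}}$.

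Next I would change variables, setting $n_v:=P_v l_v-\sum_{\beta\in\overline v}(P_v/p_\beta)\,l_{i_\beta}\in\Z$ for $v\in V^{\veemidvert}$, so that $l_v-\sum_{\beta\in\overline v}l_{i_\beta}/p_\beta=n_v/P_v$ and, by quadratic homogeneity of $Q^{\veemidvert}$, the remaining exponent is $Q^{\veemidvert}\big((n_v/2P_v)_{v\in V^{\veemidvert}}\big)$. Regrouping the sum by $n=(n_v)_{v\in V^{\veemidvert}}$, the task becomes to identify $\sum\bm{e}(\transpose{\alpha}l)F_l$, taken over all $(l_v)_{v\in V^{\veemidvert}}$ and all sign vectors $(l_{i_\beta})_\beta$ realizing a prescribed $n$, with $\widetilde{\coe}\big[F^{\veemidvert,(\alpha)}\big](n)$. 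This is a monomial-substitution computation in the $\coe$/$\widetilde{\coe}$ calculus: $F^{\veemidvert,(\alpha)}=\prod_{v\in V^{\veemidvert}}F_v^{\veemidvert,(\alpha)}$ factors over disjoint variable blocks, each factor $F_v^{\veemidvert,(\alpha)}(z_v)=F_v(\bm{e}(\alpha_v)z_v^{P_v})\prod_{\beta\in\overline v}F_{i_\beta}(\bm{e}(\alpha_{i_\beta})z_v^{P_v/p_\beta})$ is obtained from $F_v$ and the $F_{i_\beta}$ by monomial substitutions, and the principal-value coefficient of such a product is computed by the same manipulations as in \cref{rem:coeff_of_F_modular_series} and \cref{lem:R_C_corresp} \cref{item:lem:R_C_corresp:involution}, using the reflection symmetry $F_w(z^{-1})=(-1)^{\deg w}F_w(z)$ (\cref{lem:coeff_of_F} \cref{item:lem:coeff_of_F:symmetry}) to reconcile the minus sign in $n_v$ with the monomial exponents of $F_v^{\veemidvert,(\alpha)}$. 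This yields the second displayed identity.

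The first identity will then follow formally: applying \cref{lem:R_C_corresp} \cref{item:lem:R_C_corresp:vp} to $G=F^{\veemidvert,(\alpha)}$ gives $\widetilde{\coe}[F^{\veemidvert,(\alpha)}](n)=2^{-\abs{V^{\veemidvert}}}\sum_{e\in\{\pm1\}^{V^{\veemidvert}}}\coe[F^{\veemidvert,(\alpha)}](en)$, and since by construction the Laurent coefficients of $F_v^{\veemidvert,(\alpha)}$ are the $F_{v,m_v}^{\veemidvert,(\alpha)}$ (which vanish below the minimal exponent), while \cref{rem:F_v_symmetry_deg_3} gives $F_v^{\veemidvert,(\alpha)}(z_v^{-1})=(-1)^{\deg(v)+\abs{\overline v}}F_v^{\veemidvert,(-\alpha)}(z_v)$ and $e_v^{\deg(v)+\abs{\overline v}}=e_v^{\deg(v)-\abs{\overline v}}$, one obtains $\coe[F^{\veemidvert,(\alpha)}](en)=\big(\prod_{v\in V^{\veemidvert}}e_v^{\deg(v)-\abs{\overline v}}\big)F_n^{\veemidvert,(e\alpha)}$, supported on $n\in\Z_{\ge n_0}^{V^{\veemidvert}}$. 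Substituting this into the second identity produces the first.

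I expect the main obstacle to be the coefficient bookkeeping of the second paragraph: tracking the minus sign in the definition of $n_v$ (forced by the shape of \cref{rem:inverse_mat_quad_form}) against the monomial exponents of $F_v^{\veemidvert,(\alpha)}$, together with the characters $\bm{e}(\alpha_v l_v)$ and $\bm{e}(\alpha_{i_\beta}l_{i_\beta})$ and the factor $\tfrac12$ that enters $F_{v,l_v}$ for $\deg(v)\ge3$ — all of these being governed by the reflection symmetries $F_w(z^{-1})=(-1)^{\deg w}F_w(z)$, which is precisely the phenomenon that the $\coe/\overline{\coe}/\widetilde{\coe}$ formalism of \cref{sec:modular_series_framework} was set up to organize. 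Everything else — the index reduction, the quadratic-form rewriting, and the passage between the two displayed forms — is routine given the results already established.
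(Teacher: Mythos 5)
Your proposal is correct and follows essentially the same route as the paper: support reduction via \cref{lem:coeff_of_F} \cref{item:lem:coeff_of_F:expression}, the quadratic rewriting of \cref{rem:inverse_mat_quad_form} (justified by strong nondegeneracy and \cref{lem:strong_nondeg_p_non-zero}), the change of variables $n_v = P_v\bigl(l_v - \sum_{\beta\in\overline{v}} l_{i_\beta}/p_\beta\bigr)$, the per-vertex generating-function identification with $F_v^{\veemidvert,(\alpha)}$, and \cref{lem:R_C_corresp} \cref{item:lem:R_C_corresp:vp} together with \cref{rem:F_v_symmetry_deg_3} to pass between the two displayed forms. The only deviation is the order — the paper proves the sign-sector identity first (setting $e_v=\sgn(l_v)$ and computing the generating function) and then deduces the $\widetilde{\coe}$ form, while you prove the $\widetilde{\coe}$ form directly and deduce the sector form — an inessential reordering mediated by the same symmetry lemmas.
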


\begin{proof}
	The last equality follows from \cref{lem:R_C_corresp} \cref{item:lem:R_C_corresp:vp,item:lem:R_C_corresp:involution_bilateral,rem:F_v_symmetry_deg_3}.
	We prove the first equality.
	By \cref{lem:coeff_of_F} \cref{item:lem:coeff_of_F:expression,rem:inverse_mat_quad_form}, we have
	\begin{align}
		q^{-\Delta} \widehat{Z}^{(\alpha)} (q; M)
		=
		\sum_{\varepsilon \in \{ \pm 1 \}^{V_1}} \left( \bm{e} (\alpha_i \varepsilon_i) (-\varepsilon_i) \right)
		\sum_{l \in (\deg (v))_{v \in V^{\veemidvert}} + 2 \Z^{V^{\veemidvert}}} 
		&q^{Q^{\veemidvert} ( ( l_v - \sum_{\beta \in \overline{v}} \varepsilon_{i_\beta}/ p_\beta )_{v \in V^{\veemidvert}} )/4} \\
		&\prod_{v \in V^{\veemidvert}} \bm{e} (\alpha_v l_v) \sgn(l_v)^{\deg (v)} \cdot \frac{1}{2} F_{v, l_v}.
	\end{align}
	On the right-hand side, for each $i \in V_1$, choose $v \in V^{\veemidvert}$ and $\beta \in \overline{v}$ such that $i = i_\beta$ (here we use the notation introduced in \cref{subsec:branch_trunk}).  
	We replace $\varepsilon_i$ by $-\sgn(l_v)\varepsilon_\beta$.  
	Moreover, for each $v \in V^{\veemidvert}$, we set $e_v \coloneqq \sgn(l_v)$ and
	$ n_v / P_v \coloneqq l_v + \sum_{\beta \in \overline{v}} \varepsilon_\beta / p_\beta $.
	Then, we have
	\begin{align}
		q^{-\Delta} \widehat{Z}^{(\alpha)} (q; M)
		=
		2^{-\abs{V^{\veemidvert}}}
		\sum_{e \in \{ \pm 1 \}^{V^{\veemidvert}}} 
		&\left( \prod_{v \in V^{\veemidvert}} e_v^{\deg (v) - \abs{\overline{v}}} \right)
		\sum_{n \in \Z^{V^{\veemidvert}}} 
		q^{Q^{\veemidvert}((e_v n_v / 2P_v)_{v \in V^{\veemidvert}})} \\
		&\prod_{v \in V^{\veemidvert}}
		\sum_{\substack{
				\varepsilon \in \{ \pm 1 \}^{\overline{v}}, \\
				l_v \in \deg (v) + 2 \Z_{\ge -1}, \\
				l_v + \sum_{\beta \in \overline{v}} \varepsilon_\beta / p_\beta = n_v / P_v
		}}
		\bm{e} (\alpha_v l_v) F_{v, l_v}
		\prod_{\beta \in \overline{v}} \bm{e} (\alpha_{i_\beta} \varepsilon_{\beta}) \varepsilon_{\beta}
	\end{align}
	To calculate the coefficients of $ q^{Q^{\veemidvert}((e_v n_v / 2P_v)_{v \in V^{\veemidvert}})} $, we consider its generating function.
	For each $ v \in V^{\veemidvert} $, we can calculate as
	\begin{align}
		&\phant
		\sum_{n \in \Z} z_v^{n_v} 
		\sum_{\substack{
				\varepsilon \in \{ \pm 1 \}^{\overline{v}}, \\
				l_v \in \deg (v) + 2 \Z_{\ge -1}, \\
				l_v + \sum_{\beta \in \overline{v}} \varepsilon_\beta / p_\beta = n_v / P_v
		}}
		\bm{e} (\alpha_v l_v) F_{v, l_v}
		\prod_{\beta \in \overline{v}} \bm{e} (\alpha_{i_\beta} \varepsilon_{\beta}) \varepsilon_{\beta} \\
		&=
		\sum_{\substack{
				\varepsilon \in \{ \pm 1 \}^{\overline{v}}, \\
				l_v \in \deg (v) + 2 \Z_{\ge -1}
		}}
		z_v^{P_v(l_v + \sum_{\beta \in \overline{v}} \varepsilon_\beta / p_\beta)}
		\bm{e} (\alpha_v l_v) F_{v, l_v}
		\prod_{\beta \in \overline{v}} \bm{e} (\alpha_{i_\beta} \varepsilon_{\beta}) \varepsilon_{\beta} \\
		&=
		\left(
		\sum_{l_v \in \deg (v) + 2 \Z_{\ge -1}}
		F_{v, l_v} \left( \bm{e} (\alpha_v) z_v^{P_v} \right)^{l_v}
		\right)
		\prod_{\beta \in \overline{v}} 
		\sum_{\varepsilon_\beta \in \{ \pm 1 \}}
		\left( \bm{e} (\alpha_{i_\beta}) z_v^{P_v / p_\beta} \right)^{\varepsilon_\beta} \\
		&=
		F_v \left( \bm{e} (\alpha_v) z_v^{P_v} \right)
		\prod_{\beta \in \overline{v}} 
		F_{i_\beta} \left( \bm{e} (\alpha_{i_\beta}) z_v^{P_v / p_\beta} \right) \\
		&=
		F_v^{\veemidvert, (\alpha)} (z_v) \\
		&=
		\sum_{n_v = n_{v, 0}}^{\infty} F_{v, n_v}^{\veemidvert, (\alpha)} z_v^{n_v}.
	\end{align}
	Thus, we obtain the claim. 
\end{proof}


\subsection{Quantum modularity of GPPV invariants} \label{subsec:GPPV_modularity}


Since GPPV invariants $ \widehat{Z}_b (q; M) $ and their twisted sum $ \widehat{Z}^{(\alpha)} (q; M) $ are false theta functions, they admit modular transformation formulas as in \cref{thm:main_false_theta} as follows.

\begin{cor} \label{cor:GPPV_modular}
	GPPV invariants $ \widehat{Z}_b (q; M) $ and their twisted sum $ \widehat{Z}^{(\alpha)} (q; M) $ have the modular transformation formulas
	\begin{alignat}{3}
		&\frac{1}{\sqrt{\abs{H_1(M, \Z)}}} {\sqrt{\frac{\iu}{\tau}}}^{\, \abs{V^{\veemidvert}}} &
		&\cdot \widehat{Z}_b \left( -\frac{1}{\tau}; M \right) &
		&=
		\sum_{\pi \in \Pi(V^{\veemidvert})} \sum_{1 \le i \le N_\pi} \widehat{Z}_{\pi, i} (\tau) \Omega_{\pi, i} (\tau)
		+ \Omega(\tau),
		\\
		&\frac{1}{\sqrt{\abs{H_1(M, \Z)}}} {\sqrt{\frac{\iu}{\tau}}}^{\, \abs{V^{\veemidvert}}} &
		&\cdot \widehat{Z}^{(\alpha)} \left( -\frac{1}{\tau}; M \right) &
		&=
		\sum_{\pi \in \Pi(V^{\veemidvert})} \sum_{1 \le i \le N_\pi^{(\alpha)}} 
		\widehat{Z}_{\pi, i}^{(\alpha)} (\tau) \Omega_{\pi, i}^{(\alpha)} (\tau)
		+ \Omega^{(\alpha)} (\tau),
	\end{alignat}
	with the following notation:
	\begin{itemize}
		\item We denote $ \Pi(V^{\veemidvert}) \coloneqq \left\{ (V'_1, \dots, V'_s) \mid V'_1 \sqcup \cdots \sqcup V'_s = V^{\veemidvert} \right\} $ 
		the set of ordered partitions of $ V^{\veemidvert} $.
		
		\item For each $ \pi \in \Pi(V^{\veemidvert}) $, $ N_\pi $ and $ N_\pi^{(\alpha)} $ are a positive integer.
		
		\item For each $ \pi= (V'_1, \dots, V'_s) \in \Pi(V^{\veemidvert}) $, and $ 1 \le i \le N_\pi $ or $ 1 \le j \le N_\pi^{(\alpha)} $, 
		$ \Omega_{\pi, i} (\tau) $ and $ \Omega_{\pi, j}^{(\alpha)} (\tau) $ are holomorphic functions on $ \widetilde{\bbC} $
		and $ \widehat{Z}_{\pi, i} (\tau) $ and $ \widehat{Z}_{\pi, j}^{(\alpha)} (\tau) $ are false theta functions associated to a symmetric matrix
		$ 2P_{V'_1 \sqcup \cdots \sqcup V'_{s-1}} W^{\veemidvert} P_{V'_1 \sqcup \cdots \sqcup V'_{s-1}} $, where
		$ P_{V'}^{} \coloneqq \diag(P_v)_{v \in V'} $ and
		\[
		W^*_\pi \coloneqq
		\pmat{ ((W^{\veemidvert})_{V'_1}^*)^{-1} & & & \\ & (W^{\veemidvert}_{{V'_1}^\complement})^*_{V'_2} & & \\ & & \ddots & & \\ & & & (W^{\veemidvert}_{(V'_1 \sqcup \cdots \sqcup V'_{s-2})^\complement})^*_{V'_{s-1}}}
		\in \Sym_{V'_1 \sqcup \cdots \sqcup V'_{s-1}}^{} (\Q).
		\]
		
		\item The functions $ \Omega (\tau) $ and $ \Omega^{(\alpha)} (\tau) $ are some holomorophic functions on $ \widetilde{\bbC} $.
		
	\end{itemize}
\end{cor}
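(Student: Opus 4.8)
The plan is to deduce the corollary directly from the false-theta-function representation of GPPV invariants (\cref{lem:GPPV_false_theta_rep}) together with the modular transformation formula for false theta functions (\cref{thm:false_theta}); the only real work is a translation of notation.

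First I would invoke \cref{lem:GPPV_false_theta_rep}, which gives
\[
\widehat{Z}^{(\alpha)}(q; M)
=
q^{\Delta^{\veemidvert}} \sum_{n \in \Z^{V^{\veemidvert}}} \widetilde{\coe}\bigl[F^{\veemidvert, (\alpha)}\bigr](n)\, q^{Q^{\veemidvert}((n_v/2P_v)_{v \in V^{\veemidvert}})}.
\]
Since $W$ is negative definite, the $V^{\veemidvert} \times V^{\veemidvert}$-submatrix of $W^{-1}$ is negative definite, hence so is its inverse $W^{\veemidvert}$ by \cref{lem:linking_matrix_inverse}, so $Q^{\veemidvert}(x) = -\transpose{x}(W^{\veemidvert})^{-1}x$ is positive definite. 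Writing $P \coloneqq \diag(P_v)_{v \in V^{\veemidvert}}$ and $S \coloneqq -\tfrac12 P^{-1}(W^{\veemidvert})^{-1}P^{-1} \in \Sym_{V^{\veemidvert}}^+(\Q)$, the exponent equals $\Delta^{\veemidvert} + \tfrac12 \transpose{n}Sn$, so $\widehat{Z}^{(\alpha)}(q;M)$ is $q^{\Delta^{\veemidvert}}$ times a false theta function of rank $|V^{\veemidvert}|$ and depth $\le |V^{\veemidvert}|$ attached to $S$, with trivial affine shift, false quasi-polynomial $\widetilde{\coe}[F^{\veemidvert,(\alpha)}]$, and cyclotomic rational function $G = F^{\veemidvert,(\alpha)} = \prod_{v \in V^{\veemidvert}} F_v^{\veemidvert,(\alpha)}(z_v)$, which has the product shape required by \cref{thm:false_theta}. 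Using the explicit description of the $F_v$ in \cref{lem:coeff_of_F}, the factor $F_v(\bm{e}(\alpha_v)z_v^{P_v})$ is, for $\deg(v) \ge 3$, a proper rational function with poles only at roots of unity, and the remaining factors of $F_v^{\veemidvert,(\alpha)}$ are Laurent polynomials; splitting each $F_v^{\veemidvert,(\alpha)}$ into its cyclotomic part (whose $\overline{\coe}$ lies in $\overline{\frakQ}^0$) and its Laurent-polynomial part, the latter contributes only finitely many terms in that variable and hence lower-rank false theta functions with classical theta factors, so it suffices to treat the case $\overline{\coe}[G] \in \overline{\frakQ}^0$.

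Then I would apply \cref{thm:false_theta} to this false theta function and transcribe its output: the ordered partitions $\Pi_r$ of $\{1,\dots,|V^{\veemidvert}|\}$ become $\Pi(V^{\veemidvert})$; the matrices $S^*_\pi$ of \cref{thm:false_theta}, built from $S^{-1} = -2PW^{\veemidvert}P$ by the block-matrix recipe there, become (after conjugating away the diagonal rescaling $P$) the rescaled block matrices $2P_{\cdots}W^*_\pi P_{\cdots}$ of the statement, with $W^*_\pi$ formed from $W^{\veemidvert}$ exactly as $S^*_\pi$ is formed from $S^{-1}$; the pole set is that of $F^{\veemidvert,(\alpha)}(\bm{e}(\xi))$, coming from the cyclotomic denominators of the $F_v$; and the $\bm{e}(\transpose{\alpha}\cdot)$-shifts come from the insertions $\bm{e}(\alpha_v)$, $\bm{e}(\alpha_{i_\beta})$. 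The prefactor $q^{\Delta^{\veemidvert}}$ is harmless: under $\tau \mapsto -1/\tau$ it becomes $\bm{e}(-\Delta^{\veemidvert}/\tau)$, holomorphic on $\widetilde{\bbC}$, which, together with the resulting nonzero constant $(|H_1(M,\Z)|\det S)^{-1/2}$, I would absorb into the holomorphic factors $\Omega^{(\alpha)}_{\pi,i}$ and $\Omega^{(\alpha)}$. Finally the untwisted invariants follow by Fourier inversion over $H^1(M,\Z) \cong H_1(M,\Z)$: from $\widehat{Z}^{(\alpha)}(q;M) = \sum_b \bm{e}(\transpose{\alpha}b)\widehat{Z}_b(q;M)$ one gets $\widehat{Z}_b(q;M) = |H_1(M,\Z)|^{-1}\sum_\alpha \bm{e}(-\transpose{\alpha}b)\widehat{Z}^{(\alpha)}(q;M)$, so the formula for $\widehat{Z}_b$ is the corresponding linear combination of those for the $\widehat{Z}^{(\alpha)}$, with $\widehat{Z}_{\pi,i}$ the matching combination of the $\widehat{Z}^{(\alpha)}_{\pi,i}$.

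The main obstacle is bookkeeping rather than anything conceptual: one must check that removing the Laurent-polynomial parts of the $F_v^{\veemidvert,(\alpha)}$ really does place the remaining cyclotomic piece in the hypothesis range $\overline{\coe}[G] \in \overline{\frakQ}^0$ of \cref{thm:false_theta}, and one must carefully match the block-matrix recipe for $S^*_\pi$ with the $W^*_\pi$ of the statement, verifying via \cref{lem:block_matrix} and \cref{lem:linking_matrix_inverse} that the submatrix and inversion operations applied to $S^{-1} = -2PW^{\veemidvert}P$ commute with the $P$-conjugation and reproduce $2P_{\cdots}W^*_\pi P_{\cdots}$ for every $\pi \in \Pi(V^{\veemidvert})$. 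This has to be carried out uniformly over all ordered partitions, but it uses only the tools already assembled in \cref{sec:linking_matrix}.
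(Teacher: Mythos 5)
Your treatment of the twisted sums $\widehat{Z}^{(\alpha)}$ is essentially the paper's own (implicit) argument: \cref{lem:GPPV_false_theta_rep} exhibits $\widehat{Z}^{(\alpha)}$ as $q^{\Delta^{\veemidvert}}$ times a rank-$\abs{V^{\veemidvert}}$ false theta function attached to $S=-\tfrac12 P^{-1}(W^{\veemidvert})^{-1}P^{-1}$ with product-type cyclotomic rational function $F^{\veemidvert,(\alpha)}$, and \cref{thm:false_theta} (together with \cref{lem:block_matrix,lem:linking_matrix_inverse} for the block-matrix bookkeeping, and the real/complex-coefficient extension \cref{thm:false_theta_R} to absorb the roots of unity $\bm{e}(\alpha_v)$, $\bm{e}(\alpha_{i_\beta})$) then yields the stated decomposition, with $q^{\Delta^{\veemidvert}}$ absorbed into the $\Omega$'s. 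That part is fine.

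The genuine gap is your last step, recovering $\widehat{Z}_b$ from the $\widehat{Z}^{(\alpha)}$ by Fourier inversion. The inversion formula $\widehat{Z}_b=\abs{H_1(M,\Z)}^{-1}\sum_\alpha \bm{e}(-\transpose{\alpha}b)\widehat{Z}^{(\alpha)}$ requires the pairing $(\alpha, b-b')\mapsto \bm{e}(\transpose{\alpha}(b-b'))$ between $H^1(M,\Z)\cong W^{-1}(\Z^V)/\Z^V$ and the coset differences $b-b'\in 2\Z^V/2W(\Z^V)$ to be perfect, and it is not: writing $\alpha=W^{-1}m$ and $b-b'=2n$, the pairing is $\bm{e}(2\transpose{m}W^{-1}n)$, which is degenerate precisely when there exists $n\in\Z^V\smallsetminus W(\Z^V)$ with $2W^{-1}n\in\Z^V$, e.g.\ whenever $\abs{H_1(M,\Z)}$ is even. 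In the extreme case $\abs{H_1(M,\Z)}=2$ the two twisted sums satisfy $\widehat{Z}^{(\alpha_0)}=\bm{e}(\transpose{\alpha_0}\delta)\,\widehat{Z}^{(0)}$, so the family $\{\widehat{Z}^{(\alpha)}\}$ cannot determine the individual $\widehat{Z}_b$ at all, and your claimed linear combination produces $\tfrac12(\widehat{Z}_b+\widehat{Z}_{b'})$ rather than $\widehat{Z}_b$. The repair is to treat $\widehat{Z}_b$ directly: by \cref{dfn:GPPV_inv} it is itself a false theta series over the coset $2W(\Z^V)+b$ with false quasi-polynomial coefficients $F_l$, so one should run the degree-$\ge 3$ grouping of \cref{lem:GPPV_false_theta_rep} with the sum restricted to that coset (which only changes the congruence conditions entering the quasi-polynomial, not the quadratic form $2P_{V^{\veemidvert}}W^{\veemidvert}P_{V^{\veemidvert}}$) and then apply \cref{thm:false_theta} exactly as you did for $\widehat{Z}^{(\alpha)}$; no inversion over $H^1(M,\Z)$ is needed or available.
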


\begin{rem} \label{rem:GPPV_int_rep}
	As a preliminary step to the modular transformation formulas above, we obtain the following elegant expressions by \cref{ex:false_theta_PV_rep}:
	\begin{align}
		\widehat{Z} (q; M)
		&=
		\sqrt{\frac{\iu}{\tau}}^{\abs{V}} \frac{q^{\Delta}}{\sqrt{\abs{H_1 (M, \Z)}}} 
		\PV \int_{\R^V} \widetilde{q}^{\, \transpose{\xi_V^{}} W \xi_V^{}/2} F_V^{} (\bm{e} (\xi_V^{})) d\xi_V^{},
		\\
		\widehat{Z}^{(\alpha)} (q; M)
		&=
		\sqrt{\frac{\iu}{\tau}}^{\abs{V^{\veemidvert}}} \frac{q^{\Delta^{\veemidvert}}}{\sqrt{\abs{H_1 (M, \Z)}}} 
		\PV \int_{\R^V} \widetilde{q}^{\, \transpose{\xi_{V^{\veemidvert}}^{}} P W^{\veemidvert} P \xi_{V^{\veemidvert}}^{}/2}
		F^{\veemidvert, (\alpha)} (\bm{e} (\xi_{V^{\veemidvert}}^{})) d\xi_{V^{\veemidvert}}^{},
	\end{align}
	where $ P \coloneqq \diag(P_v)_{v \in V^{\veemidvert}} $.
\end{rem}


\subsection{Asympototic expansions of WRT invariants} \label{subsec:WRT_asymp}


By \cref{thm:false_theta_asymp}, we obtain the following asymptotic formulas.

\begin{cor} \label{cor:GPPV_asymp}
	The function $ \widehat{Z}^{(\alpha)} (q; M) $ satisfies the following asymptotic formulas.
	\begin{enumerate}
		\item \label{item:cor:GPPV_asymp:1}
		For any rational number $ \rho \in \Q $, we have
		\[
		\widehat{Z}^{(\alpha)} (\tau + \rho; M)
		\sim
		\sum_{j \in \frac{1}{2} \Z} Z_j^{(\alpha)} (\rho) \tau^j
		\quad \text{ as } \tau \to 0
		\]
		for some complex number
		\[
		Z_j^{(\alpha)} (\rho) \in 
		\sprod{ \bm{e} \left( -\frac{\rho}{4} \transpose{m} (W^{\veemidvert})^{-1} m 
			\relmiddle| m \in \prod_{v \in V^{\veemidvert}} \frac{1}{P_v} \Z, \, F^{\veemidvert, (\alpha)}_m \neq 0 \right) }_\Q.
		\]
		
		\item \label{item:cor:GPPV_asymp:2}
		In the case where $ \rho = h/k \neq 0 $ for fixed $ h \in \Z \smallsetminus \{ 0 \} $, for any $ j $ we have
		\[
		Z_j^{(\alpha)} (\rho)
		\sim
		\sum_{\theta \in \calS^{(\alpha)}} \bm{e} \left( -\frac{\theta}{\rho} \right) \widetilde{\varphi}_{\theta, h, j}^{(\alpha)} \left( \frac{1}{\sqrt{k}} \right) 
		\quad \text{ as } k \to \infty
		\]
		for some finite set $ \calS^{(\alpha)} \subset \Q / \Z $ independent of $ \rho $ and $ j $ and 
		some formal power series 
		$ \widetilde{\varphi}_{\theta, h, j}^{(\alpha)} (u) \in\bbC((u)) $.
		
		Moreover, we can write $ \calS^{(\alpha)} $ explicitly as
		\[
		\calS^{(\alpha)} = \{ 0 \} \cup \bigcup_{\pi \in \Pi(V^{\veemidvert})} \calS_\pi^{(\alpha)},
		\]
		where
		\begin{align}
			\calS_\pi^{} &\coloneqq
			\left\{
			-\frac{1}{4} \transpose{n} W_\pi^* n
			\bmod \Z
			\relmiddle|
			n \in \prod_{v \in V'_1 \sqcup \cdots \sqcup V'_{s-1}} \calP_v^{(\alpha)}
			\right\},
			\\
			\calP_{v}^{(\alpha)}
			&\coloneqq
			\begin{dcases}
				\left\{
				n_{v} \in \Z
				\relmiddle|
				\# \left\{ \beta \in \overline{v} \relmiddle| 2\alpha_{i_\beta} + \frac{n_v}{p_\beta} \in \Z \right\} \le \deg (v) - 3
				\right\}
				& \text{ if } \alpha_v \in \Z, \\
				\emptyset & \text{ if } \alpha_v \notin \Z,
			\end{dcases}
		\end{align}
		for $ \pi = (V'_1, \dots, V'_s) $ and $ W_\pi^* $ are as in \cref{cor:GPPV_modular}.
	\end{enumerate}
\end{cor}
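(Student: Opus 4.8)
The plan is to deduce both assertions from \cref{thm:false_theta_asymp} by feeding in the false theta function representation of \cref{lem:GPPV_false_theta_rep}. First I would observe that, up to the scalar prefactor $q^{\Delta^{\veemidvert}}$, the series
\[
\widehat{Z}^{(\alpha)} (q; M)
=
q^{\Delta^{\veemidvert}}
\sum_{n \in \Z^{V^{\veemidvert}}} \widetilde{\coe}\bigl[ F^{\veemidvert, (\alpha)} \bigr] (n)\, q^{Q^{\veemidvert}((n_v / 2P_v)_{v \in V^{\veemidvert}})}
\]
is a false theta function in the sense of \cref{dfn:false_theta} (or its variant over $\bbC$ in \cref{thm:false_theta_R} once $\alpha \neq 0$): the number of variables is $r = \abs{V^{\veemidvert}}$, the sign matrix is $A = I_r$ (so the depth is $\le r$), the coefficient is the false quasi-polynomial $\widetilde{\coe}[F^{\veemidvert, (\alpha)}] = \widetilde{\coe}[\prod_{v} F_v^{\veemidvert, (\alpha)}]$, which by \cref{lem:R_C_corresp} \cref{item:lem:R_C_corresp:C_tilde_rep} is, modulo finite support, a sign factor times a quasi-polynomial in $\overline{\frakQ}^0$ corresponding to $F^{\veemidvert, (\alpha)}$, and the kernel quadratic form $q^{Q^{\veemidvert}((n_v/2P_v))} = q^{\transpose{n} S n / 2}$ has matrix $S = -\tfrac12\, P^{-1} (W^{\veemidvert})^{-1} P^{-1} \in \Sym_r^+(\Q)$, positive definite because $W$, hence $W^{\veemidvert}$, is negative definite; here $P = \diag(P_v)_{v \in V^{\veemidvert}}$. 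Since $\Delta^{\veemidvert} \in \Q$, the factor $q^{\Delta^{\veemidvert}} = \bm{e}(\Delta^{\veemidvert} \rho)\,\bm{e}(\Delta^{\veemidvert}(\tau - \rho))$ is holomorphic and nonvanishing at every $\tau = \rho \in \Q$, so it multiplies the asymptotic expansions term by term; combined with \cref{rem:inverse_mat_quad_form} it merely re-expresses the resulting phases through the values of $\transpose{m}(W^{\veemidvert})^{-1} m$.

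Granting this, \cref{item:cor:GPPV_asymp:1} is \cref{thm:false_theta_asymp} \cref{item:thm:false_theta_asymp:1} for this false theta function: the substitution $m \mapsto P^{-1} m$ turns $\tfrac{\rho}{2}\transpose{m} S m$ into $-\tfrac{\rho}{4}\transpose{m}(W^{\veemidvert})^{-1} m$ with $m$ running over $\prod_v P_v^{-1}\Z$ and the corresponding Laurent coefficient of $F^{\veemidvert,(\alpha)}$ nonzero. Likewise \cref{item:cor:GPPV_asymp:2} is \cref{thm:false_theta_asymp} \cref{item:thm:false_theta_asymp:2} and \cref{item:thm:false_theta_asymp:3}, which give the expansion of $Z_j^{(\alpha)}(\rho)$ in powers of $k^{-1/2}$ with phases $\bm{e}(-\theta/\rho)$, $\theta$ running over $\calS^{(\alpha)} = \{0\} \cup \bigcup_\pi \calS_\pi^{(\alpha)}$ with $\calS_\pi^{(\alpha)} = \{ -\tfrac12 \transpose{n} S^*_\pi n \bmod \Z \mid n \in \calP_\pi \}$. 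It then remains to identify $S^*_\pi$ and $\calP_\pi$ with the plumbing data of \cref{cor:GPPV_modular}: since $S^{-1} = -2 P W^{\veemidvert} P$, the block matrix $S^*_\pi$ assembled in \cref{thm:false_theta} from the Schur complements of $S^{-1}$ becomes, after extracting the diagonal factors $P_v$, exactly $W^*_\pi$, so on the relabelled pole lattice $-\tfrac12\transpose{n} S^*_\pi n$ turns into $-\tfrac14\transpose{n} W^*_\pi n$.

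The only substantial computation — and the main obstacle — is pinning down $\calP_\pi$, equivalently the pole set $\calP$ of $G(\bm{e}(\xi))$ for $G = F^{\veemidvert, (\alpha)}$ and its image under the block lower-triangular changes of variables $U_\pi$ in $\calP_\pi = U_\pi(\calP_{I_s^\complement} \oplus \Q^{I_s}) \cap \Q^{I_s^\complement}$. Since $F^{\veemidvert, (\alpha)}(z_{V^{\veemidvert}}^{}) = \prod_v F_v^{\veemidvert, (\alpha)}(z_v)$ factors over $v$, this reduces to the one-variable problem of locating the poles of $F_v^{\veemidvert, (\alpha)}(\bm{e}(\xi_v)) = F_v\bigl(\bm{e}(\alpha_v + P_v \xi_v)\bigr) \prod_{\beta \in \overline{v}} F_{i_\beta}\bigl(\bm{e}(\alpha_{i_\beta} + (P_v/p_\beta)\xi_v)\bigr)$. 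By \cref{lem:coeff_of_F} \cref{item:lem:coeff_of_F:expression}, $F_v(w) = (w - w^{-1})^{2 - \deg(v)}$ contributes a pole of order $\deg(v) - 2$ wherever $w = \pm 1$, while each branch factor $F_{i_\beta}(w) = w - w^{-1}$ is a Laurent polynomial with only simple zeros, occurring where its argument is $\pm 1$; hence a pole of $F_v^{\veemidvert, (\alpha)}$ survives precisely when the number of vanishing branch factors there is at most $\deg(v) - 3$. A case analysis of when $\bm{e}(\alpha_v + P_v \xi_v) = \pm 1$ has rational solutions, together with the vanishing locus of the branch factors and the constraints on $\alpha \in W^{-1}(\Z^V)/\Z^V$, then shows these survive only if $\alpha_v \in \Z$, in which case they are the points $\xi_v = n_v/2P_v$ with $n_v \in \Z$ satisfying $\#\{\beta \in \overline{v} \mid 2\alpha_{i_\beta} + n_v/p_\beta \in \Z\} \le \deg(v) - 3$ — exactly $\calP_v^{(\alpha)}$. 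Because $U_\pi$ is block lower-triangular with identity diagonal blocks it does not displace this integer pole lattice, so $\calP_\pi$ corresponds to $\prod_{v \in V'_1 \sqcup \cdots \sqcup V'_{s-1}} \calP_v^{(\alpha)}$, which yields $\calS_\pi^{(\alpha)}$ in the stated form and completes the proof.
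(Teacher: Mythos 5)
Your overall route coincides with the paper's: \cref{cor:GPPV_asymp} is obtained there precisely by feeding the false theta representation of \cref{lem:GPPV_false_theta_rep} into \cref{thm:false_theta_asymp}, and your identifications are the ones the paper leaves implicit and are correct as far as they go --- positive definiteness of $S=-\tfrac12 P^{-1}(W^{\veemidvert})^{-1}P^{-1}$ (so $S^{-1}=-2PW^{\veemidvert}P$), the harmlessness of the prefactor $q^{\Delta^{\veemidvert}}$, the matching of $S^*_\pi$ with $W^*_\pi$ after extracting the diagonal factors $P_v$, and the criterion that a pole of $F_v^{\veemidvert,(\alpha)}(\bm{e}(\xi_v))$ survives exactly when at most $\deg(v)-3$ branch factors vanish there.

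The step you yourself call the main obstacle, however, is asserted rather than proved, and the reason you sketch for it is not the right one. You claim the poles ``survive only if $\alpha_v\in\Z$'' via ``a case analysis of when $\bm{e}(\alpha_v+P_v\xi_v)=\pm1$ has rational solutions.'' Rationality is not the issue: since $\alpha\in W^{-1}(\Z^V)/\Z^V$ has rational entries, that equation always has rational solutions $\xi_v\in\tfrac{1}{P_v}\left(\tfrac12\Z-\alpha_v\right)$, and the factor $F_v(\bm{e}(\alpha_v)z_v^{P_v})$ genuinely has poles of order $\deg(v)-2$ at all of them, whether or not $\alpha_v\in\Z$. What must actually be shown is that for $\alpha_v\notin\Z$ these shifted poles produce no exponential outside the stated $\calS^{(\alpha)}$ --- e.g.\ because the congruences $W\alpha\in\Z^V$ along each branch (which tie the $\alpha_{i_\beta}$ to $\alpha_v$ through $W_\beta$) force enough branch-factor zeros to cancel them, or because their residue contributions cancel under the $e\mapsto -e$ symmetrization built into $\widetilde{\coe}[F^{\veemidvert,(\alpha)}]$ (cf.\ \cref{rem:F_v_symmetry_deg_3}). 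As written, ``then shows'' carries the entire content of the explicit formula for $\calP_v^{(\alpha)}$, so this is a genuine gap; a milder instance of the same looseness is your claim that $U_\pi$ ``does not displace'' the pole lattice, which should be checked against the actual definition $\calP_\pi=U_\pi(\calP_{I_s^\complement}\oplus\Q^{I_s})\cap\Q^{I_s^\complement}$ using the block structure of $U_\pi$. To be fair, the paper itself states the corollary with only the citation of \cref{thm:false_theta_asymp}, so this pole bookkeeping is exactly what a complete write-up must supply.
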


Combining this asymptotic formula and the radial limit theorem (\cref{thm:GPPV_conj}), we obtain asymptotic expansions of WRT invariants as follows.

\begin{cor} \label{cor:WRT_asymp}
	We have an asymptotic expansion
	\[
	Z_k (M)
	\sim
	\sum_{\theta \in \calS} \bm{e} (k \theta) Z_\theta (k) 
	\quad \text{ as } k \to \infty
	\]
	for a finite set
	\[
	\calS \coloneqq
	\bigcup_{\alpha \in H^1(M, \Z)}  \left( -\transpose{\alpha} W \alpha + \calS^{(\alpha)} \right)
	\subset \Q/\Z
	\]
	and Puiseux series $ Z_\theta (k) \in \bbC((k^{-1/2})) $ for each $ \theta \in \calS $.
\end{cor}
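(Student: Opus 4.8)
The plan is to combine the radial limit theorem for GPPV invariants (\cref{thm:GPPV_conj}), in its twisted reformulation \cref{eq:GPPV_conj_modify}
\[
Z_k(M)
=
\frac{1}{2(\zeta_{2k} - \zeta_{2k}^{-1}) \sqrt{\abs{\det W}}}
\lim_{q \to \zeta_k}
\sum_{\alpha \in W^{-1}(\Z^V)/\Z^V}
\bm{e}(-k \transpose{\alpha} W \alpha)\,
\widehat{Z}^{(\alpha)}(q; M),
\]
where the limit is taken over $\theta_0 \le \arg(\tau - 1/k) \le \theta_1$, with the two-layer asymptotic expansion of the twisted invariants $\widehat{Z}^{(\alpha)}(q; M)$ supplied by \cref{cor:GPPV_asymp}. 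The sum over $\alpha$ is finite (since $H^1(M,\Z)$ is finite), so the whole argument reduces to tracking each term and then reorganising exponential phases.

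First I would extract the radial limit. By \cref{cor:GPPV_asymp} \cref{item:cor:GPPV_asymp:1} applied with $\rho = 1/k$, each $\widehat{Z}^{(\alpha)}(\tau + 1/k; M)$ has a Poincar\'e asymptotic expansion $\sum_{j} Z_j^{(\alpha)}(1/k)\,\tau^j$ as $\tau \to 0$, valid on the sector $\theta_0 \le \arg \tau \le \theta_1$ (this sectorial validity comes from \cref{prop:asymp_EM} and \cref{cor:asymp_EM}, which is precisely why \cref{thm:GPPV_conj} is stated with the same sectorial limit). Hence the sum inside the limit in \cref{eq:GPPV_conj_modify} has the asymptotic expansion $\sum_j \bigl(\sum_\alpha \bm{e}(-k\transpose\alpha W\alpha) Z_j^{(\alpha)}(1/k)\bigr)\tau^j$; since by \cref{thm:GPPV_conj} this admits a finite limit as $\tau \to 1/k$, the negative-order coefficients of this combination must collectively vanish and the limit equals the order-zero coefficient, giving the \emph{exact} identity
\[
Z_k(M)
=
\frac{1}{2(\zeta_{2k} - \zeta_{2k}^{-1}) \sqrt{\abs{\det W}}}
\sum_{\alpha \in W^{-1}(\Z^V)/\Z^V}
\bm{e}(-k \transpose{\alpha} W \alpha)\,
Z_0^{(\alpha)}(1/k)
\]
for every $k$.

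Next I would expand each factor as $k \to \infty$. \cref{cor:GPPV_asymp} \cref{item:cor:GPPV_asymp:2}, with $\rho = 1/k$, $h = 1$, $j = 0$, yields $Z_0^{(\alpha)}(1/k) \sim \sum_{\theta' \in \calS^{(\alpha)}} \bm{e}(-k\theta')\,\widetilde{\varphi}^{(\alpha)}_{\theta',1,0}(k^{-1/2})$ with $\widetilde{\varphi}^{(\alpha)}_{\theta',1,0}(u) \in \bbC((u))$ and $\calS^{(\alpha)}$ as described there; and $\zeta_{2k} - \zeta_{2k}^{-1} = 2\iu \sin(\pi/k)$, so the scalar prefactor $\frac{1}{4\iu\sqrt{\abs{\det W}}\sin(\pi/k)}$ has an asymptotic expansion that is a Laurent--Puiseux series in $k^{-1}$ (of leading order $k^{+1}$). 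Multiplying these expansions, each resulting summand carries a phase $\bm{e}(-k\transpose\alpha W\alpha)\bm{e}(-k\theta') = \bm{e}(k\psi)$ with $\psi \equiv -\transpose\alpha W\alpha - \theta' \bmod \Z$; re-indexing $\theta' \mapsto -\theta'$ within the finite set $\calS^{(\alpha)}$, these phases run over $-\transpose\alpha W\alpha + \calS^{(\alpha)}$, hence over $\calS = \bigcup_{\alpha \in H^1(M,\Z)}(-\transpose\alpha W\alpha + \calS^{(\alpha)})$, a finite subset of $\Q/\Z$. Collecting, for each fixed $\theta \in \calS$, the finitely many pairs $(\alpha,\theta')$ that produce the phase $\bm{e}(k\theta)$ and summing the corresponding products of the prefactor series with $\widetilde{\varphi}^{(\alpha)}_{\theta',1,0}(k^{-1/2})$ — each term lying in $\bbC((k^{-1/2}))$ — defines a single Puiseux series $Z_\theta(k) \in \bbC((k^{-1/2}))$, and gives the claimed expansion $Z_k(M) \sim \sum_{\theta \in \calS}\bm{e}(k\theta)Z_\theta(k)$ as $k \to \infty$.

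\textbf{Main obstacle.} The genuinely delicate point is the interchange of the two limiting regimes: one must be sure that the $\tau \to 1/k$ (i.e.\ $q \to \zeta_k$) limit legitimately picks out the order-zero Laurent coefficient of the $\tau \to 0$ expansion of \cref{cor:GPPV_asymp}, and then that this coefficient, which is itself a number depending on $k$, may be expanded in $k$ via \cref{cor:GPPV_asymp} \cref{item:cor:GPPV_asymp:2} and substituted back term by term. This works because the expansion in \cref{cor:GPPV_asymp} \cref{item:cor:GPPV_asymp:1} is a true sectorial Poincar\'e expansion compatible with the sector $[\theta_0,\theta_1]$ in \cref{thm:GPPV_conj}, and because the finiteness of the limit in \cref{thm:GPPV_conj} forces the collective cancellation of all negative-order terms so that no divergence can spoil the bookkeeping; everything beyond this is routine tracking of the quadratic-form phases through the two applications of \cref{cor:GPPV_asymp} and the partition-indexed sums, together with the (immediate) observation that only finitely many residues $\theta \in \Q/\Z$ occur.
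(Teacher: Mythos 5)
Your proposal is correct and follows essentially the same route as the paper, which proves \cref{cor:WRT_asymp} precisely by feeding the two-level expansion of \cref{cor:GPPV_asymp} (with $\rho = 1/k$, $h=1$, $j=0$) into the twisted radial-limit identity \cref{eq:GPPV_conj_modify} and expanding the prefactor $1/(\zeta_{2k}-\zeta_{2k}^{-1})$ in $k^{-1}$; your explicit justification that the sectorial limit picks out the order-zero coefficient of the combined expansion is exactly the implicit step in the paper's one-line derivation. The only bookkeeping point, which you already flag, is the sign of the phase $\bm{e}(-\theta/\rho)$ versus the set $-\transpose{\alpha}W\alpha + \calS^{(\alpha)}$, a matter of the paper's own sign conventions rather than a gap in your argument.
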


\begin{rem}
	In particular, the above finite set $ \calS $ contains
	\[
	\bigcup_{\alpha \in H^1(M, \Z)} \calS_{(V^{\veemidvert})}^{(\alpha)}
	=
	\left\{
	\transpose{\alpha} W \alpha -\frac{1}{2} n W^{\veemidvert} n
	\relmiddle|
	n \in \prod_{v \in V^{\veemidvert}} \calP_v^{(\alpha)}
	\right\}.
	\]
\end{rem}

By the Witten's asymptotic expansion conjecture (\cref{conj:asymptotic}) and \cref{cor:WRT_asymp}, we obtain the following conjecture.

\begin{conj} \label{conj:CS_inv_expression}
	The finite set $ \calS $ in \cref{cor:WRT_asymp} coincides with the set $ \CS_\bbC (M) \cup \{ 0 \} $, where $ \CS_\bbC (M) $ is the set of Chern--Simons invariants of $ M $.
\end{conj}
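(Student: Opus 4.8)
The plan is to prove \cref{conj:CS_inv_expression} by computing both sides of the claimed equality along the same combinatorial scaffolding that already organizes \cref{cor:GPPV_asymp,cor:WRT_asymp}, namely the vertex set $V^{\veemidvert}$ of branch points, the ordered partitions $\pi\in\Pi(V^{\veemidvert})$, and the quadratic forms attached to the matrices $W_\pi^*$. First I would recall the classification of flat $\SL_2(\bbC)$-connections on a negative definite plumbed manifold $M=M(\Gamma)$. Cutting $M$ along the tori dual to the edges of $\Gamma$ decomposes it into solid tori around the degree-$1$ vertices, thickened annuli around the degree-$2$ vertices, and pieces of the form $S^1\times(\text{$\deg(v)$-punctured sphere})$ around each $v\in V^{\veemidvert}$. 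On the solid-torus and annulus pieces every flat connection is abelian, so a flat connection on $M$ amounts to a central character together with a representation of $\pi_1$ of each punctured-sphere factor with prescribed boundary eigenvalues, glued compatibly; this is exactly the reduction $W\rightsquigarrow W^{\veemidvert}$ and the emergence of the vertex rational functions $F_v^{\veemidvert,(\alpha)}$ in \cref{lem:GPPV_false_theta_rep}. In particular the abelian part of the character variety, since $M$ is a rational homology sphere, is the finite torsion group $H^1(M;\bbC^*)\cong H^1(M;\bbZ)\cong W^{-1}(\bbZ^V)/\bbZ^V$, which is the origin of the index $\alpha$ in \cref{cor:WRT_asymp}.

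Second, I would compute the $\SL_2(\bbC)$ Chern--Simons invariant of such a connection using the gluing formula for the Chern--Simons functional along tori (Kirk--Klassen) together with the surgery presentation of $M(\Gamma)$. For the connection attached to $\alpha\in H^1(M;\bbZ)$ and rotation data $n=(n_v)_{v\in V^{\veemidvert}}$, this should split as an abelian background term $-\transpose{\alpha}W\alpha\bmod\bbZ$ — the linking form evaluated on $\alpha$, which is the Chern--Simons invariant of the associated reducible connection — plus a vertex contribution of shape $-\tfrac{1}{4}\transpose{n}(W^{\veemidvert})^{-1}n$, restricted to the cosets of $n$ for which the punctured-sphere representation at each active vertex is genuinely irreducible. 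The irreducibility condition at $v$ is that at most $\deg(v)-3$ of its meridional holonomies are central, i.e.\ $\#\{\beta\in\overline{v}\mid 2\alpha_{i_\beta}+n_v/p_\beta\in\bbZ\}\le\deg(v)-3$ (with the additional requirement $\alpha_v\in\bbZ$, forcing trivial holonomy around the central $S^1$), which is precisely the definition of $\calP_v^{(\alpha)}$. The partition $\pi\in\Pi(V^{\veemidvert})$ records which vertices carry irreducible holonomy at which level of the iterated stratification of the character variety, and the matrices $W_\pi^*$ record the quadratic form governing the Chern--Simons value on the corresponding stratum; this reproduces the sets $\calS_\pi^{(\alpha)}$ verbatim.

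Third, comparing the two descriptions would yield $\CS_\bbC(M)=\bigcup_{\alpha}\bigl(-\transpose{\alpha}W\alpha+\bigcup_{\pi}\calS_\pi^{(\alpha)}\bigr)$, while the extra $\{0\}$ in each $\calS^{(\alpha)}$, unioned over $\alpha$, contributes exactly $\{-\transpose{\alpha}W\alpha\bmod\bbZ\}$ — the Chern--Simons invariants of all reducible connections, the trivial one giving $0$. Hence $\calS=\CS_\bbC(M)\cup\{0\}$. As a sanity check, for Seifert homology spheres this recovers the Fintushel--Stern/Kirk--Klassen/Auckly/Nishi rotation-number formula for $\CS$, and one verifies directly that it agrees with $\calS_{(V^{\veemidvert})}^{(\alpha)}$; the reformulation used by Andersen--Misteg\aa{}rd is the $\deg(v)$-punctured-sphere case of the above.

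The hard part will be the second step in full generality. For graph manifolds the $\SL_2(\bbC)$ character variety can have positive-dimensional components and can carry non-abelian holonomy that is nonetheless reducible on proper subtrees, so one needs (a) a uniform description of all components of the $\SL_2(\bbC)$ character variety of $M(\Gamma)$ in terms of the combinatorics above — where the stratification by $\Pi(V^{\veemidvert})$ must be shown to be complete — and (b) an additivity theorem for the $\SL_2(\bbC)$ Chern--Simons invariant under the torus gluings that remains valid off the locus where the gluing tori restrict to irreducible representations. Both ingredients are available in the literature only partially (e.g.\ Boden--Curtis for $\SL_2(\bbC)$ Casson-type invariants of graph manifolds), which is exactly why \cref{conj:CS_inv_expression} is stated as a conjecture rather than a theorem; making (a) and (b) precise and uniform across all $\pi\in\Pi(V^{\veemidvert})$ is the essential obstacle, and a favorable side effect of the present paper is that $\calS$ is now known explicitly, so this program has a concrete target to match.
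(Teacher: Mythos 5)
You should first note that the statement you were asked to prove is stated in the paper as a \emph{conjecture}: the paper offers no proof of \cref{conj:CS_inv_expression}, only the observation that it is what Witten's asymptotic expansion conjecture (\cref{conj:asymptotic}) predicts when combined with \cref{cor:WRT_asymp}, plus a verification in the Seifert case by matching the explicit $\calS$ against Andersen--Misteg\aa{}rd's formula for $\CS_\bbC(M)$. Your text is likewise not a proof but a research program, and you concede this yourself in the final paragraph; so it cannot be accepted as a proof of the statement, and the honest comparison is that both you and the paper stop at the same place, except that the paper at least carries out the Seifert sanity check in \cref{subsubsec:CS_examples_Seifert} rather than only citing it.

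The concrete gaps in your program are the following. First, the stratification by ordered partitions $\pi\in\Pi(V^{\veemidvert})$ is of purely analytic origin: it arises from the iterated contour deformations and residue extractions in the proofs of \cref{thm:false_modular_series_modular,thm:false_theta}, and you give no argument that it corresponds to any geometric stratification of the $\SL_2(\bbC)$ character variety. In particular, for partitions with $s\ge 2$ the matrices $W_\pi^*$ are iterated Schur complements, and it is not clear that the corresponding exponentials $-\tfrac{1}{4}\transpose{n}W_\pi^*n$ are Chern--Simons invariants of actual flat connections rather than extraneous exponentials whose asymptotic series could, for instance, vanish; one inclusion of the conjectured equality is precisely the claim that they are genuine, and your step two simply asserts it. Second, the gluing ingredient you invoke (Kirk--Klassen-type additivity of the $\SL_2(\bbC)$ Chern--Simons functional along the tori of the plumbing decomposition, valid also where the restriction to a gluing torus is reducible) is exactly the ingredient you admit is not available in the needed generality, and the claimed shape of the vertex contribution, abelian term $-\transpose{\alpha}W\alpha$ plus $-\tfrac{1}{4}\transpose{n}(W^{\veemidvert})^{-1}n$ on the coset cut out by $\calP_v^{(\alpha)}$, is stated without computation; even the normalization requires care (compare the remark after \cref{cor:WRT_asymp}, where the top-stratum exponent is written with $W^{\veemidvert}$ rather than its inverse). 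Third, the conjecture is an equality of finite sets, so besides showing every element of $\calS$ is a Chern--Simons invariant or $0$ you must also show every Chern--Simons invariant occurs, which requires the completeness statement (a) about components of the character variety that you only postulate. Until (a) and (b) are established uniformly in $\pi$, the proposal remains a plausible strategy consistent with the paper's expectations, not a proof.
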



\subsection{Examples} \label{subsec:CS_ex}


To conclude this section, we write $ \calS $ in some cases.


\subsubsection{Case of Seifert homology spheres} \label{subsubsec:CS_examples_Seifert}


Seifert homology spheres are negative definite plumbed manifolds determined by plumbing graphs shown in \cref{fig:Seifert} (star-shaped graph) with $ N \ge 3 $ and its linking matrix $ W $ is negative definite and satisfies $ \det W = \pm 1 $.

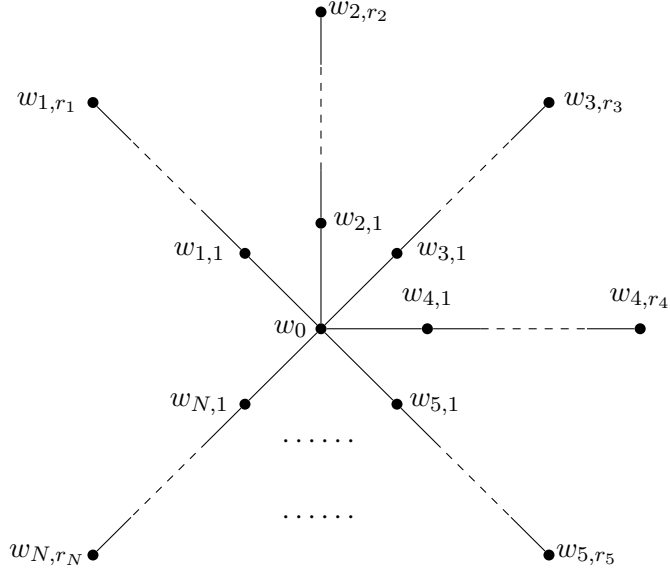
\begin{figure}[htbp]
	\centering
	\begin{tikzpicture}
		\node[shape=circle,fill=black, scale = 0.4] (1) at (0,0) { };
		\node[shape=circle,fill=black, scale = 0.4] (21) at (-1,1) { };
		\coordinate (22) at (-1.5,1.5) { };
		\coordinate (23) at (-2.5,2.5) { };
		\node[shape=circle,fill=black, scale = 0.4] (24) at (-3,3) { };
		\node[shape=circle,fill=black, scale = 0.4] (31) at (0,1.4) { };
		\coordinate (32) at (0,2.1) { };
		\coordinate (33) at (0,3.5) { };
		\node[shape=circle,fill=black, scale = 0.4] (34) at (0,4.2) { };
		\node[shape=circle,fill=black, scale = 0.4] (41) at (1,1) { };
		\coordinate (42) at (1.5,1.5) { };
		\coordinate (43) at (2.5,2.5) { };
		\node[shape=circle,fill=black, scale = 0.4] (44) at (3,3) { };
		\node[shape=circle,fill=black, scale = 0.4] (51) at (1.4,0) { };
		\coordinate (52) at (2.1,0) { };
		\coordinate (53) at (3.5,0) { };
		\node[shape=circle,fill=black, scale = 0.4] (54) at (4.2,0) { };
		\node[shape=circle,fill=black, scale = 0.4] (61) at (1,-1) { };
		\coordinate (62) at (1.5,-1.5) { };
		\coordinate (63) at (2.5,-2.5) { };
		\node[shape=circle,fill=black, scale = 0.4] (64) at (3,-3) { };
		\node[shape=circle,fill=black, scale = 0.4] (81) at (-1,-1) { };
		\coordinate (82) at (-1.5,-1.5) { };
		\coordinate (83) at (-2.5,-2.5) { };
		\node[shape=circle,fill=black, scale = 0.4] (84) at (-3,-3) { };

		\node[draw=none] (B0) at (-0.4,0) {$ w_0 $};
		\node[draw=none] (B01) at (0,-1.5) {$\cdots\cdots$};
		\node[draw=none] (B02) at (0,-2.5) {$\cdots\cdots$};
		\node[draw=none] (B1) at (-1.6,1) {$ w_{1, 1} $};
		\node[draw=none] (B12) at (-3.6,3) {$ w_{1, r_1} $};
		\node[draw=none] (B21) at (0.5,1.4) {$ w_{2, 1} $};
		\node[draw=none] (B22) at (0.5,4.2) {$ w_{2, r_2} $};
		\node[draw=none] (B31) at (1.6,1) {$ w_{3, 1} $};
		\node[draw=none] (B32) at (3.6,3) {$ w_{3, r_3} $};
		\node[draw=none] (B41) at (1.4,0.4) {$ w_{4, 1} $};
		\node[draw=none] (B42) at (4.2,0.4) {$ w_{4, r_4} $};
		\node[draw=none] (B51) at (1.5,-1) {$ w_{5, 1} $};
		\node[draw=none] (B52) at (3.5,-3) {$ w_{5, r_5} $};
		\node[draw=none] (B61) at (-1.6,-1) {$ w_{N, 1} $};
		\node[draw=none] (B62) at (-3.6,-3) {$ w_{N, r_N} $};

		\path [-](1) edge node[left] {} (21);
		\path [-](21) edge node[left] {} (22);
		\path [dashed](22) edge node[left] {} (23);
		\path [-](23) edge node[left] {} (24);
		
		\path [-](1) edge node[left] {} (31);
		\path [-](31) edge node[left] {} (32);
		\path [dashed](32) edge node[left] {} (33);
		\path [-](33) edge node[left] {} (34);
		
		\path [-](1) edge node[left] {} (41);
		\path [-](41) edge node[left] {} (42);
		\path [dashed](42) edge node[left] {} (43);
		\path [-](43) edge node[left] {} (44);
		
		\path [-](1) edge node[left] {} (51);
		\path [-](51) edge node[left] {} (52);
		\path [dashed](52) edge node[left] {} (53);
		\path [-](53) edge node[left] {} (54);
		
		\path [-](1) edge node[left] {} (61);
		\path [-](61) edge node[left] {} (62);
		\path [dashed](62) edge node[left] {} (63);
		\path [-](63) edge node[left] {} (64);
		
		
		\path [-](1) edge node[left] {} (81);
		\path [-](81) edge node[left] {} (82);
		\path [dashed](82) edge node[left] {} (83);
		\path [-](83) edge node[left] {} (84);
	\end{tikzpicture}
	\caption{A star-shaped plumbing graph. }
	\label{fig:Seifert}
\end{figure}

In this case, $ \calS $ in \cref{cor:WRT_asymp} is expressed as follows.

\begin{lem}
	We have
	\[
	\calS
	= 
	\{ 0 \} \cup
	\left\{ 
	- \frac{n^2}{4} \left( \prod_{\beta \text{ branches}} \frac{1}{p_\beta} \right) \bmod \Z 
	\relmiddle| 
	\text{$ n \in \Z $ such that $ \# \{ \beta \text{ branches} \mid n / p_\beta \in \Z \} \le \abs{\overline{v_0}} - 3 $}
	\right\}.
	\]
\end{lem}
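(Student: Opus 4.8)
\emph{Step 1: reduce to a single cohomology class and a single partition.} Since $M$ is a Seifert homology sphere, its linking matrix $W$ is negative definite with $\det W=\pm1$, so $H_1(M,\Z)=0$ and hence $H^1(M,\Z)\cong W^{-1}(\Z^V)/\Z^V=0$. Therefore in \cref{cor:WRT_asymp} only $\alpha=0$ contributes and $-\transpose{0}W0=0$, so $\calS=\calS^{(0)}$. Moreover the star-shaped graph of \cref{fig:Seifert} (with $N\ge 3$) has a unique vertex of degree $\ge 3$, namely the central vertex $v_0$; thus $V^{\veemidvert}=\{v_0\}$, the only ordered partition in $\Pi(V^{\veemidvert})$ is $\pi_0\coloneqq(\{v_0\})$, and \cref{cor:GPPV_asymp} collapses to
\[
\calS=\{0\}\cup\calS_{\pi_0}^{(0)},\qquad
\calS_{\pi_0}^{(0)}=\Bigl\{-\tfrac14\transpose n W_{\pi_0}^{*}n\bmod\Z\ \Bigm|\ n\in\calP_{v_0}^{(0)}\Bigr\}.
\]
So it suffices to identify $\calP_{v_0}^{(0)}$ and the $1\times1$ form $W_{\pi_0}^{*}$.

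\emph{Step 2: identify the pole set.} With $\alpha=0$ we have $\alpha_{v_0}=0\in\Z$, so by the definition in \cref{cor:GPPV_asymp},
$\calP_{v_0}^{(0)}=\{n\in\Z\mid \#\{\beta\in\overline{v_0}\mid n/p_\beta\in\Z\}\le\deg(v_0)-3\}$, and since $v_0$ has no neighbours other than the heads of its legs, $\deg(v_0)=|\overline{v_0}|$. This already matches the indexing set in the statement. The same set arises directly as the pole locus of $G(\bm e(\xi))$ for $G=F_{v_0}^{\veemidvert,(0)}$ from \cref{lem:GPPV_false_theta_rep}: writing $P\coloneqq P_{v_0}=\operatorname{lcm}\{p_\beta\}$, one has $F_{v_0}^{\veemidvert,(0)}(z)=(z^{P}-z^{-P})^{2-\deg(v_0)}\prod_{\beta\in\overline{v_0}}(z^{P/p_\beta}-z^{-P/p_\beta})$, and at $\xi=n/(2P)$ the denominator contributes a pole of order $\deg(v_0)-2$ while the factor for $\beta$ cancels a simple zero exactly when $\bm e(n/p_\beta)=1$; hence the net pole order is $(\deg(v_0)-2)-\#\{\beta\mid n/p_\beta\in\Z\}$, positive precisely on $\calP_{v_0}^{(0)}$.

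\emph{Step 3: identify the quadratic form and conclude.} By \cref{lem:GPPV_false_theta_rep} the relevant kernel quadratic form is $Q^{\veemidvert}(\,\cdot\,/2P)$, so in the false-theta normalization $S$ is the $1\times1$ matrix determined by $\transpose m Sm/2=Q^{\veemidvert}(m/2P)=-(W^{-1})_{v_0v_0}\,m^2/(4P^2)$, which is positive since $W^{-1}$ is negative definite. For the trivial partition $\pi_0$ ($s=1$) the matrix in \cref{cor:GPPV_modular}, equivalently in \cref{thm:false_theta}, is the inverted form $S^{-1}=-2P^2(W^{-1})_{v_0v_0}^{-1}$, and combining this with Step 2 ($\calP=\{n/(2P)\mid n\in\calP_{v_0}^{(0)}\}$) the $P$'s cancel and $\calS_{\pi_0}^{(0)}=\{\tfrac14 n^2(W^{-1})_{v_0v_0}^{-1}\bmod\Z\mid n\in\calP_{v_0}^{(0)}\}$. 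Finally apply \cref{lem:linking_matrix_inverse} to the star graph: it has only branches and no trunks, so $W^{\veemidvert}=(W^{-1})_{v_0v_0}^{-1}$ is the scalar $\det W\big/\prod_{\beta}p_\beta$; since $\det W=\pm1$ this equals $\pm\prod_{\beta}1/p_\beta$, yielding $\calS_{\pi_0}^{(0)}=\{-\tfrac{n^2}{4}\prod_{\beta}p_\beta^{-1}\bmod\Z\mid n\in\calP_{v_0}^{(0)}\}$, which together with Step 1 is the claimed description of $\calS$.

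\emph{Main obstacle.} The reduction and the pole-order count are routine; the delicate point is the bookkeeping of signs and of the rescaling factor $2P$ in Step 3 — namely verifying that pushing the quadratic form through the modular transformation (\cref{cor:GPPV_modular}) and then through the radial limit $q\to\zeta_k$ in the prescribed direction (\cref{thm:GPPV_conj}, and the passage $\calS^{(\alpha)}\leadsto\calS$ in \cref{cor:WRT_asymp}) produces exactly the sign $-n^2/(4\prod_\beta p_\beta)$ rather than its conjugate, and that $|\det W|=1$ is what makes $W^{\veemidvert}$ collapse to $\pm\prod_\beta p_\beta^{-1}$; this is where one must be careful that the answer is independent of the chosen plumbing representative.
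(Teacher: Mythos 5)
Your proposal is correct and takes essentially the same route as the paper's proof: after reducing to $\alpha=0$ and the unique partition $(\{v_0\})$, both arguments identify $\calS \smallsetminus \{0\}$ with $\bigl\{\tfrac14 n^2 w_{v_0}^{\veemidvert} \bmod \Z \mid n\in\calP_{v_0}^{(0)}\bigr\}$ and then evaluate $w_{v_0}^{\veemidvert}=\det W\prod_\beta p_\beta^{-1}$ via \cref{lem:linking_matrix_inverse}, fixing the sign from negative definiteness together with $\abs{\det W}=1$. The only differences are cosmetic: you spell out the reductions (triviality of $H^1$, $\deg(v_0)=\abs{\overline{v_0}}$, the pole-order count) that the paper leaves implicit, while the paper disposes of the sign issue you flag as the "main obstacle" simply by noting that the diagonal entry $1/w_{v_0}^{\veemidvert}$ of $W^{-1}$ is negative.
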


This expression coincides with the expression of $ \CS_\bbC^{} (M) $ in Andersen--Misteg\aa{}rd~\cite[Corollary 9]{Andersen-Mistegard}.
Thus, \cref{conj:CS_inv_expression} holds for Seifert homology spheres.

\begin{proof}
	Let $ \{ v_0 \} \coloneqq V^{\veemidvert} $. 
	We have
	\[
	\calS =
	\{ 0 \} \cup
	\left\{ \frac{1}{4} \transpose{n} W^{\veemidvert} n \bmod \Z \relmiddle| 
	\text{$ n \in \Z $ such that $ \# \{ \beta \in \overline{v_0} \mid n / p_\beta \in \Z \} \le \abs{\overline{v_0}} - 3 $}
	\right\}.
	\]
	Since $ W^{\veemidvert} = w_{v_0}^{\veemidvert} $, it suffices to show 
	$ -1/w_{v_0}^{\veemidvert} = \prod_{\beta \in \overline{v_0}} \abs{p_\beta} $.
	By \cref{lem:linking_matrix_inverse}, we have
	\[
	w_{v_0}^{\veemidvert}
	=
	\det W
	\prod_{\beta \in \overline{v_0}} \frac{1}{p_\beta}.
	\]
	Since $ W $ is negative definite, the diagonal component $ 1/ w_{v_0}^{\veemidvert} $ of $ W^{-1} $ is negative.
	Thus, we obtain the claim.
\end{proof}


\subsubsection{Case of H-graphs} \label{subsubsec:CS_examples_H-graph}


We assume that $ M $ is a negative definite plumbed manifold determined by plumbing graphs shown in \cref{fig:H-graph_revisited} (H-graph) and its linking matrix $ W $ is negative definite and satisfies $ \det W = 1 $.
By \cref{lem:V_minimal}, we can assume $ w_3, \dots, w_6 \neq -1 $ without loss of generality. 

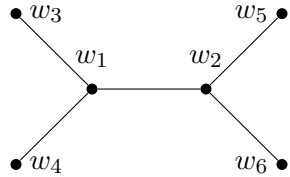
\begin{figure}[htb]
	\centering
	\begin{tikzpicture}
		\node[shape=circle,fill=black, scale = 0.4] (1) at (0,0) { };
		\node[shape=circle,fill=black, scale = 0.4] (2) at (1.5,0) { };
		\node[shape=circle,fill=black, scale = 0.4] (3) at (-1,-1) { };
		\node[shape=circle,fill=black, scale = 0.4] (4) at (-1,1) { };
		\node[shape=circle,fill=black, scale = 0.4] (5) at (2.5,1) { };
		\node[shape=circle,fill=black, scale = 0.4] (6) at (2.5,-1) { };
		
		\node[draw=none] (B1) at (0,0.4) {$ w_1 $};
		\node[draw=none] (B2) at (1.5, 0.4) {$ w_2 $};
		\node[draw=none] (B3) at (-0.6,1) {$ w_3 $};
		\node[draw=none] (B4) at (-0.6,-1) {$ w_4 $};
		\node[draw=none] (B5) at (2.1,1) {$ w_5 $};		
		\node[draw=none] (B6) at (2.1,-1) {$ w_6 $};	
		
		\path [-](1) edge node[left] {} (2);
		\path [-](1) edge node[left] {} (3);
		\path [-](1) edge node[left] {} (4);
		\path [-](2) edge node[left] {} (5);
		\path [-](2) edge node[left] {} (6);
	\end{tikzpicture}
	\caption{An H-graph.} \label{fig:H-graph_revisited}
\end{figure}

In this case, we can express $ \calS $ as follows.

\begin{lem}
	We have
	\begin{align}
		\calS =
		\{ 0 \} &\cup
		\left\{
		\frac{1}{4} \transpose{n} W^{\veemidvert} n \bmod \Z
		\relmiddle|
		n \in \left( \Z \smallsetminus (w_3 \Z \cup w_4 \Z) \right) \times \left( \Z \smallsetminus (w_5 \Z \cup w_6 \Z) \right)
		\right\}
		\\
		&\cup
		\left\{
		\frac{n_2^2}{w_1^{\veemidvert} P_1 P_2} \bmod \Z
		\relmiddle|
		n_2 \in \Z \smallsetminus (w_5 \Z \cup w_6)
		\right\}
		\\
		&\cup
		\left\{
		\frac{n_1^2}{w_2^{\veemidvert} P_1 P_2} \bmod \Z
		\relmiddle|
		n_1 \in \left( \Z \smallsetminus (w_3 \Z \cup w_4 \Z) \right)
		\right\}.
	\end{align}
\end{lem}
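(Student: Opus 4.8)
The plan is to specialize the general machinery of \cref{cor:GPPV_asymp} \cref{item:cor:GPPV_asymp:2} to the H-graph and carry out the explicit computation of the index sets $\calP_v^{(\alpha)}$ and the quadratic forms $W_\pi^*$. First I would note that for the H-graph in \cref{fig:H-graph_revisited} we have $V^{\veemidvert} = \{v_1, v_2\}$, the two central vertices, each of degree $3$; the two branches $\overline{v_1}$ emanating from $v_1$ consist of the single edges to the weight-$w_3$ and weight-$w_4$ vertices (so $p_\beta = w_3$ and $p_\beta = w_4$, each branch having length $1$), and similarly $\overline{v_2}$ consists of the branches of weights $w_5$ and $w_6$. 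There is a single trunk joining $v_1$ and $v_2$, of length $1$, so $p_\tau = w_1$ (after identifying the labelling). The assumption $\det W = 1$ forces $H^1(M,\Z) = W^{-1}(\Z^V)/\Z^V$ to be trivial, so only the trivial cocycle $\alpha = 0$ contributes in $\calS = \bigcup_\alpha (-\transpose{\alpha}W\alpha + \calS^{(\alpha)})$; thus $\calS = \calS^{(0)}$, which removes all the $\bm{e}(\alpha_v)$-twisting and simplifies $\calP_v^{(0)}$ to the $\alpha_v \in \Z$ branch of the formula.

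Next I would enumerate the ordered partitions $\pi \in \Pi(V^{\veemidvert})$. Since $|V^{\veemidvert}| = 2$ there are exactly three: $\pi = (\{v_1, v_2\})$, $\pi = (\{v_1\}, \{v_2\})$, and $\pi = (\{v_2\}, \{v_1\})$. For each I would compute $W_\pi^*$ from the definition in \cref{cor:GPPV_modular}: for the one-block partition $W_\pi^* = (W^{\veemidvert})^{-1}$, giving the term $-\tfrac14 \transpose{n} (W^{\veemidvert})^{-1} n$; but since $\det W^{\veemidvert} = \det W / (p_{w_3} p_{w_4} p_{w_5} p_{w_6} p_{\text{trunk}})$ by \cref{lem:linking_matrix_inverse}, and comparing with the quadratic form $Q^{\veemidvert}(x) = -\transpose{x}(W^{\veemidvert})^{-1}x$ appearing in \cref{lem:GPPV_false_theta_rep}, I would reconcile $-\tfrac14 \transpose{n} W_\pi^* n$ with $\tfrac14 \transpose{n} W^{\veemidvert} n$ (the sign flips because $W$ is negative definite, exactly as in the Seifert case). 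For the two-block partitions, $W_\pi^*$ is block-diagonal with a single scalar block: for $\pi = (\{v_1\},\{v_2\})$ it is $((W^{\veemidvert})_{\{v_1\}}^*)^{-1} = ((W^{\veemidvert})^{-1}_{v_1 v_1})^{-1} = w_1^{\veemidvert}$ (the $(v_1,v_1)$-entry of $W^{\veemidvert}$, since for a $2\times 2$ matrix the reciprocal of a diagonal entry of the inverse is not simply the diagonal entry of the original—here I must be careful: $((W^{\veemidvert})^{-1})_{v_1 v_1} = (W^{\veemidvert})_{v_2 v_2}/\det W^{\veemidvert}$, so its reciprocal is $\det W^{\veemidvert}/(W^{\veemidvert})_{v_2 v_2}$). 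I would track these $2\times 2$ cofactor identities carefully, expressing everything in terms of $w_i^{\veemidvert}$, $P_1$, $P_2$, where $P_j = \operatorname{lcm}$ of the branch determinants at $v_j$, matching the normalization $2 P_{V'_1 \sqcup \cdots} W^{\veemidvert} P_{V'_1 \sqcup \cdots}$ in \cref{cor:GPPV_modular} which accounts for the factors $P_1 P_2$ in the denominators of the claimed formula.

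Then I would compute the index sets. For the one-block partition, $n$ ranges over $\prod_{v \in V^{\veemidvert}} \calP_v^{(0)}$; with $\alpha_v = 0 \in \Z$ and $\deg(v) = 3$, the condition $\#\{\beta \in \overline{v} \mid n_v/p_\beta \in \Z\} \le \deg(v) - 3 = 0$ means \emph{no} branch determinant divides $n_v$, i.e. $n_{v_1} \in \Z \smallsetminus (w_3\Z \cup w_4\Z)$ and $n_{v_2} \in \Z \smallsetminus (w_5\Z \cup w_6\Z)$, giving the first bracketed set. For $\pi = (\{v_1\}, \{v_2\})$, $n$ ranges over $\calP_{v_1}^{(0)}$ only (the block $V'_1 \sqcup \cdots \sqcup V'_{s-1} = \{v_1\}$), with the scalar form $w_2^{\veemidvert}$-type coefficient—actually matching the claimed $n_1^2/(w_2^{\veemidvert} P_1 P_2)$ after the cofactor bookkeeping—and $n_1 \in \Z \smallsetminus (w_3\Z \cup w_4\Z)$; symmetrically $\pi = (\{v_2\}, \{v_1\})$ gives the $n_2^2/(w_1^{\veemidvert} P_1 P_2)$ term with $n_2 \in \Z \smallsetminus (w_5\Z \cup w_6\Z)$. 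Finally I would add the $\{0\}$ from $\theta = 0$ and assemble the union. The main obstacle I anticipate is \textbf{the cofactor bookkeeping} in matching the scalar blocks $((W^{\veemidvert}_{{V'}^\complement})^*_{V'})^{-1}$ and the normalizing diagonal matrices $P_{V'}$ against the specific denominators $w_1^{\veemidvert} P_1 P_2$ and $w_2^{\veemidvert} P_1 P_2$ in the statement, including getting all signs right given that $W$ (hence $W^{\veemidvert}$) is negative definite—this requires a careful $2\times 2$ computation but no conceptual difficulty, and the $\det W = 1$ hypothesis together with $w_i^{\veemidvert} = \det W \cdot \prod_{\beta}(1/p_\beta)$ keeps all the determinant factors under control.
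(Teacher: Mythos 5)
Your plan follows essentially the same route as the paper's proof: specialize \cref{cor:GPPV_asymp} to the H-graph, note that $\det W=1$ kills all nontrivial $\alpha$, compute $\calP_{j}^{(0)}=\Z\smallsetminus(w_{2j+1}\Z\cup w_{2j+2}\Z)$ from the degree-$3$ condition, and pin down the quadratic forms for the three ordered partitions of $V^{\veemidvert}=\{v_1,v_2\}$ using $\det W^{\veemidvert}=\det W/(P_1P_2)=1/(P_1P_2)$ from \cref{lem:linking_matrix_inverse}, which is exactly how the paper produces $(W^{\veemidvert})^{-1}=P_1P_2\pmat{w_2^{\veemidvert} & -1\\ -1 & w_1^{\veemidvert}}$ and hence the denominators $w_i^{\veemidvert}P_1P_2$.

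One concrete slip to fix: there is \emph{no} trunk joining $v_1$ and $v_2$ in the H-graph, and certainly not one with $p_\tau=w_1$. The weights $w_1,w_2$ sit on the two degree-$3$ vertices themselves, which are directly adjacent with no intermediate degree-$2$ vertex; by the adjacency case of the definition of $w_{v,v'}^{\veemidvert}$ the off-diagonal entry of $W^{\veemidvert}$ is $1$, and there is no trunk correction to the diagonal entries, so $w_1^{\veemidvert}=w_1-1/w_3-1/w_4$ and $w_2^{\veemidvert}=w_2-1/w_5-1/w_6$, as in the paper's proof. If you carried your reading through literally (off-diagonal $-1/w_1$ and an extra $-1/w_1$ in the diagonal), $W^{\veemidvert}$ and hence all three pieces of $\calS$ would come out wrong; with the corrected $W^{\veemidvert}$ your cofactor bookkeeping for the two-block partitions (scalar blocks $\bigl((W^{\veemidvert})^{-1}_{v_jv_j}\bigr)^{-1}=1/(P_1P_2\,w_{3-j}^{\veemidvert})$ up to the $P$-rescaling) goes through and reproduces the stated sets.
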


\begin{proof}
	We have
	\[
	W^{\veemidvert} =
	\pmat{w_1^{\veemidvert} & 1 \\ 1 & w_2^{\veemidvert}},
	\quad
	w_1^{\veemidvert} = w_1 - \frac{1}{w_3} - \frac{1}{w_4},
	\quad
	w_2^{\veemidvert} = w_2 - \frac{1}{w_5} - \frac{1}{w_6},
	\quad
	P_1 = w_3 w_4,
	\quad
	P_2 = w_5 w_6.
	\]
	Since $ \deg (v_1) = \deg (v_2) = 3 $, for $ j = 1, 2 $ we have
	\[
	\calP_{j}^{(0)}
	=
	\Z \smallsetminus (w_{2j+1} \Z \cup w_{2j+2} \Z).
	\]
	Moreover, since $ \det W^{\veemidvert} = 1/P_1 P_2 $ by \cref{lem:linking_matrix_inverse}, we have
	\[
	(W^{\veemidvert})^{-1} =
	P_1 P_2 \pmat{w_2^{\veemidvert} & -1 \\ -1 & w_1^{\veemidvert}}.
	\]
	Thus, we obtain the claim.
\end{proof}


\appendix
\def\thepart{\Alph{part}}

\part*{Appendix}



\section{Treatment of signed sums} \label{sec:sgn}


The following proposition implies that the sum involving $ \sgn(Am) $ can be reduced to a sum involving only $ \sgn(m) $.

\begin{prop} \label{prop:sign_Am_to_m}
	Let $ r $ and $ s $ be positive integers and $ A \in \Mat_{s, r} (\Z) $ be a matrix of rank $ d $.
	Then, for each $ 1 \le d' \le d $, there exists a rational number $ a_{d'} \in \Q $, a matrix $ B_{d'} \in \Mat_r (\Z) \cap \GL_r (\Q) $, and
	a free $ \Z $-submodule $ \Z^{d'} \subset L_{d'} \subset \Q^{d'} $ such that
	$ B_{d'} $ induces an isomorphism $ L_{d'} \oplus \Z^{r-d'} \xrightarrow{\sim} \Z^r $ and for any $ F \colon \Z^r \to \bbC $, we have
	\[
	\sum_{m \in \Z^r} \sgn(Am) F(m)
	=
	\sum_{1 \le d' \le d} a_{d'}
	\sum_{m =(m', m'') \in L_{d'} \oplus \Z^{r-d'}} \sgn(m') F \left( Bm \right)
	\]
	if the left hand side converges.
\end{prop}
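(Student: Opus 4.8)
The plan is to prove \cref{prop:sign_Am_to_m} by reducing, in stages, the general signed sum to signed sums whose sign factor is a product of signs of individual coordinates, bookkeeping at each stage the lower-rank contributions that are spawned.

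\emph{Step 1: reduction to full column rank.} Since $\ker A$ is a $\Q$-subspace of $\Q^r$, the quotient $\Z^r/(\ker A\cap\Z^r)$ is torsion-free, hence free, so $\ker A\cap\Z^r$ is a direct summand of $\Z^r$. Choosing a basis of $\Z^r$ adapted to this splitting gives $B_0\in\GL_r(\Z)$ with $AB_0=(A'\mid 0)$ for some $A'\in\Mat_{s,d}(\Z)$ of rank $d$. In the new coordinates $m=(m',m'')$ one has $\sgn(Am)=\sgn(A'm')$, so $\sum_{m\in\Z^r}\sgn(Am)F(m)=\sum_{m'\in\Z^d}\sgn(A'm')\bigl(\sum_{m''\in\Z^{r-d}}F(B_0(m',m''))\bigr)$, and the $m''$-directions will provide the free factor $\Z^{r-d}$ in the $d'=d$ term. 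Thus it suffices to treat $A'$ of full column rank.

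\emph{Step 2: the full-rank square case ($s=d$).} Here $A'\in\Mat_d(\Z)\cap\GL_d(\Q)$; put $D\coloneqq\abs{\det A'}$ and $L\coloneqq D^{-1}A'(\Z^d)$. Since $A'(\Z^d)\supseteq D\,\Z^d$ one has $\Z^d\subseteq L$, and $m'\mapsto D^{-1}A'm'$ is a bijection $\Z^d\xrightarrow{\sim}L$ along which $\sgn(A'm')=\sgn(D^{-1}A'm')$; substituting, and using that $DA'^{-1}=\pm\operatorname{adj}(A')$ is integral, yields $\sum_{m'\in\Z^d}\sgn(A'm')g(m')=\sum_{\mu\in L}\sgn(\mu)\,g(DA'^{-1}\mu)$, which is exactly the $d'=d$ term with $a_d=1$, $L_d=L$, and $B_d=DA'^{-1}$ (integral on $L$, an isomorphism $L\xrightarrow{\sim}\Z^d$). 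Composing with $B_0$ disposes of every case with $s=d$, with no lower $d'$ terms; this is the content of \cref{lem:sign_Am_to_m}.

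\emph{Step 3: removing redundant rows ($s>d$).} I would induct on $s$ and, nested inside, on $r$. Writing each sign factor as $\sgn(x)=\sgn_0(x)+\bm 1_{\{x=0\}}$ splits $\sum_m\sgn(Am)F(m)$ into a ``generic'' part $\sum_m\bigl(\prod_i\sgn_0(a_i\cdot m)\bigr)F(m)$, supported off the arrangement $\bigcup_i\{a_i\cdot m=0\}$, plus sums over the sublattices $\{m\in\Z^r:a_i\cdot m=0\}$, each of rank $r-1$ and — because $a_i$ lies in the span of the other rows — carrying a restricted system of rank $\le d-1$; these boundary sums are handled by the induction on $r$ and are what produce the terms with $d'<d$. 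On the generic locus the sign product is constant on each chamber of the real arrangement spanned by the rows; triangulating the chambers into simplicial cones over $d$-element sublists $a_{i_1},\dots,a_{i_d}$, each piece becomes a sum of $F$ over the lattice points of a simplicial cone, which — after the change of basis to the (rational) dual basis and the identity $\bm 1_{\{\mu\in\text{orthant}\}}=2^{-d}\sum_{J}\prod_{j\in J}\sgn(\mu_j)$ — is a $\Q$-combination of sums of the required shape, the superlattices $L_{d'}$ appearing as the scaled dual lattices of the spanning sublists. The induction on $s$ enters because each such sublist has $d\le s-1$ rows, so Step 2 applies to it. The essential difficulty is precisely this step: organizing the rank-$d$ sign combinatorics uniformly, handling the convention $\sgn(0)=1$ (which pins the boundary strata on the ``$+$'' side), and ensuring the double recursion on $(s,r)$ terminates with contributions stratified exactly by ranks $d'=d,d-1,\dots,1$; this is carried out in \cref{lem:sign_Am_to_m_rank_less} and \cref{prop:signs_more_than_rank}. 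The remaining work is bookkeeping — every $B_{d'}$ is a composition of integral unimodular base changes, adjugate-type maps, and a triangulation/dual-basis matrix, hence integral on $L_{d'}\oplus\Z^{r-d'}$ and an isomorphism onto $\Z^r$, while $a_{d'}$ collects the rational constants ($2^{-d}$, chamber signs, $\pm1$) picked up along the way. All decompositions of $\Z^r$ used are into finitely many cones and sublattices, so the identity holds termwise whenever the left-hand side converges absolutely, which is the case in all our applications.
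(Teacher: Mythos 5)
Your Steps 1 and 2 are correct and coincide with what the paper does: Step 1 is the content of \cref{lem:sign_Am_to_m_rank_less} (the paper obtains the splitting from the invariant factor theorem, you from saturating $\ker A\cap\Z^r$; both work), and Step 2 is exactly \cref{lem:sign_Am_to_m}, with $L_d=D^{-1}A'(\Z^d)$ and $B_d=DA'^{-1}$. The genuine gap is Step 3, which is the heart of the proposition: the case where, after reducing to $d$ columns, the number of sign factors still exceeds the rank. The paper's proof rests on \cref{prop:signs_more_than_rank} — a product of $s>r$ signs of linear forms on $\R^r$ is a $\Q$-linear combination of sub-products of length at most $r$ — proved by the chamber count of \cref{lem:hypplane_arrangement} ($\le 2^{r+1}-2<2^{r+1}$ chambers) together with discrete Fourier inversion on $\{\pm1\}^{r+1}$. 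Your proposal contains no proof of any statement playing this role; you explicitly defer ``the essential difficulty'' to \cref{lem:sign_Am_to_m_rank_less} and \cref{prop:signs_more_than_rank}, which a blind proof cannot do, so the decisive ingredient is simply missing.

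Moreover, the triangulation sketch you offer in its place has concrete problems as written. (i) After expanding $\sgn=\sgn_0+\bm{1}_{\{0\}}$, the boundary terms carry $\sgn_0$, not $\sgn$, so your inductive hypothesis (formulated for $\sgn$ with $\sgn(0)=1$) does not apply to them without a further inversion you never perform. (ii) The justification ``because $a_i$ lies in the span of the other rows'' is false in general (take $a_1,\dots,a_d$ independent, $a_{d+1}=a_1$, $i=2$); the rank does drop to $d-1$ on $\ker a_i$, but only because $a_i$ itself restricts to zero. (iii) The walls of a triangulation of a chamber are generally not cut out by sublists of the rows, and closed simplicial cones overlap along walls, so lattice points there are double counted unless you pass to half-open cones — additional strata you do not track. (iv) Even granting the cone decomposition and the orthant identity, the resulting sums have the form $\sum_{\mu\in\Lambda}\bigl(\prod_{j\in J}\sgn(\mu_j)\bigr)G(\mu)$ for a lattice $\Lambda$ that need not split as $L_{d'}\oplus\Z^{r-d'}$ in the coordinates singled out by $J$, so yet another recursion is needed to reach the normal form in the statement, and neither its setup nor its termination is argued. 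Finally, restricting to absolutely convergent sums proves slightly less than the statement, which assumes only convergence of the left-hand side. The strategy could perhaps be completed, but as it stands Step 3 is a plan rather than a proof.
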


To prove this proposition, we prepare the following three statements.

\begin{lem} \label{lem:sign_Am_to_m}
	Let $ r \ge d \ge 1 $ be positive integers and $ A \in \Mat_{d, r} (\Z) $ be a matrix of rank $ d $.
	Then, there exists a matrix $ B \in \Mat_r (\Z) \cap \GL_r (\Q) $ such that
	\[
	A B = \pmat{ D(A) I_d & O_{d, r-d} },
	\]
	where $ D(A) \coloneqq \abs{\Z^d / A(\Z^r)} $, $ I_d \in \GL_d (\Z) $ be the identity matrix, and $ O_{d, r-d} \in \Mat_{d, r-d} (\Z) $ be the zero matrix.
	
	Moreover, for any $ F \colon \Z^r \to \bbC $, we have
	\[
	\sum_{m \in \Z^r} \sgn(Am) F(m)
	=
	\sum_{\mu \in D(A)^{-1} A(\Z^r) / \Z^d} 
	\sum_{m =(m', m'') \in (\mu + \Z^d) \oplus \Z^{r-d'}} \sgn(m') F \left( Bm \right)
	\]
	if the left hand side converges.
\end{lem}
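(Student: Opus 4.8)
The plan is to build $B$ from the Smith normal form of $A$ and then to track how the lattice $\Z^r$ and the sign pattern transform under multiplication by $B$.

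First I would invoke the Smith normal form: since $A \in \Mat_{d,r}(\Z)$ has rank $d$, there exist $U \in \GL_d(\Z)$ and $V \in \GL_r(\Z)$ with
\[
UAV = \pmat{ \Delta & O_{d, r-d} }, \qquad \Delta = \diag(e_1, \dots, e_d),
\]
where $e_i \mid e_{i+1}$ and $e_1 \cdots e_d = \abs{\Z^d / A(\Z^r)} = D(A)$. Since each $e_i$ divides $D(A)$, the matrix $D(A)\Delta^{-1}$ is integral, so
\[
B := V \pmat{ D(A)\Delta^{-1} U & O \\ O & I_{r-d} } \in \Mat_r(\Z), \qquad \abs{\det B} = \prod_{1 \le i \le d} \frac{D(A)}{e_i} = D(A)^{d-1} \neq 0,
\]
hence $B \in \Mat_r(\Z) \cap \GL_r(\Q)$. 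Using $AV = U^{-1}(\Delta \mid O)$ one computes directly that $AB = (D(A) I_d \mid O_{d,r-d})$, which is the first assertion.

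Next I would identify the relevant lattice. Set $L := D(A)^{-1} A(\Z^r) \oplus \Z^{r-d} \subseteq \Q^r$; note $\Z^r \subseteq L$ because $D(A)\Z^d \subseteq A(\Z^r)$. I claim $B$ restricts to a bijection $L \xrightarrow{\ \sim\ } \Z^r$. As $B$ is injective over $\Q$, it suffices to check $B(L) \subseteq \Z^r$ together with an index count. From $UAV = (\Delta \mid O)$ we get $\Delta^{-1} U A(\Z^r) = \Z^d$, so for $m = (m', m'') \in L$ the top block satisfies $D(A)\Delta^{-1}U m' = \Delta^{-1} U (D(A) m') \in \Delta^{-1} U A(\Z^r) = \Z^d$, and applying $V \in \GL_r(\Z)$ preserves integrality; thus $B(L) \subseteq \Z^r$. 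For the indices, $[L : \Z^r] = [D(A)^{-1} A(\Z^r) : \Z^d] = [A(\Z^r) : D(A)\Z^d] = D(A)^{d}/D(A) = D(A)^{d-1} = \abs{\det B} = [\Z^r : B(\Z^r)]$. Since $B(\Z^r) \subseteq B(L) \subseteq \Z^r$, this forces $B(L) = \Z^r$. Then I would transport the sign factor: for $n \in \Z^r$ write $m := B^{-1}n = (m', m'') \in L$; from $AB = (D(A)I_d \mid O)$ we get $An = ABm = D(A)m'$, so $m' = D(A)^{-1}An$, and since $D(A) > 0$ each coordinate $m'_i$ has the same sign as $(An)_i$ (including the vanishing case, under the convention $\sgn(0)=1$), whence $\sgn(m') = \sgn(An)$. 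Re-indexing the left-hand sum by the bijection $n = Bm$, and decomposing $m' \in D(A)^{-1}A(\Z^r)$ according to its coset modulo $\Z^d$ (i.e.\ summing over representatives $\mu$ of $D(A)^{-1}A(\Z^r)/\Z^d$), produces exactly the asserted identity; convergence passes between the two sides because $B$ and $B^{-1}$ are bounded, so partial sums over expanding balls are cofinal on both sides.

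The main obstacle is the bookkeeping in the lattice step: one must match $[L : \Z^r]$ with $\abs{\det B}$ \emph{precisely} in order to conclude $B(L) = \Z^r$ on the nose, rather than only $B(L) \subseteq \Z^r$ of finite index. Once the Smith normal form is available, everything else is essentially formal; the only other point requiring (minor) care is the behaviour of the $\sgn$ convention at $0$, which causes no difficulty because the sole rescaling involved is by the positive integer $D(A)$.
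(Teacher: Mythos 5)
Your proposal is correct and follows essentially the same route as the paper: the paper also applies the invariant factor (Smith normal form) theorem, writing $A = \pmat{P & O}Q$ with $Q \in \GL_r(\Z)$ and $\abs{\det P} = D(A)$, and sets $B = Q^{-1}\smat{D(A)P^{-1} & \\ & I_{r-d}}$, which with $P = U^{-1}\Delta$, $Q = V^{-1}$ is exactly your matrix $B$. The only difference is presentational: you establish the bijection $L \xrightarrow{\sim} \Z^r$ by an index count, whereas the paper reads it off directly from the substitution $m' = (\det P)^{-1}Pn'$; both are fine, and your extra checks (integrality of $D(A)\Delta^{-1}$, the sign convention at $0$) are sound.
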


\begin{proof}
	By the invariant factor theorem, there exist matrices $ P \in \Mat_d (\Z) $ and $ Q \in \GL_r (\Z) $ such that
	\[
	A = \pmat{ P & O_{d, r-d} } Q, \quad
	D(A) = \abs{\det P}.
	\]
	Thus, the first statement holds for
	\[
	B \coloneqq
	Q^{-1} \pmat{ D(A) P^{-1} &  \\  & I_{r-d} }.
	\]
	The last statement follows by replacing $ m \in \Z^r $ by $ B \smat{m' \\ m''} $ with 
	$ m' \in (\det P)^{-1} P (\Z^d) = D(A)^{-1} A(\Z^r) $ and $ m'' \in \Z^{r-d} $.
\end{proof}

The following lemma also follows from the invariant factor theorem.
We omit a proof.

\begin{lem} \label{lem:sign_Am_to_m_rank_less}
	Let $ r \ge s \ge 1 $ be positive integers and $ A \in \Mat_{d, r} (\Z) $ be a matrix of rank $ d < s $.
	Then, there exists a matrix $ A' \in \Mat_{s, d} (\Z) $ of rank $ d $ and a matrix $ B \in \GL_r (\Z) $ such that
	\[
	A = \pmat{ A'& O_{d, r-d} } B.
	\]
	
	Moreover, for any $ F \colon \Z^r \to \bbC $, we have
	\[
	\sum_{m \in \Z^r} \sgn(Am) F(m)
	=
	\sum_{m =(m', m'') \in \Z^{d'} \oplus \Z^{r-d'}} \sgn(A' m') F \left( Bm \right)
	\]
	if the left hand side converges.
\end{lem}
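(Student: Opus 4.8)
The plan is to follow the route indicated by the parenthetical reference to the invariant factor theorem, exactly as in the proof of \cref{lem:sign_Am_to_m}, the only new feature being that here $A$ (an $s \times r$ integer matrix) has rank $d$ strictly smaller than the number of rows $s$. First I would put $A$ into Smith normal form over $\Z$: there exist $U \in \GL_s(\Z)$ and $V \in \GL_r(\Z)$ with
\[
U A V = \pmat{ \Lambda & O_{d, r-d} \\ O_{s-d, d} & O_{s-d, r-d} },
\]
where $\Lambda = \mathrm{diag}(d_1, \dots, d_d)$ with each $d_i \neq 0$, because $\mathrm{rank}(A) = d$. Setting $B \coloneqq V \in \GL_r(\Z)$, the matrix $AB = U^{-1}\,\mathrm{diag}(\Lambda, O)$ has its last $r-d$ columns equal to zero; letting $A' \in \Mat_{s,d}(\Z)$ be the block of its first $d$ columns gives $A = \pmat{A' & O_{s, r-d}} B$. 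Since $B$ is invertible, $AB$ has rank $d$, and since the last $r-d$ columns vanish this forces $\mathrm{rank}(A') = d$ (here the zero block in the statement is of size $s \times (r-d)$). This proves the first assertion.

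An equivalent and perhaps more conceptual way to obtain $B$, which I would only mention in passing, is to note that $\ker(A) \cap \Z^r$ is a saturated sublattice of $\Z^r$ of rank $r-d$; any such sublattice is a direct summand, so one may choose $B \in \GL_r(\Z)$ whose last $r-d$ columns form a $\Z$-basis of $\ker(A) \cap \Z^r$, and then $AB$ has the required block shape automatically. The only point in either argument that is not pure bookkeeping is that the change of basis realizing this block structure can be performed over $\Z$, not merely over $\Q$; this is precisely the content of the invariant factor theorem, and with it in hand the first part is immediate.

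For the ``moreover'' part I would simply replace the summation variable $m \in \Z^r$ by $Bm$ with $m = (m', m'') \in \Z^d \oplus \Z^{r-d}$; this is a bijection of $\Z^r$ since $B \in \GL_r(\Z)$, so the reindexing is valid whenever the left-hand side converges (in particular whenever it converges absolutely). Under this substitution $A(Bm) = \pmat{A' & O_{s, r-d}} \binom{m'}{m''} = A' m'$, so $\sgn(A(Bm)) = \sgn(A' m')$, and the identity
\[
\sum_{m \in \Z^r} \sgn(Am) F(m) = \sum_{m = (m', m'') \in \Z^d \oplus \Z^{r-d}} \sgn(A' m') F(Bm)
\]
follows. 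I do not anticipate a real obstacle here: the statement is a rank-deficient mirror of \cref{lem:sign_Am_to_m}, and the proof is one application of Smith normal form followed by a substitution in the sum.
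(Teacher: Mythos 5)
Your proposal is correct and is exactly the route the paper intends (it omits the proof, noting only that it "follows from the invariant factor theorem"): Smith normal form to split off the kernel over $\Z$, then the reindexing $m \mapsto Bm$ in the sum, which is legitimate because $B \in \GL_r(\Z)$. One small bookkeeping point: with $B \coloneqq V$ you obtain $AB = \pmat{A' & O_{s,r-d}}$, i.e.\ $A = \pmat{A' & O_{s,r-d}}B^{-1}$ rather than $A = \pmat{A' & O_{s,r-d}}B$, so either take $B \coloneqq V^{-1}$ for the factorization or keep $B = V$ for the substitution identity $A(Bm) = A'm'$ — harmless either way since $B^{-1} \in \GL_r(\Z)$, and your "moreover" step is carried out consistently with $AB = \pmat{A' & O_{s,r-d}}$.
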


The following proposition implies that whenever the number of sign products appearing in the summand exceeds the rank, it can be reduced to a number not exceeding the rank.

\begin{prop} \label{prop:signs_more_than_rank}
	For any vectors $ a_1, \dots, a_s \in \R^r $ with $ s > r $, we have
	\[
	\sgn(\transpose{a_1} x) \cdots \sgn(\transpose{a_s} x)
	\in
	\sprod{
		\prod_{i \in I} \sgn(\transpose{a_i} x)
		\relmiddle|
		I \subset \{ 1, \dots, s \}, \abs{I} \le r
	}_\Q.
	\]
\end{prop}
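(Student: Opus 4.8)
The plan is to prove Proposition~\ref{prop:signs_more_than_rank} by induction on $s$, reducing the general case to the base case $s = r+1$. For the base case, the key observation is that $r+1$ vectors $a_1, \dots, a_{r+1}$ in $\R^r$ are linearly dependent, so there is a nonzero relation $\sum_{i=1}^{r+1} c_i a_i = 0$ with $c_i \in \Q$. I would split the index set $\{1, \dots, r+1\}$ into $I_+ = \{i : c_i > 0\}$, $I_- = \{i : c_i < 0\}$, and $I_0 = \{i : c_i = 0\}$. For $x \in \R^r$ in "general position" (meaning $\transpose{a_i} x \neq 0$ for all $i$ with $c_i \neq 0$), the relation $\sum_i c_i (\transpose{a_i} x) = 0$ forces the numbers $\transpose{a_i} x$ with $i \in I_+ \cup I_-$ to not all have the same sign: more precisely, it is impossible that $\sgn(\transpose{a_i} x) = +1$ for all $i \in I_+$ and $\sgn(\transpose{a_i} x) = -1$ for all $i \in I_-$ simultaneously, and likewise with the signs reversed. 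This means the product $\prod_{i \in I_+ \cup I_-} \sgn(\transpose{a_i} x)$ omits certain sign patterns, which translates into a polynomial identity in the variables $\sigma_i := \sgn(\transpose{a_i}x) \in \{\pm 1\}$.

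The crucial algebraic step is then the following: on $\{\pm 1\}^{n}$, the product $\sigma_1 \cdots \sigma_n$ can be expressed as a $\Q$-linear combination of proper subproducts $\prod_{i \in J} \sigma_i$ with $|J| < n$, provided we know that $\sigma_1 \cdots \sigma_n$ never takes the value $(-1)^{|I_-|}$ (equivalently, certain "extreme" vertices of the cube are excluded). Concretely, if a function $f \colon \{\pm 1\}^n \to \Q$ vanishes at two antipodal points $\pm v$, then $f$ lies in the span of the monomials $\prod_{i \in J}\sigma_i$ with $|J| \le n-1$; this is because the top monomial $\sigma_1 \cdots \sigma_n$ is the unique (up to scalar) function supported "equally" on antipodal pairs with opposite signs, and knowing it vanishes somewhere pins down a relation. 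I would make this precise by using the Fourier/Walsh expansion on the hypercube: every $f$ is uniquely $\sum_{J} \hat f(J) \prod_{i\in J}\sigma_i$, and the top coefficient $\hat f(\{1,\dots,n\})$ can be solved for in terms of the values of $f$; when those values are constrained (two antipodal zeros of the relevant sign-pattern function) one gets the desired linear dependence. Applying this with $n = |I_+ \cup I_-|$ and then multiplying back by the untouched factors $\prod_{i \in I_0}\sigma_i$ (and dividing the equation through, if $I_0 \cap \{1,\dots,r+1\}$ is involved) gives the base case. Note the measure-zero set where some $\transpose{a_i}x = 0$ is harmless since both sides are functions defined everywhere with the convention $\sgn(0) = 1$, and one checks the identity directly holds there too, or observes the identity of $\Q$-linear combinations of sign functions is determined off a measure-zero set and both sides are built from the same finite data.

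For the inductive step with $s > r+1$: given $a_1, \dots, a_s$, apply the base case (or rather the $s=r+1$ instance) to $a_1, \dots, a_{r+1}$ to write $\sgn(\transpose{a_1}x)\cdots\sgn(\transpose{a_{r+1}}x)$ as a $\Q$-combination of $\prod_{i \in I}\sgn(\transpose{a_i}x)$ with $I \subset \{1,\dots,r+1\}$, $|I| \le r$. Multiplying both sides by $\sgn(\transpose{a_{r+2}}x)\cdots\sgn(\transpose{a_s}x)$ expresses the full product as a $\Q$-combination of products of at most $r + (s - r - 1) = s-1$ sign factors, each indexed by a subset $I' \subset \{1,\dots,s\}$ with $|I'| \le s-1$ that includes all of $\{r+2,\dots,s\}$; then apply the inductive hypothesis to each such term (each has at most $s-1$ factors, but we need the bound $|I'| \le r$, so we iterate, peeling off $(r+1)$-element subsets at a time). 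One must track that the induction terminates because the number of sign factors strictly decreases at each stage until it reaches $r$. The main obstacle I anticipate is making the base-case sign-cancellation argument fully rigorous—specifically, verifying that the excluded sign patterns are exactly the antipodal pair needed for the Walsh-expansion lemma, and handling the degenerate cases where some $c_i$ vanish (so that fewer than $r+1$ vectors actually participate in the relation, possibly allowing an even shorter reduction). A clean way to organize this is to first prove the abstract hypercube lemma ("if $f \colon \{\pm 1\}^n \to \Q$ vanishes at one point $v$ and at $-v$ then $f \in \spn\{\prod_{i\in J}\sigma_i : |J|\le n-1\}$") as a standalone statement, then feed in the geometric input from the linear dependence.
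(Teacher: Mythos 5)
Your overall architecture (reduce to $s=r+1$, exploit that not every sign pattern in $\{\pm1\}^{r+1}$ is realized, conclude via Walsh/Fourier expansion on the hypercube, then induct on $s$) is the right one, but the cornerstone you propose to prove as a standalone statement is false. The claimed lemma ``if $f\colon\{\pm1\}^n\to\Q$ vanishes at two antipodal points $\pm v$, then $f$ lies in the span of the monomials $\prod_{i\in J}\sigma_i$ with $\abs{J}\le n-1$'' already fails for $n=2$: the function $f=\bm{1}_{\{(1,-1)\}}$ vanishes at $(1,1)$ and $(-1,-1)$, yet its top Walsh coefficient $\tfrac14\sum_{e}f(e)e_1e_2=-\tfrac14$ is nonzero, and the span of the proper monomials is exactly the kernel of the top coefficient. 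What you actually need is a statement about \emph{restrictions}, not about vanishing: since the two antipodal patterns $\pm v$ coming from the relation $\sum_i c_i a_i=0$ are never realized by any generic $x$, the values of $\sigma_1\cdots\sigma_n$ at those two vertices are irrelevant, so you may replace it by any $F$ agreeing with it off $\{\pm v\}$; because $\bm{1}_v$ has top Walsh coefficient $\pm2^{-n}\neq0$, such an $F$ can be chosen with top coefficient zero, and then $F=\sum_{\abs{J}\le n-1}\widehat F(J)\prod_{i\in J}\sigma_i$ gives the identity on all realized patterns. This corrected step is precisely the paper's proof: there one notes that the number of chambers of the arrangement is at most $2^{r+1}-2<2^{r+1}$, so every function on the set of realized sign vectors extends to the full group $\{\pm1\}^{r+1}$ with vanishing top Fourier coefficient. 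Your way of exhibiting the missing vertices---an explicit linear dependence producing an unrealizable antipodal pair indexed by $(I_+,I_-)$---is a legitimate and arguably more elementary substitute for the paper's chamber-counting lemma, and the bookkeeping (multiplying back the factors in $I_0$, peeling off $(r+1)$-element subsets in the induction on $s$) is fine; but as written the key algebraic lemma is wrong and must be replaced by the extension/restriction argument above.

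A secondary step that would also fail as stated is your treatment of the locus where some $\transpose{a_i}x=0$: a $\Q$-linear identity among products of sign functions is \emph{not} determined off a measure-zero set (with the paper's convention $\sgn(0)=1$, the function $\sgn(x)+\sgn(-x)$ vanishes off $\{0\}$ but equals $2$ at $0$), and the sign vector of a boundary point need not be realized in any chamber (take $a_2=-a_1$), so ``one checks it directly'' is not automatic. To be fair, the paper's own argument also establishes the identity only on the open chambers; if the identity is needed at every $x$ (as the application to lattice sums with vanishing entries of $Am$ suggests), an additional argument is required in either approach, but your measure-zero justification in particular cannot be the reason.
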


\begin{ex}
	In the case when $ r=2 $, for any $ a_1, a_2 > 0 $ and $ x_1, x_2 \in \R $, we have
	\[
	\sgn(x_1) \sgn(x_2) \sgn(a_1 x_1 + a_2 x_2)
	=
	\sgn(x_1) + \sgn(x_2) - \sgn(a_1 x_1 + a_2 x_2).
	\]
\end{ex}

Before proving this proposition, we complete the proof of Theorem 1.

\begin{proof}[Proof of \cref{prop:sign_Am_to_m}]
	For a matrix $ A \in \Mat_{s, r} (\Z) $ be a matrix of rank $ d $, we apply the following operations:
	\begin{enumerate}[label=(\arabic*)]
		\item If $ s>r $, then we apply \cref{prop:signs_more_than_rank} and reduce to the case where $ s \le r $.
		\item[(2a)] If $ d < s \le r $, then we apply \cref{lem:sign_Am_to_m_rank_less} and reduce to the case where $ s \le r = d $.
		\item[(2b)] If $ d=s \le r $, then we apply \cref{lem:sign_Am_to_m} and finish operations.
	\end{enumerate}
	Since these operations reduce the size of the matrix $ A $, the claim follows by induction.
\end{proof}

\begin{proof}[Proof of \cref{prop:signs_more_than_rank}]
	We may assume $ s = r+1 $.
	We introduce the following notation:
	\begin{itemize}
		\item For each $ 1 \le i \le r+1 $, let
		$ H_i \coloneqq \{ x \in \R^r \mid \sgn(\transpose{a_i} x) \} $ be the hyperplane.
		\item Let $ \calC $ be the set of chambers of the arrangement of hyperplanes $ \{ H_1, \dots, H_{r+1} \} $, that is,
		the set of connected components of $ \R^r \smallsetminus H_1 \cup \cdots \cup H_{r+1} $.
		\item Let $ G \coloneqq \{ \pm 1 \}^{r+1} $.
		\item Let $ \sigma \colon \calC \hookrightarrow G $ be the injection defined by 
		$ C \mapsto (\sgn(\sgn(\transpose{a_i} x)))_{1 \le i \le r+1} $.
		\item Let $ \widehat{G} \coloneqq \{ \text{ group homomorphisms } \chi \colon G \to \bbC^\times \} $.
		\item For each $ I \subset \{ 1, \dots, r+1 \} $, define $ \chi_I^{} \in \widehat{G} $ by
		$ e \mapsto \prod_{i \in I} e_i $.
	\end{itemize}
	Then, for each $ I \subset \{ 1, \dots, r+1 \} $, we can regard the function $ \prod_{i \in I} \sgn(\transpose{a_i} x) $ as a map from $ \calC $ to $ \Q $ and it coincides with $ \chi_I^{} \in \widehat{G} $.
	Thus, it suffices to show that
	\begin{equation} \label{eq:proof:signs_more_than_rank}
		\{ f \colon \calC \to \Q \}
		=
		\sprod{ \chi_I^{} \circ \sigma \mid I \subsetneq \{ 1, \dots, r+1 \} }_\Q.
	\end{equation}
	We shall show that $ \abs{\calC} < \abs{G} $ implies this equality. 
	Indeed, take an arbitrary $ f \colon \calC \to \Q $. 
	Then, by the assumption, the space of possible extensions $ F \colon G \to \Q $ of $ f $ is at least one-dimensional.
	Define the Fourier transform $ \widehat{F} \colon \widehat{G} \to \bbC $ of $ F $ by
	$ \chi \mapsto \sum_{e \in G} F(e) \chi(e) $.
	By the Fourier inversion formula for the discrete Fourier transform, for each $ e \in G $, we have
	\[
	F(e) = \frac{1}{\abs{G}} \sum_{\chi \in \widehat{G}} \widehat{F}(\chi) \chi(e).
	\]
	Here, we have $ \widehat{G} = \{ \chi_I^{} \mid I \subset \{ 1, \dots, r+1 \} \} $.
	Taking $ F $ so as to satisfy $ \widehat{F}(\chi_{ \{ 1, \dots, r+1 \} }^{}) = 0 $, we obtain \cref{eq:proof:signs_more_than_rank}.
	Finally, $ \abs{\calC} < \abs{G} = 2^{r+1} $ follows from the following lemma.
\end{proof}

\begin{lem} \label{lem:hypplane_arrangement}
	The number of chambers of an arrangement of $ m $ hyperplanes in general position in $ \R^r $ is
	\[
	2 \sum_{0 \le l \le r-1} \binom{m-1}{l}.
	\]
	In particular, in the case when $ m=r+1 $, this number equals to $ 2^{r+1} - 2 $.
\end{lem}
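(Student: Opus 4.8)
The plan is to deduce the lemma from the general formula $c(m,r) = 2\sum_{l=0}^{r-1}\binom{m-1}{l}$ for the number $c(m,r)$ of chambers of an arrangement of $m$ hyperplanes through the origin in general position in $\R^r$ (these are exactly the arrangements that arise, since in the proof of Proposition~\ref{prop:signs_more_than_rank} each $H_i$ is the linear hyperplane $\transpose{a_i}x = 0$). Here ``general position'' means that the normals of any $k \le r$ of the hyperplanes are linearly independent; for $r=1$ this forces $m=1$, and in that degenerate case $c(m,1)=2$ trivially. Once the formula is proved, substituting $m = r+1$ gives $c(r+1,r) = 2\sum_{l=0}^{r-1}\binom{r}{l} = 2(2^r - \binom{r}{r}) = 2^{r+1}-2$, which is the second assertion.

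The main tool is the deletion--restriction recursion $c(m,r) = c(m-1,r) + c(m-1,r-1)$ for $m,r \ge 2$. To establish it, I would delete one hyperplane $H := H_m$ and then add it back: a chamber of the arrangement $\{H_1,\dots,H_{m-1}\}$ is open and convex, so $H$ either misses it or cuts it into exactly two chambers, whence the number of newly created chambers equals the number of chambers that $H$ meets. A standard argument — two regions of $H \setminus \bigcup_{i<m}(H_i\cap H)$ lying in a common chamber $C$ of $\{H_1,\dots,H_{m-1}\}$ would both lie in the connected set $C\cap H$, hence coincide — identifies this count with the number of connected components of $H \setminus \bigcup_{i<m}(H_i\cap H)$. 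Now $H \cong \R^{r-1}$ as a linear space and the traces $H_i\cap H$ form a central arrangement in $H$ which is again in general position: the projections to $H$ of any $k \le r-1$ of the $a_i$ are independent because $a_{i_1},\dots,a_{i_k},a_m$ are. (For $r=2$ the traces all collapse to the origin, and the count is still $2 = c(m-1,1)$.) So the number of new chambers is $c(m-1,r-1)$, giving the recursion.

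It then remains to check the base case $m=1$ — a single linear hyperplane splits $\R^r$ into two half-spaces, so $c(1,r)=2=2\binom{0}{0}$ — and that the closed form satisfies the recursion. The latter is Pascal's identity $\binom{m-1}{l} = \binom{m-2}{l} + \binom{m-2}{l-1}$:
\begin{align*}
2\sum_{l=0}^{r-1}\binom{m-2}{l} + 2\sum_{l=0}^{r-2}\binom{m-2}{l}
&= 2\sum_{l=0}^{r-1}\binom{m-2}{l} + 2\sum_{l=0}^{r-1}\binom{m-2}{l-1} \\
&= 2\sum_{l=0}^{r-1}\left( \binom{m-2}{l}+\binom{m-2}{l-1} \right) \\
&= 2\sum_{l=0}^{r-1}\binom{m-1}{l},
\end{align*}
so the induction on $m$ closes.

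The step that needs the most care is the geometric input to the recursion: the bijection between the regions cut on $H$ and the chambers of the ambient arrangement that get subdivided, together with the small-$r$ bookkeeping (coincident traces, the convention $c(m,1)=2$). This is routine for hyperplane arrangements but is exactly where general position — rather than mere distinctness of the hyperplanes — is used; everything else is elementary manipulation of binomial coefficients.
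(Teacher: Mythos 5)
Your proof is correct and follows essentially the same route as the paper: both establish the deletion--addition recursion (each chamber met by the new hyperplane splits into exactly two, and the chambers it meets are in bijection with the chambers of the restricted arrangement, giving $N(r,m+1)=N(r,m)+N(r-1,m)$) and then check the base cases and that the closed form satisfies the recursion via Pascal's identity. Your extra care about the central/general-position bookkeeping on the restriction is a welcome refinement but not a different argument.
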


\begin{proof}
	Let $ N(r, m) $ be the number of chambers.
	It suffices to show that it satisfies the recurrence relations $ N(r, 1) = N(1, m) = 2 $ and
	$ N(r, m+1) = N(r, m) + N(r-1, m) $ since the formula in the statement satisfies the same recurrence relations.
	The first one is clear. We now prove the second.
	Let $ \calA $ be an arrangement of $ m $ hyperplanes in general position in $ \R^r $ and $ H $ be an other hyperplane.
	Let $ \calC $ be the set of chambers of $ \calA $.
	Further dividing each chamber $C \in \mathcal{C}$ by $ H $, we see that:
	\begin{itemize}
		\item If $ C \cap H = \emptyset $, then $ C $ remains unchanged.
		\item If $ C \cap H \neq \emptyset $, then $ C $ is divided into two parts by $ H $.
	\end{itemize}
	Thus, we have
	\[
	N(r, m+1) = N(r, m) + \# \{ C \in \calC \mid C \cap H \neq \emptyset \}.
	\]
	The last term equals to $ N(r-1, m) $ since	a map
	\[
	\begin{array}{ccc}
		\{ C \in \calC \mid C \cap H \neq \emptyset \} & \longrightarrow & \{ \text{ chambers of $ \eval{\calA}_H $ } \} \\
	\end{array}
	\]
	is bijective, where $ \eval{\calA}_H \coloneqq \{ H' \cap H \mid H' \in \calA \} $ is the hyperplane arrangement of $ H $ determined by $ \calA $.
\end{proof}

We can express explicitly the difference $ G \smallsetminus \sigma(\calC) $ in the proof of \cref{prop:signs_more_than_rank}.
To achieve this, we need the following statement known as ``Gordan's theorem.''

\begin{prop}[{Gordan's theorem, \cite[Theorem 2.2.1]{Borwein-Lewis}}] \label{prop:Gordan}
	For any matrix $ A \in \Mat_{s, r} (\R) $, exactly one of the followings holds:
	\begin{enumerate}
		\item There exists $ y \in \R_{\ge 0}^s \smallsetminus \{ 0 \} $ such that $ \transpose{A} y = 0 $.
		\item There exists $ x \in \R^r $ such that $ Ax \in \R_{>0}^s $.
	\end{enumerate}
\end{prop}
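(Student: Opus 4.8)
The plan is to derive Gordan's theorem from the strict separation theorem for a point and a compact convex set, in the spirit of \cite{Borwein-Lewis}. Write $a_1, \dots, a_s \in \R^r$ for the rows of $A$, so that $Ax = (\langle a_1, x\rangle, \dots, \langle a_s, x\rangle)$ and $\transpose{A} y = \sum_{i=1}^s y_i a_i$ for $y = (y_1, \dots, y_s)$. With this notation, condition (1) says that $\sum_i y_i a_i = 0$ for some $y_i \ge 0$ not all zero, and condition (2) says that there is $x \in \R^r$ with $\langle a_i, x\rangle > 0$ for every $i$.

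First I would verify that (1) and (2) cannot hold simultaneously. If $\sum_i y_i a_i = 0$ with $y_i \ge 0$ and $y_j > 0$ for some $j$, and at the same time $\langle a_i, x\rangle > 0$ for all $i$, then pairing with $x$ gives $0 = \langle \sum_i y_i a_i, x\rangle = \sum_i y_i \langle a_i, x\rangle \ge y_j \langle a_j, x\rangle > 0$, a contradiction. Hence at most one of (1), (2) holds.

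It remains to show that if (1) fails then (2) holds. Assume (1) fails; then $0$ does not lie in the compact convex set $C \coloneqq \mathrm{conv}\{a_1, \dots, a_s\}$, since a representation $0 = \sum_i \lambda_i a_i$ with $\lambda_i \ge 0$ and $\sum_i \lambda_i = 1$ would immediately yield (1). Let $p$ be the point of $C$ of minimal Euclidean norm, which exists by compactness and is nonzero. The first-order optimality condition for minimizing $\abs{\cdot}^2$ over the convex set $C$ gives $\langle c - p, p\rangle \ge 0$, i.e.\ $\langle c, p\rangle \ge \abs{p}^2$, for every $c \in C$. Taking $c = a_i$ and setting $x \coloneqq p$, we obtain $\langle a_i, x\rangle \ge \abs{p}^2 > 0$ for all $i$, which is precisely (2).

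All of these steps are standard; the only point that needs care is the separation argument, and the (mild) subtlety is that (2) requires strict positivity of \emph{every} coordinate of $Ax$. This is why one separates $0$ from $C$ by a strict inequality---equivalently, uses the nonzero nearest-point vector $p$ as the direction $x$---rather than invoking a merely weak supporting hyperplane. I do not anticipate any genuine obstacle beyond this.
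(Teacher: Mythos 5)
Your argument is correct: mutual exclusivity is immediate, and when (1) fails the convex hull $C=\mathrm{conv}\{a_1,\dots,a_s\}$ of the rows is a compact convex set not containing $0$, so the nearest-point $p\in C$ is nonzero and the first-order optimality condition $\langle c-p,p\rangle\ge 0$ for all $c\in C$ gives $\langle a_i,p\rangle\ge |p|^2>0$, i.e.\ (2) with $x=p$; you also correctly identify the only delicate point, namely obtaining strict positivity in every coordinate by using the nonzero nearest-point vector rather than a weak supporting hyperplane. Be aware, though, that the paper itself contains no proof of this proposition: it is quoted as a known result from \cite[Theorem 2.2.1]{Borwein-Lewis} and used as a black box in the remark that follows, so there is no internal argument to compare against. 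In the cited reference the theorem is established by a different, variational route: one studies the infimum of the log-exponential function $f(x)=\log\bigl(\sum_{i}e^{\langle a_i,x\rangle}\bigr)$ and shows that attainment versus non-attainment of this infimum corresponds exactly to the two alternatives. Your separation/projection proof is the more elementary and self-contained of the two (it needs only compactness of the convex hull and the variational inequality for the Euclidean projection), while the log-exponential proof is chosen in the reference to showcase its variational-analysis framework; either one fully justifies the proposition as used in the paper.
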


\begin{rem}
	In the proof of \cref{prop:signs_more_than_rank}, let $ A \coloneqq (a_1, \dots, a_{r+1}) \in \Mat_{r+1, r} (\R) $ be the matrix
	and we assume that $ A $ has rank $ r $.
	We may set $ \{ \pm e_A \} \coloneqq \{ (\sgn(y_1), \dots, \sgn(y_{r+1})) \mid y \in \Ker A \} $.
	Under these settings, we show that $ G \smallsetminus \sigma(\calC) = \{ \pm e_A \} $.
	
	For $ e \in G = \{ \pm 1 \}^{r+1} $, let
	\[
	C_e \coloneqq
	\left\{ 
	x \in \R^r
	\relmiddle|
	e_i \transpose{a_i} x > 0 \text{ for each } 1 \le i \le r+1
	\right\}
	\in \calC \cup \{ \emptyset \}.
	\]
	Then, the condition $ C_e \neq \emptyset $ holds if and only if
	there exists $ x \in \R^r $ such that $ A_e x \in \R_{>0}^r $, where
	$ A_e \coloneqq \diag{(e_1, \dots, e_r)} A $.
	By Gordan's theorem (\cref{prop:Gordan}), this does not hold if and only if
	there exists $ y \in \R_{\ge 0}^{r+1} \smallsetminus \{ 0 \} $ such that $ \transpose{A_e} y = 0 $.
	Thus, the condition $ C_e = \emptyset $ holds if and only if
	there exists $ y \in \Ker \transpose{A} \smallsetminus \{ 0 \} $ such that $ e_i y_i > 0 $ for each $ 1 \le i \le r+1 $,
	that is, $ e \in \{ \pm e_A \} $.
\end{rem}


\bibliographystyle{alpha}
\bibliography{quantum_inv,quantum_inv_jp,modular}

\newcommand{\etalchar}[1]{$^{#1}$}
\begin{thebibliography}{BKMN23}

\bibitem[AH06]{Andersen-Hansen}
J.~E. Andersen and S.~K. Hansen.
\newblock Asymptotics of the quantum invariants for surgeries on the figure 8
  knot.
\newblock {\em Journal of Knot theory and its Ramifications}, 15(04):479--548,
  2006.

\bibitem[AH12]{Andersen-Himpel}
J.~E. Andersen and B.~Himpel.
\newblock The {W}itten--{R}eshetikhin--{T}uraev invariants of finite order
  mapping tori {II}.
\newblock {\em Quantum Topol.}, 3(3-4):377--421, 2012.

\bibitem[AHJ{\etalchar{+}}17]{Andersen-Himpel-Jorgensen-Martens-McLellan}
J.~E. Andersen, B.~Himpel, S.~F. J\o{}rgensen, J.~Martens, and B.~McLellan.
\newblock The {W}itten-{R}eshetikhin-{T}uraev invariant for links in finite
  order mapping tori {I}.
\newblock {\em Adv. Math.}, 304:131--178, 2017.

\bibitem[AM22]{Andersen-Mistegard}
J.~E. Andersen and W.~Misteg{\aa}rd.
\newblock Resurgence analysis of quantum invariants of {S}eifert fibered
  homology spheres.
\newblock {\em Journal of the London Mathematical Society}, 105(2):709--764,
  2022.

\bibitem[AP19]{Andersen-Petersen}
J.~E. Andersen and W.~Petersen.
\newblock Asymptotic expansions of the {W}itten--{R}eshetikhin--{T}uraev
  invariants of mapping tori {I}.
\newblock {\em Transactions of the American Mathematical Society},
  372(8):5713--5745, 2019.

\bibitem[BC13]{Bettin-Conrey:period}
S.~Bettin and J.~B. Conrey.
\newblock Period functions and cotangent sums.
\newblock {\em Algebra \& Number Theory}, 7(1):215--242, 2013.

\bibitem[BKM19a]{BKM}
K.~Bringmann, J.~Kaszian, and A.~Milas.
\newblock Higher depth quantum modular forms, multiple {E}ichler integrals, and
  {$\mathfrak{sl}_3$} false theta functions.
\newblock {\em Res. Math. Sci.}, 6(2):Paper No. 20, 41, 2019.

\bibitem[BKM19b]{BKM:vector}
K.~Bringmann, J.~Kaszian, and A.~Milas.
\newblock Vector-valued higher depth quantum modular forms and higher {M}ordell
  integrals.
\newblock {\em J. Math. Anal. Appl.}, 480(2):123397, 22, 2019.

\bibitem[BKMN21]{BKMN:rk2_false_modular}
K.~Bringmann, J.~Kaszian, A.~Milas, and C.~Nazaroglu.
\newblock Integral representations of rank two false theta functions and their
  modularity properties.
\newblock {\em Res. Math. Sci.}, 8(4):Paper No. 54, 31, 2021.

\bibitem[BKMN23]{BKMN:False_modular}
K.~Bringmann, J.~Kaszian, A.~Milas, and C.~Nazaroglu.
\newblock Higher depth false modular forms.
\newblock {\em Commun. Contemp. Math.}, 25(7):Paper No. 2250043, 53, 2023.

\bibitem[BL06]{Borwein-Lewis}
J.~M. Borwein and A.~S. Lewis.
\newblock {\em Convex analysis and nonlinear optimization}, volume~3 of {\em
  CMS Books in Mathematics/Ouvrages de Math\'ematiques de la SMC}.
\newblock Springer, New York, second edition, 2006.
\newblock Theory and examples.

\bibitem[BMM20]{BMM:high_depth}
K.~Bringmann, K.~Mahlburg, and A.~Milas.
\newblock Higher depth quantum modular forms and plumbed 3-manifolds.
\newblock {\em Lett. Math. Phys.}, 110(10):2675--2702, 2020.

\bibitem[BN19]{Bringmann-Nazaroglu}
K.~Bringmann and C.~Nazaroglu.
\newblock A framework for modular properties of false theta functions.
\newblock {\em Res. Math. Sci.}, 6(3):Paper No. 30, 23, 2019.

\bibitem[BOW23]{Bringmann-Ono-Wagner}
K.~Bringmann, K~Ono, and I.~Wagner.
\newblock Eichler integrals of {E}isenstein series as {$q$}-brackets of
  weighted {$t$}-hook functions on partitions.
\newblock {\em Ramanujan J.}, 61(1):279--293, 2023.

\bibitem[BV04]{Boyd-Vandenberghe}
S.~Boyd and L.~Vandenberghe.
\newblock {\em Convex optimization}.
\newblock Cambridge University Press, Cambridge, 2004.

\bibitem[BW05]{Beasley-Witten}
C.~Beasley and E.~Witten.
\newblock Non-abelian localization for {C}hern-{S}imons theory.
\newblock {\em J. Differential Geom.}, 70(2):183--323, 2005.

\bibitem[Cha16]{Charles}
L.~Charles.
\newblock On the {W}itten asymptotic conjecture for {S}eifert manifolds.
\newblock {\em arXiv:1605.04124}, 2016.

\bibitem[Chu17]{Chun}
S.~Chun.
\newblock A resurgence analysis of the {$SU(2)$} {C}hern-{S}imons partition
  functions on a {B}rieskorn homology sphere {$\Sigma(2,5,7)$}.
\newblock {\em arXiv:1701.03528v1}, 2017.

\bibitem[Chu20]{Chung:Seifert}
H.-J. Chung.
\newblock {BPS} invariants for {S}eifert manifolds.
\newblock {\em Journal of High Energy Physics}, 2020(3):1--67, 2020.

\bibitem[Chu21a]{Chung:rational}
H.-J. Chung.
\newblock {BPS} invariants for 3-manifolds at rational level k.
\newblock {\em Journal of High Energy Physics}, 2021(2):1--23, 2021.

\bibitem[Chu21b]{Chung:resurgent}
H.-J. Chung.
\newblock Resurgent analysis for some 3-manifold invariants.
\newblock {\em Journal of High Energy Physics}, 2021(5):1--40, 2021.

\bibitem[Chu22]{Chung:SU(N)}
H.-J. Chung.
\newblock {BPS} invariants for a knot in {S}eifert manifolds.
\newblock {\em Journal of High Energy Physics}, 2022(12):1--24, 2022.

\bibitem[CM14]{Creutzig-Milas}
T.~Creutzig and A.~Milas.
\newblock False theta functions and the {V}erlinde formula.
\newblock {\em Adv. Math.}, 262:520--545, 2014.

\bibitem[CM15a]{Charles-Marche:I}
L.~Charles and J.~March\'{e}.
\newblock Knot state asymptotics {I}: {AJ} conjecture and {A}belian
  representations.
\newblock {\em Publ. Math. Inst. Hautes \'{E}tudes Sci.}, 121:279--322, 2015.

\bibitem[CM15b]{Charles-Marche:II}
L.~Charles and J.~March\'{e}.
\newblock Knot state asymptotics {II}: {W}itten conjecture and irreducible
  representations.
\newblock {\em Publ. Math. Inst. Hautes \'{E}tudes Sci.}, 121:323--361, 2015.

\bibitem[DS05]{Diamond-Shurman}
F.~Diamond and J.~Shurman.
\newblock {\em A first course in modular forms}, volume 228 of {\em Graduate
  Texts in Mathematics}.
\newblock Springer-Verlag, New York, 2005.

\bibitem[FG91]{Freed-Gompf}
D.~S. Freed and R.~E. Gompf.
\newblock Computer calculation of {W}itten's {$3$}-manifold invariant.
\newblock {\em Comm. Math. Phys.}, 141(1):79--117, 1991.

\bibitem[FIMT21]{FIMT}
H.~Fuji, K.~Iwaki, H.~Murakami, and Y.~Terashima.
\newblock Witten–{R}eshetikhin–{T}uraev function for a knot in {S}eifert
  manifolds.
\newblock {\em Communications in Mathematical Physics}, 2021.

\bibitem[GM21]{Gukov-Manolescu}
S.~Gukov and C.~Manolescu.
\newblock A two-variable series for knot complements.
\newblock {\em Quantum Topology}, 12(1), 2021.
\newblock arXiv:1904.06057.

\bibitem[GMnP16]{GMP}
S.~Gukov, M.~Mari\~{n}o, and P.~Putrov.
\newblock {R}esurgence in complex {C}hern-{S}imons theory.
\newblock 2016.
\newblock arXiv:1605.07615v2.

\bibitem[GPPV20]{GPPV}
S.~Gukov, D.~Pei, P.~Putrov, and C.~Vafa.
\newblock B{PS} spectra and 3-manifold invariants.
\newblock {\em J. Knot Theory Ramifications}, 29(2):2040003, 85, 2020.

\bibitem[H\"83]{Hormander}
L.~H\"{o}rmander.
\newblock {\em The analysis of linear partial differential operators. {I}},
  volume 256 of {\em Grundlehren der mathematischen Wissenschaften [Fundamental
  Principles of Mathematical Sciences]}.
\newblock Springer-Verlag, Berlin, 1983.
\newblock Distribution theory and Fourier analysis.

\bibitem[Hik05]{Hikami:Lattice}
K.~Hikami.
\newblock Quantum invariant, modular form, and lattice points.
\newblock {\em Int. Math. Res. Not.}, (3):121--154, 2005.

\bibitem[Hik06]{Hikami:Lattice2}
K.~Hikami.
\newblock Quantum invariants, modular forms, and lattice points. {II}.
\newblock {\em J. Math. Phys.}, 47(10):102301, 32, 2006.

\bibitem[Hik11]{Hikami:radial_limit}
Decomposition of {W}itten-{R}eshetikhin-{T}uraev invariant: Author = {Hikami,
  Kazuhiro}, linking pairing and modular forms.
\newblock In {\em Chern-{S}imons gauge theory: 20 years after}, volume~50 of
  {\em AMS/IP Stud. Adv. Math.}, pages 131--151. Amer. Math. Soc., Providence,
  RI, 2011.

\bibitem[HLSS23]{Han-Li-Sauzin-Sun}
L.~Han, Y.~Li, D.~Sauzin, and S.~Sun.
\newblock Resurgence and partial theta series.
\newblock {\em Funct. Anal. Appl.}, 57(3):248--265, 2023.
\newblock Translation of Funktsional. Anal. i Prilozhen. {\bf 57} (2023), no.
  3, 89--112.

\bibitem[HT04]{Hansen-Takata}
S.~K. Hansen and T.~Takata.
\newblock Reshetikhin--{T}uraev invariants of {S}eifert 3-manifolds for
  classical simple {L}ie algebras.
\newblock {\em Journal of Knot Theory and Its Ramifications}, 13(05):617--668,
  2004.

\bibitem[Jef92]{Jeffrey}
L.~C. Jeffrey.
\newblock Chern-{S}imons-{W}itten invariants of lens spaces and torus bundles,
  and the semiclassical approximation.
\newblock {\em Comm. Math. Phys.}, 147(3):563--604, 1992.

\bibitem[LZ99]{Lawrence-Zagier}
R.~Lawrence and D.~Zagier.
\newblock Modular forms and quantum invariants of {$3$}-manifolds.
\newblock {\em Asian J. Math.}, 3(1):93--107, 1999.
\newblock Sir Michael Atiyah: a great mathematician of the twentieth century.

\bibitem[LZ01]{Lewis-Zagier:period_func}
J.~Lewis and D.~Zagier.
\newblock Period functions for {M}aass wave forms. {I}.
\newblock {\em Ann. of Math. (2)}, 153(1):191--258, 2001.

\bibitem[Mal20]{Males}
J.~Males.
\newblock A family of vector-valued quantum modular forms of depth two.
\newblock {\em Int. J. Number Theory}, 16(1):29--64, 2020.

\bibitem[MT24]{Matsusaka-Terashima}
T.~Matsusaka and Y.~Terashima.
\newblock Modular transformations of homological blocks for {S}eifert fibered
  homology $3$-spheres.
\newblock {\em Comm. Math. Phys.}, 405(2):Paper No. 48, 25, 2024.

\bibitem[Mur24a]{M:GPPV}
Y.~Murakami.
\newblock A proof of a conjecture of {G}ukov--{P}ei--{P}utrov--{V}afa.
\newblock {\em Comm. Math. Phys.}, 405(11):Paper No. 274, 23, 2024.

\bibitem[Mur24b]{M:plumbed}
Y.~Murakami.
\newblock Witten--{R}eshetikhin--{T}uraev invariants and homological blocks for
  plumbed homology spheres.
\newblock {\em Commun. Number Theory Phys.}, 18(2):371--403, 2024.

\bibitem[Neu80]{Neumann_Lecture}
W.~D. Neumann.
\newblock An invariant of plumbed homology spheres.
\newblock In {\em Topology {S}ymposium, {S}iegen 1979 ({P}roc. {S}ympos.,
  {U}niv. {S}iegen, {S}iegen, 1979)}, volume 788 of {\em Lecture Notes in
  Math.}, pages 125--144. Springer, Berlin, 1980.

\bibitem[Neu81]{Neumann_work}
W.~D. Neumann.
\newblock A calculus for plumbing applied to the topology of complex surface
  singularities and degenerating complex curves.
\newblock {\em Trans. Amer. Math. Soc.}, 268(2):299--344, 1981.

\bibitem[Roz94]{Rozansky:1}
L.~Rozansky.
\newblock A large {$k$} asymptotics of {W}itten's invariant of {S}eifert
  manifolds.
\newblock In {\em Proceedings of the {C}onference on {Q}uantum {T}opology
  ({M}anhattan, {KS}, 1993)}, pages 307--354. World Sci. Publ., River Edge, NJ,
  1994.

\bibitem[Roz96]{Rozansky:2}
L.~Rozansky.
\newblock Residue formulas for the large {$k$} asymptotics of {W}itten's
  invariants of {S}eifert manifolds. {T}he case of {${\rm SU}(2)$}.
\newblock {\em Comm. Math. Phys.}, 178(1):27--60, 1996.

\bibitem[RS75]{Reed-Simon:Fourier}
Michael Reed and Barry Simon.
\newblock {\em Methods of modern mathematical physics. {II}. {F}ourier
  analysis, self-adjointness}.
\newblock Academic Press [Harcourt Brace Jovanovich, Publishers], New
  York-London, 1975.

\bibitem[Shi10]{Shimomura}
S.~Shimomura.
\newblock Modularity gap for {E}isenstein series.
\newblock {\em Proc. Japan Acad. Ser. A Math. Sci.}, 86(4):79--84, 2010.

\bibitem[Whe25]{Wheeler}
C.~Wheeler.
\newblock Quantum modularity for a closed hyperbolic 3-manifold.
\newblock {\em SIGMA Symmetry Integrability Geom. Methods Appl.}, 21:Paper No.
  004, 2025.

\bibitem[Wit89]{Witten}
E.~Witten.
\newblock Quantum field theory and the {J}ones polynomial.
\newblock {\em Comm. Math. Phys.}, 121(3):351--399, 1989.

\bibitem[Won01]{Wong}
R.~Wong.
\newblock {\em Asymptotic approximations of integrals}, volume~34 of {\em
  Classics in Applied Mathematics}.
\newblock Society for Industrial and Applied Mathematics (SIAM), Philadelphia,
  PA, 2001.
\newblock Corrected reprint of the 1989 original.

\bibitem[Wu21]{Wu}
D.~H. Wu.
\newblock Resurgent analysis of {$\rm SU(2)$} {C}hern-{S}imons partition
  function on {B}rieskorn spheres {$\Sigma(2, 3, 6n + 5)$}.
\newblock {\em J. High Energy Phys.}, (2):Paper No. 008, 18, 2021.

\bibitem[Zag]{Zagier:QMF_lecture}
D.~Zagier.
\newblock Holomorphic quantum modular forms lecture.
\newblock Special Program``Dynamics: Topology and Numbers at the Hausdorff
  Center for Mathematics''.

\bibitem[Zag06]{Zagier:asymptotic}
D.~Zagier.
\newblock The {M}ellin transform and other useful analytic techniques.
\newblock In {\em Appendix to E. Zeidler, Quantum Field Theory {I}: {B}asics in
  Mathematics and Physics. {A} Bridge Between Mathematicians and Physicists},
  pages 305--323. Springer, Berlin, 2006.

\bibitem[Zag10]{Zagier:quantum}
D.~Zagier.
\newblock Quantum modular forms.
\newblock In {\em Quanta of maths}, volume~11 of {\em Clay Math. Proc.}, pages
  659--675. Amer. Math. Soc., Providence, RI, 2010.

\bibitem[Zwe02]{Zwegers:thesis}
S.~P. Zwegers.
\newblock {\em Mock theta functions}.
\newblock PhD thesis, Universiteit Utrecht, 2002.

\end{thebibliography}

\end{document}